\documentclass[11pt,a4paper, reqno]{amsart}
\usepackage{mathrsfs}
\usepackage{amsmath,amsfonts,verbatim, amsthm}
\usepackage{latexsym}
\usepackage{amssymb,leftidx}
\usepackage{enumitem}
\usepackage{extarrows}
\usepackage{overpic}
\usepackage{xypic}
\usepackage{color}
\usepackage{epsfig}
\usepackage{subfigure}
\usepackage{tikz}
\usetikzlibrary{graphs}
\usetikzlibrary{positioning}
\usepackage[backref=page]{hyperref}
\usepackage{cleveref}
\usepackage{caption}
\usepackage{yhmath}

\newcommand\rv[1]{{\color{blue} #1}}

\newcommand{\WF}{\mathrm{WF}}
\newcommand\WFsc{\mathrm{WF}_{\mathrm{sc}}}

\newcommand{\Ell}{\ensuremath{\mathrm{Ell}_{\mathrm{sc}}}}
\newcommand{\Char}{\ensuremath{\mathrm{Char}}}
\newcommand{\supp}{\ensuremath{\mathrm{supp}}}
\newcommand\scH{ {}^{\mathrm{sc}} H}
\newcommand{\sct}{\ensuremath{\mathrm{sc}}}

\newcommand{\la}{\ensuremath{\langle}}
\newcommand{\ra}{\ensuremath{\rangle}}

\newcommand{\rmb}{\mathrm{b}}

\hypersetup{urlcolor=blue, citecolor=red}

\usepackage{amssymb}
\usepackage{mathrsfs}
\usepackage{amscd}
\usepackage{bbm}
\usepackage{cite}

\newcommand\R{\mathbb{R}}

\newcommand\Z{\mathbb{Z}}
\newcommand\N{\mathbb{N}}
\newcommand\Id{\mathrm{Id}}

\newcommand{\Schw}{\mathcal S}
\newcommand{\SX}{\mathcal X}
\newcommand{\SY}{\mathcal Y}

\newcommand{\cu}{\ensuremath{\mathrm{cu}}}

\newcommand\sca{\mathrm{sc}}
\newcommand\cl{\mathrm{cl}}
\newcommand\ang[1]{\langle #1 \rangle}
\newcommand\aang[1]{\langle \! \langle #1 \rangle \! \rangle}
\newcommand\bang[1]{\big\langle #1 \big\rangle}
\newcommand\compn{\overline{\R^n}}
\newcommand\comphase{\overline{T^* \R^n}}
\newcommand\Psisc{\Psi_{\sca}}
\newcommand\Op{\operatorname{Op}}
\newcommand\spr{\sigma_{\mathrm{pr}}}
\renewcommand\Im{\operatorname{Im}}

\newcommand\Poi{\mathcal{P}}

\hypersetup{urlcolor=blue, citecolor=red}

\usepackage{amssymb}
\usepackage{mathrsfs}
\usepackage{amscd}
\usepackage{bbm}
\usepackage{cite}

\newcommand{\SV}{\mathcal V}

\renewcommand\Im{\operatorname{Im}}

\newcommand\Rin{\mathcal{R}_{\mathrm{in}}}
\newcommand\Rout{\mathcal{R}_{\mathrm{out}}}
\newcommand\Uin{\mathcal{U}_{\mathrm{in}}}
\newcommand\Uout{\mathcal{U}_{\mathrm{out}}}
\newcommand\signum{\operatorname{sgn}}
\newcommand\ml{\mathsf{l}}
\newcommand{\mr}{\mathsf{r}}
\newcommand\parb{\mathrm{par}}
\newcommand\comparphase{\overline{T^* \R^{n+1}_{\parb}}}
\newcommand\Psipar{\Psi_{\parb}}
\newcommand\Hpar{H_{\parb}}
\newcommand{\msf}{\ensuremath{\mathsf}}
\newcommand{\el}{l}

\usepackage{graphicx}
\usepackage{float}

\usepackage{graphicx}
\usepackage{float}

\numberwithin{equation}{section}
\newtheorem{theorem}{Theorem}[section]
\newtheorem{proposition}[theorem]{Proposition}
\newtheorem{definition}[theorem]{Definition}
\newtheorem{lemma}[theorem]{Lemma}
\newtheorem{corollary}[theorem]{Corollary}
\newtheorem*{example}{Example}

\theoremstyle{definition}

\newtheorem{problem}{Problem}[section]

\newtheorem{remark}[theorem]{Remark}

\begin{document}

\title[Lecture notes on non-elliptic Fredholm theory]{Lecture notes on non-elliptic Fredholm theory}
\author{Andrew Hassell}
\author{Qiuye Jia}
\author{Ethan Sussman}
\begin{abstract}These are lecture notes from the Austral Winter School on Microlocal Analysis and Non-elliptic Fredholm Theory, held at the Australian National University, Canberra, June 30 -- July 11, 2025. 
\end{abstract}

\maketitle

\tableofcontents

\setcounter{section}{-1}

\section{Introduction}

The Austral Winter School on Microlocal Analysis and Non-elliptic Fredholm Theory was a two-week school taught at the Australian National University, Canberra, from June 30 -- July 11, 2025. The lectures were delivered by the three authors of these lecture notes. Problem sessions were an integral part of the school, and there are problems and exercises at the end of each lecture. There were 28 students from a variety of countries, including Australia, the USA, China, Canada and Finland. 

The prerequisite for attending the school, and for reading these lecture notes, is a first course in microlocal analysis, covering the fundamentals of pseudodifferential operators on $\R^n$ and on closed manifolds, principal symbols, ellipticity, and elliptic parametrix. This could be obtained, for example, by reading the final chapter of Folland's book \cite{Folland-book} \emph{Introduction to Partial Differential Equations} (Princeton, 1995), or by reading the first 5 chapters of Hintz' recent book \cite{Hintz-book} \emph{An Introduction to Microlocal Analysis} (Springer, 2025), or lecture notes of Richard Melrose, available from his website. Other monographs or sources, most going far beyond what is needed to read these notes, include the third volume \cite{hormander2007analysis} of  H\"ormander's four volume treatise \emph{The analysis of linear partial differential operators}, Springer, 1983; Taylor, \emph{Pseudodifferential Operators}, Princeton, 1981 \cite{Taylor-book-2} and \emph{Partial Differential Equations II} \cite{Taylor-book-pseudo}, Springer, 1996; Tr\`eves, \emph{Introduction to Pseudodifferential and Fourier Integral operators vol. 1}, Plenum, 1980 \cite{Treves-book};  Dimassi-Sj\"ostrand, \emph{Spectral asymptotics in the semi-classical limit}, Cambridge, 1999 \cite{dimassi-sjostrand1999spectral}; Zworski, \emph{Semiclassical Analysis}, AMS, 2012 \cite{zworski2022semiclassical}. 

The aim of the Winter School was to explain the Fredholm approach to solving non-elliptic problems on complete manifolds, as developed during the 2010s principally by Andr\'as Vasy, although the first instance is due to Faure and Sj\"ostrand \cite{faure-sjostrand2011anosov}. We elected to do this in the context of the scattering calculus, as this is technically less demanding than the approach first used by Vasy and Hintz-Vasy, which was dedicated to studying the wave equation and used the b-calculus. Using the scattering calculus we can analyze several operators including the Helmholtz operator, the Klein-Gordon operator and  the Schr\"odinger operator (using a parabolic variant of the scattering calculus that is indistinguishable from it at finite frequency). Fredholm analysis of the wave operator, however, really does require the b-calculus due to a degeneracy at zero frequency, as explained in Problem~\ref{prob:wave} of Lecture 11. This topic is beyond the scope of these lectures. 

In the first 4 lectures we cover the basic theory of the scattering calculus on $\R^n$, including a quick review of abstract Fredholm theory. We show that elliptic elements of the scattering calculus are Fredholm on weighted Sobolev spaces (here, `elliptic' means `fully elliptic' which is a stronger notion than ellipticity in the uniform H\"ormander calculus, as it requires invertibility of the principal symbol at spatial infinity). We use a compactified perspective, in which the phase space (cotangent bundle) is compactified to a compact manifold with corners, and microlocal analysis takes place, in some sense, at the boundary of this compactification. 
In Lectures 5 and 6 we prove H\"ormander's `Propagation of singularities' in the setting of the scattering calculus (more properly called the Propagation of Regularity theorem, or a microlocal propagation estimate). In lectures 7 and 8, we describe a more general setting for the scattering calculus on the interior of any compact manifold with boundary, as presented by Melrose, and give some applications in inverse problems, where this viewpoint is essential. Lecture 8 completed the syllabus for the first week of the school. 

In Lecture 9, we introduce the Helmholtz operator and analyze its characteristic set and Hamilton vector field on the scattering phase space (or more precisely, on the boundary of its compactification). 
We then introduce variable order Sobolev spaces (Lecture 10) and extend the Propagation of Regularity result to this setting. Lecture 11 is the technical heart of the school, where we extend the propagation estimates to radial points, where the Hamilton vector field $H_p$ of our operator $P$  vanishes at the boundary of compactified phase space. These propagation estimates are slightly more intricate in that they take a different form depending on whether the Sobolev regularity/decay of the weighted Sobolev space being employed is more or less than a threshold value (above/below threshold radial point estimates). In Lecture 12, all these microlocal propagation estimates, together with a straightforward elliptic estimate, are assembled into a global semi-Fredholm estimate for the operator $P$. When a similar, dual estimate can be proven for the formal adjoint of $P$, one obtains a Fredholm realization of $P$ between certain spaces $\SX \to \SY$, where $\SY$ is a variable order Sobolev space, and $\SX$ is a `domain space' (that is, there is a condition on both $u$ and $Pu$, that they should lie in the appropriate variable order Sobolev space). In the remaining three lectures we give some applications of the general theory: to the Helmholtz operator in Lecture 13, to the Klein-Gordon operator in Lecture 14 and to the Schr\"odinger operator in Lecture 15. We also give, in Lecture 15, an alternative technical approach to building Fredholm realizations of operators that does not require variable order spaces. 

We mention some of the research articles closely related to these lectures are based. As mentioned, the first instance of a Fredholm realization of a non-elliptic operator is in a 2011 paper of Faure and Sj\"ostrand \cite{faure-sjostrand2011anosov}. They had the essential insight that, on certain judiciously chosen variable order spaces, a global escape function for the bicharacteristic flow could be constructed. Independently, Vasy \cite{vasy2013asympthyp} provided a more general framework for proving Fredholm estimates, and used this to give an asymptotic expansion of waves on a Kerr-de Sitter-like Lorentzian background. This framework was used by Vasy, with a variety of collaborators including Baskin, Gell-Redman, Haber, Hintz and Wunsch to analyze linear and nonlinear wave equations on various Lorentzian backgrounds \cite{baskin-vasy-wunsch2015asymptrad}, \cite{hintz-vasy2015semilinear}, \cite{gell-redman-haber-vasy2016feynman}, \cite{hintz-quasi}, including the celebrated work of Hintz-Vasy on the nonlinear stability of the Kerr-de Sitter family of black hole spacetimes \cite{hintz-vasy-KdSstability}. All these works applied the `Fredholm method' to the indicial family of operators arising from the b-calculus. 

Meanwhile, the Fredholm method in the scattering calculus was exposited in Vasy's Grenoble lecture notes \cite{vasy-monicourse} and applied by Gell-Redman, Hassell, Shapiro and Zhang to the nonlinear Helmholtz equation \cite{Helmholtz-1}, and by Gell-Redman, Gomes and Hassell to the Schr\"odinger equation (linear and nonlinear) \cite{gell2022propagation}, \cite{gell2023scattering}. The three authors of these lecture notes, with Vasy, also recently applied the Fredholm method to the analysis of the nonrelativistic limit, in which the Klein-Gordon equation degenerates to two decoupled Schr\"odinger equations \cite{HJSV1, HJSV2}.

\section{\texorpdfstring{Lecture 1: The scattering calculus on $\R^n$ introduced}{Lecture 1: The scattering calculus on Rn introduced}}
\label{sec:lecture1}

Recall that the standard H\"ormander pseudodifferential calculus $\Psi^m(\R^n)$ is the space of operators $A$ given by quantizations of symbols $a \in S^m(\R^n \times \R^n)$ as follows:
\begin{multline}
A \phi(x) = (2\pi)^{-n} \int e^{i(x-y) \cdot \xi} a(x, \xi) \phi(y) \, dy \, d\xi, \quad \phi \in \Schw(\R^n) \\
= (2\pi)^{-n} \int e^{ix \cdot \xi} a(x, \xi) \hat\phi(\xi) \, d\xi. 
\label{eq:def}
\end{multline}
Here the symbol class $S^m(\R^n \times \R^n)$ is the class of smooth functions $a(x, \xi)$ satisfying the standard symbol estimates 
\begin{equation}
    \Big| D_x^\alpha D_\xi^\beta a(x, \xi) \Big| \leq C_{\alpha \beta} \ang{\xi}^{m-|\beta|}.
\end{equation}
Our notation is that $D_{x_j} = -i \partial_{x_j}$, $D_x^\alpha = D_{x_1}^{\alpha_1} \dots D_{x_n}^{\alpha_n}$ where $\alpha = (\alpha_1, \dots, \alpha_n) \in \N^n$ is a multi-index, and $\ang{\xi} := \sqrt{1 + |\xi|^2}$ is the `Japanese bracket'. When \eqref{eq:def} holds, we will say that $A$ is the (left, or standard) quantization of $a$, and write $A = \Op(a)$. Occasionally we write $A = \Op_L(a)$, where $L$ stands for `left'. The corresponding right quantization $\Op_R(a)$ is given by a similar formula to \eqref{eq:def} where $a(x, \xi)$ is replaced by $a(y, \xi)$, that is, $a$ depends on the right variable $y$ of the Schwartz kernel instead of the left variables $x$. 

We will assume that the reader is familiar with basic elements of pseudodifferential calculus, including that the definition \eqref{eq:def}, initially for Schwartz $\phi$, can be extended to all tempered distributions.

The scattering calculus on $\R^n$, denoted $\Psi_{\sca}^{*, *}(\R^n)$, is a sub-calculus that is more symmetric in $(x, \xi)$. Given two orders $(m,l)$, the set of scattering symbols of order $(m,l)$, denoted $S_{\sca}^{m,l}(\R^n \times \R^n)$ are by definition the smooth functions $a(x, \xi)$ satisfying the estimates 
\begin{equation} \label{eq:sc-symbol-Rn}
    \Big| D_x^\alpha D_\xi^\beta a(x, \xi) \Big| \leq C_{\alpha \beta} \ang{x}^{l-|\alpha|} \ang{\xi}^{m-|\beta|}.
\end{equation}
These symbol estimates endow $S_{\sca}^{m,l}(\R^n \times \R^n)$ with the structure of a Fr\'echet space, with an increasing sequence of norms defined by 
\begin{equation}
\| a \|_{m,l; k} = \sup_{(x, \xi) \in \R^{2n}, |\alpha| + |\beta| \leq k} \Big| \ang{x}^{-l+|\alpha|} \ang{\xi}^{-m+|\beta|} D_x^\alpha D_\xi^\beta a(x, \xi)  \Big|. 
\end{equation}
The quantization of such symbols, as in \eqref{eq:def}, gives us the space of scattering pseudodifferential operators of order $(m,l)$, denoted $\Psi_{\sca}^{m,l}(\R^n)$. 

We call $S^{-\infty, -\infty}_{\sca}(\R^n \times \R^n) := \bigcap_{m,l} S^{m,l}_{\sca}(\R^n \times \R^n)$ the \emph{residual space of symbols}; similarly, we call a symbol $a \in S^{-\infty, -\infty}_{\sca}(\R^n \times \R^n)$ a \emph{residual symbol}, and its quantization a \emph{residual operator}. 
We write $\Psisc^{-\infty, -\infty}(\R^n)$ to denote the class of residual operators, thus 
$$
\Psisc^{-\infty, -\infty}(\R^n) = \cap_{m,l} \Psi^{m,l}(\R^n). 
$$
We also denote $\Psisc(\R^n) = \cup_{m,l} \Psisc^{m,l}(\R^n)$ to be the space of all scattering pseudodifferential operators. This is variously denoted $\Psi_{\sca}^{*, *}(\R^n)$ or $\Psi_{\sca}^{\infty, \infty}(\R^n)$ in different articles. 

The Schwartz kernel of a scattering pseudodifferential operator of order $(m,l)$ is thus given by 
\begin{equation}
K(x, y) = (2\pi)^{-n} \int e^{i(x-y) \cdot \xi} a(x, \xi) \, d\xi
\end{equation}
as an `oscillatory integral' (see \cite[Section 7.8]{hormanderbookvolI}) where $a \in S_{\sca}^{m,l}(\R^n \times \R^n)$ (and as a convergent integral if $m< -n$). It follows from this formula that we can express $a$ in terms of the kernel $K$ by performing a Fourier transform in the variable $x-y$:
\begin{equation}\label{eq:symbol from K}
a(x, \xi) = \int e^{-iw \cdot \xi} K(x, x-w) \, dw. 
\end{equation}

The point of the scattering calculus is 
\begin{itemize}
    \item It is an ideal tool for doing scattering theory, i.e. the study of the generalized eigenfunctions of operators on $\R^n$ and their large-distance asymptotics;
    \item It is the right framework for realizing certain non-elliptic operators as Fredholm maps between suitable Sobolev-like spaces. 
\end{itemize}

Some examples of scattering pseudodifferential operators:
\begin{itemize}
    \item Constant coefficient differential operators of order $m$ on $\R^n$ are scattering pseudodifferential operators of order $(m,0)$. To give specific examples, the Laplacian $\Delta = \sum_j D_{x_j} D_{x_j}$, the wave operator $D_t^2 - \Delta$ and the Klein-Gordon operator $D_t^2 - \Delta - m^2$ (where $m > 0$ is constant) are scattering pseudodifferential operators. (Notice that we use the `microlocal' convention that $\Delta$ is a positive operator, instead of the `PDE' convention that $\Delta$ is negative.)
    \item More generally, pseudodifferential operators of order $m$ in the H\"ormander class with symbols depending only on $\xi$ (thus, Fourier multipliers) are scattering pseudodifferential operators of order $(m,0)$. 
    \item Differential operators of order $m$ with coefficients that are themselves symbols in $x$ of order $l$ are in $\Psi_{\sca}^{m,l}(\R^n)$. 
    \item Suppose that $g= \sum_{ij} g_{ij}(x) dx_i dx_j$ is an `asymptotically Euclidean' metric on $\R^n$, i.e. 
    $$
    g_{ij}(x) - \delta_{ij} \in S^{-1}(\R^n),
    $$
    then the associated Laplace operator, 
    $$
    \Delta_g = \sum_{ij} \frac1{\sqrt{g(x)}} D_{x_i} g^{ij}(x) \sqrt{g(x)} D_{x_j}, \quad g = \det g_{ij}(x),   $$
    is in $\Psi^{2,0}_{\sca}(\R^n)$. (Here $g^{ij}(x)$ is the inverse matrix to $g_{ij}(x)$.)
    \item A non-example: differential operators with periodic (and non-constant) coefficients are in the H\"ormander class, but not in the scattering class of pseudodifferential operators. 
\end{itemize}

\subsection{Basic Properties of Symbols}
\begin{itemize}
    \item If $m' \leq m$ and $l' \leq l$ then $S^{m',l'}_{\sca}(\R^n \times \R^n) \hookrightarrow S^{m,l}_{\sca}(\R^n \times \R^n)$ is continuous. 
    \item $D_x^\alpha D_\xi^\beta$ maps $S^{m,l}_{\sca}(\R^n \times \R^n) \to S^{m-|\beta|,l-|\alpha|}_{\sca}(\R^n \times \R^n)$ continuously. 
    \item Pointwise multiplication is continuous 
    $$
S^{m,l}_{\sca}(\R^n \times \R^n) \times S^{m',l'}_{\sca}(\R^n \times \R^n) \to S^{m+m',l+l'}_{\sca}(\R^n \times \R^n).     
    $$
    \item Density. The residual space of symbols $S^{-\infty, -\infty}_{\sca}$ is not dense in $S^{m,l}_{\sca}$. However, if $a \in S^{m,l}_{\sca}$, then there exists a sequence $a_j \in S^{-\infty, -\infty}_{\sca}$ that is uniformly bounded in $S^{m,l}_{\sca}$ and which converges to $a$ in the (slightly weaker) topology of $S^{m',l'}_{\sca}$ for any $m' > m$ and $l' > l$. 
    \item Asymptotic summation: given a sequence of orders $(m_j, l_j), j \geq 0$ with $m_j \searrow -\infty$, $l_j \searrow -\infty$, and a sequence of scattering symbols $a_j \in S^{m_j, l_j}_{\sca}$, there exists $a \in S^{m_0, l_0}_{\sca}$ such that 
    $$
    a - \sum_{j=0}^{J-1} a_j \in S^{m_J, l_J}_{\sca}.
    $$
    We call $a$ an asymptotic sum of the $a_j$; it is unique modulo $S^{-\infty, -\infty}_{\sca}$.
\end{itemize}

\subsection{Compactification of phase space}\label{subsec:compactification}
We define the radial compactification of $\R^n$ as follows: using polar coordinates, we can represent $\R^n \setminus \{ 0 \}$ as $(0, \infty)_r \times S^{n-1}_{\hat x}$, where $r = |x|$ and $\hat x = x/|x|$. We then let $s = 1/r$, so that $[1, \infty)_r = (0, 1]_s$. We compactify $(0, 1]_s$ to $[0, 1]_s$ and then take the product with $S^{n-1}_{\hat x}$; we have thus added a `sphere at infinity'. Formally our compactification $\compn$ is given by 
$$
\compn = \R^n \sqcup [0,1]_s \times S^{n-1}_{\hat x} / \sim,
$$
where the equivalence relation $\sim$ identifies $(r, \hat x)$ (when $r \geq 1$) and $(s, \hat x)$ exactly when $r = 1/s$. Topologically $\compn$ is a closed ball. It has the differentiable structure of a compact $C^\infty$ manifold with boundary. 

The compactification of phase space $\R^n_x \times \R^n_\xi$ is then defined to be $\comphase := \compn_x \times \compn_\xi$. It is a compact manifold with corners of codimension 2. 

We can reformulate the symbol estimates using the compactification $\comphase$. We note that $a \in S^{m,l}_{\sca}$ if and only if $a$ is smooth on $(T^* \R^n)$, and if 
\begin{equation}\label{eq:symbol est equiv}
\big( \ang{x} D_x \big)^\alpha \big( \ang{\xi} D_{\xi} \big)^\beta a \in \ang{\xi}^m \ang{x}^l L^\infty(T^* \R^n)
\end{equation}
for all multi-indices $(\alpha, \beta)$. We then claim that $\ang{x} D_x$ and $\ang{\xi} D_{\xi}$ generate all smooth vector fields on $\comphase$ tangent to the boundary. Let's show this. We can write
$$
\ang{x} D_{x_k} = \frac1{\ang{x}} D_{x_k} + \sum_j \frac{x_j}{\ang{x}} x_j D_{x_k}
$$
and since $\frac{x_j}{\ang{x}}$ is a smooth function on $\compn$ (cf. Problem~\ref{prob:smooth fn}), we see that \eqref{eq:symbol est equiv} is equivalent to requiring that repeated applications of $x_j D_{x_k}$ and $\xi_j D_{\xi_k}$ (as $(j, k)$ range between $1$ and $n$ independently) to $a$ remain in the fixed space $\ang{\xi}^m \ang{x}^l L^\infty(T^* \R^n)$.

We claim that the vector fields $x_j D_{x_k}$ on $\compn$ generate (over $C^\infty(\compn)$) all smooth vector fields on $\compn$ tangent to the boundary. To see this we notice that such vector fields are homogeneous of degree zero under dilations $x \mapsto ax$, $a > 0$. In polar coordinates $r, y$, where $y = (y_1, \dots, y_{n-1})$ are angular coordinates, i.e. functions of $\hat x$, vector fields that are homogeneous of degree zero take the form 
$$
\sum_j b_j(y) D_{y_j} + c(y) r D_r.
$$
Changing to $s = 1/r$ this reads
$$
\sum_j b_j(y) D_{y_j} - c(y) s D_s,
$$
and this is evidently smooth in $(s, y)$ (which is a smooth coordinate system on $\compn$ near the boundary), and tangent to the boundary since the coefficient of $D_s$ vanishes when $s=0$. So the reformulation of the definition of the symbol space is 
\begin{multline}\label{eq:conormal reg}
S^{m,l}_{\sca}(\R^n \times \R^n) = \Big\{ a \in \ang{\xi}^m \ang{x}^l L^\infty(\comphase) \mid  V_1 \dots V_k a \in \ang{\xi}^m \ang{x}^l L^\infty(\comphase) \text{ for all} \\  \text{smooth vector fields $V_1, \dots V_k$}  \text{ on $\comphase$ tangent to the boundary, and all $k \in \N$} \Big\}.
\end{multline}
It's clear from this formulation that $C^\infty(\comphase)$ (or more precisely the set of restrictions of such functions to the interior $T^* \R^n$) is contained in $S^{0,0}_{\sca}$; we call these symbols \emph{classical} symbols of order $(0,0)$ and denote them $S^{0,0}_{\sca, \cl}(T^* \R^n)$. We similarly define classical symbols of order $(m,l)$ to be $S^{m,l}_{\sca, \cl}(T^* \R^n) := \ang{\xi}^m \ang{x}^l S^{0,0}_{\sca, \cl}(T^* \R^n)$.  Classical symbols have a Taylor-like series at both spatial and frequency infinity. Choose boundary defining functions $s = 1/r$ for the boundary at spatial infinity and $\rho = 1/|\xi|$ at frequency infinity. (A boundary defining function for a boundary hypersurface $H$ of a manifold with corners is a smooth function that is nonnegative, vanishes at $H$ and whose differential is nonzero at $H$, i.e. it vanishes simply at $H$.) Then $a \in S^{m,l}_{\sca, \cl}$ iff $a$ admits asymptotic expansions as $s \to 0$ and as $\rho \to 0$ of the form 
\begin{equation}
a \sim \sum_{j=0}^\infty s^{-l+j} a_j(\hat x, \xi) \text{ where $|\xi|$ is bounded }, \ a_j \in C^\infty, s \to 0, 
\end{equation}
as well as 
\begin{equation}
a \sim \sum_{k=0}^\infty \rho^{-m+k} b_k(x, \hat \xi) \text{ where $|x|$ is bounded }, \ b_k \in C^\infty, \rho \to 0. 
\end{equation}
Near the corner, we have a Taylor-like expansion in two variables $(\rho, s)$:
\begin{equation}
a \sim \sum_{j,k=0}^\infty s^{-l+j} \rho^{-m+k} c_{jk}(\hat x, \hat \xi) c_{jk} \in C^\infty, \rho \to 0, \ s \to 0. 
\end{equation}
Not all symbols are classical! A simple example of a non-classical symbol in $S^{0,0}_{\sca}$ is $\ang{x}^{i\lambda}$ or $\ang{\xi}^{i\lambda}$ where $\lambda \neq 0 \in \R$. Clearly, this symbol does not have a limit at the boundary of phase space, as classical symbols of order $(0,0)$ do. 

\subsection{Exercises}
\begin{problem}\label{prob:density symbols}
    \item Prove the statements about density of symbols above. To prove the density statement, take a function $\chi \in C_c^\infty(\R^n)$, such that $\chi = 1$ on $B(0,1)$ and $\chi = 0$ outside $B(0,2)$. Let $a_j(x, \xi)$ be the function 
    $$
    a_j(x, \xi) = \chi(x/j) \chi(\xi/j) a(x, \xi)
    $$
and prove the properties stated above for the sequence $a_j$. 
\end{problem}

\begin{problem} Prove the statement about asymptotic summation above. In this case, we may construct $a$ as a sum of the form 
$$
a = \sum_{j=0}^\infty a_j(x, \xi) \big( 1 - \chi(\epsilon_j x) \big) \big( 1 - \chi(\epsilon_j \xi) \big),
$$
where the $\epsilon_j$ are a sequence of small parameters converging to zero sufficiently fast. Notice that this sum is locally finite for any fixed $(x, \xi)$ since only finitely many terms are nonzero, so the sum converges to a smooth function. However, we need to show more, we need convergence in the symbol space. Assume, mostly for ease of notation, that $m_j = m-j$ and $l_j = l-j$. 

(i) Prove that a sufficient condition for convergence, both of the series above to $a$ in the topology of $S^{m, l}_{\sca}$, and of the series $a - \sum_{j=0}^J a_j$ in the topology of $S^{m -J, l - J}_{\sca}$, is that for all $k \geq 0$, $J \geq 0$, 
$$
\| a_{J+k}(x, \xi) \big( 1 - \chi(\epsilon_{J+k} x) \big) \big( 1 - \chi(\epsilon_{J+k} \xi) \big) \|_{m-J, l-J; J} \leq 2^{-k}. 
$$

(ii) Show that these conditions are satisfied given a finite number of smallness conditions on $\epsilon_j$ for each $j$. Hence we can satisfy all conditions and thus construct the asymptotic sum. 
\end{problem}

\begin{problem}\label{prob:smooth fn} Show that $\frac{x_j}{\ang{x}}$ is a smooth function on $\compn$, by writing it explicitly as a smooth function in a coordinate system $(s, y)$, where $s=1/r$ and $y = (y_1, \dots, y_{n-1})$ are local coordinates on the sphere $S^{n-1}$. (Equivalently, each $y_i$ is a function homogeneous of degree zero on a conic open set of $\R^n$, and their differentials are linearly independent.)
\end{problem}

\begin{problem}
    Show that if $a$ and $b$ are classical symbols, then $\partial_{x_j} a$, $\partial_{\xi_k} a$, and $ab$ are classical. Hence show that the scattering pseudodifferential operators with classical symbols forms a subalgebra of $\Psisc^{*,*}(\R^n)$. It is denoted $\Psi_{\sca, \cl}^{*,*}(\R^n)$. 
\end{problem}
   
\section{Lecture 2: Composition, principal symbol, ellipticity}

\subsection{Composition}
As the term `calculus' suggests, the scattering pseudodifferential calculus is a bigraded algebra, i.e. 
\begin{equation}
    \Psisc^{m,l} \circ \Psisc^{m',l'} \subset \Psisc^{m+m', l+l'}.
\end{equation}
We will give a partial proof of this fact, taking for granted the graded algebra property of the H\"ormander calculus. First we treat the case $l \leq 0, l' \leq 0$; it is easy to reduce to this case (see the first exercise). Let $A \in \Psisc^{m,l}$ and $B \in \Psi^{m',l'}$. Then $A \in \Psi^m$ and $B \in \Psi^{m'}$ (the usual H\"ormander calculus), so we know that $A \circ B \in \Psi^{m+m'}$. We need to show that the symbol of $A \circ B$ is in the space $S^{m+m', l+l'}_{\sca}$, which is a stronger condition. Let's consider the first the asymptotic expansion of the symbol of the composition:
\begin{equation}\label{eq:comp expansion}
    \sigma_L(A \circ B) \sim \sum_\alpha \frac{\big(D_\xi^\alpha a_L(x, \xi) \big)\big( \partial_x^\alpha b_L(x, \xi)\big)}{\alpha!}
\end{equation}
Consider all terms with $|\alpha| = j$. We have 
$$
D_\xi^\alpha a_L(x, \xi) \in S^{m-|\alpha|, l}_{\sca}, \quad \partial_x^\alpha b_L(x, \xi) \in S^{m', l' - |\alpha|}_{\sca}
$$
so taking the pointwise product gives a symbol in $S^{m+m'-|\alpha|, l+l'-|\alpha|}_{\sca}$ for all such terms. We can asymptotically sum such terms since both exponents are tending to $-\infty$ as $|\alpha| \to \infty$, obtaining a symbol in $S^{m+m, l+l'}_{\sca}$. This is a good sign, but is not a proof as we also have to consider the properties of the remainder term.

There are three main approaches to composition: Gauss transforms, used in H\"or\-mander's treatise \cite{hormander2007analysis}, as well as Zworski \cite{zworski2022semiclassical}, Dimassi-Sj\"ostrand \cite{dimassi-sjostrand1999spectral}, etc; left-right reduction, used by lecture notes of Melrose, Vasy \cite{vasy-monicourse} and Hintz \cite{Hintz-book}; and using pullback and pushforward of distributions via double and triple spaces, due to Melrose (e.g. \cite{melrose1996scattering}). We will follow the left/right reduction method. This involves quantizing symbols that depend on both the left and right variables of the Schwartz kernel of the operator, i.e. we quantize symbols $a(x, y, \xi)$ in a certain symbol class to the operator with Schwartz kernel
\begin{equation}\label{eq:double quant}
    K_a(x, y) = (2\pi)^{-n} \int e^{i(x-y) \cdot \xi} a(x, y, \xi) \, d\xi.
\end{equation}
The relevant class here is symbols that behave in a scattering-like manner in the $x$ and $y$ variables independently: that is, we consider classes $S^{m,l,\tilde l}_{\sca}(\R^n_x \times \R^n_y \times \R^n_\xi)$ satisfying product-type estimates 
\begin{equation}
       \Big| D_x^\alpha D_y^{\tilde \alpha}D_\xi^\beta a(x, \xi) \Big| \leq C_{\alpha \tilde{\alpha} \beta} \ang{x}^{l-|\alpha|} \ang{y}^{\tilde l - \tilde \alpha}\ang{\xi}^{m-|\beta|}.
\end{equation}
It turns out that these quantize to elements of $\Psisc^{m, l + \tilde l}$. That is, only the sum $l + \tilde l$ is relevant to the spatial order of the operator, which is not surprising as only the jet of the symbol $a(x, y, \xi)$ on the diagonal $x=y$ affects the operator (modulo residual terms). This is shown in Vasy's Grenoble lecture notes \cite{vasy-monicourse}. 

To show composition, then, we follow the following standard strategy:
\begin{itemize}
    \item We show that each symbol $a(x, y, \xi) \in S^{m,l,\tilde l}_{\sca}$ can be left or right reduced in a unique manner. That is, there are uniquely determined symbols $a_L(x, \xi)$ and $a_R(y, \xi)$, both in $S^{m, l + \tilde l}_{\sca}$, that quantize to the same operator as \eqref{eq:double quant}. 
    \item We observe that the composition $A \circ B$ is a pseudodifferential operator with symbol $a_L(x, \xi) b_R(y, \xi)$. 
    \item We then perform, say, left reduction on this symbol to show that we obtain something in $S^{m+m', l + l'}_{\sca}$. One expands the symbol in a Taylor series around the diagonal $x=y$.  As we have already seen, the terms in the asymptotic expansion are no problem, but we need to consider the remainder term after subtracting the first $J$ terms in the expansion. There is a subtlety, as this remainder term is obtained from the remainder term in the Taylor expansion of $a_L(x, \xi) b_R(y, \xi)$ at $x=y$, and involves an integral along the line segment from $x$ to $y$ (the usual Taylor formula integral remainder term). The problem is that if $|x|$ is large, and $y$ is approximately $-x$, then the line segment passes near the origin. It means that the remainder might, ostensibly, decay much less than the terms in the expansion, since spatial decay happens when $x$ and/or $y$ are large. To circumvent this difficulty, we can decompose our symbol $a_L(x, \xi) b_R(y, \xi)$ into a part far from the diagonal and a part near the diagonal. The part far from the diagonal produces a residual operator as is shown by standard integration-by-parts tricks -- see exercises. The part near the diagonal has the good property that the line segment from $x$ to $y$ has distance from the origin comparable to $|x|$ and $|y|$, and allows us to prove uniform decay of the remainder term as required. See Vasy's Grenoble notes \cite[Section 5.3.2]{vasy-monicourse} for the details. 
\end{itemize}

We obtain the bigraded algebra property of the scattering calculus, and we have also confirmed that the symbol of the composition admits the usual asymptotic expansion \eqref{eq:comp expansion}, which is an \emph{even better expansion} than the usual expansion in the sense that we gain spatial decay as well as frequency decay with each additional term. Indeed, in the scattering calculus, if we write $R_N$ for the remainder term of expansion of $A \circ B$, then $R_N \in \Psisc^{m+m'-N, l+l'-N}$ and every norm of $R_N$ in this space (by which we mean the norms of $\sigma_L(R_N)$ in $S^{m+m'-N, l+l'-N}_{\sca}$) is bounded by a constant times a suitable norm of $A$ in $\Psisc^{m,l}$ and a suitable norm of $B$ in $\Psisc^{m',l'}$. 

\subsection{Principal symbol}
Let $A = \Op_L(a_L) \in \Psisc^{m,l}$. The principal symbol, $\spr^{m,l}(A)$, is defined to be the equivalence class of $a_L$ in $S^{m,l}_{\sca} / S^{m-1. l-1}_{\sca}$. It is the same as the equivalence class of $a_R$ if $A = \Op_R(a_R)$. 

Notice the difference with the H\"ormander calculus: the principal symbol determines the leading behaviour of the symbol \emph{both} as $|\xi| \to \infty$ for all $x$, and as $|x| \to \infty$ for all $\xi$ (even $\xi = 0$). 

For classical symbols, we can be more explicit. Assume that $(m,l) = (0,0)$ for convenience. Then a classical symbol $a \in S^{0,0}_{\sca, \cl}$ is by definition $C^\infty$ on $\comphase$, and in particular has a limit at $\partial \comphase$. On the other hand, if $b \in S^{-1, -1}_{\sca}$, then $b = s \rho \tilde b$, where $\tilde b \in C^\infty(\comphase)$ and $s = \ang{x}^{-1}, \rho = \ang{\xi}^{-1}$. Such symbols are \emph{precisely} those symbols of order $(0,0)$ that \emph{vanish} at $\partial \comphase$. Thus, the principal symbol of $a \in S^{0,0}_{\sca, \cl}$ can be identified with the boundary value of $a$ on $\partial \comphase$. The boundary value of $a$ is exactly knowing the limit of $a$ as $|\xi| \to \infty$ for all $x$ and $\hat \xi$, and the limit as $|x| \to \infty$ for all $\xi$ and $\hat x$. This boundary value is `smooth' in the sense that it is smooth on each boundary hypersurface, and is consistent at the corner. Next lecture, we will see that there is an important symmetry between these two boundary hypersurfaces of the manifold with corners $\comphase$. 

Fundamental properties of the principal symbol:
\begin{itemize}
    \item By definition of the principal symbol there is a short exact sequence:
    \begin{equation}\label{eq:short exact}
        0 \rightarrow \Psisc^{m-1, l-1} \rightarrow \Psisc^{m,l} \rightarrow S^{m,l}_{\sca} / S^{m-1,l-1}_{\sca} \rightarrow 0,
    \end{equation}
    where the second arrow is inclusion and the third arrow is the principal symbol map. 
    \item The principal symbol is multiplicative:
    \begin{equation}
        \spr^{m+m', l+l'}(A \circ B) = \spr^{m,l}(A) \spr^{m', l'}(B).
    \end{equation}
    This can be seen from the asymptotic expansion \eqref{eq:comp expansion}, in which every term other than the leading term (which is the pointwise product) lies in $S^{m+m'-1,l+l'-1}_{\sca}$ which is zero in the quotient space where $\spr^{m+m', l+l'}(A \circ B)$ lives. 
    \item We also have the formula for the principal symbol of a commutator $[A, B] = AB - BA$. Recall some background to this identity: the cotangent bundle of any manifold, here $\R^n$, is a symplectic manifold with a canonically defined symplectic form $\omega = \sum_j d\xi_j \wedge dx_j$. This defines a Poisson bracket $\{ \cdot, \cdot \}$ and for any real function $p$ on $T^* \R^n$, a Hamilton vector field $H_p$ on $T^* \R^n$ defined invariantly by 
    $$
    \omega(H_p, V) = dp(V) = V(p). 
    $$
    Then we have 
    \begin{equation}\label{eq:Poisson b}
        \spr^{m+m'-1, l+l'-1}(i[A, B]) = \{ a, b \} = H_a (b) = - H_b(a), \quad a = \spr^{m,l}(A), \ b = \spr^{m', l'}(B). 
    \end{equation}
    This also can be seen from the asymptotic expansion \eqref{eq:comp expansion}; in this case, the leading terms cancel and the subleading terms give the Poisson bracket \eqref{eq:Poisson b}. 
\end{itemize}

\subsection{Ellipticity}
We say that $A \in \Psisc^{m,l}$ is (totally) elliptic if its principal symbol is invertible in the sense that there exists $B_0 \in \Psisc^{-m,-l}$ such that 
$$
\spr^{m,l}(A) \cdot \spr^{-m,-l}(B_0) = 1 \in S^{0,0}_{\sca} / S^{-1,-1}_{\sca}.
$$
In other words, 
\begin{equation}\label{eq:B0 symbol}
\sigma_L(A) \cdot \sigma_L(B_0) - 1 \in S^{-1,-1}_{\sca}.
\end{equation}

\begin{remark}\label{rem:ellipticity} This is a very strong version of ellipticity! The standard Laplacian $\Delta$  is \emph{not} elliptic according to this definition, for example: although the principal symbol at frequency infinity is elliptic, the principal symbol at spatial infinity is not; in fact, it vanishes at $\xi = 0$. However, $\Delta + \lambda$ is totally elliptic in the scattering calculus provided $\lambda \in \mathbb{C} \setminus \R_{\leq 0}$. 
\end{remark}

The standard parametrix construction, adapted to the scattering calculus, shows that if $A$ is an elliptic scattering pseudodifferential operator, then there is an inverse modulo residual operators in the scattering calculus. Let us show this. 

\begin{definition} Let $A \in \Psisc^{m,l}$ be elliptic. A parametrix for $A$ is an operator $B$ such that 
\begin{equation}
    A B - \Id \in \Psisc^{-\infty, -\infty} \ni BA - \Id. 
\end{equation}
\end{definition}

\begin{proposition} Every elliptic element of the scattering calculus has a parametrix. 
\end{proposition}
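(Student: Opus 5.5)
The plan is the standard Neumann-series parametrix construction, run in the bigraded scattering calculus and using the composition results of this lecture. Let $A \in \Psisc^{m,l}$ be elliptic and take $B_0 \in \Psisc^{-m,-l}$ as in the definition of ellipticity, so that \eqref{eq:B0 symbol} holds. By multiplicativity of the principal symbol, $\spr^{0,0}(AB_0) = \spr^{m,l}(A)\spr^{-m,-l}(B_0) = 1$. The short exact sequence \eqref{eq:short exact} then gives
\[
AB_0 = \Id - R, \qquad R \in \Psisc^{-1,-1}.
\]

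Next I improve $B_0$ by a formal Neumann series. By the bigraded algebra property, $R^j \in \Psisc^{-j,-j}$. Hence $B_0 R^j \in \Psisc^{-m-j,-l-j}$, with left symbols $c_j \in S^{-m-j,-l-j}_{\sca}$. Both orders tend to $-\infty$, so the asymptotic summation property of Lecture 1 gives a symbol $c \in S^{-m,-l}_{\sca}$ with
\[
c - \sum_{j<J} c_j \in S^{-m-J,-l-J}_{\sca} \quad \text{for every } J.
\]
Set $B_R = \Op_L(c)$. Then for each $J$,
\[
AB_R = A B_0 \sum_{j<J} R^j + A\Big(B_R - \sum_{j<J}B_0R^j\Big) = (\Id - R)\sum_{j<J}R^j + \Psisc^{-J,-J} = \Id - R^J + \Psisc^{-J,-J}.
\]
So $AB_R - \Id \in \Psisc^{-J,-J}$ for all $J$, that is, $AB_R - \Id \in \Psisc^{-\infty,-\infty}$.

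The symmetric argument on the other side builds a left parametrix. We have $B_0A = \Id - R'$ with $R' \in \Psisc^{-1,-1}$, and $B_L$ is an asymptotic sum of $R'^jB_0$, giving $B_LA - \Id \in \Psisc^{-\infty,-\infty}$. Finally,
\[
B_L = B_L(AB_R) + \Psisc^{-\infty,-\infty} = (B_LA)B_R + \Psisc^{-\infty,-\infty} = B_R + \Psisc^{-\infty,-\infty}.
\]
This uses that residual operators form an ideal, which again follows from the composition theorem. Hence $B := B_R$ satisfies both $AB - \Id \in \Psisc^{-\infty,-\infty}$ and $BA - \Id \in \Psisc^{-\infty,-\infty}$.

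The only step needing care is the asymptotic summation at the operator level. Summing the left symbols of $B_0R^j$ is legitimate because $\Op_L$ is linear and injective on symbols, and the composition theorem places each $B_0R^j$ in the stated class. The remaining point is to check that the remainder $B_R - \sum_{j<J} B_0R^j$, composed with $A$, lands in $\Psisc^{m-m-J,\,l-l-J} = \Psisc^{-J,-J}$, which again is exactly the bigraded composition property. I expect no genuine obstacle beyond these invocations of the earlier results.
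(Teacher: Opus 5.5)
Your proposal is correct and follows essentially the same route as the paper. You choose $B_0$ from ellipticity, use the short exact sequence to write $AB_0 = \Id - R$ with $R \in \Psisc^{-1,-1}$, invert $\Id - R$ modulo residual operators by an asymptotically summed Neumann series, and reconcile the left and right parametrices via $B_L A B_R$. The only difference is cosmetic: you asymptotically sum $B_0R^j$ directly, whereas the paper sums $R_1^j$ and then composes with $B_0$.
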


\begin{proof} Given $A \in \Psisc^{m,l}$ elliptic, choose $B_0 \in \Psisc^{-m,-l}$ satisfying \eqref{eq:B0 symbol}. From \eqref{eq:B0 symbol} and the symbol exact sequence \eqref{eq:short exact}, we see that 
\begin{equation}
    AB_0 - \Id = -R_1 \in \Psisc^{-1,-1}. 
\end{equation}
Thus $AB_0 = \Id - R_1$. If we can invert $\Id - R_1$ then we can compose with this inverse on the right, and hence (exactly) invert $A$. Thus, 
we would like to invert $\Id - R_1$. This is not possible in general, but \emph{modulo residual operators} we can do this with a Neumann series: let $R$ be an asymptotic sum 
$$
R \sim R_1 + R_1^2 + R_1^3 + \dots ,
$$
which exists as $R_1^j \in \Psisc^{-j,-j}$. Then we have (exercise)
\begin{equation}\label{eq:Neumann inv}
(\Id - R_1) (\Id + R) = \Id + R_r, \quad R_r \in \Psisc^{-\infty, -\infty}.
\end{equation}
So defining $B_r = B_0(\Id + R)$ we have 
$$
A B_r = \Id + R_r.
$$
In the same way, putting $B_0$ on the left of $A$ instead of to the right, we can find $B_l$ such that 
$$
B_l A = \Id + R_l, \quad R_l \in \Psisc^{-\infty, -\infty}.
$$
We then note that $B_l$ and $B_r$ differ by a residual operator. This follows from the following calculation:
$$
B_l A B_r = B_r + R_l B_r = B_l + B_l R_r. 
$$
It follows that either $B_l$ or $B_r$ is a parametrix for $A$.
\end{proof}

We remark that for classical symbols of order $(0,0)$, ellipticity is equivalent to the condition that $a \big|_{\partial \comphase}$ is everywhere nonzero. Since any classical $a$ is continuous on $\partial \comphase$, and $\partial \comphase$ is compact it follows that if $a$ is classical and elliptic, then $a$ is bounded away from zero on $\partial \comphase$ and hence $a^{-1}$ is also smooth. By extending $a^{-1}$ into the interior arbitrarily as a smooth function, we obtain an symbol $b$ satisfying $ab - 1 \in S^{-1,-1}_{\sca, \cl}$ so quantizing $b$ leads to an operator $B_0$ as above. 

For a general operator $A$, we define the elliptic and characteristic sets of $A$, which are subsets of $\partial \comphase$. This is easiest to do in the case of classical operators. If $A \in \Psisc^{m,l}$ is classical, let $\tilde a = \ang{\xi}^{-m} \ang{x}^{-l} \sigma_L(A)$. Then $\tilde a$ is smooth on $\comphase$ and we can define 
\begin{equation}\begin{aligned}
    \Ell^{m,l}(A) &= \{ q \in \partial \comphase \mid \tilde a(q) \neq 0 \}, \\
    \Char^{m,l}(A) &= \{ q \in \partial \comphase \mid \tilde a(q) = 0 \}.
\end{aligned}\label{eq:ellchar}\end{equation}
More generally, for a non-necessarily-classical operator $A\in \Psisc^{m,l}$, we say that a point $q \in \partial \comphase$ is in $\Ell^{m,l}(A)$ iff there exists a symbol $b \in S^{-m,-l}_{\sca}$ such that $\sigma_L(A) \cdot b - 1$ is in $S^{-1, -1}_{\sca}$ in a neighbourhood of $q \in \comphase$. More precisely, there is a smooth function $\chi$ on $\comphase$, identically $1$ near $q$, such that $\chi (\sigma_L(A) \cdot b - 1)$ is in $S^{-1, -1}_{\sca}$. The characteristic set $\Char^{m,l}(A)$ is defined to be the complement of the elliptic set in $\partial \comphase$. Notice that the elliptic set is an open subset of $\partial \comphase$, and hence the characteristic set is closed. 

\begin{remark}
A remark about notation: the elliptic set depends on the indices $(m,l)$. To see this, notice that if $A \in \Psisc^{m,l}$ then certainly $A \in \Psisc^{m', l'}$ if $m' > m$ and $l' > l$; however, we will always have $\Ell^{m', l'}(A) = \emptyset$ if $A \in \Psisc^{m,l}$. However, we commonly abuse notation and write $\Ell(A)$ instead of $\Ell^{m,l}(A)$ when the orders $(m,l)$ of $A$ have been specified. In this convention, the orders of the elliptic set would only be given if they differ from the orders that are implicit from the context. 
\end{remark}

\begin{remark}
 This is one of the first places where we see that it is the \emph{boundary} of compactified phase space that is of primary interest in microlocal analysis: fundamental objects such as the elliptic set, characteristic set and (operator) wavefront sets (see Lecture 4), are all subsets of the boundary of $\comphase$. This concept can be summarized by the slogan that ``microlocal analysis takes place at the boundary of phase space". (This slogan implicitly assumes that phase space has been compactified.)   
\end{remark}

\subsection{Mapping properties of residual operators}\label{subsec:residual} We will discuss the boundedness properties of scattering operators more fully in Lecture 3. But here, we make an elementary observation about the mapping properties of residual operators. It is easy to see that a symbol $a$ of order $(-\infty, -\infty)$ is precisely a Schwartz function on $\R^{2n}$. It follows that the Schwartz kernel of $\Op(a)$ is also a Schwartz function on $\R^{2n}$, as it involves taking the Fourier transform of $a$ in $\xi$. Such Schwartz kernels map distributions to Schwartz functions, since if $u_1$ and $u_2$ are distributions, and $K(x,y)$ is Schwartz in $\R^{2n}_{x,y}$, then 
$$
u_2 \mapsto \bang{ u_2, \ang{K(x, \cdot), u_1}}, \quad u \in \Schw'(\R^n)_x, 
$$
is a continuous linear functional on $\Schw'(\R^n)$, and thus $\ang{K(x, \cdot), u_1}$ is Schwartz. Thus, residual operators map distributions to Schwartz functions. 

It follows that the null space of any elliptic operator $A$ is Schwartz. In fact, if $B$ is a parametrix, with $BA = \Id + R$, $R$ residual, then $A u = 0$ implies $(\Id + R) u = 0$, so $u = - Ru$ is Schwartz. This has some consequences for essential self-adjointness of symmetric elliptic operators. 

\subsection{Applications of the elliptic parametrix construction}

We give a few corollaries of the elliptic parametrix construction. Fix real-valued $V\in S^{-\varepsilon}(\mathbb{R}^n)$ for some $\varepsilon>0$. 
Recall that a \emph{bound state} of a Schr\"odinger operator $\triangle + V$ on $\mathbb{R}^n$ is an $L^2$-eigenfunction with non-positive eigenvalue. Our discussion above shows that  
\begin{proposition}
    Bound states with strictly negative eigenvalues are necessarily all Schwartz functions.
\end{proposition}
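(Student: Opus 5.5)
The plan is to show that if $u \in L^2$ satisfies $(\Delta + V)u = Eu$ with $E<0$, then $u$ lies in the null space of a totally elliptic element of the scattering calculus. The paper has already observed in Section~\ref{subsec:residual} that such null spaces consist of Schwartz functions.

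Set $A = \Delta + V - E = \Delta + \lambda + V$, where $\lambda = -E > 0$. First we check that $A \in \Psisc^{2,0}$:
\begin{itemize}
\item $\Delta + \lambda$ is a constant coefficient differential operator of order $2$, so it lies in $\Psisc^{2,0}$.
\item $V \in S^{-\varepsilon}(\R^n)$ is a multiplication operator whose symbol $V(x)$ is independent of $\xi$, so $V \in S^{0,-\varepsilon}_{\sca} \subset S^{2,0}_{\sca}$.
\end{itemize}

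Next we verify total ellipticity, i.e.\ we exhibit $b \in S^{-2,0}_{\sca}$ with $\sigma_L(A)\, b - 1 \in S^{-1,-1}_{\sca}$. Take
\[
b = (|\xi|^2 + \lambda)^{-1}.
\]
This is a classical symbol of order $(-2,0)$ because $\lambda > 0$ keeps $|\xi|^2 + \lambda$ bounded away from zero, as in Remark~\ref{rem:ellipticity}. Then
\[
\sigma_L(A)\, b - 1 = V(x)\, (|\xi|^2+\lambda)^{-1} \in S^{-2,-\varepsilon}_{\sca}.
\]
This is the one genuine obstacle. The error is only in $S^{-2,-\varepsilon}_{\sca}$, not $S^{-1,-1}_{\sca}$, when $\varepsilon<1$, so $V$ is not literally a principal-symbol-negligible perturbation in the sense of \eqref{eq:B0 symbol}. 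I would fix this by redoing the parametrix construction with gain $\varepsilon' = \min(\varepsilon,1)$ in place of $1$. Writing $B_0 = \Op(b)$, we get $AB_0 = \Id - R_1$ with $R_1 \in \Psisc^{-1,-\varepsilon'}$. Hence $R_1^j \in \Psisc^{-j,-j\varepsilon'}$, and both orders tend to $-\infty$, so the asymptotic summation and Neumann series argument go through verbatim. The same works on the left. This yields $B$ with $BA = \Id + R$ and $R$ residual. Alternatively, one can simply note that $\Delta + \lambda$ is elliptic and treat $V$ as a lower-order term in the spatial sense.

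Finally, suppose $Au = 0$ with $u \in L^2 \subset \Schw'$. Then $u = -Ru$, and since residual operators map tempered distributions to Schwartz functions, $u \in \Schw(\R^n)$. The hypothesis $E<0$ is used exactly to make the symbol $|\xi|^2 - E$ nonvanishing at spatial infinity, including $\xi = 0$. For $E \geq 0$ it vanishes on the sphere $|\xi|^2 = E$ over spatial infinity, and the argument fails, as expected.
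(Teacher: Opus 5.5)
Your proof is correct and follows the paper's route: $\Delta+V-E$ is totally elliptic, so it has a parametrix with residual error, and residual operators map $\mathcal{S}'$ to $\mathcal{S}$. The ``obstacle'' you flag comes only from choosing $b$ independent of $V$. Since $V\to 0$ at infinity and $V$ is bounded, $|\xi|^2+\lambda+V\geq c\langle\xi\rangle^2$ outside a compact subset of phase space. So take $b=\chi\,(|\xi|^2+\lambda+V)^{-1}$, with $\chi$ vanishing on that compact set and equal to $1$ outside a slightly larger one; then $\sigma_L(A)\,b-1=\chi-1\in S^{-\infty,-\infty}_{\mathrm{sc}}$, so $A$ is elliptic in exactly the paper's sense. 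Your weaker-gain Neumann series is also valid, just unnecessary.
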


\begin{proof}
Let $-\kappa < 0$ be the eigenvalue, and $u$ be the eigenfunction. Thus $(\Delta + V + \kappa) u = 0$. But the operator $\Delta + V + \kappa$ is fully elliptic, so the result follows from the discussion in Ssection~\ref{subsec:residual}.     
\end{proof}



Next, we give a spectral-theoretic application. Recall that a symmetric, densely defined unbounded operator on a Hilbert space is said to be \emph{essentially self-adjoint} if its closure is self-adjoint. Then, the spectral theorem can be brought to bear. In particular, all eigenfunctions of an essentially self-adjoint operator have real eigenvalues. 

To show that essential self-adjointness is subtle:
\begin{example}
    Fix $\varepsilon \in \{-1,+1\}$ and $k\in \mathbb{N}$. Consider $P= D_x^2 + \varepsilon x^k \in \Psi_{\mathrm{sc}}^{2,k}(\mathbb{R})$. This is certainly symmetric: 
    \begin{equation}
        \langle P u,v \rangle_{L^2} = \langle u,Pv \rangle_{L^2} 
        \label{eq:L2_symmetry}
    \end{equation}
    whenever $u,v\in \mathcal{S}(\mathbb{R})$. Question: is it essentially self-adjoint, when considered as an unbounded operator on $L^2(\mathbb{R})$ with domain $C_{\mathrm{c}}^\infty(\mathbb{R})$? The answer is rather suprising.

    \begin{itemize}
        \item If $k$ is even and $\varepsilon=+1$, or if $k\in \{0,1,2\}$, then $P$ is essentially self-adjoint.
        \item Otherwise, $P$ is not essentially self-adjoint. In fact, for any complex number $\lambda$, there exists an $L^2$-eigenfunction of $P$ with that eigenvalue. So, $P$ is not essentially self-adjoint for \emph{any} domain $\mathcal{D}\subseteq H^2(\mathbb{R})$ containing that eigenfunction. (And a duality argument shows that it is not essentially self-adjoint on the minimal domain $C_{\mathrm{c}}^\infty(\mathbb{R})$ either.)
    \end{itemize}
    Most of this is a consequence of some classical ODE theory (Liouville--Green, or JWKB), which tells us that there exist solutions with prescribed exponential behavior as $x\to\pm \infty$. For example, if $k \geq 4$ is even and $\varepsilon$ is $-1$, then any solution of the ODE $Pu= \lambda u$ is asymptotic to a linear combination of 
    \begin{equation}
        \frac{1 }{|x|^{k/4}} e^{\pm \frac{2 i}{k+2} |x|^{(k+2)/2}  }
        \label{eq:LG_as}
    \end{equation}
    as $|x|\to \infty$. As long as $k\geq 3$, this lies in $L^2(\mathbb{R})$. If instead $k\geq 3$ is odd, then there exists a $u$ with the asymptotic \cref{eq:LG_as} on one side of the real line and superpolynomial decay on the other. Either way, we have an $L^2$-eigenfunction with eigenvalue $\lambda$. 
    Similar arguments apply to the remaining cases. (If $k=0,1,2$, then the asymptotic form \eqref{eq:LG_as} is invalid, receiving a correction from the $\lambda$ term.)
\end{example}

The following reformulation is useful: 
\begin{lemma}
    Fix $m,\el \in \mathbb{R}$. 
    Let $\mathcal{D}$ denote a subspace $C_{\mathrm{c}}^\infty(\mathbb{R}^n)\subseteq \mathcal{D} \subseteq H^{m,\el}(\mathbb{R}^n)$.
    Suppose that $A\in \Psi_{\mathrm{sc}}^{m,\el}$ is $L^2$-symmetric (\cref{eq:L2_symmetry}). Then, $A$ is essentially self-adjoint on $L^2$ with domain $\mathcal{D}$ if
    \[ 
        \operatorname{ker}_{L^2} (A\pm i) = \{0\},
    \]
       for both choice of signs.
\end{lemma}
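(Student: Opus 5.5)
The plan is to reduce to von Neumann's basic criterion: a densely defined symmetric operator $T$ on a Hilbert space is essentially self-adjoint if and only if $\ker(T^*\pm i)=\{0\}$. Let $T$ denote $A$ regarded as an unbounded operator on $L^2(\R^n)$ with domain $\mathcal{D}$. Three things are needed: $T$ is densely defined, $T$ is symmetric, and $\ker(T^* \pm i) \subseteq \ker_{L^2}(A \pm i)$, where the right-hand side is understood distributionally. The last inclusion is the heart of the matter. Here $A$ acts on $L^2 \subset \Schw'(\R^n)$ by the extension of the quantization to tempered distributions assumed in Lecture 1.

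\emph{Density and symmetry.} Density is immediate, since $C_c^\infty(\R^n) \subseteq \mathcal{D}$. For symmetry, I first note that $A$ is bounded $H^{m,\el}(\R^n) \to L^2(\R^n)$; this is part of the mapping theory of Lecture 3. Given $u,v\in\mathcal{D}$, I choose sequences $u_j, v_j \in C_c^\infty(\R^n)$ with $u_j \to u$ and $v_j \to v$ simultaneously in $H^{m,\el}$ and in $L^2$. The cutoff-and-mollify approximations $\chi(x/j)\,(\phi_{1/j} * u)$ converge in every weighted Sobolev space containing $u$, so they serve. Then $\la Au_j, v_j\ra = \la u_j, Av_j\ra$ by \eqref{eq:L2_symmetry}. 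Since $Au_j \to Au$ in $L^2$ and $v_j \to v$ in $L^2$, passing to the limit gives symmetry on $\mathcal{D}$. I expect this simultaneous-approximation point to be the only mildly delicate step. It is needed only because $\mathcal{D}$ is required to lie in $L^2$ as well as in $H^{m,\el}$.

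\emph{Identifying the adjoint.} Suppose $u\in \operatorname{Dom}(T^*)$ and $T^*u = f \in L^2$. By definition, $\la u, A\varphi\ra = \la f, \varphi\ra$ for all $\varphi\in\mathcal{D}$, and in particular for all $\varphi\in C_c^\infty$. Both sides are continuous in $\varphi$ for the Schwartz topology, because $A:\Schw\to\Schw$ is continuous and $u,f\in L^2\subset \Schw'$. Since $C_c^\infty$ is dense in $\Schw$, the identity therefore holds for all $\varphi \in \Schw$.

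Symmetry \eqref{eq:L2_symmetry} on $\Schw$ says that the formal adjoint $A^*$ agrees with $A$ on Schwartz functions. The distributional action of $A$ is defined by duality through its formal adjoint, so $\la u, A\varphi\ra = \la Au, \varphi\ra$ in the sense of distributions. Combining the two identities gives $Au = f$ in $\Schw'$. Hence if $(T^*\pm i)u=0$, then $u\in L^2$ and $(A\pm i)u=0$ distributionally. By hypothesis this forces $u=0$.

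Thus $\ker(T^*\pm i)=\{0\}$ for both signs, and von Neumann's criterion yields that the closure $\overline{T}$ is self-adjoint, i.e.\ $A$ is essentially self-adjoint with domain $\mathcal{D}$.
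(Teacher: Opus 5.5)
Your proof is correct and follows essentially the paper's route. You phrase the deficiency criterion as $\ker(T^*\pm i)=\{0\}$, while the paper uses density of $\operatorname{Ran}(A\pm i)$ on $C_{\mathrm{c}}^\infty$; these are equivalent since $\operatorname{Ran}(T\pm i)^\perp=\ker(T^*\mp i)$. In both, the key step is to test against $C_{\mathrm{c}}^\infty$ and use that symmetry on $\mathcal{S}$ extends to the $\mathcal{S}$--$\mathcal{S}'$ pairing, which identifies this kernel with the distributional kernel $\ker_{L^2}(A\pm i)$. Your approximation argument establishing symmetry on all of $\mathcal{D}$ is a useful addition, since the paper assumes that symmetry rather than proving it.
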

    This reduces questions of the essential self-adjointness of differential operators $P$ to a concrete question about the well-posedness of the PDE $(P\pm i)u=f$.
    
\begin{proof}

    Note that if $P$ is $L^2$-symmetric, then we in fact have $\langle Pu,v \rangle_{L^2} = \langle u,Pv \rangle_{L^2}$ for all $u\in \mathcal{S}$ and $v\in \mathcal{S}'$. Here, the $L^2$-inner product is understood distributionally: 
    \begin{equation}
        \langle \phi,\psi \rangle_{L^2} = 
        \begin{cases}
            \phi^*(\psi) & \phi\in \mathcal{S}' \\ 
            \psi(\phi^*)  & \psi\in \mathcal{S}'.
        \end{cases}
    \end{equation}
    Indeed, if we fix $u\in \mathcal{S}$, then $\langle Pu,v \rangle_{L^2}, \langle u,Pv \rangle_{L^2}$ depend continuously on $v$ with respect to the topology of $\mathcal{S}'$ and agree when $v\in \mathcal{S}$.
    
    If $A$ is not symmetric \emph{on all of $\mathcal{D}$}, then it is not essentially self-adjoint, trivially, so assume otherwise.
    Deficiency index theory (see e.g.\ [RS72, Chp. VIII §2]]) says that the essential self-adjointness of $A$ on $\mathcal{D}$ is equivalent to the range of $A\pm i$, acting on $\mathcal{D}$, being dense in $L^2$, for both choices of sign. Therefore, it is sufficient to consider only $\mathcal{D}=C_{\mathrm{c}}^\infty(\mathbb{R}^n)$. Now, 
    \begin{align}
    \begin{split}
        \overline{ \operatorname{Ran}_{C_{\mathrm{c}}^\infty(\mathbb{R}^n) } (A\pm i)} = L^2 &\iff  \operatorname{Ran}_{C_{\mathrm{c}}^\infty(\mathbb{R}^n) } (A\pm i)^\perp = \{0\} \\
        &\iff \{u\in L^2:  \langle (A\pm i) v , u \rangle_{L^2}=0\text{ for all } v\in C_{\mathrm{c}}^\infty(\mathbb{R}^n)\}= \{0\} \\
        &\iff \{u\in L^2:  \langle v , (A\mp i)  u \rangle_{L^2}=0\text{ for all } v\in C_{\mathrm{c}}^\infty(\mathbb{R}^n)\}= \{0\} \\ 
        &\iff \underbrace{\{u\in L^2:   (A\mp i)  u =0\}}_{\operatorname{ker}_{L^2}(A\mp i)}= \{0\}.
        \end{split} 
    \end{align}
\end{proof}

Why is it not obvious that $\ker_{L^2} (A\pm i) = \{0\}$? This is the statement that $\pm i$ is not an eigenvalue of $A$, but we saw in the example above that this can happen, e.g.\ for $A=D_x^2 + x^3$. What goes wrong if we try to apply to the unbounded operator $A$ the usual argument that bounded symmetric operators have only real eigenvalues? If $u\in L^2$ satisfies $Au=\lambda u$, \emph{and if $A$ were bounded}, then 
\begin{equation}
       \lambda^* \lVert u\rVert^2 =  \langle Au, u\rangle = \langle u,A u\rangle = \lambda \lVert u \rVert^2, 
\end{equation}
which implies $\lambda=\lambda^*$ if $u\neq 0$. The problem is that if $A$ is not bounded, then we only know $\langle Au,v \rangle_{L^2} = \langle u, A v\rangle_{L^2}$ for ``sufficiently nice'' $u,v$, and it may not be the case that an $L^2$-eigenfunction is nice. For example, for $A=D_x^2 + x^3$, the $\ker_{L^2}(A\pm i)$ consists of functions which are smooth but which do not decay particularly rapidly at infinity. They are in $L^2$, but if we try to carry out the proof that $\langle Au, u\rangle_{L^2} = \langle u,A u\rangle_{L^2}$, the integration-by-parts produces boundary terms. In fact, 
\begin{equation}\label{eq:non ibp}
    \langle Au, u\rangle_{L^2} \neq \langle u,A u\rangle_{L^2}
\end{equation}
for such $u$. In effect, there is a nonzero `boundary term at infinity' that is the difference between the LHS and RHS of \eqref{eq:non ibp}. 
    
\begin{proposition}
    If $V \in S^{-\varepsilon}(\overline{\mathbb{R}^n})$ is real, then the operator $P=\triangle+V$ is essentially self-adjoint on $L^2$ with domain $C_{\mathrm{c}}^\infty$ .
\end{proposition}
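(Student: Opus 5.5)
By the preceding Lemma (with $\mathcal{D} = C_{\mathrm{c}}^\infty(\R^n)$, $m=2$, $\el=0$), it suffices to show that $\ker_{L^2}(P \pm i) = \{0\}$ for both choices of sign. Note first that $P$ is $L^2$-symmetric, since $V$ is real and $\Delta$ is symmetric on Schwartz functions, and that $P \in \Psisc^{2,0}(\R^n)$, because $V \in S^{-\varepsilon}$ is a multiplication operator in $\Psisc^{0,-\varepsilon}$. The plan is to upgrade an $L^2$ solution of $(P \pm i)u = 0$ to a Schwartz function using the elliptic parametrix. Once $u$ is Schwartz, the naive argument recalled above ($\lambda^*\|u\|^2 = \langle Pu,u\rangle = \langle u,Pu\rangle = \lambda\|u\|^2$) becomes legitimate. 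With $\lambda = \mp i$ this forces $u = 0$.

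The key step is full ellipticity of $P \pm i$. Its principal symbol in $S^{2,0}_{\sca}/S^{1,-1}_{\sca}$ is represented by $|\xi|^2 \pm i$: the term $V$ lies in $S^{0,-\varepsilon}_{\sca}$, and I would also check that the $V$ contribution is harmless as a lower-order term. Concretely, I would take $b = (|\xi|^2 + V \pm i)^{-1}$, or to be safe $b = (|\xi|^2 \pm i)^{-1}$. Then I would verify directly that $b \in S^{-2,0}_{\sca}$, using that $\big||\xi|^2 \pm i\big| \geq c\ang{\xi}^2$ uniformly.

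Next, I need the remainder $(|\xi|^2 + V \pm i)b - 1 = Vb$ to be in $S^{-1,-1}_{\sca}$. This is the main subtlety. Since $V b \in S^{-2,-\varepsilon}_{\sca}$, it only gains $\varepsilon$ in spatial decay, not $1$, so $\varepsilon < 1$ does not fit the definition as stated. I would handle this with the Neumann series argument from the parametrix proof, which only needs the remainder $R_1$ to lie in $\Psisc^{-\delta,-\delta}$ for some $\delta > 0$. That condition suffices for the asymptotic sum $R_1 + R_1^2 + \cdots$ to exist, and here $R_1 \in \Psisc^{-2,-\varepsilon}$ qualifies. Alternatively, since $|\xi|^2 + V \pm i$ has imaginary part $\pm 1$, it never vanishes, so I could take $b$ to be its exact reciprocal. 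The symbol estimates for $b$ then follow by the quotient rule, and the composition expansion gives $\Op(b)(P\pm i) - \Id \in \Psisc^{-1,-1}$ directly, since the subleading terms already gain one order in both indices. Either route yields a parametrix $B$ with $B(P \pm i) = \Id + R$ and $R \in \Psisc^{-\infty,-\infty}$.

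Finally, the argument of Section~\ref{subsec:residual} applies. If $u \in L^2 \subset \Schw'$ and $(P \pm i)u = 0$, then $u = -Ru$ is Schwartz. For Schwartz $u$ the symmetry identity \eqref{eq:L2_symmetry} holds, which gives $\mp i \|u\|^2 = \langle (P \pm i - (\pm i))u,\,u\rangle$. Rearranged, this yields $(\lambda - \lambda^*)\|u\|^2 = 0$ with $\lambda = \mp i$, so $u = 0$. The Lemma then gives essential self-adjointness.
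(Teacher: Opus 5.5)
Your proposal is correct and follows the same route as the paper. You reduce via the preceding Lemma to showing $\ker_{L^2}(P\pm i)=\{0\}$, use full ellipticity of $P\pm i$ in $\Psisc^{2,0}$ to conclude that any such kernel element is Schwartz, and then the symmetry identity forces it to vanish.

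Your extra care about $V$ gaining only $\varepsilon$ orders is well placed: when $\varepsilon<1$ the principal symbol is $|\xi|^2+V\pm i$, not $|\xi|^2\pm i$, so your opening claim needs that correction. Either of your fixes works: the exact reciprocal of $|\xi|^2+V\pm i$, which is what makes the paper's one-line ellipticity claim literally true, or the Neumann series with remainder in $\Psisc^{-2,-\varepsilon}$.
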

\begin{proof}
    By the argument above, it suffices to show that any $L^2$-eigenfunction of $P$ with eigenvalue $\pm i$ is sufficiently nice for $\langle Pu,u \rangle_{L^2} = \langle u,Pu \rangle_{L^2}$ to hold.  

    Note that $P\pm \lambda$ is an elliptic element of $\Psi_{\mathrm{sc}}^{2,0}$ if $\lambda$ is non-real. Consequently, any element of its null space must be Schwartz. 
\end{proof}

\subsection{Exercises}
\begin{problem} Show that the composition result above in the case $l \leq 0, l' \leq 0$ implies composition for arbitrary orders. 
\end{problem}

\begin{problem}
 Consider a symbol in the case $S^{m,l,\tilde l}_{\sca}$ with the property that it vanishes whenever $|x-y| \leq \ang{x}/4$. Show that the quantization of such a symbol is residual. (Integrate by parts suitably.)
\end{problem}

\begin{problem} Prove \eqref{eq:Neumann inv}, by showing that the LHS $- \, \Id$ is in $\Psi_{\mathrm{sc}}^{-k,-k}$ for every $k$. To do this, split $R$ into the finite sum $R_1 + \dots R_1^{k-1}$, and the remainder. 
\end{problem}

\begin{problem}
    Suppose that the characteristic set of $A\in \Psi^{m,0}_{\mathrm{sc}}$ is disjoint from fiber infinity. Suppose that $A$ is symmetric, meaning that $\langle A u,v \rangle_{L^2} = \langle u,A v \rangle_{L^2}$
    for all $u,v \in \mathcal{S}$.

    Show that $A$, with domain $C_{\mathrm{c}}^\infty(\mathbb{R}^n)$, is essentially self-adjoint on $L^2(\mathbb{R}^n)$.  
\end{problem}

\begin{problem}
    Suppose that $V\in C^\infty(\overline{\mathbb{R}})$ satisfies $\lim_{x\to-\infty} V(x)=0$.
    Suppose that $u$ is a bound state, with energy $E<0$. Show that $u$ is smooth everywhere and Schwartz on the left- half-line.
\end{problem}

\begin{problem}
    If $k>0$ and $V\in \mathcal{S}(\mathbb{R})$, then $\triangle  + k x^2 + V$ is essentially self-adjoint acting on $C_{\mathrm{c}}^\infty(\mathbb{R})$. Hint:  a coordinate change from $x$ to some  
    \begin{equation}
        \tilde{x} = \begin{cases} 
            x & (x\leq 1) \\
            x^2 & (x \geq 2)
        \end{cases}
    \end{equation}
    may be useful. Note: $\triangle=-\partial_x^2$.
\end{problem}

\begin{problem}
    Use the asymptotic form \eqref{eq:LG_as} for an eigenfunction $u$ of $A = D_x^2 + x^3$, and assume that the derivative of the function enjoys the corresponding asymptotic, i.e. the derivative of \eqref{eq:LG_as}. Show that there is a `boundary term at infinity' when we attempt to integrate by parts to show that 
    \begin{equation*}
    \langle Au, u\rangle_{L^2} = \langle u,A u\rangle_{L^2}. 
\end{equation*}
\end{problem}

\begin{problem}
    Consider the differential operator on the real line given by $P=-\partial_x^2 + 1$. A student makes the following argument: 
    \begin{itemize}
        \item[]  
    \textit{$P$ is an elliptic element of $\Psi_{\mathrm{sc}}^{2,0}(\mathbb{R})$, and so, via elliptic regularity, $Pu=0 \Rightarrow u\in \mathcal{S}(\mathbb{R})$.}
    \end{itemize}
    But, $u(x)=e^{x}$ solves $Pu=0$ and is not Schwartz. What mistake did the student make?
\end{problem}

\section{Lecture 3: Weighted Sobolev spaces}

\subsection{Intertwining with the Fourier transform}

In the first lecture, we defined $\Psi_{\mathrm{sc}}^{0,0}=\Psi_{\mathrm{sc}}^{0,0}(\mathbb{R}^n)$, which is a certain proper subset 
\[
\Psi_{\mathrm{sc}}^{0,0} \subsetneq \Psi^0 
\]
of H\"ormander's uniform pseudodifferential operator class of order $0$, which we denote simply $\Psi^0 = \Psi^0(\mathbb{R}^n)$. Roughly, one should think of sc-pseudodifferential operators\footnote{We will often abbreviate scattering to sc-.} as those ordinary (i.e. H\"ormander-class) pseudodifferential operators with ``good'' behavior at infinity. 

The purpose of this subsection is to provide another motivation for the sc-calculus, namely that it is, in some sense, the simplest pseudodifferential calculus that \emph{treats momentum/frequency and position on equal footing}. This is an attractive feature of the sc-calculus not possessed by H\"ormander's uniform pseudodifferential algebra.

One can develop this theme in many ways. Here is one:
\begin{proposition} 
If $\mathcal{F}$ denotes the Fourier transform, then $\mathcal{F} \circ \Psi_{\mathrm{sc}}^{m,\el} \circ \mathcal{F}^{-1} = \Psi_{\mathrm{sc}}^{\el,m}  $ . That is, 
\begin{equation}
   \mathcal{F} \circ A \circ \mathcal{F}^{-1} \in \Psi_{\mathrm{sc}}^{\el,m}
\end{equation}
for all  $A\in \Psi_{\mathrm{sc}}^{m,\el}$.
\label{prop:Fourier}
\end{proposition}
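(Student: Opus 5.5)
The plan is to compute the Schwartz kernel of $\mathcal{F} A \mathcal{F}^{-1}$ directly, recognize it as the quantization of a symbol depending on both left and right variables, and then apply the left reduction described in Lecture 2. Concretely, take $A = \Op_L(a)$ with $a \in S^{m,\el}_{\sca}$, and write $\tilde A = \mathcal{F} A \mathcal{F}^{-1}$ in terms of the dual variables, so that $\eta$ is the new position variable and $\zeta$ the variable it acts on. Using $A\phi(x) = (2\pi)^{-n}\int e^{ix\cdot\xi} a(x,\xi)\hat\phi(\xi)\,d\xi$, we get, for $\psi = \hat \phi$,
\begin{equation*}
\tilde A \psi(\eta) = (2\pi)^{-n} \int\!\!\int e^{-ix\cdot \eta} e^{ix\cdot \zeta} a(x,\zeta) \psi(\zeta)\, d\zeta\, dx = (2\pi)^{-n}\int\!\!\int e^{i(\zeta-\eta)\cdot x} a(x,\zeta)\psi(\zeta)\,dx\,d\zeta .
\end{equation*}
Renaming $w = -x$ gives the kernel $(2\pi)^{-n}\int e^{i(\eta-\zeta)\cdot w} a(-w,\zeta)\,dw$. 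This is exactly the form \eqref{eq:double quant} with position variables $(\eta,\zeta)$ and frequency variable $w$, and with an amplitude $\tilde a(\eta,\zeta,w) = a(-w,\zeta)$ that depends only on the \emph{right} variable $\zeta$ and the frequency $w$. In other words, $\tilde A = \Op_R(b)$ with $b(\zeta,w) = a(-w,\zeta)$.

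The next step is to check symbol estimates. Since $a \in S^{m,\el}_{\sca}$, differentiating in $\zeta$ gains $\ang{\zeta}^{-1}$ and differentiating in $w$ gains $\ang{w}^{-1}$, with $|b| \lesssim \ang{\zeta}^{m}\ang{w}^{\el}$. Thus $b \in S^{\el, m}_{\sca}$ as a right symbol: its spatial order is $m$ and its frequency order is $\el$. In the notation of the three-variable classes it lies in $S^{\el,0,m}_{\sca}$.

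Now invoke the left/right reduction result from Lecture 2: every symbol in $S^{\el,0,m}_{\sca}$ quantizes to an element of $\Psisc^{\el,m}$, and in particular right quantizations of $S^{\el,m}_{\sca}$ symbols coincide with left quantizations of $S^{\el,m}_{\sca}$ symbols. This gives $\tilde A\in \Psisc^{\el,m}$, i.e. $\mathcal{F}\Psisc^{m,\el}\mathcal{F}^{-1} \subset \Psisc^{\el,m}$. Applying the same argument to $\mathcal{F}^{-1}$, which equals $\mathcal{F}$ up to the reflection $x\mapsto -x$ and a constant, and noting that the reflection preserves the class, yields the reverse inclusion. Hence the two sets are equal.

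The only genuine subtlety, and the main obstacle, is justifying the formal kernel computation, since the integrals are oscillatory rather than absolutely convergent. The plan is to first take $a \in S^{-\infty,-\infty}_{\sca}$, i.e. Schwartz, where everything is literally Fubini. For general $a$, use the density statement from Lecture 1: choose $a_j$ residual, uniformly bounded in $S^{m,\el}_{\sca}$, with $a_j \to a$ in $S^{m',\el'}_{\sca}$. Then the operators converge on $\Schw$, in the weak sense, on both sides of the identity. This requires continuity of the quantization map from symbols to operators $\Schw\to\Schw'$, which is standard and can be assumed. The rest is the bookkeeping of the reduction theorem.
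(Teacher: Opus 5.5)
Your proposal is correct and follows the paper's proof. You compute the kernel of $\mathcal{F} A \mathcal{F}^{-1}$, recognize it as $\Op_R(b)$ with $b(\zeta,w)=a(-w,\zeta)=a\circ R(\pi/2)$, observe that $a\mapsto a\circ R(\pi/2)$ maps $S^{m,\el}_{\sca}$ to $S^{\el,m}_{\sca}$ because the scattering symbol estimates treat position and frequency symmetrically, and conclude by left/right reduction. Your additional steps are sound and fill in details the paper leaves implicit: the reverse inclusion (using that $\mathcal{F}^{-1}$ is $(2\pi)^{-n}\mathcal{F}$ composed with the reflection $x\mapsto -x$), and the justification of the oscillatory-integral manipulations by approximation with residual symbols.
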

\begin{remark}
    In contrast: $\mathcal{F} \circ \Psi^{0} \circ \mathcal{F}^{-1}$ is not a subset of $\Psi^0$. (We will not prove this, but it follows via a similar computation to that below.) 
\end{remark}
\begin{proof}
Fix $A=\operatorname{Op}_L(a)$, $a\in S_{\mathrm{sc}}^{m,\el}$. 
Let us compute the action of $\mathcal{F} \circ A \circ \mathcal{F}^{-1}$ on $\phi\in \mathcal{S}(\mathbb{R}^n)$. Directly plugging into the definition \cref{eq:def}, 
\begin{equation}
  \mathcal{F}\circ  A \circ \mathcal{F}^{-1} \phi(\zeta)  = (2\pi)^{-n} \int \int e^{ix\cdot (\xi-\zeta)} a(x,\xi) \phi(\xi) d\xi d x = \int K(\zeta,\xi) \phi(\xi) d \xi  , 
\end{equation}
where 
\begin{equation}
    K(\zeta,\xi) = (2\pi)^{-n} \int e^{ix \cdot (\xi - \zeta)} a (x,\xi) d x. 
\end{equation}
To make this more recognizable, we can rename dummy variables: 
\begin{equation}
    K(x,y) = (2\pi)^{-n} \int e^{i\xi \cdot (y - x)} a (\xi,y) d \xi =  (2\pi)^{-n} \int e^{i\xi \cdot (x - y)} a (-\xi,y) d \xi. 
\end{equation}
We now recognize this as $ K_{\tilde{a}}(x,y)$, where $\tilde{a}(x,y,\xi) =  a(-\xi,y)$. 
Let $R(\pi/2) : \mathbb{R}^{2n}\to \mathbb{R}^{2n}$ denote the $90^\circ$ rotation 
\begin{equation}\label{eq:rotation}
R(\pi/2): (x,\xi) \mapsto (-\xi,x).
\end{equation}
Then, $\tilde{a}=  a\circ R(\pi/2)$. 
So, 
\begin{equation}
    \mathcal{F}\circ  A \circ \mathcal{F}^{-1} = \operatorname{Op}_{R}(a\circ R ).  
\end{equation}

The key point is that $a\circ R$ is a sc-symbol, specifically one in $S^{\el,m}_{\mathrm{sc}}$, \emph{because $R(\pi/2)$ is (obviously) a map} \[
R(\pi/2):S^{\el,m}_{\mathrm{sc}} \to S_{\mathrm{sc}}^{m,\el},
\]
owing to the fact that the estimates used to define $S_{\mathrm{sc}}^{0,0}$ treat position and frequency on equal footing.
\end{proof}

As part of the previous proposition, we proved:
\begin{proposition}
    If $A\in \Psi_{\mathrm{sc}}$, then, if $a$ denotes the full left symbol of $A$, then $ a\circ R$ is the full right symbol of $\mathcal{F}\circ  A \circ \mathcal{F}^{-1}$. 
    \label{prop:FAF_Symbol}
\end{proposition}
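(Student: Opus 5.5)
The plan is to read the statement off the computation in the proof of Proposition~\ref{prop:Fourier}, taking care with what ``full right symbol'' means and with the sense in which the integrals converge. Throughout, the right quantization is $\Op_R(b)$, with Schwartz kernel $(2\pi)^{-n}\int e^{i(x-y)\cdot\xi} b(y,\xi)\,d\xi$. The full right symbol of an operator $B$ is the unique $b$ with $B=\Op_R(b)$.

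\textbf{Uniqueness.} Since the prefactor depends only on $x-y$, we recover $b$ by the right-variable analogue of \eqref{eq:symbol from K}:
\[
b(y,\xi)=\int e^{-iw\cdot\xi} K(y+w,y)\,dw.
\]
This is a Fourier transform in the distributional sense. So once we exhibit a kernel of the form $K_{\tilde a}$ with $\tilde a$ depending only on the right variable, the right symbol is determined.

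\textbf{Kernel computation.} For $\phi\in\Schw(\R^n)$ I would redo the calculation from the proof of Proposition~\ref{prop:Fourier}. The key identity is
\[
\mathcal F A\mathcal F^{-1}\phi(\zeta)=(2\pi)^{-n}\iint e^{ix\cdot(\xi-\zeta)}a(x,\xi)\phi(\xi)\,d\xi\,dx .
\]
Renaming variables, the kernel is $(2\pi)^{-n}\int e^{i\xi'\cdot(x-y)}a(-\xi',y)\,d\xi'$. This is exactly the right quantization of $(y,\xi')\mapsto a(-\xi',y)=(a\circ R(\pi/2))(y,\xi')$, with $R(\pi/2)$ as in \eqref{eq:rotation}. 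It remains to check that $a\circ R\in S^{\el,m}_{\sca}$. This holds because the estimates \eqref{eq:sc-symbol-Rn} are symmetric under swapping the roles of $x$ and $\xi$, and the sign flip is harmless.

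\textbf{Main obstacle: justifying the manipulations.} The integrals are only oscillatory when $m$ or $\el$ is large. I would first treat $a\in S^{-\infty,-\infty}_{\sca}$, that is, Schwartz $a$, where every step is absolutely convergent Fubini. For general $a$, I would use the density statement: take the approximants $a_j=\chi(x/j)\chi(\xi/j)a$, which are bounded in $S^{m,\el}_{\sca}$ and converge in $S^{m',\el'}_{\sca}$ for $m'>m$, $\el'>\el$. Then $\Op_L(a_j)\phi\to\Op_L(a)\phi$ in $\Schw$ for each Schwartz $\phi$, by continuity of quantization from the symbol topology (taken as known from the Hörmander theory). Similarly, $\Op_R(a_j\circ R)\to\Op_R(a\circ R)$ on $\Schw$, since $a_j\circ R\to a\circ R$ in the corresponding symbol topology.

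Since $\mathcal F$ is an isomorphism of $\Schw$, the identity $\mathcal F\Op_L(a_j)\mathcal F^{-1}=\Op_R(a_j\circ R)$ passes to the limit. Uniqueness of the right symbol then finishes the proof.
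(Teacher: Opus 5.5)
Your proposal is correct and follows essentially the paper's route: the paper reads the statement directly off the kernel computation in the proof of Proposition~\ref{prop:Fourier}, recognizing the kernel as $\Op_R(a\circ R)$. Your uniqueness remark for the right symbol and your approximation by Schwartz symbols $a_j$ rigorously justify the Fubini and variable-renaming steps that the paper carries out only formally.
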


\begin{corollary}
The principal symbol of $\mathcal{F} \circ A \circ \mathcal{F}^{-1}$ is $a \circ R$. 
\end{corollary}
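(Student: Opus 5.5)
By Proposition~\ref{prop:FAF_Symbol}, $a\circ R$ is the full right symbol of $\mathcal{F}\circ A\circ\mathcal{F}^{-1}$, with $R=R(\pi/2)$ the rotation \eqref{eq:rotation}. The corollary then follows once we recall that the principal symbol can be read off from either the left or the right full symbol. As stated in the paragraph defining $\spr$, the equivalence class of $a_R$ in $S^{\el,m}_{\sca}/S^{\el-1,m-1}_{\sca}$ coincides with that of $a_L$.

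The plan has three steps. First, since $a\in S^{m,\el}_{\sca}$, the proof of Proposition~\ref{prop:Fourier} shows that $a\circ R\in S^{\el,m}_{\sca}$. This holds because the estimates \eqref{eq:sc-symbol-Rn} are symmetric under exchanging $x$ and $\xi$ up to the swap of orders, and the sign change $\xi\mapsto-\xi$ is harmless. Second, write $B=\mathcal{F}A\mathcal{F}^{-1}=\Op_R(a\circ R)$. Its left symbol $b_L$ is obtained by right-to-left reduction, whose asymptotic expansion reads
\[
b_L\sim\sum_\alpha \frac{1}{\alpha!}D_\xi^\alpha\partial_x^\alpha (a\circ R).
\]
Every term with $|\alpha|\ge1$ lies in $S^{\el-|\alpha|,m-|\alpha|}_{\sca}$, so $b_L-a\circ R\in S^{\el-1,m-1}_{\sca}$. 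Third, conclude that $\spr^{\el,m}(B)=[a\circ R]$.

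The same argument applies with $a$ replaced by any representative of $\spr^{m,\el}(A)$. Indeed, the map $c\mapsto c\circ R$ sends $S^{m-1,\el-1}_{\sca}$ into $S^{\el-1,m-1}_{\sca}$, so composition with $R$ descends to a well-defined map on the quotient spaces. The statement therefore holds at the level of equivalence classes, i.e.\ $\spr^{\el,m}(\mathcal{F}A\mathcal{F}^{-1})=\spr^{m,\el}(A)\circ R$.

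The only point needing care is the second step: the remainder in the left/right reduction must be controlled in the scattering sense, that is, with simultaneous gain in both orders. This is exactly the left-reduction statement for symbols in $S^{m,\el,\tilde\el}_{\sca}$ that the paper takes from \cite{vasy-monicourse}, here with $\el=0$ in the left spatial variable and the right spatial order equal to the full spatial order. I would simply invoke that result rather than reprove it.
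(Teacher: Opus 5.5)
Your proposal is correct and follows the paper's own route. The paper's proof is just Proposition~\ref{prop:FAF_Symbol} combined with the fact that the principal symbol is the equivalence class of either the left or the right full symbol; your right-to-left reduction expansion and your remark that composition with $R$ descends to the quotient spaces simply unpack that fact.
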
 
\begin{proof}
    For either left or right quantization, we have $A=\operatorname{Op}_{L/R}(a) \Rightarrow a \in \sigma(A)$. So, this follows immediately from \Cref{prop:FAF_Symbol}.
\end{proof}

\begin{remark}
    For the reader acquainted with Fourier integral operators (FIOs), let us remark that the Fourier transform can be thought of as an FIO quantizing the symplectomorphism $R^{\pm 1}$. 
    
    Then, since the composition of FIOs is an FIO, and since pseudodifferential operators are FIOs whose underlying symplectomorphism (/canonical relation) is the identity, $\mathcal{F}\circ A \circ \mathcal{F}^{-1}$ should be an FIO for any $A\in \Psi_{\mathrm{sc}}$, whose underlying symplectomorphism is $R^{\pm 1}\circ \operatorname{id} \circ R^{\mp 1} = \operatorname{id}$, i.e.\ a pseudodifferential operator.
\end{remark}

\subsection{Weighted Sobolev spaces}

The ordinary ($L^2$-based) Sobolev spaces $H^s(\mathbb{R}^n) = \Psi^{-s}(\mathbb{R}^n) L^2(\mathbb{R}^n)$ play a fundamental role in the ordinary microlocal analysis of PDEs. The analogous role in scattering theory is played by the (polynomially) weighted Sobolev spaces 
\begin{equation}
    H_{\mathrm{sc}}^{s,r}(\mathbb{R}^n) = L^2 (\mathbb{R}^n) = \langle x \rangle^{-r}  H^s(\mathbb{R}^n) \subset \mathcal{S}'  .
\end{equation}
These come with a natural Hilbertizable topology, namely that coming from $H^s$, such that multiplication by $\langle x \rangle^{-\el}$ defines a continuous map $H^s\to H_{\mathrm{sc}}^{s,r}$.

\begin{remark}
    Warning: different conventions exist for how $H_{\mathrm{sc}}^{\bullet,\bullet}$ should be indexed. We are following the convention that higher orders mean more regularity and more decay.
\end{remark}

In this subsection, we will present a few basic facts about weighted Sobolev spaces. Proofs will only be provided when they differ in some essential way from the corresponding facts about ordinary Sobolev spaces.

\begin{proposition}\label{prop:duality}
    The map $\iota: \mathcal{S} \to (H_{\mathrm{sc}}^{s,r})^*$ given by 
\begin{equation}\label{eq:L2pairing}
\iota(f)(u) = \int u \overline{f}, \quad f \in \mathcal{S}, \quad u \in H_{\mathrm{sc}}^{s,r}
\end{equation}
    extends boundedly to a (conjugate linear) isomorphism $H_{\mathrm{sc}}^{-s,-r} \cong (H_{\mathrm{sc}}^{s,r})^*$.  
\end{proposition}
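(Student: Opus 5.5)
The plan is to reduce the statement to the unweighted case $r=0$, which is the standard duality $H^{-s}\cong (H^s)^*$, and then transport it through the weight. Throughout I use the definition $H^{s,r}_{\mathrm{sc}} = \ang{x}^{-r} H^s$, with norm $\|u\|_{H^{s,r}_{\mathrm{sc}}} = \|\ang{x}^r u\|_{H^s}$.

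\textbf{Step 1 (the case $r=0$).} For $f,u\in\mathcal{S}$, Plancherel gives
\[
\int u\bar f = (2\pi)^{-n}\int \hat u\,\overline{\hat f} = (2\pi)^{-n}\int \ang{\xi}^s\hat u\;\overline{\ang{\xi}^{-s}\hat f},
\]
so Cauchy--Schwarz yields $|\iota(f)(u)|\le \|u\|_{H^s}\|f\|_{H^{-s}}$. Hence $\iota$ extends boundedly to $H^{-s}\to (H^s)^*$, with norm at most one.

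For surjectivity, let $\ell\in (H^s)^*$. The map $\Lambda^s=\mathcal{F}^{-1}\ang{\xi}^s\mathcal{F}$ is a unitary isomorphism $H^s\to L^2$. By Riesz, $\ell(u) = \int \Lambda^s u\,\bar g$ for a unique $g\in L^2$. Setting $f=\Lambda^s g\in H^{-s}$ and using that $\Lambda^s$ is symmetric, we get $\ell(u)=\int u\bar f$, first for Schwartz $u$ and then for all $u$ by density. The same computation shows $\|\iota(f)\| = \|f\|_{H^{-s}}$, so $\iota$ is isometric and in particular injective. Conjugate linearity is clear from the formula.

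\textbf{Step 2 (weights).} Multiplication by $\ang{x}^r$ is an isometric isomorphism $M_r : H^{s,r}_{\mathrm{sc}}\to H^s$, and by definition of the topology it is also an isomorphism $H^{-s,-r}_{\mathrm{sc}}\to H^{-s}$ via $M_{-r}$. For $f,u\in\mathcal{S}$ we have
\[
\int u\bar f = \int (\ang{x}^r u)\,\overline{(\ang{x}^{-r} f)},
\]
since $\ang{x}^r$ is real. This identity says $\iota_{s,r}(f) = \iota_{s,0}(\ang{x}^{-r}f)\circ M_r$. Composing Step 1 with the two weight isomorphisms then gives the boundedness of $\iota$ and the isomorphism $H^{-s,-r}_{\mathrm{sc}}\cong (H^{s,r}_{\mathrm{sc}})^*$.

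\textbf{Main obstacle.} The subtle point is justifying that multiplication by $\ang{x}^{\pm r}$ preserves $\mathcal{S}$ and that $\mathcal{S}$ is dense in each $H^{s,r}_{\mathrm{sc}}$, so that the extensions are well defined and unique. The first holds because $\ang{x}^{\pm r}$ is a symbol of order $\pm r$, so multiplication by it preserves $\mathcal{S}$ and $\mathcal{S}'$. Density then follows from density of $\mathcal{S}$ in $H^s$ together with the fact that $M_r$ maps $\mathcal{S}$ onto $\mathcal{S}$.
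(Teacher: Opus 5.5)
Your proposal is correct and takes essentially the paper's approach. The paper's one-line sketch (``follows from the duality $(L^2)^*\cong L^2$'') is exactly what you carry out: you conjugate by the isomorphisms $\ang{x}^r$ and $\ang{D}^s$ to reduce to the Riesz self-duality of $L^2$, and you use the density of $\mathcal{S}$ to justify the extensions.
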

\begin{proof}[Proof sketch]
    Follows from the duality $(L^2)^* \cong L^2$.
\end{proof}

\begin{remark} This may be confusing: isn't every Hilbert space its own dual, so shouldn't the statement be that $H_{\mathrm{sc}}^{s,r} \cong (H_{\mathrm{sc}}^{s,r})^*$? The confusion arises as to the implied pairing being considered. If we consider the $L^2$ pairing \eqref{eq:L2pairing} then we find that the dual of $H_{\mathrm{sc}}^{s,r}$ is $H_{\mathrm{sc}}^{-s,-r}$ as in the Proposition. If we take the pairing to be the inner product in the Hilbert space $H_{\mathrm{sc}}^{s,r}$, then the dual of $H_{\mathrm{sc}}^{s,r}$ is identified with itself, according to the usual Riesz representation theorem. In PDE, scattering theory, etc, the $L^2$ pairing is usually the more relevant pairing (for example, adjoint operators are defined with respect to the $L^2$ pairing), so this leads to the identification of the dual space as in Proposition~\ref{prop:duality}. 
\end{remark}

\begin{proposition}[Schwartz representation theorem] \hfill

    \begin{itemize}
        \item $\bigcap_{s,r\in \mathbb{R}} H_{\mathrm{sc}}^{s,r} = \mathcal{S}$, 
        \item $\bigcup_{s,r\in \mathbb{R}} H_{\mathrm{sc}}^{s,r} = \mathcal{S}'$.
    \end{itemize}
    \label{prop:Schwartz_rep}
\end{proposition}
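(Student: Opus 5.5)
For the first item, the inclusion $\mathcal{S} \subseteq H_{\mathrm{sc}}^{s,r}$ for all $s,r$ is immediate. If $\phi \in \mathcal{S}$ then $\langle x\rangle^{r}\phi \in \mathcal{S} \subset H^s$ for every $s$. I will read the definition as $H_{\mathrm{sc}}^{s,r} = \langle x\rangle^{-r}H^s$.

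For the reverse inclusion, take $u$ in every $H_{\mathrm{sc}}^{s,r}$. Then for each $N$ and each $k$ we have $\langle x\rangle^{N}u \in H^{k}$. By Sobolev embedding, $\langle x \rangle^N u \in C^j$ with bounded derivatives once $k > j + n/2$. Expanding $\partial^\alpha(\langle x\rangle^N u)$ with the Leibniz rule, we must control $\langle x\rangle^N \partial^\alpha u$. I will induct on $|\alpha|$, using that $\partial^\beta \langle x\rangle^N = O(\langle x\rangle^{N-|\beta|})$. This gives $\sup|x^\gamma \partial^\alpha u| < \infty$ for all $\gamma,\alpha$, so $u \in \mathcal{S}$.

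A cleaner alternative is to use Plancherel. The condition $\langle x \rangle^{N} u \in H^k$ for all $N,k$ is equivalent to $\partial^\alpha(x^\gamma u) \in L^2$ for all $\alpha,\gamma$. Sobolev embedding then turns these $L^2$ bounds into sup bounds. I would present it that way.

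For the second item, the inclusion $\bigcup H_{\mathrm{sc}}^{s,r} \subseteq \mathcal{S}'$ holds because $H^s \subset \mathcal{S}'$ and multiplication by $\langle x\rangle^{-r}$, a smooth function of polynomial growth with all derivatives polynomially bounded, preserves $\mathcal{S}'$.

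The reverse inclusion is the main step. Given $u \in \mathcal{S}'$, continuity gives a seminorm bound
$$|u(\phi)| \le C \sum_{|\alpha|,|\gamma|\le M} \sup |x^\gamma \partial^\alpha \phi|.$$
I aim to dominate the right side by $\|\phi\|_{H_{\mathrm{sc}}^{k,k}}$ for some large $k$. By the Sobolev embedding argument of the first part, run quantitatively, we have
$$\sup|x^\gamma\partial^\alpha\phi| \le C\|\langle x\rangle^{M}\phi\|_{H^{M+n}} \le C'\|\phi\|_{H_{\mathrm{sc}}^{M+n,M}}.$$
The last inequality holds because multiplication by $\langle x\rangle^{M}$ is bounded from $H_{\mathrm{sc}}^{s,M}$ to $H^{s}$, by the definition of the space.

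So $u$ extends to a bounded linear functional on $H_{\mathrm{sc}}^{M+n,M}$, using the density of $\mathcal{S}$ in that space, which holds because $\mathcal{S}$ is dense in $H^s$ and multiplication by $\langle x\rangle^{-M}$ preserves $\mathcal{S}$. By Proposition~\ref{prop:duality}, this functional is represented by an element of $H_{\mathrm{sc}}^{-M-n,-M}$. That element agrees with $u$ on $\mathcal{S}$, so $u \in H_{\mathrm{sc}}^{-M-n,-M}$. One has to take care with the complex conjugation in the pairing \eqref{eq:L2pairing}, but this is harmless: apply the argument to $\bar u$.

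I expect the main obstacle to be the quantitative embedding $\sup|x^\gamma\partial^\alpha\phi| \lesssim \|\phi\|_{H_{\mathrm{sc}}^{k,k}}$. This requires commuting powers of $x$ past derivatives, i.e. showing that $\langle x\rangle^{N}\partial^\alpha\langle x\rangle^{-N}$ is bounded on $H^s$. I would handle it by noting that this operator is a differential operator of order $|\alpha|$ with $S^0$ coefficients. Alternatively, I would cite Proposition~\ref{prop:Fourier} to treat $x$ and $D$ symmetrically.
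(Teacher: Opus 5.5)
Your proposal is correct and follows the paper's route. The first item comes from Sobolev embedding. The second comes from dualizing a quantitative form of the first: Schwartz seminorms are bounded by an $H_{\mathrm{sc}}^{k,k}$-type norm, $\mathcal{S}$ is dense, and then Proposition~\ref{prop:duality} applies. That is exactly what the paper's one-line sketch leaves implicit.

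One small slip: $\langle x\rangle^{N}\partial^\alpha\langle x\rangle^{-N}$ maps $H^s\to H^{s-|\alpha|}$ rather than being bounded on $H^s$. Your argument does not need that claim, since your direct Leibniz bound $|\langle x\rangle^{M}\partial^\alpha\phi|\lesssim\sum_{\beta\le\alpha}|\partial^\beta(\langle x\rangle^{M}\phi)|$ already does the job.
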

\begin{proof}[Proof sketch]
    The first fact follows from Sobolev embedding. The second follows by dualizing the first.
\end{proof}

In the spirit of the previous subsection, let us remark:
\begin{proposition}
    $\mathcal{F} : H_{\mathrm{sc}}^{s,r} \to H_{\mathrm{sc}}^{r,s}$ is a bounded map. 
    \label{prop:F_sob}
\end{proposition}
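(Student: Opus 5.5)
The plan is to reduce everything to Plancherel together with the explicit description of the spaces: $u \in H_{\mathrm{sc}}^{s,r}$ iff $\langle x\rangle^{r} u \in H^s$, iff $\langle D\rangle^s \langle x\rangle^r u \in L^2$. The norm can be taken to be $\|u\|_{s,r} = \|\langle D\rangle^s \langle x\rangle^r u\|_{L^2}$. Since $\mathcal{F}$ conjugates multiplication by $\langle x\rangle$ into $\langle D\rangle$ (up to the sign flip $x\mapsto -x$, harmless because $\langle\cdot\rangle$ is even) and $\langle D\rangle$ into multiplication by $\langle \xi\rangle$, we get
\[
\|\mathcal{F}u\|_{r,s} = \|\langle D\rangle^r \langle x\rangle^s \mathcal{F} u\|_{L^2} = (2\pi)^{n/2}\,\|\langle x\rangle^{r} \langle D\rangle^{s} u\|_{L^2}.
\]
The task is therefore to show that $\|\langle x\rangle^r\langle D\rangle^s u\|_{L^2} \lesssim \|\langle D\rangle^s\langle x\rangle^r u\|_{L^2}$, i.e. that the two natural orderings of weight and derivative give equivalent norms.

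To do this, set $v = \langle D\rangle^s\langle x\rangle^r u \in L^2$. Then $\langle x\rangle^r\langle D\rangle^s u = T v$ with
\[
T = \langle x\rangle^{r}\langle D\rangle^{s}\langle x\rangle^{-r}\langle D\rangle^{-s}.
\]
Each factor is a scattering pseudodifferential operator: $\langle x\rangle^{\pm r} \in \Psi_{\mathrm{sc}}^{0,\pm r}$ and $\langle D\rangle^{\pm s}\in\Psi_{\mathrm{sc}}^{\pm s,0}$. By the composition result of Lecture 2, $T \in \Psi_{\mathrm{sc}}^{0,0}$. It then suffices to know that elements of $\Psi_{\mathrm{sc}}^{0,0}\subset \Psi^0$ are bounded on $L^2$, which is the Calderón–Vaillancourt theorem for the Hörmander class $\Psi^0$ and may be taken as a prerequisite. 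This yields $\|\mathcal{F}u\|_{r,s}\le C\|u\|_{s,r}$ for $u\in\mathcal{S}$.

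Density of $\mathcal{S}$ in $H_{\mathrm{sc}}^{s,r}$ then finishes the argument. The computation is valid on $\mathcal{S}$, and $\mathcal{F}$ is continuous on $\mathcal{S}'$, so the bounded extension agrees with $\mathcal{F}$. Density holds because $\mathcal{S}$ is dense in $H^s$ and multiplication by $\langle x\rangle^{-r}$ is an isomorphism $H^s \to H^{s,r}_{\mathrm{sc}}$ preserving $\mathcal{S}$.

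The main obstacle is the commutation step, i.e. the claim that $\langle x\rangle^r$ and $\langle D\rangle^s$ can be swapped at the cost of an $L^2$-bounded operator. This is where the scattering calculus is needed: in the plain Hörmander calculus the weight $\langle x\rangle^r$ is not a symbol of finite order. If one prefers to avoid invoking composition for non-classical symbols, an alternative is to note that $T$ is precisely $\mathcal{F}^{-1}$-conjugate to an operator of the same type with roles reversed, using Proposition~\ref{prop:Fourier}; the $L^2$ boundedness input is unavoidable either way.
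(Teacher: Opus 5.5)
Your proof is correct and is essentially the argument the paper intends. The paper leaves this as an exercise right after Proposition~\ref{prop:Fourier}, and your reduction via Plancherel plus the Fourier intertwining of $\langle x\rangle$ and $\langle D\rangle$, down to $L^2$-boundedness of an element of $\Psi_{\mathrm{sc}}^{0,0}$, is exactly what that proposition is meant to deliver. You could shorten the end by quoting the paper's mapping result directly, since $\langle x\rangle^{r}\langle D\rangle^{s}\in\Psi_{\mathrm{sc}}^{s,r}$ maps $H_{\mathrm{sc}}^{s,r}\to L^2$; the density step is harmless but unnecessary, because all the identities you use hold directly on $\mathcal{S}'$.
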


\begin{example}
    Suppose $M\subseteq \mathbb{R}^n$ is a closed codimension $k$-submanifold of $\mathbb{R}^n$. Then, the Dirac-$\delta$ function $\delta_M$ lies in $H^{s,\el}_{\mathrm{sc}}$ whenever $s<-k/2$. So, $\mathcal{F} \delta_M \in H_{\mathrm{sc}}^{s,r}$ whenever $\el<-k/2$. 

    For example, if $M=\{0\}$, then $\mathcal{F} \delta_M = 1$. We are claiming that $1 \in \langle r \rangle^{\el} H^s$ whenever $\el<-n/2$, which is true.  
\end{example}

\begin{proposition}
    If $A\in \Psi_{\mathrm{sc}}^{m,\el}$, then, for all $s,r\in \mathbb{R}$, $A$ maps 
    \begin{equation}
        A: H_{\mathrm{sc}}^{s,r} \to H_{\mathrm{sc}}^{s-m,r-\el}, 
    \end{equation}
    and does so boundedly.
\end{proposition}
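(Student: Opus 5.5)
The plan is to reduce everything to $L^2$-boundedness of operators in $\Psisc^{0,0}$, and then to use the composition theorem together with the weights $\ang{x}^{r}$ and $\ang{D}^{s}$.

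\emph{Step 1: the reduction.} The key observation is that the operators
$$
\Lambda_{s,r} := \ang{x}^{r}\ang{D}^{s}
$$
belong to $\Psisc^{s,r}$. Indeed, $\ang{D}^s$ is a Fourier multiplier with symbol $\ang{\xi}^s\in S^{s,0}_\sca$, and multiplication by $\ang{x}^r$ has symbol $\ang{x}^r\in S^{0,r}_\sca$; we then invoke the composition theorem. By the definition $H^{s,r}_\sca=\ang{x}^{-r}H^s$, the map $\Lambda_{s,r}$ is an isomorphism $H^{s,r}_\sca\to L^2$ with inverse $\ang{D}^{-s}\ang{x}^{-r}$. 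Therefore $A$ is bounded $H^{s,r}_\sca\to H^{s-m,r-l}_\sca$ if and only if
$$
\Lambda_{s-m,r-l}\,A\,\ang{D}^{-s}\ang{x}^{-r}
$$
is bounded on $L^2$. This operator is a composition of elements of $\Psisc^{s-m,r-l}$, $\Psisc^{m,l}$, $\Psisc^{-s,0}$ and $\Psisc^{0,-r}$, so by the bigraded algebra property it lies in $\Psisc^{0,0}$. It therefore suffices to prove that every $B\in\Psisc^{0,0}$ is bounded on $L^2$. (All compositions are first understood as acting on $\Schw$; we then extend by density, noting that $\Schw$ is dense in each $H^{s,r}_\sca$.)

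\emph{Step 2: $L^2$-boundedness.} The shortest route is to note that $\Psisc^{0,0}\subset\Psi^0$ and to cite the Calder\'on--Vaillancourt theorem, which gives $L^2$-boundedness of the uniform H\"ormander class. Since the paper takes the H\"ormander calculus as background, this is legitimate.

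If a self-contained argument is preferred, I would use H\"ormander's square-root trick. Let $B=\Op(b)$ and pick $M>\sup|b|^2$. The symbol $M-|b|^2$ is then bounded below by a positive constant and lies in $S^{0,0}_\sca$, so its square root $c=(M-|b|^2)^{1/2}$ also lies in $S^{0,0}_\sca$; this requires checking that derivatives of $\sqrt{\cdot}$ of a symbol bounded below remain symbols. Setting $C=\Op(c)$, the composition and adjoint calculus give
$$
M - B^*B - C^*C = R_1\in\Psisc^{-1,-1}.
$$
We iterate to improve the error: correcting $C$ by lower-order terms, solved symbolically, yields $R_N\in\Psisc^{-N,-N}$. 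For $N>n$ the symbol of $R_N$ is integrable enough that its Schwartz kernel satisfies Schur's test, so $R_N$ is bounded on $L^2$. Then
$$
\|Bu\|^2\le M\|u\|^2+|\langle R_N u,u\rangle|
$$
for $u\in\Schw$, which gives the result.

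\emph{The main obstacle} is this $L^2$-boundedness step. In particular, the iterative correction needs the adjoint formula $\sigma(B^*)\equiv\bar b$ modulo lower order, which has not been stated explicitly in the paper. Its proof is by the same left/right reduction argument, since $B^*$ is the right quantization of $\bar b$. Because of this, I would rely on Calder\'on--Vaillancourt as the primary argument, and keep the square-root construction as a remark.
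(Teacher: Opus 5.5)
Your proposal is correct and is essentially the paper's argument. The paper conjugates only by spatial weights, getting $\ang{x}^{r-\el}A\ang{x}^{-r}\in\Psisc^{m,0}\subset\Psi^m$, and cites the H\"ormander-class boundedness $H^s\to H^{s-m}$; you also conjugate away the differential order and cite $L^2$-boundedness of $\Psi^0$ instead. One small slip: with $H^{s,r}_{\mathrm{sc}}=\ang{x}^{-r}H^s$, the isomorphism onto $L^2$ that holds by definition is $\ang{D}^s\ang{x}^r$, not $\ang{x}^r\ang{D}^s$. You can either swap the factors (the conjugated operator still lies in $\Psisc^{0,0}$), or note that your ordering follows from Step 2, which does not depend on Step 1.
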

So,  if $m,\el>0$, then applying $A\in \Psi_{\mathrm{sc}}^{m,\el}$ reduces regularity by $m$ orders \emph{and} reduces decay by $\el$ orders. Similar statements apply if $m\leq 0$ or $\el\leq 0$, switching ``reduces'' to ``increases'' where necessary.

The proposition is an easy corollary of the $L^2$-boundedness of ordinary $\Psi$DOs:
\begin{proof}
    We have a commutative diagram 
    \begin{equation}
        \xymatrix{ 
        H^{s,r}_{\mathrm{sc}} \ar[rr]^A \ar[d]_{\times \langle x \rangle^{r} } & & H_{\mathrm{sc}}^{s-m,r-\el} \\ 
        H^{s}  \ar[rr]_{   \langle x \rangle^{r-\el} A\langle x \rangle^{-r} } & &H^{s-m} \ar[u]_{\times \langle x \rangle^{\el-r} } 
        }
    \end{equation}
    in which all of the maps except the top map are known to be bounded.  (The reason we know that $\langle x \rangle^{r-\el} A\langle x \rangle^{-r}$ maps $H^{s}\to H^{s-m}$ boundedly is that, since $\langle x \rangle^b \in \Psi_{\mathrm{sc}}^{0,b}$ for any $b$, 
    \begin{equation}
            \langle x \rangle^{r-\el} A\langle x \rangle^{-r} \in \Psi_{\mathrm{sc}}^{0,r-\el} \Psi_{\mathrm{sc}}^{m,\el} \Psi_{\mathrm{sc}}^{0,-r} \subseteq \Psi_{\mathrm{sc}}^{m,0} \subset \Psi^m,  
    \end{equation}
    and we already know, as part of the theory of standard (H\"ormander-class) pseudodifferential operators, that  elements of $\Psi^m$ map $H^{s}\to H^{s-m}$  boundedly.)

    So, $A$ maps $H_{\mathrm{sc}}^{s,r} \to H_{\mathrm{sc}}^{s-m,r-\el}$, and does so boundedly.
\end{proof}

\begin{remark}
    It is sometimes useful to note that $\operatorname{Op}(a)$, viewed as an element of $\mathcal{L}(H_{\mathrm{sc}}^{s,r},H_{\mathrm{sc}}^{s-m,r-\el})$, depends continuously on $a$ in the sense that if $a_1,a_2,\dots \in S_{\mathrm{sc}}^{m,\el}$ is some sequence such that $a_n\to a$ in $S_{\mathrm{sc}}^{m,\el}$, then 
    \[\operatorname{Op}(a_n)\to \operatorname{Op}(a)\] in $\mathcal{L}(H_{\mathrm{sc}}^{s,r},H_{\mathrm{sc}}^{s-m,r-\el})$.
\end{remark}

\subsection{The Rellich theorem}

Recall the \emph{Rellich compactness theorem}, which states that if $X$ is a closed manifold (i.e. compact with no boundary), then the inclusion $H^{s'}(X)\hookrightarrow H^{s}(X)$ is compact whenever $s<s'$. 

Importantly, this fails when $X=\mathbb{R}^n$. Indeed, fix $u\in C_{\mathrm{c}}^\infty(\mathbb{R}^n)$, not identically zero. Then, for any nonzero $\mathbf{k}\in \mathbb{R}^n$, the translates $u_n = u(\bullet +n\mathbf{k} )$ converge weakly to $0$ in $H^{s'}$ for every $s \in \mathbb{R}$ as $n \to \infty$. (Why?) But, $\lVert u_n \rVert_{H^s} \neq 0$ is independent of $n$, and therefore not converging to $0$. So, 
$H^{s'}(\mathbb{R}^n)\hookrightarrow H^{s}(\mathbb{R}^n )$
is never compact.
Instead, the ``correct'' analogue of the Rellich compactness theorem on $\mathbb{R}^n$ is:
\begin{proposition}\label{prop:comp emb}
    For any $s,s',r,r'$, with $s<s'$ and $r<r'$, the embedding $H_{\mathrm{sc}}^{s',r'} \hookrightarrow H_{\mathrm{sc}}^{s,r}$ is compact.
\end{proposition}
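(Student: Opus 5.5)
We will reduce the statement to the compactness of a single residual operator on $L^2$. By definition $H_{\mathrm{sc}}^{s,r} = \langle x\rangle^{-r} H^s$, and the maps $\langle \xi\rangle^{s}(D)$ and $\langle x\rangle^{r}$ are elements of $\Psi_{\mathrm{sc}}^{s,0}$ and $\Psi_{\mathrm{sc}}^{0,r}$. Set $\Lambda_{s,r} := \langle x\rangle^{r}\langle D\rangle^{s} \in \Psi_{\mathrm{sc}}^{s,r}$. By the mapping proposition for $\Psi_{\mathrm{sc}}$, $\Lambda_{s,r}$ is an isomorphism $H_{\mathrm{sc}}^{s,r}\to L^2$, with inverse $\langle D\rangle^{-s}\langle x\rangle^{-r}$, which is also a scattering operator. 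Hence the inclusion $\iota: H_{\mathrm{sc}}^{s',r'}\hookrightarrow H_{\mathrm{sc}}^{s,r}$ is compact if and only if
\[
T := \Lambda_{s,r}\,\Lambda_{s',r'}^{-1} : L^2 \to L^2
\]
is compact. By the composition theorem, $T \in \Psi_{\mathrm{sc}}^{-\delta,-\delta}$ with $\delta := \min(s'-s, r'-r) > 0$.

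So the key claim is that every $T\in\Psi_{\mathrm{sc}}^{-\delta,-\delta}$ with $\delta>0$ is compact on $L^2$. The plan is to approximate $T$ in operator norm by operators with Schwartz kernels. Let $a = \sigma_L(T) \in S_{\mathrm{sc}}^{-\delta,-\delta}$ and take the truncations $a_j(x,\xi) = \chi(x/j)\chi(\xi/j)\,a(x,\xi)$ from Problem~\ref{prob:density symbols}. Each $a_j$ is residual, so $\operatorname{Op}(a_j)$ has a Schwartz kernel. Such an operator is Hilbert--Schmidt, and in particular compact, on $L^2$.

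The main obstacle is showing $\operatorname{Op}(a_j)\to \operatorname{Op}(a)$ in $\mathcal{L}(L^2)$. The symbols $a_j$ do not converge to $a$ in $S^{-\delta,-\delta}_{\mathrm{sc}}$, but by the density statement they do converge in $S^{0,0}_{\mathrm{sc}}$, since $0 > -\delta$. Concretely, the seminorms of $a - a_j$ in $S^{0,0}_{\mathrm{sc}}$ are controlled by $\sup \langle x\rangle^{-\delta}\langle \xi\rangle^{-\delta}$ over the region where $|x|\ge j$ or $|\xi|\ge j$, which is $O(j^{-\delta})$. The derivatives falling on the cutoffs are harmless because $\chi'(x/j)/j \lesssim \langle x\rangle^{-1}$ on its support. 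The remark on continuous dependence of $\operatorname{Op}(a)$ on $a$, applied with orders $(0,0)$ and $s=r=0$, then gives $\|\operatorname{Op}(a)-\operatorname{Op}(a_j)\|_{L^2\to L^2}\to 0$.

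Finally, $T$ is a norm limit of compact operators and is therefore compact. Consequently $\iota = \Lambda_{s,r}^{-1} T \Lambda_{s',r'}$ is compact, which proves the proposition.
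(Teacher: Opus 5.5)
Your argument is correct, but it takes a different route from the paper's.

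The paper reduces to showing that $H^{\varepsilon,\varepsilon}_{\mathrm{sc}}\hookrightarrow L^2$ is compact. It then splits the identity as $M_R + (1-M_R)$ with $M_R=\chi(x/R)$.
\begin{itemize}
\item The localized piece $M_R$ is approximated by finite-rank operators, using the classical Rellich theorem on a compact manifold (a large torus).
\item The tail $1-M_R$ is small in norm because $\langle x\rangle^{-\varepsilon}(1-\chi(\cdot/R))\to 0$ in $L^\infty$.
\end{itemize}
So the paper treats the two orders asymmetrically: frequency decay is handled by the local Rellich theorem, and spatial decay by the weight.

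You instead conjugate the inclusion by the isomorphisms $\Lambda_{s,r}$ to a single operator $T\in\Psi^{-\delta,-\delta}_{\mathrm{sc}}$ acting on $L^2$. You then truncate its symbol in $x$ and $\xi$ simultaneously. This rests on two facts: residual operators have Schwartz kernels and so are Hilbert--Schmidt, and the $L^2$ operator norm is controlled by $S^{0,0}_{\mathrm{sc}}$ seminorms. Your estimate that these seminorms of $a-a_j$ are $O(j^{-\delta})$ is right, including the handling of derivatives falling on the cutoffs.

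What your route buys:
\begin{itemize}
\item It is symmetric in $(x,\xi)$, in the spirit of the Fourier symmetry of the calculus.
\item It avoids the classical Rellich theorem entirely.
\item It proves directly the stronger statement that every element of $\Psi^{-\delta,-\delta}_{\mathrm{sc}}$, $\delta>0$, is compact on every $H^{s,r}_{\mathrm{sc}}$. This is exactly what the paper uses immediately afterwards: principal symbols determine operators modulo compact errors.
\end{itemize}
The cost is that you rely on the $L^2$-continuity of quantization with respect to the symbol topology, a Calder\'on--Vaillancourt type bound that the paper records only as a remark. The paper's second version of the tail estimate needs only a trivial $L^\infty$ bound on a multiplication operator.
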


\begin{proof}[Proof Idea]
    The full result follows easily from the $s,r=0$ case.
    
    So, we want to show that if $\varepsilon>0$, then the inclusion $H_{\mathrm{sc}}^{\varepsilon,\varepsilon} \hookrightarrow L^2$ is compact. In other words, we want to show that if $u_1,u_2,\dots$ is a sequence of elements of $H_{\mathrm{sc}}^{\varepsilon,\varepsilon}$ converging weakly to $0$ in this space, then $\lVert u_n \rVert_{L^2}\to 0$.

    Via the Rellich compactness of the inclusion $H^{\varepsilon} \hookrightarrow L^2$ on \emph{compact manifolds} (say, the torus $\mathbb{R}^n / \Lambda \mathbb{Z}^n$ for $\Lambda$ large) (the Rellich compactness theorem), it must be the case that $\lVert \chi u_n \rVert_{L^2}\to 0$ for any $\chi \in C_{\mathrm{c}}^\infty(\mathbb{R}^n)$. This means that the $L^2$-mass of the $u_n$ is leaving every compact subset. But we want to say that the $L^2$-mass of the $u_n$ is going to zero. So, what we need to rule out is that the $L^2$ mass of the $u_n$ ``escapes'' to spatial infinity without decaying. Intuitively, it makes sense that this is ruled out by weak convergence in $H_{\mathrm{sc}}^{\varepsilon,\varepsilon}$; were $u_n$ to escape,  it would be expected to be possible to construct an adversarial $v \in H_{\mathrm{sc}}^{-\varepsilon,-\varepsilon}  = (H_{\mathrm{sc}}^{\varepsilon,\varepsilon})^*$ such that $\langle v, u_n \rangle \not\to 0$.  
    
    Rather than argue along these lines, it is somewhat easier to use an alternative characterization of compact maps between Hilbertizable spaces, namely that the compact maps are those in the closure (under the operator norm) of the set of finite-rank operators.  
\end{proof}
\begin{proof}
We will find a finite rank operator $F_R$ that approximates $1: H_{\mathrm{sc}}^{\epsilon,\epsilon} \to L^2$.
    For each $R>0$, consider the multiplication operator $M_R=\chi(x/R)$, where $\chi \in C_c^\infty(\R^n)$ is identically $1$ on the unit ball. By the Rellich compactness theorem of compact manifolds, there exists a finite-rank operator $F_R:H_{\mathrm{sc}}^{\varepsilon,\varepsilon} \to L^2$ such that $\lVert M_R - F_R \rVert_{ H_{\mathrm{sc}}^{\varepsilon,\varepsilon} \to L^2}\leq 1/R$. Now, 
    \begin{equation}
        \lVert 1 - F_R \rVert_{H_{\mathrm{sc}}^{\varepsilon,\varepsilon} \to L^2} \leq   \lVert 1 - M_R \rVert_{H_{\mathrm{sc}}^{\varepsilon,\varepsilon} \to L^2}+ \lVert M_R - F_R \rVert_{H_{\mathrm{sc}}^{\varepsilon,\varepsilon} \to L^2} \leq \lVert 1 - M_R \rVert_{H_{\mathrm{sc}}^{\varepsilon,\varepsilon} \to L^2} + 1/R .
    \end{equation}
    But, $\chi(\bullet/R)$ converges, as $R\to\infty$, to $1$ in $S_{\mathrm{sc}}^{\varepsilon,\varepsilon}$, for every $\varepsilon>0$ --- see Problem~\ref{prob:density symbols}. It follows that $\lVert 1 - M_R \rVert_{H_{\mathrm{sc}}^{\varepsilon,\varepsilon} \to L^2} \to 0$ as $R\to\infty$.
    Alternatively, it suffices to note that $\chi(\bullet/R)$ converges to $1$, as $R\to\infty$,  as a map $H_{\mathrm{sc}}^{0,\varepsilon} = \ang{x}^{-\epsilon} L^2 \to L^2$, for every $\varepsilon>0$. In fact, this only requires observing that $\ang{x}^{-\epsilon}(\chi(\cdot/R) - 1)$ tends to zero in $L^\infty$, which is obvious. 
\end{proof}

Suppose that $A,A'\in \Psi_{\mathrm{sc}}^{0,0}$ have the same principal symbol. Then, it follows that 
\begin{equation}
    A-A' \in \Psi_{\mathrm{sc}}^{-1,-1}. 
\end{equation}
So, the difference $K=A-A'$ is compact acting on any fixed sc-Sobolev space. \emph{In the sc-calculus, principal symbols capture operators modulo compact errors}. 
This is the single most important property that the sc-calculus has that H\"ormander's $\Psi$ does not --- it is why $\Psi_{\mathrm{sc}}$ is as useful as it is when doing scattering theory.

\subsection{A quick review of abstract Fredholm theory}

It is a fact of life that many PDEs one cares about are not well-posed. For example, if $\triangle_g$ denotes the Laplace--Beltrami operator on a closed Riemannian manifold $(M,g)$, then, for $\lambda\in \mathbb{R}$, the problem 
\begin{equation} 
\begin{cases}
u\in H^2(M), \\ 
\triangle_g u - \lambda u = f \in L^2
\end{cases}  
\end{equation}
is only well-posed if $\lambda$ is not an eigenvalue of $\triangle_g$. 

After well-posedness, Fredholmness is the next best thing.
Recall that if $\mathcal{X},\mathcal{Y}$ are two Banach spaces and $P:\mathcal{X}\to \mathcal{Y}$ is a bounded linear map, then $P$ is said to be \emph{Fredholm} if the following three conditions are all satisfied: 
\begin{enumerate}[label=(\roman*)]
    \item $P$ has closed range, 
    \item $P$ has finite-dimensional null space,
    \item $P$ has finite-dimensional cokernel $\mathcal{Y}/ P \mathcal{X}\cong (P\mathcal{X})^\perp$. 
\end{enumerate}

Roughly, being Fredholm means being invertible modulo a finite-dimensional obstruction.

\begin{example}
If $(M,g)$ is a closed Riemannian manifold, then, for any first-order differential operator $Q$ with smooth coefficients, the operator $P=\triangle_g + Q$ is Fredholm as a map $H^s(M) \to  H^{s-2}(M)$, for any $s\in \mathbb{R}$. Thus, every eigenvalue of $\triangle_g + Q$ has finite multiplicity.
\end{example} 

\begin{example}
    On $\mathbb{R}^n$, if 
    \[ 
        \triangle = \sum_{j=1}^n D_{x_j}^2 = -\sum_{j=1}^n \partial_{x_j}^2
    \]
    denotes the \emph{positive semi-definite} Laplacian and $\lambda>0$, then $\triangle+\lambda$ is Fredholm as a map $H^{s}(M) \to  H^{s-2}(M)$, for any $s\in \mathbb{R}$. This is easily seen by passing to Fourier transforms. 

    This is not true if $\lambda\leq 0$. Indeed, then $P=\triangle+\lambda$, considered as a map $H^2\to L^2$, has range 
    \[
        \operatorname{range} P = \mathcal{F} \{u\in L^2 : (\xi^2 - \lambda)^{-1} u \in L^2(\mathbb{R}^n) \} 
    \]
    which is dense in $L^2$ but not all of $L^2$, and therefore not closed.
\end{example}

\textbf{Moral.} Unlike on closed manifolds, Fredholmness on non-compact spaces can depend on seemingly lower-order operators. 
The reason for ``seemingly'' in the previous sentence is that $\triangle \in \Psi_{\mathrm{sc}}^{2,0}$ and $+\lambda \in \Psi_{\mathrm{sc}}^{0,0}$; so in terms of differential order, it is true that $\triangle$ is higher-order, but in terms of \emph{decay order}, they are both $0$th order.

Let $P:\mathcal{X}\to \mathcal{Y}$ denote a bounded linear map. A \emph{semi-Fredholm} estimate is one of the form 
        \begin{align}\label{eq:semiFred}
            \lVert u \rVert_{\mathcal{X}} &\leq C( \lVert Pu \rVert_{\mathcal{Y}} + \lVert  \iota u \rVert_{\mathcal{Z}})
        \end{align}
for $\iota: \mathcal{X}\to \mathcal{Z}$ a compact injection from $\mathcal{X}$ to some other Banach space $\mathcal{Z}$, and $C>0$ a constant (independent of $u$). This essentially means that we can control $u$ in terms of $Pu$ and some \emph{weak} norm $\lVert \iota u \rVert_{\mathcal{Z}}$ of $u$. 
\begin{remark}
    If $P$ had no null space, then we would have an estimate $\lVert u \rVert_{\mathcal{X}} \leq C \lVert Pu \rVert_{\mathcal{Y}}$. So, a semi-Fredholm estimate is almost as good, except we have to allow a ``small'' error on the right-hand side to accomodate the fact that $P$ may have null space.
\end{remark}

The following fact about Fredholm operators is ``standard.''
\begin{proposition}
    Fix Banach spaces $\mathcal{X},\mathcal{Y}$. Then, the following are equivalent: 
    \begin{enumerate}[label=(\roman*)]
        \item  $P$ is Fredholm. 
        \item $P$ is invertible modulo a compact error. That is, there exists a bounded linear operator $Q:\mathcal{Y}\to \mathcal{X}$ such that 
        \begin{equation}\label{eq:parametrix}
        Q P - \operatorname{id}_{\mathcal{X}}, PQ - \operatorname{id}_{\mathcal{Y}} 
        \end{equation}
        are compact. 
        \item (Semi-Fredholm estimates.) $P$ and $P^*:\mathcal{Y}^* \to \mathcal{X}^*$ both satisfy a semi-Fredholm estimate.
    \end{enumerate}
    \label{prop:Fred}
\end{proposition}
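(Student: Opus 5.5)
I will prove the cycle of implications (i) $\Rightarrow$ (ii) $\Rightarrow$ (iii) $\Rightarrow$ (i). Throughout, a compact operator is one that maps bounded sets to precompact sets. I will use two standard facts: the Hahn--Banach theorem, and the fact that a finite-dimensional subspace of a Banach space admits a closed complement (a closed subspace of finite codimension does too).

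\emph{(i) $\Rightarrow$ (ii).} Let $N = \ker P$, which is finite-dimensional. Choose a closed complement $\mathcal{X}_1$, so that $\mathcal{X} = N \oplus \mathcal{X}_1$. Since $P\mathcal{X}$ is closed of finite codimension, choose a finite-dimensional complement $F$, so that $\mathcal{Y} = P\mathcal{X} \oplus F$. The restriction $P|_{\mathcal{X}_1} : \mathcal{X}_1 \to P\mathcal{X}$ is a continuous bijection between Banach spaces, so by the open mapping theorem it has a bounded inverse $Q_1$. Define $Q = Q_1 \circ \pi$, where $\pi$ is the bounded projection onto $P\mathcal{X}$ along $F$. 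Then $QP - \mathrm{id}_{\mathcal{X}}$ is minus the projection onto $N$, and $PQ - \mathrm{id}_{\mathcal{Y}}$ is minus the projection onto $F$. Both are finite rank, hence compact.

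\emph{(ii) $\Rightarrow$ (iii).} Write $QP = \mathrm{id} + K$ with $K$ compact. For $u \in \mathcal{X}$ this gives
\[
\|u\|_{\mathcal{X}} \leq \|Q\|\,\|Pu\|_{\mathcal{Y}} + \|Ku\|_{\mathcal{X}}.
\]
This is almost a semi-Fredholm estimate, but $K$ need not be injective. To fix this, let $\mathcal{Z} = \mathcal{X} \oplus \mathcal{X}$ with $\iota u = (Ku, \epsilon T u)$, where $T : \mathcal{X} \to \mathcal{X}$ is a fixed injective compact operator. Such a $T$ exists, for example by summing rank-one operators $2^{-j}\langle \cdot, f_j \rangle e_j$ where $(f_j)$ is a separating sequence of functionals, which can be chosen when $\mathcal{X}$ is separable; this is the case in all our applications. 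Then $\iota$ is a compact injection and the estimate holds.

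Alternatively, I could adopt the convention that compact injectivity is not required, since it is not used below. I will note this as the delicate point of the definition. For the adjoint, taking adjoints of \eqref{eq:parametrix} gives $P^*Q^* - \mathrm{id} = K'^*$, which is compact by Schauder's theorem. The same argument then yields the estimate for $P^*$.

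\emph{(iii) $\Rightarrow$ (i).} This is the main step. First I show that $\ker P$ is finite-dimensional. On $\ker P$ the estimate reads $\|u\| \leq C\|\iota u\|$. So the unit ball of $\ker P$ is mapped by $\iota$ to a precompact set, and by this inequality it is therefore itself precompact. Riesz's lemma then gives $\dim \ker P < \infty$.

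Next I show that the range is closed. Let $\mathcal{X}_1$ be a closed complement of $\ker P$. I claim that $\|u\| \leq C'\|Pu\|$ for all $u \in \mathcal{X}_1$. If this failed, there would be $u_k \in \mathcal{X}_1$ with $\|u_k\| = 1$ and $Pu_k \to 0$. By compactness, after passing to a subsequence, $\iota u_k$ converges. Applying the estimate to $u_k - u_j$ shows that $(u_k)$ is Cauchy, so $u_k \to u \in \mathcal{X}_1$ with $\|u\| = 1$ and $Pu = 0$. This contradicts $\mathcal{X}_1 \cap \ker P = 0$. The claim implies the range is closed: if $Pu_k \to y$ with $u_k \in \mathcal{X}_1$, then $(u_k)$ is Cauchy and its limit maps to $y$.

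Finally, the cokernel. Because the range is closed, $(P\mathcal{X})^\perp = \ker P^*$, and by Hahn--Banach the annihilator of a closed subspace identifies with the dual of the quotient. So $\dim \mathcal{Y}/P\mathcal{X} = \dim \ker P^*$. This dimension is finite by the same Riesz argument applied to the semi-Fredholm estimate for $P^*$.
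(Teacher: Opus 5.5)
Your proposal is correct and follows essentially the same route as the paper. For (ii)$\Rightarrow$(iii) you use the same $\mathcal{X}\oplus\mathcal{X}$ trick to make $\iota$ injective, and your caveat (a compact injection needs some separability, or injectivity can simply be dropped) is more careful than the paper, which just asserts one exists. For (iii)$\Rightarrow$(i) you use the same kernel/closed-range/annihilator argument, and you also supply (i)$\Rightarrow$(ii), which the paper calls standard.

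One small slip: the identity you need for $P^*$ is $Q^*P^*-\mathrm{id}_{\mathcal{Y}^*}=(PQ-\mathrm{id}_{\mathcal{Y}})^*$, which is a left parametrix on $\mathcal{Y}^*$. You wrote $P^*Q^*-\mathrm{id}$, which is $(QP-\mathrm{id}_{\mathcal{X}})^*$ and gives no estimate for $P^*$.
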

\begin{proof}
    The equivalence of (i), (ii) really is a standard fact, explained in many functional analysis texts, so we will focus on the equivalence of (iii) with the other two.
    \begin{itemize}
        \item (iii) $\Rightarrow$ (i): Consider the semi-Fredholm estimate \eqref{eq:semiFred} restricted to the kernel of $P$. This gives us 
        $$
        \lVert u \rVert_{\mathcal{X}} \leq C\lVert  \iota u \rVert_{\mathcal{Z}}, \quad u \in \ker P. 
        $$
        Thus $\iota$ has a bounded left inverse. As $\iota$ is compact, this is only possible if the kernel of $P$ is finite-dimensional. 

        To see that the range of $P$ is closed: suppose that $u_1,u_2,u_3,\dots$ is an infinite sequence of elements of $\mathcal{X}$ such that $Pu_j \to f$. Suppose we knew that the $u_j$ (or a subsequence thereof) stayed within some fixed ball in $\mathcal{X}$. Then, by compactness of $\iota$, we can pass to a subsequence (which we still denote $(u_j)$) for which $\iota u_j$ converges in $Z$. The semi-Fredholm estimate shows that this subsequence is Cauchy and hence converges in $\mathcal{X}$, say to $u_\infty$. This implies that $P u_j$ converges to $P u_\infty$. Thus, $f = P u_\infty$ is in the range of $P$, so the range is closed. 

        Of course, it need not be the case that $u_j$ stay bounded in $\mathcal{X}$, since we always could add an arbitrarily large element of $\ker P$ to $u_j$ without changing its image under $P$. To fix this we can choose $u_j$ to lie in some complementary subspace $W$ to the kernel of $P$ (which exists as the kernel is finite-dimensional). Now suppose that still $\lVert u_j \rVert \to \infty$. Let $\hat{u}_j = u_j / \lVert u_j \rVert$. 
        Since these stay bounded in $\mathcal{X}$, we can apply the argument above to $\hat u_j$. Thus, after passing to a subsequence, we may arrange that $\hat u_j$ converges in $\mathcal{X}$, say to $v$. Since $\| \hat u_j \| = 1$ for all $j$, we have $\| v \| = 1$, so in particular, $v \neq 0$. Since each $\hat u_j$ lies in the closed subspace $W$, we have $v \in W$. And since $P u_j \to f$, and $\| u_j \| \to \infty$, we have $P v = \lim P \hat u_j = \lim P u_j / \| u_j \| = 0$. So now we have $v \neq 0$ and is in both $W$ and the kernel of $P$. This is a contradiction, that shows that the sequence $u_j$ must remain in some fixed ball, so the earlier argument applies and shows that the range of $P$ is closed.

        From this, it follows that the cokernel is isomorphic to $\ker P^*$ (see \Cref{prob:kerPadj}). The argument above shows that the finite-dimensionality of $\ker P^*$ follows from the semi-Fredholm estimate for $P^*$.

        \item (ii) $\Rightarrow$ (iii). Let $QP-\operatorname{id}_{\mathcal{X}} = K$, which we assume is a compact operator on $\mathcal{X}$. Then, 
        \begin{equation}
            \lVert u \rVert_{\mathcal{X}} \leq  \lVert QP u \rVert_{\mathcal{X}}+ \lVert K u \rVert_{\mathcal{X}} \lesssim \lVert Pu \rVert_{\mathcal{Y}} + \lVert K u \rVert_{\mathcal{X}}.
        \end{equation}
        This is almost itself a semi-Fredholm estimate, except that $K$ is not injective. To remedy this (although the issue is actually inconsequential, as a semi-Fredholm-like estimate without the injectivity condition on $\iota$ is morally \emph{stronger} than with the injectivity condition!)
        we let $\mathcal{Z} := \mathcal{X} \oplus \mathcal{X}$, choose a compact injection $\tilde K : \mathcal{X} \to \mathcal{X}$, and define $\iota(x) = (Kx, \tilde K x)$. Then $\iota$ is a compact injection and we have 
        \begin{equation}
            \lVert u \rVert_{\mathcal{X}}\lesssim \lVert Pu \rVert_{\mathcal{Y}} + \lVert \iota u \rVert_{\mathcal{Z}}. 
        \end{equation}
        So, we get a semi-Fredholm estimate for $P$. 

        Applying the same argument to $P^*$, we conclude that it satisfies a semi-Fredholm estimate.
    \end{itemize}
    
\end{proof}
The operator $Q$ in \eqref{eq:parametrix} is called a \emph{parametrix} for $P$. 

\subsection{Elliptic Operators are Fredholm}

Now the main result of this lecture: 
\begin{proposition}
    If $A\in \Psi_{\mathrm{sc}}^{m,\el}$ is elliptic, then it is Fredholm as a map $H^{s,r}_{\mathrm{sc}} \to H_{\mathrm{sc}}^{s-m,r-\el}$. 
\end{proposition}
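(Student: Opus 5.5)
The plan is to combine the elliptic parametrix construction from Lecture 2 with the compact embedding of Proposition~\ref{prop:comp emb}, and then apply the characterization (ii) of Fredholmness in Proposition~\ref{prop:Fred}.

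First, since $A \in \Psisc^{m,\el}$ is elliptic, the parametrix proposition gives $B \in \Psisc^{-m,-\el}$ with
\begin{equation*}
BA - \Id = R_1, \qquad AB - \Id = R_2, \qquad R_1, R_2 \in \Psisc^{-\infty,-\infty}.
\end{equation*}
By the boundedness result for scattering operators on weighted Sobolev spaces, $A : H^{s,r}_{\sca} \to H^{s-m,r-\el}_{\sca}$ and $B : H^{s-m,r-\el}_{\sca} \to H^{s,r}_{\sca}$ are bounded. So $Q := B$ is a candidate parametrix in the sense of \eqref{eq:parametrix}.

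It then remains to check that $R_1$ is compact on $H^{s,r}_{\sca}$ and $R_2$ is compact on $H^{s-m,r-\el}_{\sca}$. Since $R_1 \in \Psisc^{-1,-1}$ (indeed of every order), it maps $H^{s,r}_{\sca}$ boundedly into $H^{s+1,r+1}_{\sca}$. Composing with the inclusion $H^{s+1,r+1}_{\sca} \hookrightarrow H^{s,r}_{\sca}$, which is compact by Proposition~\ref{prop:comp emb}, shows $R_1$ is compact on $H^{s,r}_{\sca}$. The same argument, with $(s,r)$ replaced by $(s-m, r-\el)$, handles $R_2$. (Alternatively, the mapping property from Section~\ref{subsec:residual} shows residual operators map into $\Schw$, but the compact-embedding route is cleaner.)

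The implication (ii) $\Rightarrow$ (i) of Proposition~\ref{prop:Fred} then gives Fredholmness. There is no serious obstacle: the only points needing care are that a single $B$ serves as both left and right parametrix, which the parametrix construction ensures, and that the compact embedding needs a strict gain in \emph{both} indices. That gain is exactly what full ellipticity supplies, since the remainders lie in $\Psisc^{-1,-1}$ rather than merely $\Psisc^{-1,0}$ as in the H\"ormander calculus.
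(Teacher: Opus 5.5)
Your proposal is correct and follows the same route as the paper. The paper's proof is a single line: it combines the elliptic parametrix, the compact embedding of Proposition~\ref{prop:comp emb}, and the implication (ii)$\Rightarrow$(i) of Proposition~\ref{prop:Fred}. Your argument spells out that one-line proof, including the correct observation that the remainders $BA-\Id$ and $AB-\Id$ gain in both the differential and the spatial order.
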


Before the proof, we remind the reader that `elliptic' is meant in the scattering sense, which is a stronger condition than ellipticity in the H\"ormander calculus (see Remark~\ref{rem:ellipticity}).

\begin{proof}
    Follows immediately from the elliptic parametrix construction, the characterization \Cref{prop:Fred} of Fredholmness, and the compactness of the embedding $H_{\mathrm{sc}}^{-1,-1} \hookrightarrow L^2(\mathbb{R}^n)$
\end{proof}

Consequently: 

\begin{proposition}
    If $A\in \Psi_{\mathrm{sc}}^{m,\el}$ is elliptic, then $\ker_{\mathcal{S}'} A$ is finite-dimensional. 
\end{proposition}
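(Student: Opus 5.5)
The plan is to reduce the statement about $\mathcal{S}'$ to the Fredholm statement on a single fixed weighted Sobolev space, using elliptic regularity from the parametrix construction. The point to be careful about is that $\mathcal{S}' = \bigcup_{s,r} H^{s,r}_{\mathrm{sc}}$ is a union of infinitely many spaces, so we cannot apply finite-dimensionality of $\ker A$ on each $H^{s,r}_{\mathrm{sc}}$ separately and conclude. Instead we show that every distributional null-space element already lies in one fixed space.

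\textbf{Step 1 (regularity).} Let $u \in \mathcal{S}'$ with $Au = 0$. By the elliptic parametrix construction, choose $B \in \Psisc^{-m,-\el}$ with $BA = \Id + R$, where $R \in \Psisc^{-\infty,-\infty}$. Applying $B$ to $Au = 0$ gives $u = -Ru$. As observed in Section~\ref{subsec:residual}, residual operators map $\mathcal{S}'$ into $\mathcal{S}$, so $u \in \mathcal{S}$. Hence
$$\ker_{\mathcal{S}'} A = \ker_{\mathcal{S}} A \subseteq H^{s,r}_{\mathrm{sc}}$$
for every $s,r$, and in particular we have $\ker_{\mathcal{S}'} A \subseteq \ker\bigl(A : H^{0,0}_{\mathrm{sc}} \to H^{-m,-\el}_{\mathrm{sc}}\bigr)$. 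The reverse inclusion is trivial, since $H^{0,0}_{\mathrm{sc}} = L^2 \subset \mathcal{S}'$.

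\textbf{Step 2 (finiteness).} By the preceding proposition, $A : H^{0,0}_{\mathrm{sc}} \to H^{-m,-\el}_{\mathrm{sc}}$ is Fredholm. Its kernel is therefore finite-dimensional, and by Step 1 this kernel equals $\ker_{\mathcal{S}'} A$.

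There is no real obstacle here. The only point needing care is the one already handled in Step 1: the regularity argument places every distributional null-space element inside a single fixed Sobolev space. This also shows, as a byproduct, that the kernel of $A$ on $H^{s,r}_{\mathrm{sc}}$ is independent of $(s,r)$.
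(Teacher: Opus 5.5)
Your proof is correct and matches the paper's argument: the parametrix identity $BA = \Id + R$ with $R$ residual forces every element of $\ker_{\mathcal{S}'} A$ to be Schwartz, hence to lie in $L^2$. Finite-dimensionality then follows from the Fredholm property of $A : L^2 \to H^{-m,-\el}_{\mathrm{sc}}$.
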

\begin{proof}
    By the elliptic parametrix construction, $\ker_{\mathcal{S}'} A$ consists entirely of Schwartz functions. Thus, they lie in the kernel of $A$ restricted to any individual sc-Sobolev space, say $L^2$. By the the previous proposition, the Fredholmness of $A$ acting on $L^2$ implies that the kernel is finite-dimensional.
\end{proof}

\begin{corollary}
    Consider $P=\triangle + V$, $V\in S^{-\varepsilon}$. For each $E<0$, there are at most finitely many bound states (eigenfunctions) with that energy (eigenvalue).
\end{corollary}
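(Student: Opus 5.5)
The plan is to apply the preceding proposition to the operator $A = P - E = \Delta + V - E$ and identify the bound states with energy $E$ with the null space of $A$.

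The first step is to check that $A$ belongs to the scattering calculus with the right orders. We have $\Delta \in \Psisc^{2,0}$, and the constant $-E$ lies in $\Psisc^{0,0}$. The potential $V \in S^{-\varepsilon}$ is a multiplication operator with symbol independent of $\xi$, hence it lies in $\Psisc^{0,-\varepsilon} \subset \Psisc^{-1,-\varepsilon}$ after weakening the frequency order. Indeed, a function of $x$ alone satisfying the $S^{-\varepsilon}$ estimates satisfies \eqref{eq:sc-symbol-Rn} for any $m \geq 0$; since $\xi$-derivatives vanish, it also does so for every $m$. Thus $A \in \Psisc^{2,0}$.

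The second step is to verify total ellipticity. The symbol of $A$ is $|\xi|^2 - E + V(x)$. Since $V$ has strictly lower order in both filtrations (it lies in $\Psisc^{-\infty,-\varepsilon}$ in the above sense), it does not affect the principal symbol modulo the relevant ideal. More carefully, I would take $B_0 = \Op\big((|\xi|^2 - E)^{-1}\big) \in \Psisc^{-2,0}$, which is legitimate since $E < 0$ makes $|\xi|^2 - E \geq -E > 0$ and comparable to $\ang{\xi}^2$. Then $\sigma_L(A)\sigma_L(B_0) - 1 = V(x)(|\xi|^2 - E)^{-1} \in S^{-2,-\varepsilon}_{\sca}$.

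The mild obstacle is that the definition of ellipticity demands an error in $S^{-1,-1}_{\sca}$, whereas we only gain $\varepsilon$ in decay. I would handle this in one of two ways. One option is to note that the parametrix construction works verbatim with gain $(\delta,\delta)$, $\delta = \min(1,\varepsilon)$, since the Neumann-series asymptotic sum only needs orders tending to $-\infty$. The other is to note directly that the error is compact on every $H^{s,r}_{\sca}$ by Proposition~\ref{prop:comp emb}, so Proposition~\ref{prop:Fred} still yields Fredholmness. Either way, the parametrix argument shows that $\ker_{\Schw'} A$ consists of Schwartz functions, and that $A$ is Fredholm on $L^2 \to H^{-2,0}_{\sca}$.

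Finally, a bound state with energy $E$ is by definition an $L^2$ function $u$ with $Au = 0$. Such a $u$ lies in $\ker_{\Schw'} A$, which is finite-dimensional by the preceding proposition. Hence the space of bound states at energy $E$ is finite-dimensional, which is the meaning of "at most finitely many", i.e.\ finitely many linearly independent eigenfunctions.
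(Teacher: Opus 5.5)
This is the paper's argument: the corollary is the preceding proposition applied to $A=\Delta+V-E\in\Psisc^{2,0}$. That operator is fully elliptic because $E<0$ and $V$ decays, so the $L^2$ eigenfunctions at energy $E$ lie in the finite-dimensional space $\ker_{\Schw'}A$.

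One correction, which does not affect the conclusion. Multiplication by $V$ lies in $\Psisc^{0,-\varepsilon}$, but not in $\Psisc^{-1,-\varepsilon}$ and certainly not in $\Psisc^{-\infty,-\varepsilon}$. The $\beta=0$ estimate $|V(x)|\lesssim\ang{x}^{-\varepsilon}\ang{\xi}^{m}$ fails for $m<0$ as $|\xi|\to\infty$ unless $V\equiv 0$. Also, your inclusion $\Psisc^{0,-\varepsilon}\subset\Psisc^{-1,-\varepsilon}$ runs the wrong way.

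Your remainder $V(|\xi|^2-E)^{-1}\in S^{-2,-\varepsilon}_{\sca}$ is still correct, since its $\xi$-decay comes from the second factor, and both of your workarounds are valid. They are unnecessary, though. Take instead $b=(1-\chi)(|\xi|^2-E+V)^{-1}$, where $\chi\in C_c^\infty(\R^{2n})$ equals $1$ on a compact set outside of which $|\xi|^2-E+V\geq c\ang{\xi}^2$. Such a set exists because $V$ is bounded and tends to $0$ at spatial infinity. This gives $b\in S^{-2,0}_{\sca}$ and $\sigma_L(A)\,b-1=-\chi\in S^{-\infty,-\infty}_{\sca}$, which is ellipticity in the paper's strict sense.
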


\subsection{Problems}

\begin{problem}
    Prove \Cref{prop:Schwartz_rep}.
\end{problem}

\begin{problem}
    Prove \Cref{prop:F_sob}.
\end{problem}

\begin{problem}
    \begin{enumerate}
        \item 
    Show that if $P:\mathcal{X}\to \mathcal{Y}$ is a bounded linear map between Hilbert spaces $\mathcal{X},\mathcal{Y}$ with closed range, then $\mathcal{Y}/ P \mathcal{X}\cong \operatorname{ker} P^*$. 
    \item Is this true if we do not assume that $P$ has closed range?
    \end{enumerate}
    \label{prob:kerPadj}
\end{problem}

\begin{problem}
    Consider $u(x) = e^x \sin(e^x)$. 
    \begin{enumerate}
        \item Show that $u\in H_{\mathrm{sc}}^{-1,r}(\mathbb{R})$ for $r$ sufficiently negative. 
        \item (Optional.) For which  $H_{\mathrm{sc}}^{s,r}(\mathbb{R})$ is $u$ in? ($s$ is an integer.)
    \end{enumerate}
    
\end{problem}

\begin{problem}
    Consider a repulsive Coulomb-like potential $V$, 
    \[
        V = \frac{1}{\langle r \rangle}  + S^{-1-\varepsilon}(\overline{\mathbb{R}^n} ).  
    \] 
    Show that $\operatorname{ker}_{\mathcal{S}'} (\triangle + V) $ is finite-dimensional. Hint: a coordinate change might be useful.  
\end{problem}

\section{Lecture 4: Microlocalization}

Recall that the \emph{wavefront set} $\operatorname{WF}(u)$ of a distribution $u\in \mathcal{D}'(\mathbb{R}^n)$ is a subset of \footnote{Usually, one defines wavefront set to be a fiberwise conic subset of $T^* \mathbb{R}^n = \mathbb{R}^{n}\times \mathbb{R}^n$. The compactified perspective is that, instead of taking the wavefront set to consist of lines in the cotangent bundle, to only take their ``endpoints'' at fiber infinity.} 
\begin{equation} 
\underbrace{\mathbb{R}^n}_{\text{base}} \times \underbrace{\partial \overline{\mathbb{R}^n}}_{\mathbb{S}^{n-1}\text{ fiber}} = \mathbb{S}^* \mathbb{R}^n 
\end{equation} 
obstructing smoothness --- the wavefront set over a point $x\in \mathbb{R}^n$ in the base is a subset of the fiber $\mathbb{S}^{n-1}$ over $x$ describing the directions where the germ of $u$ at $x$ fails to be smooth. A distribution is smooth if and only if it has empty wavefront set.

In this lecture, we talk about the \emph{scattering} wavefront set. This assigns to each tempered distribution $u\in \mathcal{S}'$ a subset 
\begin{equation} 
    \operatorname{WF}_{\mathrm{sc}}(u)\subset \partial (\overline{\mathbb{R}^n}\times  \overline{\mathbb{R}^n})
\end{equation} 
of the ``square boundary'' $\partial (\overline{\mathbb{R}^n}\times  \overline{\mathbb{R}^n})$
which obstructs $u$ being Schwartz. Over the ``interior'' $\mathbb{R}^n\subset \overline{\mathbb{R}^n}$, this is just the ordinary wavefront set $\operatorname{WF}(u)$. The novel thing about the sc-wavefront $\operatorname{WF}_{\mathrm{sc}}(u)$ is that it contains, in addition to the ordinary wavefront set, a set of points over spatial infinity. 
Namely, over $\infty \theta$, $\theta\in \mathbb{S}^{n-1}$\footnote{Here $\infty\theta$ means the point on $\partial \overline{\R^n}$ on the direction $\theta$.}, the sc-wavefront set contains the \emph{frequencies} at which $u$, in that direction, fails to be Schwartz. So, while the ordinary wavefront set is detecting a failure to be smooth, the sc-wavefront set is also detecting failure to decay.

Obviously, we need to make this mathematically precise. We will do this in the next subsection. Afterwards, we will discuss examples. 
\subsection{The wavefront set of a distribution}

In this section, if $S$ is a subset of $\partial( \overline{\mathbb{R}^n}\times  \overline{\mathbb{R}^n})$, then we use $S^\complement$ to denote the complement of $S$ within this set: 
\begin{equation}
    S^\complement = (\partial( \overline{\mathbb{R}^n}\times \overline{\mathbb{R}^n}))\backslash  S.
\end{equation}

The sc-wavefront set of a tempered distribution $u$ is defined by 
\begin{equation}\label{eq:WF defn}
    \operatorname{WF}_{\mathrm{sc}}(u) = \bigcap_{A\in \Psi_{\mathrm{sc}}^{0,0}\text{ s.t. }Au\in \mathcal{S} }  \operatorname{char}_{\mathrm{sc}}(A) = \bigg( \bigcup_{A\in \Psi_{\mathrm{sc}}^{0,0}\text{ s.t. }Au\in \mathcal{S} }  \Ell(A)  \bigg)^\complement .
\end{equation}
Since this is an intersection of closed sets, it is closed.

\begin{proposition}
    \begin{itemize}
    \item The portion of $\operatorname{WF}_{\mathrm{sc}}(u)$ not over base infinity is just the ordinary wavefront set $\operatorname{WF}(u)$. 
    \item  If $u$ is compactly supported, then its sc-wavefront set is the same as its ordinary wavefront set. 
    \end{itemize} 
\end{proposition}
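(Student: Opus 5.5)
We need to prove two things: that $\WFsc(u)$ over the interior of the base is $\WF(u)$, and that $\WFsc(u)=\WF(u)$ for compactly supported $u$. The plan is to compare the two definitions through cutoffs in $x$.

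\textbf{Identifying the boundary face.} The part of $\partial\comphase$ lying over the interior $\R^n_x$ is $\R^n_x\times\partial\compn_\xi$, which is the cosphere bundle $\mathbb S^*\R^n$ where $\WF(u)$ lives. So for the first bullet we fix $q=(x_0,\hat\xi_0)$ with $x_0\in\R^n$ and show that $q\notin\WFsc(u)$ if and only if $q\notin\WF(u)$.

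\textbf{Direction $q\notin\WFsc(u)\Rightarrow q\notin\WF(u)$.} Suppose $A\in\Psisc^{0,0}\subset\Psi^0$ is elliptic at $q$ and $Au\in\Schw$. Then $A$ is an ordinary H\"ormander operator, elliptic at $(x_0,\hat\xi_0)$ in the usual sense, with $Au$ smooth. The standard characterization of the ordinary wavefront set (ellipticity plus microlocal elliptic regularity) gives $q\notin\WF(u)$.

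\textbf{Direction $q\notin\WF(u)\Rightarrow q\notin\WFsc(u)$.} Choose $\chi\in C_c^\infty(\R^n)$ with $\chi=1$ near $x_0$, and $\psi(\xi)$ homogeneous of degree $0$ for $|\xi|\geq1$, supported in a small conic neighbourhood of $\hat\xi_0$, and equal to $1$ near $\hat\xi_0$. If these supports are small enough, $\Op(\chi\psi)u\in C^\infty$; this is the standard fact that an operator microsupported off $\WF(u)$ maps $u$ to a smooth function. The symbol $\chi(x)\psi(\xi)$ is compactly supported in $x$, so it lies in $S^{0,l}_\sca$ for every $l$, and the operator $A=\Op(\chi\psi)\in\Psisc^{0,0}$ is elliptic at $q$. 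It remains to upgrade smoothness of $Au$ to $Au\in\Schw$, which is the main obstacle. To handle it, write $A=\chi_1 A+(1-\chi_1)A$ with $\chi_1\in C_c^\infty$ and $\chi_1=1$ on $\supp\chi$. The first term is smooth and compactly supported, hence Schwartz. For the second, the symbol of $(1-\chi_1)\circ\Op(\chi\psi)$ has asymptotic expansion terms that all vanish, because the supports are disjoint; so by the composition expansion it lies in $\Psisc^{-\infty,-\infty}$. By Section~\ref{subsec:residual} it maps $\Schw'$ to $\Schw$. Hence $Au\in\Schw$ and $q\notin\WFsc(u)$.

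\textbf{Second bullet.} By the first bullet it suffices to show that $\WFsc(u)$ contains no points over spatial infinity when $\supp u\subset B(0,R)$. Take $\chi\in C_c^\infty$ with $\chi=1$ on $B(0,R)$, so that $(1-\chi)u=0\in\Schw$. The operator $1-\chi$ lies in $\Psisc^{0,0}$ and its principal symbol equals $1$ at every point over $\partial\compn_x$. It is therefore elliptic at all such points, which removes them all from $\WFsc(u)$.
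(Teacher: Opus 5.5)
Your argument is correct. The paper gives no proof of this proposition (it is left as an exercise), so there is no competing route to compare. Comparing the two intersection-of-characteristic-sets definitions via an $x$-cutoff, as you do, is the natural approach.

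One simplification: the step you flag as the main obstacle is not really one. For the left quantization, $\Op(\chi(x)\psi(\xi))=\chi\,\psi(D)$, so $Au=\chi\cdot\psi(D)u$ is automatically supported in $\supp\chi$. Your operator $(1-\chi_1)A$ is therefore exactly zero, and smoothness of $Au$ alone gives $Au\in C_c^\infty\subset\Schw$, with no appeal to the composition expansion.

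A minor point in the second bullet: take $\chi=1$ on a neighbourhood of $\supp u$, or note that $1-\chi$ then vanishes to infinite order on $\supp u$. This ensures $(1-\chi)u=0$ really holds for a distribution $u$.
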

\begin{proof}
    Exercise.
\end{proof}

\begin{proposition}
    $u\in \mathcal{S} \iff \operatorname{WF}_{\mathrm{sc}}(u) = \varnothing$.
    \label{prop:WF_sc_basic}
\end{proposition}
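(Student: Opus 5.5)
The forward direction is immediate from the definition \eqref{eq:WF defn}. If $u\in\mathcal{S}$, take $A=\Id\in\Psi_{\mathrm{sc}}^{0,0}$. Then $Au=u\in\mathcal{S}$, and since the principal symbol of $\Id$ is $1$, we have $\Ell(\Id)=\partial\comphase$. Hence $\operatorname{WF}_{\mathrm{sc}}(u)\subseteq\operatorname{char}_{\mathrm{sc}}(\Id)=\varnothing$.

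For the converse, suppose $\operatorname{WF}_{\mathrm{sc}}(u)=\varnothing$. Then the elliptic sets $\Ell(A)$, with $A\in\Psi_{\mathrm{sc}}^{0,0}$ and $Au\in\mathcal{S}$, form an open cover of $\partial\comphase$, since each elliptic set is open in $\partial\comphase$. Because $\partial\comphase$ is compact, finitely many $A_1,\dots,A_N$ suffice. I then form
\[
G=\sum_{j=1}^N A_j^*A_j\in\Psi_{\mathrm{sc}}^{0,0}.
\]
Its principal symbol is $\sum_j|a_j|^2$. At each point $q\in\partial\comphase$ some $a_j$ is elliptic, so near $q$ we have $|a_j|\ge c>0$ modulo $S^{-1,-1}_{\mathrm{sc}}$. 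Thus $\sum_j|a_j|^2$ is bounded below near $q$ modulo lower order terms. Patching these local inverses with a finite partition of unity on $\comphase$ near the boundary shows $G$ is (totally) elliptic. Also $Gu=\sum_jA_j^*(A_ju)\in\mathcal{S}$, because scattering operators map $\mathcal{S}$ to $\mathcal{S}$; this follows from the mapping property on $H^{s,r}_{\mathrm{sc}}$ together with Proposition~\ref{prop:Schwartz_rep}.

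Now apply the elliptic parametrix $B$ with $BG=\Id+R$, where $R\in\Psi_{\mathrm{sc}}^{-\infty,-\infty}$. This gives $u=BGu-Ru$. The first term is Schwartz since $B$ preserves $\mathcal{S}$. The second is Schwartz because residual operators map $\mathcal{S}'$ to $\mathcal{S}$, as shown in Section~\ref{subsec:residual}. Hence $u\in\mathcal{S}$.

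The main obstacle is the ellipticity of $G$ when the symbols are not classical. The elliptic set is defined via a local inverse $b$ modulo $S^{-1,-1}_{\mathrm{sc}}$ near $q$, not via pointwise nonvanishing. I would first show that ellipticity near $q$ gives a lower bound $|a_j|\ge c>0$ on a neighbourhood of $q$ in $\comphase$, away from a compact set of the interior. Here I use $a_jb_j-1\in S^{-1,-1}_{\mathrm{sc}}$, which tends to $0$ at $\partial\comphase$, together with the boundedness of $b_j$. Given these lower bounds, $\big(\sum_j|a_j|^2\big)^{-1}$, cut off away from a compact interior set, is a symbol in $S^{0,0}_{\mathrm{sc}}$ by the usual quotient-rule estimates, and it inverts $\sigma(G)$ modulo $S^{-1,-1}_{\mathrm{sc}}$. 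As an alternative, I could avoid $A_j^*$ and take $G=\sum_j B_jA_j$ with $B_j$ quantizing $\chi_j b_j$ for a partition of unity $\chi_j$; then $\sigma(G)=\sum\chi_j\equiv1$, and $Gu\in\mathcal{S}$ follows in the same way.
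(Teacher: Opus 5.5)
Your proof is correct and follows the same route as the paper. The forward direction uses the identity. For the converse, you use compactness of $\partial\comphase$, a finite subcover by elliptic sets, the operator $\sum_j A_j^*A_j$ with principal symbol $\sum_j|a_j|^2$, and its total ellipticity. You also supply details the paper leaves implicit, and all of them are sound: the uniform lower bound on $|a_j|$ in the non-classical case, the fact that $\Psi_{\mathrm{sc}}$ preserves $\mathcal{S}$, and the explicit parametrix step $u=BGu-Ru$.
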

\begin{proof}
    \begin{itemize}
       \item ($\Rightarrow$): take $A=1$. 
        \item ($\Leftarrow$): if $\operatorname{WF}_{\mathrm{sc}}(u) = \varnothing$, then, for each point $p$ in $\partial (\overline{\mathbb{R}}^n \times \overline{\mathbb{R}^n})$, there exists an $A_p \in \Psi_{\mathrm{sc}}^{0,0}$ such that $\Ell(A_p)\ni p$ and $A_p u$ is Schwartz. Because the sets $\Ell(A_p)$ are all open, and because $\partial (\overline{\mathbb{R}}^n \times \overline{\mathbb{R}^n})$ is compact, we can choose a finite number $J\in \mathbb{N}^+$ of points $p_j$ such that the sets $\Ell(A_{p_j})$ form an open cover of $\partial (\overline{\mathbb{R}^n} \times \overline{\mathbb{R}^n})$. Now consider 
        \begin{equation}
            A = \sum_{j=1}^J A_{p_j}^* A_{p_j} \in \Psi_{\mathrm{sc}}^{0,0}. 
        \end{equation}
        This satisfies $A u \in \mathcal{S}$.        
        The principal symbol of $A$ is 
        \[
            a=\sum_{j=1}^J |a_{p_j}|^2,
        \]
        where the $a_{p_j}$'s are the principal symbols of the $A_{p_j}$'s. Because the sets $\Ell(A_{p_j})$ form an open cover of $\partial (\overline{\mathbb{R}^n} \times \overline{\mathbb{R}^n})$, $a$ is totally elliptic. So, it follows that
        \begin{equation}
            Au \in \mathcal{S} \Rightarrow u\in \mathcal{S}. 
        \end{equation}
    \end{itemize}
\end{proof}
A local version of this is:
\begin{proposition}\label{prop:WF absence}
    Let $\theta\in \mathbb{S}^{n-1}$ be a spatial direction. Then, $\operatorname{WF}_{\mathrm{sc}}(u)$ is disjoint from the fiber over $\infty \theta$ if and only if there exists some $\chi \in C^\infty(\mathbb{S}^{n-1})$ identically $=1$ near $\theta$ and some $\psi\in C_{\mathrm{c}}^\infty(\mathbb{R})$ identically $=1$ near the origin such that the product $\chi(r^{-1} x )\psi(1/r) u$ is Schwartz.
\end{proposition}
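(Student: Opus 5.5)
Write $\phi(x) := \chi(x/r)\psi(1/r)$, suitably interpreted near the origin. Since the cutoff $\psi(1/r)$ vanishes for $r$ small, $\phi$ is a smooth function on $\compn$. It is identically $1$ on a neighbourhood $U$ of $\infty\theta$ in $\compn$, and it is supported in a slightly larger conic neighbourhood of $\infty\theta$. Being smooth on $\compn$, $\phi$ is a classical symbol of order $(0,0)$ depending only on $x$. So multiplication by $\phi$ is an operator $M_\phi \in \Psisc^{0,0}$, with principal symbol equal to $\phi$ restricted to $\partial\comphase$.

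\textbf{($\Leftarrow$).} Suppose $\phi u \in \Schw$.
\begin{enumerate}
\item Take $A = M_\phi$ in the definition \eqref{eq:WF defn}. Then $Au \in \Schw$.
\item The elliptic set $\Ell(M_\phi)$ contains every point of $\partial\comphase$ lying over $U \cap \partial\compn$, since there $\phi = 1$. This includes the whole fiber $\{\infty\theta\} \times \compn_\xi$.
\item Hence $\WFsc(u)$ is disjoint from that fiber.
\end{enumerate}

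\textbf{($\Rightarrow$).} Suppose $\WFsc(u)$ misses the fiber $F = \{\infty\theta\}\times\compn_\xi$.
\begin{enumerate}
\item Since $\WFsc(u)$ is closed and $F$ is compact, $\WFsc(u)$ also misses a neighbourhood of $F$ in $\partial\comphase$. Such a neighbourhood contains $\overline{V}\times\compn_\xi$ intersected with $\partial\comphase$, for some closed neighbourhood $\overline V$ of $\infty\theta$ in $\compn$.
\item Choose $\chi,\psi$ so that $\supp\phi \subset \overline V$, which only requires shrinking the supports.
\item For each point $p$ of the compact set $K := (\overline{V}\times\compn_\xi) \cap \partial\comphase$, pick $A_p \in \Psisc^{0,0}$ with $p\in \Ell(A_p)$ and $A_pu\in\Schw$. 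Extract a finite subcover and form $A = \sum_j A_{p_j}^*A_{p_j}$, exactly as in the proof of Proposition~\ref{prop:WF_sc_basic}. Then $Au\in\Schw$, and $A$ is elliptic on a neighbourhood of $K$.
\end{enumerate}

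The main step is a microlocal parametrix. We want $B\in\Psisc^{0,0}$ with
$$
BA = M_\phi + R, \qquad \WFsc'(R)\cap K=\emptyset .
$$
Here $R$ should be residual near $K$; concretely, $R = R_1 + R_2$ with $R_1\in\Psisc^{-\infty,-\infty}$ and $R_2$ having full symbol supported away from $K$.

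To construct $B$, start from $b_0 = \phi/a$, which makes sense because $a$ is invertible on $\supp\phi$ near the boundary. Then iterate as in the parametrix proof, multiplying the symbol errors by a cutoff $\tilde\phi$ that equals $1$ on $\supp\phi$ and is supported where $a$ is elliptic. At each stage the error is $M_\phi$ times something of lower order, plus terms whose symbols are supported where $\tilde\phi$ is non-constant. Asymptotic summation then gives $B$.

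Granting this, write
$$
\phi u = BAu - Ru .
$$
The term $BAu$ is Schwartz. It remains to show $Ru\in\Schw$, and this is the obstacle: it needs composition with symbol supports disjoint from $\WFsc(u)$.

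The cleanest route is to arrange, via the cutoffs, that the error is $E M_\phi$ with $E \in\Psisc^{-\infty,-\infty}$ plus a residual operator. Indeed, every error term in the iteration is of the form (operator)$\,\cdot\,(\partial^\alpha \text{ of } \phi \text{ or } \tilde\phi)$. Using the composition expansion \eqref{eq:comp expansion}, terms involving derivatives of $\tilde\phi$ vanish on $\supp\phi$. They therefore compose with $M_\phi$ to give residual operators, because they have disjoint supports in $x$ and the expansion gains decay.

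Concretely, I would use the left-reduction expansion to build $B$ with
$$
BA\,M_{\phi'} = M_{\phi} + \Psisc^{-\infty,-\infty},
$$
where $\phi'$ is a cutoff equal to $1$ on $\supp \phi$. I would then apply this to $u$, with $\phi' u$ handled by the same argument at a slightly larger scale. The main expected difficulty is bookkeeping these nested cutoffs so that each remainder is genuinely residual rather than merely lower order.
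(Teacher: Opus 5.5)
\textbf{Verdict: genuine gap in the last step.}

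Your ($\Leftarrow$) direction is correct. The set-up of ($\Rightarrow$) is also correct: the tube-lemma step, shrinking $\chi,\psi$ so that $\WF'(M_\phi)\subseteq K$, and the sum-of-squares operator $A$ with $Au\in\Schw$ and $A$ elliptic near $K$. The gap is the final step, which you leave open, and the concrete fix you propose does not close it. From $BAM_{\phi'}=M_\phi+\Psisc^{-\infty,-\infty}$ you could only conclude $\phi u\in\Schw$ if you knew $A(\phi'u)\in\Schw$, but you only know $Au\in\Schw$. ``Handling $\phi'u$ by the same argument at a slightly larger scale'' is the statement you are proving, for a larger cutoff, so this is an infinite regress rather than an argument. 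Similarly, as long as you allow a non-residual piece $R_2$ of $BA-M_\phi$ microsupported away from $K$, you really are stuck: $u$ may have wavefront set there, so $R_2u$ need not be Schwartz. Deducing $Ru\in\Schw$ from information about $\WFsc(u)$ would be exactly Proposition~\ref{prop:cones}, which is what is at stake. Note that the paper itself gives no proof here: this proposition is the special case $A=M_\phi$ of Proposition~\ref{prop:cones}, whose proof is Problem~\ref{ex:cones}, with the hint to use the microlocal elliptic parametrix.

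\textbf{The missing idea.} You never need to know how the error acts on $u$. It suffices to use the operator-level fact $\WF'(PQ)\subseteq\WF'(P)\cap\WF'(Q)$, which follows directly from the composition expansion \eqref{eq:comp expansion}; the paper uses it freely in proving Proposition~\ref{prop:microlocality}.
\begin{itemize}
\item Take the microlocal parametrix of Proposition~\ref{prop:micro-elliptic} for the closed set $K$: $BA=\Id+E$ with $\WF'(E)\cap K=\emptyset$.
\item Then write $\phi u=M_\phi B(Au)-(M_\phi E)u$.
\item Since $\WF'(M_\phi)\subseteq K$, the operator $M_\phi E$ has empty operator wavefront set. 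It is therefore residual and maps $\Schw'$ to $\Schw$.
\end{itemize}

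Alternatively, you can finish within your own iteration.
\begin{itemize}
\item Because $\phi$ is independent of $\xi$, and composition cannot enlarge $\WF'$, every error symbol $r_j$ is essentially supported in $\WF'(M_\phi)$.
\item Choose $\tilde\phi=1$ on a neighbourhood of $\WF'(M_\phi)$, not merely on $\supp\phi$. Then the terms involving $d\tilde\phi$, and the cutoff errors $(1-\tilde\phi)r_j$, are all residual.
\item Hence $\WF'(BA-M_\phi)\subseteq K$, while you also arranged it to be disjoint from $K$. So $BA-M_\phi\in\Psisc^{-\infty,-\infty}$ outright, and there is no $R_2$ at all.
\end{itemize}
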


\subsection{Operator wavefront set}
\label{subsec:op-WF}
Given a symbol $a \in S^{m,l}_{\sca}(\R^n \times \R^n)$, we define its \emph{essential support}, a subset of $\partial \comphase$,  as follows: the complement of the essential support (let us call this the \emph{residual set}, NB not a standard term) is the subset of $\partial \comphase$ consisting of points $q$ such that $a$ is a residual symbol in a neighbourhood of $q$. More precisely, this condition is the existence of $\chi \in C^\infty(\comphase)$ such that $\chi(q) \neq 0$ and $\chi a$ is a residual symbol. By definition, the residual set is an open subset of $\partial \comphase$, and therefore the essential support is a closed subset of $\partial \comphase$. 

Then, given $A \in \Psisc^{m,l}$, we define the \emph{operator wavefront set}, $\WF'(A)$, also known as its \emph{microlocal support} or \emph{essential support}, to be the essential support of its (full) symbol. We emphasize that the operator wavefront set depends on the full symbol, not just the principal symbol of $A$.  Thus we now have two decompositions of $\partial \comphase$ depending on a scattering pseudodifferential operator $A$: it is the disjoint union of the elliptic set and the characteristic set, and the disjoint union of the operator wavefront set and the residual set (of its full symbol). See Problem~\ref{prob:op WF set} for some simple properties of these two decompositions.

Proposition~\ref{prop:WF absence} can be generalized as follows, using the operator wavefront set: 

\begin{proposition}
    Suppose that $A\in \Psi_{\mathrm{sc}}$ has $\operatorname{WF}'_{\mathrm{sc}}(A)$  disjoint from $\operatorname{WF}_{\mathrm{sc}}(u)$. Then $Au$ is Schwartz.
    \label{prop:cones}
\end{proposition}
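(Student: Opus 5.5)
The plan is to use compactness to build a single operator $B\in\Psisc^{0,0}$ that is elliptic on $\WF'(A)$ and satisfies $Bu\in\Schw$, and then to write $A$ in terms of $B$ using a microlocal parametrix.

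\textbf{Step 1: construct $B$.} Since $\WF'(A)$ is disjoint from $\WF_{\sca}(u)$, every $q\in\WF'(A)$ lies in the complement of $\WF_{\sca}(u)$. By the definition \eqref{eq:WF defn}, there is then some $A_q\in\Psisc^{0,0}$ with $A_qu\in\Schw$ and $q\in\Ell(A_q)$.

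The operator wavefront set is closed in the compact set $\partial\comphase$, so it is compact. Hence finitely many of the open sets $\Ell(A_{q_j})$, $j=1,\dots,J$, cover it.

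Exactly as in the proof of Proposition~\ref{prop:WF_sc_basic}, set
$$B=\sum_{j=1}^J A_{q_j}^*A_{q_j}\in\Psisc^{0,0}.$$
Each $A_{q_j}u$ is Schwartz, and $A_{q_j}^*\in\Psisc^{0,0}$ maps $\Schw$ to $\Schw$ (this follows from the boundedness on all $H^{s,r}_{\sca}$ together with Proposition~\ref{prop:Schwartz_rep}). Hence $Bu\in\Schw$. The principal symbol of $B$ is $\sum_j|a_{q_j}|^2$, which is elliptic on a neighbourhood $U$ of $\WF'(A)$ in $\partial\comphase$.

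\textbf{Step 2: microlocal parametrix.} Next we construct $G\in\Psisc^{m,l}$, where $(m,l)$ are the orders of $A$, with
$$A=GB+R,\qquad R\in\Psisc^{-\infty,-\infty}.$$
Choose a cutoff $\chi\in C^\infty(\comphase)$ that equals $1$ near $\WF'(A)$ and whose support meets $\partial\comphase$ only inside $U$. Take $G_0=\Op(\chi\,\sigma_L(A)\,b^{-1})$, where $b$ is the full symbol of $B$. This is a legitimate symbol because $b$ is invertible (modulo lower order) on the support of $\chi$ near the boundary.

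By the composition expansion \eqref{eq:comp expansion}, $A-G_0B=E_1$, where $E_1\in\Psisc^{m-1,l-1}$ and $\WF'(E_1)\subset\WF'(A)$. The wavefront containment holds because every term of the expansion involves derivatives of $\sigma_L(A)$ or of $\chi$, and the $\chi$-derivatives vanish near $\WF'(A)$. In that region the cut-off terms are residual; away from $\WF'(A)$, the symbol of $A$ is itself residual.

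We then iterate, setting $G_k=\Op(\chi\,\sigma_L(E_k)\,b^{-1})$, so that each step gains one order in both indices. Asymptotically summing $G\sim\sum_k G_k$ gives $A-GB\in\bigcap_k\Psisc^{m-k,l-k}=\Psisc^{-\infty,-\infty}$.

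\textbf{Step 3: conclude.} We have
$$Au=G(Bu)+Ru.$$
The first term is Schwartz because $G$ preserves $\Schw$ and $Bu\in\Schw$. The second is Schwartz because residual operators map $\Schw'$ to $\Schw$ (Section~\ref{subsec:residual}).

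\textbf{Main obstacle.} The hardest part is Step 2. We must check that the operator wavefront set of each error term stays inside $\WF'(A)$, which in turn lies where $B$ is elliptic. This requires the microlocality of composition, $\WF'(PQ)\subset\WF'(P)\cap\WF'(Q)$, which I would verify directly from the full composition expansion, including its remainder bounds.
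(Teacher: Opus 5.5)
Your proposal is correct and is essentially the argument the paper intends. The paper defers the proof to Problem~\ref{ex:cones}, with the hint to avoid distributional microlocality and instead use the microlocal elliptic parametrix. Your Steps 1--2 do exactly that: the sum-of-squares trick from Proposition~\ref{prop:WF_sc_basic} gives $B$ elliptic on $\WF'(A)$ with $Bu\in\Schw$, and you then factor $A=GB+R$ with $R$ residual. The operator-level fact $\WF'(PQ)\subset\WF'(P)\cap\WF'(Q)$ does follow from the composition expansion with global remainders, and the only minor tidy-up is to cut $b^{-1}$ off to the region where $|b|$ is bounded below, which costs only a residual error.
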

\begin{proof}
    See \Cref{ex:cones}.
\end{proof}

\begin{proposition}[Microlocality] \label{prop:microlocality}
        If $A\in \Psi_{\mathrm{sc}}$ and $u\in \mathcal{S}'$, then  $\operatorname{WF}_{\mathrm{sc}}(Au) \subseteq \operatorname{WF}_{\mathrm{sc}}(u)  \cap \operatorname{WF}'_{\mathrm{sc}}(A)$.
\end{proposition}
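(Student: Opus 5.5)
We prove the two inclusions $\WFsc(Au) \subseteq \WF'_{\sca}(A)$ and $\WFsc(Au) \subseteq \WFsc(u)$ separately. In each case we fix a point $q \in \partial\comphase$ outside the relevant set and produce $B \in \Psisc^{0,0}$ with $q \in \Ell(B)$ and $BAu \in \Schw$. By the definition \eqref{eq:WF defn}, this shows $q \notin \WFsc(Au)$. Throughout we use the composition theorem with its asymptotic expansion \eqref{eq:comp expansion}. Two facts follow from it. First, if the full symbol of $A$ is residual near $q$ and the symbol of $B$ is supported near $q$, then $BA$ is residual. Second, in general $\WF'_{\sca}(BA) \subseteq \WF'_{\sca}(B) \cap \WF'_{\sca}(A)$. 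Both hold because each term $D_\xi^\alpha b \,\partial_x^\alpha a/\alpha!$ vanishes where either factor does, and the remainders gain arbitrary orders. A small annoyance is that the expansion only gives residual modulo $\Psisc^{-N,-N}$ for every $N$, uniformly in the sense stated after \eqref{eq:comp expansion}. That is exactly the definition of residual, so it suffices.

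\emph{First inclusion.} Suppose $q \notin \WF'_{\sca}(A)$. Then there is $\chi \in C^\infty(\comphase)$ with $\chi(q)\neq 0$ and $\chi a$ residual. Choose $\tilde\chi \in C^\infty(\comphase)$ with $\tilde\chi(q)=1$ and $\supp\tilde\chi \subset \{\chi\neq0\}$. Let $B = \Op(\tilde\chi)\in\Psisc^{0,0}$, which is classical and elliptic at $q$. Every term of the expansion of $\sigma_L(BA)$ involves derivatives of $\tilde\chi$ times derivatives of $a$ on $\supp\tilde\chi$. On $\{\chi\neq 0\}$ the symbol $a$ is residual, since $a=(\chi a)/\chi$ there and $1/\chi$ is a classical symbol on $\supp\tilde\chi$. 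Hence $BA \in \Psisc^{-\infty,-\infty}$. By Section~\ref{subsec:residual}, $BAu\in\Schw$.

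\emph{Second inclusion.} Suppose $q\notin\WFsc(u)$. Then there is $C\in\Psisc^{0,0}$ with $q\in\Ell(C)$ and $Cu\in\Schw$. We build a microlocal parametrix of $C$ near $q$. Choose $B\in\Psisc^{0,0}$ elliptic at $q$ with $\WF'_{\sca}(B)$ contained in a small neighbourhood $U$ of $q$ on which $C$ is elliptic. We then want $G\in\Psisc^{0,0}$ with
\[
BA = G C + R, \qquad R\in\Psisc^{-\infty,-\infty}.
\]
Granting this, $BAu = G(Cu) + Ru$. Here $G$ maps $\Schw$ to $\Schw$ by the weighted Sobolev boundedness of $G$ together with Proposition~\ref{prop:Schwartz_rep}, and $Ru\in\Schw$, so $BAu\in\Schw$. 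To construct $G$, apply the elliptic parametrix argument localized to $U$. Take a symbol $e \in S^{0,0}_{\sca}$ with $e\,\sigma_L(C) - 1$ in $S^{-1,-1}_{\sca}$ on a neighbourhood of $\overline U$, as in the definition of $\Ell(C)$. Set $G_0 = BA\,\Op(e)$. Then $BA - G_0 C \in \Psisc^{-1,-1}$ with operator wavefront set in $U$. Iterate in the Neumann-series fashion of the parametrix proof, correcting the error order by order, and asymptotically sum the corrections. Every operator produced has operator wavefront set inside $\WF'_{\sca}(B)\subset U$, by the second composition fact above.

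The main obstacle is this localized parametrix. One has to keep track that at each step the error is microsupported where $C$ is elliptic, so that the next correction term is well defined. If this bookkeeping becomes messy, an alternative route is available. Extend $C$ to a globally elliptic operator $\tilde C = C + \Op(\phi)$, where $\phi$ is chosen supported away from a smaller neighbourhood of $q$ so that $\tilde C$ is totally elliptic. Let $\tilde Q$ be its global parametrix, and take $B$ microsupported where $\phi$ vanishes. Then
\[
BA \equiv BA\tilde Q\tilde C = BA\tilde Q C + BA\tilde Q\Op(\phi)
\]
modulo residual operators. The last term is residual, by the microsupport composition fact, because $\WF'_{\sca}(B)\cap\WF'_{\sca}(\Op(\phi)) = \varnothing$. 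This gives $G = BA\tilde Q$.
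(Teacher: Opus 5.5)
Your proposal is correct and is essentially the paper's argument. The first inclusion is identical: a $B$ elliptic at $q$ and microsupported off $\WF'_{\sca}(A)$ makes $BA$ residual. In the second inclusion, the paper chooses $B$ with $\WF'_{\sca}(B)\cap\WFsc(u)=\varnothing$ and invokes Proposition~\ref{prop:cones}; you instead inline the microlocal-parametrix argument behind that proposition, at the single point $q$ and with the single witness $C$, which spares you the step of covering by finitely many elliptic sets.

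There are two harmless slips. For $A\in\Psisc^{m,l}$ your $G$ lies in $\Psisc^{m,l}$ rather than $\Psisc^{0,0}$, which is fine since it still preserves $\Schw$. In the backup route, total ellipticity of $C+\Op(\phi)$ needs an argument when the symbols are complex, whereas $C^*C+\Op(\phi)$ with $\phi\geq 0$ works immediately.
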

\begin{proof}
    \begin{itemize}
        \item First, we show that  $\operatorname{WF}_{\mathrm{sc}}(Au) \subseteq\operatorname{WF}'_{\mathrm{sc}}(A)$, i.e.\ $\operatorname{WF}_{\mathrm{sc}}(Au)^\complement \supseteq \operatorname{WF}'_{\mathrm{sc}}(A)^\complement$. Suppose that $p \in \operatorname{WF}'_{\mathrm{sc}}(A)^\complement$. Then, $\exists B\in \Psi_{\mathrm{sc}}^{0,0}$ elliptic at $p$ but whose operator wavefront set is disjoint from that of $A$. Then, $BA\in\Psi_{\mathrm{sc}}^{-\infty,-\infty}$. Thus $BAu\in \mathcal{S}$, so the elliptic set of $B$, including $p$, is contained in  $\operatorname{WF}_{\mathrm{sc}}(Au)^\complement$. 
        \item Second, we show that $\operatorname{WF}_{\mathrm{sc}}(Au) \subseteq\operatorname{WF}_{\mathrm{sc}}(a)$, i.e.\  $\operatorname{WF}_{\mathrm{sc}}(Au)^\complement \supseteq \operatorname{WF}_{\mathrm{sc}}(u)^\complement$. If $p\in \operatorname{WF}_{\mathrm{sc}}(u)^\complement$, then there exists a $B\in \Psi_{\mathrm{sc}}^{0,0}$ elliptic at $p$ but with $\operatorname{WF}'_{\mathrm{sc}}(B)$ disjoint from $\operatorname{WF}_{\mathrm{sc}}(u)$. It follows that $\operatorname{WF}'_{\mathrm{sc}}(BA)$ is disjoint from $\operatorname{WF}_{\mathrm{sc}}(u)$. So, \Cref{prop:cones} implies that $BAu \in \mathcal{S}$. It follows that $p\in \operatorname{WF}_{\mathrm{sc}}(Au)^\complement$. 
    \end{itemize}
\end{proof}

Just as differential operators do not spread supports, pseudodifferential operators do not spread singular supports or wavefront set.

The following proposition will be useful in reducing computations of sc-wavefront set to computations of ordinary wavefront set:
\begin{proposition}
    Except at the corner of the square $\partial \overline{\mathbb{R}^n}\times \partial \overline{\mathbb{R}^n}$, $\operatorname{WF}_{\mathrm{sc}}(u)$ consists of the union of the ordinary wavefront set $\operatorname{WF}(u)$ set of $u$ and the rotation by $90^\circ$ of the ordinary wavefront set of $\mathcal{F} u$. 
    \label{prop:Fourier_WF}
\end{proposition}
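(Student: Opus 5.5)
The boundary $\partial\comphase$ minus the corner $\partial\compn\times\partial\compn$ splits into two disjoint open pieces. The first is fiber infinity over the interior, $\R^n\times\partial\compn_\xi$. The second is spatial infinity at finite frequency, $\partial\compn_x\times\R^n_\xi$. I will treat the two pieces separately.

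On the first piece I invoke the earlier proposition that the part of $\WFsc(u)$ not over base infinity is exactly $\WF(u)$. I also check that the rotated set $R(\pi/2)\big(\WF(\mathcal F u)\big)$ contributes nothing here. The ordinary wavefront set of $\mathcal F u$ lies in $\R^n_\zeta\times\partial\compn$, i.e. over finite $\zeta$ at fiber infinity in the dual variable. Under \eqref{eq:rotation} the point $(\zeta,\infty\eta)$ becomes $(-\infty\eta,\zeta)$ up to sign conventions, so it lies over spatial infinity at finite frequency. The rotated set therefore lives entirely in the second piece, and the first piece is settled.

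On the second piece the plan is to transport everything by the Fourier transform. The key identity is that for $A\in\Psisc^{0,0}$,
\[
\mathcal F(Au)=(\mathcal F A\mathcal F^{-1})(\mathcal F u).
\]
By Proposition~\ref{prop:Fourier}, $\mathcal F A\mathcal F^{-1}\in\Psisc^{0,0}$, and by the corollary following Proposition~\ref{prop:FAF_Symbol} its principal symbol is $a\circ R$. Hence
\[
\Ell(\mathcal F A\mathcal F^{-1})=R^{-1}\big(\Ell(A)\big),
\]
where $R=R(\pi/2)$ extends to a diffeomorphism of $\comphase$ that swaps the two boundary faces. Since $\mathcal F$ preserves $\Schw$, the map $A\mapsto\mathcal F A\mathcal F^{-1}$ is a bijection of $\Psisc^{0,0}$ that intertwines the conditions ``$Au\in\Schw$'' and ``$(\mathcal FA\mathcal F^{-1})\mathcal F u\in\Schw$''. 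Using the definition \eqref{eq:WF defn} this gives the exact identity
\[
\WFsc(u)=R\big(\WFsc(\mathcal F u)\big)
\]
on all of $\partial\comphase$. Restricted to the second piece, this reads $\WFsc(u)\cap(\partial\compn\times\R^n)=R\big(\WFsc(\mathcal F u)\cap(\R^n\times\partial\compn)\big)$. Applying the first piece to $\mathcal F u$ identifies $\WFsc(\mathcal F u)\cap(\R^n\times\partial\compn)$ with $\WF(\mathcal F u)$, which completes the second piece.

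The main obstacle is bookkeeping rather than analysis. First, I must check that $R$ extends continuously to $\comphase$ and maps $\R^n\times\partial\compn$ onto $\partial\compn\times\R^n$. Second, I must fix the sign or direction conventions so that the statement's ``rotation by $90^\circ$'' matches $R$ rather than $R^{-1}$. This is harmless: $R^{-1}=-R$, and the problem is really about the orientation convention of $\mathcal F$ versus $\mathcal F^{-1}$, which I would pin down by testing on $u=e^{ix\cdot\xi_0}\chi$-type examples. Third, I need ellipticity to transform correctly for non-classical symbols, in the local sense used to define $\Ell$. This follows because composition with $R$ preserves $S^{-1,-1}_{\sca}$ and maps cutoffs $\chi$ near $q$ to cutoffs near $R^{-1}q$.

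Finally, I must make sure the first-piece fact is applied to $\mathcal F u$, which is merely tempered, not compactly supported. The earlier proposition covers general $u\in\Schw'$ away from base infinity, so this is fine.
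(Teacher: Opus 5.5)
Your proposal is correct and takes essentially the paper's route. The paper's proof is the one line ``follows from Proposition~\ref{prop:Fourier}'', and your argument unpacks it. You conjugate by $\mathcal F$ to get $\WFsc(u)=R(\WFsc(\mathcal F u))$ on all of $\partial\comphase$. You then apply the interior identification $\WFsc=\WF$ away from base infinity, to $u$ and to $\mathcal F u$.
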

By rotation by $90^\circ$ we mean the map \eqref{eq:rotation}.
\begin{proof}
    Follows from \Cref{prop:Fourier}.
\end{proof}

Similarly, if we want to measure $u$'s failure to lie in some sc-Sobolev space $H_{\mathrm{sc}}^{s,r}$, then we consider
\begin{equation}
    \operatorname{WF}_{\mathrm{sc}}^{s,\el}(u) = \bigcap_{A\in \Psi_{\mathrm{sc}}^{0,0}\text{ s.t. }Au\in H^{s,\el}_{\mathrm{sc}} }  \operatorname{char}_{\mathrm{sc}}(A) .
\end{equation}

\begin{proposition}
    $\operatorname{WF}_{\mathrm{sc}}(u) = \overline{\bigcup_{s,\el\in \mathbb{R}}  \operatorname{WF}_{\mathrm{sc}}^{s,\el}(u)}$. 
\end{proposition}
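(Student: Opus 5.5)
Since $\operatorname{WF}_{\mathrm{sc}}(u)$ is closed, I will prove the two inclusions separately, and only the inclusion $\operatorname{WF}_{\mathrm{sc}}(u)\subseteq\overline{\bigcup_{s,\el}\operatorname{WF}^{s,\el}_{\mathrm{sc}}(u)}$ needs real work.

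\emph{The easy inclusion.} If $A\in\Psi^{0,0}_{\mathrm{sc}}$ satisfies $Au\in\mathcal{S}$, then $Au\in H^{s,\el}_{\mathrm{sc}}$ for every $(s,\el)$. So every $A$ appearing in the intersection \eqref{eq:WF defn} also appears in the intersection defining $\operatorname{WF}^{s,\el}_{\mathrm{sc}}(u)$. Hence $\operatorname{WF}^{s,\el}_{\mathrm{sc}}(u)\subseteq\operatorname{WF}_{\mathrm{sc}}(u)$ for all $(s,\el)$. Because $\operatorname{WF}_{\mathrm{sc}}(u)$ is closed, the closure of the union is also contained in it.

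\emph{The converse.} Let $p\in\partial\comphase$ lie outside $\overline{\bigcup_{s,\el}\operatorname{WF}^{s,\el}_{\mathrm{sc}}(u)}$.
\begin{enumerate}
\item Choose an open neighbourhood $U\ni p$ in $\partial\comphase$ disjoint from every $\operatorname{WF}^{s,\el}_{\mathrm{sc}}(u)$.
\item Choose $A\in\Psi^{0,0}_{\mathrm{sc}}$ elliptic at $p$ with $\operatorname{WF}'_{\mathrm{sc}}(A)\subset U$. For instance, quantize a symbol $\chi\in C^\infty(\comphase)$ with $\chi(p)=1$, supported near $p$, so that the essential support of $\chi$ lies in $U$.
\end{enumerate}
It then suffices to show $Au\in H^{s,\el}_{\mathrm{sc}}$ for every $(s,\el)$. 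The point is that $A$ is fixed, independent of $(s,\el)$. Then Proposition~\ref{prop:Schwartz_rep} gives $Au\in\bigcap_{s,\el}H^{s,\el}_{\mathrm{sc}}=\mathcal{S}$, so $p\notin\operatorname{WF}_{\mathrm{sc}}(u)$.

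\emph{The Sobolev analogue of Proposition~\ref{prop:cones}.} Fix $(s,\el)$. I will show that if $\operatorname{WF}'_{\mathrm{sc}}(A)$ is disjoint from $\operatorname{WF}^{s,\el}_{\mathrm{sc}}(u)$, then $Au\in H^{s,\el}_{\mathrm{sc}}$.
\begin{enumerate}
\item For each $q\in\operatorname{WF}'_{\mathrm{sc}}(A)$, the definition of $\operatorname{WF}^{s,\el}_{\mathrm{sc}}$ gives $B_q\in\Psi^{0,0}_{\mathrm{sc}}$ elliptic at $q$ with $B_qu\in H^{s,\el}_{\mathrm{sc}}$.
\item Since $\operatorname{WF}'_{\mathrm{sc}}(A)$ is compact and each $\Ell(B_q)$ is open, finitely many $\Ell(B_{q_j})$ cover it, as in the proof of Proposition~\ref{prop:WF_sc_basic}.
\item Set $G=\sum_jB_{q_j}^*B_{q_j}\in\Psi^{0,0}_{\mathrm{sc}}$. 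Its principal symbol $\sum_j|b_{q_j}|^2$ is elliptic on a neighbourhood of $\operatorname{WF}'_{\mathrm{sc}}(A)$. Also $Gu\in H^{s,\el}_{\mathrm{sc}}$, since each $B_{q_j}^*$ has order $(0,0)$ and so preserves $H^{s,\el}_{\mathrm{sc}}$.
\item Construct a microlocal parametrix: $Q\in\Psi^{0,0}_{\mathrm{sc}}$ with $A-QG\in\Psi^{-\infty,-\infty}_{\mathrm{sc}}$.
\item Then $Au=QGu+(A-QG)u$. The first term lies in $H^{s,\el}_{\mathrm{sc}}$, and the second is Schwartz because residual operators map $\mathcal{S}'\to\mathcal{S}$ (Section~\ref{subsec:residual}).
\end{enumerate}

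\emph{Main obstacle: the microlocal parametrix in step 4.} This is the step needing care. I would mimic the elliptic parametrix proof. Take $q_0=a\,\tilde g$, where $a=\sigma_L(A)$ and $\tilde g$ is a symbol inverting $\sigma_L(G)$ near $\operatorname{WF}'_{\mathrm{sc}}(A)$, cut off away from it. Then $A-\operatorname{Op}(q_0)G=R_1$ with $R_1\in\Psi^{-1,-1}_{\mathrm{sc}}$. By the composition expansion \eqref{eq:comp expansion}, $\operatorname{WF}'_{\mathrm{sc}}(R_1)\subseteq\operatorname{WF}'_{\mathrm{sc}}(A)$. Iterate: $q_1=\sigma_L(R_1)\tilde g$, and so on. Asymptotically summing the $q_j$ gives $Q$ with $A-QG$ residual. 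The point to verify is that each error's essential support stays inside the region where $\tilde g$ inverts $\sigma_L(G)$, which the expansion guarantees.
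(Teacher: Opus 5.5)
Your proof is correct. The paper states this proposition without proof, remarking only that the closure is necessary and that the earlier propositions carry over mutatis mutandis to $\operatorname{WF}^{s,\el}_{\mathrm{sc}}$. Your argument is the intended assembly of those facts: the closure is used exactly to obtain a single $A$, independent of $(s,\el)$, to which the Sobolev version of Proposition~\ref{prop:cones} applies for every $(s,\el)$, after which Proposition~\ref{prop:Schwartz_rep} finishes. The step you flag as the main obstacle is already available in the paper: Proposition~\ref{prop:micro-elliptic}, applied to $G$ on the closed set $\operatorname{WF}'_{\mathrm{sc}}(A)$, gives $B$ with $\operatorname{WF}'_{\mathrm{sc}}(\Id-BG)\cap\operatorname{WF}'_{\mathrm{sc}}(A)=\emptyset$, so $Q=AB$ works because $A-QG=A(\Id-BG)$ has empty operator wavefront set and is therefore residual.
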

\begin{remark}
    The closure here is necessary! (Exercise: Why?)
\end{remark}

The propositions above apply, mutatis mutandis, to the $\operatorname{WF}_{\mathrm{sc}}^{s,\el}$.
For example, microlocality reads:
\begin{proposition}
    If $A\in \Psi_{\mathrm{sc}}^{m,\el}$ and $u\in \mathcal{S}'$, then 
    $\operatorname{WF}_{\mathrm{sc}}^{s,r}(Au) \subseteq \operatorname{WF}_{\mathrm{sc}}^{s+m,r+\el}(u)  \cap \operatorname{WF}'_{\mathrm{sc}}(A)$.
\end{proposition}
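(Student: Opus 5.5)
The plan is to mirror the proof of Proposition~\ref{prop:microlocality}, replacing ``Schwartz'' by membership in the appropriate sc-Sobolev space. We establish the two inclusions separately, by showing that the complement of $\operatorname{WF}_{\mathrm{sc}}^{s,r}(Au)$ contains both $\operatorname{WF}'_{\mathrm{sc}}(A)^\complement$ and $\operatorname{WF}_{\mathrm{sc}}^{s+m,r+\el}(u)^\complement$. Throughout, we use that $u \in \mathcal{S}'$ lies in some $H_{\mathrm{sc}}^{s_0,r_0}$ (Proposition~\ref{prop:Schwartz_rep}). We also use the boundedness $\Psi_{\mathrm{sc}}^{m',\el'}: H^{s,r}_{\mathrm{sc}} \to H^{s-m',r-\el'}_{\mathrm{sc}}$ and the composition and symbol calculus from Lecture 2.

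For the first inclusion, take $p \notin \operatorname{WF}'_{\mathrm{sc}}(A)$. We choose $B \in \Psi_{\mathrm{sc}}^{0,0}$, quantizing a cutoff symbol that equals $1$ near $p$, with $\operatorname{WF}'_{\mathrm{sc}}(B)$ inside the residual set of $A$. Then $BA \in \Psi_{\mathrm{sc}}^{-\infty,-\infty}$, since the composition expansion \eqref{eq:comp expansion} vanishes to infinite order everywhere. Residual operators map $\mathcal{S}'$ into $\mathcal{S} \subset H^{s,r}_{\mathrm{sc}}$ (Section~\ref{subsec:residual}). So $BAu \in H^{s,r}_{\mathrm{sc}}$ with $p \in \Ell(B)$, and hence $p \notin \operatorname{WF}^{s,r}_{\mathrm{sc}}(Au)$.

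For the second inclusion, take $p \notin \operatorname{WF}_{\mathrm{sc}}^{s+m,r+\el}(u)$, so there is $C \in \Psi_{\mathrm{sc}}^{0,0}$, elliptic at $p$, with $Cu \in H^{s+m,r+\el}_{\mathrm{sc}}$. We need a microlocal parametrix: an operator $G \in \Psi_{\mathrm{sc}}^{0,0}$ with $\operatorname{WF}'_{\mathrm{sc}}(\Id - GC)$ disjoint from a neighbourhood $U$ of $p$. We build it as in the elliptic parametrix construction, using the local inverse symbol of $C$ near $p$ cut off to $\Ell(C)$ and iterating the Neumann series in the localized sense. Then we take $B \in \Psi_{\mathrm{sc}}^{0,0}$ elliptic at $p$ with $\operatorname{WF}'_{\mathrm{sc}}(B) \subset U$, and split
\[
BAu = BAGCu + BA(\Id - GC)u .
\]
The first term lies in $H^{s,r}_{\mathrm{sc}}$ because $BAG \in \Psi_{\mathrm{sc}}^{m,\el}$ and $Cu \in H^{s+m,r+\el}_{\mathrm{sc}}$. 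For the second term, $\operatorname{WF}'_{\mathrm{sc}}(BA(\Id-GC)) \subset \operatorname{WF}'_{\mathrm{sc}}(B) \cap \operatorname{WF}'_{\mathrm{sc}}(\Id - GC) = \varnothing$. This uses the fact, read off from the composition expansion, that the operator wavefront set of a product lies in the intersection of the operator wavefront sets of the factors. So this operator is residual and the term is Schwartz. Hence $p \notin \operatorname{WF}^{s,r}_{\mathrm{sc}}(Au)$.

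The main obstacle is the microlocal parametrix $G$. The construction of Lecture 2 assumed global ellipticity, so it must be redone with cutoffs. Concretely, we pick $\chi \in C^\infty(\comphase)$ supported in $\Ell(C)$ with $\chi=1$ near $p$, set $G_0 = \Op(\chi/c)$, and write $G_0C = \Op(\chi) - R_1$ with $R_1 \in \Psi_{\mathrm{sc}}^{-1,-1}$. We then asymptotically sum $G \sim (\Id + R_1 + R_1^2+\cdots)G_0$, localized by $\Op(\chi)$ on the appropriate side. This gives $\Id - GC$ with wavefront set avoiding $\{\chi = 1\}^\circ$. An alternative that avoids the construction is the trick from the proof of Proposition~\ref{prop:WF_sc_basic}: add $\Id - \Op(\chi)$-type terms to make an operator globally elliptic and apply the global parametrix. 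We would try this first, since only the global parametrix has been established so far.
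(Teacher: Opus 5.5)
Your proof is correct and follows the paper's route. The paper obtains this as the Sobolev-order analogue of Proposition~\ref{prop:microlocality}: its first half is exactly your residual-composition argument, and its second half rests on the Sobolev version of Proposition~\ref{prop:cones}, which is proved with the microlocal elliptic parametrix. Your splitting $BAu = BAGCu + BA(\Id - GC)u$ is that argument carried out at the single point $p$, which spares you the compactness/covering step. The microlocal parametrix you flag as the main obstacle is Proposition~\ref{prop:micro-elliptic}, stated later in the same lecture, so you need not rebuild it. Your construction $G=(\Id+R)G_0$ also works, since it gives $\Id - GC = (\Id+R)(\Id-\Op(\chi))$ modulo residual operators.
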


\subsection{One-dimensional examples}

\begin{example} Fix $\sigma>0$. 
    Consider $u\in \mathcal{S}'(\mathbb{R})$ defined by $u(x)= e^{i \sigma x}$. Then, $\operatorname{WF}_{\mathrm{sc}}(u)$ consists of exactly two points, one over each of the two points in $\infty \mathbb{S}^0$.  
\end{example}
\begin{proof}
    The Fourier transform of $u$ is a Dirac $\delta$-function over the single point $\sigma\in \mathbb{R}$. The ordinary wavefront set of such a $\delta$-function is the whole cosphere $\mathbb{S}^*_\sigma \mathbb{R}$ over the point where the $\delta$-function is located. The sc-wavefront set must be the same --- we cannot have any sc-wavefront set at base infinity --- since the $\delta$-function is compactly supported. So, the claim follows from \Cref{prop:Fourier_WF}.
\end{proof}

This is consistent with the intuition that $\operatorname{WF}_{\mathrm{sc}}$ is measuring which frequencies fail to decay.

\begin{example}[Wavefront set at the corner]
        How do we interpret sc-wavefront set at the corner 
        \begin{equation}
            \partial \overline{\mathbb{R}^n} \times \partial \overline{\mathbb{R}^n} \subseteq \partial (\overline{\mathbb{R}^n} \times \overline{\mathbb{R}^n})? 
        \end{equation}
        Let us get some intuition in the $n=1$ case.
        \begin{itemize}
            \item Consider the Dirac comb $u(x)= \sum_{k\in \mathbb{Z} }\delta(x-k)$. Since the singular support of $u$ is $\mathbb{Z}$, and since the sc-wavefront set is a closed set, the sc-wavefront set of $u$ must contain points in the corner.  

            This applies also to 
            \[ u(x)= \sum_{k\in \mathbb{Z} }f(x) \delta(x-k)
            \] 
            for $f$ 
            Schwartz.
            \item Consider $u(x) =e^{ix^2}$. This tempered distribution is not Schwartz, so it has to have sc-wavefront set \emph{somewhere}. But it is smooth, so it has no ordinary wavefront set. Moreover, it's Fourier transform has the same form (up to rescaling) and therefore has no ordinary wavefront set either. 
            
            Therefore, by \Cref{prop:Fourier_WF}, the wavefront set of $u$ must be entirely at the corner. 
        \end{itemize}
\end{example}

Intuition: sc-wavefront set at the corner corresponds to oscillations with infinite frequency which fail to decay.

\subsection{Higher-dimensional examples: it's about the cones}

\begin{example}
    A \emph{plane wave} has the form $u(x)=e^{ik\cdot x}$ for $k\in \mathbb{R}^n$. \textit{Claim:} the sc-wavefront set of $u$ consists of a single point over each point at spatial infinity. 
\end{example}
\begin{proof}
    Same as in the one-dimensional case.
\end{proof}

Ignoring fiber infinity, we can think of the portion of $\partial( \overline{\mathbb{R}^n}\times \overline{\mathbb{R}^n} )$ over $\infty\theta$, $\theta \in \mathbb{S}^{n-1}$, as consisting of one point for each plane wave.

\begin{example}
    A \emph{spherical wave} has the form $u(x) = e^{i\sigma \langle r \rangle }$ for $\sigma \in \mathbb{R}$. The sc-wavefront set over $\infty \theta$ consists of a single point, that in the sc-wavefront set of the plane wave $e^{i k\cdot x}$ for $k=\sigma \theta$. 

    \textbf{Q.} Fix a plane wave $e^{ik\cdot x}$. At how many points do $\operatorname{WF}_{\mathrm{sc}}(e^{ik\cdot x})$, $\operatorname{WF}_{\mathrm{sc}}(e^{i \sigma \langle r \rangle } )$ intersect?
    \textbf{A.} $0$ if $\sigma\neq \lVert k \rVert$, $1$ otherwise.
\end{example}

\begin{example}
    Let $n=2$ (for notational simplicity). 
    Consider a ``beam'' of the form $u(x) = \chi(x_2 )e^{i\sigma x_1}$ for nonnegative $\chi \in C_{\mathrm{c}}^\infty(\mathbb{R})$ not identically zero.  

    \textbf{Q.} What is the sc-wavefront set of $u$?
    \textbf{A.} The sc-wavefront set is entirely over the forward/backwards directions $\pm \infty \mathbf{e}_1$, $\mathbf{e}_1=(1,0)$, but, rather than consisting of a single point over each, it consists of (the closure of) all of the wavevectors $k\in \mathbb{R}^2$ of the form $k=(\sigma,\eta)$, $\eta\in \mathbb{R}$. 
\end{example}
\begin{proof}
    
    First of all, $u$ is smooth --- it has no ordinary wavefront set. Its sc-wavefront set therefore sits entirely over base infinity. Some of this wavefront set could be in the corner. Let's ignore this (but see \Cref{ex:beam_helper}). The question we are then asking is about the sc-wavefront set of $u$ over base infinity at finite frequency. This is equivalent to asking about the ordinary wavefront set of the Fourier transform $\hat{u}$.

    The Fourier transform $\hat{u}$ is $\hat{\chi}(\xi_2) \delta(\xi_1 - \sigma)$. Because $\chi$ is compactly supported, the support of $\hat{\chi}$ must be the whole real line. (This is because the Fourier transform of any $C_{\mathrm{c}}^\infty$ function on the real line extends to an entire, nonzero function on the complex plane, so cannot vanish on any nonempty open sets.) So, the ordinary wavefront set of $\hat{u}$ is 
    \begin{equation}
        \operatorname{WF}(\hat{u}) = N^* \{\xi_1 = \sigma \}.
    \end{equation}
    That is, it consists of the entire conormal bundle of the line  $\{\xi_1 = \sigma \}$. Rotating this by $90^\circ$, we conclude the claim.
\end{proof}

\textbf{Moral:} We cannot determine the plane waves out of which a distribution is built if we restrict attention to a rectangular prism $R=\{\lVert y \rVert<C\}$, because, if we only have access to $u|_R$, then we cannot distinguish different functions of the form $e^{ik\cdot x} A_k$ for $A_k\in C^\infty(\overline{\mathbb{R}^n})$ for $k\in \mathbb{R}^n$ with the same first component. 
Instead, we need access to the whole cone $\{ \lVert y \rVert < C x_1 \}$. 

\subsection{The microlocal elliptic parametrix construction}

\begin{proposition} \label{prop:micro-elliptic}
    Suppose that $A\in \Psi_{\mathrm{sc}}^{m,\el}$ is elliptic at $p\in \partial( \overline{\mathbb{R}^n} \times \overline{\mathbb{R}^n} )$. Then, there exists a $B\in \Psi_{\mathrm{sc}}^{-m,-\el}$ such that $AB-1,BA-1$ have $\operatorname{WF}'_{\mathrm{sc}}$ disjoint from $p$.  

    This also applies with $p$ replaced by any closed subset.
\end{proposition}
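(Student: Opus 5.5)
The plan is to mimic the global parametrix construction from Lecture 2, inserting a cutoff that localizes near $p$ (or near a closed set $K \subset \partial\comphase$).

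\textbf{Step 1: local symbolic inverse.} By the definition of $\Ell^{m,l}(A)$, there is $b \in S^{-m,-l}_{\sca}$ and $\chi \in C^\infty(\comphase)$, identically $1$ near $p$, such that $\chi(\sigma_L(A) b - 1) \in S^{-1,-1}_{\sca}$. For a closed set $K \subset \Ell(A)$, I would use compactness of $K$ to cover it by finitely many such neighbourhoods. I would then glue the local $b$'s with a partition of unity subordinate to the cover; this gives one $b$ and one $\chi$, equal to $1$ near $K$, with the same property. I would also shrink the support so that $\chi$ is supported in the region where the estimate holds.

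Set $B_0 = \Op(\chi' b)$, where $\chi'$ is another cutoff equal to $1$ near $K$ with $\supp \chi' \subset \{\chi = 1\}$. By multiplicativity of the principal symbol and the exact sequence \eqref{eq:short exact}, we get $AB_0 = \Op(\chi') - R_1$ with $R_1 \in \Psisc^{-1,-1}$. Equivalently,
$$
AB_0 = \Id - R_1 - \Op(1-\chi'),
$$
and $\Op(1-\chi')$ has $\WF'$ disjoint from a neighbourhood $U$ of $K$.

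\textbf{Step 2: Neumann series modulo operators microsupported away from $K$.} Write $E = R_1 + \Op(1-\chi')$, so that $AB_0 = \Id - E$. Here $R_1$ has order $(-1,-1)$ only near $U$, in the sense that $E - R_1$ is residual near $U$. Take an asymptotic sum $R \sim R_1 + R_1^2 + \cdots$, which exists since $R_1^j \in \Psisc^{-j,-j}$. Then
$$
(\Id - R_1)(\Id + R) = \Id + R_r, \quad R_r \in \Psisc^{-\infty,-\infty},
$$
exactly as in \eqref{eq:Neumann inv}. Hence
$$
AB_0(\Id+R) = \Id + R_r - \Op(1-\chi')(\Id+R).
$$
The last term has $\WF'$ contained in $\WF'(\Op(1-\chi'))$, which is disjoint from $U$. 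This uses the composition fact $\WF'(CD) \subset \WF'(C) \cap \WF'(D)$, which follows from the asymptotic expansion \eqref{eq:comp expansion} together with the remainder bounds: if either full symbol is residual near $q$, then every term of the expansion, and the remainder, is residual near $q$.

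So $B_r := B_0(\Id + R)$ satisfies $\WF'(AB_r - \Id) \cap K = \emptyset$. Symmetrically, putting $B_0$ on the left gives $B_l$ with $\WF'(B_l A - \Id) \cap K = \emptyset$.

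\textbf{Step 3: $B_l$ and $B_r$ agree microlocally.} As in the global proof, compute
$$
B_l A B_r = B_l + B_l(AB_r - \Id) = B_r + (B_l A - \Id) B_r.
$$
Hence $\WF'(B_l - B_r)$ is disjoint from $K$, again by the $\WF'$-of-composition property. Therefore $B = B_r$ also satisfies $\WF'(BA - \Id) \cap K = \emptyset$, since
$$
BA - \Id = (B_r - B_l)A + (B_l A - \Id).
$$

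\textbf{Main obstacle.} The substantive point is the microlocality of composition: that $\WF'(CD) \subset \WF'(C)\cap\WF'(D)$. This requires checking that the remainder in \eqref{eq:comp expansion} is residual near a point $q$ where, say, $c$ is residual. I would handle this by the same near/far-from-diagonal splitting used for composition, applied to the symbol $c_L(x,\xi)\, d_R(y,\xi)$ localized by a cutoff near $q$. Everything else is bookkeeping with cutoffs.
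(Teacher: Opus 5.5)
Your proposal is correct and follows the route the paper indicates (its proof is just the remark that the argument is analogous to the global elliptic parametrix construction): a microlocalized symbolic inverse, a Neumann series modulo operators microsupported away from $K$, and the $B_lAB_r$ comparison, all resting on $\WF'(CD)\subset\WF'(C)\cap\WF'(D)$. That last fact does not need a fresh near/far-diagonal splitting: a cutoff supported where $c$ or $d$ is residual makes every term of \eqref{eq:comp expansion} residual, and the remainder after $N$ terms lies in $\Psisc^{m+m'-N,l+l'-N}$ globally for every $N$, so the localized full symbol of $CD$ lies in $S^{m+m'-N,l+l'-N}_{\sca}$ for all $N$, i.e.\ is residual.
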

\begin{proof}
    Analogous to construction of parametrix for totally elliptic operators.
\end{proof}

Microlocality says that pseudodifferential operators cannot spread wavefront set. Elliptic regularity says that pseudodifferential operators cannot kill wavefront set except where they are characteristic. Specifically:
\begin{proposition}
    Suppose $A \in \Psi_{\mathrm{sc}}^{0,0}$. Then,
    \[
    \operatorname{WF}_{\mathrm{sc}}(Au)\supseteq \operatorname{WF}_{\mathrm{sc}}(u)\backslash \operatorname{char}_{\mathrm{sc}}^{m,\el}(A)  . 
    \]
\end{proposition}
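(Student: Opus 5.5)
We argue by contrapositive. Fix a point $p \in \partial\comphase$ with $p \notin \WFsc(Au)$ and $p \in \Ell(A)$, where ellipticity is at orders $(0,0)$ since $A\in\Psisc^{0,0}$. We will show $p \notin \WFsc(u)$. This is the microlocal analogue of the argument in Section~\ref{subsec:residual} showing that elliptic operators have Schwartz null space: there the global parametrix was used, and here Proposition~\ref{prop:micro-elliptic} replaces it.

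\textbf{Step 1 (microlocal parametrix).} Since $A$ is elliptic at $p$, Proposition~\ref{prop:micro-elliptic} gives $B \in \Psisc^{0,0}$ with $BA = \Id + R$, where $p \notin \WF'(R)$. Because $\WF'(R)$ is closed, there is an open neighbourhood $U$ of $p$ in $\partial\comphase$ disjoint from $\WF'(R)$.

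\textbf{Step 2 (cutoff and decomposition).} Since $p\notin\WFsc(Au)$ and the wavefront set is closed, we may shrink $U$ so that $\overline U\cap \WFsc(Au)=\varnothing$. Choose $C\in\Psisc^{0,0}$ elliptic at $p$ with $\WF'(C)\subset U$; this is obtained by quantizing a symbol of the form $\chi\in C^\infty(\comphase)$ with $\chi(p)=1$ and support near $p$. Then
\[
Cu = CBAu - CRu .
\]

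\textbf{Step 3 (each term is Schwartz).}
\begin{itemize}[label={}]
\item For the second term: $\WF'(CR)\subset \WF'(C)\cap\WF'(R)=\varnothing$, because the operator wavefront set of a composition is contained in the intersection of the operator wavefront sets (the full symbol expansion \eqref{eq:comp expansion}). Hence $CR$ is residual, and $CRu\in\Schw$ by Section~\ref{subsec:residual}.
\item For the first term: by microlocality (Proposition~\ref{prop:microlocality}), $\WFsc(CBAu)\subset \WF'(C)\cap\WFsc(Au)\subset U\cap\WFsc(Au)=\varnothing$. So $CBAu\in\Schw$ by Proposition~\ref{prop:WF_sc_basic}.
\end{itemize}
Therefore $Cu\in\Schw$ with $C$ elliptic at $p$, and by the definition \eqref{eq:WF defn}, $p\notin\WFsc(u)$.

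The proof is essentially routine. The only points needing care are:
\begin{itemize}[label={}]
\item the composition property $\WF'(CR)\subset\WF'(C)\cap\WF'(R)$, which should follow from the asymptotic expansion of the composed symbol, each term being a product of derivatives of the two symbols, together with the fact that the remainder is residual where either symbol is;
\item making sure that the neighbourhood $U$ can be taken small enough to avoid both $\WF'(R)$ and $\WFsc(Au)$, which uses the closedness of both sets.
\end{itemize}
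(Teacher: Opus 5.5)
Your proof is correct and is essentially the paper's argument: both combine the microlocal elliptic parametrix $BA=\Id+R$ from Proposition~\ref{prop:micro-elliptic} with microlocality applied to the decomposition $u=BAu-Ru$. The only difference is packaging. The paper writes $\WFsc(u)\subseteq\WFsc(Au)\cup\WF'(\Id-BA)$ and intersects over all $B$, whereas you insert a cutoff $C$ elliptic at $p$ and check the definition \eqref{eq:WF defn} directly, which avoids using the subadditivity $\WFsc(v+w)\subseteq\WFsc(v)\cup\WFsc(w)$.
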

\begin{proof}
    If $B\in \Psi_{\mathrm{sc}}^{0,0}$ and $K=I-BA$, then, from $u = BAu+Ku$, we get 
    \begin{equation}
        \operatorname{WF}_{\mathrm{sc}}(u) \subseteq \operatorname{WF}_{\mathrm{sc}}(BAu) \cup \operatorname{WF}_{\mathrm{sc}}(Ku) \subseteq \operatorname{WF}_{\mathrm{sc}}(Au)\cup \operatorname{WF}'_{\mathrm{sc}}(K). 
    \end{equation}
    So, 
    \begin{equation}
        \operatorname{WF}_{\mathrm{sc}}(u) \subseteq \operatorname{WF}_{\mathrm{sc}}(Au)\cup \bigcap_{B\in \Psi_{\mathrm{sc}}^{0,0} } \operatorname{WF}'_{\mathrm{sc}}(I-BA). 
    \end{equation}
    If $p\notin \operatorname{char}_{\mathrm{sc}}^{s,\el}(A)$, then the microlocal elliptic parametrix construction says we can find some $B$ such that the operator wavefront set of $I-BA$ does not contain $p$. So, $p$ is not in the intersection above. So, 
    \begin{equation}
        \bigcap_{B\in \Psi_{\mathrm{sc}}^{0,0} } \operatorname{WF}'_{\mathrm{sc}}(I-BA) \supseteq \operatorname{char}_{\mathrm{sc}}^{s,\el}(A). 
    \end{equation}
    So we end up with $\operatorname{WF}_{\mathrm{sc}}(u) \subseteq \operatorname{WF}_{\mathrm{sc}}(Au)\cup \operatorname{char}_{\mathrm{sc}}^{m,\el}(A)$, which is a restatement of the desired result. 
\end{proof}

\subsection{Problems and exercises}

\begin{problem}\label{prob:op WF set}
(a) Show that the elliptic set of $A$, $\Ell(A)$, is contained in the operator wavefront set. Show that if $\Ell(A) = \WF'(A)$ then $A$ is either elliptic or residual. 

(b) Show that, for any constant-coefficient differential operator, not identically zero, the operator wavefront set is equal to $\partial \comphase$. 
\end{problem}

\begin{problem}
    Prove \Cref{prop:cones}. Hint: you cannot use microlocality, since we used \Cref{prop:cones} to prove microlocality. Instead, use the microlocal elliptic parametrix construction.
    \label{ex:cones}
\end{problem}

\begin{problem}
    Suppose that $u$ is real-valued. Show that $\operatorname{WF}_{\mathrm{sc}}(u)$ is invariant under the fiberwise antipodal map.
\end{problem}

\begin{problem}
    \begin{itemize}
        \item Prove $\operatorname{WF}_{\mathrm{sc}}(u+v) \subseteq \operatorname{WF}_{\mathrm{sc}}(u)\cup \operatorname{WF}_{\mathrm{sc}}(v)$.   
        \item Let $u= \sum_{j=1}^J e^{i k_j \cdot x}$ for some distinct $k_j\in \mathbb{R}^n$. What is $\operatorname{WF}_{\mathrm{sc}}(u)$?
    \end{itemize}
\end{problem}

\begin{problem}
    Let $p(x)$ denote a nonzero polynomial of $x\in \mathbb{R}^n$. What can the sc-wavefront set of $p$ be? Can you guess the answer before doing any work?
\end{problem}

\begin{problem}
    Consider $u(x) = e^{ix^3} \in \mathcal{S}'(\mathbb{R})$. What is the sc-wavefront set of this $u$? Can you guess the answer before doing any work? 
\end{problem}

\begin{problem}
    Consider the Bessel function $J_\nu(x)$, $\nu\in \mathbb{R}$. What is the sc-wavefront set of $u(x)=1_{x>0} \chi(1/x) J_\nu(x)$? 
    
    Hint: use Bessel's ODE, $x^2 u''(x) +x u'(x) + (x^2- \nu^2) u(x) = 0$.
\end{problem}

\begin{problem}
    Consider the Airy function $A(x)$. What is the sc-wavefront set of $A$?

    Hint: use Airy's ODE $A''(x) = x A(x)$. 
\end{problem}

\begin{problem}
    Consider the beam 
    $u(x) = \chi(x_2 )e^{i\sigma x_1}$ on $\mathbb{R}^2$. Show that the portion of its sc-wavefront set at the corner is the closure of the sc-wavefront set at finite frequency.
    \label{ex:beam_helper}
\end{problem}

\begin{problem}
    Repeat a nonempty subset of the problems above for the Sobolev wavefront sets $\operatorname{WF}_{\mathrm{sc}}^{m,s}$. 
\end{problem}

\begin{problem}
    Let $j>0$, $\chi \in C_{\mathrm{c}}^\infty(\mathbb{R})$ be identically $1$ near the origin. What is the sc-wavefront set of $\chi(y/x^j_1) e^{ik\cdot x}$? Here $x=(x_1,y)$
\end{problem}

\section{Microlocal propagation estimates I}

\subsection{Null bicharacteristics and microlocal propagation}
Consider $P \in \Psisc^{m,l}(\R^n)$ which is not elliptic. The failure of ellipticity 
could be at frequency infinity, at spatial infinity, or both. For example, the Helmholtz operator $\Delta - \lambda^2$, where $\lambda > 0$, is elliptic at frequency infinity but not at spatial infinity, while the Klein Gordon operator $D_t^2 - \Delta - m^2$, $m > 0$, is non-elliptic both at frequency infinity and at spatial (or more precisely spacetime) infinity.  We'll assume that $P$ has a real, classical principal symbol $p \in S^{m,l}_{\sca, \cl}$.

Recall some basic symplectic geometry: the vector field $H_p$ is given by the formula
\begin{equation}\label{eq:Hvf}
    H_p = \sum_j \Big( \frac{\partial p}{\partial \xi_j} \frac{\partial}{\partial x_j} - \frac{\partial p}{\partial x_j} \frac{\partial}{\partial \xi_j} \Big).
\end{equation}
Although we write this using the usual Cartesian coordinates on $\R^n$, this is actually invariant under coordinate changes: if $(x_1, \dots x_n)$ are \emph{any} local coordinates and $(\xi_1, \dots, \xi_n)$ are the dual coordinates induced by $(x_1, \dots x_n)$ on the fibres of the cotangent bundle, then $H_p$ takes the form \eqref{eq:Hvf} in these coordinates. 

If $P$ happens to have order $(1,1)$, then the Hamilton vector field is a smooth vector field on $\comphase$, tangent to the boundary, and hence restricts to a smooth vector field on $\partial \comphase$. This is straightforward to see from \eqref{eq:Hvf}, which we write as follows:
\begin{equation}\label{eq:Hvf2}
    H_p = \sum_j \Big( \ang{x}^{-1} \frac{\partial p}{\partial \xi_j} \big(\ang{x} \frac{\partial}{\partial x_j}\big) - \ang{\xi}^{-1} \frac{\partial p}{\partial x_j} \big( \ang{\xi} \frac{\partial}{\partial \xi_j} \big) \Big).
\end{equation}
As we have seen before, the vector fields $\ang{x} \partial_{x_j}$ and $\ang{\xi}\partial_{\xi_j}$
are smooth on $\comphase$ and tangent to the boundary, while the coefficients in \eqref{eq:Hvf2} are classical of order $(0,0)$ and hence smooth on $\comphase$. 
For operators of general order $(m,l)$, it is convenient to rescale the Hamilton vector field as follows:
\begin{equation}
\scH_p^{m,l} = \ang{x}^{-l+1} \ang{\xi}^{-m+1} H_p.
\end{equation}
Then $\scH_p^{m,l}$ is again smooth on $\comphase$ and tangent to the boundary. Because of the tangency to the boundary, the flow of $\scH_p^{m,l}$ exists for all `time' (the `time' here is the flow parameter). An easy calculation (see exercises) shows that this rescaled vector field is tangent to $\mathsf{p} = 0$, where $\mathsf{p} = \ang{x}^{-l}\ang{\xi}^{-m} p$ is a smooth function on $\comphase$ vanishing at $\Char(P)$. It follows that if one point of an integral curve of $H_p$ is contained in $\Char(P)$, then the whole integral curve is contained in $\Char(P)$. Such integral curves play an important role in microlocal analysis. 

\begin{definition} A (null-)bicharacteristic of $P$ is an integral curve of $\scH_p^{m,l}$ contained in $\Char(P) = \{ \mathsf{p} = 0 \} \cap \partial \comphase$. We will usually omit the prefix `null-'. 
\end{definition}

Simple example: take $P = D_{x_1} \in \Psi^{1, 0}$. Then $p = \xi_1$ and $H_p = \partial_{x_1}$. The regularized Hamilton vector field is $\scH_p^{1,0}$ is $\ang{x} \partial_{x_1}$ and this is tangent to $\partial \comphase$; see Problem~\ref{prob:D1 tangency}. Note that if we are only interested in frequency infinity, then over a bounded region in $x$-space there is no need for the regularization. 

\subsection{Propagation theorems}

A major theorem (or really a meta-theorem, with many versions in different contexts) in microlocal analysis is called `Propagation of Singularities', or perhaps more accurately `Propagation of Regularity'. Our first version of this theorem is:

\begin{theorem}\label{thm:pos1} Suppose that $P \in \Psisc^{m,l}(\R^n)$ and admits a real, classical principal symbol $p$. Suppose that $u \in \Schw'(\R^n)$ satisfies  $Pu \in \Schw(\R^n)$. Then 
\begin{itemize}
    \item $\WFsc(u) \subset \Char(P)$, and 
    \item $\WFsc(u)$ is a union of bicharacteristics of $\scH_p^{m,l}$. 
\end{itemize}
That is, if $q, q' \in \Char(P)$ are on the same bicharacteristic of $P$, then $q \in \WFsc(u)$ iff $q' \in \WFsc(u)$, i.e. we have `propagation of singularities'. Equivalently, $q \notin \WFsc(u)$ iff $q' \notin \WFsc(u)$, i.e. we have `propagation of regularity' along bicharacteristics. 
\end{theorem}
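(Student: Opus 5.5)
The first claim follows directly from the microlocal elliptic parametrix. If $q \notin \Char(P)$, then $P$ is elliptic at $q$, so Proposition~\ref{prop:micro-elliptic} gives $B \in \Psisc^{-m,-l}$ with $q \notin \WF'(BP - \Id)$. Write $u = BPu - (BP-\Id)u$. Since $Pu \in \Schw$, the term $BPu$ is Schwartz, and microlocality (Proposition~\ref{prop:microlocality}) shows that $\WFsc((BP-\Id)u)$ avoids $q$. Hence $q \notin \WFsc(u)$. This is just the elliptic regularity statement proved at the end of the previous lecture, applied to $P$ rather than an order-$(0,0)$ operator. The order does not matter, since we can compose with the elliptic operator $\ang{x}^{-l}\ang{\xi}^{-m}$-quantized.

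For the second claim, by symmetry (flowing forward or backward) and a connectedness argument along the bicharacteristic, it suffices to prove a local statement. Let $\gamma$ be an integral curve of $\scH_p^{m,l}$ in $\Char(P)$, and suppose $\gamma(0) \notin \WFsc(u)$. We want $\gamma(t) \notin \WFsc(u)$ for $t \in [0,T]$, provided $\scH_p^{m,l}$ does not vanish along the segment. The non-vanishing hypothesis is implicit here; radial points are deferred to Lecture 11. Using $\WFsc(u) = \overline{\bigcup \WF^{s,r}_{\sca}(u)}$, I will prove the Sobolev version: if $q = \gamma(0) \notin \WF^{s,r}_{\sca}(u)$, then $\gamma(T) \notin \WF^{s,r}_{\sca}(u)$. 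This is done by induction on $(s,r)$ in steps of $\tfrac12$, starting from an a priori order at which $u$ lies globally (possible since $u \in \Schw'$ lies in some $H^{s_0,r_0}_{\sca}$ by Proposition~\ref{prop:Schwartz_rep}).

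The core is a positive commutator estimate. Near the segment of $\gamma$, choose coordinates on $\comphase$ in which $\scH_p^{m,l}$ is straightened out, so that it equals $\partial_t$ along a flow-box. Construct a commutant symbol of the form $a = \ang{x}^{r'}\ang{\xi}^{s'}\chi_0(t)\chi_1(y)^2$, with $\chi_1$ cutting off transversally. Take $\chi_0$ with $\chi_0' \le 0$ on most of $[0,T]$ and of the form $e^{-Mt}$, so that it is monotone. The only region where $\chi_0' > 0$ is near $t=0$, where $u$ is already known to be regular. Choose the weights $s',r'$ so that, after factoring, $\spr(i[P,A^*A]) = H_p(a^2)$ equals $-b^2 + e$. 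Here $b$ is elliptic on the segment, $e$ is supported near $\gamma(0)$, and there are remainder terms of the form $\chi_1\chi_1'$ supported off $\gamma$. The large parameter $M$ absorbs the derivatives of the weight factors $\ang{x}^{r'}\ang{\xi}^{s'}$, which enter as $s'$, $r'$ times smooth bounded functions.

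With this in hand, I pair:
\[
\langle i[P,A^*A]u,u\rangle = 2\,\mathrm{Im}\langle A^*APu,u\rangle - \langle i(P-P^*)A^*Au,u\rangle .
\]
The left side is $-\|Bu\|^2 + \langle Eu,u\rangle$ plus lower order terms. The first term on the right is finite because $Pu$ is Schwartz. The second term is lower order, since $P$ has real principal symbol and so $P - P^* \in \Psisc^{m-1,l-1}$. This yields $\|Bu\|^2 \lesssim |\langle Eu,u\rangle| + |\langle APu,Au\rangle| + \text{(errors one half order weaker)}$, and the weaker errors are controlled by the inductive hypothesis.

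The main obstacle I expect is rigour in the pairing: $u$ is only known to lie in a weak space, so $\langle A^*APu,u\rangle$ need not make sense a priori. I would handle this with a regularization family $\Lambda_\epsilon = \Op(\ang{\epsilon x}^{-1}\ang{\epsilon\xi}^{-1})$, replacing $a$ by $a\Lambda_\epsilon$. This family is uniformly bounded in $S^{0,0}_{\sca}$ and converges in slightly weaker orders, in the spirit of Problem~\ref{prob:density symbols}. I would check that the commutator sign is preserved up to terms uniformly controlled, obtain an $\epsilon$-uniform bound on $\|B_\epsilon u\|$, and then use weak compactness to conclude $Bu \in L^2$. Handling the transversal cutoff error terms uniformly is the delicate point.
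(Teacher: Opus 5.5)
Your strategy is the paper's: elliptic regularity off $\Char(P)$ (your first part is fine), then a positive commutator estimate along a flow-box as in Theorem~\ref{thm:mpe}, iterated in half-steps and justified by regularization. (No non-vanishing hypothesis is actually needed. A bicharacteristic through a point where $\scH_p^{m,l}\neq 0$ never meets a zero of $\scH_p^{m,l}$, and constant bicharacteristics make the claim trivial.)

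One step fails as written: you treat $\langle i(P-P^*)A^*Au,u\rangle$ as lower order. It is not.
\begin{itemize}
\item Since $P-P^*\in\Psisc^{m-1,l-1}$ and $A^*A\in\Psisc^{2s',2r'}$, this term has exactly the order of $i[P,A^*A]$, and hence of $B^*B$.
\item Its symbol is $p_1a^2$, where $p_1=\spr(i(P^*-P))$ is real and of indefinite sign. It lives on all of $\operatorname{supp} a$, where $u$ is not yet known at this order. An inductive hypothesis half an order weaker therefore cannot control it.
\item The fix is to use the operator $i(P^*A^*A-A^*AP)$. Its expectation is exactly $2\Im\langle Pu,A^*Au\rangle$ and its principal symbol is $H_p(a^2)+p_1a^2$. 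You then arrange $H_p(a^2)+p_1a^2=-b^2+e$ by taking $M$ large enough to dominate $p_1$ as well as the weight derivatives. This is exactly the role of $p_1$ in \eqref{eq:ham eqn}.
\end{itemize}

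Two further points concern the symbol construction.

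First, $e^{-Mt}$ must be multiplied by a turn-off $\phi$ beyond $t=T$. Then $b$ is the square root of something like $e^{-Mt}\big((M-C)\phi-\phi'\big)$. For a generic cutoff this nonnegative function need not have a smooth square root where $\phi$ vanishes, so $b$ need not be a symbol. The paper avoids this with the turn-off $\chi_0(t)=e^{-\digamma/t}$ of \eqref{eq:chi_0}. Since $\chi_0'=\digamma t^{-2}\chi_0$, $b$ factors as $\sqrt{t^{-2}\chi_0}$ times the square root of a function bounded below by $\digamma-O(1)$. This single function serves as both your exponential weight and your cutoff. Alternatively, you can discard a nonnegative, non-square remainder via sharp G\aa rding.

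Second, the transversal terms $\chi_1\chi_1'$ you single out as the delicate point do not occur at principal order. In a flow-box, $\scH_p^{m,l}\chi_1(y)=0$ on $\partial\comphase$, so derivatives of $\chi_1$ enter only the remainder one order lower, which your half-step induction absorbs. This matters: principal-order terms of indefinite sign spread along the whole tube could not be bounded by anything on the right-hand side.
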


We can also give a more quantitative version of this, in which we look at wavefront set relative to $H^{s,r}$:

\begin{theorem}\label{thm:pos2} Suppose that $P \in \Psisc^{m,l}(\R^n)$ and admits a real, classical principal symbol $p$. Suppose that $u \in \Schw'(\R^n)$ satisfies  $Pu \in H^{s,r}$. Then 
\begin{itemize}
    \item $u$ is in $H^{s+m, r+l}$ microlocally on $\Ell(P)$, i.e. 
    $$
    \WFsc^{s+m, r+\el}(u) \cap \Ell(P) = \emptyset;
    $$
    \item $\WFsc^{s+m-1, r+\el-1}(u) \subset \Char(P)$ is a union of bicharacteristics of $\scH_p^{m,l}$. 
\end{itemize}
\end{theorem}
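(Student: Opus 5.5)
The first bullet is microlocal elliptic regularity in the Sobolev setting, and I will do it first. Fix $q \in \Ell(P)$. Proposition~\ref{prop:micro-elliptic} gives $B \in \Psisc^{-m,-l}$ with $q \notin \WF'(BP - \Id)$. Choose $Q \in \Psisc^{0,0}$ elliptic at $q$ with $\WF'(Q)$ disjoint from $\WF'(BP-\Id)$; this is possible because the residual set of $BP-\Id$ is an open neighbourhood of $q$. Then
$$Qu = QBPu + Q(\Id - BP)u,$$
and $Q(\Id-BP)$ is residual, so the second term is Schwartz. For the first term, $QB \in \Psisc^{-m,-l}$ maps $H^{s,r}$ to $H^{s+m,r+l}$. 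Hence $Qu \in H^{s+m,r+l}$, which gives $q \notin \WFsc^{s+m,r+l}(u)$. This settles the first bullet, and in particular $\WFsc^{s+m-1,r+l-1}(u) \subset \Char(P)$.

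For the second bullet I will use a positive commutator (Hörmander) argument. First reduce to order $(1,1)$ by replacing $P$ with $\Lambda P$, where $\Lambda \in \Psisc^{1-m,1-l}$ is elliptic with positive principal symbol, e.g.\ a quantization of $\ang{\xi}^{1-m}\ang{x}^{1-l}$. This rescales $H_p$ by a positive function on $\Char(P)$, so bicharacteristics are unchanged as sets; the orders shift accordingly. Now suppose $q_0 \notin \WFsc^{s',r'}(u)$, with $(s',r') = (s+m-1,r+l-1)$ in the original normalization, and let $q_1 = \exp(T\,\scH_p)(q_0)$. The goal is to show $q_1 \notin \WFsc^{s',r'}(u)$. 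By compactness of the segment and a continuity/iteration argument, it suffices to propagate over a short time within a single coordinate patch, and to do so with a small gain each time: from $\WFsc^{s'-1/2,r'-1/2}$-regularity near the segment to $\WFsc^{s',r'}$-regularity. This gain is the standard inductive bootstrap, starting from the fact that $u \in H^{N,N}$ for some $N$ by Proposition~\ref{prop:Schwartz_rep}.

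The core step is constructing a commutant $A = \Op(a)$ with $a \in S^{2s'-1,2r'-1}_{\sca}$ real, microsupported in a thin tube around the segment from $q_0$ to $q_1$, and satisfying
$$\scH_p a = -b^2 + e,$$
where $b$ is elliptic at $q_1$ and $e$ is supported near $q_0$, i.e.\ where $u$ is already controlled. The standard construction takes $a = \ang{x}^{2r'-1}\ang{\xi}^{2s'-1}\chi_0(\phi)\chi_1(t)$. Here $t$ is a local flow-time coordinate; $\chi_1$ is increasing near the start and decreasing steeply, like $e^{-1/(T-t)}$ type, so that its derivative dominates; and $\phi$ localizes transversally. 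The weight's own derivative $\scH_p(\ang{x}^{2r'-1}\ang{\xi}^{2s'-1})$ is absorbed by the steepness of $\chi_1$. Then I compute
$$\Im\langle Pu, Au\rangle = \tfrac{1}{2i}\langle [P,A]u,u\rangle + \langle \tfrac{P-P^*}{2i}Au,u\rangle,$$
where $i[P,A]$ has principal symbol $\scH_p a$ times the weight. This yields $\|Bu\|^2 \le |\langle Pu,Au\rangle| + |\langle Eu,u\rangle| + \text{lower order}$, and the right side is finite by hypothesis and by the regularity at $q_0$. The skew-adjoint part $\frac{P-P^*}{2i}$ has order one lower since $p$ is real, so it enters as a lower-order error.

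I expect the main obstacle to be making this pairing rigorous, because $u$ is only a priori in a weaker space and $\langle [P,A]u,u\rangle$ need not make sense. I will regularize by inserting $\Op(\ang{\epsilon\xi}^{-K}\ang{\epsilon x}^{-K})$ into $a$, uniformly bounded in $S^{0,0}$ as $\epsilon\to 0$ per Problem~\ref{prob:density symbols}. I will check that the regularizer's contribution to $\scH_p a$ has a favorable sign or can be absorbed uniformly in $\epsilon$, and then pass to a weak limit using boundedness of $Bu_\epsilon$ in $L^2$. A secondary point is controlling $Pu$ in the pairing: split $|\langle Pu,Au\rangle| \le \delta\|Bu\|^2 + C_\delta\|\tilde{Q}Pu\|_{H^{s,r}}^2 + \dots$ by writing $A = B'^*B'$ appropriately, which is where the loss of one order from $s+m$ to $s+m-1$ arises. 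Reversing the flow direction (taking $-T$) gives the symmetric statement, so the wavefront set is a union of full bicharacteristics.
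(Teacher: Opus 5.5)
Your route is the paper's. The first bullet follows from the microlocal elliptic parametrix, and your argument there is complete. The second bullet reduces to order $(1,1)$ and runs the positive commutator proof of Theorem~\ref{thm:mpe}: a steep turn-off function, a half-order bootstrap, a regularization by a fixed finite amount, and a weak limit.

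One step, however, is wrong as stated: the skew-adjoint part $\frac{P-P^*}{2i}$ does \emph{not} enter as a lower-order error. It is one order below $P$, but it is not below the commutator. With $P\in\Psisc^{1,1}$ and $A$ of order $(2s',2r')$, both $i[P,A]$ and $(P-P^*)A$ lie in $\Psisc^{2s',2r'}$. Hence $\langle \frac{P-P^*}{2i}Au,u\rangle$ has the size of $\|u\|^2_{H^{s',r'}}$ microlocally on $\WF'(A)$. That is precisely the quantity you are trying to estimate, and your inductive hypothesis (regularity $(s'-\tfrac12,r'-\tfrac12)$ near the segment) does not control it. Nor can you simply replace $P$ by $\frac{P+P^*}{2}$, since $\frac{P-P^*}{2}u$ would then be known only half an order too weakly.

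The remedy is what the paper does in \eqref{eq:ham eqn}. Solve $\scH_p a + p_1 a = -b^2 + e$, where $p_1 = i\sigma_L(P^*-P)\in S^{0,0}_{\sca}$ is real and sign-indefinite. Then dominate $p_1 a$ by making the turn-off steep enough, i.e.\ $\digamma$ large compared with $\sup|p_1|$. This is the same device you already use for the derivative of the weight and for the regularizer. The order of choices matters: fix the regularization strength first, then the steepness.

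There are also some bookkeeping slips.
\begin{itemize}
\item After the reduction to order $(1,1)$, the commutant must have order $(2s',2r')$, matching the paper's $A\in\Psisc^{2s,2r}$, not $(2s'-1,2r'-1)$. Otherwise $b$ only controls $u$ in $H^{s'-1/2,r'-1/2}$.
\item The a priori space should be $H^{-N,-N}$, not $H^{N,N}$.
\item Absorbing $|\langle Pu,Au\rangle|$ into $\delta\|Bu\|^2$ requires $a\lesssim b^2$ with a smooth quotient on all of $\supp a$. The $e^{-\digamma/t}$ profile provides this; the paper builds it in as the extra $-a^2$ term in the symbol identity.
\end{itemize}
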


\begin{remark}\label{rem:gain}
Notice the discrepancy of the two orders in the two statements of Theorem~\ref{thm:pos2}. On the elliptic set, we gain regularity of order $(m, l)$, the same as the order of $P$, where `gain' refers to the orders of the Sobolev space containing $u$ compared to the orders of the Sobolev space containing $Pu$. On the characteristic variety, there is no `automatic' gain as in the elliptic case, but there is a `conditional' gain, i.e. if one has regularity of a certain order of $Pu$ and regularity of $u$ at some point along a bicharacteristic, then one has regularity along the whole bicharacteristic. However, the `non-elliptic' gain here is only $(m-1, l-1)$, one less in each exponent relative to the elliptic gain. This is always the case and is explained by the method of proof below. More on this later in Lecture 6. 
\end{remark}

We will prove a \emph{microlocal propagation estimate} that implies Theorem~\ref{thm:pos2} and is a strictly stronger result, as it is `fully microlocal': instead of the global assumption that $Pu \in H^{s,r}$, we only assume this in some microlocal region. To set up this theorem, we suppose as before that $P \in \Psisc^{m,l}$ has real, classical principal symbol, and $\gamma: [0, s_0] \to \Char(P)$ is a nontrivial bicharacteristic, i.e. nonconstant (equivalent to assuming that $\scH_p^{m,l}$ is nonvanishing at, and therefore, by continuity, also near, $\gamma(0)$). Let $U_0$ and $U$ be open subsets of $\partial \comphase$ such that $U_0$ contains $\gamma(0)$ and $U$ contains $\gamma([0, s_0])$. 

\begin{theorem}\label{thm:mpe} There exist operators $B, E, G \in \Psisc^{0,0}$ such that 
\begin{equation}\begin{gathered}
    \gamma([0, s_0]) \subset \Ell(B) \subset \WF'(B) \subset \Ell(G) \subset \WF'(G) \subset U, \\
    \gamma(0) \subset \Ell(E) \subset \WF'(E) \subset U_0, 
\end{gathered}\end{equation}
such that for all $s,r, N$ there exists $C > 0$ such that for all $u \in \Schw'(\R^n)$ we have 
\begin{equation}\label{eq:mpe}
\| Bu \|_{H^{s,r}} \leq C \Big( \| GPu \|_{H^{s-m+1, r-l+1}} + \| Eu \|_{H^{s,r}} + \| u \|_{H^{-N, -N}} \Big). 
\end{equation}
This inequality holds in the strong sense that if the RHS is finite, then so is the LHS, and the inequality holds. In particular it means that if $u \in H^{-N, -N}$, $Pu$ is in $H^{s-m+1, r-l+1}$ microlocally in $U$, and $u$ is in $H^{s,r}$ microlocally in $U_0$, then $u$ is in $H^{s,r}$ microlocally on $\Ell(B)$, in particular on $\gamma([0, s_0])$. 
\end{theorem}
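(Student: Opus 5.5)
We prove \eqref{eq:mpe} by a positive commutator argument. The plan is to construct a real symbol $a \in S^{2s-m+1,2r-l+1}_{\sca}$, i.e.\ of the form $a = \ang{x}^{2r-l+1}\ang{\xi}^{2s-m+1} \check a$ with $\check a$ smooth on $\comphase$, supported in a small neighbourhood of $\gamma([0,s_0])$ inside $U$. We arrange that
\[
\scH_p^{m,l} \check a = -b^2 + e,
\]
where $b$ is elliptic on $\gamma([0,s_0])$ and $e$ is supported in $U_0$. Then we quantize $A = \Op(a)$ (symmetrized, so that $A = A^*$) and expand the pairing $\Im \langle Pu, Au\rangle$, i.e.\ $\langle i[P,A]u,u\rangle$ plus a term from $\Im P$, which is lower order.

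First, straighten the flow near $\gamma$. Since $\scH_p^{m,l}$ is nonvanishing near $\gamma(0)$, hence near the whole of $\gamma([0,s_0])$, we choose local coordinates $(t, y)$ on $\partial\comphase$ near the curve, with $\scH_p^{m,l} = \partial_t$ and $\gamma = \{y=0,\ t\in[0,s_0]\}$. We extend these functions into $\comphase$ and use the boundary defining functions $\ang{x}^{-1}$ and $\ang{\xi}^{-1}$ as extra coordinates. We then take
\[
\check a = \chi_0(|y|^2)\,\chi_1(t),
\]
where $\chi_0$ is a cutoff near $0$ and $\chi_1(t) = e^{-\beta t}$ on $[-\delta, s_0+\delta]$, times a cutoff $\psi(t)$ vanishing for $t > s_0 + 2\delta$ and for $t<-2\delta$. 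The derivative $\partial_t$ hitting $e^{-\beta t}$ gives the negative square $-\beta\check a$. The cutoff on the right end is chosen with $\psi' \le 0$ there, so its contribution has the same sign. On the left end, $\psi' > 0$ near $t\in[-2\delta,-\delta]$, and that region lies in $U_0$ for $\delta$ small; this is the $e$ term. We also need the weight factors $\ang{x}^{2r-l+1}\ang{\xi}^{2s-m+1}$ differentiated by $\scH_p$. These give $\check a$ times a smooth bounded function, which is absorbed by choosing $\beta$ large. This is the standard trick and I expect it to be the main delicate point: making the sign bookkeeping work uniformly at the boundary, including near the corner, where both weights matter.

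By \eqref{eq:Poisson b}, $i[P,A]$ has principal symbol
\[
H_p a = \ang{x}^{2r}\ang{\xi}^{2s}\bigl(-b^2+e\bigr),
\]
up to the required normalisation, with $-b^2 + e \in S^{0,0}$. So
\[
i[P,A] = -B^*B + E^*E' + R, \qquad R \in \Psisc^{2s-1,2r-1}, \quad \WF'(R) \subset \supp a.
\]
We pick $G\in\Psisc^{0,0}$ elliptic on $\supp a$ with $\WF'(G)\subset U$, and $E$ elliptic on $\supp e$ with $\WF'(E) \subset U_0$. Then
\[
\|Bu\|_{H^{s,r}}^2 \lesssim |\langle GPu, Au\rangle| + \|Eu\|^2_{H^{s,r}} + |\langle Ru,u\rangle| + \|u\|^2_{H^{-N,-N}}.
\]
Cauchy–Schwarz with $\epsilon$ on the first term bounds it by $\epsilon^{-1}\|GPu\|_{H^{s-m+1,r-l+1}}^2 + \epsilon\|\tilde Bu\|_{H^{s,r}}^2$, where $\tilde B$ is microsupported on $\supp a$. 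By building $b^2$ to dominate $\check a$ (exploiting $\beta$ large), this $\epsilon$-term is absorbed into the left-hand side. The term $\langle Ru,u\rangle$ is half an order weaker and is handled by induction on $(s,r)$ in steps of $1/2$: at the previous step we have control of $u$ in $H^{s-1/2,r-1/2}$ on a slightly larger neighbourhood. The base case is $u\in H^{-N,-N}$, and the remainders off the microsupports are residual by microlocality (Proposition~\ref{prop:microlocality}) and the microlocal elliptic parametrix (Proposition~\ref{prop:micro-elliptic}).

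Finally, the pairing argument requires $Au$ to lie in the appropriate space a priori, which is not known when only the right-hand side is finite. We handle this by a regularization: replace $a$ by $a_\varepsilon = a\,(1+\varepsilon\ang{x})^{-K}(1+\varepsilon\ang{\xi})^{-K}$ with $K$ large. This family is uniformly bounded in $S^{2s-m+1,2r-l+1}_{\sca}$ by Problem~\ref{prob:density symbols}, and its $\scH_p$-derivative keeps the favourable sign up to terms controlled uniformly. So the estimate holds uniformly in $\varepsilon$. Letting $\varepsilon\to0$ and using weak compactness gives $Bu\in H^{s,r}$ together with the estimate. This justifies the ``strong sense'' of the statement.
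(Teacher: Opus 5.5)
Your strategy is the paper's: straighten $\scH_p^{m,l}$ to $\partial_t$ near $\gamma$, and choose a commutant whose derivative along the flow is negative except on a turn-on region inside $U_0$. You then handle that region by the microlocal elliptic estimate, iterate in half-steps down to $H^{-N,-N}$, and justify the pairing by regularizing and taking a weak limit. Your weight $e^{-\beta t}$ with $\beta$ large plays the role of the paper's turn-off function $e^{-\digamma/\theta}$, $\theta=s_0+\epsilon-z_1$, with $\digamma$ large. As written, however, two steps fail; with the repairs below your argument becomes the paper's proof.

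\emph{The $P-P^*$ term is not lower order.} You have $2\Im\langle Pu,Au\rangle=\langle i(P^*A-AP)u,u\rangle$ and $i(P^*A-AP)=i[P,A]+i(P^*-P)A$. Since the principal symbol of $P$ is real, $P^*-P\in\Psisc^{m-1,l-1}$. With $A\in\Psisc^{2s-m+1,2r-l+1}$, the second term therefore lies in $\Psisc^{2s,2r}$, the same order as $i[P,A]$ and as $B^*B$. If you put it into $R$, then $R\notin\Psisc^{2s-1,2r-1}$. Bounding $\langle Ru,u\rangle$ would then need $H^{s,r}$ control of $u$ on $\supp a$, which is what you are proving, so the half-step induction becomes circular. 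The repair is to include $\tilde p_1=\ang{x}^{1-l}\ang{\xi}^{1-m}\sigma_L(i(P^*-P))\in S^{0,0}_{\sca}$ in the transport identity, i.e.\ arrange $\scH_p^{m,l}\check a+(w+\tilde p_1)\check a=-b^2+e$, and dominate $\tilde p_1$ by $\beta$ exactly as you do the weight term $w$. This is the $p_1a$ term in the paper's identity $H_pa+p_1a=-b^2-a^2+e'$.

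\emph{You never ensure $b$ is a smooth symbol.} Near the right end you need $b^2=\chi_0(|y|^2)\,e^{-\beta t}\bigl((\beta-w-\tilde p_1)\psi+|\psi'|\bigr)$, where $\chi_0$ is your transverse cutoff. Near the left end $b^2$ contains the factor $\chi_0(|y|^2)\psi$. For general cutoffs $\psi,\chi_0$ these square roots need not be smooth at the edge of the support, and then $B=\Op(b)$ is not in the calculus.

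The paper avoids this in two ways. It squares the transverse cutoff and the turn-on function. It also uses the turn-off $e^{-\digamma/\theta}$, whose derivative is $\digamma\theta^{-2}e^{-\digamma/\theta}$ and for which $\theta^{-1}e^{-\digamma/(2\theta)}$ is smooth. Consequently $b^2$ is $\theta^{-2}e^{-\digamma/\theta}$ times squares times a factor $\digamma-\theta^2(\cdots)$ that is bounded below.

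The same issue affects your absorption step. A pointwise bound $\check a^2\lesssim b^2$ does not by itself give $\|\Op(\check a)u\|\lesssim\|Bu\|$ plus lower-order terms, because $b$ is not elliptic on all of $\supp\check a$. You would need a smooth factorization or the sharp G{\aa}rding inequality. The paper sidesteps this by building $-a^2$ into the symbolic identity, so the term $\|\Lambda^{-s,-r}Au\|^2$ produced by Cauchy--Schwarz cancels exactly.

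Finally, in your regularization, differentiating $(1+\varepsilon\ang{x})^{-K}(1+\varepsilon\ang{\xi})^{-K}$ produces terms of size $O(K)$ uniformly in $\varepsilon$. So $K$ must be fixed (depending on $N$) before $\beta$ is chosen, which is the paper's remark that only a finite amount of regularization is possible.
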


\begin{figure}
    \centering
    \includegraphics[width=0.8\textwidth]{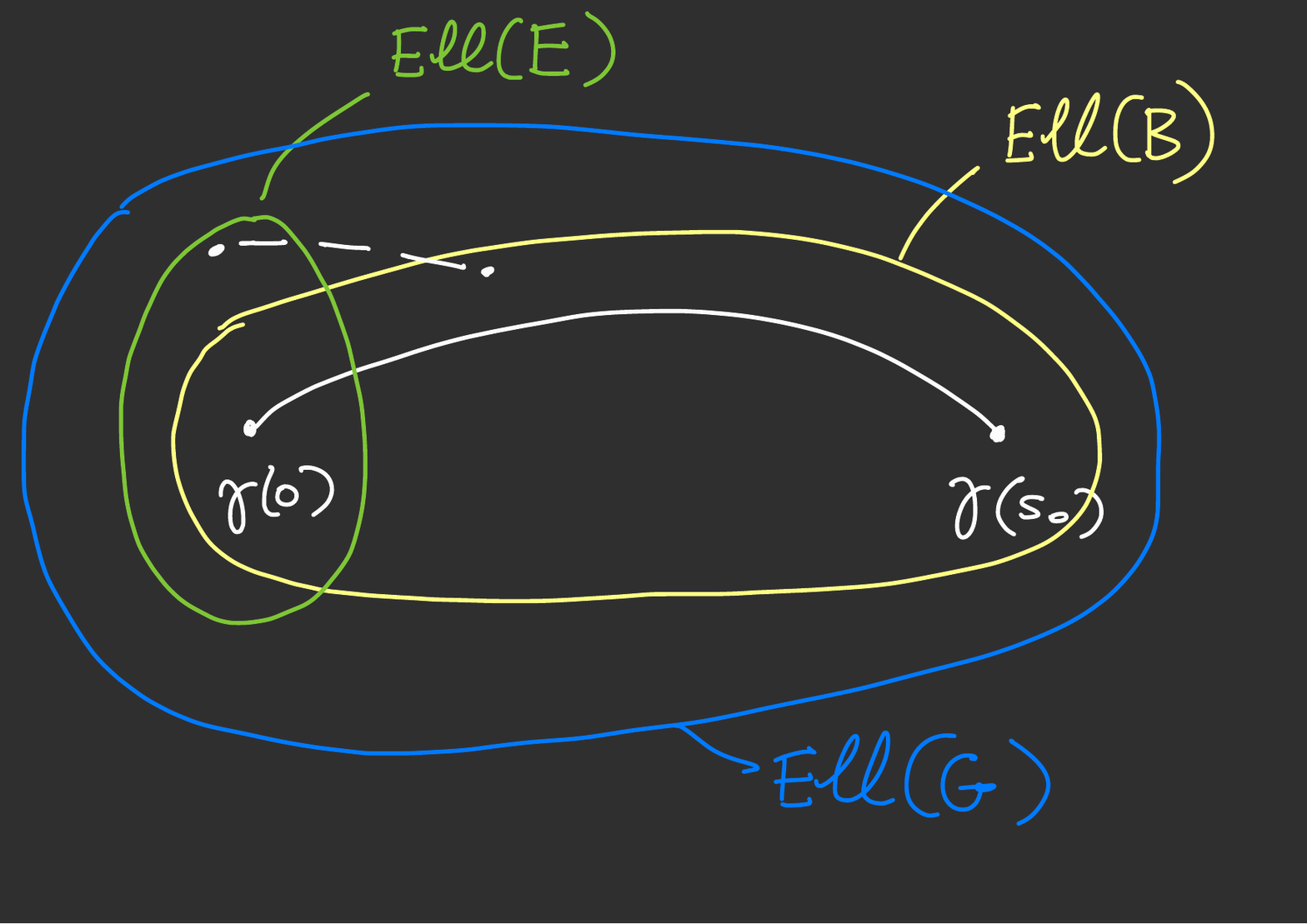}
    \caption{The geometric setup of Theorem~\ref{thm:mpe}. The bicharacteristic $\gamma$ is depicted with a solid white line, and the dotted white line shows a bicharacteristic with endpoint in $\Ell(B)$ and starting point in $\Ell(E)$, as required in the proof.}
    \label{fig:pos}
\end{figure}

\begin{remark} Note that this estimate is `fully microlocal' in the sense that the assumptions both on $u$ and $Pu$ are microlocalized near the region of interest, which is the bicharacteristic segment $\gamma([0, s_0])$. The only exception is the assumption that $u \in H^{-N, -N}$ but this is really no assumption at all, since every tempered distribution is in $H^{-N, -N}$ for sufficiently large $N$. 
\end{remark}

We will prove this result next lecture. 

\subsection{A simple example}
In the remainder of this lecture we will present a proof in a simple model situation. Here will be working at frequency infinity, over a bounded region of $\R^2$. Let $P = D_{x_1} \in \Psisc^{1,0}(\R^2)$, and we suppose that $u$ is a tempered distribution such that $Pu = f \in L^2(\R^2)$. We will assume that 
$$
u \in L^2([-2, 0]_{x_1} \times [-2, 2]_{x_2}). 
$$
The problem is to prove that $u$ is $L^2$ on a rectangle that is larger in the $x_1$ direction, although (for convenience) a bit smaller in the $x_2$ direction: show that 
$$
u \in L^2([-2, 2]_{x_1} \times [-1, 1]_{x_2}). 
$$
That is, regularity of $u$ `propagates' in the $x_1$-direction. 
It will be enough, in view of the assumption on $u$, to show it is $L^2$ on $L^2([-1, 2]_{x_1} \times [-1, 1]_{x_2})$. 

We choose a cutoff function $\chi_1(x_1)$ that is identically $1$ on $[-1, 2]$, supported on $[-2, 3]$, and monotone nondecreasing between $[-2, -1]$, monotone nonincreasing on $[2, 3]$. We also choose a cutoff $\chi_2(x_2)$ that is identically $1$ on $[-1, 1]$, supported on $[-2, 2]$, and monotone nondecreasing between $[-2, -1]$, monotone nonincreasing on $[1, 2]$. We define
$$
a(x_1, x_2) = \chi_1(x_1) \chi_2(x_2) e^{-x_1}. 
$$
We compute
\begin{equation}\begin{gathered}
    i[P, a(x_1, x_2)] = \partial_{x_1} a(x_1, x_2) \\
    = - b(x_1, x_2) + e(x_1, x_2) \text{ where }\\
    b(x_1, x_2) = a(x_1, x_2) - \chi_1'(x_1) 1_{x_1 \geq 0} e^{-x_1} \chi_2(x_2) \geq 0, \\
    e(x_1, x_2) = \chi_1'(x_1) 1_{x_1 \leq 0} e^{-x_1} \chi_2(x_2) \geq 0.
\end{gathered}\label{eq:be}
\end{equation}
Then we compute (for suitably regular $u$, say $u \in C^1(\R^2)$) the commutator
\begin{equation}\label{eq:commutator}
i \big\langle i[P, a(x_1, x_2)] u, u \big\rangle_{L^2(\R^2)}
\end{equation}
in two different ways. 

\emph{First}, we unwrap the commutator, obtaining 
\begin{equation}\begin{gathered}
\eqref{eq:commutator} = i \big\langle  a(x_1, x_2) u, P u \big\rangle - i \big\langle Pu,  a(x_1, x_2) u \big\rangle \\
= 2 \Im \big\langle Pu,  a(x_1, x_2) u \big\rangle  = 2 \Im \big\langle f,  a(x_1, x_2) u \big\rangle.
\end{gathered}
\end{equation}

\emph{Second}, we use \eqref{eq:be} to write 
\begin{equation}
    \eqref{eq:commutator} = - \ang{bu, u} + \ang{eu, u}. 
\end{equation}
Using the support property of $e$, we put these two identities together and use Cauchy-Schwarz on \eqref{eq:commutator}, together with the inequality $2|xy| \leq  x^2/\epsilon + \epsilon y^2$,  to find that 
$$
\ang{bu, u} \leq \ang{eu, u} + \frac1{\epsilon}  \| f \|_{L^2}^2 + \epsilon  \| au \|_{L^2}^2
$$
for any $\epsilon > 0$. Here, the $\ang{eu, u}$ term can be controlled by $C' \| u \|_{L^2([-2, 1]_{x_1} \times [-2, 2]_{x_2})}^2$, but we do not make this step just yet. 

Notice that $0 \leq a \leq 9$. So $a^2 \leq 9a \leq 9b$. Choosing $\epsilon = 1/18$, we find 
$$
\ang{bu, u} \leq \ang{eu,u} + 18  \| f \|_{L^2}^2 + \frac1{2} \ang{bu, u}.
$$
We now absorb the $\ang{bu, u}$ term on the RHS and obtain, after multiplying by 2, 
\begin{equation}\label{eq:quant est}
\ang{bu, u} \leq 2 \ang{eu,u} + 36  \| f \|_{L^2}^2 .
\end{equation}

Unfortunately, this argument does \emph{not} prove that $u$ is $L^2$ on $[-1, 2] \times [-1, 1]$ as it requires that $\ang{bu, u} < \infty$ (for the absorption step), which is a stronger assumption! So it might seem that what we've done so far is completely pointless. This is not the case however, as we have proved a \emph{quantitative} estimate on $\ang{bu, u}$ making only the \emph{qualitative} assumption that this quantity is finite. Quantitative estimates are very powerful as we shall see shortly: our quantitative estimate can be combined with an approximation argument to achieve our aim.  

To do this, we choose $N$ large enough so that $u \in H_{\sca}^{-N, -N}$; one can choose such $N$ for any tempered distribution. Fixing such an $N$, we let 
$$
u_{\tilde{r}} = (1 + {\tilde{r}} \Delta)^{-M} u := T_{\tilde{r}} u, \quad M \geq N/2, {\tilde{r}} > 0, 
$$
and consider the limit as ${\tilde{r}} \to 0$. Intuitively, as ${\tilde{r}} \to 0$, $T_{\tilde{r}}$ tends to the identity operator. In fact this is rigorously true in the topology of $\Psisc^{\eta, \eta}$ for any $\eta > 0$; moreover, $T_{\tilde{r}}$ is uniformly bounded in $\Psisc^{0,0}$ as ${\tilde{r}} \to 0$ and converges strongly to the identity operator in $\mathcal{B}(L^2, L^2)$ (exercise). We have 
$$
u_{\tilde{r}} \in L^2 \text{ for all } {\tilde{r}} > 0, \text{ and } Pu = T_{\tilde{r}} f = f_{\tilde{r}}
$$
where for the second identity we used the fact that $P$ and $T_{\tilde{r}}$ commute (both are Fourier multipliers). Our quantitative estimate \eqref{eq:quant est} applies to $u_{\tilde{r}}$ for each ${\tilde{r}} > 0$, and we obtain 
\begin{equation}\label{eq:ineqr}
\ang{bu_{\tilde{r}}, u_{\tilde{r}}} \leq 2 \ang{eu_{\tilde{r}}, u_{\tilde{r}}} + 36  \| f_{\tilde{r}} \|_{L^2}^2 .
\end{equation}
Now we take the limit as ${\tilde{r}} \to 0$. Since $f \in L^2$, we have $f_{\tilde{r}} \to f$ in $L^2$ and hence 
$$
\| f_{\tilde{r}} \|_{L^2}^2  \to \| f \|_{L^2}^2 . 
$$
Next we consider the limit of the $\ang{eu_{\tilde{r}}, u_{\tilde{r}}}$ term. This is slightly more complicated as $u$, unlike $f$, is not globally in $L^2$. We write this term by choosing a new cutoff function $\psi$, so that $\psi \equiv 1$ on the support of $e$, and $\psi$ is supported in $[-2, 1]_{x_1} \times [-2, 2]_{x_2}$. We have 
\begin{equation}\begin{gathered}
    \ang{eT_{\tilde{r}} u, T_{\tilde{r}} u} = \ang{eT_{\tilde{r}} \psi u, T_{\tilde{r}} \psi u} + \ang{eT_{\tilde{r}} (1 - \psi) u, T_{\tilde{r}} \psi u} + \ang{ T_{\tilde{r}} u,  eT_{\tilde{r}} (1 - \psi) u}. 
\end{gathered}
\end{equation}
The first term on the RHS converges to $\ang{e\psi u, \psi u}$ since $\psi u \in L^2$ and  $T_{\tilde{r}} \to \Id$ strongly. Moreover, since $\psi \equiv 1$ on the support of $e$, this is just $\ang{eu, u}$. We would like to show that the other two terms tend to zero. 

Consider the operator $e T_{\tilde{r}} (1-\psi)$. Since  $e (1 -\psi) = 0$ (as the supports of $e$ and $1-\psi$ are disjoint) this operator is equal to $e (T_{\tilde{r}} - \Id) (1-\psi)$. We recall that $T_{\tilde{r}} - \Id$ converges to zero in the topology of $\Psisc^{\eta, \eta}$ for all $\eta > 0$. However, we can say much more by writing the operator $e T_{\tilde{r}} (1-\psi)$ as the expansion of this composition to $K$ terms, plus a remainder term. The expansion vanishes identically, and the remainder (which is $e T_{\tilde{r}} (1 - \psi)$ itself) tends to zero  in $\Psisc^{-K+\eta, -K+\eta}$ for all $\eta > 0$. From this we see that $e T_{\tilde{r}} (1 - \psi) u$ converges to zero in every Sobolev space, showing that the second and third terms on the RHS tend to zero. 

Using \eqref{eq:ineqr} and the reasoning above, we have 
\begin{equation}\label{eq:ineqr2}
\limsup_{{\tilde{r}} \to 0} \ang{bu_{\tilde{r}}, u_{\tilde{r}}} \leq 2\ang{eu, u} + 36 \| f \|_{L^2}^2.
\end{equation}
Now we would like to distribute $\sqrt{b}$ on both sides of the inner product on the LHS. This can be done either by choosing $b$ to be a square of a smooth function from the outset, or we can give up a little, and choose $\chi \in C_c^\infty(\R^2)$ such that $0 \leq \chi^2 \leq b$ and $\chi \equiv 1$ on $[-2, 1] \times [-1, 1]$. Then we have from \eqref{eq:ineqr2} that 
\begin{equation}\label{eq:ineqr3}
\limsup_{{\tilde{r}} \to 0} \| \chi T_{\tilde{r}} u \|_{L^2}^2 \leq 2\ang{eu, u} + 36 \| f \|_{L^2}^2.
\end{equation}
The uniform boundedness of $\chi T_{\tilde{r}} u$ in $L^2$ means that there is a weakly convergent subsequence, (weakly) converging to $v \in L^2$, say. A fundamental property of weak limits is that $\| v \|_{L^2}^2$ also satisfies the inequality \eqref{eq:ineqr3}.  On the other hand, we have $\chi T_{\tilde{r}} u \to \chi u$ in a weaker topology, that of $H_{\sca}^{-N-\eta, -N - \eta}$, using $u \in H_{\sca}^{-N, -N}$, and using the convergence of $T_{\tilde{r}} \to \Id$ in $\Psisc^{\eta, \eta}$ for all $\eta > 0$. Weak convergence in $L^2$ implies strong convergence in $H_{\sca}^{-N-\eta, -N - \eta}$ due to compact embeddings (see Proposition~\ref{prop:comp emb}).  Uniqueness of limits shows that $v = \chi u$. We deduce that 
\begin{equation}\label{eq:ineqr4}
\| \chi u \|_{L^2}^2 \leq 2\ang{eu, u} + 36 \| f \|_{L^2}^2 \leq 2 \| u \|_{L^2([-2, 1]_{x_1} \times [-2, 2]_{x_2})}^2 + 36 \| f \|_{L^2}^2,
\end{equation}
and since $\chi = 1$ on $[-1, 2] \times [-1, 1]$, we have achieved our goal. 

\subsection{Exercises}
\begin{problem}
 Check that the rescaled Hamilton vector field $\scH^{m,l}_p$ of $P = \Op(p)$ is tangent to $\mathsf{p} = 0$, using the notation in the lecture notes above. 
 \end{problem}

 \begin{problem}\label{prob:D1 tangency}
Show, by writing in smooth coordinates near the boundary of spacetime, that the vector field $\ang{x} D_{x_1}$ is tangent to the boundary of phase space. 
\end{problem}

 \begin{problem}
 Show that $T_{\tilde{r}} = (1 + {\tilde{r}} \Delta)^{-M}$ is a uniformly bounded family of operators in $\Psisc^{0,0}$, for any $M > 0$. Use this to show that $T_{\tilde{r}}$ converges strongly to the identity operator in $\mathcal{B}(L^2, L^2)$. Hint: for $f \in L^2$, for any $\epsilon > 0$ one can approximate $f$ by an element of a positive order Sobolev space, up to an $\epsilon$ error (in $L^2$-norm). 
 \end{problem}
 
\begin{problem} Suppose that $u$ is a tempered distribution in $\R^2$ satisfying the PDE
$$
D_{x_1} u = \delta + f,
$$
where $\delta$ is the two-dimensional delta distribution, supported at the origin in $\R^2$, and $f$ is an $L^2$ function of $(x_1, x_2) \in \R^2$ supported where $x_1 > 0$. You are given that $u$ is $C^\infty$ for $x_1 < 0$. 

(i) First show that $\delta \in H^s(\R^2)$ for all $s < -1$, but it is not in $H^{-1}(\R^2)$. 

Now use the result of Theorem~\ref{thm:pos2} to answer the following questions:

(ii) Suppose that $s < -1$. Show that $u$ is `locally' in $H^s(\R^2)$, in the sense that for any $\phi \in C_c^\infty(\R^2)$, we have $\phi u \in H^s(\R^2)$.

(iii) Suppose that $-1 < s < 0$. What bound on $\WF^s(\phi u)$ does Theorem~\ref{thm:pos2} provide? (Hint: it might help to first consider the case that $f = 0$.) Here $s$ is the differential index; the spatial index is omitted as $\phi$ has compact support, so there is no wavefront set at spatial infinity. 

(iv) Suppose that $0 < s < 1$. Give an `optimal' bound (not depending on $f$, other than using the information $f \in L^2$ and its support property)  on $\WF^s(\phi u)$. (Your bound may not be optimal for every individual choice of $f$, but should be the optimal bound that works for all $f$ as specified in the problem.)   
\end{problem}



\section{Microlocal propagation estimates II}\label{lec:propagation}
In this Lecture we prove Theorem~\ref{thm:mpe} for a general operator $P$. However, we shall assume that $P \in \Psisc^{1,1}$ which simplifies some of the numerology. There is no loss of generality, as solving an equation $Pu = f$ is equivalent to solving $TPu = Tf$ where $T$ is an invertible operator. We can always choose an invertible scattering pseudodifferential operator $T$ with real, classical symbol, such that $TP \in \Psisc^{1,1}$ (see below) and this reduces us to this case. 

For such a $P$, with real, classical principal symbol, we want to construct $B, E, G \in \Psisc^{0,0}$ as in the theorem, so that the estimate
\begin{equation}\label{eq:mpe2}
\| Bu \|_{H^{s, r}} \leq C \Big( \| GPu \|_{H^{s, r}} + \| Eu \|_{H^{s,r}} + \| u \|_{H^{-N, -N}} \Big) 
\end{equation}
holds. The nontrivial case of \eqref{eq:mpe2} is that $u$ is in  $H^{-N, -N}$, that $Pu$ is in $H^{s,r}$ microlocally in $U$, and that $u$ is in $H^{s,r}$ microlocally in $U_0$, otherwise the RHS is infinite (or can be made infinite by choosing $E, G$ carefully) and the result is vacuously true. So we assume this for the remainder of the proof. 

\subsection{Proof of Theorem~\ref{thm:mpe}, Step 1}
 We construct $A \in \Psisc^{2s, 2r}$ with real symbol $a$, with $A = A^*$ such that 
\begin{equation}\label{eq:operatorA}
    i(P^*A - AP) = - \tilde B^* \tilde B - (\Lambda^{-s,-r} A)^* \Lambda^{-s,-r} A + E' + R, 
\end{equation}
where 
\begin{itemize}
    \item $\Lambda^{s,r} = \ang{x}^r \ang{D}^s$ is an invertible elliptic operator, with real, classical symbol, of order $(s,r)$. Note that $\ang{D}^s$ is an abbreviation for $\Op(\ang{\xi}^s)$. 
    \item $\tilde B = \Lambda^{s,r} B$, where $B$ is as in \eqref{eq:mpe2}.
    \item $E' \in \Psisc^{2s, 2r}$ and $\WF'(E') \subset \Ell(E)$.
    \item $R \in \Psisc^{2s-1, 2r-1}$, $\WF'(R) \subset \Ell(G)$. 
    \item $G$ is microlocally equal to the identity on $\WF'(A)$. 
\end{itemize}
Notice that in \eqref{eq:operatorA}, the LHS has order $(2s, 2r)$ although the individual terms $P^* A$ and $AP$ have higher order. In fact, we can write this as 
$$
i [P, A] + i (P^* - P)A,
$$
and $[P, A]$ has order $(2s, 2r)$ (recall we are assuming $P \in \Psisc^{1,1}$)  since commutators drop order by $(1,1)$ relative to the sum of the orders of the individual operators; while $(P^* - P)$ has order $(0,0)$ since $\spr^{1,1}(P) = \overline{\spr^{1,1}(P)} = \spr^{1,1}(P^*)$. On the RHS side, the first three terms have order $(2s, 2r)$, while the last term has order $(2s-1, 2r-1)$. Thus, if we view the $R$ term as a remainder term of lesser interest (we only care about its microsupport, as in the fourth bullet point above), then the operator equation \eqref{eq:operatorA} can be arranged by making it hold at a principal symbol level. 

The reason for being interested in the quantity $i(P^* A - AP)$ is that 
$$
\bang{i(P^* A - AP) u, u} = 2 \Im \ang{Pu, Au}. 
$$
Given the identity \eqref{eq:operatorA}, and a sufficiently nice $u$ (meaning it is in a Sobolev space with sufficiently large orders), we can make the following calculation:
\begin{equation}\label{eq:comm-identity}
    2 \Im \ang{Pu, Au} = - \| \tilde B u \|_{L^2}^2 - \| \Lambda^{-s,-r} A u \|_{L^2}^2 + \ang{E' u, u} + \ang{Ru, u}. 
\end{equation}
We can microlocalize the $Pu$ term by using the condition that $G = \Id$ microlocally on $\WF'(A)$. Therefore, by redefining $R$ by the addition of a residual operator (which we do not indicate in notation), we obtain from \eqref{eq:operatorA} the variant 
\begin{equation}\label{eq:operatorA2}
    i((GP)^*A - AGP) = - \tilde B^* \tilde B - (\Lambda^{-s,-r} A)^* \Lambda^{-s,-r} A + E' + R, 
\end{equation}
and this leads to (with the same redefinition of $R$)
\begin{equation}\label{eq:comm-identity-G}
    2 \Im \ang{GPu, Au} = - \| \tilde B u \|_{L^2}^2 - \| \Lambda^{-s,-r} A u \|_{L^2}^2 + \ang{E' u, u} + \ang{Ru, u}. 
\end{equation}
in place of \eqref{eq:comm-identity}. 
We estimate the LHS as follows: we write $Au = (\Lambda^{-s,-r})^{-1} \Lambda^{-s,-r} Au$ and move the left factor to the other side of the inner product, noting that the adjoint of $(\Lambda^{-s,-r})^{-1}$ is $\Lambda^{s,r}$. We then apply Cauchy-Schwarz to the inner product. We obtain 
\begin{equation}
     \| \tilde B u \|_{L^2}^2 +  \| \Lambda^{-s,-r} A u \|_{L^2}^2 \leq \| \Lambda^{s,r} GPu \|_{L^2}  \| \Lambda^{-s,-r} Au \|_{L^2} 
    + \ang{E' u, u} + \ang{Ru, u}. 
\end{equation}
We now apply the inequality $ab \leq a^2 + b^2$ to the first term on the RHS. We also apply an elliptic estimate to the $\ang{E'u, u}$ term, noting that $(\Lambda^{s,r} E)^* \Lambda^{s,r} E$ is elliptic on $\WF'(E')$ (see Problem~\ref{prob:micro ell estimate} exercises at the end of the lecture). We can do the same with the $R$ term. In fact, we choose a $B' \in \Psisc^{0,0}$ such that $\WF'(R) \subset \Ell(B') \subset \WF'(B') \subset \Ell(G)$, and estimate it in the same way. 
\begin{multline}\label{eq:microlocal energy est}
     \| \Lambda^{s,r}  B u \|_{L^2}^2 +  \| \Lambda^{-s,-r} A u \|_{L^2}^2 \leq  \| \Lambda^{s,r} GPu \|_{L^2}^2 +  \| \Lambda^{-s,-r} Au \|_{L^2}^2  \\
    + C \| \Lambda^{s,r} Eu \|_{L^2}^2 + C \| \Lambda^{s-1/2,r-1/2} B'u \|_{L^2}^2 + C \| u \|_{H^{-N, -N}}^2,
\end{multline}
where the final term arises from the remainder term in the elliptic estimates. 
The $\| \Lambda^{-s, -r} Au \|_{L^2}^2$ terms cancel. Now eliminating the $\Lambda^{\bullet, \bullet}$ factors and writing norms in terms of weighted Sobolev norms, we have obtained 
\begin{equation}
     \|  B u \|_{H^{s,r}}^2  \leq C \Big(  \| GPu \|_{H^{s,r}}^2 
    +  \|  Eu \|_{H^{s,r}}^2 +  \|  B'u \|_{H^{s-1/2,r-1/2}}^2 + \| u \|_{H^{-N, -N}}^2 \Big),
\end{equation}
which is almost the estimate we are aiming for: the only discrepancy is the term $C \|  B'u \|_{H^{s-1/2,r-1/2}}^2$ on the RHS. Notice that this term is just like the $Bu$ term on the LHS that we are estimating, but it has orders lower by $(1/2, 1/2)$. We can thus proceed by induction, estimating the $B'$ term as above, up to an additional $B''u$ term that would be measured in the Sobolev space $H^{s-1, r-1}$, and so on. Each time, we need to enlarge the microlocal support of $B', B'', ...$ relative to the operator before (and also enlarge the microlocal support of the $E'$ term correspondingly), but we can do this while always remaining in the region where $G$ is microlocally equal to the identity. After a finite number of steps, the order of the extra term is reduced to $(-N, -N)$ and then this term can be absorbed in the $\| u \|_{H^{-N, -N}}^2$ term, at which point the argument is terminated and the estimate is proved. 

To complete the proof, we need to accomplish two more steps: arrange the operator identity \eqref{eq:operatorA2}, and eliminate the assumption that $u$ is in a sufficiently nice Sobolev space (just like we had to eliminate the assumption $\ang{bu,u} < \infty$ in the previous Lecture). 

\begin{remark}
    Returning to Remark~\ref{rem:gain}, the reason that there is a loss of 1 in both orders relative to the elliptic gain is that, in the propagation proof, the estimate arises from the ellipticity of $B$, which comes from the positivity (actually negativity, as we presented it!) of the commutator $i[P, A]$. On the other hand, in the elliptic estimate the ellipticity is directly from the ellipticity of $P$. Since the commutator drops order by $(1,1)$ relative to the composition of the two operators, this causes the propagation estimate to be weaker, i.e. we need to measure $Pu$ by a stronger norm to deduce a fixed Sobolev norm of $Bu$. 
\end{remark}

\subsection{Proof of Theorem~\ref{thm:mpe}, Step 2} Constructing the operators. We do this first in the case that $(s, r) = (0, 0)$ (the general case is only notationally more complicated). In this case, $A$, $B = \tilde B$ and $E'$ all have order $(0,0)$. 

We need to arrange \eqref{eq:operatorA}. Since we are not much interested in the properties of $R$ (other than its microlocal support, which is anyway bounded above by the union of the  microlocal supports of the other operators), to have \eqref{eq:operatorA} it is enough to ensure it holds at a principal symbol level. Let $p_1 = i\sigma_L(P^* - P) \in S^{0,0}_{sca}$; this is a classical symbol, and real since $i(P^* - P)$ is symmetric.  At the principal symbol level, \eqref{eq:operatorA} reads 
\begin{equation}\label{eq:ham eqn}
    H_p(a) + p_1 a = - b^2 - a^2 + e' . 
\end{equation}
In our construction, these operators will be classical, thus their symbols will be smooth on the compactified phase space $\comphase$. Since we are only interested in their principal symbols, that means we only care about their value at the boundary of $\comphase$. Thus in Step 2, we work purely at the boundary of phase space, and define $a$, $b$ and $e'$ such that \eqref{eq:ham eqn} holds at the boundary.

Using the assumed nonvanishing of $H_p$ at $\gamma(0)$, and assuming for expository clarity that $\gamma(0)$ is not contained in the corner\footnote{A small, mostly notational, modification of this construction serves to treat the case that $\gamma(0)$ is contained in the corner of $\comphase$; we do not pursue this further here.} of $\comphase$, there are local coordinates $(z_1, z')$, $z' \in \R^{2n-2}$,  on $\partial \comphase$ near $\gamma([0, s_0])$ so that $\gamma(0) = (0, 0)$ and $H_p = \partial_{z_1}$ in this coordinate system, i.e. bicharacteristics are given by $z' = $ constant.

Choose $\epsilon > 0$ sufficiently small so that, in the coordinates $(z_1, z')$, we have
\begin{equation}\label{eq:bich nbhd}\begin{aligned}
[-2\epsilon, s_0 + 2\epsilon] \times \{ |z'| \leq 2\epsilon \} &\subset U \subset WF^{0,0}(Pu)^\complement, \\
[-2\epsilon, 2\epsilon] \times \{ |z'| \leq 2\epsilon \} &\subset U_0 \subset WF^{0,0}(u)^\complement. 
\end{aligned}\end{equation}
This is possible because, by our assumption of nontriviality, we have $Pu \in H^{0,0} = L^2$ microlocally near $\gamma([0, s_0])$, hence $\WF^{0,0}(Pu)$ is disjoint from $\gamma([0, s_0])$, which is given by $\{ z_1 \in [0, s_0], z' = 0 \}$ in our coordinate system. Since both $\gamma([0, s_0])$ and $\WF^{0,0}(Pu)$ are compact sets, one can find a neighbourhood $U$ of $\gamma([0, s_0])$ disjoint from $\WF^{0,0}(Pu)$, and it necessarily contains the set $[-2\epsilon, s_0 + 2\epsilon] \times \{ |z'| \leq 2\epsilon \}$ for sufficiently small $\epsilon > 0$. A similar argument applies to the second line of \eqref{eq:bich nbhd}. 

We now choose several cutoff functions. Let $\psi(z') \in C_c^\infty(\R^{2n-2})$ be such that $\psi(z') = 1$ for $|z'| \leq \epsilon$ and $\psi(z') = 0$
if $|z'| \geq 2\epsilon$. Then, in the $z_1$ direction we choose turn-on and turn-off functions. The turn-on function is $\chi_1(z_1)$ which is smooth, monotone, equal to $0$ for $z_1 \leq -\epsilon$ and $1$ for $z_1 \geq \epsilon$. The turn-off function we specify a bit more explicitly, as we need to take square-roots and show smoothness of these. We let $\chi_0$ be the smooth function defined by 
\begin{equation}\label{eq:chi_0}
    \chi_0(t) = \begin{cases} 0, t \leq 0 \\ e^{-\digamma /t}, t \geq 0 \end{cases} 
\end{equation}
where $\digamma > 0$ (the symbol $\digamma$ is `digamma') will be chosen sufficiently large. Notice that 
\begin{equation}\label{eq:chi_0 2}
    \chi_0'(t) = \digamma \chi_0(t)/t^2.
\end{equation} 
The turn-off function is then defined by
\begin{equation}\label{eq:turnoff}
    \chi(z_1) = \chi_0(s_0 + \epsilon - z_1) \Longrightarrow \chi'(z_1)= - \frac{\digamma}{(s_0+\epsilon-z_1)^2}\chi(z_1). 
\end{equation}
We now define
\begin{equation}
    a(z, z') = \chi(z_1) \chi_1(z_1)^2 \psi(z')^2,
\end{equation}
which is supported in $[-\epsilon, s_0 + \epsilon] \times \{ |z'| \leq 2\epsilon \} \subset U$. 
We compute
\begin{equation}\label{eq:Hpa}\begin{gathered}
    H_p a + p_1 a = \chi'(z_1) \chi_1(z_1)^2 \psi(z')^2 \\
    + p_1(z, z') \chi(z_1) \chi_1(z_1)^2 \psi(z')^2 \\
    + 2 \chi(z_1) \chi_1(z_1) \chi_1'(z_1) \psi(z')^2
\end{gathered}\end{equation}
where the sum of the first two lines is nonpositive (for $\digamma$ sufficiently large) thanks to \eqref{eq:turnoff} and the third line is nonnegative. We will define $b$ so that the first two lines are $-b^2 - a^2$, and the third line is $e'$. Using \eqref{eq:turnoff}, this requires that 
\begin{equation}
    b^2 = \Big( \frac{\digamma}{(s_0+\epsilon-z_1)^2} + p_1 \Big)\chi(z_1) \chi_1(z_1)^2 \psi(z')^2 - \chi(z_1)^2 \chi_1(z_1)^4  \psi(z')^4. 
\end{equation}
Noting that $\sqrt{t^{-2}\chi_0(t)}$ is a smooth function, we have 
\begin{equation}
    b = \sqrt{\frac{\chi_0(s_0 + \epsilon - z_1)}{(s_0+\epsilon-z_1)^2}}\chi_1(z_1) \psi(z') \sqrt{\digamma - (s_0+\epsilon-z_1)^2 \Big( p_1 + \chi(z_1) \chi_1(z_1)^2  \psi(z')^2 \Big)}
\end{equation}
Clearly, for $\digamma$ sufficiently large, the argument of the square root is bounded away from zero on the support of $\chi(z_1) \chi_1(z_1) \psi(z')^2$ and therefore $b$ is smooth,  with support contained in the closed set $[-\epsilon, s_0 + \epsilon] \times \{ |z'| \leq 2\epsilon \} \subset U$. 

As mentioned above, we define $e'$ to be the third line of \eqref{eq:Hpa}:
\begin{equation}
    e' = 2 \chi_1(z_1) \chi_1'(z_1) \chi(z_1) \psi(z')^2.
\end{equation}
We see that the support of $e'$ is contained in $[-\epsilon, \epsilon] \times \{ |z'| \leq 2\epsilon \} \subset U_0$. 
We choose $g$, the principal symbol of $G$, to be a smooth function equal to $1$ on the support of $a$ (hence also on the support of $b$ and $e$) and supported in $U$. 

We then extend the smooth functions $a, b, e', g$ into the interior and quantize to obtain operators 
$$
A = \frac{\Op_L(a) + \Op_L(a)^*}{2}, \quad B = \Op_L(b), \quad E' = \Op_L(e'), \quad G = \Op_L(g). 
$$
so that \eqref{eq:operatorA} is satisfied, noting that this makes $R \in \Psisc^{-1, -1}$. We must extend $g$ into the interior so that it is identically $1$ in a neighbourhood (in the ambient space, i.e. not just at $\partial \comphase$) of $\WF'(A)$. We also note that $\WFsc'(R)$ is bounded by the union of $\WFsc'(A), \WFsc(B)$ and $\WFsc(E')$. Thus \eqref{eq:operatorA}, and the conditions listed below that equation, are all satisfied. 

In the case of general $(s,r)$ we redefine $a$ to be 
$$
a = \ang{\xi}^{2s} \ang{x}^{2r} \chi_1(z_1)^2 \chi(z_1) \psi(z')^2
$$
and proceed as before. In the calculation \eqref{eq:Hpa}, as well as multiplying the whole identity through by the factor $\ang{\xi}^{2s} \ang{x}^{2r}$ we will obtain extra terms, while arise from differentiating these factors. The new terms, in effect, have the effect of replacing the $p_1$ term by 
$$
p_1 + \ang{\xi}^{-2s} \ang{x}^{-2r} H_p \big( \ang{\xi}^{2s} \ang{x}^{2r} \big).
$$
It is easy to see that this additional term is a symbol of order $(0,0)$; in fact, it is the principal symbol of $\Lambda^{-2s, -2r} [P, \Lambda^{2s, 2r}]$ which has order $(0,0)$. 
So, effectively the difference is to replace $p_1$ by some other symbol $\tilde p_1$ of order $(0,0)$, so this allows the construction to proceed exactly as before, with $b$ and $e'$ redefined suitably. It would be a useful exercise for the reader to compute the exact formulae for $b$ and $e'$ in this setting of general orders. 

\subsection{Proof of Theorem~\ref{thm:mpe}, Step 3} At this stage we have proved the required inequality for sufficiently regular/decaying $u$. As with the simple example proved last lecture, this is far from satisfactory as we need to assume at least as much regularity/decay as is proved. Indeed, the $\| \Lambda^{-s, -r} Au \|_{L^2}^2$ terms need to be absorbed  in \eqref{eq:microlocal energy est}, which requires them to be finite a priori. Nevertheless we have proved a quantitative estimate for sufficiently regular/decaying functions, and as with the example last lecture, we can use a regularization argument to obtain the full result. 

In the simple example last time, we used $T_{\tilde{r}} = (1 + {\tilde{r}} \Delta)^{-M}$ as a regularizing operator. This worked well as it commuted with the operator $P = D_{x_1}$. In the more general case, the lack of commutation is an issue and we have to be more careful. Moreover, $T_{\tilde{r}}$ regularizes only in the differential sense, not the decay sense. To emphasize the point here, we will henceforth assume that $\gamma$ lies in the spatial boundary, at finite frequency. Thus the appropriate `regularization' is to multiply by $(1 + {\tilde{r}} |x|^2)^{-M}$. We will thus define a family $a_{\tilde{r}}$, $\tilde r \geq 0$, (again sticking to the case $s=r=0$ for notational simplicity; note that the $s$ parameter is actually irrelevant for us if $\gamma$ is at finite frequency, and only the $r$ is relevant) by
\begin{equation}\label{eq:reg x}
    a_{\tilde{r}} = a (1 + {\tilde{r}} |x|^2)^{-M}. 
\end{equation}
This will allow us to apply the regularized operators $A_{\tilde{r}} = (\Op_L(a_{\tilde{r}}) + \Op_L(a_{\tilde{r}})^*)/2$, etc, to $u$ (notice that although $u$ is only assumed to be in $H^{-N, -N}$ the lack of differential regularity is not a problem as our operators are all microsupported near $\gamma$, and in particular away from frequency infinity; that is, they can be taken to be order $-\infty$ in the differential sense, so only the spatial order needs to be improved). Then, when we compute $H_p a_{\tilde{r}} + p_1 a_{\tilde{r}}$, we obtain an extra term from differentiating the regularizer. 

To compute it, recall that for $P \in \Psisc^{1,1}$ classical, the principal symbol at spatial infinity, and away from fibre infinity, may be taken homogeneous of degree 1 under spatial dilations $(x, \xi) \mapsto (ax, \xi)$. Its Hamilton vector field $H_p$ is, correspondingly, homogeneous of degree 0 under spatial dilations. Let us extend coordinates $(z_1, z')$ into the interior by letting them be homogeneous of degree 0 under spatial dilations, and choose boundary defining function $\rho_b = 1/|x|$ for the spatial boundary. In these coordinates $(z_1, z', \rho_b)$ in the ambient space, $H_p$ takes for the form 
\begin{equation}\label{eq:H_p hom}
H_p = \frac{\partial}{\partial z_1} + q(z_1, z') \rho_b \frac{\partial}{\partial \rho_b}
\end{equation}
for some smooth, homogeneous of degree zero function $q$. Therefore, 
$$
H_p (1 + {\tilde{r}} |x|^2)^{-M} = \frac{2M q {\tilde{r}} |x|^2}{(1 + {\tilde{r}} |x|^2)} (1 + {\tilde{r}} |x|^2)^{-M} := f_{\tilde{r}} (1 + {\tilde{r}} |x|^2)^{-M}.
$$
One can check that the factor $f_{\tilde{r}} = 2M q {\tilde{r}} |x|^2/(1 + {\tilde{r}}|x|^2)$ is a symbol of order $(0,0)$, uniformly as ${\tilde{r}} \to 0$. Thus, this additional term is similar to the additional term from varying the order of the symbol $a$: it in effect changes $p_1$ to a new symbol of order $(0,0)$, although now it is ${\tilde{r}}$-dependent (but in a uniform way). We notice that it is proportional to $M$, which could be large. However, we first fix $M$ (sufficiently large depending on $N$, where $u \in H^{-N, -N}$) and \emph{then} choose $\digamma$ large enough so that the square root defining $b_{\tilde{r}}$ is well-defined and smooth. It is a subtle but important technical point that we can only do a finite (although arbitrarily large) amount of regularization in this argument. 

The upshot of this is that we can repeat the previous construction with ${\tilde{r}}$-dependent operators and obtain families of operators $A_{\tilde{r}}, B_{\tilde{r}}, E'_{\tilde{r}}, R_{\tilde{r}}$ satisfying the identity (in the case of general orders $(s,r)$, but assuming that $\gamma$ is located away from frequency infinity)
\begin{equation}\label{eq:operatorAr}
    i(P^*A_{\tilde{r}} - A_{\tilde{r}}P) = - \tilde B_{\tilde{r}}^* \tilde B_{\tilde{r}} - (\Lambda^{-s,-r} A_{\tilde{r}})^* \Lambda^{-s,-r} A_{\tilde{r}} + E_{\tilde{r}}' + R_{\tilde{r}}, 
\end{equation}
where these operators satisfy, for every $\eta > 0$, and every $S \in \R$, 
\begin{itemize}
    \item $A_{\tilde{r}} \in \Psisc^{S, 2r}$ uniformly, and $A_{\tilde{r}} \in \Psisc^{S, 2r-2M}$ for each $r > 0$, 
    \item $\tilde B_{\tilde{r}} \in \Psisc^{S, r}$ uniformly, and $\tilde B_{\tilde{r}} \in \Psisc^{S, r-M}$ for each $r > 0$, $\tilde B_{\tilde{r}} \to \tilde B$ in $\Psisc^{S, r + \eta}$, 
    \item $E'_{\tilde{r}} \in \Psisc^{S, 2r}$ uniformly, and $E'_{\tilde{r}} \in \Psisc^{S, 2r-2M}$ for each $r > 0$, $E'_{\tilde{r}} \to E'$ in $\Psisc^{S, 2r + \eta}$, 
    \item $R_{\tilde{r}} \in \Psisc^{S, 2r-1}$ uniformly, and $R_{\tilde{r}} \in \Psisc^{S, 2r-1-2M}$ for each $r > 0$, $R_{\tilde{r}} \to R$ in $\Psisc^{S, 2r -1 + \eta}$.
\end{itemize}
Because of the regularization, we can now take expectation values with $u \in H^{-N, -N}$ (for sufficiently large $M$), and we find that 
\begin{equation}\label{eq:reg-est}
     \|  \tilde B_{\tilde{r}} u \|_{L^2}^2  \leq C \Big(  \| GPu \|_{H^{s,r}}^2 
    +  \ang{E_{\tilde{r}}'u, u}   + \| u \|_{H^{-N, -N}}^2 \Big),
\end{equation}
We now `take a limit' as ${\tilde{r}} \to 0$, or more precisely investigate the uniform properties of this inequality in this limit. Intuitively, we expect that the term $\ang{E_{\tilde{r}}'u, u}$ should be uniformly bounded, since $u$ is in $H^{s,r}$ microlocally on $\WF'(E')$. This is correct, and is one of the exercises for this lecture. Thus the RHS of \eqref{eq:reg-est} is uniformly bounded, and it follows that $\tilde B_{\tilde{r}} u$ is uniformly bounded in $L^2$, as ${\tilde{r}} \to 0$. We can therefore find a sequence $r_j$ tending to zero such that $\tilde B_{r_j} u$ converges weakly, say to $v \in L^2$. Similarly to last lecture, we have 
\begin{equation}\label{eq:vineq}
  \| v \|_{L^2}^2 \leq \limsup_{{\tilde{r}} \to 0}    \|  \tilde B_{\tilde{r}} u \|_{L^2}^2  \leq C \Big(  \| GPu \|_{H^{s,r}}^2 
    +  \|  Eu \|_{H^{s,r}}^2  + \| u \|_{H^{-N, -N}}^2 \Big). 
\end{equation}
On the other hand, we know that $\tilde B_{\tilde{r}} u$ converges to $\tilde Bu$ in a weak Sobolev norm, since $\tilde B_{\tilde{r}} \to \tilde B$ in $\Psisc^{s, r + \eta}$. So we have $\tilde B_{\tilde{r}} u$ converges to both $\tilde Bu$ and $v$ distributionally. By uniqueness of distributional limits, $\tilde Bu = v$ so we have the inequality \eqref{eq:vineq} for $\tilde Bu$ in $L^2$, or equivalently for $Bu$ in $H^{s, r}$. This completes the proof of Theorem~\ref{thm:mpe}.

\subsection{Exercises}
\begin{problem}\label{prob:micro ell estimate} Let $E' \in \Psisc^{2s,2r}$ and suppose that $E \in \Psisc^{0,0}$ is elliptic on $\WF'(E')$. Show that for all $N$ there is a constant $C_N$ such that, for all $u \in H^{s,r}$, 
    $$
    |\ang{E'u, u}| \leq C_N \Big( \| Eu \|_{H^{s,r}}^2 + \| u \|_{-N, -N}^2 \Big). 
    $$
    Hint: find an operator $F \in \Psisc^{0,0}$ such that $E' = (\Lambda^{s,r}E)^* F \Lambda^{s,r}E + R$, where $R$ has sufficiently negative orders. 
\end{problem}

\begin{problem} Extend the previous result as follows: Suppose that $\sigma_L(E'_{\tilde{r}}) = \sigma_L(E') (1+ {\tilde{r}} |x|^2)^{-M}$. Prove the above inequality with $E'_{\tilde{r}}$ in place of $E'$ on the LHS, with a constant $C_N$ independent of ${\tilde{r}}$ for all $\tilde r \in (0, 1]$. Hint: use the fact that the operator norm of $F$ is controlled by a suitable $S^{0,0}_{\sca}$-norm of its left-reduced symbol. 
\end{problem}

\begin{problem} Show that the function $\chi_0$ in \eqref{eq:chi_0} is $C^\infty$ at $t=0$. Moreover, $t^{-k} \chi_0(t)$ is $C^\infty$ for all $k \in \mathbb{N}$. 
\end{problem}

\begin{problem}
Consider the coordinates $(z_1, z', \rho_b)$ defined above \eqref{eq:H_p hom}. Show that any vector field that is homogeneous of degree zero under spatial dilations takes the form 
$$
q_1 \partial_{z_1} + \sum q'_j \cdot \partial_{z'_j} + q \rho_b \partial_{\rho_b},
$$
where the coefficients $q_1, q'_j, q$ are independent of $\rho_b$. 
\end{problem}

\section{Lecture 7: Scattering calculus on manifolds (with boundary)}

We introduce the scattering calculus on manifolds, first introduced in this setting by Melrose \cite{melrose1994spectral}, and discuss its basic properties in this part. Some of our discussion is derived from the lecture notes of Peter Hintz, which is now a part of \cite{Hintz-book}.

We only deal with operators acting on scalar functions with details, but a significant portion of those general statements in this section have fairly straightforward generalization to vector bundles. Of course, difficulties in dealing with vector bundles will arise in concrete problems.

\subsection{The scattering cotangent bundle} \label{sec:sc-bundle}
Let $M$ be a $n$-dimensional manifold with boundary and denote its boundary by $\partial M$ and its interior by $M^\circ$. 
We have studied the case $M = \overline{\R^n}$ with $\partial M = \mathbb{S}^{n-1}$ in Lecture~\ref{sec:lecture1}.
 The smooth structure of $M$ determines a (equivalent class of) boundary defining function $\msf{x}$. (We will use $\msf{x}$ to denote such a boundary defining function, with the sans-serif font to distinguish it from the Euclidean $x \in \R^n$. Furthermore, we will use $z = (z_1, \dots, z_n)$ instead of $(x_1, \dots, x_n)$ for Euclidean coordinates on $\R^n$, and $\zeta = (\zeta_1, \dots, \zeta_n)$ for the dual coordinates, to avoid confusion with $\msf{x}$.) 
All of our bundles, symbol classes, calculi will be the same as the classical ones on manifolds without boundary on the interior of $M$, but not uniformly down to $\partial M$.
Suppose $y = (y_1,...,y_{n-1})$ is a coordinate system near $p \in \partial M$, then we can extend the $y_i$ into a collar neighbourhood of $\partial M$ such that 
\begin{align*}
(\msf{x},y_1,...,y_{n-1})
\end{align*}
forms a coordinate system of $M$ near $p$. 
A local frame of the space\footnote{Rigorously speaking, generators of the left $C^\infty(M)$-module.} of vector fields that are tangent to $\partial M$ will be
\begin{align} \label{eq:b-vec-frame}
\msf{x}\partial_{\msf{x}}, \, \partial_{y_1}, \, ... \; \partial_{y_{n-1}}.
\end{align}
We denote their $C^\infty(M)$-span by\footnote{Be sure that you appreciate the huge difference between the $C^\infty(M)$-span and the $C^\infty(M^\circ)$-span!} $\mathcal{V}_{\rmb}(M)$, which is called the space of b-vector fields on $M$. 
A direct computation shows that this is a Lie algebra with the Lie bracket being the standard commutator:
\begin{align} \label{eq:commutator_defn}
[V_1,V_2]f = V_1V_2f - V_2V_1f, \; f \in C^\infty(M).
\end{align}
Then we define the space of scattering vector fields to be
\begin{align} \label{eq:sc-vec-defn}
\mathcal{V}_{\sct}(M) = {\msf{x}} \mathcal{V}_{\rmb}(M).
\end{align}
From \eqref{eq:b-vec-frame} and the definition, we know that locally $\mathcal{V}_{\sct}(M)$ is the span of
\begin{align} \label{eq:sc-vec-frame}
{\msf{x}}^2\partial_{\msf{x}}, \, {\msf{x}}\partial_{y_1}, \, ...\; {\msf{x}}\partial_{y_{n-1}}.
\end{align}
In the case $M = \overline{\R^n}$, with $z_i$ being Euclidean, coordinates on $\R^n$, one can calculate that the frame $\partial_{z_i}$ will be equivalent to the frame ${\msf{x}}^2\partial_{\msf{x}}, \; {\msf{x}}\partial_{y_i}$ in the sense that the linear transformations between them have uniformly bounded and smooth coefficients in both directions. 
For example, in the region of $\overline{\R^n}$ such that $z_n$ dominates all other components and $z_n>0$, we can take 
\begin{equation} \label{eq:z-xy-transform}
{\msf{x}} = \frac{1}{z_n}, \; y_i = \frac{z_i}{z_n},\, i = 1,2,..\,n-1
\end{equation}
and it is straightforward to verify this claim by computing one frame in terms of the other. 
See Problem~\ref{ex:Rn-sc-vec} at the end of this lecture. 

This $\mathcal{V}_{\sct}(M)$ again a Lie algebra with the Lie bracket being the commutator as in \eqref{eq:commutator_defn}. In fact, it is even better, in the sense that
\begin{align*}
[\mathcal{V}_{\sct}(M),\mathcal{V}_{\sct}(M)]
=[{\msf{x}}\mathcal{V}_{\rmb}(M),{\msf{x}}\mathcal{V}_{\rmb}(M)] \subset {\msf{x}}^2\mathcal{V}_{\rmb}(M) = {\msf{x}}\mathcal{V}_{\sct}(M),
\end{align*}
which means commutators has one order better decay compared with $\mathcal{V}_{\sct}(M)$ and this corresponds to the more general fact that the scattering calculus is commutative on the top level in both the differential and decay sense, and this is one of the major reasons that makes it more tractable\footnote{Of course, when it is applicable!} compared with other calculi (for example, the b-calculus) when one wants to obtain the Fredholm property or invertibility.

The Lie algebra of scattering vector fields uniquely determines\footnote{See Problem~\ref{ex:sc-bundle-construction}.} a vector bundle ${}^{\sct}TM$, which is called the \emph{scattering tangent bundle}. The bundle of importance for our analysis is its dual bundle ${}^{\sct}T^*M$, which is called the \emph{scattering cotangent bundle}.
Its local frame\footnote{Again, by frame below, we mean the generator of a left $C^\infty(M)$-module.} is given by
\begin{align} \label{eq:sc-cotangent-frame}
\frac{d{\msf{x}}}{{\msf{x}}^2}, \, \frac{dy_1}{{\msf{x}}}, \, ... \; \frac{dy_{n-1}}{{\msf{x}}}.
\end{align}
This is going to be the phase space in which we work and coefficients in terms of this frame gives the `scattering frequencies'.
Concretely, this means that  we write the (extension of) tautological one-form as
\begin{equation}
\alpha =  \tau \frac{d{\msf{x}}}{{\msf{x}}^2} + \mu \cdot \frac{dy}{{\msf{x}}},
\end{equation}
where $\mu = (\mu_1,\, ... \; \mu_{n-1})$ and $\mu \cdot \frac{dy}{{\msf{x}}} = \sum_{i=1}^{n-1} \mu_i \frac{dy_i}{{\msf{x}}}$.

In the case $M = \overline{\R^n}$, they are closely related to the frequency in classical Fourier analysis. 
Let $(z,\zeta)$ be coordinates of $T^*\R^n$, then one has
\begin{equation} \label{eq:tautological-one-form}
\tau \frac{d{\msf{x}}}{{\msf{x}}^2} + \mu \cdot \frac{dy}{{\msf{x}}} = \zeta \cdot dz,
\end{equation}
with ${\msf{x}} = |z|^{-1}$, say,  for $|z|$ large and $y \in \R^{n-1}$ is obtained from a local parametrization of $\mathbb{S}^{n-1}$. So one can think of $\tau$ as minus the radial component of the classical frequency while $\mu$ is the tangential component.

As we did on $\R^n$ in Lecture~\ref{sec:lecture1}, to facilitate the high frequency analysis, we will compactify the fiber to be $\overline{\R^n}$ and denote the corresponding bundle by $\overline{{}^{\sct}T^*M}$. We call it the \emph{compactified scattering cotangent bundle}, which is a $n$-ball bundle over $M$. 
Its boundary consists of two parts: its restriction to $\partial M$, which we denote by $\overline{{}^{\sct}T^*_{\partial M}M}$, and the `fiber infinity', which is locally of the form $U \times \partial \overline{\R^n}$, where $U$ is an open set in $M$ on which you can trivialize ${}^{\sct}T^*M$.
We will use $\rho_{\mathrm{df}}$ ($\mathrm{df}$ stands for `differential face') to denote a boundary defining function of this boundary hypersurface, which is just the boundary defining function of $\overline{\R^n}$ formed by compactifying the $\R^n$ of $(\tau,\mu)$.
Even when $M$ is a manifold with boundary having no corner, this will have a corner formed by the intersection of these two parts.


\begin{remark}
If one is familiar with the so-called double space construction of those calculi, then in general the conceptually correct phase space to work with should be the conormal bundle of the lifted diagonal of the appropriate double space. 
See \cite[Lemma~4.6]{Melrose-APS} for the construction of the double space in the case of b-calculus and \cite[Section 21]{melrose1994spectral} for the scattering calculus.  This is a quite general principle that applies to many settings. 
\end{remark}

\subsection{The symbol class and the quantization}
\label{subsec:symbol-quantization}

We will define the scattering symbol class and the corresponding pseudodifferential algebra in this lecture.

As we have seen in the case $M = \overline{\R^n}$ in Lecture~\ref{sec:lecture1}, compared with the classical symbol class, the main property of the scattering symbol class is that it gains frequency/spatial decay when you differentiate in frequency/spatial variables. 

We will define scattering symbols in terms of smooth functions on the interior of $\overline{{}^{\sct}T^*M}$ with certain prescribed growth or decay rate when we approach $\partial M$ or fiber infinity, but of course, one should think of them as objects living on this compactified phase space.


Let $M^\circ$ be the interior of $M$ and we denote the restriction of ${}^{\sct}T^*M$ to $M^\circ$ by ${}^{\sct}T_{M^\circ}^*M$.
The space of scattering symbols of order $(m,r)$, which we denote by $S^{m,\el}_{\sct}(M)$, consists of smooth functions $a$ on ${}^{\sct}T_{M^\circ}^*M$ such that in a coordinate chart:
\begin{equation} \label{eq:sc-symbol-def}
|({\msf{x}}\partial_{\msf{x}})^{\alpha}\partial_y^\beta \partial_\tau^\gamma \partial_{\mu}^\delta \partial_{\mu} a({\msf{x}},y,\tau,\mu) |  \leq C_{\alpha\beta\gamma\delta} \la \tau,\mu \ra^{m-\gamma-|\delta|} {\msf{x}}^{-\el}.
\end{equation}
We will call $m$ the differential order, and $\el$ the decay\footnote{In fact, this is a growth order on the symbol or operator side if one think about how the requirement changes as $r$ increases. It will become clear that this is a good name after we define corresponding Sobolev spaces.} order.
For fixed indices, the infimum\footnote{We fix an atlas of coordinate charts that is locally finite.} of $C_{\alpha\beta\gamma\delta}$ on the right hand side gives a seminorm on this symbol class. All those seminorms together gives the Fr\'echet topology on $S^{m,\el}_{\sct}(M)$.

Recalling the equivalence between $\msf{x}^2\partial_{\msf{x}},\msf{x}\partial_y$ and $\partial_{z_i}$ mentioned before \eqref{eq:z-xy-transform},
the requirement in \eqref{eq:sc-symbol-def}, i.e. remaining the same growth under repeated application of ${\msf{x}}\partial_{\msf{x}}, \; \partial_{y_i}$, which are generators (say, as a left $C^\infty(M)$-module) of vector fields that are tangent to $\partial M$,
is indeed the same as gaining one order decay in $\la z \ra^{-1}$ when we apply $\partial_{z_i}$, which is our definition in \eqref{eq:sc-symbol-Rn}.

We can also interpret \eqref{eq:sc-symbol-def} in terms of vector fields on $\overline{{}^{\sct}T^*M}$ tangent to the boundary, as discussed in Section~\ref{subsec:compactification} in the case $M = \overline{\R^n}$. To do so it is more transparent to rewrite \eqref{eq:sc-symbol-def} in the form 
\begin{equation} \label{eq:sc-symbol-def-2}
|({\msf{x}}\partial_{\msf{x}})^{\alpha}\partial_y^\beta (\ang{\tau, \mu}\partial_\tau)^\gamma (\ang{\tau, \mu}\partial_{\mu})^\delta \partial_{\mu} a({\msf{x}},y,\tau,\mu) |  \leq C_{\alpha\beta\gamma\delta} \la \tau,\mu \ra^{m} {\msf{x}}^{-\el}.
\end{equation}
Here, all the vector fields on the LHS are tangent to the boundary of $\overline{{}^{\sct}T^*M}$ and we have a fixed weighted $L^\infty$ bound on the RHS. Such regularity (with respect to vector fields tangent to the boundary) is sometimes referred to as conormal regularity, thus, symbols have infinite conormal regularity with respect to a given weighted $L^\infty$ space. 

Let $\mathcal{S}(M^\circ) = \bigcap_{N \in \mathbb{N} } {\msf{x}}^NC^\infty(M)$ be the Schwartz function class on $M^\circ$.\footnote{This is indeed the usual Schwartz function class $\mathcal{S}(\R^n)$ when $M = \overline{\R^n}$.} 
Then for $a \in S^{m,\el}_{\sct}(M)$ that is supported in a single coordinate chart, the (left) quantization of $a \in S_{\sct}^{m,\el}(M)$ is defined to be the operator acting on functions in $\mathcal{S}(M^\circ)$ and supported in the same chart as follows. If the coordinate patch does not meet the boundary of $M$ then in terms of local coordinates $z_1, \dots, z_n$ on this patch, we define 
\begin{align} \label{eq:quant-sc-0}
\operatorname{Op}(a)f(z) = 
(2\pi)^{-n} \int e^{i (z-z') \cdot \zeta } a(z, \zeta) f(z')  dz' d\zeta,
\end{align}
while if the coordinate patch is localized near $\partial M$ with coordinates $(\msf{x}, y)$ as above, then we define
\begin{align} \label{eq:quant-sc}
\operatorname{Op}(a)f(x,y) = 
(2\pi)^{-n} \int \int e^{i (\tau \frac{{\msf{x}'}-{\msf{x}}}{{\msf{x}}{\msf{x}}'} + \mu \cdot (\frac{y}{{\msf{x}}} - \frac{y'}{{\msf{x}}'}) ) } a({\msf{x}},y,\tau,\mu)  f({\msf{x}}',y') \frac{d{\msf{x}}'dy'}{({\msf{x}}')^{n+1}} d\tau d\mu. 
\end{align}
And it acts on functions with support disjoint from this chart by a kernel that is a Schwartz function on $M^\circ \times M^\circ$. Although this definition depends on the choice of coordinates, one can verify that the \emph{class} of such operators, as $a$ ranges over $S^{m,\el}_{\sct}(M)$, is independent of the choice of coordinates modulo Schwartz functions on $M^\circ \times M^\circ$. 
See \cite[Chapter~5]{Hintz-book} for the Kuranishi trick that is used to prove such invariance. It is also from this change of variables involved in the Kuranishi trick that it becomes clear that we should view frequencies as living in the cotangent bundle, and specifically, the \emph{scattering} cotangent bundle near $\partial M$. 

The volume form used in \eqref{eq:quant-sc} near $\partial M$ is given by wedging together the frame \eqref{eq:sc-cotangent-frame}. More geometrically, one could specify a smooth non-degenerate metric on the scattering cotangent bundle, and take the corresponding Riemannian volume, which will be of this form. Melrose defined a `scattering metric' to be such a metric, specifically of the form  
\begin{align} \label{eq:sc-metric}
g = \frac{d{\msf{x}}^2}{{\msf{x}}^4} + \frac{h({\msf{x}},dy)}{{\msf{x}}^2},
\end{align}
where $h$ is a metric on $\partial M$ depending on ${\msf{x}}$ smoothly. They are also called `asymptotically conic metrics' in the literature. The reason for the name `asymptotically conic' is that when $h$ is independent of $\msf{x}$, then under the transformation $r = \msf{x}^{-1}$, \eqref{eq:sc-metric} becomes $g = dr^2+r^2h$, which is the metric of a cone. Since we required $h$ to be smooth in $\msf{x}$, $g$ in \eqref{eq:sc-metric} is asymptotic to such a cone with metric $h(0,dy)$ on the cross-section. It is, however, not necessary to have a metric specified to define the quantization \eqref{eq:quant-sc} above. 

Generally, we define $\Psi_{\sct}^{m,\el}(M)$ to be the space of operators having Schwartz kernels of the form
\begin{equation}
 \sum_{i} A_i + R,
\end{equation}
where $A_i$ acts like \eqref{eq:quant-sc} in various charts and $R \in \mathcal{S}(M^\circ) \otimes \mathcal{S}(M^\circ)$. That is, near the diagonal of $M \times M$, it acts by \eqref{eq:quant-sc} and off the diagonal, it acts by a kernel that is Schwartz in both variables.
This definition gives a local reduction to $\Psi_{\sct}^{m,\el}(\overline{\R^n})$ modulo $\mathcal{S}(M^\circ) \otimes \mathcal{S}(M^\circ)$. Now it might be illuminating to look at the phase $\tau \frac{{\msf{x}}-{\msf{x}}'}{{\msf{x}}{\msf{x}}'} + \mu \cdot (\frac{y}{{\msf{x}}} - \frac{y'}{{\msf{x}}'})$ in the $\overline{\R^n}$ case.
Using the transformation in \eqref{eq:z-xy-transform}, we have
\begin{equation}
\alpha = \tau \frac{d{\msf{x}}}{{\msf{x}}^2} + \mu \cdot \frac{dy}{{\msf{x}}} = 
(-\tau - \frac{\mu \cdot \tilde{z}}{z_n})dz_n + \sum_{i=1}^{n-1}\mu_idz_i,
\end{equation}
where $\tilde{z}=(z_1,..,z_{n-1})$.
Comparing with $\alpha = \zeta \cdot dz$, we see
\begin{equation}
\zeta_i = \mu_i,\, 1 \leq i \leq n-1, \; \zeta_n = -\tau - \frac{\mu \cdot \tilde{z} }{z_n}.
\end{equation}
Then we have
\begin{equation*}
\zeta \cdot (z-z') = \mu \cdot (\frac{y}{{\msf{x}}} - \frac{y'}{{\msf{x}}'})
+ (-\tau - \mu \cdot y ) (\frac{1}{{\msf{x}}} - \frac{1}{{\msf{x}}'}),
\end{equation*}
which agrees with the phase in \eqref{eq:quant-sc} after a change of coordinates in frequencies that is smooth in both directions. Moreover, at any given point of $\partial M$ we can arrange coordinates so that $y=0$ at that point, and then we have the simple correspondence $\zeta_i = \mu_i$, $1 \leq i \leq n-1$ and $\zeta_n = -\tau$ (but only at that one point, not in a neighbourhood). 

In addition, this phase is the reasonable one to use by composing elements in $\mathcal{V}_{\sct}(M)$ from the left. 
If we apply ${\msf{x}}^2D_{\msf{x}}, \, {\msf{x}}D_y$, then it brings down an $\tau,\, \mu$ factor respectively.

We will call $\Psi_{\sct}(M) = \bigcup_{m,\el} \Psi_{\sct}^{m,\el}(M)$ the scattering pseudodifferential algebra on $M$.
We will discuss its structure as a multi-graded algebra in the next subsection.
Lastly, we set
\begin{equation}
\Psi_{\sct}^{-\infty,-\infty}(M) = \bigcap_{m,\el} \Psi_{\sct}^{m,\el}(M),
\end{equation}
which consists of operators with kernels being Schwartz functions on $M^\circ \times M^\circ$, and we will call them residual.

Classical symbols are defined in the same manner as the $\R^n$-case:
\begin{equation} 
	S_{\mathrm{sc, cl}}^{m,\el}(M) =  \rho_{\mathrm{df}}^{-m} {\msf{x}}^{-\el}  C^\infty(\overline{{}^{\sct}T^*M}) \subset S_{\mathrm{\sct}}^{m,\el}(M).
\end{equation} 
If $a$ is classical, then $A=\operatorname{Op}(a)$ is called classical.

Now we turn to the global quantization map. Considering the setting of a general manifold $M$ without any restriction will face the issue that one need to compare the volume growth of the manifold with the off-diagonal decay of the kernel or other objects one is summing/integrating over. In this notes we make the following assumption:
\begin{center}
$M$ is compact.
\end{center}
However, be aware that the actual physical space is the interior of $M$, which is of course non-compact.
For example, in the $\R^n$-case studied in Lecture 1-6, $M = \overline{\R^n}$ is a compact ball but $M^{\circ} = \R^n$. 



As we will see in Theorem~\ref{thm:Groenewold}, there can't be a `perfect' quantization. So we just define it via gluing \eqref{eq:quant-sc}. Concretely, choose a (finite) cover $U_i$  of $M$ by open sets, and let $\phi_i: U_i \to \overline{\R^n}$ or $U_i \to \overline{\R^n_+}$, if $U_i$ meets $\partial M$, be a coordinate chart. Let $\chi_i$ be a partition of unity that is subordinate to this cover and let $\tilde{\chi} \in C_c^\infty(U_i)$ be identically $1$ on $\supp \chi_i$.
Then we define
\begin{equation} \label{eq:sc-quant-global}
\operatorname{Op}(a) = \sum_i \operatorname{Op}(\chi_ia)\tilde{\chi}_i,
\end{equation}
where $\operatorname{Op}(\chi_i a)$ is defined via \eqref{eq:quant-sc}\footnote{Here we packaged pull-backs, trivialization induced by $\phi_i$ into the definition of $\operatorname{Op}$.} and it can act on $\tilde{\chi}_i f$ since it has support contained in $U_i$.
Here the localizer $\tilde{\phi}_i$ differs with $\phi_i$ because in \eqref{eq:quant-sc} we allow $f$ to have support slightly larger than $a$. Keep in mind that $\operatorname{Op}$ is far from canonical, as it depends on the choice of open sets, coordinate charts and partitions of unity. For simplicity we do not indicate this in notation. Despite this lack of uniqueness, we will be able to define a canonical principal symbol map in Definition~\ref{def:sc-pr-symbol} below.

\subsection{Basic properties of the calculus} \label{subsec:sc-basic-property}


First we define the principal symbol of scattering pseudodifferential operator.
Consider $A_{\mathrm{\mathrm{loc}}}=\chi A \chi$ with $\chi \in C_c^\infty(M)$ with support contained in a single coordinate chart $\phi: U \to U' \subset \overline{\R^n}$ and is identically $1$ on some smaller $V \subset U$ (of course, it will be $1$ on $\overline{V}$ then). 
By the invariance of $\Psi_{\sct}$ discussed after \eqref{eq:quant-sc},
we know that $A_{\mathrm{\mathrm{loc}}}$ can be identified with 
\begin{equation}
   (\phi^{-1})^* A_{\mathrm{\mathrm{loc}}} \phi^*,
\end{equation}
which is a scattering pseudodifferential operator on $\overline{\R^n}$ (verify this!).
So locally this $(\phi^{-1})^* A_{\mathrm{\mathrm{loc}}} \phi^*$ can be written as $\operatorname{Op}(a_{U'})$ for some $a_{U'} \in S^{m,\el}(\overline{\R^n})$.
Finally, we set
\begin{equation}
a_V = \tilde{\phi}^*a_{U'}|_{{}^{\sct}T^*_VM},
\end{equation}
where $\tilde{\phi}:{}^{\sct}T_U^*M  \to \phi(U) \times \R^n$ is the trivialization induced by $\phi$. And the equivalent class
\begin{equation}
[a_V] \in S^{m,\el}_{\sct}(V) / S^{m-1,\el-1}_{\sct}(V)
\end{equation}
is independent of the choice of $\chi$ and the coordinate system $\phi$, by checking the `coordinate invariance' as in the classical setting.\footnote{This is not very trivial. Read Section~6.1 of Hintz's notes if you haven't seen this.} 
This defines the principal symbol of $A$ locally and the actual `global' principal symbol will be the equivalent class modulo $S^{m-1,\el-1}_{\sct}(M)$ obtained by gluing those $a_{U'}$.
It is not hard to check the following fact: if $V_1 \subset V$, then we have
\begin{equation}
[a_V]|_{T^*_{V_1}M} = [a_{V_1}].
\end{equation}

\begin{definition}\label{def:sc-pr-symbol}
The principal symbol of $A \in \Psi_{\sct}^{m,\el}(M)$ is the unique equivalent class
\begin{equation}
\sigma^{m,\el}(A) \in S^{m,\el}_{\sct}(M)/S^{m-1,\el-1}_{\sct}(M)
\end{equation}
such that: let $a \in S^{m,\el}_{\sct}(M)$ be any representative of it and $V, \, [a_V]$ as above, then $[a|_{{}^{\sct}T^*_V M}]=[a_V]$ in $S^{m,\el}_{\sct}(V) / S^{m-1,\el-1}_{\sct}(V)$.
\end{definition}

\begin{proof}

The uniqueness is easy: the restriction of $\sigma^{m,\el}(A)$ to any coordinate chart is uniquely determined.

To show the existence, we use a partition of unity $\{ \phi_i \}$ subordinate to a locally finite\footnote{It can be taken to be finite by our assumption.} open cover $\{V_i\}$ as in the definition of $[a_V]$ above.
Now we take a representative $a_{V_i}$ for each $[a_{V_i}]$ and set
\begin{equation}
a = \sum_i \phi_i a_{V_i}, \quad \sigma^{m,\el}(A) = [a] \in S_{\sct}^{m,\el}(M)/S_{\sct}^{m-1,\el-1}(M).
\end{equation}
Now we verify that $[a]$ satisfies desired properties. Take any $V$ in the definition, since for any $p \in V$, there is a $\phi \in C_c^\infty(V)$ that is identically $1$ near $p$, so we only need to show that for all $\phi \in C_c^\infty(V)$ we have
\begin{align*}
[(\phi a)|_{V}] = [(\phi a)_{V}].
\end{align*}
As aforementioned, making further restriction does not change the equivalent class, so we restrict $[\phi \phi_i a_{V_i}]$ and $[\phi\phi_i a_V]$ to $V_i \cap V$ to see that 
\begin{align*}
\phi \phi_i a_{V_i} = \phi \phi_i a_V + e_i,
\end{align*}
with $e_i \in S_{\sct}^{m-1,\el-1}(M)$ and is supported in ${}^{\sct}T^*_{V\cap V_i} M$.
Then we have
\begin{align*}
\phi a = \phi a_V + \sum_i e_i,
\end{align*}
while $\sum_i e_i \in S_{\sct}^{m-1,\el-1}(M)$, as desired.

\end{proof}

Now we turn to the ellipticity. For $a \in S^{m,\el}_{\sct}(M)$, it is called elliptic (in $S^{m,\el}_{\sct}(M)$) at $q \in \partial(\overline{{}^{\sct}T^*M})$ if 
\begin{align*}
|{\msf{x}}^\el\rho_{\mathrm{df}}^ma({\msf{x}},y,\tau,\mu)| \geq C>0
\end{align*}
in a neighborhood of $q$. We say $a$ is elliptic (in $S^{m,\el}_{\sct}(M)$) if it is elliptic at every point of $\partial(\overline{{}^{\sct}T^*M})$. Of course, this property descends to a property of $S^{m,\el}_{\sct}(M)/S^{m-1,\el-1}_{\sct}(M)$ and we say $A$ is elliptic if $\sigma_{\sct}^{m,r}(A)$ is elliptic.
This is equivalent to the following fact: $a \in S^{m,\el}_{\sct}(M)$ is elliptic if and only if there exists $b \in S^{-m,-\el}_{\sct}(M)$ such that
\begin{equation}
ab - 1 \in S^{-1,-1}_{\sct}(M).
\end{equation}
Also, $A$ is elliptic in $\Psi_{\sct}^{m,\el}(M)$ if and only if there is a $B \in \Psi_{\sct}^{-m,-r}(M)$ such that
\begin{equation} \label{eq:elliptic-1-parametrix}
AB - \mathrm{Id}, \; BA - \mathrm{Id} \in \Psi^{-1,-1}_{\sct}(M).
\end{equation}
In fact, the error term can be improved to be in $\Psi^{-\infty,-\infty}_{\sct}(M)$ and such $B$ is called the parametrix of $A$.

\begin{proposition}
The principal symbol map gives the following short exact sequence\footnote{Say, as left $C^\infty(M)$-modules. But this is not that important in our analysis.}:
\begin{align*}
0 \to \Psi_{\sct}^{m-1,l-1}(M) \to 
\Psi_{\sct}^{m,\el}(M) \xrightarrow{\sigma^{m,\el}(\cdot)} S^{m,\el}_{\sct}(M)/S^{m-1,\el-1}_{\sct}(M) \to 0.
\end{align*}
\end{proposition}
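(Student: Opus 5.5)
We need to verify exactness at three places: injectivity of the inclusion (trivial), surjectivity of $\sigma^{m,\el}$, and that the kernel of $\sigma^{m,\el}$ is exactly $\Psi_{\sct}^{m-1,\el-1}(M)$. The plan is to reduce everything to the $\R^n$ case via the global quantization \eqref{eq:sc-quant-global} and the local identification of $\chi A\chi$ with an element of $\Psi_{\sct}^{m,\el}(\overline{\R^n})$, using the exact sequence \eqref{eq:short exact} already established on $\R^n$.

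\emph{Surjectivity.} Given a class $[a]\in S^{m,\el}_{\sct}(M)/S^{m-1,\el-1}_{\sct}(M)$, choose a representative $a$ and form $\operatorname{Op}(a)=\sum_i \operatorname{Op}(\chi_i a)\tilde\chi_i$. I would show $\sigma^{m,\el}(\operatorname{Op}(a))=[a]$ by checking the defining property of Definition~\ref{def:sc-pr-symbol} on each small chart $V$. In local coordinates, $\operatorname{Op}(\chi_i a)\tilde\chi_i$ is, by the $\R^n$ composition formula \eqref{eq:comp expansion}, a scattering operator with left symbol $\chi_i a \tilde\chi_i + S^{m-1,\el-1}_{\sct}$, and $\chi_i a\tilde\chi_i=\chi_i a$ since $\tilde\chi_i=1$ on $\supp\chi_i$. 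Transferring between charts uses the coordinate invariance of the principal symbol (the Kuranishi argument quoted from \cite{Hintz-book}), which only changes symbols modulo $S^{m-1,\el-1}_{\sct}$. Summing over $i$ gives $\sum_i\chi_i a=a$ modulo lower order.

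\emph{Exactness in the middle.} If $A\in\Psi_{\sct}^{m-1,\el-1}(M)$, then each local symbol $a_V$ lies in $S^{m-1,\el-1}_{\sct}$, so $\sigma^{m,\el}(A)=0$. Conversely, suppose $\sigma^{m,\el}(A)=0$. Write $A=\sum_i \chi_i A\tilde\chi_i + \sum_i \chi_i A(1-\tilde\chi_i)$. The second sum has Schwartz kernel vanishing near the diagonal. From the definition of $\Psi_{\sct}(M)$ (kernel Schwartz off the diagonal; on $\R^n$, the exercise that symbols vanishing for $|x-y|\le\ang{x}/4$ quantize to residual operators), this sum is residual. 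For each $i$, pick a chart $V\supset\supp\tilde\chi_i$. Then $\chi_iA\tilde\chi_i$ is an element of $\Psi_{\sct}^{m,\el}(\overline{\R^n})$ whose principal symbol is $\chi_i\sigma(A)\tilde\chi_i=0$ there. By \eqref{eq:short exact} on $\R^n$, it lies in $\Psi_{\sct}^{m-1,\el-1}(\overline{\R^n})$, and pulling back it lies in $\Psi_{\sct}^{m-1,\el-1}(M)$. Summing finitely many terms gives the claim.

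\emph{Main obstacle.} The delicate point is the localization step. One must know that $\chi A\tilde\chi$ really is a local scattering operator whose principal symbol equals the restriction of $\sigma^{m,\el}(A)$, including near $\partial M$. Here the cutoffs $\chi\in C^\infty(M)$ are smooth up to the boundary, hence lie in $S^{0,0}_{\sct}$, and multiplication by them is compatible with the calculus. I would handle this by treating multiplication by $\chi$ as a composition with an order $(0,0)$ operator and citing the $\R^n$ composition theorem. The coordinate invariance of the class and of the principal symbol modulo one order in both indices I would take from the Kuranishi argument as stated before Definition~\ref{def:sc-pr-symbol}.
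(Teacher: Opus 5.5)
Your proposal is correct and follows the route the paper intends. The paper states this proposition without a written proof and relies, as it says for the composition law, on local reduction to $\overline{\R^n}$ and the exact sequence \eqref{eq:short exact}; your surjectivity check also mirrors the gluing argument the paper gives after Definition~\ref{def:sc-pr-symbol}.

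Two points are worth making explicit. First, take $\tilde\chi_i\equiv 1$ on a neighbourhood of $\supp\chi_i$, not merely on $\supp\chi_i$. This keeps $\chi_iA(1-\tilde\chi_i)$ supported away from the diagonal of $M\times M$, including over $\partial M$, so it is residual. Second, write $\chi_iA\tilde\chi_i=\chi_i(\chi A\chi)\tilde\chi_i$ for a single-chart cutoff $\chi\equiv 1$ near $\supp\tilde\chi_i$. This identity is what connects your local symbol to Definition~\ref{def:sc-pr-symbol}.
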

This means that $\sigma^{m,\el}(\cdot)$ captures the leading order behaviour in both the differential sense and the decay sense. If two operators have the same principal symbol, then the coincide up to $\Psi_{\sct}^{m-1,\el-1}(M)$-level (in the sense that their difference lies in here).

Then the proof of the composition law can be done by local reduction to the $\overline{\R^n}$-case. We state the conclusion below.
\begin{proposition}
$\Psi_{\sct}$ is a multi-graded $*$-algebra in the following sense (let $*$ denote the formal adjoint)
\begin{align} \label{sc_composition}
\begin{split}
A \in \Psi_{\sct}^{m,\el}(M)  & \implies A^* \in \Psi_{\sct}^{m,\el}(M),\\
A \in \Psi_{\sct}^{m_1,\el_1}(M), \; B \in \Psi_{\sct}^{m_2,\el_2}(M)  & \implies A \circ B \in \Psi_{\sct}^{m_1+m_2,\el_1+\el_2}(M).
\end{split}
\end{align}
In addition, the principal symbol map `preserves products':
\begin{equation} \label{eq:principal-product}
\sigma^{m_1+m_2,\el_1+\el_2}(AB) = \sigma^{m_1,\el_1}(A)\sigma^{m_2,\el_2}(B).
\end{equation}
Of course, the product on the right hand side is defined via taking representatives to form a product and then take the equivalence class again. One can check that this does not depend on the choice of representatives.

\end{proposition}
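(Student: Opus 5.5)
The plan is to reduce everything to the $\overline{\R^n}$ case from Lecture 2, using the structure of $\Psi_{\sct}^{m,\el}(M)$ as a finite sum of localized chart operators plus a residual operator. I will write $A = \sum_i A_i + R_A$ and $B = \sum_j B_j + R_B$. Here each $A_i$ (resp. $B_j$) has Schwartz kernel supported in $U_i\times U_i$ (resp. $U_j \times U_j$) for a chart $U_i$ and is of the form \eqref{eq:quant-sc}. The remainders $R_A, R_B$ have kernels in $\mathcal{S}(M^\circ)\otimes\mathcal{S}(M^\circ)$.

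For the adjoint, the residual part is immediate: the adjoint of a residual kernel $K(x,y)$ is $\overline{K(y,x)}$, which is still Schwartz. For a chart term, the density $\frac{d\msf{x}'dy'}{(\msf{x}')^{n+1}}$ used in \eqref{eq:quant-sc} is, on $\overline{\R^n}$ via \eqref{eq:z-xy-transform}, a smooth positive multiple (smooth down to $\partial\overline{\R^n}$) of Lebesgue measure $dz$. The formal adjoint with respect to a fixed smooth scattering density therefore differs from the Euclidean adjoint only by conjugation with multiplication operators in $S^{0,0}_{\sct,\cl}$. The Euclidean adjoint of $\Op_L(a)$ is $\Op_R(\bar a)$, and this can be left reduced to an element of $\Psisc^{m,\el}$ by the left/right reduction of Lecture 2.

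For the composition, I expand $AB$ into terms $A_iB_j$, $A_iR_B$, $R_AB_j$ and $R_AR_B$. Products involving a residual operator are residual. The reason is that chart operators map $\mathcal{S}(M^\circ)$ to itself and, by the $*$-property, also $\mathcal{S}'$ to itself, so composing a Schwartz kernel with them on either side gives a Schwartz kernel. For $A_iB_j$ I split into two cases. If $U_i\cap U_j=\emptyset$, or more generally if the supports are separated, the composition kernel is off-diagonal. Then I use the off-diagonal decay of sc-kernels (the integration-by-parts exercise of Lecture 2) to show it is Schwartz. Otherwise I insert cutoffs so that both factors live in a common chart $U\supset U_i\cup U_j$, which may require refining the cover first. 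In that chart, both operators become elements of $\Psisc(\R^n)$ modulo residual terms, by the coordinate invariance discussed after \eqref{eq:quant-sc}. The composition theorem of Lecture 2 then gives an element of $\Psisc^{m_1+m_2,\el_1+\el_2}(\R^n)$, and transplanting back gives the claim.

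The principal symbol identity \eqref{eq:principal-product} is local. On any chart $V$, the local symbol of $\chi AB\chi$ equals, modulo $S^{m_1+m_2-1,\el_1+\el_2-1}_{\sct}$, the product of the local symbols of $A$ and $B$. This follows from the expansion \eqref{eq:comp expansion}, whose subleading terms gain one order in both indices, and from the fact that the cutoffs contribute lower order or residual terms. Definition~\ref{def:sc-pr-symbol} and its uniqueness then glue these into the global statement. Independence of the representatives follows because changing a representative by an element of $S^{m_1-1,\el_1-1}_{\sct}$ changes the product by an element of $S^{m_1+m_2-1,\el_1+\el_2-1}_{\sct}$.

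I expect the main obstacle to be the bookkeeping needed to reduce $A_iB_j$ to a single chart near $\partial M$. Charts near the boundary are not "balls" in the scattering sense, so the kernel of $A_i$ may not be small where $x$ and $y$ lie in different charts. What I would try first is to show that the kernel of a chart operator \eqref{eq:quant-sc} is Schwartz outside any neighbourhood of the diagonal in $M\times M$, and in particular rapidly decaying near $\partial M\times\partial M$ away from the diagonal. With that in hand, multiplying by cutoffs with $\tilde\chi=1$ near $\supp\chi$ changes operators only by residual terms, and the reduction goes through.
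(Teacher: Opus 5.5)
Your proposal is correct and follows the paper's route. The paper only asserts that these properties follow ``by local reduction to the $\overline{\R^n}$-case'', and your argument is a fleshed-out version of exactly that: chart pieces plus a residual part, a change of density for the adjoint, reduction of each $A_iB_j$ to a common chart, and local use of \eqref{eq:comp expansion} for the principal symbol.

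The off-diagonal fact you flag as the main obstacle does hold. Integrating by parts in $\zeta$, as in the Lecture 2 exercise, gives $|D_z^\alpha D_{z'}^\beta K(z,z')|\lesssim \ang{z}^{\el}|z-z'|^{-N}$ for $|z-z'|\geq 1$ and all $N$. This is rapid decay in both variables wherever $|z-z'|\gtrsim \ang{z}+\ang{z'}$, i.e.\ away from the diagonal of $M\times M$. So refining the cover and manipulating the cutoffs cost only residual terms.
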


As we will see in the proof of propagation estimates, commutators will play a very important role in microlocal analysis. In scattering calculus, commutators has the following property that is analogous to the classical case:
\begin{proposition}
Let $A \in \Psi_{\sct}^{m_1,\el_1}(M), \; B \in \Psi_{\sct}^{m_2,\el_2}(M)$, then $$[A,B] \in \Psi_{\sct}^{m_1+m_2-1,\el_1+\el_2-1}(M)$$ and
\begin{equation} \label{eq:principal-sc-commutator}
\sigma^{m_1+m_2-1,\el_1+\el_2-1}(i[A,B]) =  \{ \sigma^{m_1,\el_1}(A), \sigma^{m_2,\el_2}(B) \}
\end{equation}
where $\{ \cdot, \cdot \}$ denotes the Poisson bracket. 
\end{proposition}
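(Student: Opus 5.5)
Both claims are local, so the plan is to reduce to the $\overline{\R^n}$ case treated in Lecture 2, exactly as the composition law \eqref{sc_composition} was handled.

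\textbf{Step 1: localization.} Choose the finite cover $\{U_i\}$, partition of unity $\chi_i$ and cutoffs $\tilde\chi_i$ used in \eqref{eq:sc-quant-global}, and write $A = \sum_i \chi_i A + R_A$ and $B=\sum_j \tilde\chi_j B \chi_j' + R_B$. Here $R_A, R_B$ are residual, i.e.\ they have kernels in $\mathcal S(M^\circ)\otimes\mathcal S(M^\circ)$. Then $[A,B]$ splits into:
\begin{itemize}
\item terms where both factors are supported in a common chart;
\item terms with disjoint supports, which have Schwartz kernels (the off-diagonal part of a scattering operator is residual);
\item terms involving a residual factor, which are residual because $\Psi_{\sct}^{-\infty,-\infty}(M)$ is an ideal. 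This follows from the composition law.
\end{itemize}
Residual operators lie in every $\Psi_{\sct}^{m_1+m_2-1,\el_1+\el_2-1}(M)$ and have zero principal symbol, so only the first kind of term matters. Its principal symbol is supported over $U_i\cap U_j$, and it can be computed there.

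\textbf{Step 2: commutator in a chart.} In a chart, pull back by $\phi$ to obtain operators in $\Psisc^{m_1,\el_1}(\R^n)$ and $\Psisc^{m_2,\el_2}(\R^n)$; this uses the coordinate invariance discussed after \eqref{eq:quant-sc}. Apply the expansion \eqref{eq:comp expansion} to $AB$ and to $BA$. The $\alpha=0$ terms $ab$ and $ba$ cancel. The $|\alpha|=1$ terms give
\[
\sum_j \big(D_{\zeta_j}a\,\partial_{z_j}b - D_{\zeta_j}b\,\partial_{z_j}a\big),
\]
and multiplying by $i$ turns this into $\{a,b\}$ in the Euclidean coordinates $(z,\zeta)$. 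Each $|\alpha|\ge 2$ term gains $(|\alpha|,|\alpha|)$ orders, and so does the remainder. Hence $[A,B]\in\Psisc^{m_1+m_2-1,\el_1+\el_2-1}$, with principal symbol as claimed, which is \eqref{eq:Poisson b}.

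\textbf{Step 3: invariance.} It remains to check that the local answer $\{a,b\}$ is the Poisson bracket on ${}^{\sct}T^*M$, independent of the chart, and that the results glue. The Poisson bracket is defined by the canonical symplectic form on $T^*M^\circ$, and the identification \eqref{eq:tautological-one-form} shows that $(\tau,\mu)$ and $\zeta$ are related by the canonical cotangent lift of the coordinate change. So $\{a,b\}$ is coordinate invariant in the interior. Since symbol classes are defined by estimates up to the boundary, the class modulo $S^{m_1+m_2-2,\el_1+\el_2-2}_{\sct}$ is well defined. Gluing over the partition of unity then proceeds exactly as in the existence part of Definition~\ref{def:sc-pr-symbol}.

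\textbf{Main obstacle.} The expected difficulty is verifying that $\{a,b\}$, for $a\in S^{m_1,\el_1}_{\sct}(M)$ and $b\in S^{m_2,\el_2}_{\sct}(M)$, really lies in $S^{m_1+m_2-1,\el_1+\el_2-1}_{\sct}(M)$ near $\partial M$, with a gain in decay as well as in differential order. To handle this, write the symplectic form in the coordinates $(\msf{x},y,\tau,\mu)$, which gives $\omega = d\tau\wedge d\msf{x}/\msf{x}^2 + d\mu\wedge dy/\msf{x} + \cdots$. The Hamilton vector field then becomes $\msf{x}$ times a combination of $\msf{x}\partial_{\msf{x}}$, $\partial_y$, $\partial_\tau$, $\partial_\mu$ with smooth coefficients; this is the analogue of \eqref{eq:Hvf2}. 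Each such term produces the gain of one order in both $\msf{x}$ and $\ang{\tau,\mu}$. One must also check that the lower-order error terms produced by the Kuranishi change of variables affect only $\Psi_{\sct}^{m_1+m_2-2,\el_1+\el_2-2}$, so that they do not disturb the principal symbol.
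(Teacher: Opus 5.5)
Your proposal is correct and follows the same route as the paper, which only says that the result comes from local reduction to the $\overline{\R^n}$ case plus the asymptotic expansion of the composition. Your Steps 1--3 fill in exactly that sketch. One small bookkeeping point: in Step 1 the two cases ``supported in a common chart'' and ``disjoint supports'' are only exhaustive if you choose the cover fine enough that any two sets whose cutoffs have intersecting supports lie in a single chart. On compact $M$ this is always possible via a Lebesgue-number argument.
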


The proof comes down to local reduction to the $\R^n$ case and use the asymptotic expansion. 
The Poisson bracket on the right hand side is defined via taking representatives and compute using the definition that is almost the same as the $\R^n$-case which we recall below.  
For $p, a \in C^\infty({}^{\sct}T^*M)$, $\{p,a\}$ is defined via
\begin{equation}
\{ p , a \} = H_pa,  
\end{equation}
where $H_p$ is the Hamilton vector field uniquely determined by
\begin{equation} \label{eq:Hp-defn}
dp(H') = \omega_{\sct}(H_p,H'),
\end{equation}
for any smooth vector field $H'$ on ${}^{\sct}T^*M$. Here $\omega_{\sct}$ is the symplectic form
\begin{align}  \label{eq: omega sc}
\begin{split}
\omega_{\sct} & = -d(\tau \frac{d{\msf{x}}}{{\msf{x}}^2} + \mu \cdot \frac{dy}{{\msf{x}}} )
\\ &= - d \tau \wedge \frac{d{\msf{x}}}{{\msf{x}}^2} - d\mu \wedge \frac{dy}{{\msf{x}}} + {\msf{x}} \frac{d{\msf{x}}}{{\msf{x}}^2} \wedge \frac{\mu \cdot dy}{{\msf{x}}}.
\end{split}
\end{align}

Of course, both of the Poisson bracket and the Hamiltonian vector field are the same as the classical one in the interior and one can compute them in terms of local coordinates.
But notice that our contact form is the one in \eqref{eq:tautological-one-form}, not $\tau d{\msf{x}} + \mu \cdot dy$, so there are ${\msf{x}}$-factors involved and the expression in terms of those coordinates will look different from the classical case.


By condition \eqref{eq:Hp-defn} solving for the undetermined coefficients, we have
\begin{align} \label{eq: sc Hamilton vector}
\begin{split}
H_p = & \partial_{\tau}p ({\msf{x}}^2\partial_{\msf{x}}) + \partial_{\mu}p({\msf{x}}\partial_y)
-({\msf{x}}^2\partial_{\msf{x}}p + {\msf{x}}\mu \cdot \partial_{\mu}p) \partial_{\tau} 
\\ & -({\msf{x}}\partial_yp-{\msf{x}}(\partial_\tau p )\mu) \cdot \partial_{\mu}.
\end{split}
\end{align}

One point we would like to emphasize here is that one can see from \eqref{eq:principal-sc-commutator} that Hamiltonian dynamics will enter naturally if one wants to run arguments involving commutators (like propagation estimates) ---  as we have already done in Lecture~\ref{lec:propagation}.

 In fact, one can check that this Poisson bracket satisfies the Jacobi's identity and makes $C^\infty({}^{\sct}T^*M)$ a Lie algebra.
 On the other hand $[\cdot,\cdot]$ also makes $\Psi_{\sct}$ a Lie algebra. Then \eqref{eq:principal-sc-commutator} says that the principal symbol map (modulo the $i$-factor) preserves those structures on the top level. 
 Of course, one would like to ask can we preserve this structure completely?
The answer is no, even in a much more limited setting. This is known as Groenewold's theorem:

\begin{theorem} \label{thm:Groenewold}
On $\R^n$, as long as $S \subset C^\infty(T^*\R^n)$ includes symbols that is polynomial in positions and frequencies, then there is no quantization map $Q: S \to \Psi(\R^n)$ such that
$Q(x_j) = x_j,\, Q(\xi_j)=D_{x_j}$ and:
\begin{equation} \label{eq:7-9}
-i Q(\{f,g\}) = [Q(f),Q(g)].
\end{equation}
\end{theorem}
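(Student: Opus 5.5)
We argue by contradiction, following the classical proof of Groenewold's theorem (as in van Hove / Folland). Suppose such a $Q$ exists. We work in one degree of freedom, with $x=x_1$, $\xi=\xi_1$, $D=D_{x_1}$; for general $n$ the same argument runs using only these variables. We will assume $Q$ is linear; this is implicit in calling it a quantization, and \eqref{eq:7-9} is only meaningful for linear maps. First note that \eqref{eq:7-9} is consistent on the generators: $\{\xi,x\}=1$ and $-i[D,x]=-1$. We use the paper's Poisson bracket convention $\{p,a\}=H_pa$, so that $\{\xi,x\}=\partial_\xi\xi\,\partial_x x=1$. From \eqref{eq:7-9} applied to $f=\xi$, $g=x$, we get $-iQ(1)=[D,x]=-i$, so $Q(1)=\Id$. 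Applying \eqref{eq:7-9} with $f$ arbitrary and $g=1$ then gives $[Q(f),\Id]=0$, which is consistent.

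\textbf{Step 1: quadratics.} We determine $Q$ on quadratic polynomials. We have $\{x^2,x\}=0$ and $\{x^2,\xi\}=-2x$, up to sign conventions. Using \eqref{eq:7-9}, $Q(x^2)$ commutes with $x$. Its commutator with $D$ equals the commutator of $x^2$ with $D$. Hence $Q(x^2)-x^2$ commutes with both $x$ and $D$. An operator in $\Psi(\R^n)$ commuting with multiplication by $x$ and with $D$ must be a constant multiple of the identity: commuting with $x$ forces the Schwartz kernel to be supported on the diagonal, and commuting with $D$ forces translation invariance. So $Q(x^2)=x^2+c_1$. Similarly $Q(\xi^2)=D^2+c_2$. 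Next, $\{x^2,\xi^2\}$ is a multiple of $x\xi$, so $Q(x\xi)$ is determined from $[x^2,D^2]$ as $\tfrac12(xD+Dx)$, exactly the Weyl quantization. By the same reasoning, $Q(f)-f^w$ commutes with $x$ and $D$ for $f$ of degree at most $2$.

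\textbf{Step 2: cubics.} The key computation is for cubic monomials. Since $\{x^3,x\}=0$ and $\{x^3,\xi\}=-3x^2$, we get $Q(x^3)=x^3+\alpha x+\beta$. Now use $\{x^3,\xi^2\}\propto x^2\xi$ and $\{x\xi,x^2\}\propto x^2$ to pin down the ambiguities. An induction on degree shows that $Q$ agrees with the Weyl quantization $f^w$ on all polynomials of degree at most $3$. We get this by writing each cubic monomial as $\tfrac{1}{c}\{\text{quadratic},\text{cubic}\}$ and using $Q(x\xi)$, $Q(x^2)$, $Q(\xi^2)$, which are now known. The leftover constants are then killed using the brackets with $x\xi$, which acts diagonally by degree (Euler operator), since a constant is not an eigenvector of weight $3$.

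\textbf{Step 3: contradiction.} We obtain it from the identity
\[
\{x^3,\xi^3\}=\tfrac13\{x^2\xi,x\xi^2\}\cdot c,
\]
which holds as an equality of Poisson brackets, since both sides are multiples of $x^2\xi^2$. Applying \eqref{eq:7-9} to each side expresses $Q(x^2\xi^2)$ in two ways, as $-\tfrac{i}{9}[(x^3)^w,(\xi^3)^w]$ and as $-\tfrac{i}{3}[(x^2\xi)^w,(x\xi^2)^w]$. Direct computation with $[D,x]=-i$ shows that these two commutators differ by a nonzero multiple of the identity; this is the classical discrepancy, with a term of order $\hbar^2$ constant. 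That is a contradiction. The main obstacle is bookkeeping: fixing the signs and normalizing constants consistently with the paper's convention $\{p,a\}=H_pa$, and carrying out the two Weyl commutator computations. To do this I would first reduce the operators to normal-ordered form and compare the constant terms.
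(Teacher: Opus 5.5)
Your proposal is correct and is essentially the paper's argument: you force $Q$ to agree with the Weyl quantization on polynomials of degree at most three, and then obtain the contradiction from $x^2\xi^2=-\tfrac19\{x^3,\xi^3\}=-\tfrac13\{x^2\xi,x\xi^2\}$ (with the convention $\{p,a\}=H_pa$), since the two resulting Weyl expressions for $Q(x^2\xi^2)$ differ by the nonzero constant $\tfrac13$. Two small slips do not affect the argument: for $n>1$ you need commutation with all $x_j, D_{x_j}$, not only $x_1, D_{x_1}$, to conclude that the ambiguities are scalars (this is available because the relevant brackets with $x_j,\xi_j$, $j\ge 2$, vanish); and $\{x\xi,\cdot\}$ acts on $x^a\xi^b$ by $a-b$ rather than by total degree, which still separates $x^3$ and $\xi^3$ from their possible linear-plus-constant ambiguities.
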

A sketch of the proof is as following: \eqref{eq:7-9} for $f,g$ being polynomials for the first two orders determines that it have to be the Weyl quantization, at least restricted to polynomials up to order three.
And then the quantized version of ($x_1,\xi_1$ stands for the first component of position and frequency respectively)
\begin{equation*}
x_1^2\xi_1^2 = \frac{1}{9} \{ x_1^3,\xi_1^3 \} = \frac{1}{3} \{x_1^2\xi_1,x_1\xi_1^2\}
\end{equation*}
will lead to contradiction.
See \cite[Section~13.4]{hall2013quantum} for details.



\subsection{Sobolev spaces, mapping properties}

Recall we have the Schwartz function class on $M$ defined by
\begin{equation}
\mathcal{S}(M^\circ) = \bigcap_{N \in \mathbb{N} } {\msf{x}}^NC^\infty(M),
\end{equation} 
and it is equipped with the Fr\'echet topology induced by this intersection (for example: $\sup_{({\msf{x}},y) \in \phi(U)} |{\msf{x}}^{-N}\partial_{{\msf{x}},y}^{\alpha}u|$ for a fixed set of charts $(U,\phi)$ forming an finite open cover of $M$).

Then we denote $\mathcal{S}'(M^\circ)$ to be the dual space of it (continuous linear functionals on it). It is called the space of tempered distributions on $M$ (or $M^\circ$).

A metric of the form \eqref{eq:sc-metric} gives a volume form, or more precisely a density $|d\nu|$,  that is a smooth multiple, bounded away from $0$, of $|\frac{d{\msf{x}}dy}{{\msf{x}}^{n+1}}|$ and this defines $L^2_{\sct}(M)$.
Concretely, let $\chi_i$ be a partition subordinate to coordinate charts $\{ (U_i,\phi_i) \}$ consists of finitely $U_i$ and we define
\begin{equation}
\| u \|_{L^2_{\sct}(M)} =  ( \sum_i \int |\chi_i u({\msf{x}},y)|^2 |d\nu| )^{1/2}
\end{equation}
for $u \in \mathcal{S}(M^\circ)$. Here $|d\nu|$ is $|\frac{d{\msf{x}}dy}{{\msf{x}}^{n+1}}|$ if we are near the boundary and if we are away from the boundary, then it is any smooth positive density.  
One can verify that this is a norm and $L^2_{\sct}(M)$ is defined to be the completion of $\mathcal{S}(M^\circ)$ with respect to this norm.

Let $\{ (U_i,\phi_i,\chi_i) \}$ be as above \eqref{eq:sc-quant-global}, and set 
\begin{equation}
\| u \|_{H_{\sct}^{s,r}(M)} =  \big( \sum_i \|(\phi_i^{-1})^*(\chi_i u) \|^2_{H_{\sct}^{s,r}(\overline{\R^n})} \big)^{1/2}.
\end{equation}
for $u \in \mathcal{S}(M^\circ)$, and again set $H_{\sct}^{s,r}$ to be the completion of $\mathcal{S}(M^\circ)$ with respect to this norm.
This is a Hilbert space with the inner product defined by
\begin{equation}
\la u, v \ra = \sum_i \la A_i (\phi_i^{-1})^*(\chi_i u), A_i (\phi_i^{-1})^*(\chi_i v) \ra,
\end{equation}
where $A_i$ is a invertible elliptic operator in $\Psi_{\sct}(\overline{\R^n})$ and $\la \cdot , \cdot \ra$ is the $L^2$-pairing with respect to $|dz|$ or $|\frac{d{\msf{x}}dy}{{\msf{x}}^{n+1}}|$ as above, depending on we are near the boundary or not.

The boundedness of $\Psi_{\sct}$-operators on Sobolev spaces are proved by reducing to the $\overline{\R^n}$ case, while the residual part with Schwartz kernel is easy to deal with. 
We only list results below for the convenience of reference afterwards.

\begin{proposition}
Let $A \in \Psi_{\sct}^{m,\el}(M)$, then for any $s,r \in \R$, $A$ is a continuous map from $H_{\sct}^{s,r}(M)$ to $H_{\sct}^{s-m,r-\el}(M)$:
\begin{equation} \label{eq:sc-PsiDO-boundedness-manifold}
\| Au \|_{ H^{s-m,r-\el}_{\sct}(M) }  \leq C \|u\|_{H_{\sct}^{s,r}},
\end{equation}
where the constant $C$ may depend on $A,m,\el,s,r$ and choices we made in the definition of norms.
\end{proposition}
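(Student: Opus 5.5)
The plan is to reduce to the $\overline{\R^n}$ case, where the corresponding boundedness of $\Psisc^{m,\el}(\R^n)$ on weighted Sobolev spaces was proved in Lecture 3. By definition, $A = \sum_i A_i + R$, where each $A_i$ is a local quantization \eqref{eq:quant-sc} supported in a single chart $U_i$ and $R$ has Schwartz kernel in $\mathcal{S}(M^\circ)\otimes\mathcal{S}(M^\circ)$. Since the sum is finite (by compactness of $M$), it suffices to prove \eqref{eq:sc-PsiDO-boundedness-manifold} for a single $A_i$ and for $R$. Throughout we take $u \in \mathcal{S}(M^\circ)$ and extend by density, since $H^{s,r}_{\sct}(M)$ is defined as a completion.

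For the residual term $R$, I would show that it maps $H^{-N,-N}_{\sct}(M)$ continuously into every $H^{N,N}_{\sct}(M)$. Write $R u(z) = \langle K(z,\cdot), u\rangle$. For each chart piece, $(\phi_j^{-1})^*\chi_j R$ has a kernel that, after pulling back to $\overline{\R^n}\times\overline{\R^n}$ through the charts and using the density factor ${\msf{x}}'^{-n-1}$ (which is absorbed by the Schwartz decay), is a Schwartz function on $\R^{2n}$. It is therefore residual on $\R^n$, and Lecture 3 gives the bound $H^{s,r}_{\sct}(\R^n)\to H^{s',r'}_{\sct}(\R^n)$ for all orders. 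To handle the input side, we insert $\sum_k \chi_k = 1$ on the right of $R$, which reduces to terms of the form $(\phi_j^{-1})^*\chi_j R\chi_k\phi_k^*$, each again residual on $\R^n$.

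For a local term $A_i = \operatorname{Op}(\chi_i a)\tilde\chi_i$, the norm of $A_i u$ is $\sum_j \|(\phi_j^{-1})^*(\chi_j A_i u)\|_{H^{s-m,r-\el}_{\sct}(\overline{\R^n})}$. I would decompose $\tilde\chi_i u = \sum_k \tilde\chi_i\chi_k u$ and consider the operators $\chi_j A_i \tilde\chi_i\chi_k$. Since $\chi_j$ may not be supported in $U_i$, I would split $\chi_j = \chi_j\hat\chi_i + \chi_j(1-\hat\chi_i)$, where $\hat\chi_i$ equals $1$ near $\supp\chi_i a$ and is supported in $U_i$. The off-support part $\chi_j(1-\hat\chi_i)\operatorname{Op}(\chi_i a)$ has Schwartz kernel by the standard off-diagonal integration by parts (the exercise in Lecture 2), so it is handled like $R$. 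The remaining part lives in the chart $U_i$. Pulling back by $\phi_i$, it becomes an element of $\Psisc^{m,\el}(\overline{\R^n})$ sandwiched between cutoffs, which are symbols of order $(0,0)$, so by Lecture 3 it is bounded $H^{s,r}_{\sct}\to H^{s-m,r-\el}_{\sct}$ on $\R^n$.

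The remaining step, and the main obstacle, is the change of charts. We must compare $\|(\phi_i^{-1})^*(\chi_j v)\|$ with $\|(\phi_j^{-1})^*(\chi_j v)\|$, both for the output and for the input $\tilde\chi_i\chi_k u$ in the chart $\phi_i$ versus $\phi_k$. That is, we need the transition map $\phi_j\circ\phi_i^{-1}$ to preserve $H^{s,r}_{\sct}(\overline{\R^n})$ for compactly supported (in $\overline{\R^n}$) functions. For integer $s\ge0$ this follows because such diffeomorphisms, smooth up to the boundary of the compactification, preserve $\mathcal{V}_{\sct}$ and the boundary defining function up to a smooth positive factor, so they preserve the characterization via derivatives by $\partial_z$ (equivalently ${\msf{x}}^2\partial_{\msf{x}}$, ${\msf{x}}\partial_y$) and powers of ${\msf{x}}$. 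Negative $s$ then follows by duality (Proposition~\ref{prop:duality}) and non-integer $s$ by interpolation. Alternatively, I would invoke the Kuranishi-type invariance of $\Psi_{\sct}$ under such diffeomorphisms cited after \eqref{eq:quant-sc}, writing $\|v\|_{H^{s,r}} \sim \|\Lambda^{s,r} v\|_{L^2} + \|v\|_{H^{-N,-N}}$ and conjugating $\Lambda^{s,r}$ by the transition map. This also shows that the norm is independent of the choices made, up to constants.
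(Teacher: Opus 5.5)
Your proposal is correct and takes the same route as the paper. The paper only remarks that boundedness follows by reducing to the $\overline{\R^n}$ case, with the residual (Schwartz-kernel) part being easy; you supply the details it omits, notably the invariance of the local $H^{s,r}_{\sct}$ norms under chart transitions, and one small simplification is available: with left quantization the output of $\operatorname{Op}(\chi_i a)$ is supported in $\supp\chi_i$, so the off-support piece $\chi_j(1-\hat\chi_i)\operatorname{Op}(\chi_i a)$ vanishes identically and needs no integration by parts.
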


As aforementioned, $A \in \Psi_{\sct}^{m,\el}(M)$ is elliptic if and only if there is a $B \in \Psi_{\sct}^{-m,-r}(M)$ such that
\begin{equation} \label{eq:elliptic-full-parametrix}
AB - \mathrm{Id}, \; BA - \mathrm{Id} \in \Psi^{-\infty,-\infty}_{\sct}(M).
\end{equation}
And this yields the elliptic estimate:

\begin{proposition}
Suppose $A \in \Psi_{\sct}^{m,\el}(M)$ is elliptic, then \eqref{eq:sc-PsiDO-boundedness-manifold} can almost be reversed with an error term:
\begin{equation} \label{eq:sc-elliptic-manifold}
\|u\|_{H_{\sct}^{s,r}}  \leq C \big( \| Au \|_{ H^{s-m,r-\el}_{\sct}(M) } + \|u\|_{H_{\sct}^{-N,-N}} \big).
\end{equation}
Here $N \in \R$ is arbitrary, but the useful case is when $N$ is large: $-N<s,r$.
\end{proposition}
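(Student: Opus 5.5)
The plan is to derive the estimate \eqref{eq:sc-elliptic-manifold} directly from the full parametrix \eqref{eq:elliptic-full-parametrix}. Since $A$ is elliptic in $\Psi_{\sct}^{m,\el}(M)$, we may choose $B \in \Psi_{\sct}^{-m,-\el}(M)$ with $BA - \mathrm{Id} = -R$, where $R \in \Psi_{\sct}^{-\infty,-\infty}(M)$. For $u \in H^{s,r}_{\sct}(M)$ (or merely $u \in H^{-N,-N}_{\sct}(M)$ with $Au \in H^{s-m,r-\el}_{\sct}(M)$, if we want the strong form), we then write
\begin{equation*}
u = BAu + Ru .
\end{equation*}

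Next, we estimate the two terms separately using the mapping property \eqref{eq:sc-PsiDO-boundedness-manifold}. Because $B \in \Psi_{\sct}^{-m,-\el}(M)$, it maps $H^{s-m,r-\el}_{\sct}(M) \to H^{s,r}_{\sct}(M)$ boundedly, so $\|BAu\|_{H^{s,r}_{\sct}} \le C\|Au\|_{H^{s-m,r-\el}_{\sct}}$. Because $R$ is residual, it lies in $\Psi_{\sct}^{s+N,r+N}(M)$. Applying \eqref{eq:sc-PsiDO-boundedness-manifold} with these orders, $R$ maps $H^{-N,-N}_{\sct}(M) \to H^{s,r}_{\sct}(M)$ boundedly, which gives $\|Ru\|_{H^{s,r}_{\sct}} \le C\|u\|_{H^{-N,-N}_{\sct}}$. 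The triangle inequality then yields \eqref{eq:sc-elliptic-manifold}, with a constant depending on $A$ (through $B$ and $R$), $s$, $r$, $N$, and the choices of norms.

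The main point needing care is the existence of the parametrix with a residual error on $M$. Here we rely on the statement just before the proposition: the error in \eqref{eq:elliptic-1-parametrix} can be improved to $\Psi_{\sct}^{-\infty,-\infty}(M)$. That improvement comes from the Neumann series and asymptotic summation argument of Lecture 2, carried out in $\Psi_{\sct}(M)$ using the composition law \eqref{sc_composition}. A second, minor point is the strong form: if $u$ is only known to be in $H^{-N,-N}_{\sct}$, the identity $u = BAu + Ru$ still holds distributionally, and both terms on the right are in $H^{s,r}_{\sct}$, so the estimate holds without any a priori regularity assumption on $u$.
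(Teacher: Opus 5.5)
Your argument is correct and follows the route the paper indicates: the full parametrix \eqref{eq:elliptic-full-parametrix} gives $u = BAu + Ru$, and the estimate then follows from the mapping property \eqref{eq:sc-PsiDO-boundedness-manifold}. One small sign slip: to get $R : H^{-N,-N}_{\sct} \to H^{s,r}_{\sct}$ you need $R \in \Psi_{\sct}^{-s-N,-r-N}(M)$, not $\Psi_{\sct}^{s+N,r+N}(M)$; this is harmless, since a residual $R$ lies in every class.
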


Finally, we briefly introduce the scattering the wavefront sets for distributions and operators.
Let $A \in \Psi_{\sct}^{m,\el}(M)$, then $\WF'_{\sct}(A)$ is the subset in $\partial(\overline{{}^{\sct}T^*M})$ that captures locations and frequencies where $A$ is non-trivial, where trivial means acting with integral kernel that is a Schwartz function. Concretely, let $a$ be the full symbol of $A$ (potentially modulo a $\Psi_{\sct}^{-\infty,-\infty}$-term, according to our definition of the operator class) using the quantization \eqref{eq:sc-quant-global}, then for $q \in \partial(\overline{{}^{\sct}T^*M})$, we say that $q \notin \WF'_{\sct}(A)$ if there exists $\chi \in C^\infty(\overline{{}^{\sct}T^*M})$ such that $\chi(q)=1$ and $\chi a \in S_{\sct}^{-\infty,-\infty}(M)$, which is the space of Schwartz functions on $\overline{{}^{\sct}T^*M}$.
This is equivalent to that there is a $Q \in \Psi_{\sct}^{0,0}$ that is elliptic at $q$ such that $QA \in \Psi_{\sct}^{-\infty,-\infty}$, which means $A$ is acting with an integral kernel that is a Schwartz function microlocally at $q$

For $u \in \mathcal{S}'(M)$, the scattering wavefront set of order $s,r$, denoted by $\WF_{\sct}^{s,r}(u)$ is the subset of $\partial(\overline{{}^{\sct}T^*M})$ that captures locations and frequencies where $u$ fails (to have enough regularity of decay) to lie in $H_{\sct}^{s,r}(M)$. Let $q\in \partial(\overline{{}^{\sct}T^*M})$, we say $q \notin \WF_{\sct}^{s,r}(u)$ if  there is a $Q \in \Psi_{\sct}^{0,0}$ that is elliptic at $q$ such that $Qu \in H_{\sct}^{s,r}(M)$. One can also take union over those finite order wavefront sets to form a set that measures where $u$ fails to be trivial (i.e., being Schwartz):
\begin{align*}
\WF_{\sct}(u) =\overline{\bigcup_{s,r \in \Z} \WF_{\sct}^{s,r}(u)}.
\end{align*}





\subsection{Generalization to vector bundles} \label{subsec:vec-bundle}
The scattering algebra and pseudodifferential algebras constructed 
in previous chapters extends to operators between sections
of vector bundles naturally. We describe this process very briefly for the scattering algebra as given in \cite[Section~3]{Melrose1994}, 
and this transplants to other algebras in a verbatim manner. One major motivation of this generalization is the application to systems of PDEs and geometric rigidity theory.

Let $\pi_E: E \rightarrow M$ and $\pi_F: F \rightarrow M$ be two vector bundles over $M$
of rank $r_E$ and $r_F$ respectively, either complex or real. 
We define the space of scattering pseudodifferential operators from $E$ to $F$ 
with differential order $m$ and decay order $l$ as
\begin{align}
\Psi_{\sct}^{m,\el}(M;E,F):=C^\infty(X;\mathrm{Hom}(E,F)) \otimes \Psi_{\sct}^{m,\el}(M),
\end{align}
where the tensor product is over  $C^\infty(M)$,
viewing $C^\infty(X;\mathrm{Hom}(E,F))$ as a right $C^\infty(M)$-module
and $\Psi_{\sct}^{m,\el}(M)$ as a left $C^\infty(M)$-module.
Equivalently, an element in $\Psi_{\sct}^{m,\el}(M;E,F)$ is a matrix
with entries in $\Psi_{\sct}^{m,\el}(M)$. 
Suppose $e_1,...,e_{r_E}$ and $f_1,...,f_{r_F}$ are local frames of $E$ and $F$
over an open set $O$ respectively, then for any $K \Subset U$, we have
$P_{ij} \in \Psi_{\sct}^{m,\el}(M), 1 \leq j \leq r_E, 1 \leq i \leq r_F$ such that
\begin{align}
P(\sum_{j=1}^{r_E} \phi_j e_j) = \sum_{i,j} P_{ij}(\phi_j)f_i,
\end{align}
where $\phi_i \in C_c^\infty(O)$ and $\supp \phi_i \subset K$. We transposed
the matrix interpretation compared with notations in \cite{Melrose1994} to 
make it more compatible with basic linear algebra.

The space of corresponding symbols, denoted by $S_{\sct}^{m,\el}(M;E,F)$
is the space of $r_F \times r_E$ matrices with entries in
$S_{\sct}^{m,\el}(M)$, and the quantization map sending a symbol
to $\Psi_{\sct}^{m,\el}(M;E,F)$ is applying the quantization map
in the scalar case componentwise. The ellipticity is defined in the same way using \eqref{eq:elliptic-full-parametrix}, except for that now both $A,B$ are endomorphisms of bundles.

The Sobolev spaces $H^{s,r}_{\sct}(M;E)$ is defined using a partition of unity, a local trivialization of $E$, and a scattering connection $\nabla^{\sct}$, which gives the notion of differentiating sections of $E$ along scattering vector fields, i.e., sending sections of $E$ to sections of $E$ satisfying conditions for connections on vector bundles, but with $\mathcal{V}(M)$ replaced by $\mathcal{V}_{\sct}(M)$.
For a positive integer $s$, $H^s_{\sct}(M;E)$ is defined to be the space of sections of $E$ such that
it up to $s$-order derivatives using $\nabla^{\sct}$ and scattering vector fields are square integrable.
Here we assume that there is a fixed Hermitian inner product (and hence volume form) on $E$ to define the integration.
For general $s \in R$, we define for $s \geq 0$ by interpolation, and then use duality to define the $s<0$ case.
The weighted case is done by multiplying ${\msf{x}}^r$ componentwise, where ${\msf{x}}$ is the boundary defining function of $\partial M$.

The concept of the adjoint operator requires us to fix a density $\Omega$, and
define $A^*$ of $\Psi_{\sct}^{m,\el}(M;E,F)$ as an element
in $\Psi_{\sct}^{m,\el}(M;F^* \otimes \Omega, E^* \otimes \Omega)$
using the equation of pairing
\begin{align}
\la Au,v \ra = \overline{ \la u,A^*v  \ra},
\end{align}
where $u,v$ are sections of $E$ and $F^* \otimes \Omega$ respectively.
$a \in S_{\sct}^{m,\el}(M;E,F)$ is said to be elliptic at $(z,\zeta)$ if on a conic neighborhood
of this point there exists $b \in S_{\sct}^{-m,-l}(M;F,E)$ such that $ b \circ a- \Id_E \in S^{-1,-1}_{\sct}(M;E,E)$
and $a \circ b-\Id_F \in S_{\sct}^{-1,-1}(M;F,F)$.
Then the principal symbol the mapping properties, elliptic estimates, wavefront sets are similar to the scalar case.

\subsection{Problems}

\begin{problem}
 Check the claim we made in Section~\ref{subsec:symbol-quantization}: when $M = \overline{\R^n}$, the frame $\partial_{z_i}, \ 1 \leq i \leq n$ will be equivalent (in the sense that linear transformations between them have uniformly bounded and smooth coefficients in both directions) to the frame ${\msf{x}}^2\partial_{\msf{x}}, \; {\msf{x}}\partial_{y_j}, \ 1 \leq i \leq n-1$.   \label{ex:Rn-sc-vec}
\end{problem}

\begin{problem} Verify the claim that a scattering metric as in \eqref{eq:sc-metric} gives a metric on ${}^{\sct}TM$ and it is complete. Can we do something similar to ${}^{\sct}T^*M$?
\end{problem}

\begin{problem} In Section~\ref{ex:sc-bundle-construction}, we claimed that taking $\mathcal{V}_{\sct}(M)$ as local sections uniquely determines a vector bundle. Verify this. (Hint: For any $p \in M$, let $\mathcal{I}_p \subset C^\infty(M)$ be the ideal of smooth functions vanishing at $p$, then define the fiber \begin{align*}
{}^{\sct}T_pM = \mathcal{V}_{\sct}(M)/\mathcal{I}_p  \cdot \mathcal{V}_{\sct}(M).\end{align*} \label{ex:sc-bundle-construction}
Then verify that this gives a vector bundle. Maybe think about what is $C^\infty(M)/\mathcal{I}_p$ first.)
\end{problem}

\begin{problem} Verify the equivalence of two definitions of the ellipticity given in Section~\ref{subsec:sc-basic-property}.  \label{ex:elliptic-equivalence}
\end{problem}

\begin{problem} Derive \eqref{eq: sc Hamilton vector}. \label{ex:sc-Hamiltonian-vec}
\end{problem}

\begin{problem} In terms of the quantization map in \eqref{eq:sc-quant-global}, verify: $\operatorname{Op}(1) = \mathrm{Id}$. And this quantization map is almost surjective: $\Psi_{\sct}^{m,\el}(M) = \operatorname{Op}(S^{m,\el}_{\sct}(M)) + \Psi_{\sct}^{-\infty,-\infty}(M)$.
\end{problem}

\section{Lecture 8: Application to Inverse problems}

In this lecture we will discuss some applications of the scattering calculus to inverse problems.
In particular, we will discuss the result of Uhlmann and Vasy (with an Appendix by Zhou) \cite{uhlmann2016inverse}, but with a slightly different proof, which is closer to the presentation in later papers \cite{SUV2021local-gloabl}\cite{zachos2022inverting}\cite{jia2022tensorial}\cite{vasy2020semiclassical}\cite{jia20202d}. But we avoid introducing a quasi-homogeneous semiclassical calculus, which is used in some of those referred papers.
We can't offer a thorough literature review, but only refer to \cite[Section~1]{SUV2021local-gloabl}\cite{uhlmann2016journey} and references therein. 

\subsection{The X-ray transform}
\label{subsec:X-ray-intro}
Let $(X,g)$ be a Riemannian manifold with boundary. 
The geodesic X-ray transform is a generalization of the Radon transform, and the inverse problem on it can be formulated as follows: 
On a Riemannian manifold $(X,g)$, the information we have is integrals like 
\begin{equation}
(I_0 f)(\gamma(\cdot)) := \int_\gamma  f(\gamma(t))dt, 
\end{equation}
where 
$\gamma$ is a geodesic segment in a neighborhood $O_p$ of a fixed point $p \in \partial X$. Here $0$ stands for viewing $f$ as a tensor of rank $0$. The general case is defined via taking the integrand to be $f(\gamma(t))(\dot{\gamma}(t),...,\dot{\gamma}(t))$ when $f$ is a tensor field.
We should emphasize that here, unlike in Lecture~7, $g$ is a smooth function at the boundary and $X$ is geodesically incomplete.
Since we will be studying the local problem, one may assume that $X$ is compact. In this case, the distance from any interior point to $\partial X$ is finite.
Think of a Euclidean ball with the induced Euclidean metric as a typical example.

In this part, we consider the local geodesic ray transform with weights in dimension at least 3. `Local' means that the geodesic segment we integrate over lies in $O_p$ and has endpoints on $\partial X$, 
see Section \ref{sec_defn_transform} for the more detailed definition.
We show that we can recover the restriction of the function $f$ to $O_p$ from $I_0f$,
which amounts to the injectivity of $I_0$,
by proving an estimate which uses the Sobolev norm of $I_0 f$ (viewed as a function on the 
projective sphere bundle $PSX$) to control the Sobolev norm of $f$. 
In addition,
this estimate is stable under small perturbations of the metric $g$.

\subsection{Notations and results}
\label{notations}
\subsubsection{The set up}
Let $(X,g)$ be a Riemannian manifold with boundary. 
It is convenient to consider a larger region containing $X$. So suppose $X$ is embedded as a strictly convex domain in a Riemannian manifold $(\tilde{X},g)$ (we have used the same notation to indicate the smooth extension of the metric). Here convexity means when a geodesic is tangent to $\partial X$, it is tangent and curving away from $X$. 

Concretely, let $\bar{X}$ be the closure of $X$ in $\tilde{X}$ and let $\rho$ be the boundary defining function of $\bar{X}$, which means $\rho(z)$ vanishes on $\partial X$, $\rho(z)>0$ on $X$, and satisfy the non-degeneracy condition $d\rho \neq 0$ when $\rho=0$.
Using $G$ to denote the dual metric function on $T^*\tilde{X}$ and $H_G$ to denote its Hamilton vector field, the convexity means that if at some $\beta \in T_p^*\tilde{X}\backslash o$ with $p \in \partial X$ and $o$ being the zero section, we have: 
\begin{equation} \label{convexity}
\text{if }(H_G \rho) (\beta)= 0 \text{, then }(H^2_G \rho) (\beta) < 0.
\end{equation}

We will consider local geodesic transform near $p$ in a neighborhood $O_p \subset U$ of $p$ in $X$. See Section \ref{sec_defn_transform} for more details on $O_p$ and the meaning of local here. 
Recall that an initial point and an tangent vector at this point determine a geodesic.
The bundle we use to parametrize geodesics is the \textit{projective sphere bundle}, denoted by $PSX$, whose fibers are $\R \times \mathbb{S}^{n-2}$. 
We also denote the same type bundle extended over $\tilde{X}$ by $PS\tilde{X}$.
It parametrizes geodesics whose initial velocities have unit tangential component, except for those that are normal to the foliation that we will introduce in \eqref{eq:tilde-Sigma-def}, which correspond to $\lambda=\pm \infty$. Those excluded geodesics are irrelevant for our purpose since we will introduce a cut-off function to restrict our analysis to geodesics away from them.

\subsubsection{The foliation condition and the choice of the coordinate system}

If we define the region we consider to be $\{0 \leq \rho(z) \leq c\}$, the region might be non-compact (even when $c$ is small, it might be a long thin strip near the boundary). So we use a modification of $-\rho$ making the level sets less convex to enforce its intersection with $\partial X$ happen in a small compact region close to $p$. 

To this end, we introduce another boundary defining function $\tilde{{\msf{x}}}$ satisfying 
\begin{equation} \label{defn_tildex}
d\tilde{{\msf{x}}}(p)=-d\rho(p), \; \tilde{{\msf{x}}}(p)=0,
\end{equation} 
whose level sets are strictly convex from the sub-level sets $\{\tilde{{\msf{x}}}<-T\}$ for a constant $T>0$,
 which means geodesics tangential to this region will curve away from it. 
This function is used to introduce the artificial boundary, 
which is a level set of $\tilde{{\msf{x}}}: \, \{\tilde{{\msf{x}}}=-c\}$ for $c>0$.
This level set intersects with $\partial X$ and 
together with $\partial X$ it 
encloses a small region on which our discussion happens.
This allows us to conduct analysis locally. 
In terms of this new parameter $c>0$, the region $O_p$ is 
\begin{equation} \label{defn_Omegac}
\Omega_c:=\{ z \in X: \tilde{{\msf{x}}}(z) \geq -c, \rho(z) \geq 0 \}.
\end{equation} 
We can choose $\tilde{{\msf{x}}}$ such that $\bar{\Omega}_c$ is compact for $c$ sufficiently small.  Our proof for the local result is valid for all small $c$.

\begin{figure}
    \centering
    \includegraphics[width=0.5\linewidth]{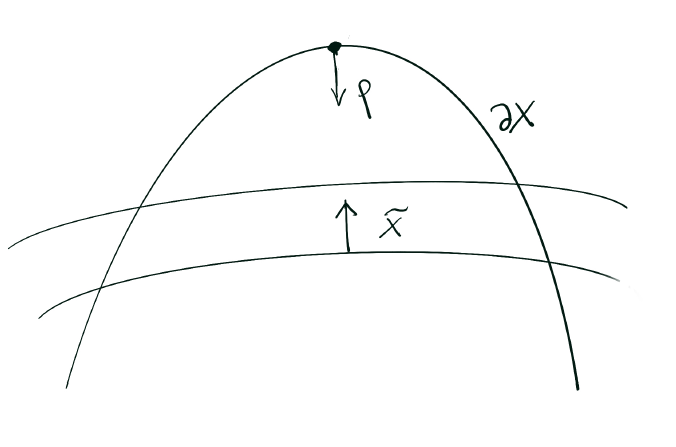}
    \caption{An illustration of the convex foliation. The level sets of $\tilde{x}$ are still convex but less convex than $\partial X$. The arrows show the direction of increase of $\rho$ and $\tilde{\msf{x}}$.}
    \label{fig:convex-foliation}
\end{figure}

We give an explicit construction of $\tilde{{\msf{x}}}$ here to show it exists locally (thus can be used for our Theorem~\ref{thm:inverse-main}), 
but our result is valid for any $\tilde{{\msf{x}}}$ satisfying conditions above.
Shrinking $O_p$ if necessary, we can assume the neighborhood we are working on is entirely in a local coordinate patch. We take
\begin{equation}
\tilde{{\msf{x}}}(z) = -\rho(z) - \epsilon |z-p|^2, \quad  z \in O_p,  \label{tildex}
\end{equation}
where $|\cdot|$ means the Euclidean norm in this coordinate patch, and this term is introduced to enforce the region characterized by $\tilde{{\msf{x}}}$ to be compact. Here $\epsilon>0$ is a fixed small constant chosen before we choose $c$. Taking $c>0$ sufficiently small, $\{\tilde{{\msf{x}}}\geq -c, \rho \geq 0\}$ is compact. This is because $\tilde{{\msf{x}}}\geq -c,\rho \geq 0$ implies $\rho \leq C$ and $|z-p|\leq c/\epsilon$. Since we are in a fixed coordinate patch, topologically this region is a closed subset of a compact Euclidean ball, hence it is compact. 
In addition, by the discussion in \cite[Section~3.1]{uhlmann2016inverse},
each $\Sigma_t$ with $0 \leq t \leq c$ is convex in the sense that any geodesic tangent to it curves away from $\{ \tilde{{\msf{x}}} \leq -t \}$.

We now turn to the \textit{convex foliation condition} we need. 
From now on, we assume $\tilde{{\msf{x}}}$ to be any function that satisfies (\ref{defn_tildex}) 
and convexity condition after it.
Our foliation of the part of $X$ near $\partial X$ is given by level sets of $\tilde{{\msf{x}}}$. 
That is, the family of hypersurfaces 
\begin{equation} \label{eq:tilde-Sigma-def}
 \{\tilde{\Sigma}_t = \tilde{{\msf{x}}}^{-1}(-t),0 \leq t \leq T\}.
\end{equation}
Here we choose $T$ to be a number such that desired properties of $\tilde{{\msf{x}}}$ hold from $\tilde{\Sigma}_0$
up to $\tilde{\Sigma}_T$. 
For Theorem~\ref{thm:inverse-main}, which concerns the local injectivity,
we only use the part of the foliation with $t \leq c$. While for the Corollary,
which concerns the global result, we use the entire foliation up to $\tilde{{\msf{x}}}=-T$.
By our choice of $c$, we may take $T=c$. Taking $T$ larger will make the region 
on which our result holds larger. In fact, one may apply a layer stripping method 
to obtain injectivity result up to $\tilde{\Sigma}_T$.
When $T>c$, we may take $\tilde{\Sigma}_c$ as the `new boundary' and apply our Theorem~\ref{thm:inverse-main}, and then repeat. For more details of the layer stripping method, see
  the discussion after \cite[Corollary]{uhlmann2016inverse}.

Finally, the coordinate system we use is 
\begin{equation} \label{coordinate}
({\msf{x}},y,\tau,\mu), 
\end{equation}
where ${\msf{x}}=\tilde{{\msf{x}}}+c$, and $y$ is the coordinate on $\tilde{\Sigma}_t$. $\tau,\mu$ are fiber variables dual to ${\msf{x}},y$ respectively
in the scattering cotangent bundle $^{\;sc}T^*X$
(i.e., we are writing covectors as $\tau \frac{d{\msf{x}}}{{\msf{x}}^2}+\mu \frac{dy}{{\msf{x}}}$).

We emphasize that introducing $\tilde{X}$ and the artificial boundary $\{\tilde{{\msf{x}}}=-c\}$ brings us convenience in this framework, allowing us to 
restrict our analysis in to this local region and use the scattering pseudodifferential calculus.


\subsubsection{The local geodesic ray transform} \label{sec_defn_transform}
Geodesics below are with respect to the metric $g$. 
Recalling (\ref{defn_Omegac}), we replace $\tilde{{\msf{x}}}$ by ${\msf{x}}=\tilde{{\msf{x}}}+c$, so that ${\msf{x}}$ itself becomes the defining function of the artificial boundary. In an open set $O \subset \bar{X}$, for a geodesic segment $\gamma \subset O$, we call it \textit{O-local geodesic} if its endpoints are on $\partial X \cap O$, and \textbf{all geodesic segments we consider below are assumed to be $O_p$-local} with $O_p$ as in the previous part.

As we mentioned in the introduction, \textit{the local geodesic ray transform} of a function $f$ is defined by:
\begin{equation*}
I_0  f(s) := \int_\gamma  f(\gamma(t))dt,
\end{equation*}
where $s \in PS\tilde{X}$, $\gamma(\cdot)$ is the geodesic determined by $s$.
So our geodesic ray transform is a function on $PS\tilde{X}$. 


\subsubsection{The main result}

Let $\bar{M}_c = \{ \tilde{\msf{x}} \geq -c \} = \{ {\msf{x}} \geq 0 \} \subset \tilde X$, and we may assume that $\bar{M}_c$ is compact by modifying $\tilde{\msf{x}}$ away from $X$. We use exponentially weighted Sobolev spaces: $H^s_F(\bar{M}_c) : = e^{\frac{F}{{\msf{x}}} }H^s(\bar{M}_c) =\{ f \in H_{\mathrm{loc}}^s(\bar{M}_c): e^{-\frac{F}{{\msf{x}}}}f \in H^s(\bar{M}_c) \}$, where the additional subscript $F$ is a positive constant, which indicates the exponential conjugation. 

For exponentially weighted Sobolev spaces on other manifolds, we use the same notation with $\bar{M}_c$ replaced by that manifold. 
Furthermore, $PS\tilde{X}|_{\bar{M}_c}$ is the restriction of the projective sphere bundle to $\bar{M}_c$.
 With all these preparations, the main theorem is:
\begin{theorem} \label{thm:inverse-main} Assume $n \geq 3$, and that $X$, $\tilde{\msf{x}}$ are as above. 
For $p \in \partial X$,  we can choose $c$ small so that the local geodesic ray transform is injective on $H_F^s(\bar{M}_c)$, $s\geq 0$. More precisely, for small $c$ there exists $C>0$ such that for all $f \in H^{s}_F (\bar{M}_c)$ that is supported in $\bar{M}_c \cap \bar{X}$, we have 
\begin{align} \label{est_main}
||f||_{ H^{s}_F (\bar{M}_c)} \leq C || I_0 f ||_{H^{s+1}(PSX|_{\bar{M}_c})}.
\end{align}
\end{theorem}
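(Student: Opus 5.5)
The plan follows the Uhlmann--Vasy strategy: introduce a localized ``back-projection'' operator $L$ and show that the normal-type operator $N = L I_0$, conjugated by the exponential weight, is an elliptic element of the scattering calculus on $\bar{M}_c$ (from Lecture 7). Concretely, I will define
\[
L v(z) = \int \chi(\lambda/{\msf{x}})\, v(\gamma_{z,\lambda,\omega})\, {\msf{x}}^{-2}\, d\lambda\, d\omega ,
\]
where $\gamma_{z,\lambda,\omega}$ is the geodesic through $z$ with initial velocity $\lambda\partial_{\msf{x}}+\omega$, $\omega$ a unit vector tangent to the level set of ${\msf{x}}$, and $\chi$ a compactly supported (or Gaussian-type) cutoff. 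Because $\chi(\lambda/{\msf{x}})$ only allows geodesics almost tangent to the level sets of ${\msf{x}}$, and these level sets are strictly convex, those geodesics stay in a region of size $O({\msf{x}})$ in scattering coordinates. The main object is then $N_F = e^{-F/{\msf{x}}} L I_0 e^{F/{\msf{x}}}$.

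First I will show $N_F \in \Psi_{\sct}^{-1,0}(\bar{M}_c)$. I will write the Schwartz kernel in the coordinates $({\msf{x}},y)$ and change variables to $X=({\msf{x}}'-{\msf{x}})/{\msf{x}}^2$ and $Y=(y'-y)/{\msf{x}}$. Then the kernel becomes a conormal distribution at the scattering diagonal $X=Y=0$, smooth down to ${\msf{x}}=0$. It is rapidly decaying off the diagonal, thanks to the factor $e^{-F(1/{\msf{x}}'-1/{\msf{x}})}$ together with the support of $\chi$ and the convexity. This matches the quantization \eqref{eq:quant-sc}, with differential order $-1$ because we are integrating over lines in an $n$-dimensional space.

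Second, I will compute the principal symbol at both fiber infinity and the spatial boundary ${\msf{x}}=0$. At fiber infinity it equals, up to a positive constant, the integral of $\chi(\hat\lambda)$ over directions orthogonal to $\zeta$; this is positive when $\chi\ge0$ is chosen suitably, and $n\ge3$ ensures the set of such directions is nonempty. At finite scattering frequency $(\tau,\mu)$ over ${\msf{x}}=0$, the symbol is a Fourier transform in $(X,Y)$ of an explicit Gaussian-type kernel. With the choice $\chi(s)=e^{-s^2/(2F^{-1}\alpha)}$, following Uhlmann--Vasy (possibly after composing with the cutoff in $\lambda$ only), this is an explicitly positive Gaussian in $(\tau,\mu)$. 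Hence $N_F$ is elliptic in $\Psi_{\sct}^{-1,0}$, and I expect this step to be the main obstacle.

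Third, since $N_F$ is elliptic, the elliptic estimate \eqref{eq:sc-elliptic-manifold} gives
\[
\|f\|_{H^{s}_{\sct}} \le C\big(\|N_F f\|_{H^{s+1}_{\sct}} + \|f\|_{H^{-N,-N}_{\sct}}\big).
\]
To remove the error term I will shrink $c$. The residual error, restricted to functions supported in $\bar M_c\cap\bar X$ (a set where ${\msf{x}}\le c$), has small operator norm as $c\to0$ because of the gain in decay order, so it can be absorbed; equivalently, the parametrix can be inverted by a Neumann series. Finally, $L$ is bounded from $H^{s+1}(PSX)$ to $H^{s+1}$ with the exponential weight. Since $H^s_F$ on the compact $\bar M_c$ is comparable to the scattering spaces for functions supported in $\bar X$ (up to the weight bookkeeping), this yields \eqref{est_main}, including stability under perturbations of $g$, since all the constants depend continuously on the metric.
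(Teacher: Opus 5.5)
Your plan follows the paper's proof closely: the same cut-off back-projection $L$, the same conjugated normal operator (the paper's $A_F$, renamed $B_c$ in the final argument), ellipticity at fibre infinity using $n\ge 3$, ellipticity at $\msf{x}=0$ via the Gaussian choice $\chi=e^{-F\hat\lambda^2/(2\alpha)}$, a parametrix, shrinking $c$, and boundedness of $L$. The one difference of route is how you verify $N_F\in\Psi_{\sct}^{-1,0}$. You use conormality of the kernel at $X=Y=0$ together with off-diagonal decay, as in Uhlmann--Vasy. The paper instead Fourier transforms the kernel to get the left symbol $a_F$ and applies stationary phase in $(\hat t,\hat\lambda)$, or in $(\hat t,\omega_1)$ when $\tau=0$. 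Either works. The problem is the step where you remove the error term.

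When you shrink $c$, it is not only the support of $f$ that changes. The defining function $\msf{x}=\tilde{\msf{x}}+c$, the artificial boundary $\{\tilde{\msf{x}}=-c\}$, and the scattering structure and Sobolev norms on $\bar M_c$ all depend on $c$, and therefore so do $N_F$ and its parametrix. Your elliptic estimate thus holds with a constant $C(c)$, and the residual error has kernel bounds $C_N(c)$. The factor $c^N$ you gain from $\msf{x}\le c$ on $\bar M_c\cap\bar X$ absorbs the error only if these constants stay bounded as $c\to0$, and you neither state nor check this.

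This uniformity is exactly what lets $c$ act as a semiclassical parameter, and the paper supplies it as follows.
\begin{enumerate}
\item It pulls everything back by $\Psi_c(\tilde{\msf{x}},y)=(\tilde{\msf{x}}+c,y)$ to the fixed space $\bar M_0$.
\item It observes that the symbol bounds and the ellipticity lower bound for $a_F$, coming from the stationary-phase and Gaussian computations, are uniform for $c\in(0,c_0]$. Hence the residual errors satisfy $|\msf{x}^{-N}(\msf{x}')^{-N}K_{E_c}|\le C_N$ uniformly.
\item With a cutoff $\phi_c$ supported in $\Omega_c$, Schur's lemma gives $\|\phi_cE_c\phi_c\|_{L^2_{\sct}\to L^2_{\sct}}\le C_N''c^{2N}$, so $\Id+\phi_cE_{0c}\phi_c$ is invertible for small $c$.
\end{enumerate}
The cutoff is needed for a second reason too. $N_F$ is only elliptic near $\Omega_c$, since the foliation, the convexity and the definition of $L$ are all local near $p$. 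So instead of the global estimate \eqref{eq:sc-elliptic-manifold}, you should use a parametrix on $\Omega_c$ composed with $\phi_c$, as the paper does.

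Three smaller corrections.
\begin{itemize}
\item The damping factor in the kernel is $e^{-F/\msf{x}+F/\msf{x}'}=e^{-F(1/\msf{x}-1/\msf{x}')}$. It is bounded and decaying because convexity gives $\msf{x}'=\gamma^{(1)}(t)\ge\msf{x}+\lambda t+Ct^2$; with the sign you wrote, it would grow along the geodesics.
\item The boundary symbol is not a Gaussian in $(\tau,\mu)$; it must behave like $|(\tau,\mu)|^{-1}$ to match order $-1$. It is the positive integral \eqref{symbol_boundary} over $\omega\in\mathbb{S}^{n-2}$.
\item Replacing the Gaussian by a compactly supported $\chi$, which is needed so that only $O_p$-local geodesics enter, is not a formality. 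One must show the change is at most $\epsilon\langle(\tau,\mu)\rangle^{-1}$ uniformly. The paper does this with non-stationary phase and van der Corput estimates in the region $|\tau|\lesssim|\mu|$.
\end{itemize}
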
    

We should emphasize that for the purpose of the injectivity of $I_0$ on functions on $X$, it might be most natural to consider the case $s=0$, i.e. $f \in L^2(X)$, since one can just extend $f$ by zero outside $X$ without changing $I_0f$ and keep $f \in L^2(\tilde{X})$.
Then \eqref{est_main} indeed is an estimate on functions on $X$ and shows that $I_0$ is injective. This also gives injectivity on Sobolev spaces with higher regularity since they are contained in $L^2(X)$.

In the corollary below, $X,\tilde{\Sigma}_t$ are defined as above, and in addition we assume that $\bar{X}$ is compact.
\begin{corollary}
If the convex foliation construction $\{\tilde{\Sigma}_t\}$ is valid up to $\{\tilde{{\msf{x}}}=-T\}$ and
$K_T : = X \backslash \cup_{t \in [0,T)} \tilde{\Sigma}_t$ has measure zero, the global geodesic X-ray transform is injective on $L^2(X)$. If $K_T$ has empty interior, the global geodesic transform is injective on $H^s(X)$ for $s > \frac{n}{2}$.
\end{corollary}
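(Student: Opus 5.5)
The plan is to deduce the corollary from Theorem~\ref{thm:inverse-main} by a layer stripping argument along the foliation $\{\tilde\Sigma_t\}$. Suppose $f \in L^2(X)$ with $I_0 f = 0$ for all geodesics; in particular its local transforms vanish. Define
$$t^* = \sup\{ t \in [0,T] : f = 0 \text{ a.e. on } \{\tilde{\msf{x}} > -t\} \cap X\},$$
and aim to show $t^* = T$. Once this holds, $f$ vanishes a.e. on $\bigcup_{t<T}\tilde\Sigma_t \cap X$. Since $K_T$ has measure zero, this gives $f=0$ in $L^2(X)$.

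\textbf{Step 1: the starting case.} Apply Theorem~\ref{thm:inverse-main} with $s=0$ at each boundary point $p\in\partial X$, noting that $e^{-F/\msf{x}}$ is bounded on $\bar M_c$. This gives $f=0$ on a neighbourhood $\bar M_{c(p)}\cap X$ of $p$. By compactness of the boundary near the relevant region (or directly, since the $\tilde{\msf{x}}$-sublevel sets near $\tilde\Sigma_0$ lie close to $\partial X$), we obtain $t^*>0$.

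\textbf{Step 2: the inductive step.} Suppose $t^*<T$. Replace $X$ by $X_{t^*} := \{\tilde{\msf{x}} \le -t^*\}\cap X$. By the convexity of the level sets of $\tilde{\msf{x}}$, this region has boundary $\tilde\Sigma_{t^*}$ that is strictly convex in the sense of \eqref{convexity}. The restriction $f|_{X_{t^*}}$ still has vanishing local ray transform near each point $p'\in\tilde\Sigma_{t^*}$. Indeed, a local geodesic in $X_{t^*}$ with endpoints on $\tilde\Sigma_{t^*}$ extends to a geodesic of $X$, and the extra pieces lie in $\{\tilde{\msf{x}}>-t^*\}$, where $f=0$. (For the global transform, the integral over the full geodesic equals that over the segment.) Apply Theorem~\ref{thm:inverse-main} at each $p'$, using the foliation function $\tilde{\msf{x}}+t^*$, which satisfies the hypotheses. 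This yields vanishing in a neighbourhood $\{\tilde{\msf{x}} > -t^*-c(p')\}$ near $p'$.

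\textbf{The main obstacle.} The key difficulty is uniformity: we need the constant $c(p')$ to be bounded below uniformly in $p'$ and in $t^*$. Only then can we advance to some $t^*+\delta$, contradicting maximality. My first approach is to observe that the constants in Theorem~\ref{thm:inverse-main} depend continuously on the geometry (the stability statement under metric perturbations). Compactness of $\bar X$ and of $\bigcup_{t\le T}\tilde\Sigma_t$ then gives a uniform $c$. Closedness of the vanishing set handles the sup itself.

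\textbf{The $H^s$ case.} When $K_T$ only has empty interior, the same argument gives $f=0$ on the dense open set $X\setminus K_T$. For $s>n/2$, Sobolev embedding makes $f$ continuous, and therefore $f\equiv 0$.
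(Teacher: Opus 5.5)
Your argument is sound and rests on the same layer-stripping idea as the paper. The difference is in the bookkeeping, and it shows that the step you call the main obstacle is not really an obstacle.

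\textbf{How the paper avoids it.} The paper never advances the whole front. It sets $\tau=\inf_{\supp f}(-\tilde{\msf{x}})$ and observes that $\tau\ge T$ would put $\supp f$ inside $K_T$. Otherwise it uses compactness of $\bar X$ to find a single point $q\in\tilde\Sigma_\tau\cap\supp f$. It then applies Theorem~\ref{thm:inverse-main} once, at $q$, on the region $\{\tilde{\msf{x}}<-\tau\}$. Combined with $f=0$ on $\{\tilde{\msf{x}}>-\tau\}$, this gives a neighbourhood of $q$ disjoint from $\supp f$, a contradiction.

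\textbf{Your version needs no uniform bound either.}
\begin{itemize}
\item Uniformity in $t^*$ is irrelevant: $t^*$ is fixed, and you only have to get past it once.
\item Uniformity in $p'$ can be replaced by compactness. The pointwise neighbourhoods of the $p'\in\tilde\Sigma_{t^*}\cap\bar X$ on which $f=0$ a.e.\ have a finite subcover. Again by compactness of $\bar X$, its union contains a collar $\{-t^*-\delta<\tilde{\msf{x}}\le-t^*\}\cap X$, which contradicts maximality of $t^*$.
\item This also makes your Step 1 redundant, since it is just the case $t^*=0$.
\end{itemize}

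\textbf{The justification you give should be dropped.} Continuous dependence of the constants on the geometry is not established in the notes. The stability remark concerns the constant $C$ in \eqref{est_main} under small perturbations of $g$. It says nothing about the size $c$ of the region as the base point and boundary vary. Replace that step by the compactness argument above.

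\textbf{What each route buys.} Your version proves directly that $f=0$ a.e.\ on $X\setminus K_T$, with no hypothesis on $K_T$. That makes the $H^s$ case (continuity plus density of $X\setminus K_T$) a one-liner. The paper's version is shorter and uses the local theorem only once.

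\textbf{A smaller point.} To apply Theorem~\ref{thm:inverse-main} at $p'\in\tilde\Sigma_{t^*}$, you need the analogue of \eqref{tildex} for the new boundary, e.g.\ $\tilde{\msf{x}}+t^*-\epsilon|z-p'|^2$. Its level sets are less convex than $\tilde\Sigma_{t^*}$ and cut off a small compact region near $p'$. The function $\tilde{\msf{x}}+t^*$ itself has level sets parallel to $\tilde\Sigma_{t^*}$ that never meet it. So it does not give the local set-up in which the theorem was proved.
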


\begin{remark}
We added `global' because the functions are not restricted to $O_p$ anymore. 
In addition, our result is stable since all the conditions we need in the proof
are also satisfied by small perturbations of $g$, and the constant $C$ 
in (\ref{est_main}) can be made uniform for small perturbations of $g$,
hence the same result holds for small metric perturbations.
\end{remark}

\begin{proof} Assuming the theorem holds, we prove the corollary. 
For the first part, suppose that $K_T$ has measure zero. For a nonzero $f \in L^2(X)$, $\text{supp}f$ has non-zero measure by the definition of $L^2(X)$.
Consider $\tau:=\text{inf}_{\text{supp}f} (-\tilde{{\msf{x}}})$. If $\tau \geq T$, then $\text{supp} f \subset K_T$, which has measure zero, contradiction. So $\tau < T$ and by definition $f \equiv 0$ on $\tilde{\Sigma}_t$ with $t < \tau$. By the definition of $\tau$, closedness of $\text{supp}f$ and compactness of $\bar{X}$, we know there exists $q \in \tilde{\Sigma}_\tau \cap \text{supp}f$. However, consider the manifold given by $\{ \tilde{{\msf{x}}} < -\tau \}$, to which we can apply our theorem. Since we have local injectivity near $q$, we conclude that $q$ has a neighborhood disjoint with $\text{supp}f$, contradiction.

If $f \in H^s(X),s>\frac{n}{2}$, $f \neq 0$, then $f$ is continuous by the Sobolev embedding theorem and consequently supp$f$ has non-empty interior.
Then apply the local result to a fixed point in $\text{supp}f$ gives the contradiction.
\end{proof}


\subsection{Why scattering calculus?}\label{subsec:why sc}
We are going to use the scattering calculus with $\{ \tilde{x} = -c\}$ being the boundary in the sense of the scattering calculus. We call $\{ \tilde{x} = -c\}$ the `artificial boundary' as it not part of the boundary of the original domain, while the original boundary $\partial X = \{ \rho = 0 \}$ will play a much less significant role. 

At first glance, it seems unnatural to use scattering calculus with a boundary that is not at geometric `infinity'.
But there are good reasons why we need to use scattering calculus instead of Kohn-Nirenberg or H\"ormander type calculus (or Boutet de Monvel type, if one want to run parametrix for this manifold with boundary) here. Before the result of Uhlmann and Vasy \cite{uhlmann2016inverse}, Stefanov and Uhlmann \cite{stefanov2004stability} have already shown that the analogue (without the cutoff we will use) of the normal operator we define below in \eqref{eq:AF-def} is an elliptic pseudodifferential operator (in the usual sense at fibre-infinity). 

However, if we don't restrict to geodesics that are becoming more and more tangent to the boundary of the local region, then the right hand side of the stability estimate will involve Sobolev norms on a slightly larger region, since the X-ray transform and in turn the normal operator will involve information outside this region.
Consequently, this does not give the desired invertibility.


Now if we introduce a cut-off to enforce almost-tangency , as we do in \eqref{eq:L defn} below with the $\chi(\frac{\lambda}{{\msf{x}}})$ factor,  where $\chi(\cdot) \in C_c^\infty(\R)$, then the amplitude of the kernel \eqref{eq:L defn} does not have fixed regularity under the repeated action of $D_{\msf{x}}$, so the resulting normal operator will not a classical elliptic PsiDO as one approaches the artificial boundary. (Think about $\int \chi(\lambda/{\msf{x}}) d\lambda = {\msf{x}}\int \chi(\hat{\lambda})d\hat{\lambda}$ for intuition.)
But it \emph{is} bounded under repeated applications of ${\msf{x}}D_{\msf{x}}$, as is the case for the symbol of a scattering pseudodifferential operator. We will see that the normal operator lies in $\Psi_{\sct}$.

In addition, this cone of geodesics (that is, those with $|\lambda| \lesssim {\msf{x}}$) is becoming degenerate (i.e., thinner and thinner) in the standard tangent bundle. However, it has uniform width as $\msf{x} \to 0$ measured using the scattering structure: see Remark~\ref{rem:geodesic cone} for more explanation of this point.  


Also, this localization is necessary as one will see in the proof. The small parameter $c$, which determines the (small) size of the region $\Omega_c$, works like a `semiclassical' parameter that finally allows us to remove the error term in the elliptic estimate, which is essential for deducing injectivity.

Of course, one can choose other scaling to restrict to geodesics tangent to the artificial boundary. 
For example one can use $\sqrt{{\msf{x}}}$ here as the defining function of the artificial boundary and consider geodesics with $|\lambda| \lesssim \sqrt{{\msf{x}}}$. But that would require the use of some other pseudodifferential algebra in which one needs to treat non-commutative (operator-valued) boundary symbols, which is technically more complicated. 
See \cite[Remark~3.4]{uhlmann2016inverse}.

\subsection{The pseudodifferential property and ellipticity of the normal operator}
\label{subsec:ellipticity}

In this section we prove the pseudodifferential property and ellipticity of the exponentially conjugated microlocalized normal operator. The exponential conjugation is needed because although the Schwartz kernel of the normal operator $A = L \circ I_0$ (see definitions of $L,I_0$ below) behaves well when $X=\frac{{\msf{x}}'-{\msf{x}}}{{\msf{x}}{\msf{x}}'},\, Y=\frac{y'}{{\msf{x}}'}-\frac{y}{{\msf{x}}}$ are bounded, it is not so when $(X,Y) \rightarrow \infty$, and this conjugation gives additional exponential decay to resolve this issue.

Using the notation $({\msf{x}},y,\lambda,\omega) \in PSX$,  
we define the modified adjoint operator of $I_0$ as
\begin{equation}\label{eq:L defn}
(Lv)(\msf{x},y) := {\msf{x}}^{-2} \int \chi(\frac{\lambda}{{\msf{x}}},y) v( \gamma_{{\msf{x}},y,\lambda,\omega}) d\lambda d\omega,
\end{equation}
where $\gamma_{{\msf{x}},y,\lambda,\omega}(t)$ is the geodesic starting at $({\msf{x}},y)$ with initial tangent vector $\lambda \partial_{\msf{x}} + \omega \cdot \partial_y$.
$\chi$ is smooth and compactly supported in the first variable, with $\chi(0,y)=1$ and the size of its support is uniformly bounded. All derivatives with respect to $y$ are also uniformly bounded. 
Here $v$ is a function defined on the space of $O_p$-local geodesic segments, whose prototype is the geodesic ray transform of a function on $X$:
$$
v(\gamma) = I_0 f(\gamma) = \int_\gamma f(\gamma(t)) dt.
$$
By the compactness of $\bar{\Omega}_c$ discussed after (\ref{tildex}) and $|(\lambda,\omega)|\geq 1$ and the convexity assumption we made, \eqref{eq:gamma1-est} below (see also \cite[Lemma~3.1]{vasy2020semiclassical}) shows that there exists a uniform bound $T_g$ of the escape time of $\bar{O}_p$. Thus we assume $|t|\leq T_g$ in arguments below.
$I_0$ is the original geodesic ray transform operator and $L$ is its adjoint if we ignore $\chi$ and assume fast decay conditions on integrands. 

We define the conjugated normal operator $A_F$ as
\begin{equation} \label{eq:AF-def}
A_F = e^{-F/{\msf{x}}}L \circ I_0 e^{F/{\msf{x}}}.
\end{equation}
By the definition of $L$ and $I_0$, we know $A_F$ acts by
\begin{equation}\label{eq:AF kernel}
A_Ff(\msf{x},y):={\msf{x}}^{-2}\int e^{-\frac{F}{{\msf{x}}} +\frac{F}{\gamma^{(1)}_{{\msf{x}},y,\lambda,\omega}(t)}} \chi(\frac{\lambda}{{\msf{x}}},y) f(\gamma_{{\msf{x}},y,\lambda,\omega}(t)) dt |d\nu|,
\end{equation}
where $|d\nu|=|d\lambda d\omega|$ is a smooth density and $\gamma^{(1)}_{{\msf{x}},y,\lambda,\omega}(t)$ is the $\msf{x}$-component of $\gamma_{{\msf{x}},y,\lambda,\omega}(t)$. 

Recall the parameter $c$ in the definition $O_p=\{\tilde{{\msf{x}}} >-c \} \cap \bar{X}$ where we are only concerned with the region $0 \leq {\msf{x}} = \tilde{{\msf{x}}}+c \leq c$, we have:
\begin{theorem} \label{AFproperty}
$A_F$ is a scattering pseudodifferential operator of order $(-1, 0)$ on $\{ \msf{x} \geq 0 \}$ for $F>0$. 
In addition, if we choose $\chi = \chi(\lambda/x) \in C_c^\infty(\R)$ appropriately with $\chi \geq 0, \chi(0)=1$, the principal symbol of $A_F$, including the boundary symbol, is elliptic.
\end{theorem}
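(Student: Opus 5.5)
The plan is to follow the Uhlmann--Vasy strategy: write the Schwartz kernel of $A_F$ in the scattering coordinates $X=\frac{\msf{x}'-\msf{x}}{\msf{x}\msf{x}'}$, $Y=\frac{y'}{\msf{x}'}-\frac{y}{\msf{x}}$, and check two things. First, it is conormal to the (lifted) diagonal $X=Y=0$ with the right singularity. Second, it decays exponentially as $|(X,Y)|\to\infty$, uniformly down to $\msf{x}=0$. Taking the Fourier transform in $(X,Y)$ then gives a symbol in $(\tau,\mu)$ of the form required by \eqref{eq:quant-sc}.

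\textbf{Step 1: rewrite the kernel.} Parametrize geodesics as $\lambda=\msf{x}\hat\lambda$ and $t=\msf{x}\hat t$. In \eqref{eq:AF kernel} I will change variables from $(t,\hat\lambda,\omega)$ to the endpoint $(\msf{x}',y')=\gamma_{\msf{x},y,\msf{x}\hat\lambda,\omega}(\msf{x}\hat t)$. Using the Taylor expansion of geodesics,
\[
\gamma^{(1)}(t)=\msf{x}+\lambda t+\alpha t^2+O(t^3), \qquad \gamma'(t)=y+\omega t+O(t^2),
\]
where $\alpha=\alpha(\msf{x},y,\lambda,\omega)$ is the second-derivative term of the geodesic (the term controlled by convexity, with $\alpha>0$ at $\lambda=0$), I get
\[
X\approx -(\hat\lambda\hat t+\alpha\hat t^2)+O(\msf{x}), \qquad Y\approx \hat t\omega+O(\msf{x}),
\]
with smooth dependence on $\msf{x}\ge 0$. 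The map $(\hat t,\hat\lambda,\omega)\mapsto(X,Y)$ (with $\omega$ recovered from $Y/|Y|$ and $\hat t=\pm|Y|$) is then a smooth diffeomorphism away from $Y=0$, down to $\msf{x}=0$. The factor $\msf{x}^{-2}$ and the Jacobians combine with the scattering density $\frac{d\msf{x}'dy'}{(\msf{x}')^{n+1}}$. The kernel becomes
\[
K(\msf{x},y,X,Y)=e^{-F\frac{X}{1+\msf{x}X}}\,|Y|^{-n+1}\,\chi\!\left(\frac{X-\alpha|Y|^2}{|Y|}\right)J(\msf{x},y,X,Y),
\]
with $J$ smooth and both $\hat Y=Y/|Y|$ and $\pm$ orientations summed. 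The factor $|Y|^{-n+1}$ with $\chi(S)$ homogeneous-type behaviour gives the conormal singularity at $(X,Y)=0$ of order $-1$, exactly as in Stefanov--Uhlmann. Fourier transforming in $(X,Y)$ therefore gives a symbol of differential order $-1$.

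\textbf{Step 2: decay at infinity.} On $\operatorname{supp}\chi$ we have $X\approx \alpha|Y|^2+O(|Y|)$, and $X\ge -C|Y|$. Restricting to $1+\msf{x}X>0$ (the geodesic stays in the region), the weight $e^{-FX/(1+\msf{x}X)}$ decays exponentially in $|Y|$ on the support, uniformly in $\msf{x}$. This gives Schwartz (indeed exponential) decay of the kernel away from the diagonal. I also need a bound on the escape time, using \eqref{eq:gamma1-est}, to keep all geodesics in a compact region. With these two properties, $A_F\in\Psi_{\sct}^{-1,0}$ follows from \eqref{eq:quant-sc}.

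\textbf{Step 3: ellipticity.} At fibre infinity the principal symbol is the standard X-ray normal operator symbol: a positive integral of $\chi$ over directions orthogonal to $\zeta$. This is nonzero since $\chi(0)=1$, $\chi\ge 0$, and directions with $\hat\lambda$ near $0$ are present. The main obstacle is the boundary symbol at $\msf{x}=0$, i.e. ellipticity at finite $(\tau,\mu)$. Here the symbol is the $(X,Y)$-Fourier transform of the $\msf{x}=0$ model kernel, and there is no positivity for free.

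I will follow Uhlmann--Vasy and choose a Gaussian, $\chi(s)=e^{-s^2/(2k(y))}$. Strictly it is not compactly supported, but the argument is approximated by compactly supported $\chi$ using the exponential decay from Step 2. With the Gaussian, the $X$-integral and then the $\hat t$-integral can be done explicitly as Gaussian integrals, where $k=F/\alpha$ is tuned. The result is an integral over $\hat Y\in\mathbb{S}^{n-2}$ of a strictly positive function, and it is $\sim |(\tau,\mu)|^{-1}$ for large frequency. Positivity at $\mu=0,\tau$ finite needs care, as does the fact that $n\ge 3$ gives enough directions $\hat Y$ orthogonal to $\mu$. I expect this explicit positivity computation, together with the approximation from Gaussian to compactly supported $\chi$ (which preserves ellipticity by continuity in symbol topology on a compact set), to be the hardest part.
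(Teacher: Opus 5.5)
Your approach is sound. It is essentially the original Uhlmann--Vasy argument \cite{uhlmann2016inverse}, which the notes cite for the membership $A_F\in\Psi_{\sct}^{-1,0}$ but do not follow.

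\textbf{How the routes differ.} The notes never change variables to $(X,Y)$. They insert the kernel into \eqref{eq:symbol from K} and keep the full left symbol as the oscillatory integral \eqref{eq3} over $(\hat t,\hat\lambda,\omega)$, with phase \eqref{eq:phi-1} and damping factor \eqref{eq:phase-conjugation}. Order $-1$ and ellipticity at fibre infinity then come from stationary phase. The critical set is $\hat t=0$, $\tau\hat\lambda+\mu\cdot\omega+O(\msf{x})=0$, where the Hessian has rank two. A case split ($\tau$ against $\msf{x}|\mu|$, then $\mu\cdot\omega$) rules out other critical points. The boundary symbol is read off by setting $\msf{x}=0$ in the same integral.

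Your kernel-side route replaces that critical-point bookkeeping with two structural facts. First, the singular part $|Y|^{-(n-1)}b$, with $b$ smooth in polar coordinates about $(X,Y)=0$, is homogeneous of degree $-(n-1)$ in $\R^n$. This gives order $-1$ directly and identifies the fibre-infinity symbol with the classical X-ray normal-operator symbol. Second, exponential decay off the diagonal gives the finite-frequency estimates.

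The cost is also twofold. You need the standard characterization of sc-kernels as conormal to the diagonal and rapidly decreasing in $(X,Y)$; the notes use this only implicitly through \eqref{eq:symbol from K}. You also need uniform control of the two-to-one map $(\hat t,\hat\lambda,\omega)\mapsto(X,Y)$ for large $\hat t$, which relies on $t=\msf{x}\hat t$ staying small, i.e.\ on small $c$. The Gaussian computation at $\msf{x}=0$ is common to both routes.

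\textbf{Three points need repair.}

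(i) Coordinates. With your stated $X=\frac{\msf{x}'-\msf{x}}{\msf{x}\msf{x}'}=\frac{1}{\msf{x}}-\frac{1}{\msf{x}'}$, one has $X=+(\hat\lambda\hat t+\alpha\hat t^2)+O(\msf{x})$; this is the sign your later formulas actually use. The damping factor is then exactly $e^{-FX}$, and $Y=\hat t\omega-yX+O(\msf{x})$. Your weight $e^{-FX/(1+\msf{x}X)}$ and $Y\approx\hat t\omega$ belong instead to the coordinates $X=(\msf{x}'-\msf{x})/\msf{x}^2$, $Y=(y'-y)/\msf{x}$. Either convention works, but pick one.

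(ii) The Gaussian width. The tuning must be $k=\alpha(y,\omega)/F$, i.e.\ $\chi=e^{-F\hat\lambda^2/(2\alpha(y,\omega))}$ as in the notes, so $\chi$ has to be allowed to depend on $\omega$. Only this choice makes $\alpha\hat t^2+\hat\lambda\hat t+\hat\lambda^2/(2\alpha)$ positive definite. After the $\hat\lambda$-integration it also cancels the imaginary coefficient $i\tau(\alpha-kF)\hat t^2$, so the $\hat t$-integral is a real Gaussian and yields the positive expression \eqref{symbol_boundary}. With your $k=F/\alpha$ that coefficient survives and the result is no longer a positive function. Moreover, for $F^2>2\alpha^2$ and $\tau$ near $0$ the integral diverges.

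(iii) The approximation step. Passing from the Gaussian to compactly supported $\chi_m$ needs more than convergence on a compact set of $(\tau,\mu)$. A priori, the radius beyond which the fibre-infinity positivity of $a_{F,m}$ takes over could grow with $m$. You need $\langle(\tau,\mu)\rangle(a_{F,m}-a_F)\to0$ uniformly, i.e.\ convergence in $S^{-1}$ on the compactified fibre. The notes prove this by hand in \eqref{eq:error bound aF}, using non-stationary phase and van der Corput. In your setting it follows once you show the boundary symbol depends continuously on $\chi$ into $S^{-1}$, for instance via Schwartz seminorms of $\chi$ together with the domination $0\le\chi_m\le\chi$ to control the weight.
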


\begin{remark}
For the original derivation of the decay property of $A_F$'s Schwartz kernel and consequently the membership $A_F\in \Psi_{\sct}^{-1,0}$, we refer readers to  \cite[Section~3.5]{uhlmann2016inverse}. And see \cite[Section~5]{stefanov2004stability} for the interior of the region. 
\end{remark}

\begin{remark}
It is recommended that the reader solves Problem~\ref{prob:classical Radon} before reading this proof, as it will aid greatly in understanding the calculation. 
\end{remark}
\begin{proof}
Let $z=(1/\msf{x},y/\msf{x})$ be induced `Euclidean' coordinates on $\{ \msf{x} > 0 \}$ and let $w=(\frac{1}{\msf{x}}-\frac{1}{\msf{x}'},\frac{y}{\msf{x}}-\frac{y'}{\msf{x'}})$ denote the difference between the left variable and the right variable. We will denote $\overline\gamma_{{\msf{x}},y,\lambda,\omega}(t)$ to denote $\gamma$ in these Euclidean coordinates; that is, if $\gamma_{{\msf{x}},y,\lambda,\omega}(t) = (\gamma^{(1)}_{{\msf{x}},y,\lambda,\omega}(t),  \gamma^{(2)}_{{\msf{x}},y,\lambda,\omega}(t))$ is the splitting into $(x,y)$-coordinates, then 
$$\overline\gamma_{{\msf{x}},y,\lambda,\omega}(t) =  \Big( \frac1{\gamma^{(1)}_{{\msf{x}},y,\lambda,\omega}(t)}, \frac{\gamma^{(2)}_{{\msf{x}},y,\lambda,\omega}(t)}{\gamma^{(1)}_{{\msf{x}},y,\lambda,\omega}(t)} \Big). 
$$
From the definition of $A_F$, we know
\begin{align*}
K_{A_F}(z,z') = \int & e^{-\frac{F}{{\msf{x}}} +\frac{F}{\gamma^{(1)}_{{\msf{x}},y,\lambda,\omega}(t)}} {\msf{x}}^{-2}\chi(\frac{\lambda}{{\msf{x}}},y)\delta(z'-\overline\gamma_{{\msf{x}},y,\lambda,\omega}(t))  
\   dt |d\nu| \\
 = (2 \pi)^{-n} \int & e^{-\frac{F}{{\msf{x}}} +\frac{F}{{\gamma^{(1)}_{\msf{x},y,\lambda,\omega}}(t)}}   {\msf{x}}^{-2}\chi(\frac{\lambda}{{\msf{x}}},y)
 e^{-i \zeta' \cdot (z'-\overline\gamma_{{\msf{x}},y,\lambda,\omega}(t))} 
 dt |d\nu| |d\zeta '|.
\end{align*}
Using the formula \eqref{eq:symbol from K}, we compute 
\begin{align*}
a_F(z,\zeta) =  &  \int e^{-iw\cdot \zeta} K_{A_F}(z,z-w) \, dw  \\
  =  \int & e^{-\frac{F}{{\msf{x}}} +\frac{F}{{\gamma^{(1)}_{\msf{x},y,\lambda,\omega}}(t)}} {\msf{x}}^{-2}\chi(\frac{\lambda}{{\msf{x}}},y)  e^{-iz \cdot \zeta} e^{i \zeta \cdot \overline\gamma_{{\msf{x}},y,\lambda,\omega}(t)}   dt |d\nu|.
\end{align*}
Or more concretely
\begin{align}  \label{eq3}
a_F(z,\zeta) =  & \int e^{-\frac{F}{{\msf{x}}} +\frac{F}{{\gamma^{(1)}_{\msf{x},y,\lambda,\omega}}(t)}}  {\msf{x}}^{-2}\chi(\frac{\lambda}{{\msf{x}}},y)
e^{i (\tilde{\tau},\mu) \cdot ( \frac{1}{\gamma^{(1)}_{{\msf{x},y,\lambda,\omega}}(t)} - \frac{1}{\msf{x}} , \frac{\gamma^{(2)}_{{\msf{x}},y,\lambda,\omega}(t)}{\gamma^{(1)}_{{\msf{x},y,\lambda,\omega}}(t)}-\frac{y}{\msf{x}})}   dt |d\nu|.
\end{align}
Our goal is to show that this $a_F$ is an elliptic scattering symbol of order $(-1, 0)$. 

Next we investigate the phase function of this oscillatory integral and then apply the stationary phase lemma. We denote components of $\gamma_{z,\nu}(t)$ (recall $\nu=(\lambda,\omega)$) and its derivatives by 
\begin{align}
\begin{split}
&\gamma_{{\msf{x}},y,\lambda,\omega}(0) = ({\msf{x}},y), \quad \dot{\gamma}_{{\msf{x}},y,\lambda,\omega}(0) = (\lambda,\omega), \\
& \ddot{\gamma}_{{\msf{x}},y,\lambda,\omega}(t) = 2 (\alpha({\msf{x}},y,\lambda,\omega,t), \beta({\msf{x}},y,\lambda,\omega,t)),
\end{split}
\label{gamma}
\end{align}
where $\alpha,\beta$ are defined by this equation and are smooth with respect to their variables. In addition, $\alpha$ is a quadratic form in $\omega$, and it is strictly positive definition in $\omega$ for small enough ${\msf{x}},\lambda,t$, which means being close to the starting point at the boundary, by our convexity condition.

Since $\gamma_{{\msf{x}},y,\lambda,\omega}$ starting at $({\msf{x}},y)$ with initial velocity $(\lambda,\omega)$, there exists
smooth functions $\tilde{\Gamma}^{(1)},\Gamma^{(2)}$ such that
$$
\gamma_{{\msf{x}},y,\lambda,\omega}(t) = ( {\msf{x}} + \lambda t + \alpha t^2 + \tilde{\Gamma}^{(1)}({\msf{x}},y,\lambda,\omega,t)t^3, y+ \omega t +  \Gamma^{(2)}({\msf{x}},y,\lambda,\omega,t)t^2),
$$
where we only expanded the second component to the first order and included the $\beta$-term in the definition of $\Gamma^{(2)}$. 
Then we make the change of variables
$$
\hat{t} = \frac{t}{{\msf{x}}}, \quad \hat{\lambda} =\frac{\lambda}{{\msf{x}}}.
$$
By the support condition of $\chi$, we require $\hat \lambda \in \supp \chi$, so the integrand is only non-zero when $\hat{\lambda}$ is in a compact interval. However, the bound on $\hat{t}$ is $|\hat{t}| \leq \frac{T_g}{{\msf{x}}}$, which is not uniformly bounded. We deal with this by treating it in two regions separately. In terms of these new variables we have 
\begin{equation}\label{eq:gamma(i)}\begin{gathered}
\gamma^{(1)}_{{\msf{x}},y,\lambda,\omega}(t) =  {\msf{x}} + {\msf{x}}^2 (\hat \lambda \hat t + \alpha \hat t^2)  + {\msf{x}}^3 \tilde{\Gamma}^{(1)}({\msf{x}},y,\lambda,\omega,t) \hat t^3 , \\
\frac1{\gamma^{(1)}_{{\msf{x}},y,\lambda,\omega}(t)} =  \frac1{{\msf{x}}} - (\hat\lambda \hat t + \alpha \hat t^2) + \msf{x} \Gamma^{(1)}({\msf{x}},y,\lambda,\omega,t) \hat t^3 , \text{ where } \Gamma^{(1)} \text{ is smooth, and} \\
\gamma^{(2)}_{{\msf{x}},y,\lambda,\omega}(t) = y + \msf{x} \hat t \omega + \msf{x}^2 \hat t^2 \Gamma^{(2)}({\msf{x}},y,\lambda,\omega,t), \\
\frac{\gamma^{(2)}_{{\msf{x}},y,\lambda,\omega}(t)}{\gamma^{(1)}_{{\msf{x}},y,\lambda,\omega}(t)} = \frac{y + \msf{x} (\hat t \omega - (\hat \lambda \hat t + \alpha \hat t^2) y) + \msf{x}^2 \hat t^2 \Gamma^{(2)}({\msf{x}},y,\lambda,\omega,t)}{\msf{x}}.
\end{gathered}\end{equation}
Then the oscillatory factor in \eqref{eq3} is $e^{i\phi}$, with
\begin{equation} \label{eq:phi-1}
\phi = \tau(\hat{\lambda}\hat{t} + \alpha \hat{t}^2 + {\msf{x}}\hat{t}^3 \Gamma^{(1)}({\msf{x}},y,{\msf{x}}\hat{\lambda},\omega,{\msf{x}}\hat{t})) + \mu(\omega \hat{t}+{\msf{x}}\hat{t}^2 \Gamma^{(2)}({\msf{x}},y,{\msf{x}}\hat{\lambda},\omega,{\msf{x}}\hat{t})),
\end{equation}
where $\tau = -\tilde{\tau} - \mu \cdot y$. 

The exponent in \eqref{eq:AF kernel} arising from the exponential conjugation is
\begin{align} \label{eq:phase-conjugation}
\begin{split}
-\frac{F}{{\msf{x}}} + \frac{F}{\gamma^{(1)}_{{\msf{x}},y,\lambda,\omega}(t)} = & -F(\lambda t + \alpha t^2 + t^3 \Gamma^{(1)}({\msf{x}},y,{\msf{x}}\hat{\lambda},\omega,{\msf{x}}\hat{t})) \\ 
& \times ({\msf{x}}({\msf{x}}+\lambda t + \alpha t^2 +t^3 \Gamma^{(1)}({\msf{x}},y,{\msf{x}}\hat{\lambda},\omega,{\msf{x}}\hat{t})))^{-1}   \\
=& -F(\hat{\lambda}\hat{t} +\alpha \hat{t}^2 +\hat{t}^3{\msf{x}} \hat{\Gamma}^{(1)}({\msf{x}},y,{\msf{x}}\hat{\lambda},\omega,{\msf{x}}\hat{t})),
\end{split}
\end{align}
where $\hat{\Gamma}^{(i)}$ is introduced when we first express $\gamma^{(1)}_{{\msf{x}},y,\lambda,\omega}(t)$ by variables $t,\lambda$, and then invoke our change of variables, then collect the remaining terms, which is a smooth function of these normalized variables. So this amplitude is Schwartz in $\hat{t}$, hence we take a constant $\epsilon_t>0$ and
deal with regions $|\hat{t}| \geq \epsilon_t$ and $|\hat{t}|< \epsilon_t$ separately. In our later argument, we will take $\epsilon_t$ small
to enforce $\hat{t}=0$ holds for critical points.

The geometric reason that the exponent in \eqref{eq:phase-conjugation} introduced by conjugation is in the favourable direction is that we are restricting to geodesics with tangent vector with at most $O(\msf{x})$ level component on the $(-\msf{x})$-direction. By the convexity assumption (i.e. the geodesic is curving away from the foliation), $\gamma^{(1)}_{\msf{x},y,\lambda,\omega}(t)$ may decrease (i.e. the component of the tangent vector that is normal to the foliation may stay in the $(-\msf{x})$-direction) for $O(\msf{x})$ time, which means it could move at most $O(\msf{x}^2)$ level in the  $(-\msf{x})$-direction. So $-1/x + 1/\gamma^{(1)}_{\msf{x},y,\lambda,\omega}(t) \leq C$ uniformly for all $t$, meaning the exponential factor is uniformly bounded. On the other hand, it produces decay on most of the geodesic. To see this, we claim that, for small enough $c$, 
by the convexity condition \eqref{convexity} we assumed, 
there exists a $\lambda_0>0$ and $C>0$ such that for all $\lambda<\lambda_0$, we have (whenever $\gamma_{{\msf{x}},y,\lambda,\omega}(t)$ is defined):
\begin{equation} \label{eq:gamma1-est}
\gamma_{{\msf{x}},y,\lambda,\omega}^{(1)}(t) \geq {\msf{x}} + \lambda t + Ct^2.
\end{equation}
This follows by the same computation as \eqref{eq:gamma(i)} above, using that ${\msf{x}}\hat{t}^3 = t \times \hat{t}^2$  is $\ll \hat{t}^2$ if we select $c$ to be small so that the uniform upper bound of $t$ is small. So the exponential factor is $O(e^{-C\hat{t}^2})$ for large $\hat{t}$ with some $C>0$. For this reason we shall refer to it as the `damping factor'.  In particular, the exponential decay means that those stationary phase results for compact intervals apply here as well. In terms of Problem~\ref{prob:classical Radon}, this damping factor plays a similar role to the localizing function $\phi$ in that problem, with the unimportant difference that here the damping is exponentially decreasing instead of compactly supported. 
We left the proof of those geometric facts in Exercise~\ref{ex:geodesic-gamma1-est}. They were proven in \cite[Lemma~3.1]{vasy2020semiclassical}.

To summarize, after substituting $\phi$ and the damping factor above into \eqref{eq3}, it is not hard to see, for finite $(\tau,\mu)$ that it remain bounded under repeated application of ${\msf{x}}\partial_{\msf{x}}$ and $\partial_y$. And indeed, the $\chi(\lambda/{\msf{x}})$-factor tell us that we would lose control if we consider the classical pseudodifferential algebra with respect to $({\msf{x}},y)$ here, since then we would need to test regularity under repeated applications of $\partial_{\msf{x}}$, which would obviously lead to blowup. 

In addition, the Gaussian in $\hat{t}$ decay allows the stationary phase argument below to go up to arbitrary many terms, and this also shows the symbolic behavior as $(\tau,\mu) \to \infty$. See Problem~\ref{ex:symbolic-fiber-infinity-inverse}.  

Now we start to show the ellipticity in the differential sense, i.e. as $|(\tau,\mu)| \to \infty$.
We will show that its leading order behavior is a non-zero multiple of $|(\tau,\mu)|^{-1}$ via a stationary phase argument in $(\hat{t},\hat{\lambda})$-variables.
Notice that this also shows the differential order of $A_F$ (as a pseudodifferential operator) is $-1$.

Before considering the critical points of the phase for small ${\msf{x}}>0$, we first consider the critical points of the phase at ${\msf{x}}=0$. This helps us to get rid of 
those $\Gamma^{(i)}$-terms and simplifies the process to solve the equation for the critical points.

When ${\msf{x}}=0$, the phase becomes
\begin{equation}\label{eq:phase x=0}
\tau (\hat{\lambda}\hat{t} + \alpha \hat{t}^2)+ \hat{t}\mu \cdot \omega.
\end{equation}
When $\hat{t} \neq 0$, the derivative with respect to $\hat{\lambda}$ vanishes only when $\tau=0$. 
Since our analysis on the ellipticity is happening away from the zero section, thus $\tau=0$ implies $|\mu| \gtrsim 1$.
Then we consider the $(\hat t, \omega)$-derivative to see that there are no critical points here. So the region $|\hat{t}| \geq \epsilon$ gives rapid decay contribution when ${\msf{x}}=0$, for any $\epsilon > 0$.
Next we consider the region $\hat{t}=0$, still for $\msf{x} = 0$. Clearly this implies that the $\hat \lambda$ and $\omega$-derivatives vanish. For the $\hat t$-derivative to vanish, we require that $(\tau, \mu) \cdot (\hat \lambda, \omega) = 0$. This is a codimension one submanifold of $\R \times S^{n-2} \ni (\hat \lambda, \omega)$. Thus we have stationary points precisely at the submanifold 
$$
 \hat{t} = 0, \quad \tau \hat{\lambda} + \mu \cdot \omega = 0.
$$
It is not hard to see that the Hessian at this submanifold always has rank $2$. If $\tau \neq 0$, then we have nondegenerate stationary phase in the $(\hat \lambda, \hat t)$-variables. On the other hand, if $\tau = 0$, then (as we are looking at $(\tau, \mu)$ large) we have $\mu \neq 0$, so without loss of generality we can assume that $\mu = (\mu_1, 0, \dots, 0)$ with $\mu_1 \neq 0$. Then we have nondegenerate stationary phase in the $(\hat t, \omega_1)$-variables. (In this case, we can move $\omega$ in the direction $(1, 0, \dots, 0)$ since $\omega \cdot \mu = 0$.) 
The phase function vanishes at the stationary points since $\hat t = 0$ there. Performing stationary phase, we obtain a leading asymptotic that is a nonzero multiple of $|(\tau, \mu)|^{-1}$ (where this factor arises from the Hessian, which is a symbol of order $1$ in $(\tau, \mu)$).


 The case ${\msf{x}}>0$ can be dealt with the same method, but with more complicated computation. Notice that, $\alpha,\Gamma^{(i)}$ take $\lambda = {\msf{x}}\hat{\lambda}, t = {\msf{x}} \hat{t}$ as variables, and produces an extra ${\msf{x}}$ factor when we take partial derivatives with respect to $\hat{\lambda},\hat{t}$. Concretely, the derivative with respect to $\hat{\lambda}$ is:
\begin{align*}
\frac{\partial \phi}{\partial \hat{\lambda}} &= \tau \hat{t}(1+{\msf{x}}\hat{t}\partial_\lambda \alpha + {\msf{x}}^2 \hat{t}^2 \partial_\lambda \Gamma^{(1)})+ \mu {\msf{x}}^2\hat{t}^2\partial_\lambda \Gamma^{(2)}\\
&= \hat{t} \Big( \tau (1+t\partial_\lambda \alpha + t^2 \partial_\lambda \Gamma^{(1)})+ x \mu t \partial_\lambda \Gamma^{(2)} \Big).
\end{align*}
Recall that $|t| \leq T_g$ and we can choose $T_g$ to be small, say $\leq \epsilon$ for any given $\epsilon > 0$, by shrinking $O_p$, i.e. by shrinking $c$.
Thus when $\epsilon$ is sufficiently small and  $\tau \geq C \epsilon x |\mu|$, then the first term $\hat t \tau$ dominates,  
so $\frac{\partial \phi}{\partial \hat{\lambda}}$ cannot vanish and there is no critical point in this case.

We are then reduced to considering the case $\tau \leq C \epsilon x |\mu|$. We next compute the $\hat t$-derivative of $\phi$. This is 
$$
\mu \cdot \omega + \tau(\hat \lambda + 2 \alpha \hat t + x \partial_t \alpha \hat t^2) + O( x \tau \hat t^2 + x^2 \tau \hat t^3 + x \mu \hat t + x^2 \mu \hat t^2),
$$
which in the region $\tau \leq C \epsilon x |\mu|$ is 
$$
\mu \cdot \omega + O(\epsilon x |\mu| + \epsilon |\mu|),  
$$
and is therefore nonvanishing whenever $|\mu \cdot \omega| \geq C \epsilon |\mu|$. Finally, in the remaining region $\tau \leq C \epsilon x |\mu|$ and $|\mu \cdot \omega| \leq C \epsilon |\mu|$, assuming without loss of generality that $\mu = (\mu_1, 0, \dots, 0)$ then the $\omega$-derivative of $\phi$ in a direction with a large component in the $\omega_1$ direction is nonzero, since the term $\hat t \mu_1$ will dominate all other terms. 

Thus, critical points only occur at $\hat t = 0$. Here, just as in the previous argument at $x=0$, we have $\partial_{\hat \lambda} \phi = \partial_{\omega} \phi = 0$ when $\hat t = 0$, so we only need to consider the $\hat t$-derivative of $\phi$. This is equal to 
$$
\tau \hat \lambda + \mu \cdot \omega + O(x),
$$
which has nonzero differential in $(\hat \lambda, \omega)$ on $\{ \hat t = 0 \}$, so this yields a codimension-two submanifold of critical points. As with the previous case at $x=0$, we can check that the Hessian of $\phi$ has rank two everywhere on its critical set. When $\tau \neq 0$, then the critical set is given by $\{ \partial_{\hat \lambda}\phi = \partial_{\hat t} \phi = 0 \}$, since $\partial_{\hat \lambda} \phi = \tau \hat t + O(x \hat t^2)$, and it is easy to see from this that one has nondegenerate stationary phase in the $(\hat \lambda, \hat t)$ variables. On the other hand, if $\tau = 0$ (and therefore also in a neighbourhood, $|\tau| \leq \epsilon |\eta|$) then, assuming $\eta = (\eta_1, 0, \dots, )$, the critical set is given by $\{ \partial_{\omega_1}\phi = \partial_{\hat t} \phi = 0 \}$, since $\partial_{\omega_1} \phi = \mu_1 \hat t + O(x \hat t^2)$, and therefore one has nondegenerate stationary phase in the $(\omega_1, \hat t)$ variables.

\vskip 5pt

Next we turn to show boundary part of the principal symbol of $A_F$ is also elliptic (when the fiber variables are finite). 
Evaluating \eqref{eq3} at ${\msf{x}}=0$, with the help of \eqref{eq:phi-1} and \eqref{eq:phase-conjugation}, the boundary principal symbol of $A_F$ is
\begin{align*}
a_F(0,y,\zeta) & = \int e^{-F(\hat{\lambda} \hat{t} + \alpha \hat{t}^2)} \chi(\hat{\lambda},y) e^{i (\tau(\hat{\lambda} \hat{t} + \alpha \hat{t}^2 )+\hat{t} \mu \cdot \omega ) } d\hat{t}  d\hat{\lambda}d\omega.
\end{align*}
Now $\alpha({\msf{x}},y,{\msf{x}}\hat{\lambda},\omega) = \alpha(0,y,0,\omega): = \alpha(y,\omega)$, which is a positive quadratic form in $\omega$, 
hence changing the sign of $\omega$ does not change its value. 
We choose $\chi(\hat{\lambda},y)$ to be a Gaussian density in $\hat{\lambda}$ first, then we use approximation argument to obtain a cut-off function $\chi$ that has compact support in $\hat{\lambda}$. 
We choose $\chi(\hat{\lambda},y,\omega)= e^{-\frac{F\hat{\lambda}^2}{2\alpha(y,\omega)}}$, then we have:
\begin{align} \label{a_F-boundary-3}
\begin{split}
a_F = & \int e^{-F(\hat{\lambda} \hat{t} + \alpha \hat{t}^2)} \chi(\hat{\lambda},y,\omega) e^{i (\tau(\hat{\lambda} \hat{t} + \alpha \hat{t}^2 )+\omega \cdot \mu\hat{t}) } d\hat{t} d\hat{\lambda} d\omega \\
= & \int  (\int e^{-F\hat{\lambda} \hat{t}-\frac{F\hat{\lambda}^2}{2\alpha} + i\tau \hat{\lambda}\hat{t}} d\hat{\lambda} )e^{-F\alpha \hat{t}^2 + i\omega \cdot \mu \hat{t}+i\tau \alpha \hat{t}^2} d\hat{t} d\omega
\end{split}
\end{align}
The integral in $\hat{\lambda}$ is a Fourier transform of a Gaussian, it is $\sqrt{\frac{2\pi \alpha}{F}} e^{\frac{\alpha F \hat{t}^2}{2}-i\tau \alpha \hat{t}^2 -\frac{\alpha}{2F}\hat{t}^2\tau^2}$.
Thus we need to compute:
\begin{align*}
\int e^{-\frac{\alpha}{2F}(F^2+\tau^2) \hat{t}^2 + i\omega \cdot \mu \hat{t}} d\hat{t},
\end{align*}
which is again a Gaussian type integral, and it equals to a constant multiple of 
$$\sqrt{\frac{F}{\alpha}}(F^2+\tau^2)^{-\frac{1}{2}}e^{-\frac{F(\omega\cdot\mu)^2}{2(\tau^2+F^2)\alpha(y,\omega)}},$$ 
which is even in $\mu$. 
Finally, with a constant factor $\hat{C}$, we have:
\begin{align}  \label{symbol_boundary}
a_F(0,y,\tau, \mu) & = \hat{C}\int_{\mathbb{S}^{n-2}} \sqrt{\frac{F}{\alpha}}(F^2+\tau^2)^{-\frac{1}{2}}e^{-\frac{F(\omega \cdot \mu)^2}{2(\tau^2+F^2)\alpha(y,\omega)}} d\omega.
\end{align}
This equation shows that $a_F(0,y,\tau, \mu)$ is bounded below by a positive constant when the fiber variables are uniformly bounded.  
In fact, a more detailed decomposition in $\omega$ also gives upper and lower bound proportional to $|(\tau, \mu)|^{-1}$ as $|(\tau,\mu)| \to \infty$. Indeed, first consider the region $|\tau| \geq 2 |\mu|$. Then the exponential in \eqref{symbol_boundary} is uniformly bounded below, and is of course bounded above by $1$, so this integral is comparable to $|\tau|^{-1}$ for large $|\tau|$. On the other hand, when $|\mu| \geq 2 |\tau|$, then one can look at the region on $\mathbb{S}^{n-2}$ that is within distance $\sim \frac{(|\tau|^2+F^2)^{1/2}}{|\mu|}$ of the great circle perpendicular to $\mu$. This region is roughly a small interval times $\mathbb{S}^{n-3}$ and will have volume $\sim \frac{(|\tau|^2+F^2)^{1/2}}{|\mu|}$ and the factor $e^{-\frac{F(\omega \cdot \mu)^2}{2(\tau^2+F^2)\alpha(y,\omega)}}$ is uniformly bounded below on it. So the integral is $\gtrsim 1/|\mu|$
as $|(\tau,\mu)| \to \infty$. Similar considerations show that this is also an upper bound. This is of course to be expected as we already computed the principal symbol for large $(\tau, \mu)$ at $\msf{x} = 0$ (although for a compactly supported rather than Gaussian $\chi$). 

Notice that the constants in argument above are uniformly bounded for all choices of small $c$, thus we can choose $c$ small and our lower bound on the ellipticity of $a_F$ will be uniform as $c \to 0$. 

Now we fix the compact support issue. Readers may wish to skip this technical part on a first reading. 
Let $\chi$ be a Gaussian as above, which generates an elliptic operator, then we fix a bump function $\chi_0 \in C_c^\infty(\R)$ that is identically $1$ on $[-1,1]$ and set
\begin{equation}
    \chi_m = \chi_0(\hat{\lambda}/m)\chi.
\end{equation}
Then $\chi_m$ is compactly supported in $\hat{\lambda}$ and converge to $\chi$ in the topology of Schwartz space $\mathcal{S}(\mathbb{R})$. 
Using such $\chi_m$, \eqref{a_F-boundary-3} becomes
\begin{align} \label{a_F-boundary-4}
\begin{split}
a_{F,n} = & \int e^{-F(\hat{\lambda} \hat{t} + \alpha \hat{t}^2)} \chi_m(\hat{\lambda},y,\omega) e^{i (\tau(\hat{\lambda} \hat{t} + \alpha \hat{t}^2 )+\omega \cdot \mu\hat{t}) } d\hat{t} d\hat{\lambda} d\omega ,
\end{split}
\end{align}
which can be rearranged as
\begin{align} \label{a_F-boundary-5}
\begin{split}
a_{F,n} = & \int e^{i(\tau+iF)\alpha\hat{t}^2}e^{i((\tau+iF)\hat{\lambda} + \omega\cdot \mu)\hat{t}}
\chi_0(\hat{\lambda}/m)e^{-\frac{\hat{F\lambda^2}}{2\alpha(y,\omega)}}
d\hat{t} d\hat{\lambda} d\omega.
\end{split}
\end{align}
In argument below $F$ is taken as a fixed constant.
Then the $\hat{t}$ integral is the Fourier transform of the Gaussian $e^{i(\tau+iF)\alpha\hat{t}^2}$ evaluated at $(\tau+iF)\hat{\lambda}+\omega\cdot\mu$, which is, up a constant:
\begin{equation}
\alpha^{-1/2}(\tau+iF)^{-1/2}
e^{-i\frac{1}{4\alpha(\tau+iF)}(\tau\hat{\lambda}+\mu\cdot\omega+iF\hat{\lambda})^2},
\end{equation}
where the branch of square root is taken so that it takes positive reals when the variable is a positive real, and we avoid the ray $\{-ir,\; r \in (0,\infty)\}$.
So we have
\begin{align} \label{a_F-boundary-6}
\begin{split}
a_{F,m} & =  C\int (\tau+iF)^{-1/2}
e^{-i\frac{1}{4(\tau+iF)}(\tau\hat{\lambda}+\mu\cdot\omega+iF\hat{\lambda})^2}
\chi_0(\hat{\lambda}/m)e^{-\frac{\hat{F\lambda^2}}{2\alpha(y,\omega)}}
d\hat{\lambda} d\omega
\\ & = C \int (\tau+iF)^{-1/2}
e^{i\frac{(-\tau+iF)}{4\alpha(\tau^2+F^2)}(\tau\hat{\lambda}+\mu\cdot\omega+iF\hat{\lambda})^2}
\chi_0(\hat{\lambda}/m)e^{-\frac{F\hat{\lambda^2}}{2\alpha(y,\omega)}}
d\hat{\lambda} d\omega.
\end{split}
\end{align}
So we only need to show that for any $c>0$, the difference of $a_{F,n}$ in \eqref{a_F-boundary-6} with $a_F$ in \eqref{symbol_boundary} can be bounded by $c\la(\tau,\mu)\ra^{-1}$.
Consider the finite $(\tau,\mu)$ part first.
Since $\chi_0(\hat{\lambda}/m)-1$ is supported in $\hat{\lambda}\gtrsim m$, for any $\epsilon>0$, there is an $N(\epsilon)$ so that 
\begin{align} \label{a_F-boundary-difference}
\begin{split}
|C \int (\tau+iF)^{-1/2}
e^{i\frac{(-\tau+iF)}{4\alpha(\tau^2+F^2)}(\tau\hat{\lambda}+\mu\cdot\omega+iF\hat{\lambda})^2}
(1-\chi_0(\hat{\lambda}/m))e^{-\frac{F\hat{\lambda^2}}{2\alpha(y,\omega)}}
d\hat{\lambda} d\omega| \leq \epsilon
\end{split}
\end{align}
for all $m>N(\epsilon)$.
By the ellipticity proved for the case with $\chi$ being a Gaussian, we know that for any $M>0$, there is a constant $c(M)$ such that the symbol in \eqref{symbol_boundary} is lower bounded by $c(M)$ for $|(\tau,\mu)|\leq M$.
Applying the approximation result above with $\epsilon=\frac{c(M)}{2}$, then we know that for all $M>0$ and \eqref{a_F-boundary-4} with $n \geq N(\frac{c(M)}{2})$, the symbol $a_{F,n}$ with $|(\tau,\mu)| \leq M$ is lower bounded by $\frac{c(M)}{2}$.

Now we turn to the large $(\tau,\mu)$ behaviour of $a_F-a_{F,n}$. 
That is, we need to prove that for any $c>0$, there is an $N(c)$ such that for all $n \geq N(c)$, we have:
\begin{equation}\label{eq:error bound aF}
|\int (\tau+iF)^{-1/2}
e^{i\frac{(-\tau+iF)}{4\alpha(\tau^2+F^2)}(\tau\hat{\lambda}+\mu\cdot\omega+iF\hat{\lambda})^2}
(1-\chi_0(\hat{\lambda}/m))e^{-\frac{F\hat{\lambda^2}}{2\alpha(y,\omega)}}
d\hat{\lambda} d\omega| \leq c\la(\tau,\mu)\ra^{-1},
\end{equation}
where we removed $C$ on the left hand side since the claim is unchanged with $c$ replaced by $c/C$.
We use coordinates in $\omega$ so that $\omega_1|\mu|=\omega\cdot\mu$ and denote the rest components by $\omega'$.
Then the spherical measure becomes $(1-\omega_1^2)^{\frac{n-4}{2}}d\omega_1d\omega'$.
We consider two regions $|\tau| \geq |\mu|$ and $|\tau| \leq |\mu|$ respectively.
When $|\tau|\geq|\mu|$, we rearrange the oscillatory integral as
\begin{multline} \label{eq:11}
\iiint (\tau+iF)^{-1/2}
e^{-i\frac{\tau\hat{\lambda}^2+2\hat{\lambda}\omega_1|\mu|}{4\alpha} }
e^{-i\frac{\tau \omega_1^2|\mu|^2}{4\alpha(\tau^2+F^2)} }
(1-\chi_0(\hat{\lambda}/m))
e^{-F\frac{\omega_1^2|\mu|^2}{4\alpha(\tau^2+F^2)}}e^{-\frac{F\hat{\lambda}^2}{4\alpha}}
d\hat{\lambda} \\ \times  (1-\omega_1^2)^{(n-4)/2}d\omega_1d\omega'.
\end{multline}
Then we apply a non-stationary phase argument in $\hat{\lambda}$.
In this region, the derivative of the phase in $e^{-i\frac{\tau\hat{\lambda}^2+2\hat{\lambda}\omega_1|\mu|}{4\alpha} }$ is 
\begin{equation}
|-i\frac{\hat{\lambda}\tau+\omega_1|\mu|}{2\alpha}|
\geq |\tau|
\end{equation}
for sufficiently large $m$, since we have $|\omega_1| \leq 1$ and $|\tau|\geq|\mu|$.
Then we can integrate by parts in $\hat{\lambda}$ and the integral is bounded by 
\begin{equation}
(|\tau|+|F|)^{-1/2}|\tau|^{-1} \|\partial_{\hat{\lambda}} \Big((1-\chi_0(\hat{\lambda}/m))
e^{-F\frac{\omega_1^2|\mu|^2}{4\alpha(\tau^2+F^2)}}e^{-\frac{F\hat{\lambda}^2}{4\alpha}}\Big)\|_{L^1},
\end{equation}
which is bounded by an arbitrarily small multiple of $|\tau|^{-1}$ for $|\tau| \gtrsim 1$ when $n$ is large.
Notice that this statement is even uniform in large $F$, since $|\hat{\lambda}| \gtrsim m$ and the $F$-loss caused by differentiation can be absorbed by $e^{-\frac{F\hat{\lambda}}{4\alpha}}$.

Now we consider the case with $|\tau|\leq|\mu|$, in particular the interesting case is when $|\tau|\ll|\mu|$. 
We rewrite \eqref{eq:11} as
\begin{multline} \label{eq:12}
\iiint (\tau+iF)^{-1/2}
e^{-i\frac{\tau\hat{\lambda}^2}{4\alpha} }
e^{-i\Big(\frac{\tau \omega_1^2|\mu|^2+2\hat{\lambda}\omega_1|\mu|(\tau^2+F^2)}{4\alpha(\tau^2+F^2)} \Big) }
(1-\chi_0(\hat{\lambda}/m))
e^{-F\frac{\omega_1^2|\mu|^2}{4\alpha(\tau^2+F^2)}}e^{-\frac{F\hat{\lambda}^2}{4\alpha}}
d\hat{\lambda} \\ \times (1-\omega_1^2)^{\frac{n-4}{2}}d\omega_1d\omega'.
\end{multline}

If $|\tau| \geq F$, we decompose the integral in $\omega_1$ into two parts.
After factoring out the large parameter, our imaginary phase is 
\begin{equation}
    \frac{ \omega_1^2+2\hat{\lambda}\omega_1|\mu|^{-1}\tau^{-1}(\tau^2+F^2)}{4\alpha}.
\end{equation}
Then we know that the second order derivative of this is 
\begin{equation}
    \frac{1}{2\alpha}+O(\omega_1),
\end{equation}
where the second term includes terms with $\alpha$ differentiated in $\omega_1$.
So there is a $\delta \in (0,1)$, which is independent of all (potentially) large parameters here, such that when $\omega_1 \in [-\delta,\delta]$,
this second order derivative in $\omega_1$ is $\gtrsim 1$. 

For the part with $|\omega_1| \leq \delta$, we apply the Van der Corput lemma (in the form of \cite[Section~VIII.1.2, Corollary]{Stein-harmonic-textbook}) in the $\omega_1$-integral to see that the left hand side is bounded by (modulo an absolute constant independent of all parameters)
\begin{align}\label{eq:13}
\begin{split}
& \int |\tau+iF|^{-1/2}
(\frac{2|\tau||\mu|^2}{4\alpha(\tau^2+F^2)})^{-1/2}
(1-\chi_0(\hat{\lambda}/m))
\\& 
\Big(1+\|\partial_{\omega_1} \big((1-\omega_1^2)^{\frac{n-4}{2}} e^{-F\frac{\omega_1^2|\mu|^2}
{4\alpha(\tau^2+F^2)}}\big)\|_{L^1[-\delta,\delta]} \Big)
 e^{-\frac{F\hat{\lambda}^2}{4\alpha}} d\hat{\lambda}d\omega',
   \end{split} 
\end{align} 
which is a small multiple of $|\mu|^{-1}$ for $m$ large.

Now for the part with $|\omega_1| \geq \delta$, we rewrite the phase as in \eqref{eq:11} and apply the Van der Corput lemma in $\hat{\lambda}$-integral again, it is bounded by
\begin{align} 
& \int_{|\omega_1| \geq \delta} (\tau+iF)^{-1/2}
|\tau|^{-1/2}
e^{-i\Big(\frac{\tau \omega_1^2|\mu|^2}{4\alpha(\tau^2+F^2)} \Big) }
\|\partial_{\hat{\lambda}} \Big((1-\chi_0(\hat{\lambda}/m)) e^{-\frac{F\hat{\lambda}^2}{4\alpha}} \Big) \|_{L^1}
\\& e^{-F\frac{\omega_1^2|\mu|^2}{4\alpha(\tau^2+F^2)}}
d\hat{\lambda} (1-\omega_1^2)^{\frac{n-4}{2}}d\omega_1d\omega'. 
\end{align}
In the integrated region, $|\omega_1| \gtrsim 1$, hence the $e^{-F\frac{\omega_1^2|\mu|^2}{4\alpha(\tau^2+F^2)}}$ factor allows us to trade-off between $\tau^{-1}$ and $|\mu|^{-1}$, and we conclude that this is a small multiple of $|\mu|^{-1}$ and completes the proof. 

Finally we consider the case $|\tau| \lesssim F$. In this case \eqref{eq:11} is bounded by
\begin{equation}
\|e^{-F\frac{\omega_1^2|\mu|^2}{4\alpha(\tau^2+F^2)}}(1-\omega_1^2)^{\frac{n-4}{2}}\|_{L^1([-1,1])}
 \|(1-\chi_0(\hat{\lambda}/m) e^{-\frac{F\hat{\lambda}^2}{4\alpha}}\|_{L^1(\R)}.
\end{equation}
Now we claim that
\begin{equation}
    \|e^{-F\frac{\omega_1^2|\mu|^2}{4\alpha(\tau^2+F^2)}}(1-\omega_1^2)^{\frac{n-4}{2}}\|_{L^1([-1,1])} \lesssim |\mu|^{-1}.
\end{equation}
We consider the region with $|\omega_1| \leq \frac{1}{2}$ and $|\omega_1| \geq \frac{1}{2}$ respectively.
For the first part, we introduce a change of variables $\tilde{\omega}_1=|\mu|\omega_1$, then this part is bounded by
\begin{equation}
|\mu|^{-1} \int_{\R} e^{-F\frac{\tilde{\omega}_1^2}{4\alpha(\tau^2+F^2)}} (1-(1/2)^2)^{\frac{n-4}{2}} d\tilde{\omega}_1,
\end{equation}
which is $\lesssim |\mu|^{-1}$ since we have $|\tau| \lesssim F$ in this region.
For the contribution from $|\omega_1| \geq \frac{1}{2}$, we again use $e^{-F\frac{\omega_1^2|\mu|^2}{\alpha(\tau^2+F^2)}} \lesssim |\mu|^{-1}$ (in fact we can take any power of $|\mu|^{-1}$), then the contribution is bounded by (modulo an constant independent of $\mu$)
\begin{equation}
|\mu|^{-1}\|(1-\omega_1^2)^{\frac{n-4}{2}}\|_{L^1([-1,-\frac{1}{2}] \cup [\frac{1}{2},1])}  \|(1-\chi_0(\hat{\lambda}/m) e^{-\frac{F\hat{\lambda}^2}{4\alpha}}\|_{L^1(\R)},
\end{equation}
which is a small multiple of $|\mu|^{-1}$ when $m$ is large.

\end{proof}

\begin{remark}\label{rem:geodesic cone}
Returning to a discussion at the beginning of Section~\ref{subsec:why sc}, consider a nearly-tangent geodesic in the sense that its tangent vector is $\lambda\partial_{\msf{x}}+\omega\cdot \partial_y $ with $\hat\lambda = \lambda/x \in \supp \chi$. Then we can write this vector as 
    since ${\msf{x}}^{-1}(\hat{\lambda}{\msf{x}}^2\partial_{\msf{x}}+\omega\cdot {\msf{x}}\partial_y)$, and so after removing a scaling factor ${\msf{x}}^{-1}$, the cone of tangent vectors of allowed geodesics consists of scattering vector fields of the form $\hat{\lambda}{\msf{x}}^2\partial_{\msf{x}}+\omega\cdot {\msf{x}}\partial_y)$, and this cone has --- in terms of scattering geometry --- a fixed width since $\hat \lambda$ lies in a fixed set, and $|\omega| = 1$. Problem~\ref{prob:classical Radon} parts (b) and (c) is concerned with the classical Radon transform with geodesics restricted to a cone of fixed width, and shows that this is sufficient for injectivity. 
\end{remark}


\subsection{The proof of the main theorem, Theorem~\ref{thm:inverse-main}.}
\label{main_proof}
Fix $c_0$ small and apply results in previous sections to $\Omega_{c_0}$, estimates above are uniform with respect to $c \in (0, c_0]$. We let $c$ vary and take $\mathsf{b} \in C(\tilde{X})$ such that ${\msf{x}} \leq \mathsf{b}(c)$ on $\Omega_c$.
For $\Omega_c$, denote $A_F$ in Section~\ref{subsec:ellipticity} constructed for $\Omega_c$ by $B_c$. By Theorem~\ref{AFproperty} and Proposition~\ref{prop:micro-elliptic}, 
we have its parametrix $G_c$ of $B_c$ such that $G_cB_c = \Id+ E_{0c}, \, E_{0c} \in \Psi^{-\infty,-\infty}_{\sct}(\tilde{X})$ on $\Omega_c$.

Consider the map $\Psi_c(\tilde{{\msf{x}}},y)=(\tilde{{\msf{x}}}+c,y)$, and let 
\begin{equation}
A_c =(\Psi_c^{-1})^*B_c\Psi_c^*,\, E_c =(\Psi_c^{-1})^*(E_{0c})\Psi_c^*.
\end{equation}
 This conjugation is introduced to make this family of operators to be defined on a fixed region $\bar{M}_0:=\{ \tilde{{\msf{x}}} \geq 0 \}$. We have an estimate of the error term in terms of $f$. To be more precise, we consider the Schwartz kernel $K_{E_c}$ of $E_c$, which satisfies $|{\msf{x}}^{-N}{\msf{x}}'^{-N}K_{E_c}| \leq C_N$ on $\Omega_c$. Then we insert a truncation factor $\phi_c$ compactly supported in $\Omega_c$, and being identically 1 on smaller compact set $K_c$, such that $|\phi_c({\msf{x}},y)\phi_c({\msf{x}}',y')K_{E_c}| \leq C'_Nc^{2N}{\msf{x}}^{n+1}({\msf{x}}')^{n+1}$ for all $N$. (The $(n+1)$-power factors are introduced to deal with the scattering density, which is as in \eqref{eq:quant-sc}.) Then we apply Schur's lemma on the integral operator bound 
 to conclude that 
\begin{equation}
 ||\phi_cE_c\phi_c||_{L^2_{\sct}(\bar{M}_0)
 \to L^2_{\sct}(\bar{M}_0)} \leq C''_Nc^{2N}. 
\end{equation}
 In particular, we can take $c_0$ so that this norm $<1$ when $c \in (0,c_0]$. Since those conjugations are invertible, this guarantees that $\phi_cG_cB_c \phi_c =   \Id + \phi_cE_{0c}\phi_c$ is invertible. So for the functions supported on $K_c$, $B_c$ is injective. $K_c$ can be arbitrary compact subset of $\Omega_c$ for arguments up to now. 

 Define $\bar{M}_c:=\{ \tilde{{\msf{x}}}+c \geq 0 \}$ and choose $K_c = \bar{M}_c \cap \{\rho \geq 0\}=\Omega_c$. $K_c$ is compact by our choice of $\tilde{{\msf{x}}}$.
 Support conditions are encoded by subscripts below. For example, $H^{s,r}_{\sct}(\bar{M}_c)_{K_c}$ is the space consists of those functions in $H^{s,r}_{\sct}(\bar{M}_c)$ which have support in $K_c$.  We have
$$
||v||_{H^{s,r}_{\sct}(\bar{M}_c)_{K_c}}  \leq C ||B_cv||_{H^{s+1,r}_{\sct}(\bar{M}_c)}.
$$
If we express this in terms of $A = L \circ I_0$, this is (with $f = e^{\frac{F}{{\msf{x}}}}v$):
$$
||f||_{e^{\frac{F}{{\msf{x}}}}H^{s,r}_{\sct}(\bar{M}_c)_{K_c}}  \leq C ||Af||_{e^{\frac{F}{{\msf{x}}}}H^{s+1,r+1}_{\sct}(\bar{M}_c)}.
$$

We can get rid of the $r$-indices with the cost of increasing the power of left hand side to $e^{\frac{F+\delta}{{\msf{x}}}}$. That is:\\
\begin{equation}\label{eq:inj est 1}
||f||_{e^{\frac{F+\delta}{{\msf{x}}}}H^{s}_{\sct}(\bar{M}_c)_{K_c}}  \leq C ||Af||_{e^{\frac{F}{{\msf{x}}}}H^{s+1}_{\sct}(\bar{M}_c)},
\end{equation}
which proves injectivity of the local geodesic ray transform. 

Finally we consider the boundedness of operators involved. We consider the decomposition $A = L \circ I_0$, and show that $L$ is bounded from $H^s(PSX|_{\bar{M}_c})$ to $\msf{x}^{-s+1}H^s(\bar{M}_c)$. In order to prove this, we decompose $L$ into $L=M_2 \circ \Pi \circ M_1$, with $M_2,\Pi,M_1$ being 
\begin{align*}
 M_1:& \ H^s([0,+\infty)_{\msf{x}} \times \mathbb{R}_y^{n-1} \times \mathbb{R}_\lambda \times \mathbb{S}^{n-2}_{\omega}) \rightarrow H^s([0,+\infty)_{\msf{x}} \times \mathbb{R}_y^{n-1} \times \mathbb{R}_\lambda \times \mathbb{S}^{n-2}_{\omega}), \\
   &(M_1u)({\msf{x}},y,\lambda,\omega) = {\msf{x}}^s \chi(\frac{\lambda}{{\msf{x}}},y)u({\msf{x}},y,\lambda,\omega),\\
 \Pi: & \ H^s([0,+\infty)_{\msf{x}} \times \mathbb{R}_y^{n-1} \times \mathbb{R}_\lambda \times \mathbb{S}^{n-2}_{\omega} ) \rightarrow H^s([0,+\infty)_{\msf{x}} \times \mathbb{R}_y^{n-1}),
 \\&(\Pi u)({\msf{x}},y) = \int_{\mathbb{R}_\lambda \times \mathbb{S}^{n-2}_{\omega}} u({\msf{x}},y,\lambda, \omega)d\lambda d\omega,\\
 M_2:& \ H^s([0,+\infty)_{\msf{x}} \times \mathbb{R}_y^{n-1}) \rightarrow {\msf{x}}^{-(s+1)}H^s([0,+\infty)_{\msf{x}} \times \mathbb{R}_y^{n-1}), \\ 
 &(M_2f)({\msf{x}},y)={\msf{x}}^{-(s+1)}f({\msf{x}},y).\\
\end{align*}

Consider the boundedness of $M_1$ when $s \in \N$ first. The general case follows from interpolation. Consider derivatives of ${\msf{x}}^s \chi(\frac{\lambda}{{\msf{x}}},y)u({\msf{x}},y,\lambda,\omega)$ up to order $s$. Each order of differentiation on $\chi$ gives an ${\msf{x}}^{-1}$ factor, which is canceled by ${\msf{x}}^s$ and the remaining part belongs to $L^2$ by smoothness of $\chi$ and $u\in H^s$. 
$M_2$ is bounded by the definition of the space on the right hand side. The operator $\Pi$ is a pushforward map, integrating over $|\lambda| \leq C |{\msf{x}}|$ (notice the support condition after we apply $M_1$), hence bounded via Minkowski inequality.
So we have boundedness of $L$ from $H^s(PSX|_{\bar{M}_c})$ to $\msf{x}^{-s+1}H^s(\bar{M}_c)$.
Since the scattering Sobolev spaces and the usual Sobolev spaces only differ by a polynomial weight and such weight, together with $\msf{x}^{-s+1}$ above, can be absorbed into the exponential weight, and we know that $L$ is bounded from $H^{s+1}(PSX|_{\bar{M}_c})$ to $e^{\frac{F}{{\msf{x}}}}H^{s+1}_{\sct}(\bar{M}_c)$.

On the other hand, $I_0$ itself is a bounded operator. This comes from the decomposition $I_0 = \tilde{\Pi} \circ \Phi^*$, where $\Phi$ is the geodesic coordinate representation $\Phi(z,\nu,t)=\gamma_{z,\nu}(t)$ and $\tilde{\Pi}$ is integrating against $t$, which is bounded as a pushforward map. Because the initial vector always has length 1 on the tangent component, the travel time is uniformly bounded. 
By the property of the exponential map, after potentially shrinking the region to make the travel time short, 
$\Phi$ has surjective differential, hence the pull back is bounded. Consequently $I_0$ is bounded.

The boundedness of $L$ gives us an estimate
\begin{equation}\label{eq:inj est 2}
 ||Af||_{e^{\frac{F}{{\msf{x}}}}H^{s+1}_{\sct}(\bar{M}_c)} \leq C_1 ||I_0 f||_{H^{s+1}(PSX|_{\bar{M}_c})},
\end{equation}
where we require $f$ to have support in $K_c$. 
Combining \eqref{eq:inj est 1} and \eqref{eq:inj est 2} completes the proof of Theorem~\ref{thm:inverse-main}.

\subsection{Problems}

\begin{problem}\label{prob:classical Radon}
It is instructive to first consider the `flat model' for the normal operator. So, let us consider the classical (localized) Radon transform on $\R^n$, $n \geq 2$ which takes $f \in \Schw(\R^n)$  to the function $I_0 f$ of $(z, \omega)$, where $z \in \R^n$, and $\omega \in S^{n-1}$, given by the integral along the corresponding straight line (geodesic):
\begin{equation}
I_0f (z, \omega) = \int f(z + t \omega) \phi(t) \, dt. 
\end{equation}
Here we will take $\phi$ to be a real, nonnegative, even function, positive for $|t| \leq 1$ and supported where $|t| \geq 2$. 
The Schwartz kernel of this operator is $K(x, \omega; y) = \int \delta(y-z-t\omega) \phi(t) dt$. 
The adjoint operator $L$  is 
\begin{equation}
    L v(y) = \int v( z, \omega) \delta(y - z - t\omega) \phi(t) \, dt \, dz \, d\omega = 
    \int v( y - t\omega, \omega) \phi(t) \, dt \,  d\omega. 
\end{equation}
The composition is then the localized normal operator: 
\begin{equation}\label{eq: A defn prob}
    A = L \circ I_0, Af(y) = \iiint f(z + t_1\omega - t_2 \omega) \phi(t_1) \phi(t_2) \, dt_1 \, dt_2 \, d\omega 
    = \int f(z - t\omega) \tilde \phi(t) dt d\omega
\end{equation}
where $\tilde \phi = \phi * \phi$ is the convolution of $\phi$ with itself. 
So, $Af$ at $y \in \R^n$ is given by integrating $f$ over all straight lines through $y$ (localized using $\tilde \phi$ so we only consider points at distance $\leq 4$ from $y$), and then integrating this over all possible directions $\omega$. We will also consider a variant of this where we don't integrate over all possible directions, but only those with $\omega_1$ small, where we write $\omega = (\omega_1, \sqrt{1 - \omega_1^2} \omega')$ with $\omega' \in S^{n-2}$. This is achieved by inserting an additional cutoff $\chi(\omega_1)$ into the integral for $A$. 

(a) Assume that $n \geq 2$. Without the cutoff function $\chi(\omega_1)$, show that $A$ is an elliptic scattering pseudodifferential operator of order $(-1, 0)$, with positive principal symbol. Hint: first show that the kernel takes the form
$$
K(y, z) = K(y-z) =  \tilde \phi(|y-z|) |y-z|^{-(n-1)}. 
$$
Show that the Fourier transform of $\tilde \phi$ is a nonnegative function, and hence show that the Fourier transform of $K$ in the variable $y-z$ is strictly positive and asymptotic to $c |\xi|^{-1}$ with $c > 0$ as $|\xi| \to \infty$. 
Alternatively, return to \eqref{eq: A defn prob} and express in terms of a delta function, write this delta function in terms of a Fourier integral and hence compute the symbol.

(b) Assume now that $n \geq 3$. Assume that a cutoff function $\chi(\omega_1)$ satisfying $\chi(0) > 0$, $\chi \geq 0$ has been inserted into the definition of $A$ (that is, we multiply the Schwartz kernel by $\chi(\omega_1)$).  Show that this new $A$ is still an elliptic scattering pseudodifferential operator. Hint: use the nonnegativity of both $\chi$ and $\widehat{\tilde \phi}$. Why is $n \geq 3$ a necessary condition for this result? Explain clearly the geometric significance of $n \geq 3$ that allows us to deduce ellipticity in this case. 

(c) In either case, deduce the injectivity of $I_0$. 
\end{problem}

\begin{problem} Verify that $\tilde{{\msf{x}}}$ given by \eqref{tildex} has properties we want.
\end{problem}

\begin{problem} What is the expression of the damping factor introduced by the conjugation in terms of $X = \frac{{\msf{x}}-{\msf{x}}'}{{\msf{x}}{\msf{x}}'}$? Why this is good for us?
\end{problem}

\begin{problem} Prove \eqref{eq:gamma1-est}, and verify the claim about the uniform upper bound of $t$. \label{ex:geodesic-gamma1-est}
\end{problem}

\begin{problem} Prove that $a_F$ in \eqref{eq3} is indeed symbolic with respect to scattering frequencies $(\tau,\mu)$. (Hint: Fix the order of derivatives you want to deal with, then perform stationary expansion up to some level depending on this order.)  \label{ex:symbolic-fiber-infinity-inverse}
\end{problem}


\begin{problem}  (i)Why this proof does not work in two dimension? \\
(ii) What conclusion can we say in two dimension using this proof? \label{ex:inverse-2D-gap}
\end{problem}

\section{Lecture 9: Helmholtz operator}
Let $g$ be an asymptotically Euclidean metric on $\R^n$. We assume that 
$$
g_{ij} - \delta_{ij} \in S^{-1}(\R^n). 
$$
The  metric Laplacian $\Delta_g$ , given by 
\begin{equation}
    \Delta_g f = \sum_{i,j} \frac1{\sqrt{g(x)}} D_{x_i} (g^{ij}(x) \sqrt{g(x)} D_{x_j} f),\quad g(x) = \det g_{ij}(x). 
\end{equation}
is the operator associated to the Dirichlet form
$$
Q(f) = \int_{\R^n} |\nabla_g f|^2 d\mathrm{vol}(g) = \int_{\R^n} g^{ij}(x) D_{x_i} f \overline{D_{x_j} f} \sqrt{g(x)} \, dx. 
$$
(Recall that the Riemannian measure induced by $g$ is $\sqrt{g(x)} dx$.)
That is, 
$$
Q(u) = \ang{u, \Delta_g u}_{L^2(\sqrt{g(x)} dx)}. 
$$
Let $\lambda > 0$ be a spectral parameter, let $V \in S^{-1}(\R^n)$ be a real-valued potential function, and let  
\begin{equation}
    P = \Delta_g + V(x) - \lambda^2.
\end{equation}
We call $P$ a `Helmholtz operator'. (We remind the reader here that our sign convention is that $\Delta_g$ is positive as an operator, which is opposite to the standard PDE convention.) The operator $P_0 = \Delta - \lambda^2$ is called the free, or flat, Helmholtz operator (where $\Delta$ is the free Laplacian, associated to the flat metric $\delta_{ij}$).  We study the free case first. 

\subsection{Free Helmholtz operator}
The free Helmholtz operator $P_0$ is unitarily equivalent, via the Fourier transform, to the multiplication operator $|\xi|^2 - \lambda^2$ on $\R^n_{\xi}$. It is easy to see that $|\xi|^2$, as a multiplication operator, has continuous spectrum on $[0, \infty)$, and each spectral projection $E_I$ corresponding to a nontrivial interval $I$ has infinite dimensional range. It follows that $|\xi|^2 - \lambda^2$ is \emph{not} Fredholm on $L^2$, for any $\lambda \in [0, \infty)$. We will, nevertheless, find Hilbert spaces $\SX$, $\SY$ betweeen which $P_0$ (and $P$) is Fredholm (and, in fact, invertible) --- see Lecture 12. 

It is clear that $|\xi|^2$, as a multiplication operator, has no $L^2$ eigenfunctions. In fact $|\xi|^2 - \lambda^2$ vanishes on a set of measure zero, so if $f \in L^2(\R^n_\xi)$ satisfies $(|\xi|^2 - \lambda^2) f = 0$, then $f$ vanishes outside a set of measure zero and is zero in $L^2(\R^n_\xi)$. However, there are certainly many generalized (non-$L^2$) eigenfunctions, which comprise the \emph{scattering theory} of $P_0$. In the Fourier picture, these are simply distributions on $\R^n_\xi$ that vanish when multiplied by $|\xi|^2 - \lambda^2$. Such distributions are the product of the delta function $\delta(|\xi| - \lambda)$ with another distribution, but one has to be careful as the product of two distributions is not always well-defined. However, as $\delta(|\xi| - \lambda)$ is in $H^{-1/2 - \epsilon}(\R^n_\xi)$ for any $\epsilon > 0$, we can multiply it by $u \in H^{1/2 + \epsilon}(\R^n_\xi)$. This is because
$$
\ang{\delta_\lambda u, \phi} = \ang{\delta_\lambda, u \phi}, \quad \phi \in \Schw(\R^n_{\xi}), \quad \delta_\lambda = \delta_{|\xi| - \lambda}, 
$$
and $u \phi$ is in $H^{1/2 + \epsilon}$ which allows pairing with $\delta_\lambda$. One can also note that by Sobolev trace theorems, any $u \in H^{1/2 + \epsilon}(\R^n_\xi)$ can be restricted to the sphere $\{ |\xi| = \lambda \}$ as an $H^\epsilon$ function. Then we can take the tensor product with the $\delta$ function in the transverse direction. 

Observe that the spectral projection, for the operator multiplication by $|\xi|^2$, corresponding to the interval $[0, \lambda^2]$ is given by multiplication by $1_{|\xi| \leq \lambda}$. Formally differentiating in $\lambda$, we see that the spectral measure $dE(\lambda^2)$ is (formally) given by $\delta(|\xi| - \lambda) d\lambda$. From the discussion in the previous paragraph, the spectral measure maps $H^{1/2 + \epsilon}(\R^n_\xi)$ to $H^{-1/2 - \epsilon}(\R^n_\xi)$. In terms of weighted Sobolev spaces on $\R^n_\xi$, we can put any `spatial' (in terms of $\xi$!) order that we like, as the behaviour near $|\xi| = \infty$ is not relevant. Thus, the spectral measure for $P_0$ on the Fourier side maps $H^{1/2 + \epsilon, S}(\R^n_\xi)$ to $H^{-1/2 - \epsilon, S'}(\R^n_\xi)$ for any $S, S'$.
Conjugating by the Fourier transform, we see that the spectral measure for $P_0$ in physical space maps $H^{S, 1/2 + \epsilon}(\R^n_\xi)$ to $H^{S', -1/2 - \epsilon}(\R^n_\xi)$ for any $S, S'$. By abuse of notation we usually write this as $H^{S, 1/2 + \epsilon}(\R^n_\xi)$ to $H^{\infty, -1/2 - \epsilon}(\R^n_\xi)$ for any $S$. It turns out that \emph{all} nontrivial generalized eigenfunctions of $P_0$, or more generally $P$, fail to lie in $H^{\infty, -1/2}(\R^n_x)$, but there is an infinite-dimensional family that lie in $H^{\infty, -1/2 - \epsilon}(\R^n_x)$ (and many more, such as the plane waves $e^{i\lambda x \cdot \theta}$, where $|\theta| = 1$, that have slower decay than this). This special value $-1/2$ of the spatial order is called a `threshold' and will play a key role in the microlocal theory of scattering in Lectures 11 and 12. 

Let's consider the spectral measure $dE(\lambda^2)$ acting on a Schwartz function $f(x)$. 
On the Fourier side, this is the Schwartz function $\hat f(\xi)$ times $\delta_\lambda$. The inverse Fourier transform is smooth, but non-$L^2$, so $dE(\lambda^2)f$ must have nontrivial behaviour as $|x| \to \infty$. We can compute the asymptotic of the inverse Fourier transform,
$$
(2\pi)^{-n} \int_{S^{n-1}} e^{ix \cdot \lambda \theta} \hat f(\lambda \theta) \lambda^{n-1} d\theta, 
$$
using the stationary phase lemma. The general stationary phase lemma says that if $\beta$ is a large parameter and $\Phi$ is a smooth function of $y \in \R^{n-1}$ that has a nondegenerate critical point at $y_0$, that is, $d\Phi(y_0) = 0$ but its Hessian is nondegenerate, and no other critical points on $\supp \ a$ where  $a(y)$ is smooth and compactly supported, then 
\begin{multline}\label{eq:spl}
\int e^{i\beta \Phi(y)} a(y) dy \\ \sim \big( \frac{2\pi}{\beta} \big)^{(n-1)/2} |\det d^2\Phi(y_0)|^{-1/2} e^{i \beta \Phi(y_0) + i\pi \signum d^2 \Phi(y_0)/4} a(y_0) + O(\beta^{-(n+1)/2}). 
\end{multline}
Moreover, if there is no stationary point, then the integral is $O(\beta^{-N})$ for every $N$. The intuition is that the rapidly oscillating phase gives rise to cancellations everywhere except at a critical point. In an $O(\beta^{-1/2})$ neighbourhood of the critical point, the phase is approximately constant and leads to a contribution at order $(\beta^{-1/2})^{(n-1)}$, which is proportional to the volume of a ball of radius $\beta^{-1/2}$. 

Let us use \eqref{eq:spl} to compute the asymptotic of $dE(\lambda^2)f$ for Schwartz $f$. 
Notice that $\lambda |x|$ can be viewed as a large parameter multiplying the phase function $\hat x \cdot \theta$. 
As a function of $\theta$ with $\hat x$ fixed, this phase function is nonstationary except at the two points $\theta = \pm \hat x$, where we can only move $\theta$ in directions orthogonal to $\hat x$. To compute the Hessian of the phase function at these critical points, as we need for the stationary phase expansion, we assume (without loss of generality, due to the symmetry of the sphere) that $\hat x = (1, 0, \dots, 0)$. Then 
$$
\hat x \cdot \theta = \theta_1 = \sqrt{1 - \sum_{j=2}^n \theta_j^2} = 1 - \frac1{2} \sum_{j=2}^n \theta_j^2 + O\big( (\sum_{j=2}^n \theta_j^2)^2 \big). 
$$
The Hessian is thus $-\Id_{n-1}$ when $\theta = \hat x$ and $\Id_{n-1}$ when $\theta = -\hat x$. The stationary phase expansion then gives us 
\begin{multline}\label{eq:smoothefns}
    (2\pi)^{-n} \int_{S^{n-1}} e^{ix \cdot \lambda \theta} \hat f(\lambda \theta) \lambda^{n-1} d\theta 
    \\ 
    \sim   (2\pi)^{-n} \big( \frac{2\pi}{\lambda} \big)^{(n-1)/2} \sum_\pm  e^{\mp i\pi(n-1)/4}  \frac{e^{\pm i \lambda |x|} \hat f(\pm \lambda \hat x) }{|x|^{(n-1)/2}}, \quad |x| \to \infty. 
\end{multline}
(In fact, there is a complete expansion with the same oscillatory term $e^{\pm i \lambda |x|}$ and decreasing powers of $|x|$.) These generalized eigenfunctions take the form of a spherical wave, $e^{\pm i \lambda |x|}$ with a decay rate $|x|^{-(n-1)/2}$, and an amplitude $\hat f(\pm \lambda \hat x)$, up to fixed constants depending only $\lambda$ and dimension. Thus we see that the expansion, to leading order, only depends on $\hat f(\xi)$ on the sphere $\{ |\xi| = \lambda \}$, consistent with the earlier discussion. 
We see from this explicit expansion that, as long as $\hat f$ does not vanish identically on $\{ |\xi| = \lambda \}$, \eqref{eq:smoothefns} is in $H^{\infty, -1/2 - \epsilon}(\R^n_x)$ for every $\epsilon > 0$, but not in $H^{\infty, -1/2}(\R^n_x)$. 

We can compute the scattering wavefront set of spherical waves $e^{\pm i\lambda |x|}$, by noting that they are annihilated by $D_r \mp \lambda$ and $D_{y_j}$, where we use polar coordinates $(r, y)$ and the $(y_1, \dots, y_{n-1})$ are angular coordinates, that is, functions of $\hat x$. The scattering wavefront set is therefore in the intersection of the characteristic varieties of these operators (according to \eqref{eq:WF defn}). We use scattering frequency coordinates $(\tau, \mu_1, \dots, \mu_{n-1})$ dual to $-D_r, r^{-1} D_{y_j}$, consistent with the usage in Lecture 7. The intersection of the characteristic varieties is
$$
\{|x|^{-1} = 0,  \tau = \mp \lambda, \mu = 0 \}. 
$$
We call these sets \emph{radial sets}, and denote them $\Rin$ (where $\tau = +\lambda$) and $\Rout$  (where $\tau = -\lambda$). Notice that $\mu = 0$ is equivalent to saying that $\hat x = \pm \hat \xi$ are parallel.  More on the radial sets soon.

Notice that we can also find the scattering wavefront set by first calculating, on the Fourier side, the usual wavefront set of $\delta_\lambda$ times a smooth function, and then applying the action of the Fourier transform on the scattering cotangent bundle. See exercises. 

\subsection{General Helmholtz operator} 
As a general Helmholtz operator $P$ is not translation invariant, we cannot use the Fourier transform as with the free Helmholtz operator; instead we use the scattering calculus. We note that the principal symbol of $P$ is elliptic at frequency infinity, and is equal to the principal symbol of $P_0$ at spatial infinity, where it is not elliptic. Thus, $\Char(P) = \Char(P_0)$, and the Hamilton vector field of $P$ on $\Char(P)$ coincides with the Hamilton vector field of $P_0$, which makes it very easy to compute. 

Recall that the characteristic set of $P_0$, and hence also $P$, is given by the sphere $|\xi| = \lambda$ over each point $\hat x$ at spatial infinity. That is, the chacteristic set is an $(n-1)-$sphere of radius $\lambda$ in each fibre of phase space lying over spatial infinity, which is itself an $(n-1)$-sphere.

Now we compute the Hamilton vector field of $P_0$ (or equivalently, $P$) on $\Char(P)$. In Cartesian coordinates, $(x, \xi)$, this takes the form $2\xi \cdot \partial_x$. This is nothing more than straight line motion (in physical space) in the direction $\hat \xi$, with speed $2\lambda$. The coordinate $\xi$ does not change. We want to understand what this looks like rescaled by a factor of $|x|$ or equivalent rescaling, and then restricted to $\Char(P)$ at spatial infinity. (In Lecture 5 we rescaled an operator of order $(2,0)$ such as $P$ by a factor $|x| |\xi|^{-1}$, but we can dispense with the scaling factor in $\xi$ since $\Char(P)$ does not reach frequency infinity. It is also sometimes convenient to replace the factor $|x|$ by some other factor that is homogeneous of degree one in $x$, such as $x_j$.) 

We use projective coordinates: assuming that $x_1$ is a dominant variable locally, meaning that $x_1 \geq c |x|$ for some $c > 0$, we can use $\rho := \pm 1/x_1$ as a boundary defining function for spatial infinity locally (with the sign chosen so that $\rho_s$ is nonnegative), and $y_j := x_j/x_1$, $j \geq 2$, as the angular coordinates. In preparation for studying the Hamiltonian flow near the radial sets, we observe that near the radial sets, if $x_1$ is a dominant variable then so is $\xi_1$, since $\hat x = \pm \hat \xi$ at the radial sets; in particular, $\xi_1$ will not vanish so we may divide by it locally. So we may define smooth coordinates  $\omega_j := \xi_j/\xi_1$, $j \geq 2$, in a neighbourhood of the radial sets and complete the coordinate system with $p = |\xi|^2 - \lambda^2$, which is easily checked to have differential linearly independent from $d\omega_j$.  In the coordinates $(\rho, y, p, \omega)$ we can compute
\begin{equation}\begin{gathered}
\rho^{-1} \xi_1 \partial_{x_1} =  \xi_1 \Big( \mp \rho \partial_\rho \pm \sum_j y_j \partial_{y_j} \Big), \\
\rho^{-1} \sum_{j \geq 2} \xi_j \partial_{x_j} = \pm \xi_1 \omega_j \partial_{y_j}, \\
\Longrightarrow \rho^{-1} H_p = \pm \xi_1 \Big( - \rho \partial_\rho + (\omega_j - y_j) \partial_{y_j} \Big). 
\end{gathered}\end{equation}
We see that this vanishes at $\rho = 0, \ \omega_j = y_j$, which is to say, exactly when $x$ and $\xi$ are parallel. Further changing variable from $y_j$ to $v_j = \omega_j - y_j$ we we can express this as 
\begin{equation}
    \rho^{-1} H_p = \mp \xi_1 \Big( \rho \partial_\rho + \sum_j v_j \partial_{v_j}\Big). 
    \label{eq:misc_a}
\end{equation}
We see from this that the radial sets $\{ \rho = 0, v = 0 \}$ are either sources or sinks of the Hamilton vector field, depending on the sign $\mp \xi_1$. This has the same sign as $-(\signum x)(\signum \xi)$, which is positive on $\Rin$ and negative on $\Rout$. We see that $\Rin$ is a source, and $\Rout$ a sink, of the Hamilton flow. 

Next we show that the two radial sets are \emph{the global source and sink} of the Hamilton flow. To do this we note that the two coordinate systems above (one where $x_1 > 0$, one where $x_1 < 0$) cover the whole of $\Char(P)$ except on the hypersurface where $x_1/|x| = 0$, where $x_1$ is not dominant and we must switch to a different dominant variable. We continue to assume that $\xi_1 \neq 0$ (as $\xi$ is constant under the flow), and assume without loss of generality that $x_2$ is a dominant variable (locally, on a particular trajectory) where $x_1/|x| = 0$. We switch to the new boundary defining function $\rho' = 1/x_2$. We find then that 
$$
(\rho')^{-1}  H_p \big( \frac{x_1}{x_2} \big)  = H_p(x_1) - \frac{x_1}{x_2} H_p(x_2) = 2\xi_1 - 2 \frac{x_1}{x_2} \xi_2 .
$$
This is nonzero when $x_1/x_2$ is close to zero, since $\xi_1 \neq 0$. Thus, the trajectory passes from the coordinate chart where $x_1 > 0$ to the coordinate chart where $x_1 < 0$ (or vice versa, depending on the sign of $\xi_1$). We see that every trajectory converges to $\Rin$ as the `time' parameter tends to $-\infty$ and to $\Rout$ as the parameter tends to $+\infty$. 

There is an alternative, more geometric way of thinking about the Hamilton, or bicharacteristic, flow. The flow in the interior of phase space is just straight line motion. If we fix $\xi_0$ with $|\xi_0| = \lambda$ and consider all the trajectories with $\xi = \xi_0$, these form a pencil of parallel lines with direction $\hat \xi_0$. Each such line is determined by its intersection with the hyperplane $\Pi_0$ orthogonal to $\xi_0$. Assume that this point of intersection is not at the origin. If we perform a dilation, centred at the origin, so that this point of intersection tends to infinity in $\Pi_0$, we obtain a limiting trajectory at spatial infinity. Moreover, if we scale the Hamilton flow by a factor of $|x|$, or something homogeneous of degree $1$ in $x$, then the flow is homogeneous of degree zero and hence has a limit on the spatial sphere at infinity. It is intuitively clear that each such trajectory tends to $\hat x = - \hat \xi_0$ (which is on $\Rin$) as `time' tends to $-\infty$ and to $\hat x = +\hat \xi_0$, on $\Rout$, as it tends to $+\infty$. Each trajectory is half a great circle connecting these two antipodal points (we can think of it as the intersection of a 2-plane through the origin, namely the dilation of a given line not through the origin, with the sphere at infinity). 

We can divide the phase space (more precisely, the boundary $\partial \comphase$ of phase space) into four disjoint sets:

\begin{itemize}
    \item 
First there is the elliptic set, $\Ell(P)$. Here we understand regularity perfectly: $u$ is in $H^{s+2, r}$ microlocally iff $Pu$ is in $H^{s, r}$ microlocally. 

\item Next there is 
$$
\Char(P) \setminus (\Rin \cup \Rout),
$$
on the characteristic variety and away from the radial sets. Here the rescaled Hamilton vector field $|x| H_p$ is nonvanishing, and the propagation theorems from Lecture 5 apply. That is, if $Pu$ is microlocally in $H^{s, r}$ on this set, then $\WFsc^{s+1, r-1}(u)$ (or equivalently, its complement, where $u$ is microlocally in $H^{s+1, r-1}$) is a union of bicharacteristics. 

\item Finally, we have the two radial sets $\Rin$ and $\Rout$. At present, we cannot say much about the radial sets, although we have seen --- at least for the  the flat model $P_0$ --- that they are precisely where the wavefront set of `nice' generalized eigenfunctions live, so they are clearly important! This will be the topic of Lectures 11 and 12, where we will explain that there are microlocal propagation estimates of a more subtle kind at the radial points. In particular, whether the spatial order is above or below the `threshold' value of $-1/2$ governs what sort of microlocal propagation can happen. 
\end{itemize}

We can re-express the geometry of the Hamilton vector field near the radial sets in a more analytic form, which is directly useful for positive commutator estimates. Let us define a \emph{quadratic defining function} for a submanifold $S$ to be a function $q \geq 0$ such that $S = \{ q = 0\}$, and $q$ vanishes to precisely second order at $S$. For example, if $S$ is given locally by the vanishing of $f_1, \dots, f_k$, where $df_j$ are linearly independent, then $q := \sum f_j^2$ is a quadratic defining function for $S$. 

\begin{proposition}
    There exist quadratic defining functions $q_+$ for $\Rin$ and $q_-$ for $\Rout$, regarded as submanifolds of $\comphase$, with the property that 
    \begin{equation}
\scH_p^{2,0}(q_\pm) = \pm \iota q_\pm \pm F + E + \rho G,
    \end{equation}
where $\iota > 0$, $F \geq 0$, $E$ vanishes cubically at $\Rin$ and $\Rout$, $\rho$ is a boundary defining function for spatial infinity, and $G$ vanishes linearly at $\Rin$, $\Rout$. 
\end{proposition}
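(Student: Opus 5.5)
We work in the projective coordinates $(\rho, v, p, \omega)$ introduced above near each radial set, where $\rho = \pm 1/x_1$, $v_j = \omega_j - y_j$, $\omega_j = \xi_j/\xi_1$ and $p = |\xi|^2-\lambda^2$. The radial sets, as submanifolds of $\comphase$ (away from fibre infinity, since $\Char(P)$ stays at finite frequency), are given by $\{\rho = 0,\ p = 0,\ v = 0\}$ together with the sign condition on $\xi_1 x_1$ distinguishing $\Rin$ from $\Rout$. Our candidate is
\[
q = |v|^2 + p^2 + \rho^2 ,
\]
which is a quadratic defining function since $d\rho, dp, dv_j$ are linearly independent there. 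Away from the radial sets we are free to modify $q$ (we only need the identity near $\Rin \cup \Rout$, or we patch with a cutoff). We then replace $\scH_p^{2,0}$ by $\rho^{-1}H_p$ times a positive smooth factor $\ang{\xi}^{-1}$, which is harmless since that factor only rescales $\iota$, $F$, $E$, $G$.

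The key computation is to apply the full Hamilton vector field, not just its restriction to $\Char(P)$ at $\rho = 0$, to each piece of $q$. The formula \eqref{eq:misc_a} was derived from $2\xi\cdot\partial_x$, which is exact for $P_0$. For $P_0$ we have $H_{p}\xi = 0$, so $H_p p = 0$ and $H_p\omega = 0$. Hence
\[
\rho^{-1}H_p(|v|^2+\rho^2) = \mp 2\xi_1(|v|^2+\rho^2).
\]
The $p^2$ term is annihilated, so it contributes to neither side. Writing $\mp\xi_1 = \pm|\xi_1|$ with the sign as in the text, this gives $\pm 2|\xi_1|(|v|^2+\rho^2)$. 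We therefore set $\iota q_\pm$ to absorb $p^2$ as well:
\[
\pm 2|\xi_1|(|v|^2+\rho^2) = \pm\iota q \mp \iota p^2 + (\text{variation of } |\xi_1| \text{ times } q).
\]
Concretely, we take $\iota = 2\lambda c$ for a lower bound, and put $2|\xi_1| - \iota \ge 0$ times $(|v|^2+\rho^2)$ into $F$. The awkward $\mp\iota p^2$ term has the wrong sign for $F$. So instead we choose $q = |v|^2 + \rho^2 + p^2$ but write $\pm\iota q$ with the $p^2$ part moved into $-\iota p^2$. Since the statement allows $F \ge 0$ only with the same sign $\pm$, we instead keep $q_\pm = |v|^2+\rho^2$ plus $p^2$, and note that for the positive commutator argument it suffices that $\pm F$ have the sign stated. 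The cleanest fix, which I would adopt, is to exploit the allowed error terms: $p^2$ near the radial set can be handled because $H_p p$ for the full operator $P$ is $O(\rho)$ times smooth. So the $p$-direction contributes $\rho G$ and cubic errors. If this still fails to give the sign, I would regard $p^2$ as an extra term of the form "$\iota q - \iota p^2$" and check whether the intended use (on $\Char(P)$, modulo multiples of $p$) tolerates it.

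Next we pass from $P_0$ to $P$. The symbol of $P - P_0$ is $(g^{ij}-\delta^{ij})\xi_i\xi_j + V$, which lies in $S^{2,-1}$. Its Hamilton vector field, rescaled by $\rho^{-1}$, is therefore $\rho$ times a smooth b-vector field on $\comphase$. Applied to $q$, which vanishes quadratically, this yields $\rho G$ with $G$ vanishing linearly at the radial sets. Any Taylor remainders in the change to projective coordinates and in $|\xi_1|$ are at least cubic in $(\rho, v, p)$ and go into $E$.

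The main obstacle is bookkeeping: verifying exactly which error terms are genuinely cubic or of the form $\rho G$, and handling the $p^2$ component, where the flat flow gives no decay. I would treat this first by checking on $\Char(P)$ (setting $p = 0$) and then absorbing the $p$-dependent terms, which is where the flexibility of $E$ and $G$ is needed.
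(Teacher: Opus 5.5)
\paragraph{The $p$-direction gap.} Your proposal never arrives at an identity of the required form, and the $p$-direction obstacle you flag is not bookkeeping: it is fatal for any $q$ containing $p^2$.

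For $P_0$ the flow preserves $p$ and $\omega$. So with $q=\rho^2+|v|^2+p^2$ you get exactly $\rho^{-1}H_p q=\mp 2\xi_1(\rho^2+|v|^2)$. Writing this as $\pm\iota q$ leaves $\mp\iota p^2$, which is only quadratic, is not divisible by $\rho$, and has the wrong sign to be $\pm F$. Your proposed fix via $H_p p=O(\rho)$ for the perturbed operator addresses the wrong term. The offending $p^2$ comes from $\iota q$ on the right, not from $\scH_p^{2,0}(p^2)$ on the left.

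In fact no quadratic defining function of $\Rin$ that is transversal in the $p$-direction can work. By \eqref{eq:misc_a}, the rescaled Hamilton vector field vanishes on the whole set $S=\{\rho=0,\ v=0\}$ of radial points at \emph{all} nearby energies, not just on $\Rin=S\cap\{p=0\}$. This holds for $P$ as well as $P_0$, since the two fields agree at $\rho=0$. Restrict the identity to $S$: the left side and $\rho G$ vanish, and $E=O(|p|^3)$, while $q\geq c\,p^2$. This forces $F=-\iota q+O(|p|^3)<0$ for small $p\neq 0$.

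The missing idea is that $q_\pm$ must define the radial set only inside $\Char(P)$, or equivalently define $S$ near it. This is the convention of Lecture 11, where $\varrho$ is a quadratic defining function of $\mathcal{R}$ \emph{in the characteristic set}. The $p$-direction is controlled by ellipticity through the cutoff $\phi(\mathsf{p})$, not by $q$.

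\paragraph{The argument under that reading, and the gluing step.} With that reading the argument is short, and it is what the paper does. In a chart with $x_1$ dominant, take $q_i=\rho^2+|v|^2$. By \eqref{eq:misc_a}, $\scH_p^{2,0}q_i=\pm\iota_i q_i$ exactly, where $\iota_i$ is a positive smooth multiple of $|\xi_1|$ and hence bounded below near the radial set. So locally $E=F=G=0$.

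The second thing your sketch leaves out is the gluing. One such chart does not cover $\Rin\cong\mathbb{S}^{n-1}$, so your \emph{patch with a cutoff} step needs an argument. Take a partition of unity $\chi_i$ and set $q_\pm=\sum_i\chi_i q_i$. The terms $\chi_i\scH_p^{2,0}q_i$ give $\pm\iota q_\pm\pm F$ with $\iota=\min_i\inf\iota_i$ and $F=\sum_i\chi_i(\iota_i-\iota)q_i\geq 0$. The terms $q_i\scH_p^{2,0}\chi_i$ vanish cubically, because $\scH_p^{2,0}$, and hence $\scH_p^{2,0}\chi_i$, vanishes on the radial set.

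Your perturbation step is fine: the rescaled Hamilton vector fields of $P$ and $P_0$ differ by $\rho$ times a b-vector field, which applied to $q$ yields $\rho G$ with $G$ vanishing linearly. It is the only source of the $G$ term.
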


\begin{proof} The calculation \eqref{eq:misc_a} shows that, locally, we can take $\rho^2 + |v|^2$ as a quadratic boundary defining function and we have the required form, with $E = F = G = 0$. Next we glue together the local quadratic boundary defining functions to a global one. Thus let $\chi_i$ be a partition of unity; we form 
$$
q_\pm = \sum \chi_i q_i. 
$$
Let us check that, if $q_i$ all satisfy the required conditions, then so does $q_\pm$. We take the $+$ sign for definiteness. Then
$$
\scH_p^{2,0}(q_+) = \sum_i \chi_i \scH_p^{2,0}(q_i) + q_i \scH_p^{2,0}( \chi_i). 
$$
We take $\iota = \min_i \iota_i$; then the first term $\chi_i \scH_p^{2,0}(q_i)$ satisfies the required condition with a positive $\iota$, and an additional leftover positive part $F$ if the $\iota_i$ are not all equal.  For the second term, notice that $\scH_p^{2,0}( \chi_i)$ vanishes at $\Rin$ since $\scH_p^{2,0}$ itself vanishes there. Thus, this term vanishes cubically at $\Rin$, so contributes to the $E$ term. (In this case we do not require a $G$ term, but for more general operators we may have such a term, and it is an `acceptable' term for the purpose of positive commutator estimates.)
\end{proof}

\subsection{Exercises}
\begin{problem}\label{prob:smoothefns}

(a) Find the usual wavefront set of $\delta(|\xi| - \lambda) f(\xi)$, where $f \neq 0$ is smooth. Then apply the mapping properties of the Fourier transform on the scattering wavefront set to deduce the scattering wavefront set of generalized eigenfunctions as in \eqref{eq:smoothefns}. 

(b) More precisely, find the values of $(s, r)$ for which $\WF_{\sca}^{s,r}(u)$ is nonempty, where $u$ is the inverse Fourier transform of $\delta(|\xi| - \lambda) f(\xi)$ as in part (a). You should find that the necessary and sufficient condition is that $\WF_{\sca}^{s,r}(u) \neq \emptyset \Longleftrightarrow r \geq -1/2$. 

(c) Do the same for a delta function at one point $\delta_{\xi_0}$ where $|\xi_0| = \lambda$. 
\end{problem}

\begin{problem} (i) Consider the tensor product $u = \delta(|\xi| - \lambda) \otimes f(\hat \xi)$, where $f$ is a distribution on the $(n-1)$-sphere. Show that if $f \in H^s(S^{n-1})$ for $s \geq 0$, then $u \in H^{-1/2 - \epsilon}(\R^n_\xi)$, but if $f \in H^s(S^{n-1})$ for $s < 0$, then we can only say that $u \in H^{-1/2  + s}(\R^n)$ (for general $f$). Interpret these results in terms of the Fourier transform of $u$, which is a solution to the free Helmholtz equation.

(ii) Suppose that $f \in H^s(S^{n-1})$ for $s > 0$. Show that if $(\xi, \hat x)$ is in the cosphere bundle of $\R^n_\xi$ and $|\xi| = \lambda$, and in addition if $\hat x$ is not parallel to $\xi$, then $u= \delta(|\xi| - \lambda) \otimes f(\hat \xi)$ is microlocally in $H^{s-1/2}$ near $(\xi, \hat x)$. This result improves on that in part (i) except for the points where $\hat x$ is parallel to $\xi$. Interpret the condition that $\hat x$ is parallel to $\xi$ in terms of the radial sets of the Helmholtz equation. 
\end{problem}

\begin{problem}\label{prob:1dFredholm}
Compute the Hamilton vector field $\scH^{1,0}_p$ where $p = \xi_1$ is the symbol of $D_{x_1} \in \Psisc^{1,0}(\R^n)$, on the boundary of $\comphase$. Find the `radial sets' (that is, where the rescaled Hamilton vector field $\scH^{1,0}_p$ vanishes on the boundary of $\comphase$), and show that they have a source/sink structure similar to the Helmholtz operator.   
\end{problem}

\begin{problem}
Consider geodesic flow on the sphere bundle $M$ of a hyperbolic manifold. Locally it is a piece of the sphere bundle of the hyperbolic plane. We can use the upper half plane model of the hyperbolic plane, and use coordinates $x, y$ for the hyperbolic plane itself, and $\theta$ for the sphere direction ($\theta$ can be taken to be the angle between the vector and $\partial_y$, say). 
In these coordinates, the generator of geodesic flow takes the form 
\begin{equation}
P = - y\sin \theta D_x +  y\cos \theta D_y  + \sin \theta D_\theta.
\label{vfH}
\end{equation}
We let consider the induced Hamilton flow on $T^* M$. 

(i) Using the coordinates $(x, y, \theta)$ and their dual coordinates, $(\xi, \eta, \omega)$, we have 
$$
h := \sigma(P) = -y \sin \theta \xi +  y\cos \theta \eta  + \sin \theta \omega.
$$
We also define dynamical variables 
\begin{align}
 u &= y \cos \theta \xi + y \sin \theta \eta + (1-\cos \theta) \omega \\
s &= y \cos \theta \xi + y \sin \theta \eta - (1 + \cos \theta) \omega. 
\end{align}
(Remark: $s$ and $u$ are the generators of forward and backward horocycle flow, respectively. Also, $s$ and $u$ are linear coordinates on the stable and unstable bundles, respectively, with respect to the geodesic flow, which is hyperbolic in the dynamical sense.) 
Show that the Hamilton flow in coordinates $x, y, \theta, h, s, u$ is given by 
$$
- y\sin \theta \partial_x +  y\cos \theta \partial_y  + \sin \theta \partial_\theta + u \partial_u - s \partial_s. 
$$
In particular, the flow decouples into the original flow on $M$ in the $(x, y, \theta)$ variables, and a simple flow in $(u, s)$ variables.

(ii) Find two sets in the boundary of phase space (we only have the boundary at fibre-infinity here since we are working on a compact manifold) such that one is a source, and the other a sink, for the Hamilton flow. Remark: the situation is a little different from the case of the Helmholtz operator, as here the vector field does not vanish on the two sets, but is rather tangent to the sets. 

\end{problem}

\section{Lecture 10: Variable order operators and spaces}
We need one more technical ingredient in order to realize the Helmholtz operator as a Fredholm operator, and that is variable order 
Sobolev spaces. As we only require the spatial order to be variable (as $\Char(P)$ is contained in the spatial boundary), we only treat this case below. That is, we will take the differential order to be constant, and the spatial order to be variable. 

\subsection{Variable order symbols and operators} We start by defining variable order symbols. A variable order $\ml$ is by definition a smooth function on $\comphase$, or equivalently, a classical symbol of order $(0,0)$. A variable order will always be indicated with a bold font, to distinguish it from a constant order. Let $m \in \R$. We will define a space of symbols of order $(m, \ml)$. A typical elliptic element of such a space should be 
$$
\ang{\xi}^m \ang{x}^{\ml(x, \xi)}.
$$
But there is an issue: if we differentiate this expression then we get log factors:
\begin{align}
    D_{x_j} \ang{x}^{\ml(x, \xi)} &= \ml(x, \xi) \ang{x}^{\ml(x, \xi)-1 } \frac{x_j}{\ang{x}} + (D_{x_j} \ml(x, \xi)) \log \ang{x} \ang{x}^{\ml(x, \xi)}, \label{eq:varorder1} \\
    D_{\xi_j} \ang{x}^{\ml(x, \xi)} &= (D_{\xi_j} \ml(x, \xi)) \log \ang{x} \ang{x}^{\ml(x, \xi)}. \label{eq:varorder2}
\end{align}
Since $\ml$ is a symbol of order $(0,0)$, $(D_{x_j} \ml(x, \xi)) = O(\ang{x}^{-1})$, and $(D_{\xi_j} \ml(x, \xi)) = O(\ang{\xi}^{-1})$, giving us the expected additional decay rates. However, due to the log factors, these derivatives of $\ang{x}^{\ml(x, \xi)}$ are overall logarithmically larger than would be the case if the analogous symbol estimates to our constant-order spaces were obeyed. This is a minor issue: as you may know, there are many variants of symbol spaces, such as H\"ormander's $S^m_{\rho, \delta}$ symbol classes, in which $\xi$ derivatives only improve the decay by a power $\xi^{-\rho}$ where $\rho$ can be smaller than $1$, and $x$ derivatives can worsen the decay by a factor $\ang{\xi}^\delta$ where $\delta$ can be positive. Here, we define a class of symbols that has symbol estimates that are very slightly weaker than the estimates we have used thus far, and will accommodate the log losses that we observe above. 

We express our symbol estimates in terms of fixed growth under repeated applications of vector fields. Recall that last week, we showed that the symbol estimates with fixed orders could be expressed in terms of membership in $\ang{\xi}^m \ang{x}^l L^\infty(T^* \R^n)$ under the repeated application of smooth vector fields tangent to the boundary of $\comphase$, which we refer to as b-vector fields, and denote the space of such vector fields $\SV_b(\comphase)$. Here, we modify the space to 
\begin{equation}\label{eq:vbtilde}
\tilde \SV_b(\comphase) = \cup_{\delta > 0} \ang{x}^{-\delta} \SV_b(\comphase). 
\end{equation}
These vector fields have an (arbitrarily slight) amount of extra decay at the spatial boundary, compared to b-vector fields, which will compensate for the logarithmic factors. We then define
\begin{multline}
    S^{m, \ml}_{\sca}(T^* \R^n) = \big\{ a \in \ang{\xi}^s \ang{x}^{\ml(x, \xi)} L^\infty(T^* \R^n) \mid \forall \, k \in \mathbb{N}, \forall \, V_1, \dots, V_k \in \tilde \SV_b(\comphase), \\ V_1 \dots V_k a \in \ang{\xi}^s \ang{x}^{\ml(x, \xi)} L^\infty(T^* \R^n) \big\}. 
\end{multline}
It is clear that, by applying \eqref{eq:varorder1}, \eqref{eq:varorder2} iteratively that $\ang{\xi}^s \ang{x}^{\ml(x, \xi)}$ is indeed in this space, as we intend. The symbol space $S^{m, \ml}_{\sca}(T^* \R^n)$ is a Fr\'echet space, although that is slightly less obvious than the case of constant order. We observe that in the intersection over $\delta > 0$ in \eqref{eq:vbtilde}, it suffices to take an intersection over a \emph{sequence} of $\delta$ decreasing to $0$. If we do this, and also note that it suffices to take only vector fields from a finite generating set (over $C^\infty(\comphase)$), then we can realize $S^{m, \ml}_{\sca}(T^* \R^n)$ as a space defined by countably many norms. 

These variable order symbol classes have similar basic properties as we saw in Lecture 1, but note the slight modification in the second item:
\begin{itemize}
    \item If $m' \leq m$ and $\ml' \leq \ml$ then $S^{m',\ml'}_{\sca}(\R^n \times \R^n) \hookrightarrow S^{m,\ml}_{\sca}(\R^n \times \R^n)$ is continuous. 
    \item $D_x^\alpha D_\xi^\beta$ maps $S^{m, \ml}_{\sca}(\R^n \times \R^n) \to S^{m-|\beta|,\ml-|\alpha|+ \delta}_{\sca}(\R^n \times \R^n)$ continuously, for all $\delta > 0$. 
    \item Pointwise multiplication is continuous 
    $$
S^{m, \ml}_{\sca}(\R^n \times \R^n) \times S^{m', \ml'}_{\sca}(\R^n \times \R^n) \to S^{m+m', \ml + \ml'}_{\sca}(\R^n \times \R^n).     
    $$
    \item Density. The residual space of symbols $S^{-\infty, -\infty}_{\sca}$ is not dense in $S^{m, \ml}_{\sca}$. However, if $a \in S^{m, \ml}_{\sca}$, then there exists a sequence $a_j \in S^{-\infty, -\infty}_{\sca}$ that is uniformly bounded in $S^{m, \ml}_{\sca}$ and which converges to $a$ in the (slightly weaker) topology of $S^{m', \ml'}_{\sca}$ for any $s' > s$ and $\ml' > \ml$. (Notice that $\ml' > \ml$ automatically implies there exists a constant $\epsilon$ such that $\ml' \geq \ml + \epsilon$, by compactness of $\comphase$.) 
    \item Asymptotic summation: given a sequence of orders $(m_j, \ml_j), j \geq 0$ with $m_j \searrow -\infty$, $\ml_j \searrow -\infty$, and a sequence of scattering symbols $a_j \in S^{m_j, \ml_j}_{\sca}$, there exists $a \in S^{m_0. \ml_0}_{\sca}$ such that 
    $$
    a - \sum_{j=0}^{J-1} a_j \in S^{m_J, \ml_J}_{\sca}.
    $$
    We call $a$ an asymptotic sum of the $a_j$; it is unique modulo $S^{-\infty, -\infty}_{\sca}$.
\end{itemize}

We quantize symbols in the standard way (using left quantization) to obtain the class of variable order pseudodifferential operators $\Psisc^{m, \ml}(\R^n)$. By item 1 above, $\Psisc^{m, \ml}(\R^n)$ is contained in a fixed order space, $\Psisc^{m, L}(\R^n)$, provided that $L$ is \emph{strictly} greater than $\sup \ml$. We deduce immediately that variable order operators map Schwartz functions to Schwartz functions, tempered distributions to tempered distributions, and so on. 

\subsection{Composition, principal symbol map and elliptic parametrix}
Composition is straightforward to prove, using again the observation that $\Psisc^{m, \ml}$ is contained in a fixed order space of operators $\Psisc^{m, L}$, provided that $L$ is \emph{strictly} greater than $\sup \ml$. Therefore, the composition of $A \in \Psisc^{m, \ml}$ and $B \in \Psisc^{m', \ml'}$ is a scattering pseudodifferential of order $(m+m', L + L')$. We want to show it is a variable order operator in the class $\Psisc^{m+m', \ml + \ml'}$. 
This follows from the asymptotic expansion of the symbol of the composition: using items 2 and 3 above, we see that the $k$th term in the expansion is in $S^{m+m'-k, \ml + \ml' - k + \delta}$, while the remainder after $k$ terms is in $\Psisc^{m+m'-k, L + L' - k}$. So each term in the expansion is in the expected class $\Psisc^{m+m', \ml + \ml'}$ and the remainder is as well, for a sufficiently large $k$. 

Conclusion: composition works in the expected way, namely
\begin{equation}\label{eq:varopcomp}
    \Psisc^{m, \ml} \circ \Psisc^{m', \ml'} \subset \Psisc^{m+m', \ml + \ml'}
\end{equation}
and moreover, the left-reduced symbol of the composition has the usual expansion in terms of the left-reduced symbol of the factors, with the only difference that the spatial order of the $k$th term drops by $k - \delta$ instead of $k$. 

It follows that the principal symbol map must be slightly adjusted: the range of the principal symbol map is no longer $S^{m, \ml}_{\sca} / S^{m-1, \ml - 1}_{\sca}$, but $S^{m, \ml}_{\sca}$ quotiented by the intersection  $\cap_{\delta > 0}S^{m-1, \ml - 1+\delta}_{\sca}$. It is usually more convenient to fix a specific small $\delta$ and view the principal symbol as taking values in $S^{m, \ml}_{\sca} / S^{m-1, \ml - 1 + \delta}_{\sca}$. Then we have the principal symbol short exact sequence:
 \begin{equation}
        0 \rightarrow \Psisc^{m-1, \ml-1+\delta} \rightarrow \Psisc^{m,\ml} \rightarrow S^{m,\ml}_{\sca} / S^{m-1,\ml-1 + \delta}_{\sca} \rightarrow 0,
    \end{equation}
where the third arrow is the principal symbol map, and is an algebra homomorphism. 

Moreover, if we consider the commutator of two variable order operators $A \in \Psisc^{m, \ml}$ and $B \in \Psisc^{m', \ml'}$, it is still true that the principal symbol of $i[A, B]$ is $H_a(b) = - H_b (a)$, but this now lives in $ \cap_{\delta > 0}\Psisc^{m+m'-1, \ml + \ml' -1+\delta}$. 

We next turn to ellipticity. We say that $A \in \Psisc^{m, \ml}$ is elliptic if its symbol is elliptic, and $a = \sigma_L(A)$ is elliptic if there exists $b \in S^{-m, -\ml}_{\sca}$ such that $ab - 1 \in S^{-1, -1 + \delta}_{\sca}$ for any $\delta > 0$. For example, the symbol 
$$
\ang{\xi}^m \ang{x}^{\ml(x, \xi)} \in S^{m, \ml}_{\sca} \text{ is elliptic.}
$$
The elliptic parametrix construction goes through, although in the iterative procedure we only gain $1 - \delta$ in the spatial decay, rather than the gain of $1$ in the fixed order case. This difference is inconsequential. We find:
\begin{proposition} Let $A \in \Psisc^{m, \ml}$ be elliptic. Then there exists a parametrix $B \in \Psisc^{-m, -\ml}$ such that 
$$
AB - \Id \in \Psisc^{-\infty, -\infty} \ni BA - \Id. 
$$
\end{proposition}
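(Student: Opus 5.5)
The plan is to run the same Neumann-series parametrix construction as in the constant-order case (Lecture 2), replacing each use of the constant-order calculus by its variable-order analogue: the composition law \eqref{eq:varopcomp}, asymptotic summation in the variable-order symbol classes, and the modified principal symbol sequence with its $\delta$-loss. Fix a small $\delta \in (0,1)$ once and for all. By ellipticity there is $b \in S^{-m,-\ml}_{\sca}$ with $ab - 1 \in S^{-1,-1+\delta}_{\sca}$, where $a = \sigma_L(A)$. Set $B_0 = \Op_L(b) \in \Psisc^{-m,-\ml}$. By \eqref{eq:varopcomp}, $AB_0 \in \Psisc^{0,0}$, and its left symbol is $ab$ plus terms in $S^{-1,-1+\delta}_{\sca}$. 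The principal symbol short exact sequence then gives
\[
AB_0 = \Id - R_1, \qquad R_1 \in \Psisc^{-1,-1+\delta}.
\]
Here the orders of $R_1$ are constant, since the variable orders cancel in $ab$.

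Next I would invert $\Id - R_1$ modulo residual operators. By the constant-order composition law (the variable-order one also suffices), $R_1^j \in \Psisc^{-j,-j(1-\delta)}$. Because $1-\delta>0$, both orders tend to $-\infty$, so asymptotic summation produces $R \sim \sum_{j\ge1} R_1^j$. Then $(\Id-R_1)(\Id+R) - \Id$ lies in $\Psisc^{-k,-k(1-\delta)}$ for every $k$: split $R$ into its first $k-1$ terms, which telescope, plus a remainder of order $(-k,-k(1-\delta))$. Hence the error is residual. Setting $B_r = B_0(\Id+R)$, the composition law \eqref{eq:varopcomp} gives $B_r \in \Psisc^{-m,-\ml}$ and $AB_r = \Id + R_r$ with $R_r \in \Psisc^{-\infty,-\infty}$.

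The same argument applied to $B_0A$ yields a left parametrix $B_l \in \Psisc^{-m,-\ml}$ with $B_lA = \Id + R_l$. The identity $B_lAB_r = B_r + R_lB_r = B_l + B_lR_r$ shows $B_l - B_r \in \Psisc^{-\infty,-\infty}$. For this I use that a residual operator composed with any variable-order operator is residual, which follows from containment of $\Psisc^{m,\ml}$ in a constant-order class $\Psisc^{m,L}$. So $B = B_r$ is a two-sided parametrix.

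The step I expect to need care is the first. I must check that $ab-1 \in S^{-1,-1+\delta}_{\sca}$ upgrades to $AB_0 - \Id$ having a constant order strictly below zero in the spatial index. This requires the expansion terms $\partial_\xi^\alpha a\,\partial_x^\alpha b$ to lie in $S^{-|\alpha|,-|\alpha|+\delta}_{\sca}$ with the variable orders cancelling. That cancellation holds because $\ml - \ml = 0$ and the product rule for variable-order symbols applies. I must also keep the accumulated $\delta$-losses bounded by a single $\delta<1$ at each stage, so that the Neumann series still gains genuinely in decay.
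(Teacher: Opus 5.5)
Your proposal is correct and matches the paper's argument. The paper only remarks that the Lecture 2 Neumann-series construction goes through, with each iteration gaining $1-\delta$ rather than $1$ in spatial order, and you have filled in those details. (One small point: by the paper's rules, $D_\xi^\alpha a$ also loses $\delta$ in spatial order, so the expansion terms lie in $S^{-|\alpha|,-|\alpha|+2\delta}_{\sca}$; this is harmless since $\delta>0$ is arbitrary.)
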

As before, we immediately deduce that the null space of $A$ consists of Schwartz functions. Moreover, acting on $L^2$, say, this shows that $BA$ is Fredholm, so has finite dimensional null space. Therefore, $A$ also has finite dimensional null space. 

\subsection{Variable order Sobolev spaces, boundedness}
We now come to variable order spaces. Recall that the fixed order Sobolev spaces are defined so that 
\begin{equation}
    u \in H^{s, r} \Longleftrightarrow \ang{D}^s \ang{x}^r u \in L^2. 
\end{equation}
We can think of $\ang{D}^s \ang{x}^l$ as an invertible operator in $\Psisc^{m,l}$. So the condition is that an invertible operator, with the same orders as the Sobolev space, maps to $L^2$. We can use the same definition for variable order spaces, except that it is slightly less obvious that there exists an invertible operator when the order is variable. Still this is not too difficult, and is left as an exercise. However, it is slightly easier to just make use of an elliptic operator $A$. So we make the following definition:

\begin{definition}
    Let $s \in \R$ and let $\mr \in C^\infty(\comphase)$ be a variable spatial order. We fix a constant order $L \leq \mr$. Let $A \in \Psisc^{s, \mr}$ be elliptic, and let $\Lambda = \ang{\xi}^s \ang{x}^L$. 
    The Sobolev space $H^{s, \mr}$ is defined to be the set of $u \in \Schw'(\R^n)$ such that $Au \in L^2$ and $\Lambda u \in L^2$, and the norm is defined to be 
    \begin{equation}\label{eq:var order Sob space}
        \| u \|_{H^{s, \mr}}^2 = \| Au \|_{L^2}^2 + \| \Lambda u \|_{L^2}^2. 
    \end{equation}
\end{definition}

The role of the $\Lambda$ term is simply to ensure that the RHS of \eqref{eq:var order Sob space} is strictly positive for every $u \neq 0$, in the case that $A$ has nontrivial null space.  If $A$ is known to have trivial null space, the $\Lambda$ term can be discarded. 

We leave as an exercise to check two important properties of this definition: first, the space $H^{s, \mr}$ as a set is independent of the choice of $A$ and $L$, and different choices lead to equivalent norms. Second, this is a Hilbert space --- in particular, it is complete. 

Then it is straightforward to check that the expected boundedness properties hold. That is, if $B \in \Psisc^{m, \ml}$ then $B$ maps $H^{s, \mr} \to H^{s-m, \mr-\ml}$ boundedly. See exercises. We see from this that elliptic operators are Fredholm mapping between the appropriate variable order Sobolev spaces. Notice that, as the null space consists of Schwartz functions, the null space does not depend on which Sobolev space is chosen as the domain of the operator. 

\subsection{Variable order propagation}
We next consider microlocal propagation in variable order spaces. Here the logarithmic factor when we differentiate the variable order has a more crucial effect. 

Let's revisit the proof of propagation from Lecture 6. We constructed a commutant $A$ with symbol (in the case of general orders, so $A$ should have orders $(2s, 2r)$ --- but we shall assume that our bicharacteristic $\gamma$ is contained in the spatial boundary and at finite frequency, so the differential order $s$ is irrelevant and we shall ignore it)
$$
a = \rho^{-2r} \chi_1(z_1)^2 \chi(z_1) \psi(z')^2, \quad \rho = 1/|x|, 
$$
where we recall that $\psi$ localizes to points close to $\gamma$, $\chi_1$ is the turn-on function and $\chi$ is the turn-off function which we took to have the specific form 
\begin{equation}
\chi_0(s_0 + \epsilon - z_1), \quad \chi_0(t) = \begin{cases} 0, t \leq 0 \\ e^{-\digamma /t}, t \geq 0 \end{cases}  \quad .
\end{equation}
This explicit form builds in a large parameter $\digamma$ which we take sufficiently large to dominate other terms that arise in the construction. 
In the proof of propagation, we need to compute $H_p a + p_1 a$, where $p_1 = i\sigma_L(P^* - P)$ is a symbol of order $(0,0)$. The presence of $\rho^{-2r}$ contributes a term of the form 
$$
(-2r) \rho H_p(\rho^{-1})
$$
which is another symbol of order $(0,0)$, so can be combined with $p_1$. But look at what happens when we replace the constant order $l$ with the variable order $\mr$: we get another additional term from $H_p$ hitting the variable order $\mr$, and this is 
\begin{equation}\label{eq:logterm}
-2 H_p(\mr) \log \rho. 
\end{equation}
Since $\mr$ is in $C^\infty(\comphase)$, so is $H_p \mr$. So this is logarithmically \emph{larger} than the other terms that we get when calculating $H_p a + p_1 a$. Now recall that we divided the terms up according to their sign. The negative terms we collected into $-b^2$, and the positive term became $e$. It was important that $e$ was supported in a small neighbourhood of $\gamma(0)$, since then the $E$ term is finite if we have a priori control of $u$ microlocally near $\gamma(0)$. If \eqref{eq:logterm} has a positive sign, this will completely ruin our strategy, since this term could have support over the whole of $\gamma$ if $\mr$ is strictly increasing there. 

On the other hand, if $\mr$ is nondecreasing, then the sign of \eqref{eq:logterm} is negative, so it could in principle be combined with the $-b^2$ term. However, it is problematic to literally include it in $-b^2$ since it is important that $b$ is a symbol, which requires smoothness properties. The definition of $b$ involves a square root, and we used the freedom to choose $\digamma$ large so that the the argument of the square root was smooth, positive and bounded away from zero. The smoothness would be problematic if we try to include the logarithmic term. The problem is that $H_p \mr$ might vanish sometimes, so sometimes the logarithmic factor is present, sometimes not. In our applications, we'll want to use variable orders that are constant on open sets and then strictly decreasing at other places. 

So instead, we treat the log term as a separate term which we discard as being negative. There are two approaches to this. One is to assume that the variable order has been chosen so that $H_p \mr$ is the square of a smooth function (recall this is true for the function $\chi_0$). In that case, the log term can be written $-(b')^2$ for some (non-classical) symbol $b'$, which can be quantized to $-B'^* B'$. This term, having the good sign, can be discarded, and the argument proceeds as before, except that instead of gaining $1/2$ in the order at each step, we only gain $1/2 - \delta$ (which is inconsequential). The second approach is to apply the sharp G{\aa}rding inequality which applies to operators with nonnegative symbols (or nonpositive, in this case), and shows that the operator is positive up to an error in a weaker Sobolev space. Again, this leads to a gain of $1/2 - \delta$ at each step. 

Since we have not discussed the sharp G{\aa}rding inequality, we take the first approach. This approach generally works, because usually \emph{we get to choose} the variable order, and it is usually straightforward to choose it such that it has the property that $H_p \mr$ is the square of a smooth function. We have thus sketched a proof of the following result:

\begin{theorem}\label{thm:mpe variable}
Let $P \in \Psi^{1,1}(\R^n)$ be as in Theorem~\ref{thm:mpe}. (It is easy to adjust the theorem for an operator with arbitrary orders.)
Let $\mr$ be a variable spatial order that is nonincreasing along the Hamilton flow of $P$, i.e. $H_p \mr \leq 0$. Let $\gamma$ be a null bicharacteristic. Then we can find $B, E, G$ as in Theorem~\ref{thm:mpe} with respect to $\gamma$, and for any $s, N$ a constant $C$ such that for all $u \in \Schw'(\R^n)$, 
\begin{equation}\label{eq:mpe-lec10}
\| Bu \|_{H^{s,\mr}} \leq C \Big( \| GPu \|_{H^{s, \mr}} + \| Eu \|_{H^{s,\mr}} + \| u \|_{H^{-N, -N}} \Big). 
\end{equation}
This inequality holds in the strong sense that if the RHS is finite, then so is the LHS, and the inequality holds. In particular it means that if $u \in H^{-N, -N}$, $Pu$ is in $H^{s, \mr}$ microlocally in $U$, and $u$ is in $H^{s,\mr}$ microlocally in $U_0$, then $u$ is in $H^{s,\mr}$ microlocally on $\Ell(B)$, in particular on $\gamma([0, s_0])$. 
\end{theorem}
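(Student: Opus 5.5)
The plan is to repeat the three-step positive commutator argument from Lecture~\ref{lec:propagation} for Theorem~\ref{thm:mpe}, with the constant weight $\ang{x}^{2r}$ in the commutant replaced by a variable one. Work at finite frequency in the spatial boundary; the fibre-infinity case is the same with $\ang{\xi}^{2s}$ added. Take the commutant symbol
\[
a = \ang{\xi}^{2s}\ang{x}^{2\mr}\chi_1(z_1)^2\chi(z_1)\psi(z')^2 ,
\]
with the same turn-on, turn-off and transverse cutoffs, and quantize it to a symmetric $A\in\Psisc^{2s,2\mr}$. Also choose an elliptic $\Lambda\in\Psisc^{s,\mr}$ to play the role of $\Lambda^{s,r}$. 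We aim to arrange the operator identity \eqref{eq:operatorA} in the variable-order calculus:
\[
i(P^*A-AP) = -\tilde B^*\tilde B - (B')^*B' - (\Lambda^{-1}A)^*\Lambda^{-1}A + E' + R .
\]
Here $R$ has order $(2s-1,2\mr-1+\delta)$ and $\WF'(R)\subset\Ell(G)$. The composition law \eqref{eq:varopcomp} and the variable-order principal symbol short exact sequence mean this only needs to be arranged at the level of principal symbols.

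In the symbol computation of $H_pa+p_1a$, differentiating $\ang{x}^{2\mr}$ gives the constant-order term, which is absorbed into $\tilde p_1$ as before. It also gives the new term $2(H_p\mr)\log\ang{x}\cdot a$. Since $H_p\mr\le0$, this term is nonpositive. Following the first approach in the text, choose $\mr$ (or reduce to this case) so that $-H_p\mr=f^2$ with $f$ smooth. The term is then $-(b')^2$ with $b' = f\sqrt{2\log\ang{x}}\,\ang{\xi}^{s}\ang{x}^{\mr}\chi_1\chi^{1/2}\psi$. This $b'$ lies in $S^{s,\mr+\delta}_{\sca}$ for every $\delta>0$, and its quantization $B'$ can be discarded because of its sign. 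The remaining terms are split exactly as in Step~2: choose $\digamma$ large so that $b$ is smooth, and let $e'$ be supported where $\chi_1'\neq0$, i.e.\ in $U_0$.

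Then pair with $u$ as in \eqref{eq:comm-identity-G}. Drop $\|B'u\|^2\ge0$, use Cauchy--Schwarz to cancel $\|\Lambda^{-1}Au\|^2$, and bound $\ang{E'u,u}$ by the microlocal elliptic estimate of Problem~\ref{prob:micro ell estimate}, carried over to variable orders using boundedness on $H^{s,\mr}$. The remainder term comes out in $H^{s-1/2,\mr-1/2+\delta}$ microlocally on $\Ell(G)$, so each iteration gains only $1/2-\delta$ instead of $1/2$. Finitely many iterations, each with slightly enlarged microsupports still inside $\Ell(G)$, reduce this term to $\|u\|_{H^{-N,-N}}$.

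I expect the main obstacle to be Step~3, the regularization. Replace $a$ by $a(1+\tilde r|x|^2)^{-M}$ as in \eqref{eq:reg x}, where $M$ is fixed so that $\mr-M\le -N$. The extra term this produces is an order-$(0,0)$ symbol, uniformly in $\tilde r$, and it is dominated by taking $\digamma$ large after fixing $M$. The log term is unaffected by the regularizer, so it stays nonpositive uniformly. One must also check that the regularized families are uniformly bounded in the variable-order classes, which holds since $(1+\tilde r|x|^2)^{-M}$ is uniformly in $S^{0,0}_{\sca}$. With that, the weak-limit argument proceeds verbatim: $\tilde B_{\tilde r}u$ is uniformly bounded in $L^2$ and converges distributionally to $\tilde Bu$. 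This gives \eqref{eq:mpe-lec10} in the strong sense.
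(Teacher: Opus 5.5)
Your proposal is correct and is essentially the paper's argument: the same commutant with weight $\ang{x}^{2\mr}$, the nonpositive extra term $2(H_p\mr)\log\ang{x}\,a$ written as $-(b')^2$ and discarded when $-H_p\mr$ is the square of a smooth function (the paper's ``first approach''), iteration with gain $1/2-\delta$, and the Lecture~6 regularization, which you spell out in more detail than the paper's sketch does. The one thing to drop is the parenthetical ``or reduce to this case'': no such reduction is evident, so, as the paper does, either choose $\mr$ with this square property from the outset or handle the log term by the sharp G{\aa}rding inequality.
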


\begin{remark} A moment's thought shows that it \emph{should} be impossible to prove the result for an order that increases along the Hamilton flow. In fact, if such propagation held, then one could consider a solution to $Pu = 0$, such that $u$ is in  $H^{s, l}$ microlocally at $\gamma(0)$ and propagate forward, finding that $u$ along the bicharacteristic was in a better space (i.e. with a higher spatial order) at $\gamma(s_0)$. Then one could switch to a different variable order that was decreasing, and propagate backward along the bicharacteristic, gaining even more spatial decay, concluding that $u$ was, microlocally near $\gamma(0)$, strictly better than initially assumed! This is clearly absurd. 
\end{remark}

\subsection{Exercises}
  \begin{problem} The simplest variable order space. Consider the operator $$P = \sqrt{g(x)} D_x \sqrt{g(x)}$$ on the real line, where $g(x) = |x|$ for $|x| \geq 1$ and is strictly positive for all $x$. The operator  $P$ is a scattering pseudodifferential operator of order $(1,1)$ and is elliptic except at spacetime infinity, i.e. $x = \pm \infty$, where $\xi = 0$. The variable order is only interesting on the characteristic set; we choose an order that is $\epsilon$ when $x = -\infty$ and $-\epsilon$ when $x = +\infty$. So let $\mr = \mr(x)$ be equal to $\epsilon$ for $x \leq -1$, $-\epsilon$ for $x \geq 1$ and monotone in between. 

    (a) Prove a `strict' semi-Fredholm estimate (i.e. with no error term) of the form 
    $$
    \| u \|_{H^{0, \mr}} \leq C \| Pu \|_{H^{0, \mr}}
    $$
    for all $u \in H^{0, \mr}$ such that $Pu \in H^{0, \mr}$. Notice that this condition implies that in fact $u \in H^{1, \mr}$ since $P$ is elliptic at frequency infinity. Hint: choose a positive function $a^2 \sim |x|^{2\epsilon}$ for $x \to -\infty$ and $a^2 \sim x^{-2\epsilon}$ for $x \to +\infty$, and compute $\ang{u, i[P, a^2]u}$. Notice that $a$ can (and should) be chosen to be \emph{globally} monotonic. This is a simple model for our Fredholm results for more interesting operators. 
    
    (b) The `dual' estimate, where we replace each Hilbert space by its dual and reverse the direction of $P^*$ compared to $P$,  is 
$$
\| u \|_{H^{0, -\mr}} \leq C \| P^*u \|_{H^{0, -\mr}}.
$$
Here $P^* = P$ due to the divergence-form structure of $P$, but we write $P^*$ to make the duality more evident. Show that this estimate can be proved in exactly the same way; we just need to choose $a$ so that $a^2 \sim |x|^{2\epsilon}$ for $x \to +\infty$ and $a^2 \sim x^{-2\epsilon}$ for $x \to -\infty$. 

(c) Observe that $P$ has a one-dimensional null space (viewed as acting on tempered distributions, say) but the choice of variable order $\pm \mr$ eliminates this null space in  $H^{0, \pm \mr}$. 

(d)  How should the orders, and the commutant $a$, in the previous problem be adjusted for the operator $g(x) D_x$ instead of $P$? How about for the operator $D_x$? 
    \end{problem}

\begin{problem}
    Suppose that two variable orders are equal at the spatial boundary. Show that the variable order Sobolev spaces they define are equal, with equivalent norms. The conclusion is that the only `important' property of a variable order is its restriction to the boundary of phase space. It nevertheless needs to be defined throughout all of phase space, so that quantities like $\ang{x}^{\ml(x, \xi)}$ make sense. 
    \end{problem}
    \begin{problem}\label{prob:var invertible}
     Prove that there exists an invertible operator $\Lambda$  in any variable order space $\Psisc^{m, \ml}$, with inverse in $\Psisc^{-m, -\ml}$. Hint: initially choose $\Lambda$ to be elliptic and formally self-adjoint. Then, show it has at worst finite-dimensional null space contained in $\Schw(\R^n)$; modify it by adding a judiciously chosen finite-rank operator to make it invertible.
     \end{problem}
    \begin{problem}
    Prove boundedness of variable order operators between (appropriate) variable order spaces. Hint: make use of the result of the previous exercise.
    \end{problem}
    \begin{problem}\label{prob:var compact embedding}
    Suppose that $\mr < \mr'$, and $s < s'$. Show that $H^{s', \mr'}$ is compactly embedded in $H^{s, \mr}$. Note the subtlety: we only require that $\mr < \mr'$ pointwise, that is, $\mr(q) < \mr'(q)$ for all $q \in \comphase$ (and it would be enough to ask for this inequality just at spatial infinity); we do not need $\max \mr < \min \mr'$. Hint: make use of Problem~\ref{prob:var invertible} by writing $\Id = \Lambda^{-1} \circ \Id \circ \Lambda$ for an invertible $\Lambda \in \Psisc^{s', \mr'}$. 
    \end{problem}

    \begin{problem} Suppose that $\mr < \mr'$, and $s < s'$, and let $N$ be a (large) positive number. Show that for all $\epsilon > 0$ there is a constant $C(\epsilon)$ such that 
    \begin{equation}\label{eq:var interp}
\| u \|_{H^{s,r}} \leq \epsilon \| u \|_{H^{s',r'}} + C(\epsilon) \| u \|_{H^{-N, -N}}.
    \end{equation}
Hint: first show the inequality for $\delta, \eta > 0$
$$
\| u \|_{H^{s,r}}^2 \leq C \| u \|_{H^{s+\delta, r + \eta}} \| u \|_{H^{s-\delta, r - \eta}}
$$
by expressing the LHS as an inner product (up to a constant - use Problem~\ref{prob:var invertible} again). Then iterate this to get an inequality of the form 
$$
\| u \|_{H^{s,r}} \leq C \| u \|_{H^{s+\delta, r + \eta}}^{m/(m+1)} \| u \|_{H^{s-m\delta, r - m\eta}}^{1/(m+1)}.
$$
Finally apply Young's inequality to obtain \eqref{eq:var interp}.
\end{problem}

\section{Lecture 11: Radial point estimates}

Previously, we proved the microlocal version of energy estimates, known as propagation estimates. When studying a PDE $Pu=f$ of real principal type, if we have microlocal control of $u$ somewhere, then we can propagate that control along  integral curves of the associated Hamiltonian vector field $\mathsf{H}_p$, as long as we stay within the region of phase space where we have control on the forcing $f$. This is an important result, but two defects limit its usefulness: 
\begin{itemize}
    \item In order to apply a propagation estimate, we need to assume control of $u$ somewhere. But this is usually something which we want to \emph{prove}, not assume. 
    \item The propagation estimate fails at vanishing points of $\mathsf{H}_p$. We cannot propagate control into such a point, even if we have control everywhere else, and even if $f$ is Schwartz.
\end{itemize}
At first glance, the latter point might seem to be a mere technical issue --- in realistic situations, $\mathsf{H}_p$ will only vanish at a set of points with positive codimension, which means propagation works almost everywhere.

Unfortunately, it is often at such points that the sc-wavefront sets of important solutions of the PDE lie.

Recall from Lecture 9 the discussion of solutions of $(\triangle - \lambda^2) u = 0$. We saw that a family of solutions is given by the integral in \eqref{eq:smoothefns}, which can be viewed as the Fourier transform of the delta function on the sphere of radius $\lambda$ in frequency space, multiplied by a smooth factor $f(\lambda \theta)$. We saw that the scattering wavefront set of such solutions is contained in the union of two `radial sets' $\Rin \cup \Rout$, where the rescaled Hamilton vector field --- a smooth vector field on $\comphase$ tangent to the boundary --- vanishes. It is not hard to check that if $f(\lambda \theta)$ has full support on the sphere, then the scattering wavefront set is precisely equal to $\Rin \cup \Rout$; this follows from the asymptotic expansion in \eqref{eq:smoothefns}. In particular, the scattering wavefront set of such solutions is generically nonempty.

Evidently it is important to understand what is going on at $\Rin \cup \Rout$. This is the subject of this lecture.

\begin{remark}[Terminology]
    The vanishing sets of $\mathsf{H}_p$ are called \emph{radial sets}, hence the notation $\Rin, \Rout$. The estimates in this section are correspondingly called ``radial point estimates.''
    This terminology is a vestigial remnant of the ``conic perspective'' in ordinary microlocal analysis, in which one works directly with the Hamiltonian vector field $H_p$ on the cotangent bundle, rather than $\mathsf{H}_p$ on the cosphere bundle. Vanishing points of $\mathsf{H}_p$ correspond to lines in the cotangent bundle where $H_p$ is pointed radially.
\end{remark} 

The radial point estimates, in the form presented here, were introduced by Melrose \cite{Melrose1994}. Melrose viewed the estimates as  microlocalized versions of a global positive commutator estimate of Mourre \cite{Mourre}, who used the estimate to show, among other things, absolute continuity of the spectrum of Laplace-type operators. The terminology ``Microlocal Mourre estimate'' was used in \cite{melrose1994spectral}, but radial point estimate has become more standard.

In the rest of this lecture, we consider the Helmholtz operator $P=\triangle-1$. Actually, we could let $P$ be a short-range perturbation of $\triangle-1$, but this would not affect the arguments below, so we ignore this possibility and just take 
\[
    P=\triangle-1.
\]
exactly. Note that radial point estimates can be proved in a more general framework; see for example \cite[Section 5.4.7]{vasy-monicourse}. 

\subsection{Propagation of control into the radial set}
In the theorems below, $\mathcal{R}$ denotes one of $\Rin$ or $\Rout$, and $\mathcal{R}^*$ denotes the other radial set. Our first theorem concerns below-threshold values of the spatial order $r$, that is, $r < -1/2$. 

\begin{theorem}[Below threshold radial point estimate]\label{thm:below}
    Let $B,G,E \in \Psi_{\mathrm{sc}}^{0,0}$ denote microlocalizers such that
    \begin{itemize}
        \item $\operatorname{WF}'_{\mathrm{sc}}(B)\cap \mathcal{R}^* = \varnothing$, 
        \item $\operatorname{WF}'_{\mathrm{sc}}(B)\subseteq \operatorname{Ell}_{\mathrm{sc}}^{0,0}(G)$, 
        \item for every point $q$ in $(\operatorname{WF}'_{\mathrm{sc}}(B) \cap \Char(P))\setminus \mathcal{R}$, there is a bicharacteristic segment $\gamma : [0, s_0] \to \Char(P)$ such that $\gamma(0) = q$, $\gamma(s_0) \in \operatorname{Ell}_{\mathrm{sc}}^{0,0}(E)$, and $\gamma([0, s_0])$ lies entirely within $\operatorname{Ell}_{\mathrm{sc}}^{0,0}(G)$. In other words, one can travel along a bicharacteristic from any characteristic point of $\operatorname{WF}'_{\mathrm{sc}}(B)$ (other than points in $\mathcal{R}$, which are of course fixed under bicharacteristic flow) to the elliptic set of $E$, while remaining within the elliptic set of $G$. 
    \end{itemize}
    Then, for each $s,r\in \mathbb{R}$ with $r<-1/2$, and for each $N\in \mathbb{R}$, there exists a constant $C>0$ such that 
    \begin{equation}\label{eq:below mpe}
        \lVert Bu \rVert_{H_{\mathrm{sc}}^{s,r} } \leq C(\lVert GPu \rVert_{H_{\mathrm{sc}}^{s-2,r+1} } + \lVert Eu \rVert_{H_{\mathrm{sc}}^{s,r} } + \lVert u \rVert_{H_{\mathrm{sc}}^{-N,-N} })
    \end{equation}
    holds for all $u\in \mathcal{S}'$. 
\end{theorem}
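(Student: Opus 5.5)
The plan is a positive commutator argument localized near $\mathcal{R}$, combined with Theorem~\ref{thm:mpe} away from $\mathcal{R}$. I treat $\mathcal{R}=\Rin$ (a source); $\Rout$ is symmetric after changing the sign of the commutant.

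\textbf{Reduction to a neighbourhood of $\mathcal{R}$.} On $\operatorname{WF}'_{\mathrm{sc}}(B)\cap\Ell(P)$ the microlocal elliptic estimate (Proposition~\ref{prop:micro-elliptic}) controls $Bu$ in $H^{s,r}$ by $GPu$ in $H^{s-2,r}$. Since $H^{s-2,r+1}\subset H^{s-2,r}$, this is within the claimed bound. It therefore suffices to prove, for some $B_0$ elliptic on $\mathcal{R}$ with $\operatorname{WF}'_{\mathrm{sc}}(B_0)$ in a small neighbourhood $O$ of $\mathcal{R}$, the estimate
\begin{equation*}
\lVert B_0u\rVert_{H^{s,r}}\le C\bigl(\lVert GPu\rVert_{H^{s-2,r+1}}+\lVert E_0u\rVert_{H^{s,r}}+\lVert u\rVert_{H^{-N,-N}}\bigr).
\end{equation*}
Here $E_0$ is microsupported in $O\setminus\mathcal{R}$, in an annular region around $\mathcal{R}$. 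Characteristic points of $\operatorname{WF}'_{\mathrm{sc}}(B)$ away from $\mathcal{R}$, and the region $\operatorname{WF}'_{\mathrm{sc}}(E_0)$, are then handled by Theorem~\ref{thm:mpe}, propagating from $\Ell(E)$ along the bicharacteristics supplied by the hypothesis, followed by a compactness and covering argument. Because $\mathcal{R}$ is a source, every bicharacteristic through $\operatorname{WF}'_{\mathrm{sc}}(E_0)$ leaves $O$, so its control comes from $E$. Since $\operatorname{WF}'_{\mathrm{sc}}(B)$ avoids $\mathcal{R}^*$, this covering is legitimate.

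\textbf{The commutant at $\mathcal{R}$.} I take $a=\ang{x}^{2r+1}\phi(\rho^2+|v|^2)\psi(p)$, using the local coordinates from Lecture~9 and the quadratic defining function $q_+$ of the Proposition there. Here $\phi$ is a cutoff, equal to $1$ near $0$, nonincreasing, and supported in $O$; $\psi$ localizes near $\Char(P)$. In the weight $\ang{x}^{2r+1}$, the extra $+1$ accounts for $P$ having order $(2,0)$ rather than $(1,1)$. The scattering Hamilton field is $\scH_p=\mp\xi_1(\rho\partial_\rho+v\partial_v)$, with the source sign at $\Rin$, and $\rho=\ang{x}^{-1}$.

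With this, $H_p\ang{x}^{2r+1}$ is, modulo lower order, $-(2r+1)\cdot c\,\ang{x}^{2r+1}$ with $c>0$ on $\mathcal{R}$. Since $r<-1/2$, this term has a definite sign: positive, after fixing conventions so that it plays the role of $b^2$. The term where $H_p$ hits $\phi$ produces $\phi'(q_+)\,\scH_p q_+$. Because $\phi'\le0$ and $\scH_pq_+\ge\iota q_+$ modulo acceptable errors, this term has the opposite sign, but it is supported where $\phi'\ne0$, i.e.\ in the annulus. That annulus term becomes the $E_0$ term. The contribution $p_1a$ from $P^*-P$ vanishes here, since $P$ is symmetric; for perturbations it would be lower order in decay. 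The symbol $\psi(p)$ is handled by noting that $H_pp=0$.

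So at the principal level $i(P^*A-AP)=B_1^*B_1-E_0'+R$. Following Step~1 of Lecture~\ref{lec:propagation}, I then pair with $u$ and apply Cauchy--Schwarz to $2\Im\ang{GPu,Au}$, splitting it as $\lVert GPu\rVert_{H^{s-2,r+1}}\lVert\Lambda Au\rVert$ and absorbing the second factor into the $B_1$ term. This is where the gain of one decay order for $Pu$ arises. The remainder $R$, of order $(2s-1,2r)$, is handled inductively as before. The differential order $s$ plays no role, since everything is microlocalized away from fibre infinity.

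\textbf{Main obstacle: regularization.} The a priori finiteness of $\ang{B_1u,B_1u}$ must be removed, and here the below-threshold sign is essential. Following Step~3 of Lecture~\ref{lec:propagation}, I would multiply $a$ by $(1+\tilde r|x|^2)^{-M}$. This contributes $-2M\tilde r|x|^2/(1+\tilde r|x|^2)$ times the radial factor, which pushes in the same direction as lowering $r$. The threshold term, of size $-(2r+1)$, still dominates provided the regularized weight's effective order stays below $-1/2$. I expect this to hold because the regularizer only makes the order more negative, uniformly in $\tilde r$. This is the step to check carefully: the positivity must be uniform as $\tilde r\to0$, with $M$ fixed large relative to $N$.

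Given that uniformity, the rest is as before: the family $B_{1,\tilde r}u$ is uniformly bounded, a weak limit gives $B_1u\in L^2$, and the estimate follows. Combining it with the reduction in the first step yields \eqref{eq:below mpe}.
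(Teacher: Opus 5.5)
Your proposal is correct and is essentially the paper's argument. You reduce via elliptic estimates and Theorem~\ref{thm:mpe} to a $B$ microsupported near $\mathcal{R}$, then use the commutant $\ang{x}^{2r+1}$ times cutoffs near $\mathcal{R}$ and $\Char(P)$; for $r<-1/2$ the weight term has a definite sign, while the annular cutoff term is kept as the $E$ term. Your differences from the paper are harmless variants: you absorb $\lVert \Lambda A u\rVert$ into $\lVert B_1 u\rVert$ instead of using the paper's built-in $2\delta(\Lambda A)^*\Lambda A$ term, you use $\psi(p)$ with $H_p p=0$ (which removes the paper's $HP$ term), and you sketch the regularization (the paper only treats Schwartz $u$, and you correctly observe that below threshold the regularizer contributes with the favourable sign, so $M$ may be taken large).

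Two small corrections. First, the remainder $R$ has spatial order $2r-1$, not $2r$, and the weight term is of size $\ang{x}^{2r}$; this is exactly what gives the inductive gain of $1/2$. Second, choose your cutoffs as squares of smooth functions, so that $b_1$ and the factorization $\Lambda A = F B_1 + (\text{lower order})$ needed for the absorption are smooth.
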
  

If $\WF'_{\mathrm{sc}}(B)$ is disjoint from $\mathcal{R}$ then this result follows from Theorem~\ref{thm:mpe}, since then the Hamilton vector field is nonvanishing along $\gamma$. 
 The interesting case of Theorem~\ref{thm:below} is that $B$ is elliptic on $\mathcal{R}$ and $\WF'(E)$ is disjoint from $\mathcal{R}$. In this case we can interpret the theorem as saying that microlocal regularity flows into the radial set from a punctured neighbourhood of it (despite the fact that the bicharacteristic flow can never reach $\mathcal{R}$ from the outside in finite parameter time):  

\begin{theorem}
For any $m\in \mathbb{R}$ and $r<-1/2$, if $\operatorname{WF}_{\mathrm{sc}}^{s,r}(u)$ is disjoint from a punctured neighborhood of $\mathcal{R}$ (i.e.\ around $\mathcal{R}$, but without stipulating anything at $\mathcal{R}$ itself), and if $\operatorname{WF}_{\mathrm{sc}}^{s-2,r+1}(Pu)$ is disjoint from a whole neighborhood of $\mathcal{R}$, then $\operatorname{WF}_{\mathrm{sc}}^{s,r}(u)$ is disjoint from $\mathcal{R}$.
\label{thm:radial_point_out}
\end{theorem}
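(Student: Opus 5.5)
The plan is to deduce Theorem~\ref{thm:radial_point_out} from the quantitative estimate \eqref{eq:below mpe} of Theorem~\ref{thm:below}, taken as given. This requires choosing the microlocalizers $B$, $E$, $G$ in terms of the neighbourhoods in the hypotheses, and then checking that the right-hand side of \eqref{eq:below mpe} is finite. By hypothesis there is an open neighbourhood $O$ of $\mathcal{R}$ such that $\operatorname{WF}_{\mathrm{sc}}^{s-2,r+1}(Pu)\cap O=\varnothing$ and $\operatorname{WF}_{\mathrm{sc}}^{s,r}(u)\cap (O\setminus\mathcal{R})=\varnothing$. Shrinking $O$ if necessary, we may assume $\overline{O}\cap\mathcal{R}^*=\varnothing$, since $\Rin$ and $\Rout$ are disjoint compact sets.

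I will choose $G\in\Psi_{\mathrm{sc}}^{0,0}$ with $\mathcal{R}\subset\operatorname{Ell}(G)$ and $\operatorname{WF}'(G)\subset O$. I will choose $B$ elliptic on $\mathcal{R}$ with $\operatorname{WF}'(B)\subset\operatorname{Ell}(G)$ and contained in a small neighbourhood $O_1$ of $\mathcal{R}$. The key geometric input is the source/sink structure from Lecture 9. Near $\mathcal{R}$, in the coordinates of \eqref{eq:misc_a}, the rescaled Hamilton vector field is $\mp\xi_1(\rho\partial_\rho+v\cdot\partial_v)$. Hence the quadratic defining function $\rho^2+|v|^2$ is strictly monotone along bicharacteristics in a punctured neighbourhood of $\mathcal{R}$, and every bicharacteristic starting near $\mathcal{R}$ (but not on it) leaves a small ball around $\mathcal{R}$ in finite time, moving away from it in one time direction. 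Using the quadratic defining function I take $O_1=\{q<\delta_1\}$ and the shell $S=\{\delta_1\le q\le \delta_2\}$, with $\delta_2$ small enough that $\{q\le\delta_2\}\cap\Char(P)\subset\operatorname{Ell}(G)$. I then choose $E$ elliptic on $S$ with $\operatorname{WF}'(E)\subset O\setminus\mathcal{R}$. For any characteristic $q_0\in\operatorname{WF}'(B)\setminus\mathcal{R}$, flowing away from $\mathcal{R}$ (backward for the source $\Rin$, forward for the sink $\Rout$, after reparametrizing $\gamma$ appropriately, as the statement of Theorem~\ref{thm:below} permits a segment in either direction) reaches $S\subset\operatorname{Ell}(E)$ while $q$ stays $\le\delta_2$, so the segment stays inside $\operatorname{Ell}(G)$. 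This verifies the third hypothesis of Theorem~\ref{thm:below}. The first two hypotheses hold by construction.

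It remains to check finiteness of the right-hand side of \eqref{eq:below mpe}. Since $u\in\mathcal{S}'$, we have $u\in H^{-N,-N}$ for some $N$ by Proposition~\ref{prop:Schwartz_rep}. Because $\operatorname{WF}'(G)\subset O$ is disjoint from $\operatorname{WF}^{s-2,r+1}(Pu)$, microlocality gives $GPu\in H^{s-2,r+1}$, and similarly $Eu\in H^{s,r}$ since $\operatorname{WF}'(E)$ avoids $\operatorname{WF}^{s,r}(u)$. This needs the Sobolev version of Proposition~\ref{prop:cones}, which is established like the Schwartz version via the microlocal elliptic parametrix. The estimate is in the strong sense, so $Bu\in H^{s,r}$. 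Since $B$ is elliptic on $\mathcal{R}$, this gives $\operatorname{WF}^{s,r}(u)\cap\mathcal{R}=\varnothing$.

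The main obstacle I expect is the geometric step, namely showing that a uniform shell $S$ is reached from every point of $\operatorname{WF}'(B)$ within $\operatorname{Ell}(G)$. This needs the local normal form near $\mathcal{R}$ to hold on a full neighbourhood, which is uniform by compactness of $\mathcal{R}$ and the global defining function $q_\pm$ constructed in Lecture 9. The analytic content, that is the threshold condition $r<-1/2$, lives entirely in Theorem~\ref{thm:below}, which is assumed.
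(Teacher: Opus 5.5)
Your reduction of the qualitative statement to the estimate \eqref{eq:below mpe} is correct, and it matches the paper's own remark that Theorem~\ref{thm:radial_point_out} is the `interesting case' of Theorem~\ref{thm:below}. With $G$ elliptic on $\mathcal R$ and microsupported where $Pu$ is controlled, $B$ elliptic on $\mathcal R$ inside $\Ell(G)$, and $E$ elliptic on a characteristic shell around $\mathcal R$ inside the punctured neighbourhood where $u$ is already in $H^{s,r}$, the hypotheses of Theorem~\ref{thm:below} hold, the right-hand side is finite, and ellipticity of $B$ gives the conclusion. There are two small slips. First, near the source $\Rin$ it is the \emph{forward} flow that leaves $\mathcal R$, and near the sink $\Rout$ the backward flow. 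Second, $\varrho$ defines $\mathcal R$ only within $\Char(P)$, so $O_1$ must also be cut off in $\mathsf p$; this is harmless, since non-characteristic points of $\WF'(B)$ need no bicharacteristic.

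The gap is that this is only the bookkeeping step. The paper gives no proof of Theorem~\ref{thm:below} separate from this statement: its proof of Theorem~\ref{thm:radial_point_out} \emph{is} the proof of \eqref{eq:below mpe}. By taking that estimate as given, you assume exactly what has to be shown. The missing idea is the positive commutator argument with the threshold-weighted commutant $a=\rho^{-(2r+1)}\phi(\mathsf p)^2\psi(\varrho)^2$, for which
\[
H_pa=\rho^{-2r}\big(\beta_0(-2r-1)\phi(\mathsf p)^2\psi(\varrho)^2+2(\mathsf H_p\varrho)\phi(\mathsf p)^2\psi'(\varrho)\psi(\varrho)+2\mathsf q\mathsf p\,\phi'(\mathsf p)\phi(\mathsf p)\psi(\varrho)^2\big).
\]
The first term has a definite sign at $\mathcal R$ precisely because $r\neq-1/2$. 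For $r<-1/2$ its sign is opposite to that of the $\psi'$ term, which must therefore be kept as the $E^*E$ term, supported in an annulus around $\mathcal R$. This is where the punctured-neighbourhood hypothesis on $u$ really enters; for $r>-1/2$ the signs agree and $E$ disappears. One then writes $i[P,A]=-2\delta(\Lambda A)^*\Lambda A-B^*B+E^*E+HP+R$ and pairs with $u$. The term $2\Im\langle Au,Pu\rangle$ is absorbed into the $\delta$ term by Peter--Paul, $\langle u,HPu\rangle$ is bounded by Cauchy--Schwarz, and one iterates on $R\in\Psi_{\mathrm{sc}}^{*,2r-1}$ down to $\|u\|_{H^{-N,-N}}$.

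Your deduction also needs \eqref{eq:below mpe} in the strong sense, i.e.\ that a finite right-hand side forces $Bu\in H^{s,r}$. This is the second nontrivial ingredient. The paper runs the commutator argument only for Schwartz $u$, for which Theorem~\ref{thm:radial_point_out} is vacuous, and merely indicates that a Lecture 6 regularization upgrades it. A complete proof must carry this out. Replace $a$ by $a(1+\tilde r\rho^{-2})^{-M}$ as in \eqref{eq:reg above}. Differentiating the regularizer contributes $\frac{2M\tilde r\beta_0\rho^{-2}}{1+\tilde r\rho^{-2}}$ times the regularized commutant. For $r<-1/2$ this has the same sign as the main term $\beta_0(-2r-1)$, so, unlike the above-threshold case, $M$ may be taken as large as needed for all pairings to make sense when $u$ is only in $H^{-N,-N}$. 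The uniform bound then passes to the limit $\tilde r\to0$ by the weak-compactness argument of Lecture 6. With these two steps supplied, your outline becomes a proof; the geometric shell argument you flagged as the main obstacle is routine by comparison.
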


So, we can propagate below-threshold (meaning $H^{s,r}$ for $r<-1/2$) control from an annular region around the radial set into the radial set, assuming that $Pu$ is sufficiently nice. Equivalently, very bad wavefront set (meaning $\WF^{s,r}(u)$ with $r < -1/2$) has to escape the radial set.

\begin{remark}
    Note the orders $(s-2,r+1)$ on the $GPu$ term. 
    If this were an elliptic estimate, then, since the Helmholtz operator lies in $\Psi_{\mathrm{sc}}^{2,0}$, we would have $\lVert GP u \rVert_{H_{\mathrm{sc}}^{s-2,r} }$ on the right-hand side. Having the stronger norm 
    \[ 
        \lVert GP u \rVert_{H_{\mathrm{sc}}^{s-2,r+1} }
    \] 
    on the right-hand side means our estimate requires stronger hypotheses  --- just like with propagation estimates, radial-point estimates require one more order of control on the forcing. 
    We also remark that if we were treating the Klein-Gordon operator, which is not elliptic at fibre-infinity as well as at spatial infinity, then the orders on the $GPu$ term (with $P$ now representing the Klein-Gordon operator) would be $(s-1, r+1)$ --- i.e. we would need one order of extra control, relative to an elliptic estimate, at both fibre-infinity and spatial infinity. 
\end{remark}

\begin{example}
    Consider the solution \eqref{eq:smoothefns} of the free Helmholtz equation $Pu=0$. We saw that its scattering wavefront set is contained in $\Rin \cup \Rout$, so it follows immediately from the theorem above that $\operatorname{WF}_{\mathrm{sc}}^{s,r}(u)$ is empty for all $r<-1/2$, which we already knew by direct means (see Problem~\ref{prob:smoothefns}). 
\end{example}

\begin{example}
    Consider the plane wave $u=e^{ix_1}$. This lies only in $H^{\infty,-d/2-\varepsilon}_{\mathrm{sc}}$ for $\varepsilon>0$. So, assuming that $d\geq 2$, 
    \[\operatorname{WF}_{\mathrm{sc}}^{m,-1/2-\varepsilon}(u)\] 
    is nonempty for $\varepsilon$ sufficiently small. Theorem~\ref{thm:below} tells us that this wavefront set, if intersecting $\mathcal{R}$, must escape the radial set and propagate along the integral curves of the flow. Indeed, we have already seen that the sc-wavefront set of a plane wave is not confined to the radial sets.
\end{example}

We only prove \Cref{thm:radial_point_out} for $u\in \mathcal{S}$. This will suffice for our intended application in Lecture 12: proving a global Fredholm estimate.  The proof below loosely follows \cite{Hintz-book}. The estimate is valid in the `strong sense' that it holds for all $u\in \mathcal{S}'$. To show this requires a regularization argument of the sort discussed in Lecture 6. 

\subsection{Propagation of control out of the radial set}
In the next theorem, we assume that the spatial order $r$ is \emph{above} threshold, that is, $r > -1/2$. Here are two versions of the above-threshold radial point estimate. 

\begin{theorem}[Above threshold radial point estimate]\label{thm:above Schw}
    Let $B,G \in \Psi_{\mathrm{sc}}^{0,0}$ denote microlocalizers such that
    \begin{itemize}
        \item 
        $\operatorname{WF}'_{\mathrm{sc}}(B)\cap \mathcal{R}^* = \varnothing$, 
        \item $\operatorname{WF}'_{\mathrm{sc}}(B)\subseteq \operatorname{Ell}_{\mathrm{sc}}^{0,0}(G)$, 
        \item for every $q \in \operatorname{WF}'_{\mathrm{sc}}(B) \cap \Char(P)$, there is a bicharacteristic segment $\gamma([0, \infty))$ such that $\gamma(0) = q$ and $\gamma(s) \to \mathcal{R}$ as $s \to \infty$, while remaining  within  $\operatorname{Ell}_{\mathrm{sc}}^{0,0}(G)$. 
    \end{itemize}
    Then, for each $s,r\in \mathbb{R}$ with $r>-1/2$, and for each $N\in \mathbb{R}$, there exists a constant $C>0$ such that for all \textbf{Schwartz} $u$ we have 
    \begin{equation}\label{eq:mpe radial above}
        \lVert Bu \rVert_{H_{\mathrm{sc}}^{s,r} } \leq C \Big( \lVert GPu \rVert_{H_{\mathrm{sc}}^{s-2,r+1} } + \lVert u \rVert_{H_{\mathrm{sc}}^{s-1/2,r - 1/2} } \Big).
    \end{equation}
    
\end{theorem}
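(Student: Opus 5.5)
The plan is a positive commutator argument with a commutant concentrated at $\mathcal{R}$ itself, combined with the real-principal-type propagation of Theorem~\ref{thm:mpe} away from $\mathcal{R}$. Since $u$ is Schwartz, every pairing is finite, so no regularization is needed. First reduce to the case where $\operatorname{WF}'_{\mathrm{sc}}(B)$ lies in a small neighbourhood $U$ of $\mathcal{R}$. Indeed, by the hypothesis on bicharacteristics, every characteristic point of $\operatorname{WF}'_{\mathrm{sc}}(B)$ outside $U$ flows, inside $\operatorname{Ell}(G)$, into $U$ in finite time. Theorem~\ref{thm:mpe} then bounds those pieces of $Bu$ by $\|GPu\|_{H^{s-2,r+1}}$, by an $H^{s,r}$ norm of $u$ microlocalized near $\mathcal{R}$, and by $\|u\|_{H^{-N,-N}}$. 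Here one should check that $P\in\Psi^{2,0}$ really gives the loss $(s-1,r+1)$, hence $(s-2,r+1)$ at finite frequency where the $s$ order is irrelevant. Elliptic points are handled by the microlocal elliptic estimate.

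Near $\mathcal{R}$, which lies over spatial infinity at finite frequency, take $a=\rho^{-2r-1}\phi(q_\pm)^2$. Here $\rho$ is a spatial boundary defining function and $q_\pm$ is the quadratic defining function from the Proposition of Lecture 9. The cutoff $\phi$ is equal to $1$ near $0$, nonincreasing, and supported in $[0,\delta)$, with $\sqrt{-\phi\phi'}$ smooth. Multiply by a cutoff in $\tau$ or at fibre infinity as needed; this is harmless because $P$ is elliptic at fibre infinity. Using \eqref{eq:misc_a}, the rescaled Hamilton vector field is $\mp\xi_1(\rho\partial_\rho+v\cdot\partial_v)$. The computation of $\sigma(i[P,A])=H_pa$ then has two parts. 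The first is $(-2r-1)\rho^{-2r-1}\phi^2\cdot(\mp\xi_1)$, modulo the $p_1$-type term. This term has a definite sign precisely when $2r+1\neq 0$, and it is of the favourable sign when $r>-1/2$. The second is the term from $H_p$ hitting $\phi(q_\pm)$, which is supported on $\operatorname{supp}\phi'$, i.e.\ in a punctured neighbourhood of $\mathcal{R}$.

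The key point is the above-threshold sign. At a source or sink, the term from $H_p\phi(q)$ has the \emph{same} sign as the main term, because $H_p q_\pm=\pm\iota q_\pm+\dots$ and $\phi'\le 0$. The errors $E$ and $\rho G$ in the Proposition are dominated on $\operatorname{supp}\phi$ once $\delta$ is small. So, after fixing the overall sign of $a$, both terms combine into $-(b^2+b_1^2)$ with $b$ elliptic on $\mathcal{R}$, and no $e'$ term is needed. This is exactly why no $\|Eu\|$ term appears in \eqref{eq:mpe radial above}. The pairing identity \eqref{eq:comm-identity-G} then gives
\[
\|Bu\|_{H^{s,r}}^2\lesssim |\langle GPu,Au\rangle| + |\langle Ru,u\rangle|,
\]
with $R$ of order $(2s-1,2r)$ in the effective sense. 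Cauchy--Schwarz with weights $H^{s-2,r+1}$ and $H^{-s+2,-r-1}$ and absorption handle the first term. The remainder is controlled by $\|u\|^2_{H^{s-1/2,r-1/2}}$. That term is kept as is rather than iterated down to $H^{-N,-N}$, because iterating near the radial set would require the threshold condition at lower orders, which fails.

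The main obstacle is the second step: verifying that the symbol term $(2r+1)$, together with the skew part $p_1=i\sigma(P^*-P)$ (zero here, since $P$ is self-adjoint) and the subprincipal contributions from $\rho^{-2r-1}$, beats all other contributions uniformly on $\operatorname{supp}\phi$. It is also essential to track that the threshold is exactly $-1/2$, which comes from the order $(0,2r+1)$ of the commutant after accounting for the loss of $1$. I would try the explicit choice $q=\rho^2+|v|^2$ locally first and glue with the partition-of-unity argument already given.
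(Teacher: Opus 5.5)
Your proposal is correct and is essentially the paper's proof. It uses the same commutant, $\rho^{-2r-1}$ times cutoffs to $\mathcal{R}$, with $(2r+1)\beta_0$ giving the threshold. Above threshold the cutoff-derivative term has the same sign as the main term, so no $E$ term appears. The term $\langle GPu,Au\rangle$ is absorbed; the paper arranges this by squeezing a $2\delta(\Lambda A)^*\Lambda A$ term into $b$. Finally, the general case reduces to $B$ microsupported near $\mathcal{R}$ via Theorem~\ref{thm:mpe}.

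A few small fixes are needed:
\begin{itemize}
\item The remainder $R$ has orders $(2s-1,2r-1)$, not $(2s-1,2r)$. This is what actually yields the $\|u\|_{H^{s-1/2,r-1/2}}$ term.
\item The terms where $H_p$ hits your cutoff toward the characteristic set (the paper's $\phi(\mathsf{p})$) are harmless because they lie in $\Ell(P)$ over spatial infinity. They are handled by the $HP$/elliptic term, not by ellipticity at fibre infinity.
\item The $\rho G$ contribution is harmless because it is one spatial order lower and goes into $R$, not because it is small for small $\delta$.
\end{itemize}
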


\begin{remark} There are a number of variants of this estimate. For example, 
the final term on the RHS may be microlocalized to $\lVert Gu \rVert_{H_{\mathrm{sc}}^{s-1/2,r - 1/2} }$, at the cost of another (global) term $\lVert u \rVert_{H_{\mathrm{sc}}^{-N, -N} }$ measured in an arbitrarily weak Sobolev norm. Moreover, if $r-1/2$ is still above threshold, then one could iterate the estimate to reduce the orders of the $Gu$ term until the spatial order is below threshold. 
\end{remark}

Unlike \eqref{eq:below mpe}, the estimate \eqref{eq:mpe radial above} does \emph{not} hold in the strong sense. In fact, the solutions \eqref{eq:smoothefns} to $Pu = 0$ are such that, for $-1/2 < r < 0$, the RHS is finite, and the LHS is infinite! (See Problem~\ref{prob:strong}.)
We next give a strong version of this radial point estimate, valid for all $u \in \Schw '$. In the strong form of the estimate, we need an extra term on the RHS that imposes above-threshold behaviour of $u$ at the radial point, at some arbitrary $r_0 > -1/2$ unrelated to $r$: 
\begin{theorem}\label{thm:above dist}
 In addition to the microlocalizers $B, G$, let $\tilde E \in \Psi_{\mathrm{sc}}^{0,0}$ be a microlocalizer that is elliptic at $\mathcal{R}$. 
 Then, for each $s,r\in \mathbb{R}$ with $r>-1/2$, for each $s_0 \in \R$ and $r_0 > -1/2$, and for each $N\in \mathbb{R}$, there exists a constant $C>0$ such that for all tempered distributions $u$ we have 
    \begin{equation}\label{eq:mpe radial above 2}
        \lVert Bu \rVert_{H_{\mathrm{sc}}^{s,r} } \leq C \Big(\lVert GPu \rVert_{H_{\mathrm{sc}}^{s-2,r+1} } + \lVert \tilde E u \rVert_{H_{\mathrm{sc}}^{s_0,r_0} } + \lVert u \rVert_{H_{\mathrm{sc}}^{-N, -N} } \Big) .
    \end{equation}
\end{theorem}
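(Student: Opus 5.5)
The plan is to reduce Theorem~\ref{thm:above dist} to the Schwartz version, Theorem~\ref{thm:above Schw}, by a regularization argument of the type used in Lecture 6, Step 3. The term $\lVert \tilde E u\rVert_{H^{s_0,r_0}}$ with $r_0>-1/2$ is what makes the regularization work.

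\textbf{Step 1: a positive commutator at $\mathcal{R}$.} Near $\mathcal{R}$ I take a commutant with symbol
\[ a=\rho^{-2r}\ang{\xi}^{2s}\chi(q_\pm)\phi(p), \]
where $q_\pm$ is the quadratic defining function from the Proposition in Lecture 9, and $\chi$, $\phi$ are cutoffs with $\chi'\le 0$. Using \eqref{eq:misc_a} and the identity for $\scH_p^{2,0}(q_\pm)$, the symbol of $i(P^*A-AP)$ splits into three pieces.
\begin{itemize}
\item A term from $H_p$ hitting the weight $\rho^{-2r}$. It has a definite sign and size $\sim(2r+1)\rho^{-2r-1}$ at $\mathcal{R}$. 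The $+1$ comes from the subprincipal/density correction: $P=P^*$, but the weight interacts with $\rho\partial_\rho$ in \eqref{eq:misc_a}. This is exactly where the threshold $-1/2$ appears, and for $r>-1/2$ this term has the good sign.
\item A term $\chi'(q_\pm)H_pq_\pm$. It is supported away from $\mathcal{R}$ and also has the good sign, because $\mathcal{R}$ is a source/sink.
\item Terms supported on $\WF'(GP)$ near $\mathcal{R}$, which are handled by Cauchy--Schwarz.
\end{itemize}
So the commutator is elliptic on $\mathcal{R}$ with a \emph{favourable} sign and requires no a priori control term $E$ there. The only errors are lower order, of orders $(2s-1,2r-1)$. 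This is the estimate of Theorem~\ref{thm:above Schw}, proved for nice $u$.

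Away from $\mathcal{R}$, the bicharacteristics in $\WF'(B)$ tend to $\mathcal{R}$. I therefore propagate from a neighbourhood of $\mathcal{R}$ outward using Theorem~\ref{thm:mpe}, run backward in time, together with elliptic estimates off $\Char(P)$.

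\textbf{Step 2: regularize.} For a general $u\in\Schw'$, I take $a_{\tilde r}=a(1+\tilde r|x|^2)^{-M}(1+\tilde r|\xi|^2)^{-M'}$. Exactly as in \eqref{eq:reg x}, differentiating the regularizer contributes a term of the form
\[ -2M\frac{\tilde r|x|^2}{1+\tilde r|x|^2}\,\rho H_p(\rho^{-1})\,a_{\tilde r}. \]
This effectively lowers the weight from $r$ to $r-M\,\theta_{\tilde r}$, where $\theta_{\tilde r}\in[0,1]$. The threshold sign condition then holds uniformly in $\tilde r$ only if $r-M>-1/2$.

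To arrange this, I choose $M$ so that the effective order $r-M$ is still above threshold, specifically $r-M\ge r_0$ with $r_0>-1/2$. For $u$ with $\tilde Eu\in H^{s_0,r_0}$, the expectation $\ang{A_{\tilde r}u,u}$ is then finite for every $\tilde r>0$. Each regularized quantity is finite, the sign structure from Step 1 holds uniformly in $\tilde r$, and the estimate gives a uniform bound on $\lVert B_{\tilde r}u\rVert_{H^{s,r}}$.

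\textbf{Step 3: close the argument.} I iterate in steps of $1/2$ to reduce the remainder terms down to $\lVert u\rVert_{H^{-N,-N}}$, using $G$ and the terms $\lVert\tilde Eu\rVert$. Then I take a weak limit as in \eqref{eq:vineq}.

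\textbf{Main obstacle.} The hardest part is the bookkeeping in Step 2. The amount of regularization is capped by $r-r_0$, so I would first bootstrap. Start from the known order $r_0$ and raise the order by at most $r_0+1/2-\delta$ at each step, each time keeping the effective regularized order above $-1/2$. After finitely many steps I reach $r$. The differential order $s$ is regularized separately, since at $\mathcal{R}$ we work at finite frequency and $s$ is essentially irrelevant there.
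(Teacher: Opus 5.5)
Your overall route is the paper's. You regularize the commutant by $(1+\tilde r|x|^2)^{-M}$ (the paper writes $(1+\tilde r\rho^{-2})^{-M}$), note that the derivative of the regularizer has the wrong sign and must not swamp the threshold term, use the a priori control of $\tilde E u$ to make the regularized pairings finite, and remove the regularizer by a weak limit.

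\textbf{Step~2 is inverted and fails as written.} For fixed $\tilde r>0$, the regularized commutant has spatial order $2(r-M)+1$. The quantities in the commutator identity, for example $\lVert B_{\tilde r}u\rVert^2$ with $B_{\tilde r}$ of spatial order $r-M$, and the commutator pairing of order $2(r-M)$, are known to be finite only when $r-M\le r_0$, i.e.\ $M\ge r-r_0$. Your condition $r-M\ge r_0$ gives finiteness only in the case of equality. The other constraint is positivity, $r-M>-1/2$, i.e.\ $M<r+1/2$. So $r-r_0$ is a \emph{lower} bound on the amount of regularization, not a cap. The admissible range $r-r_0\le M<r+1/2$ is nonempty exactly because $r_0>-1/2$, and this is the entire role of the $\tilde E$ term. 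Hence a single regularization step suffices, as in the paper. Even under your own two constraints, $M=r-r_0$ (when $r>r_0$) works at once. Your bootstrap does still work if at each stage you take $M=r_{k+1}-r_k$, but limiting the increments to $r_0+1/2-\delta$ is spurious.

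\textbf{Your account of the threshold in Step~1 is also wrong.} For $P=\Delta-1$ there is no subprincipal or density correction. The ``$+1$'' arises because $i[P,A]$ has spatial order one less than $A$. To control $u$ in $H_{\mathrm{sc}}^{*,r}$, the commutant must therefore carry the weight $\rho^{-(2r+1)}$. Using $H_p=\rho\mathsf{H}_p$ and $\mathsf{H}_p\rho=\beta_0\rho$, one then gets $H_p\rho^{-(2r+1)}=-(2r+1)\beta_0\rho^{-2r}$. With the weight $\rho^{-2r}$ you wrote, the corresponding term is instead $-2r\beta_0\rho^{-2r+1}$. That term changes sign at $r=0$ and yields an $H_{\mathrm{sc}}^{*,r-1/2}$ estimate. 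This is the same threshold after relabelling, so nothing essential breaks. But the orders, and hence the constraints on $M$ above, must be computed with the weight $\rho^{-(2r+1)}$, which is implicitly what your Step~2 condition $r-M>-1/2$ already assumes.
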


So, if $Pu$ is nice, microlocally near $\mathcal{R}$, and if $u$ is known \emph{a priori} to have sufficient -- that is, above threshold -- decay on $\mathcal{R}$ (as provided by the $\tilde Eu$ term, microlocally in $H_{\sca}^{s_0, r_0}$ near $\mathcal{R}$ with $r_0$ above threshold if the RHS of \eqref{eq:mpe radial above} is finite), then $u$ is \emph{automatically} nice on $\mathcal{R}$ as well. 
Notice that, unlike the previous `propagation of regularity' theorems we have seen, we don't require regularity of the same order elsewhere to obtain above-threshold regularity at $\mathcal{R}$. In that sense, we can interpret Theorems~\ref{thm:above Schw} and \ref{thm:above dist} as saying that regularity `flows out of the radial set', as opposed to Theorem~\ref{thm:below}, in which regularity `flows into the radial set' from outside. 


\begin{example}
    Suppose that $(\triangle -1) u = f $ for $f\in \mathcal{S}(\mathbb{R}^n)$. Using the Fourier transform, we can construct many solutions of the form
    \begin{equation}
        u \in \langle r \rangle^{-(d-1)/2} e^{i \langle r \rangle} A + \langle r \rangle^{-(d-1)/2} e^{-i \langle r \rangle} B, \;  r = |x|
    \end{equation}
    for $A,B\in C^\infty(\overline{\mathbb{R}^n})$. What the theorem above says in this case is that if $A|_{\infty \mathbb{S}^{n-1}} = 0$, then in fact $A\in \mathcal{S}(\mathbb{R}^n)$, in which case we can choose $A=0$. 
\end{example}

\subsection{\texorpdfstring{Proof of the radial point estimates (for Schwartz $u$)}{Proof of the radial point estimates (for Schwartz u)}} 
\label{subsec:proof-radial-est}

We will now prove the radial point estimates assuming that $u$ is Schwartz. We will see in Lecture 12 that this is sufficient for a global Fredholm estimate, due to the density of Schwartz functions in the appropriate function space --- see Lemma~\ref{prop:Schwdense}.

As a reminder, we are only discussing the free Helmholtz operator $P= \triangle+1$, but the discussion below works essentially verbatim for all $P$ such that 
\[
    P-(\triangle+1)\in \operatorname{Diff}_{\mathrm{sc}}^{2,-2}.
\]
Small modifications allow one to handle elliptic $P$ that differ from $\triangle+1$ by an element of $\Psi_{\mathrm{sc}}^{2,-1}$. See \cite{Hintz-book} or \cite{vasy-monicourse} for details on that. An anti-symmetric subprincipal part shifts the thresholds in the radial point estimate.

\begin{remark}\label{rem:warning}
    Warning: proofs of radial point estimates always involve an excessive amount of notation. To avoid some of this, we will define some operators $B,E,\dots$ below which have a different meaning than in the theorem statements.
\end{remark}

    To simplify the notation a bit, we will only specify the form of symbols near the characteristic set, thus ignoring fiber infinity (which lies in the elliptic region) and the interior of $\partial(\overline{\mathbb{R}^n}\times \overline{\mathbb{R}^n} )$. Consequently, we will write a `$*$' where fiber infinity orders should be when it does not matter. 

    We use the notation: 
    \begin{itemize}
        \item $\varrho$ is a quadratic-defining-function of $\mathcal{R}$ in the characteristic set. 
        
        \item $\mathsf{p} = (\xi^2-1)/(\xi^2+1)$ is the principal symbol of $(\triangle -1)/(\triangle+1)$.
        \item $\beta_0 \in C^\infty(\overline{\mathbb{R}^n}\times \overline{\mathbb{R}^n})$ is defined such that $\mathsf{H}_p \rho = \beta_0 \rho$ near $\mathcal{R}$, where $\rho$ is a boundary-defining-function for base infinity (e.g. $\rho=\langle r \rangle^{-1}$).
        \item  $\beta_1 \in C^\infty(\overline{\mathbb{R}^n}\times \overline{\mathbb{R}^n})$ is defined such that $\mathsf{H}_p \varrho = \beta_1 \varrho$ near $\mathcal{R}$.
    \end{itemize}
    The exact formula for $\beta_0,\beta_1$ depends on the choice of boundary-defining-functions used in weighting $H_p$ to define $\mathsf{H}_p$. 
    Also the sign depends on the sign convention used in defining the Hamiltonian flow, which has no intrinsic significance. What matters is that each of $\beta_0,\beta_1$ has a definite sign on $\mathcal{R}$, and also what their relative signs are.  Moreover, these definite signs do not depend on any choices made. So, it suffices to work them out in local coordinates, using whatever locally valid coordinates, boundary-defining-functions, etc.\ we like. 
    Also, using the symmetry of the problem, it suffices to work in a region where $x_1$ is a dominant variable, i.e. $x_1 \geq c |x|$, with $c > 0$, so we can use $\rho=1/x_1$ as a valid boundary-defining-function, rather than $1/r$. Working with either would result in the same signs --- the computation is just easier this way.

    Then \cref{eq:misc_a} reads $\mathsf{H}_p = \mp \xi_1 (\rho \partial_\rho + \sum_j v_j \partial_{v_j})$ for $v_j = \omega_j - y_j$ (where the top sign refers to $\Rout$ and the bottom sign to $\Rin$). Then, if $\varrho = \sum_j v_j^2$,  
    \begin{align}
        \beta_0 &= \mp \xi_1, \\ 
        \beta_1 &= \mp 2 \xi_1 . 
    \end{align}
    In the region under consideration, $\xi_1$ is a dominant variable near the radial set when $x_1$ is a dominant variable, since $x$ and $\xi$ are parallel on the radial set. So the radial set lies away from $\{\xi_1=0\}$, and this means $\beta_1,\beta_0$ have a definite sign, indeed the same definite sign. For definiteness, we will discuss the case of $\Rout$, so that $\xi_1>0$ on it, and $\beta_0, \beta_1$ are both strictly negative.

    The proof is via a positive commutator estimate of the sort used in previous lectures to prove a propagation estimates. 
    Our goal is to cook up a pseudodifferential operator $A$, the ``commutant,'' such that $i[P,A]$ has a definite sign (modulo lower order terms) modulo some term which we assume to be under control. This means constructing a symbol $a$ such that the derivative $H_p a$ of $a$ along the Hamiltonian flow has a definite sign. But, because $\mathsf{H}_p$ vanishes at $\mathcal{R}$, this is easier said than done.  The symbol cannot be smooth at $\mathcal{R}$ --- or else we have to weaken what we mean when we say that $H_p a$ has a definite sign. The simplest sort of non-smooth symbol is one with some factors of the boundary-defining-function $\rho$ thrown in.
    
    Indeed, suppose $a=\rho^{-r'}$ near the radial set $\mathcal{R}$. Then 
    \[
        \mathsf{H}_p a \sim -r' \beta_0  a.
    \]
    So, as long as $r'\neq 0$ (which would make $a$ smooth at $\mathcal{R}$), then $a^{-1}\mathsf{H}_p$ has a definite sign at $\mathcal{R}$. The proof of radial point estimates involves cutting off $a$ away from $\mathcal{R}$ in an amenable way, and then running through the positive-commutator argument to see what pops out.

    Here are the details: let\footnote{When doing this carefully, there should be cutoffs localizing near $\rho=0$ and away from fiber infinity, but as we already mentioned, these are  of tertiary importance.}
    \begin{equation}
        a = \rho^{-(2r+1)}  \phi( \mathsf{p})^2 \psi(\varrho)^2,
    \end{equation}
    where $\phi,\psi \in C_{\mathrm{c}}^\infty(\mathbb{R};[0,1])$
    are both identically $=1$ near $0$ and $\psi$ satisfies 
    \begin{equation}\label{eq:psi smoothness}
        \sqrt{-\psi' \psi} \in C^\infty( [0,\infty) ). 
    \end{equation}
    (We are taking $r'=2r+1$ for later convenience.)
    See Lecture 6 for why such a $\psi$ exists (it is easy to construct using the function $\chi_0$ from \eqref{eq:chi_0} and \eqref{eq:chi_0 2}). The purpose of $\phi$ is just to localize near the characteristic set --- its role in the argument is of secondary importance. The purpose of the $\psi(\varrho)$ term is to further localize near the radial set. This term is of primary importance.

    We can now compute 
    \begin{multline}
        H_p a = \rho \mathsf{H}_p a = \rho^{-2r} ( \beta_0 (-2r-1) \phi(\mathsf{p})^2 \psi(\varrho)^2 + \\
        + 2 (\mathsf{H}_p \varrho) \phi(\mathsf{p})^2 \psi'(\varrho) \psi(\varrho) +2 \mathsf{q} \mathsf{p} \phi'(\mathsf{p}) \phi(\mathsf{p}) \psi(\varrho)^2   ) ,
    \end{multline}
    where $\mathsf{q} \in C^\infty$ is defined by $\mathsf{H}_p \mathsf{p} = \mathsf{q}\mathsf{p}$. 
    On the right-hand side:
    \begin{enumerate}
        \item The first term has a definite sign on the radial set (unless $r=-1/2$, exactly, which is the reason this value of $r$ is explicitly excluded in both Theorems~\ref{thm:below} and \ref{thm:above Schw}). The sign is determined by whether $r<-1/2$ or $r>-1/2$. 
        \item The second term also has a semi-definite sign on the characteristic set (we do not have control over that sign), and it is essentially supported in a small annular region encircling $\mathcal{R}$. 
        \item The final term, since it involves $\phi'(\mathsf{p})$, is supported in the elliptic set of $P$ (which we understand completely).
    \end{enumerate}
    If $r>-1/2$, then the first two terms on the right-hand side have the \emph{same} sign, while if $r<-1/2$ they have the opposite sign. (Recall that $\psi'<0$.)

    Let us discuss the $r<-1/2$ case first. We will then mention the differences in the $r>-1/2$ case, which amount to some consequential sign switches.

    In the $r < -1/2$, i.e. below-threshold, case, we make the following observation: it is enough to prove the estimate for a $B$ that is elliptic at $\mathcal{R}$ and microsupported in an arbitrarily small neighbourhood of $\mathcal{R}$. Indeed, if we prove the estimate in this case, then it also follows for any other $B$ operator (as in the Theorem statement) using microlocal propagation estimates, Theorem~\ref{thm:mpe}. This is where the first and third hypotheses of the theorem are crucial. This observation is useful as it means that we need only prove the estimate for a specific $B$ of our choice. 
    
    Let us give a name to the various terms appearing above:
    \begin{align}
            b&= \rho^{-r} \phi(\mathsf{p}) \psi(\varrho) \sqrt{ \beta_0 \big(2r+1 -\underbrace{2\delta \beta_0^{-1} \phi(\mathsf{p})^2 \psi(\varrho)^2}_{\text{For later use}} \big)} \label{eq:b defn} \\
            e&= \rho^{-r}  \phi(\mathsf{p}) \sqrt{2 (\mathsf{H}_p \varrho) \psi'(\varrho)\psi(\varrho) } \\ 
            h &= 2 \rho^{-2r} \mathsf{q} \phi'(\mathsf{p}) \phi(\mathsf{p}) \psi(\varrho)^2.
    \end{align}
    Here $\delta > 0$ is a small parameter to be chosen later, and the indicated term in \eqref{eq:b defn} is added in for later advantage. Notice that it is precisely the strict positivity of $-\beta_0(2r+1)$ that allows us to `squeeze in' this additional term. (More abstractly, it is crucial that $\varrho^{-1}H_p \varrho$ has a definite sign near the radial set.)
    As long as the supports of $\phi,\psi$ and $\delta$ are sufficiently small, then the definitions of these symbols all make sense in the sense that the square roots are smooth, due either to strict positivity in \eqref{eq:b defn} or due to \eqref{eq:psi smoothness}.  (First take the supports of $\phi,\psi$ sufficiently small, then $\delta$ relative to those.)
    
    So, 
    \begin{equation}
        H_p a = -2 \delta \rho^{2r+2} a^2-b^2+e^2+hp.
    \end{equation}
    Now we quantize: let $A= \operatorname{Op}(a)$, $B=\operatorname{Op}(b)$, etc. Thus, $A \in \Psisc^{*, 2r+1}, B \in \Psisc^{*, r}$, $E \in \Psisc^{*,r}$ and $H \in \Psisc^{*, 2r}$. (Recall that the differential orders here are not relevant; all these operators are in fact of order $-\infty$ in the differential sense, since we are working near the radial set which is disjoint from fibre-infinity). Notice that $B$ is not quite the same operator that appears in the Theorem statement, since there $B$ had order $(0,0)$; however, this is an inessential point, as we can adjust for this by adjusting the norm in which we measure the functions. Here we want to use the $L^2$ inner product, so it is convenient to put the weight onto the operator rather than the norm.  We also  let $\Lambda=\operatorname{Op}(\rho^{r+1}) \in \Psisc^{0, r+1}$. Then, 
    \begin{equation}
        i[P,A] =  -2 \delta (\Lambda A)^* (\Lambda A) - B^*B + E^* E+ HP + R 
        \label{eq:misc_11}
    \end{equation}
for some $R\in \Psi_{\mathrm{sc}}^{*,2r-1}$. Note that $B$ is elliptic at the radial set. We also observe that $B$ and $E$ satisfy the third condition in Theorem~\ref{thm:below} since $E$ is elliptic on a small annular region around $\mathcal{R}$, and in particular, the elliptic set of $E$ intercepts every bicharacteristic of $P$ other than the stationary bicharacteristics sitting at $\mathcal{R}$ and $\mathcal{R}^*$.  For convenience, we can arrange that $A$ is self-adjoint, by replacing it with $(A+A^*)/2$.

We now proceed as in the propagation estimates, namely we apply the operators in \eqref{eq:misc_11} to $u$ and then take the inner product with $u$, an operation that is certainly well-defined for $u \in \Schw$:
\begin{equation}\label{eq:misc_12}
    \langle u, i[P,A] u\rangle = -2\delta \lVert  \Lambda A u \rVert^2 - \underbrace{\lVert Bu \rVert^2}_{\text{Thing we want to control}} + \lVert Eu \rVert^2 + \langle u,HP u \rangle + \langle u,R u \rangle.  
\end{equation}
The thing we want to control is $\lVert Bu \rVert$. The term $-2\delta \lVert  \Lambda A u \rVert^2$ on the right-hand side has the same sign, so is \emph{helping us}. The $Eu$ term, however, has the `wrong' sign, so it has to be retained on the RHS of the estimate.  The term $\langle u,HP u \rangle$ can be understood via elliptic regularity (since $h$ was essentially supported in the elliptic set), and $\langle u,R u \rangle$ should be ``lower order,'' so unimportant. 

The left-hand side $\langle u, i[P,A] u\rangle$ will be controlled in terms of $Pu$. This last step is the same as in the proof of Theorem~\ref{thm:mpe}:\footnote{As a reminder, we are working with $P$ such that $P=P^*$. If $P\neq P^*$, then $P-P^*$, which is lower order, will enter. }
\begin{equation}\label{eq:misc_13}
    |\langle u, i[P,A] u\rangle| = |\langle u,  PA u\rangle+\langle u,  AP u\rangle| = |\langle Pu,  A u\rangle+\langle Au,  P u\rangle| = 2 |\Im \langle Au,Pu \rangle|. 
\end{equation}
So, rearranging \eqref{eq:misc_12} and using \eqref{eq:misc_13}, we have 
\begin{equation}
    2\delta \lVert  \Lambda A u \rVert^2 + \lVert Bu \rVert^2  \leq 2 |\Im \langle Au,Pu \rangle|+ | \langle u,HP u \rangle | +  \lVert Eu \rVert^2 + |\langle u,R u \rangle|. 
\end{equation}
Using the Peter--Paul inequality\footnote{The Peter-Paul inequality is a frivolous name for the inequality $2|xy| \leq \epsilon x^2 + y^2/\epsilon$, for any real $x, y$ and $\epsilon > 0$. The choice of $\epsilon$ allows us to have a small constant in front of the $x^2$ term at the cost of a large constant in front of $y^2$. We rob Peter to pay Paul.} to estimate $\langle Au,Pu \rangle = \langle \Lambda Au, \Lambda^{-1} Pu \rangle$, we get a term $\varepsilon \lVert \Lambda A u \rVert^2$, where $\varepsilon$ can be taken arbitrarily small, and one $\varepsilon^{-1} \lVert \Lambda^{-1} Pu \rVert^2$. That $\varepsilon \lVert A u \rVert^2$ term can be absorbed into the left-hand side if $\varepsilon <\delta$, and that is the \emph{raison d'\^{e}tre} of the $2\delta \lVert  \Lambda A u \rVert^2$ term. So: 
\begin{equation}
    \lVert Bu \rVert^2 \lesssim \lVert \Lambda^{-1} Pu \rVert^2+ | \langle u,HP u \rangle | + \lVert Eu \rVert^2  + |\langle u,R u \rangle|. 
\end{equation}

It is now ``straightforward'' to go from this estimate to the final result, since we have a bound of $Bu$ (the thing we want to control) in terms of $Pu$, some term controlled via an elliptic estimate, and some ``lower-order'' term. Since the details are not enlightening (and since they are covered in e.g.\cite{Hintz-book}), we summarize the key points:  $|\langle u,R u \rangle|$ is controlled by $u$ near $\mathcal{R}$ in 
\[ 
H_{\mathrm{sc}}^{*,r-1/2},
\]
and $\lVert \Lambda^{-1} Pu \rVert^2$ is controlled by $Pu$ in \[H^{*,r+1}_{\mathrm{sc}}.
\]
The $\langle u,HP u \rangle$ can be estimated by $\| u \|_{*, r-1} \| Pu \|_{*, r+1} \lesssim \| u \|_{*, r-1}^2 + \| Pu \|_{*, r+1}^2$, two terms that are already controlled. 
So, we can control $\lVert Bu \rVert_2^2$ if we have a local $H_{\mathrm{sc}}^{*,r-1/2}$ bound on $u$ and a global $H_{\mathrm{sc}}^{*,r+1}$ bound on $Pu$. Since $B\in \Psi_{\mathrm{sc}}^{*,r}$, this means that we can conclude that $u$ is in $H_{\mathrm{sc}}^{*,r}$ on the elliptic set of $B$. 
This is almost what we want, but sub-optimal in two ways:
\begin{itemize}
    \item We want to only make use of local control on $Pu$; but this is easy to arrange, because when we bounded $\langle Au,Pu \rangle$, because $A$ has operator wavefront set near $\mathcal{R}$, we really only needed to control $Pu$ nearby. Indeed, if we choose $G$ as in Theorem~\ref{thm:below}, in addition to being microlocally equal to the identity\footnote{This means that $G - \Id$ has operator wavefront set disjoint from $\WF'(A)$} on $\WF'(A)$, then we have $[A, P] = [A, GP] + [A, (\Id - G)P]$, but the factor $[A, (\Id - G) P]$ is residual if $G$ is microlocally equal to the identity on $\WF'(A)$. Thus, we can replace $P$ by $GP$ in the argument, at the cost of an error term 
    $\ang{u, [A, (\Id - G)P] u}$ that can be estimated by $\| u \|_{-N, -N}^2$, which is the final term on the RHS of \eqref{eq:below mpe}. 
    \item We don't want to assume control on $u$, even microlocally, in $H_{\mathrm{sc}}^{*,r-1/2}$. But we can iterate the argument (with $r$ replaced by $r-1/2$) to estimate this term by a (weaker) norm of $Pu$ and a still weaker norm of $u$ in $H_{\mathrm{sc}}^{*,r-1}$. This iteration can be continued until the norm of $u$ is reduced to order $(-N, -N)$, at which point this term can be absorbed in the final term on the RHS of \eqref{eq:below mpe}. 
\end{itemize}

That completes the discussion of the below threshold estimate $r < -1/2$. What about $r>-1/2$? Similarly to the below-threshold estimate, it is sufficient (using the first and third conditions in the Theorem statement and microlocal propagation, Theorem~\ref{thm:mpe}) to prove for a single choice of $B$ that is elliptic at $\mathcal{R}$ and microsupported in a small neighbourhood of $\mathcal{R}$. 
The estimate works in a similar fashion, except one has to change the definition of $b$ in \eqref{eq:b defn} so that the argument of the square-root is positive: we redefine 
$$
b= \rho^{-r} \phi(\mathsf{p}) \psi(\varrho) \sqrt{ -\beta_0 \big(2r+1 +2\delta \beta_0^{-1} \phi(\mathsf{p})^2 \psi(\varrho)^2 \big)} . 
$$
This changes the sign of the $B^*$ and $(\Lambda A)^* (\Lambda A)$ terms, so now instead of \eqref{eq:misc_11}  one gets
    \begin{equation}
        i[P,A] =  2 \delta (\Lambda A)^* (\Lambda A) + B^*B + E^* E+ HP + R 
    \end{equation}
 One then proceeds as before. The main difference is that now the $B^*B$ and $E^*E$ terms have the \emph{same sign}, so the $E^*E$ term can be discarded. This explains why there is no $E$ term in \eqref{eq:mpe radial above}. 

 \

We next make some brief remarks about the proof of Theorem~\ref{thm:above dist}. In this case, if $u$ is not in $H_{\sca}^{s_0, r_0}$ for some $s_0 \in \R$ and some $r_0 > -1/2$, then the result is vacuously true as the RHS is $+\infty$. So we may assume that $u \in H_{\sca}^{s_0, r_0}$. To prove this result, we regularize, similar to the procedure in Lecture 6. Again we take $\tilde r$ as a regularizing parameter, and define, similar to \eqref{eq:reg x}, 
\begin{equation}\label{eq:reg above}
\tilde a = a (1 + \tilde r \rho^{-2})^{-M}. 
\end{equation}
Then we find that 
$$
H_p \tilde a = (1 + \tilde r \rho^{-2})^{-M} \Big( H_p a +  \frac{2\tilde r M \beta_0 \rho^{-2}}{1 + \tilde r \rho^{-2}} a \Big). 
$$
This new term is the `wrong' sign, that is, opposite in sign to the good positive term $-\beta_0 (2r+1) a$ that allows us to define $b$ by taking a square-root. We can see that $2M$ must be taken less than $2r+1$ in order for this needed positivity not to be wiped out completely. A moment's thought shows that this is to be expected, as the regularizing factor has the effect of reducing the growth rate of the symbol of $a$ from $\rho^{-2r-1}$ to $\rho^{-2r-1+2M}$. However, we also know that the positive term on which the entire estimate relies requires that this power is negative, otherwise the sign of this term changes sign (as in the below-threshold estimate). So, we see intuitively that the condition $2M < 2r+1$ is to be expected. Then, since the overall power of $\rho$ can be taken arbitrarily close to zero (while remaining negative), and taking into account the gain of one in the order of the commutator $i[P, A]$ relative to the order of $A$, we see that $u$ being in an above-threshold space microlocally, i.e. in $H_{\mathrm{sc}}^{*,r_0}$ for some $r_0 > -1/2$, is a sufficient condition for the calculation above to go through with the regularizing factor. The regularizer is then removed following the argument in Lecture 6.  (Actually, there is a further subtlety involved in calculating the commutator, which requires a second regularization step. This is explained in  \cite[discussion below (5.62)]{vasy-monicourse}; we do not pursue this point further.)

\subsection{Problems}

\begin{problem}
    Consider $P= x D_x\in \Psi_{\mathrm{sc}}^{1,1}(\mathbb{R})$. 
    \begin{enumerate}[label=(\alph*)]
        \item What is $\operatorname{Char}_{\mathrm{sc}}^{1,1}(P)$? 
        \item State the radial-point estimates for the radial sets of $P$ over spatial infinity. Hint: you may use without proof that the threshold decay rate $s$ is the one such that $H_{\mathrm{sc}}^{*,s-\varepsilon}$ contains $\ker P$ for $\varepsilon>0$ but not $\varepsilon=0$.    
        \item Deduce them from the radial-point estimates of $P$ over $0$, which you may cite from \cite{Hintz-book}.
        \item   Prove them directly. 
    \end{enumerate}
\end{problem}

\begin{problem}\label{prob:strong}
    Consider the estimate from Theorem~\ref{thm:above dist}:
    \begin{equation}\label{eq:above dist}
        \lVert Bu \rVert_{H_{\mathrm{sc}}^{s,r} } \leq C(\lVert GPu \rVert_{H_{\mathrm{sc}}^{s-2,r+1} } + \lVert \tilde Eu \rVert_{H_{\mathrm{sc}}^{s_0,r_0} } + \lVert u \rVert_{H_{\mathrm{sc}}^{-N,-N} })
    \end{equation}
    for all $u\in \mathcal{S}'$. Here, $-1/2<r_0<r$ and $s_0 < s$. 
    \begin{enumerate}[label=(\alph*)]
        \item Show that (for appropriate $\tilde E,B$ satisfying the hypotheses of that theorem),  for each $\varepsilon>0$ we can bound 
    \begin{equation}
    \lVert \tilde Eu \rVert_{H_{\mathrm{sc}}^{s_0,r_0} } \leq \varepsilon \lVert Bu \rVert_{H_{\mathrm{sc}}^{s,r} } + C \| Pu \|_{H_{\mathrm{sc}}^{s-2,r+1} }+ C(\varepsilon) \lVert u \rVert_{H_{\mathrm{sc}}^{-N,-N} } 
    \end{equation}
    for some $C(\varepsilon)>0$. 
    \item  Consequently,  if we ``absorb the  $\varepsilon \lVert Bu \rVert_{H_{\mathrm{sc}}^{s,r} }$ term into the left-hand side,'' 
    we get an estimate like 
    \begin{equation}\label{eq:above nonstrong}
        \lVert Bu \rVert_{H_{\mathrm{sc}}^{s,r} } \lesssim  \lVert GPu \rVert_{H_{\mathrm{sc}}^{s-2,r+1} }  + \lVert u \rVert_{H_{\mathrm{sc}}^{-N,-N} }.
    \end{equation}
    Explain what assumption was implicitly made in order to deduce \eqref{eq:above nonstrong}, and how we know that this estimate does \emph{not} hold for all $u\in \mathcal{S}'$. 
    \end{enumerate}
    
\end{problem}

\begin{problem}
    Fix $\mathcal{R} \in \{\Rin, \Rout \}$. 
    Prove directly, without using radial-point estimates, that if a solution 
    \begin{equation}
        u(x)  = \int_{\mathbb{R}^d} e^{ix\cdot \xi} \delta(\lVert \xi \rVert - 1) A(\xi ) \, d\xi,\quad A\in C^\infty(\mathbb{R}^d)
    \end{equation}
    to the free Helmholtz equation satisfies $\operatorname{WF}_{\mathrm{sc}}^{s,r}(u)\cap \mathcal{R}= \varnothing$ for some $m\in \mathbb{R}$ and $r>-1/2$, then $\operatorname{WF}_{\mathrm{sc}}(u)\cap \mathcal{R}= \varnothing$.
\end{problem}

\begin{problem} Explain clearly why it is that, in the regularization step  needed to prove Theorem~\ref{thm:above dist}, see \eqref{eq:reg above}, it is only possible to regularize by a fixed amount $2M \leq 2r+1$; while for the below-threshold case, one can regularize by an arbitrarily large amount (that is, $M$ may be taken arbitrarily large in that case). 
\end{problem}

\begin{problem}
    Consider a `formal series solution' to the free Helmholtz equation $(\Delta - \lambda^2) u = 0 $ of the form
    $$
    e^{i\lambda |x|} \sum_{j=0}^\infty |x|^{-\alpha - j} f_j(\hat x), \quad  \hat x = \frac{x}{|x|}. 
    $$
    Here we assume that $f_j$ are $C^\infty$ functions on the sphere. 
    
    Show that such a formal series exists if $\alpha = (n-1)/2$, but not for any other value of $\alpha$ (if $f_0$ is not identically zero). Relate this fact to the threshold value $-1/2$ for the spatial order for $\Delta - \lambda^2$. 
\end{problem}

\begin{problem}\label{prob:wave} Let $P$ be the free wave operator $D_t^2 - \Delta$ on $\R^{n+1}$, with symbol $\tau^2 - |\xi|^2$. Compute the characteristic variety and the radial set of this operator (as an operator in the scattering calculus). Show that $P$ cannot be treated using the analysis presented in this lecture, since the characteristic variety meets the zero section $\{ (\xi, \tau) = 0 \}$ over the boundary of the `light cone' at spacetime infinity, and $\beta_0$ (as defined below Remark~\ref{rem:warning}) is equal to zero at the zero section. This means that one cannot define the symbol $b$ as we have done above, as the argument of the square-root in \eqref{eq:b defn} will not be strictly positive. 

Remark: The solution to this issue is to `blow up at zero frequency' at the spacetime boundary, which leads to the b-calculus, as the front face created by this blowup coincides with the boundary of the b-cotangent bundle. The Fredholm method can be implemented in the b-calculus (and this is where it was first introduced by Vasy; see the Introduction for a brief discussion), but there is a catch: in the b-calculus, instead of the symbol extending to spacetime infinity, there is a normal operator, which is global on the boundary. In the b-calculus setting one needs invertibility of this normal operator, which is much more difficult to arrange than invertibility of the principal symbol (at the spacetime boundary) in the scattering calculus. The latter, which we carried out in this Lecture, simply requires checking that a certain function we construct (the function $b$ in our case, the principal symbol of $B$) is nonzero. For this reason, the Fredholm method in the scattering calculus is technically simpler than the analogous construction in the b-calculus. It is for this reason that the authors elected to use the scattering calculus in these Lecture notes. 
\end{problem}

\section{Lecture 12: Global (semi-)Fredholm estimate for Helmholtz operator}


In this lecture, we combine estimates obtained previously to establish our global (semi-)Fredholm estimates. We will call them Fredholm estimates below since the same estimate for their adjoints can be proved in the same manner and combining them indeed establishes the Fredholm property.

We will consider the Helmholtz operator $P=\Delta_g-\lambda^2$. 
Most results in this section holds on more general asymptotically conic manifolds, see \cite{Melrose1994}\cite{Helmholtz-1}. But the commuting property as in Proposition~\ref{prop:cusp-Laplacian-commutator} and the test module used will be more complicated.

\subsection{Global (semi-)Fredholm estimates for Helmholtz operators: variable order spaces}

We derive global Fredholm estimates for 
\begin{equation}
P = \Delta_g -\lambda^2
\end{equation}
in this part. The treatment here follows \cite[Section~3]{Helmholtz-1} and \cite[Section~5.2]{gell2022propagation}.

Here $g$ is a short-range perturbation of the Euclidean metric.
That is, denoting the Euclidean metric by $g_0$ and the boundary defining function of $\partial\overline{\R^n}$ by $\msf{x}$, then
\begin{equation} \label{eq:metric-assumption}
g - g_0 \in {\msf{x}}^2 C^\infty(\overline{\R^n}, S^2T^*\overline{\R^n}),
\end{equation}
where $S^2T^*\overline{\R^n}$ is the bundle of symmetric 2-tensors. Here we required smoothness on $\overline{\R^n}$ for simplicity, but readers that are familiar with conormality should be aware that almost all results below generalizes to the situation where one has conormal (or symbolic) metric and potential perturbations. 
The Fredholm estimates presented in this Lecture would hold equally well if we considered long-range (that is, $O({\msf{x}})$-level) perturbations, but we make this stronger decay condition on the perturbation because invertibility (as in Theorem~\ref{thm:Helmtholz-invertible}) would require additional assumptions. Also, we still keep the non-trapping assumption on the geodesic flow of $g$.

As analyzed in previous lectures, the Hamilton flow associated the the principal symbol of $P$ has a nice source-sink structure and we denote the source by $\Rin$ and the sink by $\Rout$.

\begin{definition} \label{definition:variable-order-admissible}
Let $\mathsf{r}_\pm \in C^\infty({}^{\sct}T^*\overline{\R^n})$ be variable spatial orders. We call $\mr_+$ Feynman admissible, if $\mathsf{r}_+$ is a constant larger than $-\frac{1}{2}$ near $\Rin$, a constant less than $-\frac{1}{2}$ near $\Rout$ and it is non-increasing along the Hamilton flow associated to $P$. Similarly we call $\mr_-$ anti-Feynmann admissible if it is a constant less than $-\frac{1}{2}$ near $\Rin$, a constant larger than $-\frac{1}{2}$ near $\Rout$ and it is non-decreasing along the Hamilton flow associated to $P$. 

Given such variable orders, we will denote by $\Uin, \Uout$ neighbourhoods of $\Rin, \Rout$ on which these variable orders are constant. 
\end{definition}

We first introduce the space on which we establish our Fredholm theory.
We define $\mathcal{X}^{s,\mathsf{r}_\pm}$ to be the spaces
\begin{equation}\label{eq:calX-defn}
\mathcal{X}^{s,\mathsf{r}_\pm} = \{ u \in H_{\sct}^{s,\mathsf{r}_\pm}: Pu \in  H_{\sct}^{s-2,\mathsf{r}_\pm+1} \},
\end{equation}
where $\mr_\pm$ are Feynman/anti-Feynman admissible,  equipped with the `graph norm' defined via
\begin{equation} \label{eq:cal-X-norm}
\| u \|_{\mathcal{X}^{s,\mathsf{r}_\pm} }^2 =
\| u \|_{s,\mathsf{r}_\pm}^2 + \| Pu \|_{s-2,\mathsf{r}_\pm+1}^2.
\end{equation}

\begin{remark}
Notice that the definition of this $\mathcal{X}^{s,\mathsf{r}_\pm}$ depends only on the principal symbol (in all senses, not only the differential sense!) of $P$. Since the requirement on $Pu$ does not change under lower order perturbations (due to the requirement on $u$ itself).
\end{remark}

\begin{remark}
In the case of a hyperbolic operator, such as the Klein-Gordon operator $P_{KG}$, we would define instead of \eqref{eq:calX-defn}
\begin{equation}\label{eq:calX-defn-hyp}
\mathcal{X}^{s,\mathsf{r}_\pm} = \{ u \in H_{\sct}^{s,\mathsf{r}_\pm}: P_{KG}u \in  H_{\sct}^{s-1,\mathsf{r}_\pm+1} \},
\end{equation}
where the differential order on the $P_{KG}u$ term is $s-1$, not $s-2$, reflecting the non-ellipticity of $P_{KG}$ in this case at fibre-infinity. Notice that, in our case where $P$ is elliptic at fibre-infinity, we could equally well define $\mathcal{X}^{s,\mathsf{r}_\pm}$ by requiring $u \in H_{\sct}^{s-1,\mathsf{r}_\pm}(\R^n)$ and $Pu \in  H_{\sct}^{s-2,\mathsf{r}_+}(\R^n)$, since microlocal elliptic regularity at fibre-infinity implies that  $u \in H_{\sct}^{s,\mathsf{r}_\pm}(\R^n)$ automatically. So this works the same as the hyperbolic case, except for the (slightly unfortunate) discrepancy of $1$ in our choice of index $s$. 
\end{remark}


Now we summarize basic properties of $\mathcal{X}^{s,\mathsf{r}_\pm}$ before diving into microlocal analysis.

\begin{proposition}
$\mathcal{X}^{s,\mathsf{r}_\pm}$ is a Hilbert space, using inner products from $H_{\sct}^{s,\mathsf{r}_\pm}$ and $H_{\sct}^{s-2,\mathsf{r}_\pm}$ for $u$ and $Pu$ respectively.
\end{proposition}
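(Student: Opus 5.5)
We show that $\mathcal{X}^{s,\mathsf{r}_\pm}$ is a Hilbert space by realizing it as a closed subspace of a product of Hilbert spaces. The graph norm \eqref{eq:cal-X-norm} comes from the inner product
\[
\langle u, v\rangle_{\mathcal{X}} = \langle u, v\rangle_{H_{\sct}^{s,\mathsf{r}_\pm}} + \langle Pu, Pv\rangle_{H_{\sct}^{s-2,\mathsf{r}_\pm+1}},
\]
which is positive definite because the first term already is. The only real content is therefore completeness. The target space for $Pu$ should be the one appearing in \eqref{eq:calX-defn}, namely $H_{\sct}^{s-2,\mathsf{r}_\pm+1}$; the orders in the statement as written are a slip. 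Recall from Lecture 10 that the variable order spaces $H_{\sct}^{s,\mathsf{r}_\pm}$ and $H_{\sct}^{s-2,\mathsf{r}_\pm+1}$ are themselves Hilbert spaces.

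The plan is the standard graph argument. Consider the map
\[
J: \mathcal{X}^{s,\mathsf{r}_\pm} \to H_{\sct}^{s,\mathsf{r}_\pm} \oplus H_{\sct}^{s-2,\mathsf{r}_\pm+1}, \qquad Ju = (u, Pu).
\]
By construction $J$ is an isometry onto its image, so it suffices to prove that the image is closed.

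Let $(u_j)$ be Cauchy in $\mathcal{X}^{s,\mathsf{r}_\pm}$. Then $u_j \to u$ in $H_{\sct}^{s,\mathsf{r}_\pm}$ and $Pu_j \to f$ in $H_{\sct}^{s-2,\mathsf{r}_\pm+1}$. We must show $Pu = f$, and this is the only step needing an argument. Since $P \in \Psisc^{2,0}$, the boundedness result for variable order operators from Lecture 10 shows that $P$ maps $H_{\sct}^{s,\mathsf{r}_\pm}$ continuously into $H_{\sct}^{s-2,\mathsf{r}_\pm}$, so $Pu_j \to Pu$ there. Both $H_{\sct}^{s-2,\mathsf{r}_\pm}$ and $H_{\sct}^{s-2,\mathsf{r}_\pm+1}$ embed continuously into $\Schw'$; this follows, for example, from containment in a fixed order space $H_{\sct}^{s-2,-L}$ with $L$ large. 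Hence $Pu_j$ converges in $\Schw'$ to both $Pu$ and $f$. By uniqueness of distributional limits $Pu = f \in H_{\sct}^{s-2,\mathsf{r}_\pm+1}$, so $u \in \mathcal{X}^{s,\mathsf{r}_\pm}$ and $u_j \to u$ in the graph norm.

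I expect no serious obstacle. The points needing care are the continuity of $P$ on variable order spaces and the continuity of the inclusions into $\Schw'$. Both reduce to the Lecture 10 exercises, where an invertible elliptic $\Lambda$ of the relevant variable order is used to define the norm.
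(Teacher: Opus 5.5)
Your argument is correct and is essentially the paper's proof. Both take a Cauchy sequence, obtain limits $u$ in $H_{\sct}^{s,\mathsf{r}_\pm}$ and $f$ in $H_{\sct}^{s-2,\mathsf{r}_\pm+1}$, and use continuity of $P : H_{\sct}^{s,\mathsf{r}_\pm}\to H_{\sct}^{s-2,\mathsf{r}_\pm}$ to identify $Pu=f$. Your reading of the second order as $\mathsf{r}_\pm+1$ is right; with $\mathsf{r}_\pm$ the space would not be complete. Your explicit uniqueness-of-limits step via $\Schw'$ just makes precise what the paper states in one line.
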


\begin{proof}
It is clear that this gives a inner product on $\mathcal{X}^{s,\mathsf{r}_\pm}$ that is compatible with our norm above and the only thing left to verify is the completeness.

Suppose $\{ u_j \}$ is a Cauchy sequence in terms of the norm in \eqref{eq:cal-X-norm}, then $\{ u_j\}$ is Cauchy in $H_{\sct}^{s,\mathsf{r}_\pm}$-norm and $\{ Pu_j \}$ is Cauchy in $H_{\sct}^{s-2,\mathsf{r}_\pm+1}$-norm. 
So we can find $u_\infty , \, f_\infty$ such that
\begin{equation}
u_j \to u_\infty \text{ in } H_{\sct}^{s, \mathsf{r}_\pm} \quad Pu_j \to f_\infty \text{ in } H_{\sct}^{s-2,\mathsf{r}_\pm+1}.
\end{equation}

Since $P$ is continuous $H_{\sct}^{s,\mathsf{r}_\pm} \to H_{\sct}^{s-2,\mathsf{r}_\pm }$, we know $Pu_j \to Pu_\infty$ in $H_{\sct}^{s-2,\mathsf{r}_\pm }$. This shows $Pu_\infty = f_\infty \in H_{\sct}^{s-2,\mathsf{r}_\pm}$ and indeed 
$\{u_j\}$ converges to $u_\infty$ in $\mathcal{X}^{s,\mathsf{r}_\pm}$.

\end{proof}

We can also run density argument using nice functions in this $\mathcal{X}^{s,\mathsf{r}_\pm}$. 
For each $u \in \mathcal{X}^{s,\mathsf{r}_\pm}$, we will construct a regularized family $u_r \in \mathcal{S}(\R^n)$ that converges to $u$ as $r \to 0$. We first prove the following lemma, which offers us those regularizers.
\begin{lemma}  \label{lemma:regularizer-Tr}
There exists a family of $T_r \in \Psi_{\sct}^{-\infty,-\infty}$ for $r \in (0,r_0]$ with some $r_0>0$ such that
\begin{equation} \label{eq:Tr-approximate-Id}
T_r \to \mathrm{Id}, 
\end{equation}
as $r \to 0$ with respect to the strong operator topology (for $H_{\sct}^{s,\mathsf{r}_\pm}(\R^n) \to H_{\sct}^{s,\mathsf{r}_\pm}(\R^n)$), and
\begin{equation} \label{eq:Tr,P-commutator-to-0}
 [T_r,P] \to 0
\end{equation}
as $r \to 0$ with respect to the strong operator topology (for $H_{\sct}^{s,\mathsf{r}_\pm}(\R^n) \to H_{\sct}^{s-1,\mathsf{r}_\pm+1}(\R^n)$).

\end{lemma}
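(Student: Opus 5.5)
The plan is to take an explicit regularizer of product form, $T_r = \Op(\phi(r x)\phi(r\xi))$ with $\phi \in C_c^\infty(\R^n)$, $\phi \equiv 1$ near $0$. A Gaussian-type weight such as $\Op\big((1+r|x|^2)^{-M}(1+r|\xi|^2)^{-M}\big)$ would serve equally well and would parallel the regularizers in Lectures 5 and 6. For each fixed $r>0$ the symbol $t_r(x,\xi)=\phi(rx)\phi(r\xi)$ is Schwartz, so $T_r \in \Psisc^{-\infty,-\infty}$. The key structural fact, already used in Problem~\ref{prob:density symbols}, is that $t_r$ is \emph{uniformly bounded} in $S^{0,0}_{\sca}$ for $r\in(0,1]$. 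Moreover $t_r \to 1$ in $S^{\eta,\eta}_{\sca}$ for every $\eta>0$. This holds because derivatives of $t_r$ are supported where $|x|\gtrsim 1/r$ or $|\xi|\gtrsim 1/r$, and there each derivative $\partial_x$ produces a factor $r \lesssim \ang{x}^{-1}$.

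\textbf{Strong convergence (eq.~\eqref{eq:Tr-approximate-Id}).} Fix an elliptic, invertible $\Lambda\in\Psisc^{s,\mr_\pm}$ (Problem~\ref{prob:var invertible}). This reduces the claim to $L^2$ after conjugation: $\Lambda T_r\Lambda^{-1}$ is uniformly bounded in $\Psisc^{0,0}$, because of the uniform symbol bounds and the composition calculus for variable orders, \eqref{eq:varopcomp}. Hence $T_r$ is uniformly bounded on $H^{s,\mr_\pm}$, by the remark that operator norms are controlled by symbol seminorms. For $u$ in the dense subspace $\Schw$ (or $H^{s+1,\mr_\pm+1}$) we have $T_r u\to u$ in $H^{s,\mr_\pm}$, because $T_r-\Id\to0$ in $\Psisc^{\eta,\eta}$, which maps $H^{s+1,\mr_\pm+1}\to H^{s,\mr_\pm}$ with norm tending to zero. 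A standard $\epsilon/3$ argument (uniform boundedness plus convergence on a dense set) then gives strong convergence on all of $H^{s,\mr_\pm}$.

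\textbf{Commutator (eq.~\eqref{eq:Tr,P-commutator-to-0}).} Since $P\in\Psisc^{2,0}$, we have $[T_r,P]\in\Psisc^{-\infty,-\infty}$ for each $r$. The crucial point is that, uniformly in $r$, it lies in $\Psisc^{1,-1}$, one order better in decay than the naive $(2,0)$. The principal part is $-i\{t_r,p\}=-iH_p t_r$. The relevant terms are $(\partial_\xi p)\cdot r(\nabla\phi)(rx)\phi(r\xi)$, which lies in $S^{1,-1}$ uniformly because $r\lesssim\ang{x}^{-1}$ on its support, and $(\partial_x p)\cdot r\phi(rx)(\nabla\phi)(r\xi)$, which lies in $S^{1,-1}$ as well, in fact in $S^{2-1,-1}$ since $\partial_x p\in S^{2,-1}$ and $r\lesssim\ang{\xi}^{-1}$. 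The remainder terms of the composition expansion are uniformly bounded in lower orders. Moreover, these symbols tend to $0$ in $S^{1+\eta,-1+\eta}$, because they are supported where $|x|$ or $|\xi|\gtrsim 1/r$. Thus $[T_r,P]$ is uniformly bounded $H^{s,\mr_\pm}\to H^{s-1,\mr_\pm+1}$, and it tends to $0$ in norm as a map $H^{s+\eta',\mr_\pm+\eta'}\to H^{s-1,\mr_\pm+1}$. The same density argument as above then gives strong convergence to $0$.

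\textbf{Main obstacle.} The hard step is the \emph{uniform} membership $[T_r,P]\in\Psisc^{1,-1}$, including control of the full remainder of the composition expansion uniformly in $r$ rather than only the principal symbol. To get it, I would invoke the remark in Lecture 2 that the seminorms of the remainder $R_N$ are bounded by seminorms of the factors, applied to the uniformly bounded family $t_r\in S^{0,0}$. For the gain I would use that each term with $|\alpha|\ge1$ carries a derivative of $t_r$. The variable-order $\delta$-losses are harmless here, since every estimate has room $\eta>0$.
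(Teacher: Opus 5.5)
Your proposal is correct and follows essentially the paper's own argument: a product-type regularizer (the paper's $\chi(r\langle z\rangle)\operatorname{Op}(\chi(\cdot))$ is, like yours, the left quantization of a product symbol) that is uniformly bounded in $\Psisc^{0,0}$ and tends to $\Id$ in $\Psisc^{\delta,\delta}$, so that $[T_r,P]$ is uniformly bounded in $\Psisc^{1,-1}$ and tends to $0$ in $\Psisc^{1+\delta,-1+\delta}$, followed by a density argument. The paper also sketches an alternative splitting $[T_r,P]=[T_r-\Id,P]T_{r'}+[T_r,P](\Id-T_{r'})$ with a second regularizer $T_{r'}$, but your route is the paper's primary one.
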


\begin{proof}
Let $\chi \in C_c^\infty(\R^n)$ be even, identically $1$ near the origin and monotonically decreasing on $[0,\infty)$.
Then we set $T_r = \chi(r\la z \ra) \operatorname{Op}(\chi(\la r\zeta \ra))$.
We leave it to readers to verify \eqref{eq:Tr-approximate-Id}. (Hint: $\|(\mathrm{Id}-T_r)u\|_{s,\mathsf{r}_+}$ is  upper-bounded by the Sobolev norm of $u$ computed outside a large ball with radius $\sim r^{-1}$, on both the spatial and frequency side.)
We prove the slightly trickier \eqref{eq:Tr,P-commutator-to-0} here. 
A sketch of the idea is that, we can require $T_r$ being uniformly bounded in $\Psi_{\sct}^{0,0}$ and $T_r \to \Id$ in the operator topology of $\Psi_{\sct}^{\delta,\delta}$ for some small $\delta>0$ and it follows that $[T_r,P] \to 0$ in $\Psi_{\sct}^{1+\delta,-1+\delta}$ and $[T_r,P]$ being uniformly bounded in $\Psi_{\sct}^{1,-1}$ and this is sufficient for the desired strong convergence. 
This can be shown by noticing that $H_{\sct}^{s+\delta,\mathsf{r}_\pm+\delta}$ is dense in $H_{\sct}^{s,\mathsf{r}_\pm}$. So for each $u \in H_{\sct}^{s,\mathsf{r}_\pm}$, we take a sequence $u_j \in H_{\sct}^{s+\delta,\mathsf{r}_\pm+\delta}$ such that $u_j \to u$ in $H_{\sct}^{s,\mathsf{r}_\pm}$. Then we decompose $[T_r,P]$ as
\begin{equation}
[T_r,P]u = [T_r,P]u_j + [T_r,P](u-u_j).
\end{equation}
The right hand side converges to $0$ in $H_{\sct}^{s-1,\mathsf{r}_\pm+1}$ by the following reason.
For the first term, we use $[T_r,P] \to 0$ in $\Psi_{\sct}^{1+\delta,-1+\delta}$; while for the second term, we use uniform boundedness of $[T_r,P]$ in $\Psi_{\sct}^{1,-1}$ and $u-u_j \to 0$ in $H_{\sct}^{s,\mathsf{r}_\pm}$.

An alternative (almost equivalent) way is to further introduce a regularizer $T_{r'}$ and decompose $[T_r,P]$ as
\begin{equation}
[T_r,P] = 
[T_r-\mathrm{Id},P]T_{r'} + [T_r,P](\mathrm{Id}-T_r'),
\end{equation} 
for any fixed $r'$, where we used $[\mathrm{Id},P]=0$.  
Now we only concern $r \ll r'$ and send $r \to 0$ first. With $r \ll r'$, derivatives of the symbol of $T_r$ vanishes on the support of the (left) symbol of $T_{r'}$. 

For the first term, we need to show
\begin{align*}
\| [T_r-\mathrm{Id},P]T_{r'} u \|_{s-1,\mathsf{r}_\pm + 1} \to 0,
\end{align*}
which is equivalent to 
\begin{align*}
\| \la z \ra [T_r-\mathrm{Id},P]T_{r'} u \|_{s-1,\mathsf{r}_\pm} \to 0,
\end{align*}

The left full symbol $[T_r-\mathrm{Id},P]T_{r'}$ is a sum of products of symbols of $T_r, P,T_{r'}$ but with at least one order derivative hitting that of $T_r$ and $p$, which produces a $r$-factor. In addition, on the essential support of this, we have $\la z \ra \lesssim (r')^{-1}$. One can then check that each semi-norm of the left symbol of $\la z \ra[T_r-\mathrm{Id},P]T_{r'}$ in $S^{1,0}_{\sct}$ is $O(r/r')$. So sending $r \to 0$ gives the desired convergence.



On the other hand, we have $(\mathrm{Id}-T_r')u \to 0$ in $H^{s,\mathsf{r}_\pm}$ as $r' \to 0$ and $[T_r,P]$ is uniformly bounded in $\Psi_{\sct}^{1,-1}$.
So taking $r'\to 0$ (while keeping $r/r' \to 0$) gives $[T_r,P](\mathrm{Id}-T_r') \to 0$ in $H_{\sct}^{s-1,\mathsf{r}_\pm}$, which completes the proof.

\end{proof}

\begin{proposition}\label{prop:Schwdense}
The Schwartz function class $\mathcal{S}(\R^n)$ is dense in $\mathcal{X}^{s,\mathsf{r}_\pm}$ in terms of the topology from the $\mathcal{X}^{s,\mathsf{r}_\pm}$-norm.
\end{proposition}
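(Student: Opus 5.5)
The approach is to use the regularizers $T_r$ of Lemma~\ref{lemma:regularizer-Tr}. Given $u \in \mathcal{X}^{s,\mathsf{r}_\pm}$, set $u_r := T_r u$. Since $T_r \in \Psi_{\sct}^{-\infty,-\infty}$, and residual operators map tempered distributions to Schwartz functions (Section~\ref{subsec:residual}), each $u_r$ lies in $\mathcal{S}(\R^n)$. It then suffices to show $u_r \to u$ in the graph norm \eqref{eq:cal-X-norm}, that is,
\begin{equation*}
\| u_r - u \|_{s,\mathsf{r}_\pm} \to 0, \qquad \| P u_r - P u \|_{s-2,\mathsf{r}_\pm+1} \to 0 .
\end{equation*}

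The first convergence is exactly \eqref{eq:Tr-approximate-Id}, since $u \in H^{s,\mathsf{r}_\pm}_{\sct}$. For the second, we write
\begin{equation*}
P u_r - P u = (T_r - \mathrm{Id}) P u + [P, T_r] u .
\end{equation*}
The second term tends to zero in $H^{s-1,\mathsf{r}_\pm+1}_{\sct}$ by \eqref{eq:Tr,P-commutator-to-0}, applied to $u \in H^{s,\mathsf{r}_\pm}_{\sct}$. Because $H^{s-1,\mathsf{r}_\pm+1}_{\sct}$ embeds continuously into $H^{s-2,\mathsf{r}_\pm+1}_{\sct}$, it therefore also tends to zero in the norm we need.

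For the first term, $f := Pu \in H^{s-2,\mathsf{r}_\pm+1}_{\sct}$, and we need $T_r f \to f$ in that space. The lemma as stated gives strong convergence of $T_r$ to the identity only on $H^{s,\mathsf{r}_\pm}_{\sct}$. However, its proof is indifferent to the orders: $T_r = \chi(r\la z\ra)\operatorname{Op}(\chi(\la r\zeta\ra))$ is uniformly bounded in $\Psi^{0,0}_{\sct}$ and converges to $\mathrm{Id}$ in $\Psi^{\delta,\delta}_{\sct}$. So the same density argument applies on any variable order space, in particular on $H^{s-2,\mathsf{r}_\pm+1}_{\sct}$: approximate $f$ by an element of a space with orders higher by $\delta$, use the $\Psi^{\delta,\delta}$ convergence on the approximant, and use uniform boundedness on the remainder.

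The main point to check is this uniform boundedness of $T_r$ on variable order spaces. It should follow from boundedness of variable order operators (the exercise following Problem~\ref{prob:var invertible}), together with the fact that the operator norm is controlled by finitely many symbol seminorms, which are uniform in $r$. Everything else is a direct combination of Lemma~\ref{lemma:regularizer-Tr} with the continuous Sobolev embeddings.
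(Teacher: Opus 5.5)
Your proof is correct and is essentially the paper's argument: set $u_r = T_r u$, use \eqref{eq:Tr-approximate-Id} for $u$, and split $PT_r u = T_r P u + [P,T_r]u$, handling the two terms by \eqref{eq:Tr-approximate-Id} and \eqref{eq:Tr,P-commutator-to-0}. Your extra remark fills in a point the paper uses without comment: strong convergence $T_r \to \mathrm{Id}$ is needed on $H_{\sct}^{s-2,\mathsf{r}_\pm+1}$ as well as on $H_{\sct}^{s,\mathsf{r}_\pm}$, and it holds there by the same order-independent argument.
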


\begin{proof}
For any $u \in \mathcal{X}^{s,\mathsf{r}_\pm}$, we set
\begin{equation}
u_r = T_ru,
\end{equation}
where $T_r$ is as in Lemma~\ref{lemma:regularizer-Tr}. Then we know $u_r \in \mathcal{S}(\R^n)$ since $T_r \in \Psi_{\sct}^{-\infty,-\infty}$.

To prove the conclusion, we need to show $T_ru \to u$ in $H_{\sct}^{s,\mathsf{r}_\pm}$ and $PT_ru \to Pu$ in $H_{\sct}^{s-2,\mathsf{r}_\pm+1}$.
The first claim follows from \eqref{eq:Tr-approximate-Id}. 
For the latter one, we write
\begin{align*}
PT_ru = T_rPu + [P,T_r]u.
\end{align*}
Then we apply \eqref{eq:Tr-approximate-Id} to the first term and apply \eqref{eq:Tr,P-commutator-to-0} to the second term.
\end{proof}

Now we state the main result in this part. Observe that $\rm_+$ is Feynman admissible if and only if $-\rm_+ - 1$ is anti-Feynman admissible. We assume that $\rm_- = -\rm_+ - 1$ in the next theorem: 
\begin{theorem}\label{thm:Fredholm-Helmholtz}
Let $\mathsf{r}_\pm$ be variable orders as above, then for any $s, M,N \in \R$ and $u \in \mathcal{X}^{s,\mathsf{r}_\pm}$, we have:
\begin{equation} \label{eq:Helmholtz-global-Fredholm}
\| u \|_{s,\mathsf{r}_\pm} \lesssim \| Pu \|_{s-2,\mathsf{r}_\pm}
+ \| u \|_{M,N}.
\end{equation}
The interesting case is when $M<\min\{s,2-s\}$ and $N < \min\{\mathsf{r}_+,\mathsf{r}_-\} = \frac{1}{2}-\delta$. With either sign $\pm$, this implies that
\begin{equation} \label{eq:Helmholtz-mapping}
(\Delta_g-\lambda^2):\; \mathcal{X}^{s,\mathsf{r}_\pm}
\to H_{\sct}^{s-2,\mathsf{r}_\pm+1}
\end{equation}
is Fredholm.
\end{theorem}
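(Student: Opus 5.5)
The plan is to prove \eqref{eq:Helmholtz-global-Fredholm} by patching together microlocal estimates on a finite cover of $\partial\comphase$, working first for Schwartz $u$ and then extending by density (Proposition~\ref{prop:Schwdense}). Throughout, the $Pu$ term is measured in $H^{s-2,\mr_\pm+1}$, the norm of the target space in \eqref{eq:Helmholtz-mapping}. We treat $\mr_+$; the anti-Feynman case is identical with the roles of $\Rin$ and $\Rout$ exchanged.

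Choose a microlocal partition of unity $\Id = B_{\mathrm{ell}} + B_{\mathrm{in}} + B_{\mathrm{out}} + B_{\mathrm{mid}} + R$ with $R$ residual and:
\begin{itemize}
\item $B_{\mathrm{ell}}$ microsupported in $\Ell(P)$;
\item $B_{\mathrm{in}}$ microsupported in $\Uin$;
\item $B_{\mathrm{out}}$ microsupported in $\Uout$ and elliptic on $\Rout$;
\item $B_{\mathrm{mid}}$ microsupported in $\Char(P)$ away from both radial sets.
\end{itemize}
The four pieces are estimated as follows.
\begin{itemize}
\item The elliptic estimate gives $\|B_{\mathrm{ell}}u\|_{s,\mr_+} \lesssim \|Pu\|_{s-2,\mr_+} + \|u\|_{-N,-N}$.
\item On $\Uin$ the order is a constant $r>-1/2$, so the above-threshold estimate (Theorem~\ref{thm:above Schw}, valid for Schwartz $u$) gives $\|B_{\mathrm{in}}u\|_{s,r} \lesssim \|GPu\|_{s-2,r+1} + \|u\|_{s-1/2,r-1/2}$. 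The error term is lower order in both indices; by interpolation (the inequality \eqref{eq:var interp} of Lecture 10) it is bounded by $\epsilon\|u\|_{s,\mr_+} + C_\epsilon\|u\|_{M,N}$, to be absorbed at the end.
\item Every point of $\Char(P)\setminus(\Rin\cup\Rout)$ lies on a bicharacteristic that emanates from $\Rin$ and tends to $\Rout$. Backward propagation therefore enters $\Uin$, and since $\mr_+$ is nonincreasing along the flow, Theorem~\ref{thm:mpe variable} bounds $\|B_{\mathrm{mid}}u\|_{s,\mr_+}$ by $\|GPu\|$, by $\|Eu\|_{s,\mr_+}$ with $E$ microsupported in $\Uin$ and already controlled, and by $\|u\|_{-N,-N}$.
\item At $\Rout$ the order is a constant below threshold. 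Theorem~\ref{thm:below} controls $B_{\mathrm{out}}u$ by $Pu$ and by an $E$ term microsupported in a punctured neighbourhood of $\Rout$, which the previous step already handles.
\end{itemize}
The loss of one order in the $Pu$ norm is consistent across all pieces because every estimate requires $Pu \in H^{s-2,\mr_++1}$.

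Summing the four estimates and absorbing the $\epsilon$-terms yields \eqref{eq:Helmholtz-global-Fredholm} for Schwartz $u$. For general $u\in\mathcal{X}^{s,\mr_+}$, take $u_r = T_ru$ from Lemma~\ref{lemma:regularizer-Tr}; then $u_r\to u$ in $\mathcal{X}$ by Proposition~\ref{prop:Schwdense}, and both sides of the estimate are continuous in the $\mathcal{X}$-norm, so the estimate passes to the limit.

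Semi-Fredholmness of \eqref{eq:Helmholtz-mapping} then follows from Proposition~\ref{prop:Fred}(iii). The inclusion $\mathcal{X}^{s,\mr_+}\hookrightarrow H^{M,N}$ is compact by Problem~\ref{prob:var compact embedding}, since $M<s$ and $N<\mr_+$ pointwise, and the left side of the estimate can be upgraded to the full $\mathcal{X}$-norm by adding $\|Pu\|$ to both sides.

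For the dual estimate, identify $(H^{s-2,\mr_++1})^* = H^{2-s,-\mr_+-1} = H^{2-s,\mr_-}$ via Proposition~\ref{prop:duality}. Since $P^*=P$ (the subprincipal part is symmetric, so the thresholds do not shift), the same proof with the anti-Feynman order $\mr_-$ gives $\|v\|_{2-s,\mr_-} \lesssim \|Pv\|_{-s,\mr_-+1} + \|v\|_{M,N}$. This accounts for the conditions $M<2-s$ and $N<\mr_-$. Finite-dimensionality of the cokernel follows: an annihilator $v$ of the range lies in $H^{2-s,\mr_-}$ with $Pv=0$, and the dual estimate applied on the corresponding $\mathcal{X}$ space makes such $v$ finite dimensional.

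The main obstacle I expect is the strong versus Schwartz issue at the above-threshold radial set. Theorem~\ref{thm:above Schw} holds only for Schwartz $u$, so the density step must be checked carefully: the estimate has to be proved on Schwartz functions and then extended using the $\mathcal{X}$-norm continuity. A second delicate point is verifying that the duality pairing really sends the $\mathcal{X}$-problem for $P$ to the one for $P^*$ with the correct, reversed variable order.
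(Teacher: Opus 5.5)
Your architecture matches the paper's: a microlocal partition of unity, the elliptic estimate, the above-threshold estimate at $\Rin$, forward propagation out of $\Uin$ with the nonincreasing order $\mr_+$, the below-threshold estimate at $\Rout$, then absorption, the compact embedding, and duality via $\mr_-=-\mr_+-1$ and $P=P^*$. Proving the estimate first for Schwartz $u$ and extending it by Proposition~\ref{prop:Schwdense} is also what the paper intends.

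One step, however, fails as written. You use the error term of Theorem~\ref{thm:above Schw} in its global form $\|u\|_{s-1/2,r-1/2}$, where $r$ is the constant value of $\mr_+$ on $\Uin$. You then claim that \eqref{eq:var interp} bounds it by $\epsilon\|u\|_{s,\mr_+}+C_\epsilon\|u\|_{M,N}$.
\begin{itemize}
\item That interpolation inequality needs the weaker order to lie strictly below $\mr_+$ at every point.
\item Admissibility only gives $r>-1/2>\mr_+|_{\Uout}$, so nothing prevents $r-1/2>\mr_+|_{\Uout}$. For example, take $r=0$ and $\mr_+=-1$ near $\Rout$.
\item In that case the bound is false even for Schwartz $u$. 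By density it would give $H^{s,\mr_+}\subset H^{s-1/2,r-1/2}$.
\item Yet there are $u\in H^{s,\mr_+}$ with $\WFsc(u)\subset\Rout$ and $\|u\|_{s-1/2,r-1/2}=\infty$. For $r\geq 0$, generic outgoing solutions $P_+^{-1}\phi$ are examples: their leading term $r^{-(n-1)/2}e^{i\lambda r}a_0$ with $a_0\neq 0$ is not in $H^{s-1/2,-1/2}$.
\end{itemize}
So the absorption at the end of your argument does not go through for general admissible orders.

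The repair is what the paper does implicitly. Use the microlocalized form of the above-threshold estimate, from the remark following Theorem~\ref{thm:above Schw}. Its error term is $\|G'u\|_{s-1/2,r-1/2}+\|u\|_{-N,-N}$ with $\WF'(G')\subset\Uin$. Since $\mr_+\equiv r$ on $\Uin$, this is bounded by $\|u\|_{s-1/2,\mr_+-1/2}+\|u\|_{-N,-N}$, which is the error term in the paper's estimate \eqref{eq:fred4}. Now \eqref{eq:var interp} does apply, because $\mr_+-1/2<\mr_+$ pointwise. With this change the rest of your argument, including the density step and the duality argument for the cokernel, is fine.
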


\begin{proof}
As we have discussed in Proposition~\ref{prop:Fred}, the last statement on the Fredholm property follows from \eqref{eq:Helmholtz-global-Fredholm} and the same type estimate for $P^*$. So we only need to prove the estimate here.

For definiteness we prove the case for spaces with order $\mathsf{r}_+$; the case for $\mathsf{r}_-$ can be proved in the same way.
	Recall the definitions of $\Uin, \Uout$ from Definition~\ref{definition:variable-order-admissible}. We begin by choosing $Q_i\in\Psi_{\sct}^{0,0}$, $1 \leq i \leq 4$, such that 
	\begin{enumerate}
		\item $P$ is elliptic on $\WF'(Q_1)$,
        \item $\WF'(Q_2)$ is disjoint from  $\mathcal{R}_\pm$,
		\item $\WF'(Q_3)\subset \Uout$,
		\item $\WF'(Q_4) \subset\Uin$,
		\item Every $\alpha \in \Char(P)$ in $\WF'(Q_3) \setminus \Rout$ lies on a forward bicharacteristic $\gamma$ from a point $\alpha'\in \Ell(Q_2)$, 
		\item Every $\alpha\in  \Char(P)$ in $\WF'(Q_2)$ lies on a forward bicharacteristic $\gamma$ from a point $\alpha'\in\Ell(Q_4)$,  \label{condition:Q4-control-Q2}
		\item $Q_1+Q_2+Q_3+Q_4=\mathrm{Id}$. \label{condition:Qi-partition}
	\end{enumerate}
A set of operators $\{ Q_i \}$ satisfying \eqref{condition:Qi-partition} is called a microlocal partition of unity. They can be constructed by quantizing a partition of unity $\{ \chi_i \}$ on ${}^{\sct}\overline{T}^*\R^n$. That is, $Q_i = \operatorname{Op}(\chi_i)$.

First consider $Q_1 = \operatorname{Op}(\chi_1)$. It deals with the elliptic region. So let $\chi_1$ be a function that vanishes on a small neighborhood of $\Sigma$ and is identically $1$ outside a slightly larger neighborhood of $\Sigma$.
Then we set $\chi_3$ to be a function that is identically $1$ on a small neighborhood of $\Rout$ and is supported in a slighly larger neighborhood. Similarly, let $\chi_4$ be a function that is identically $1$ on a small neighborhood of $\Rin$ and is supported in a lightly larger region.
Then we set $\chi_2 = 1-\chi_1-\chi_3-\chi_4$. In particular, $\chi_2$ will be supported away from $\mathcal{R}_+ \cup \mathcal{R}_-$ and close to $\Sigma$.

In the elliptic region, the elliptic estimate can be weakened to be:
	\begin{equation}
		\label{eq:fred1}
		\|Q_1u\|_{s,\mathsf{r}_+}\leq C(\|Pu\|_{s-2,\mathsf{r}_+ +1}+\|u\|_{M,N})
	\end{equation}
so that the spatial order of the norm of $Pu$ agrees with the norms arising from positive commutator estimates. (Here and below, we remove the microlocalizer $G$ in front of $Pu$, which of course we are entitled to do as $G \in \Psisc^{0,0}$ is bounded on all Sobolev spaces.) 

Now we first apply the above-threshold radial point estimate,
	using the operator $Q_4$. We can replace the spatial order $r$ with the variable weight $\mathsf{r}_+$ in this estimate, as $\mathsf{r}_+$ is constant (equal to $r$) in $\WF'(G_4)$. This yields the estimate
	\begin{equation}
		\label{eq:fred4}
		\|Q_4 u \|_{s,\mathsf{r}_+} \leq C(\|Pu\|_{s-2,\mathsf{r}_++1}+\|u\|_{s-1/2, \mr_+ - 1/2}).
	\end{equation} 
	Similarly, we apply the below-threshold radial point estimate 
	to the operators $Q_3,Q_2$ to give
	\begin{equation}
		\label{eq:fred3}
		\|Q_3u\|_{s,\mathsf{r}_+}\leq C(\|Q_2 u\|_{s,\mathsf{r}_+}+\|Pu\|_{s-2,\mathsf{r}_++1}+\|u\|_{M,N}).
	\end{equation}
	Away from the radial sets, we can control $\|Q_2u\|$ using Theorem~\ref{thm:mpe}, using $\WF'(Q_4)$ as a source of regularity, given the dynamical condition \eqref{condition:Q4-control-Q2}.
	
	Consequently we have an estimate
	\begin{equation}
		\label{eq:fred2}
		\|Q_2u\|_{s,\mathsf{r}_+}\leq C(\|Q_4u\|_{s,\mathsf{r}_+}+\|Pu\|_{s-2,\mathsf{r}_++1}+\|u\|_{M,N}).
	\end{equation}

Without loss of generality, we can assume that the constants $C$ in estimates \eqref{eq:fred4} --- \eqref{eq:fred1} are equal, and exceed $1$. Then, we 
 estimate 
 $$
 \|u\|_{s,\mathsf{r}_+}\leq \|Q_1u\|_{s,\mathsf{r}_+}+ 2C\|Q_2u\|_{s,\mathsf{r}_+}+\|Q_3u\|_{s,\mathsf{r}_+}+4C^2 \|Q_4u\|_{s,\mathsf{r}_+}
 $$ and combine the estimates in \eqref{eq:fred1}, \eqref{eq:fred2},  \eqref{eq:fred3} and \eqref{eq:fred4}. This combination allows us to absorb the $Q_2u$ and $Q_4u$ terms on the RHS by those on the LHS. 
	This gives (with a new constant $C$) 
	\begin{equation}
		\label{eq:fred5}
		\|u\|_{s,\mathsf{r}_+}\leq C(\|Pu\|_{s-1,\mathsf{r}_++1}+\|u\|_{s-1/2, \mr_+ - 1/2}+\|u\|_{M,N}).
	\end{equation}
    Then, we can use the interpolation inequality \eqref{eq:var interp} to estimate 
$$
C \|u\|_{s-1/2, \mr - 1/2} \leq \frac1{2} \|u\|_{s,\mathsf{r}_+} + C' \|u\|_{M,N}
$$
which allows us to remove the $\|u\|_{s-1/2, \mr - 1/2}$ term on the RHS of \eqref{eq:fred5} (at the cost of increasing $C$). 
This yields estimate \eqref{eq:Helmholtz-global-Fredholm} which is a semi-Fredholm estimate for $M < s$ and $N < \inf \rm_+$, thanks to the compact embedding $H_{\sca}^{s, \mr_+} \hookrightarrow H_{\sca}^{M, N}$ (see Problem~\ref{prob:var compact embedding}).

The final step to conclude Fredholm property is to establish that the cokernel is finite-dimensional.  
To do this, we identify the cokernel with the annihilator, in $H_{\sca}^{2-s, -\mr_+ - 1}$ (which we view as the adjoint of $H_{\sca}^{s-2, \mr_+ + 1}$ via $L^2$-pairing, as in Proposition~\ref{prop:duality}; note this is also valid for variable order spaces) of the range of $P$ on $\mathcal{X}^{s,\mathsf{r}_\pm}$. Let $v$ lie in this annihilator subspace of $H_{\sca}^{2-s, -\mr_+ - 1}$. Then, for all $u \in \mathcal{X}^{s,\mathsf{r}_+}$ we have $\ang{Pu, v} = 0$. In particular this holds for all Schwartz $u$. Since $P$ is formally self-adjoint we have $\ang{u, Pv} = 0$ for all Schwartz $u$, which implies that $Pv = 0$. Thus, $v$ lies in $\mathcal{X}^{2-s,-\mathsf{r}_+-1} = \mathcal{X}^{2-s,\mathsf{r}_-}$ (using $\mr_- = - \mr_+ - 1$) and is in the kernel of $P$ acting from $\mathcal{X}^{2-s,\mathsf{r}_-} \to H_{\sca}^{-s, \mathsf{r}_-+1}$. 

It thus suffices to show that the kernel of $P : \mathcal{X}^{2-s,\mathsf{r}_-} \to H_{\sca}^{-s, \mathsf{r}_-+1}$ has finite-dimensional kernel. This follows from the corresponding semi-Fredholm estimate for $P$ acting between these spaces, which is proved exactly as above, but with $\mr_-$ replacing $\mr_+$ and with $\Rin$ and $\Rout$ changing roles. 

\end{proof}

After proving Fredholm estimates, the final step to `solve'\footnote{In the sense that we have existence and uniqueness of solutions in terms of spaces in \eqref{eq:Helmholtz-mapping}.} a PDE is to show invertibility of the operator; then the inverse is the desired solution operator. 
For $P = \Delta_g -\lambda^2$, this is obtained via boundary pairing after extracting the leading order asymptotic of solutions with so-called `module regularity' as we sketch below. See also \cite{melrose1994spectral}, \cite[Section~3.2]{Helmholtz-1}. The term module regularity was introduced in \cite{HMV2004} but the idea has a long tradition in PDE. For example, Klainerman's vector field method \cite{klainerman1985} is essentially the same idea. 

For equations with conserved quantities (like linear and some non-linear Schr\"odinger equations), this can be achieved via exploiting certain conservation laws. See for example \cite[Section~6]{gell2022propagation}.

We summarize the result `solving' the inhomogeneous PDE below.

\begin{theorem} \label{thm:Helmtholz-invertible}
Let $\mathsf{r}_\pm$ be as in Definition~\ref{definition:variable-order-admissible}, and $\lambda > 0$, then 
\begin{equation}\label{eq:P Fred}
P = (\Delta_g-\lambda^2):\; \mathcal{X}^{s,\mathsf{r}_\pm}
\to H_{\sct}^{s-2,\mathsf{r}_\pm+1}
\end{equation}
is invertible and has bounded inverse:
\begin{equation}
\| u \|_{s,\mathsf{r}_\pm} \lesssim \| Pu \|_{s-2,\mathsf{r}_\pm+1}.
\end{equation}
We denote those bounded inverses by $P_\pm^{-1}$.
\end{theorem}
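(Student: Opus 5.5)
Since Theorem~\ref{thm:Fredholm-Helmholtz} already gives that $P:\mathcal{X}^{s,\mathsf{r}_\pm}\to H_{\sct}^{s-2,\mathsf{r}_\pm+1}$ is Fredholm, invertibility reduces to two facts: the kernel is trivial, and the cokernel is trivial. The bounded inverse and the estimate $\|u\|_{s,\mathsf{r}_\pm}\lesssim\|Pu\|_{s-2,\mathsf{r}_\pm+1}$ then follow from the open mapping theorem. Because $\mathsf{r}_-=-\mathsf{r}_+-1$, the annihilator argument at the end of the proof of Theorem~\ref{thm:Fredholm-Helmholtz} identifies the cokernel for $\mathsf{r}_+$ with the kernel of $P$ on $\mathcal{X}^{2-s,\mathsf{r}_-}$, and vice versa. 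So the whole theorem reduces to showing that $\ker P=0$ on $\mathcal{X}^{s,\mathsf{r}_\pm}$ for every $s$. I treat $\mathsf{r}_+$; the other sign is symmetric, with $\Rin$ and $\Rout$ exchanged.

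\emph{Step 1: regularity of the kernel.} Let $u\in\mathcal{X}^{s,\mathsf{r}_+}$ with $Pu=0$. Ellipticity at fibre infinity gives $u\in H^{\infty,\mathsf{r}_+}$. Near $\Rin$ the order is constant and above threshold, so the strong above-threshold estimate, Theorem~\ref{thm:above dist}, applies with $Pu=0$ and shows that $u$ is microlocally in $H^{\infty,r}$ at $\Rin$ for every $r$, i.e. $\Rin\cap\WF_{\sct}(u)=\emptyset$. Propagation along bicharacteristics (Theorem~\ref{thm:mpe}) together with the source/sink structure from Lecture 9 moves this regularity to all of $\Char(P)\setminus\Rout$. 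Therefore $\WF_{\sct}(u)\subset\Rout$, and $u$ lies in $H^{\infty,-1/2-\epsilon}$ near $\Rout$ (below threshold).

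\emph{Step 2: module regularity and asymptotics at $\Rout$.} Next I show that $u$ has an outgoing expansion $u=e^{i\lambda r}r^{-(n-1)/2}(a(\hat x)+\text{lower order})$. I would do this by proving module regularity: $u$ stays in $H^{\infty,-1/2-\epsilon}$ under repeated application of first-order operators whose symbols vanish at $\Rout$, such as $r(D_r-\lambda)$ and angular derivatives. This uses a radial estimate for the test module, which is essentially a commutator computation showing that the module is closed under commutation with $P$ modulo itself and $P$. I expect this to be the main technical obstacle. It is here that the short-range assumption \eqref{eq:metric-assumption} enters, so that no logarithmic phase corrections appear. From module regularity one can integrate the ODE in $r$ to extract the leading coefficient $a\in L^2(S^{n-1})$.

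\emph{Step 3: boundary pairing and unique continuation.} Since $P$ is formally self-adjoint and $Pu=0$, I would compute $0=2\,\Im\langle Pu,\chi(r/R)u\rangle$ and take $R\to\infty$. The commutator $i[P,\chi(r/R)]$ localizes to $|x|\sim R$, and by the asymptotics of Step 2 it converges to a positive multiple of $\lambda\|a\|^2_{L^2(S^{n-1})}$. Hence $a=0$. With the leading term vanishing, $u$ is microlocally above threshold at $\Rout$ as well. Applying Theorem~\ref{thm:above dist} at $\Rout$ then upgrades $u$ to Schwartz. Finally, a Schwartz eigenfunction of $\Delta_g$ with positive eigenvalue $\lambda^2$ vanishes, by absence of embedded eigenvalues (a Rellich-type or Carleman unique continuation argument at infinity, valid for short-range perturbations). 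So $u=0$, which gives injectivity, and by the duality above, surjectivity.
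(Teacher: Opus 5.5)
Your proof is correct and has the same architecture as the paper's. You reduce to $\ker P=0$ via the annihilator identification, and use the above-threshold estimate at $\Rin$ plus propagation to get $\mathrm{WF}_{\mathrm{sc}}(u)\subset\Rout$. You then extract an outgoing leading term and kill it by boundary pairing; your $\Im\langle Pu,\chi(r/R)u\rangle$ computation is simply a proof of the formula in Problem~\ref{prob:bpf}. Finally you conclude that $u$ is above threshold, hence Schwartz, at $\Rout$, and finish with unique continuation at infinity. One small point: $\mathsf{r}_+$ near $\Rout$ may be far below $-1/2$, so the upgrade to $H^{\infty,-1/2-\epsilon}$ there comes from the below-threshold estimate, Theorem~\ref{thm:below}, not from the admissible order itself.

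The genuine difference is the test module in your Step 2. The paper uses the cusp module generated by translations and rotations. Because the flat Laplacian is invariant under these, Proposition~\ref{prop:cusp-Laplacian-commutator} gives $[P,L]\in\Psi_{\mathrm{sc}}^{2,-1}$ for each generator, which is an extra order of decay. Hence $P(Lu)=[P,L]u$ modulo Schwartz is controlled by $u$ alone, and infinite module regularity follows by iterating the scalar below-threshold estimate at $\Rout$ (Proposition~\ref{prop:inf module reg}). The asymptotics then come from the factorization $(D_r+\lambda)(D_r-\lambda)v=r^{(n-1)/2}f+r^{-2}\Delta_{\mathbb{S}^{n-1}}v+\tilde Rv$. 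There, two orders of rotational regularity make the right side decay two orders faster, and $D_r+\lambda$ is inverted microlocally away from $\Rin$ (Theorem~\ref{thm:leading-asymptotic-Helmholtz}).

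Your module of operators characteristic on $\Rout$ is the radial-set-adapted one of \cite{HMV2004}, and its commutators do not gain decay. In the flat case one computes $i[P,r(D_r-\lambda)]=2P-2\lambda(D_r-\lambda)+i(n-1)\lambda r^{-1}$. So for $Pu=0$ you get $(P-2i\lambda r^{-1})\,r(D_r-\lambda)u=(n-1)\lambda r^{-1}u$, which forces a coupled positive-commutator estimate with the threshold for that component shifted to $+1/2$ at $\Rout$. The shift is favourable, so $-1/2-\epsilon$ stays below threshold, but checking this is the real content of the step you flagged.

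What each route buys: the paper's cusp-module shortcut is simpler but exploits the asymptotic rotational symmetry of the Euclidean end, as the paper itself remarks. Your route does not need that symmetry and is the version that extends to general asymptotically conic metrics.
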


\begin{proof} Since we already know that $P$, acting as in \eqref{eq:P Fred}, is Fredholm, it only remains to show that the kernel and cokernel are both trivial. But as we saw in the previous proof, actually the cokernel can be identified with the kernel of the  operator $P$ acting on $\mathcal{X}^{2-s,\mathsf{r}_\mp}$. So it suffices to show that the kernel of $P$ is trivial, for all $s$ and for both signs $\mr_\pm$. 

Assume that $Pu = 0$, $u \in \mathcal{X}^{s,\mathsf{r}_+}$. We will show that $u = 0$. However, to do this we need two extra ingredients. The first is the notion of `module regularity', that is, extra regularity that may be possessed by a solution $u$ to $Pu = f$. This is the topic of the next subsection. The second is the existence of a leading asymptotic of solutions to $Pu = 0$ at spatial infinity, which we will prove in the next lecture (see Theorem~\ref{thm:leading-asymptotic-Helmholtz}).
Thus, the remainder of the proof is postponed to after the proof of Theorem~\ref{thm:leading-asymptotic-Helmholtz}. We will also need a `boundary pairing' formula, which we give here as Problem~\ref{prob:bpf}. 

\end{proof}

\subsection{Fredholm estimates for module regularity spaces}
Next we discuss some analogous Fredholm estimates on module regularity spaces that we will introduce below, which we will use in Lecture 13 to obtain asymptotics of solutions to $Pu = f$ for $f$ with suitable `module regularity'. Module regularity is extra regularity (additional to, say, membership in a Sobolev space $H_{\sca}^{s, \mr}$) with respect to a Lie algebra of vector fields. In our case, the extra regularity will be with respect to so-called `cusp' vector fields\footnote{The terminology arose from hyperbolic manifolds with cusps, but this is not relevant to our discussion.}
We first define cusp vector fields, and discuss some intertwining properties of cusp vector fields and our operator $P$. 

Let $\mathcal{V}_{\cu}(\R^n)$, the Lie algebra of cusp vector fields on $\overline{\R^n}$, be the $C^\infty(\overline{\R^n})$-module generated by 
\begin{equation}
\mathcal{G} = \{ D_{x_i}, \quad 1 \leq i \leq n, \quad x_iD_{x_j}-x_jD_{x_i}, \quad 1 \leq i,j \leq n, \quad \mathrm{Id} \}.
\end{equation}
We can think of this as the asymptotic symmetry group of our operator; if the metric is the flat Euclidean metric, then these vector fields genuine symmetries of $\Delta_0$, namely translations and rotations. 
We are only interested in the module in a neighbourhood of spatial infinity. There, we can discard all the partial derivatives $D_{x_i}$ and retain only a radial derivative $D_r$, where $r = |x|$, as the others are $C^\infty(\overline{\R^n})$-linear combinations of $D_r$ and angular derivatives $x_iD_{x_j}-x_jD_{x_i}$. 
In terms of coordinates $(\msf{x}, y_1, \dots, y_{n-1})$ on $\overline{\R^n}$ with $(y_1,..,y_{n-1})$ being a coordinate system on $\partial \overline{\R^n}$, as in Lecture 7, an equivalent generating set is  
\begin{equation} \label{eq:generating-set-2}
{\msf{x}}^2D_{\msf{x}},\quad D_{y_1},..D_{y_{n-1}}, \quad \Id. 
\end{equation} 
Then we set $\mathrm{Diff}_{\cu}^k$ to be the $C^\infty(\overline{\R^n})$-module generated by $k$-fold products of elements in $\mathcal{G}$.


Our intertwining property is as follows.
\begin{proposition} \label{prop:cusp-Laplacian-commutator}
Let $L \in \mathrm{Diff}_{\cu}^k \subset \Psi_{\sct}^{k,k}$ be a $k$-fold product of elements in $\mathcal{G}$, we have
\begin{equation} \label{eq:module-commutator-extra-vanishing}
[L,P] \in \Psi_{\sct}^{k+1,k-2}.
\end{equation}

That is, it has one extra order of spatial decay compared with the `trivial' membership, reflecting that $P$ is asymptotically symmetric with respect to the flow generated by (factors of) those vector fields.
\end{proposition}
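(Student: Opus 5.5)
We will prove the statement by reducing to the generators and then using the Leibniz rule. Since $[L_1L_2,P]=L_1[L_2,P]+[L_1,P]L_2$, induction on $k$ together with the composition law ($\Psi_{\sct}^{a,b}\circ\Psi_{\sct}^{c,d}\subset\Psi_{\sct}^{a+c,b+d}$) reduces everything to the case $k=1$. Indeed, if $[G,P]\in\Psi_{\sct}^{2,-1}$ for every $G\in\mathcal{G}$, and $[L',P]\in\Psi_{\sct}^{k,k-3}$ for a $(k-1)$-fold product $L'$, then $G[L',P]\in\Psi_{\sct}^{1,1}\Psi_{\sct}^{k,k-3}=\Psi_{\sct}^{k+1,k-2}$ and $[G,P]L'\in\Psi_{\sct}^{2,-1}\Psi_{\sct}^{k-1,k-1}=\Psi_{\sct}^{k+1,k-2}$. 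Here we use $D_{x_i}\in\Psi_{\sct}^{1,0}\subset\Psi_{\sct}^{1,1}$ and $x_iD_{x_j}-x_jD_{x_i}\in\Psi_{\sct}^{1,1}$. We will also allow $C^\infty(\overline{\R^n})$ coefficients, so that we cover all of $\mathrm{Diff}^k_{\cu}$. For a function $\phi\in C^\infty(\overline{\R^n})$, we have $[\phi,P]=[\phi,\Delta_g]$. This is a first order operator whose coefficients involve $\partial\phi\in S^{-1}$, so it lies in $\Psi_{\sct}^{1,-1}\subset\Psi_{\sct}^{2,-1}$, and the same reduction handles it.

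For the generators, write $P=\Delta_0-\lambda^2+Q$, where $Q=\Delta_g-\Delta_0$. The identity and the translations $D_{x_i}$ commute with $\Delta_0-\lambda^2$, and the rotations $x_iD_{x_j}-x_jD_{x_i}$ commute with $\Delta_0$ as well, so only $[G,Q]$ needs to be checked. By the assumption \eqref{eq:metric-assumption} we can write $Q=\sum a_{ij}D_{x_i}D_{x_j}+\sum b_jD_{x_j}$ with $a_{ij}\in\msf{x}^2C^\infty(\overline{\R^n})\subset S^{-2}$ and $b_j\in S^{-3}$. This holds because $b_j$ comes from derivatives of $g^{ij}$ and $\sqrt{g}$, and derivatives of symbols gain one order of decay. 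Hence $Q\in\Psi_{\sct}^{2,-2}$.

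The main step is to show that $[G,Q]\in\Psi_{\sct}^{2,-1}$, and we will do this by direct computation rather than by symbol calculus. Commuting $D_{x_k}$ with $Q$ differentiates the coefficients, giving coefficients in $S^{-3}$, so $[D_{x_k},Q]\in\Psi_{\sct}^{2,-3}$. For a rotation $\Omega=x_iD_{x_j}-x_jD_{x_i}$, we have $[\Omega,a\,D_{x_k}D_{x_l}]=(\Omega a)D_{x_k}D_{x_l}+a[\Omega,D_{x_k}D_{x_l}]$. Since $\Omega$ is tangent to spheres and homogeneous of degree zero, $\Omega a\in S^{-2}$, and $[\Omega,D_{x_k}]$ is a constant-coefficient combination of the $D_{x_m}$. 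So this commutator is again in $\Psi_{\sct}^{2,-2}$, which is better than needed. The potentially delicate point is that the principal symbol computation $\{\sigma(G),\sigma(Q)\}$ only gives order $(2,-1)$ in general, since the symbol of a rotation grows like $\ang{x}$. The explicit differential-operator computation avoids any worry about the loss of $\delta$ or about remainder terms.

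Putting these together, every generator satisfies $[G,P]\in\Psi_{\sct}^{2,-1}$, and in fact the stronger inclusion $\Psi_{\sct}^{2,-2}$ holds. The Leibniz induction then gives \eqref{eq:module-commutator-extra-vanishing}.
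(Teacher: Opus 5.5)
Your proof is correct for the proposition as stated (pure $k$-fold products of elements of $\mathcal{G}$). Its skeleton is the paper's: the Leibniz reduction to $k=1$, then the splitting $P=(\Delta_0-\lambda^2)+(\Delta_g-\Delta_0)$.

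\textbf{Where you differ from the paper.} The difference is in how the perturbation term is handled.
\begin{itemize}
\item The paper uses only $\Delta_g-\Delta_0\in\Psi^{2,-1}_{\sct}$ together with the commutator rule $[\Psi^{1,1}_{\sct},\Psi^{2,-1}_{\sct}]\subset\Psi^{2,-1}_{\sct}$. This needs no more than a long-range ($O(\msf{x})$) metric perturbation and gives exactly the required order.
\item You use the short-range hypothesis to get $Q\in\Psi^{2,-2}_{\sct}$, which buys the stronger $[G,P]\in\Psi^{2,-2}_{\sct}$ (and, iterating, one extra order of decay for every $k$).
\end{itemize}
Using the Euclidean generators $D_{x_i}$ and $x_iD_{x_j}-x_jD_{x_i}$, which commute exactly with $\Delta_0$, is a clean way to dispose of the $\Delta_0$ part.

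Your remark that the symbolic calculus ``only gives $(2,-1)$'' is mistaken. The inclusion $[\Psi^{1,1}_{\sct},\Psi^{2,-2}_{\sct}]\subset\Psi^{2,-2}_{\sct}$ is immediate from the calculus, and no $\delta$-losses occur because all orders here are constant. Your hand computation is valid, but it is not needed.

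\textbf{The extension to $C^\infty$ coefficients is false.} Drop the claim that the argument extends to all of $\mathrm{Diff}^k_{\cu}$, i.e.\ to $C^\infty(\overline{\R^n})$ coefficients.
\begin{itemize}
\item If $\phi\in C^\infty(\overline{\R^n})$ counts as order $(0,0)$, then $[\phi L',P]=\phi[L',P]+[\phi,P]L'$. The second term lies only in $\Psi^{1,-1}_{\sct}\Psi^{k,k}_{\sct}=\Psi^{k+1,k-1}_{\sct}$, one order short.
\item Your reduction implicitly treats $\phi$ as an extra generator of order $(1,1)$. That only yields $\Psi^{k+2,k-1}_{\sct}$, which is not the statement for $\mathrm{Diff}^k_{\cu}$.
\item The loss is genuine. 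On $\R^2$, take $\phi=\psi(\theta)$ for $|x|$ large and $\Omega=x_1D_{x_2}-x_2D_{x_1}$. Then
\[
\{\phi\,\sigma(\Omega),|\xi|^2\}=-2\,\psi'(\theta)\,(\hat x_1\xi_2-\hat x_2\xi_1)^2 ,
\]
which is a symbol of order $(2,0)$ that does not vanish at the spatial boundary wherever $\psi'\neq0$ and $\xi$ is not radial.
\item Since $\sigma(P)-|\xi|^2+\lambda^2\in S^{2,-2}_{\sct}$, it follows that $[\phi\Omega,P]\notin\Psi^{2,-1}_{\sct}$.
\end{itemize}
So the restriction in the statement to products of generators is essential.
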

\begin{remark}
This one order extra decay for all cusp vector fields is the main reason that our asymptotically Euclidean setting is easier than the general asymptotically conic setting in \cite{Melrose1994}, in which case one needs to deal with certain elliptic combination of cusp vector fields instead of individual ones. 
Geometrically, this is because the metric (and hence operator) to which we  are asymptotic  is rotationally symmetric.
\end{remark}

\begin{proof}
We only need to consider the case $k=1$ since we have the composition law and the derivation property:
\begin{equation}
[V_1V_2,P] = V_1[V_2,P] + [V_1,P]V_2,
\end{equation}
for $V_i \in \mathcal{G}$.

Let $\Delta_0$ be the Laplacian of the Euclidean space, then for all $V \in \mathcal{G}$, we have $[D_{y_i},\Delta_0] = 0$ and $[D_r,\Delta_0] \in \Psi_{\sct}^{1,-2}$.

We can write
\begin{equation}
[V,\Delta_g-\lambda^2]  = [V,\Delta_g-\Delta_0]+[V,\Delta_0].
\end{equation}
Under our assumption \eqref{eq:metric-assumption}, we have (the computation is left as Problem~\eqref{ex:Laplacian-approximate})
\begin{equation} \label{eq:Laplacian-approximate}
\Delta_g - \Delta_0 \in \Psi_{\sct}^{2,-1},
\end{equation}
and this gives \eqref{eq:module-commutator-extra-vanishing} for $k=1$, which in turn finishes the proof as we have explained.
\end{proof}

Then our Fredholm estimate generalizes to those module regularity spaces below.

\begin{definition}
Let $\mathcal{G}^k$ be the set of $k$-fold products of elements in $\mathcal{G}$. We define the $k$th-order module regularity space, denoted by $H_{\sct}^{s,\mathsf{r}_\pm;k}(\R^n)$, to be the space of functions such that
\begin{equation}
\{u:   L u  \in   H_{\sct}^{s,\mathsf{r}_\pm}, \forall L \in \mathcal{G}^k \}.
\end{equation}
Of course, this already requires $u \in   H_{\sct}^{s,\mathsf{r}_\pm}$.  We equip this space with the norm (in fact the corresponding inner product as well):
\begin{equation} \label{eq:k-module-norm-defn}
\| u \|_{s,\mathsf{r}_\pm;k}^2 = \sum_{L \in \mathcal{G}^k} \| Lu \|_{s,\mathsf{r}_\pm}^2.
\end{equation}
\end{definition}
Then scattering pseudodifferential operators are bounded between those Sobolev spaces.
\begin{proposition}  \label{prop:boundedness-module-regularity}
Let $Q \in \Psisc^{m,l}$, then
\begin{equation}
    \| Q u \|_{s-m,\mathsf{r}_\pm-l;k} \lesssim \| u \|_{s,\mathsf{r}_\pm;k}.
\end{equation}
\end{proposition}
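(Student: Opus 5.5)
By the definition \eqref{eq:k-module-norm-defn}, it suffices to bound $\| L Q u \|_{s-m, \mr_\pm - l}$ by $\| u \|_{s, \mr_\pm; k}$ for each fixed $L \in \mathcal{G}^k$. I would commute $L$ through $Q$:
\[
LQ u = Q L u + [L, Q] u .
\]
The first term is handled by the boundedness of $Q$ on (variable order) Sobolev spaces. It gives $\| QLu \|_{s-m,\mr_\pm - l} \lesssim \| Lu \|_{s, \mr_\pm}$, which is controlled by the module norm.

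The second term needs more care, since $L \in \Psisc^{k,k}$ and the commutator $[L,Q]$ only lies in $\Psisc^{m+k-1, l+k-1}$. Measured crudely, this is far from bounded by $\|u\|_{s,\mr_\pm}$. The point is that the commutator of a cusp differential operator with a scattering pseudodifferential operator can be re-expressed through lower-order module elements. I would argue by induction on $k$, treating the case $k=1$ first. For $V \in \mathcal{G}$, the operator $[V,Q]$ has principal symbol $-i H_v q$. Here $v$ is the symbol of $V$, namely $\xi_i$, or $x_i\xi_j - x_j \xi_i$, or $1$. For $V = D_{x_i}$ we get $[V,Q] \in \Psisc^{m, l-1} \subset \Psisc^{m,l}$. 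For a rotation vector field, $H_v$ is the rotation generator $(x_i\partial_{x_j} - x_j \partial_{x_i}) + (\xi_i \partial_{\xi_j} - \xi_j\partial_{\xi_i})$. This is a b-vector field on $\comphase$, so it preserves symbol orders, and the full asymptotic expansion gives $[V,Q] \in \Psisc^{m,l}$ exactly: rotations act on the calculus by conjugation with orthogonal changes of variables. In either case $[V,Q] \in \Psisc^{m,l}$, which gives $\| [V,Q]u \|_{s-m, \mr_\pm - l} \lesssim \|u\|_{s,\mr_\pm}$.

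For general $k$, write $L = V_1 \cdots V_k$ and expand
\[
LQ = \sum_{\text{terms}} Q_{\alpha} L_{\alpha},
\]
with $L_\alpha \in \mathcal{G}^{j}$, $j \le k$, and $Q_\alpha$ an iterated commutator $\mathrm{ad}_{V_{i_1}} \cdots \mathrm{ad}_{V_{i_p}} Q \in \Psisc^{m,l}$ by repeated use of the $k=1$ case. Each term is then bounded by $\|L_\alpha u\|_{s,\mr_\pm}$. This is at most $\|u\|_{s,\mr_\pm;k}$, because $\mathrm{Id}\in\mathcal{G}$ lets lower products be padded to $k$-fold ones.

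The main obstacle is the variable order. First, $\Psisc^{m,l}$ must map $H^{s,\mr_\pm}$ to $H^{s-m,\mr_\pm - l}$; this is the boundedness exercise for variable order spaces. Second, and more subtle, the commutator $[V,Q]$ should not lose the $\delta$ that variable orders usually cost. Since $Q$ itself has constant orders, the $k=1$ analysis above is purely constant order and no $\delta$-loss arises. The variable order only enters through the target Sobolev spaces.
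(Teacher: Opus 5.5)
Your proof is correct and follows essentially the paper's argument. Both proofs commute $L$ past $Q$ and rest on the single fact that $[V,Q]\in\Psisc^{m,l}$ for $V\in\mathcal{G}$; this already follows from $V\in\Psisc^{1,1}$ and the $(1,1)$ order drop of commutators, so your symbol computation is not needed. The only difference is bookkeeping: you normal-order $LQ$ fully into iterated commutators $Q_\alpha L_\alpha$, whereas the paper writes $[L_0,Q]=\sum_j V_1\cdots V_{j-1}[V_j,Q]V_{j+1}\cdots V_k$ and inducts on $k$.
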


\begin{proof}
By the definition of the $H_{\sct}^{s,\mathsf{r}_\pm;k}$-norm, we only need to show
\begin{equation}
    \sum_{L \in \mathcal{G}^k} \| L Q u \|_{s-m,\mathsf{r}_\pm-l} \lesssim \sum_{L \in \mathcal{G}^k} \| L  u \|_{s,\mathsf{r}_\pm},
\end{equation}
    which in turn can be implied by 
\begin{equation} \label{eq:module-est-L0}
   \| L_0 Q u \|_{s-m,\mathsf{r}_\pm-l} \lesssim \sum_{L \in \mathcal{G}^k} \| L  u \|_{s,\mathsf{r}_\pm},
\end{equation}    
for a fixed $L_0 \in \mathcal{G}^k$. Now we apply an induction on $k$ and the estimate above for $k=0$ follows from the boundedness of pseudodifferential operators between usual weighted (variable order) Sobolev spaces we discussed.
Suppose the estimate is true with $k$ replaced by $k-1$ and now we prove \eqref{eq:module-est-L0}.
Writing $L_0 = V_1...V_k$, we have
\begin{equation} \label{eq:L0Q-decomposition}
    L_0Qu = QL_0u + [L_0,Q] u .
\end{equation}
For the contribution from the first term, we apply the boundedness of $Q$ on Sobolev spaces without module regularity to obtain
\begin{equation}
    \| QL_0u \|_{s-m,\mathsf{r}_\pm-l} \lesssim \|  L_0 u \|_{s,\mathsf{r}_\pm}  \leq \sum_{L \in \mathcal{G}^k} \| L  u \|_{s,\mathsf{r}_\pm}.
\end{equation}
For the contribution from the second term in \eqref{eq:L0Q-decomposition}, we notice that $[L_0,Q]$ can be written as a sum of terms of the form $V_1...V_{j-1}[V_{j},Q]V_{j+1}...V_k$ and we have
\begin{equation}
   \| [L_0,Q] u \|_{s-m,\mathsf{r}_\pm-l}
   \leq \sum_{j=1}^k \| V_1...V_{j-1}[V_{j},Q]V_{j+1}...V_k u \|_{s-m,\mathsf{r}_\pm-l}.
\end{equation}
For each individual term, we have
\begin{multline}
   \| V_1...V_{j-1}[V_{j},Q]V_{j+1}...V_ku \|_{s-m,\mathsf{r}_\pm-l} \lesssim  \| [V_{j},Q]V_{j+1}...V_ku \|_{s-m,\mathsf{r}_\pm-l;j-1}
  \\  \lesssim \| V_{j+1}...V_ku \|_{s,\mathsf{r}_\pm;j-1}
  \lesssim \| u \|_{s,\mathsf{r}_\pm;k-1},
\end{multline}
where the first inequality follows from the definition of module regularity norms, the second inequality follows from the induction hypothesis since $[V_j,Q] \in \Psisc^{m,l}$, and the last inequality again follows from the definition of module regularity norms. This completes the proof. 
\end{proof}

The analogue of the $\mathcal{X}^{s,\mathsf{r}_\pm}$-space in this module regularity is the following:
\begin{equation}
\mathcal{X}^{s,\mathsf{r}_\pm;k}:=
\{ u \in H^{s,\mathsf{r}_\pm;k}\mid Pu \in  \mathcal{X}^{s-2,\mathsf{r}_\pm;k} \}.
\end{equation}
And it is equipped with the norm (and inner product given by)
\begin{equation} \label{eq:module-reg-norm}
\| u \|_{\mathcal{X}^{s,\mathsf{r}_\pm;k}}^2
= \| u \|_{s,\mathsf{r}_\pm;k}^2 + \| Pu \|_{s-2,\mathsf{r}_\pm;k}^2.
\end{equation}

Using the same argument as before, this is a Hilbert space. Now we have the following global Fredholm estimate for the module regularity spaces.
\begin{theorem}\label{thm:Fredholm-Helmholtz-module case}
Let $\mathsf{r}_\pm$ be variable orders as before, then for any $s,M,N \in \R$ and $u \in \mathcal{X}^{s,\mathsf{r}_\pm;k}$, we have (we ignore the module regularity in the error term, since they can be absorbed into $M,N$):
\begin{equation} \label{eq:module-reg-global-Fredholm}
\| u \|_{s,\mathsf{r}_\pm;k} \lesssim \| Pu \|_{s-2,\mathsf{r}_\pm+1;k}+ \| u \|_{M,N}.
\end{equation}
The interesting case is when $M<\min\{s,2-s\}$ and $N < \min\{\mathsf{r}_+,\mathsf{r}_-\} = \frac{1}{2}-\delta$. With either sign $\pm$, this implies that
\begin{equation} \label{eq:Helmholtz-mapping-module}
(\Delta_g-\lambda^2):\; \mathcal{X}^{s,\mathsf{r}_\pm;k}
\to H_{\sct}^{s-2,\mathsf{r}_\pm+1;k}
\end{equation}
is Fredholm.
\end{theorem}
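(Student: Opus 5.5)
The plan is to reduce the module-regularity estimate \eqref{eq:module-reg-global-Fredholm} to the already established estimate \eqref{eq:Helmholtz-global-Fredholm} of Theorem~\ref{thm:Fredholm-Helmholtz}. We apply that estimate to $Lu$ for each $L \in \mathcal{G}^k$ and control the commutator $[L,P]$ using Proposition~\ref{prop:cusp-Laplacian-commutator}. We argue by induction on $k$; the case $k=0$ is Theorem~\ref{thm:Fredholm-Helmholtz}. To keep the manipulations legitimate, we first work with Schwartz $u$. Afterwards we extend to all of $\mathcal{X}^{s,\mathsf{r}_\pm;k}$ by a density argument. For this we need the regularizers $T_r$ of Lemma~\ref{lemma:regularizer-Tr}, checked to also commute asymptotically with elements of $\mathcal{G}^k$. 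This should hold because $T_r$ is uniformly bounded in $\Psi_{\sct}^{0,0}$ and converges to $\Id$ in $\Psi_{\sct}^{\delta,\delta}$, so Proposition~\ref{prop:boundedness-module-regularity} applies uniformly.

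For fixed $L \in \mathcal{G}^k$ we write $PLu = LPu + [P,L]u$. Applying \eqref{eq:Helmholtz-global-Fredholm} to $Lu$ gives
\begin{equation*}
\|Lu\|_{s,\mathsf{r}_\pm} \lesssim \|LPu\|_{s-2,\mathsf{r}_\pm+1} + \|[P,L]u\|_{s-2,\mathsf{r}_\pm+1} + \|Lu\|_{M,N}.
\end{equation*}
The first term is bounded by $\|Pu\|_{s-2,\mathsf{r}_\pm+1;k}$ by definition. The last term is bounded by $\|u\|_{M+k,N+k}$, since $L \in \Psi_{\sct}^{k,k}$, and this can later be reabsorbed as a weak norm.

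The key step is the commutator term. We expand $[P,L]$ as a sum of terms $V_1\cdots V_{j-1}[P,V_j]V_{j+1}\cdots V_k$. By Proposition~\ref{prop:cusp-Laplacian-commutator}, $[P,V_j] \in \Psi_{\sct}^{2,-1}$. This is one order better spatially than the trivial $\Psi_{\sct}^{2,0}$, and that gain is exactly what is needed: the output is measured with spatial order $\mathsf{r}_\pm+1$. We would like to rewrite $[P,V_j] = \sum_i Q_{ij} W_i$ with $W_i \in \mathcal{G}$ and $Q_{ij} \in \Psi_{\sct}^{1,-1}$, or more crudely, modulo terms of the form $\Psi_{\sct}^{1,-1}\circ\mathcal{G}$ plus $\Psi_{\sct}^{2,-1}$. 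Then each commutator term lies in $\Psi_{\sct}^{*,-1}\circ\mathcal{G}^{k}$ or in lower-module terms. Commuting the pseudodifferential factors past the remaining $V$'s costs nothing by Proposition~\ref{prop:boundedness-module-regularity}. Consequently $\|[P,L]u\|_{s-2,\mathsf{r}_\pm+1}$ should be bounded by $\|u\|_{s-1,\mathsf{r}_\pm;k} + \|u\|_{s,\mathsf{r}_\pm;k-1}$.

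I expect the main obstacle to be the differential order. If $[P,V_j]$ really has differential order $2$ (which is what the crude inclusion $\Psi_{\sct}^{k+1,k-2}$ gives for $k=1$), the bound loses no decay. In that case we must use ellipticity of $P$ at fibre infinity, via the elliptic estimate \eqref{eq:fred1} applied to $Lu$, to recover one derivative. Alternatively we can note that the top differential part of $[P,V_j]$ is a first-order cusp-differential operator, which we then express through $\mathcal{G}$. Either way, the commutator contributes $\|u\|_{s-1,\mathsf{r}_\pm;k}$, which is weaker than the left-hand side in differential order only.

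We then sum over $L \in \mathcal{G}^k$. The term $\|u\|_{s,\mathsf{r}_\pm;k-1}$ is handled by the induction hypothesis. The term $\|u\|_{s-1,\mathsf{r}_\pm;k}$ is controlled by the interpolation inequality \eqref{eq:var interp}, applied to each $Lu$: it is at most $\epsilon\|u\|_{s,\mathsf{r}_\pm;k} + C_\epsilon\|u\|_{M',N'}$. We absorb the $\epsilon$ term into the left-hand side; this absorption is justified for Schwartz $u$. This yields \eqref{eq:module-reg-global-Fredholm}.

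The Fredholm statement then follows as in the proof of Theorem~\ref{thm:Fredholm-Helmholtz}. The kernel is contained in the kernel on $\mathcal{X}^{s,\mathsf{r}_\pm}$, so it is finite dimensional. The range is closed by the estimate together with compactness of $H_{\sct}^{s,\mathsf{r}_\pm;k}\hookrightarrow H^{M,N}_{\sct}$, which holds since $H^{s,\mathsf{r}_\pm;k} \subset H^{s,\mathsf{r}_\pm}$ and by Problem~\ref{prob:var compact embedding}. The cokernel is finite dimensional because the target module space embeds continuously into $H_{\sct}^{s-2,\mathsf{r}_\pm+1}$, on which $P$ already has finite-codimensional range.
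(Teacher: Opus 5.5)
\textbf{The estimate does not close as written.} The failing step is the absorption of $\|u\|_{s-1,\mathsf{r}_\pm;k}$.

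The interpolation inequality \eqref{eq:var interp} requires a strict gain in \emph{both} orders. With the same spatial order it is false on $\R^n$: an inequality $\|w\|_{s-1,\mathsf{r}}\le\epsilon\|w\|_{s,\mathsf{r}}+C_\epsilon\|w\|_{-N,-N}$ would make $H^{s,\mathsf{r}}_{\sct}\hookrightarrow H^{s-1,\mathsf{r}}_{\sct}$ compact. Normalized translates of a fixed bump, as in the example preceding Proposition~\ref{prop:comp emb}, rule this out.

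Your fallback via ellipticity of $P$ at fibre infinity does not help either. The term $\|Lu\|_{s-1,\mathsf{r}}$ also lives at finite frequency near $\Char(P)$. There the differential order is irrelevant, so $\|\cdot\|_{s-1,\mathsf{r}}$ is comparable to $\|\cdot\|_{s,\mathsf{r}}$ and there is no small constant to absorb with.

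\textbf{The term never needs to appear.} The obstacle you anticipate is not there. You have $[P,V_j]\in\Psi_{\sct}^{2,-1}$, and you measure the commutator in $H^{s-2,\mathsf{r}_\pm+1}$. Its differential order $2$ is therefore exactly accounted for, with no loss of derivatives or decay. Proposition~\ref{prop:boundedness-module-regularity} then gives directly
\[
\|V_1\cdots V_{j-1}[P,V_j]V_{j+1}\cdots V_k u\|_{s-2,\mathsf{r}_\pm+1}
\lesssim \|[P,V_j]V_{j+1}\cdots V_k u\|_{s-2,\mathsf{r}_\pm+1;j-1}
\lesssim \|u\|_{s,\mathsf{r}_\pm;k-1},
\]
which is precisely the computation in the proof of that proposition.

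So drop the factorization $[P,V_j]=\sum_i Q_{ij}W_i$. Only $\|u\|_{s,\mathsf{r}_\pm;k-1}$ remains, and the induction hypothesis handles it. No absorption is needed, hence no Schwartz approximation and no check that $T_r$ commutes asymptotically with $\mathcal{G}^k$. Indeed, $u\in\mathcal{X}^{s,\mathsf{r}_\pm;k}$ already gives $Lu\in\mathcal{X}^{s,\mathsf{r}_\pm}$, so the $k=0$ estimate applies to $Lu$ directly.

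This is essentially the paper's proof. The paper peels off one $V\in\mathcal{G}$ at a time: it applies the $(k-1)$ estimate to $Vu$, writes $PVu=VPu+[P,V]u$, bounds the commutator by $\|u\|_{s,\mathsf{r}_\pm;k-1}$, and applies the induction hypothesis once more.

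\textbf{The cokernel step also needs one more ingredient.} Finite codimension of $P(\mathcal{X}^{s,\mathsf{r}_\pm})$ in $H^{s-2,\mathsf{r}_\pm+1}_{\sct}$ does not by itself give finite codimension of $P(\mathcal{X}^{s,\mathsf{r}_\pm;k})$ in the smaller space $H^{s-2,\mathsf{r}_\pm+1;k}_{\sct}$. You also need the module-regularity statement: if $u\in\mathcal{X}^{s,\mathsf{r}_\pm}$ and $Pu\in H^{s-2,\mathsf{r}_\pm+1;k}_{\sct}$, then $u\in H^{s,\mathsf{r}_\pm;k}_{\sct}$. This is the extension of Proposition~\ref{prop:inf module reg} posed in the last problem of Lecture 12, and it relies on the radial point estimates.

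With that in hand, choose a complement of the range of $P$ on $\mathcal{X}^{s,\mathsf{r}_\pm}$ consisting of Schwartz functions. Then every $f\in H^{s-2,\mathsf{r}_\pm+1;k}_{\sct}$ can be written as $f=Pu+g$ with $g$ in that complement and $u\in\mathcal{X}^{s,\mathsf{r}_\pm;k}$, which gives finite codimension.
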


\begin{proof}
We prove this using \eqref{eq:Helmholtz-global-Fredholm} and the intertwining property above. 
We make an induction on $k$. The case $k=0$ is \eqref{eq:Helmholtz-global-Fredholm}.
Suppose \eqref{eq:Helmholtz-mapping-module} is true with $k$ replaced by $k-1$, then the induction amounts to show
\begin{equation}
\| V u \|_{s,\mathsf{r}_\pm;k-1} \lesssim \| V Pu \|_{s-2,\mathsf{r}_\pm+1;k-1}+ \| Pu \|_{s-2,\mathsf{r}_\pm+1;k-1}+ \| V u \|_{M,N},
\end{equation}
for any $V \in \mathcal{G}$. To show this, we notice that
\begin{align} \label{eq:VP-commutator}
VPu = PVu + [V,P]u.
\end{align}
Applying the induction hypothesis to $Vu$, we have
\begin{align*}
\| V u \|_{s,\mathsf{r}_\pm;k-1} & \lesssim \| PVu \|_{s-2,\mathsf{r}_\pm+1;k-1} + \| Vu \|_{M,N}
\\ & \leq   \| VPu \|_{s-2,\mathsf{r}_\pm+1;k-1} + \|[P,V]u\|_{s-2,\mathsf{r}_\pm+1;k-1} + \| Vu \|_{M,N}
\\ & \lesssim \| VPu \|_{s-2,\mathsf{r}_\pm+1;k-1} + \|u\|_{s,\mathsf{r}_\pm;k-1} + \| Vu \|_{M,N},
\end{align*}
where in the last line we used \eqref{eq:module-commutator-extra-vanishing} with $L$ there being a single vector and the boundedness in Proposition~\ref{prop:boundedness-module-regularity}. For the $\|u\|_{s,\mathsf{r}_\pm;k-1}$-term, we can apply the induction hypothesis again to control it by $\| Pu \|_{s-2,\mathsf{r}_\pm+1;k-1}$ with an error term that can be combined into the last term. 

\end{proof}

\begin{proposition}\label{prop:inf module reg}
 Assume that $Pu$ is Schwartz, and $u \in H_{\sca}^{s, \mr_\pm}$. Then $u \in H_{\sca}^{s, \mr_\pm; k}$ for all $k \in \mathbb{N}$.    
\end{proposition}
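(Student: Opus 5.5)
We argue by induction on $k$; the case $k=0$ is the hypothesis $u \in H_{\sca}^{s,\mr_\pm}$. Note first that $Pu \in \Schw$ lies in $H_{\sca}^{s-2,\mr_\pm+1;k}$ for every $k$, since elements of $\mathrm{Diff}_{\cu}^k \subset \Psisc^{k,k}$ preserve $\Schw$. So the only issue is the regularity of $u$ itself. Suppose $u \in H_{\sca}^{s,\mr_\pm;k-1}$ (for every $s$, say, or at least for the fixed $s$ and all slightly lowered $s$). We must show $Vu \in H_{\sca}^{s,\mr_\pm;k-1}$ for each $V \in \mathcal{G}$.

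The natural approach is to apply the estimate \eqref{eq:module-reg-global-Fredholm} (at level $k-1$) to $Vu$, using \eqref{eq:VP-commutator}:
\[
PVu = VPu - [V,P]u .
\]
Here $VPu$ is Schwartz. By Proposition~\ref{prop:cusp-Laplacian-commutator}, $[V,P] \in \Psi_{\sct}^{2,-1}$. Then Proposition~\ref{prop:boundedness-module-regularity} shows that $[V,P]u \in H_{\sca}^{s-2,\mr_\pm+1;k-1}$, using the inductive hypothesis $u \in H_{\sca}^{s,\mr_\pm;k-1}$. Hence
\[
PVu \in H_{\sca}^{s-2,\mr_\pm+1;k-1}.
\]
Also $Vu$ is a priori in $H_{\sca}^{s-1,\mr_\pm-1;k-1}$, since $V \in \Psisc^{1,1}$. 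If we knew $Vu \in \mathcal{X}^{s,\mr_\pm;k-1}$, the estimate would give a bound, but membership is exactly what we want to prove.

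\textbf{Main obstacle.} The Fredholm estimate in Theorem~\ref{thm:Fredholm-Helmholtz-module case} is only stated for elements already in $\mathcal{X}^{s,\mr_\pm;k-1}$; it is an a priori estimate, not a regularity statement. To turn it into one, I would regularize. Take $T_r$ from Lemma~\ref{lemma:regularizer-Tr}, or better a family like $V_r = V T_r$. The family $T_r Vu$ is Schwartz, so the estimate applies to it, giving
\[
\|T_r Vu\|_{s,\mr_\pm;k-1} \lesssim \|P T_r V u\|_{s-2,\mr_\pm+1;k-1} + \|T_r Vu\|_{M,N}.
\]
Write $P T_r V u = T_r PVu + [P,T_r]Vu$. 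The first term is uniformly bounded. The second requires the commutator $[P,T_r]$ to be uniformly bounded from $H_{\sca}^{s-1,\mr_\pm-1;k-1}$ to $H_{\sca}^{s-2,\mr_\pm+1;k-1}$. Since $[P,T_r]$ is uniformly in $\Psi_{\sct}^{1,-1}$ (as noted in the proof of Lemma~\ref{lemma:regularizer-Tr}), this loses one spatial order relative to what is needed. So instead I would iterate in small steps. I would use a regularizer uniformly in $\Psisc^{0,0}$ but converging only in $\Psisc^{\delta,\delta}$, and gain at most $(1/2,1/2)$ or so per step, running the argument with the weights $\mr_\pm$ lowered and then raised step by step. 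This mirrors the bootstrapping in Lecture 6 and in Step 3 there.

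Concretely, start from $Vu \in H_{\sca}^{s-1,\mr_\pm-1;k-1}$. Repeatedly apply the uniform bound to $T_r Vu$, with orders chosen so that the commutator term is controlled by the already known regularity of $Vu$. Note that $\mr_\pm - 1 + 1/2$ remains an admissible order, up to shifting $\mr_\pm$ by constants, which preserves the threshold structure as long as we do not cross it near $\Rin$ or $\Rout$. This is a second subtle point: shifting constants could push the above-threshold value below $-1/2$. If so, I would instead fix $s$ large and note that $u \in H_{\sca}^{s,\mr_\pm}$ for all $s$, since $u$ is elliptic-regular at fibre infinity. Then I would compensate using only differential orders, and use the elliptic/propagation structure to recover spatial order at the radial sets via Theorem~\ref{thm:above dist}. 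After finitely many steps, uniform boundedness plus weak compactness, exactly as in Lecture 6, gives $Vu \in H_{\sca}^{s,\mr_\pm;k-1}$. This completes the induction.
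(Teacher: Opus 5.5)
There is a genuine gap at the regularity step. Your set-up is fine: the identity $PVu = VPu - [V,P]u$ and Proposition~\ref{prop:boundedness-module-regularity} do give $PVu \in H_{\sca}^{s-2,\mr_\pm+1;k-1}$. But the implication you then try to prove for $v = Vu$ is false. That implication says that $v\in H_{\sca}^{s-1,\mr_+-1}$ and $Pv\in H_{\sca}^{s-2,\mr_++1}$ force $v\in H_{\sca}^{s,\mr_+}$. For a counterexample, take a generalized eigenfunction $w$ as in \eqref{eq:smoothefns}, with $\hat f$ not vanishing identically on $\{|\xi|=\lambda\}$. Then $Pw=0$ and $w\in H_{\sca}^{\infty,-1/2-\epsilon}$ for every $\epsilon>0$, so $w\in H_{\sca}^{s-1,\mr_+-1}$ whenever $\max \mr_+<1/2$. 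Yet $w$ has wavefront set at $\Rin$ at order $-1/2$, so $w\notin H_{\sca}^{s,\mr_+}$.

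This is the same phenomenon that makes the above-threshold estimate, and hence the global estimate \eqref{eq:module-reg-global-Fredholm}, fail in the strong sense. So that estimate cannot be turned into a regularity statement by applying it to $T_rVu$ and letting $r\to0$. Concretely, $[P,T_r]$ is only uniformly bounded in $\Psisc^{1,-1}$. Controlling $[P,T_r]Vu$ in $H_{\sca}^{s-2,\mr'+1}$ therefore requires $Vu\in H_{\sca}^{s-1,\mr'}$. You gain one differential order, which ellipticity at fibre infinity gives for free, but no spatial order. Your ``small steps'' iteration has no mechanism that produces a spatial gain; in Lecture~6 the regularization is built into the commutant, not applied to $u$ from outside. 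Your closing appeal to Theorem~\ref{thm:above dist} does not rescue this either. That theorem needs $Vu$ to be above threshold at $\Rin$ a priori, and $Vu\in H_{\sca}^{s-1,\mr_+-1}$ does not give this.

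The missing idea is to extract the above-threshold information from $u$ itself before commuting with $V$. Since $Pu\in\Schw$ and $\mr_+>-1/2$ near $\Rin$, Theorem~\ref{thm:above dist} shows that $u$ is microlocally Schwartz near $\Rin$. Combined with elliptic regularity and propagation (Theorem~\ref{thm:mpe}), this gives $\WFsc(u)\subset\Rout$, and by microlocality $\WFsc(Vu)\subset\Rout$. Now $P(Vu)=[P,V]u$ modulo $\Schw$ lies in $H_{\sca}^{s-2,\mr_++1}$ by Proposition~\ref{prop:cusp-Laplacian-commutator}. Since $\mr_+$ is below threshold at $\Rout$, you can apply Theorem~\ref{thm:below}, which unlike the above-threshold estimate holds for all tempered distributions. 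The control it needs on a punctured neighbourhood of $\Rout$ is supplied by $\WFsc(Vu)\subset\Rout$. It gives $Vu\in H_{\sca}^{s,\mr_+}$ near $\Rout$, hence globally. No regularization and no global Fredholm estimate are needed. Higher $k$ follows by the same argument with nested commutators such as $[L_1,[P,L_2]]\in\Psisc^{2,-1}$. The $\mr_-$ case is symmetric, with the roles of $\Rin$ and $\Rout$ exchanged.
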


\begin{remark}
 We first remark on the relationship between this Proposition and Theorem~\ref{thm:Fredholm-Helmholtz-module case}. Given $Pu = f$, we can ask, if $f$ has additional module regularity, does $u$ also have additional module regularity? Furthermore, if the answer is yes, can we get estimates on the module regularity norms of $u$ in terms of the module regularity norms of $f = Pu$, plus weaker norms of $u$? 

 Theorem~\ref{thm:Fredholm-Helmholtz-module case} \emph{assumes} the answer to the first question is yes, and provides an estimate. By contrast, this Proposition answers the first question affirmatively, \emph{under the additional assumption that $u$ is above threshold at one of the radial sets}. It is easy to see that the answer to the first question is negative without this assumption; for example, there are many solutions to $Pu = 0$, including plane waves such as $e^{i\lambda x_1}$ in the free case, that have no additional module regularity. 
\end{remark}

\begin{proof}
It suffices to prove the result just for $\mr_+$, as the argument for $\mr_-$ is strictly analogous. 

 We prove this by induction on $k$. We consider the scattering wavefront set of $u$. 
Since $Pu$ is Schwartz, the wavefront set of $u$ is contained in $\Char(P)$. Next, by assumption, $u$ is above threshold microlocally near $\Rin$ since $\mr_+ > -1/2$ there. Thus using the above-threshold radial point estimate, Theorem~\ref{thm:above dist}, we see that $u$ is Schwartz microlocally near $\Rin$, or equivalently, that $\Rin \notin \WFsc(u)$. Next, using Theorem~\ref{thm:mpe}, this regularity propagates throughout $\Char(P)$ except for the outgoing radial set $\Rout$. Thus $\WFsc(u) \subset \Rout$. 

 Next we consider $Lu$ for $L \in \mathcal{G}$.  Clearly $\WFsc(Lu) \subset \Rout$, since this is true of $u$, and the operator $L$ is microlocal in the sense of Proposition~\ref{prop:microlocality}. We investigate the microlocal regularity of $Lu$ near $\Rout$. We compute
 \begin{equation}\label{eq:1 commutator}
 PLu = LPu + [P, L]u = [P, L] u \text{ mod } \mathcal{S}. 
 \end{equation}
Since $\mr_+$ is below threshold near $\Rout$, we can apply the below-threshold radial point estimate, Theorem~\ref{thm:below}. According to Proposition~\ref{prop:boundedness-module-regularity}, $[P, L] \in \Psisc^{2, -1}$, so we see that $[P, L] u \in H_{\sca}^{s-2, \mr_+ + 1}$. Hence, using \eqref{eq:1 commutator}, $P(Lu) \in H_{\sca}^{s-2, \mr_+ + 1}$ and therefore, Theorem~\ref{thm:below} shows that $Lu \in H_{\sca}^{s, \mr_+}$ microlocally near $\Rout$. As we already observed that $\WFsc(Lu) \subset \Rout$, this means that $Lu \in H_{\sca}^{s, \mr_+}$ globally. Thus, $u \in H_{\sca}^{s, \mr_+; 1}$. That is, we have proved the $k=1$ case of the Proposition.

To prove for $k=2$, we compute, for $L_1, L_2 \in \mathcal{G}$,  
\begin{multline}
 PL_1 L_2u = L_1P L_2 u + [P, L_1]L_2 u 
 = L_1 L_2 Pu + L_1[P, L_2]  u + [P, L_1]L_2 u \\
 = [P, L_2] L_1 u + [L_1, [P, L_2]] u  + [P, L_1]L_2 u\text{ mod } \mathcal{S}. 
 \end{multline}
\end{proof}
We claim that the RHS is in $H_{\sca}^{s-2, \mr_+ + 1}$. To see this, we have already shown that $L_i u \in H_{\sca}^{s, \mr_+}$, and $[P, L_j] \in \Psisc^{2, -1}$, so the first and the third terms are in  $H_{\sca}^{s-2, \mr_+ + 1}$. As for the second term, the double commutator $[L_1, [P, L_2]]$ is in $\Psisc^{2, -1}$ showing that the second term is also in this space, proving the claim. Then Theorem~\ref{thm:below} shows that $L_1 L_2u \in H_{\sca}^{s, \mr_+}$ microlocally near $\Rout$, which proves the $k=2$ case of the Proposition. This argument can be iterated to prove the claim for all $k$. 

\begin{remark}
In the analysis of the Helmholtz equation, the characteristic variety is only at finite frequency over spatial infinity. So only the geodesic flow `at spatial infinity' is relevant to our analysis and no non-trapping condition is needed. If we were instead considering Klein-Gordon equations or Sch\"odinger equations, then we would have characteristic variety also at fibre-infinity over the interior. Establishing a Fredholm estimate would require microlocal propagation estimates at fiber infinity over the interior of spacetime. To have a global source-sink structure for the flow, as we showed for the Helmholtz operator, would require the metric to be \emph{non-trapping}. That means that every bicharacteristic (geodesic, in this case) would reach spatial infinity both forward and backward. This would not happen if there were a periodic geodesic for example. 

But we should point out that, even in the presence of mild (normally hyperbolic) trapping, one can still have propagation estimates with certain loss. See \cite{dyatlov2016spectral}\cite{hintz2021normally}\cite{jia2022propagation}. In addition, we can obtain a estimate that propagates regularity out of the trapped set and then again glue them using \cite{Datchev-Vasy-glue} to obtain a global Fredholm estimate.
In fact, this is the situation in \cite{vasy2013asympthyp}, which is where this non-elliptic Fredholm theory was invented.
\end{remark}


\subsection{Problems}

\begin{problem}
 Verify \eqref{eq:Tr-approximate-Id} and also this difference is in $r^{-2}\mathrm{Diff}_{\cu}^2$.
\end{problem}

\begin{problem}
 Prove $\mathcal{X}^{s,\mathsf{r}_\pm;k}$ with norm \eqref{eq:module-reg-norm} is a Hilbert space.
\end{problem}

\begin{problem}
In fact, we have the following generalization (\cite[Lemma~5.10]{vasy-monicourse}): suppose we have a family $A_j \in \Psi_{\sct}^{m_j,\el_j}(\R^n),\, j = 1,2,...N$ and set
\begin{equation}
\mathcal{X} = \{u \in H_{\sct}^{s,r}: \;  A_j u \in H_{\sct}^{s_j,r_j}, \, j = 1,2,...,N  \}
\end{equation}
and equip it with the norm
\begin{equation}
\|u\|_{\mathcal{X}}^2 = \|u\|_{s,r}^2 + \sum_{j=1}^N \| A_j \|^2_{s_j,r_j},
\end{equation}
then this is a Hilbert space.
\end{problem}

\begin{problem} Failure of density. Let $\ml_\pm$ be as above, and choose a constant $L > 1/2$. Show that Schwartz functions are \emph{not} dense in the space 
$$
\SX = \{ u \in H^{s, \ml_\pm} \mid P_0u \in H^{s, L} \}, \quad P_0 = \Delta - \lambda^2. 
$$
Hint: let $u = P_0^{-1} \phi$, where $P_0^{-1}$ is the outgoing resolvent, and $\phi$ is Schwartz. Assume that $\omega_j$ is a sequence of Schwartz functions converging to $u$ in the topology of the space $\SX$ above. Pass to the Fourier transform, and consider the operation of restriction to the sphere of radius $\lambda$ to derive a contradiction. This counterexample shows that the density result of Proposition~\ref{prop:Schwdense} is quite delicate. 
\end{problem}

\begin{problem}
(Complex absorption) Suppose $Q \in \Psi^{0,0}_{\sct}$ has real principal symbol elliptic near $\mathcal{R}_-$ , and that $Q$ is micro-supported in a slightly larger neighborhood of $\mathcal{R}_-$. Let $\mathsf{r}$ be a variable order that is not necessarily monotonic near $\mathcal{R}_-$, prove that for $\tilde{P}=P-iQ$, the sub-elliptic estimate like the above threshold radial point estimate still hold near $\mathcal{R}_-$ (we don't have a threshold to restrict $\mathsf{r}$ now!), which in turn gives the global Fredholm estimate if the assumption on $\mathsf{r}$ (monotonic along the $H_p$-flow and below threshold there) remain valid near $\mathcal{R}_+$.
\end{problem}

\begin{problem}\label{prob:bpf} (Boundary pairing formula). Let $u_1, u_2$ be two solutions to $Pu_i \in \Schw$. \emph{Assume} that each $u_i$ has an asymptotic expansion 
\begin{multline}
u_i = r^{-(n-1)/2} \Big( e^{i\lambda r} f_{i,+}(\omega) + e^{-i \lambda r} f_{i,-}(\omega) \Big) + u_i', \quad r = |x| \to \infty, \quad \omega = \frac{x}{|x|}, \\
u_i', D_{x_j} u_i' = O(r^{-(n-1)/2-\epsilon})
\end{multline}
where $f_{i, \pm} \in L^2(S^{n-1})$ and the $O(r^{-(n-1)/2-\epsilon})$ term is measured in $L^2(S^{n-1})$. 
Prove  the formula 
\begin{equation} \label{eq:boudary-pairing}
\int_{\R^n} \big(u_1 \overline{Pu_2} - (Pu_1)\overline{u_2} \big) \mathrm{dvol}_g = 2i\lambda \int_{\mathbb{S}^{n-1}} \big( f_{1,+}\overline{f_{2,+}}-f_{1,-}\overline{f_{2,-}} \big) \mathrm{dvol}_{\mathbb{S}^{n-1}}.
\end{equation}
Hint: apply Green's formula on the ball of radius $R$, $B_R = \{ |x| \leq R \}$, and show that the boundary term tends to the RHS of \eqref{eq:boudary-pairing} as $R \to \infty$. It might help to do the case of the free Helmholtz operator $P_0$ first. 
\end{problem}

\begin{problem}
Generalize Proposition~\ref{prop:inf module reg} as follows: instead of $Pu \in \Schw$, assume the weaker condition that $Pu \in H_{\sca}^{s-2, \mr_\pm + 1; k}$. Show that if $u \in H_{\sca}^{s, \mr_\pm}$, then $u \in H_{\sca}^{s, \mr_\pm; k}$. 
\end{problem}

\section{Lecture 13: Scattering theory}
We discuss scattering theory for the Helmholtz operator $P= \Delta_g - \lambda^2$ in this lecture. We use the term `scattering theory' in the sense of the `geometric scattering theory' of Melrose \cite{Melrose-geometric-sc}, rather than the traditional approach in terms of wave operators, as expounded in e.g. Reed and Simon \cite{RS-III}. We study the structure of the generalized eigenfunctions of $P$ using the inverses $P_{\pm}^{-1}$ constructed in Theorem~\ref{thm:Helmtholz-invertible}. However, we first have to complete the proof of this Theorem, for which we need Theorem~\ref{thm:leading-asymptotic-Helmholtz}.

\subsection{The Poisson operator}


Let $P = \Delta_g - \lambda^2$ be a Helmholtz operator. 
The Poisson operator $\Poi(\pm \lambda)$ is (the extension of) the operator sending $f_- \in C^\infty(\partial \overline{\R^n})$ to the solution $u$ of 
\begin{align*}
Pu = 0
\end{align*}
that is of the form (we use $r = |x|$ to denote the radial length in $\R^n$)
\begin{align} \label{eq:typical-expansion-Helmholtz}
r^{-(n-1)/2}e^{-i\lambda r} f_-(y) + r^{-(n-1)/2}e^{i\lambda r}f_+(y)+ o(r^{-(n-1)/2}).
\end{align}
Here $f_-$ is called the `incoming' data while $f_+$ is called the `outgoing' data.\footnote{Think about multiplying \eqref{eq:typical-expansion-Helmholtz} by $e^{-i\lambda t}$. This becomes a solution to the wave equation, and the factor $e^{-i\lambda(r+t)}$ is a spherical incoming wave, while $e^{i\lambda(r-t)}$ is a spherical outgoing wave.} 

Of course this definition only makes sense if we can establish the existence and uniqueness of such a function $u$. We proceed to show this. 
The next theorem below tells us that solutions in our $\mathcal{X}^{s,\mathsf{r}_\pm}$-type spaces are indeed like this, but with only one piece of the expansion, that is, only the $e^{\pm i\lambda r}$ oscillation. We need to assume in addition 2 orders of module regularity as introduced in Lecture 12 (see Definition~\ref{eq:k-module-norm-defn}). For simplicity, we state the result only for $\mr_+$. 


\begin{theorem} \label{thm:leading-asymptotic-Helmholtz}
Suppose $u \in \mathcal{X}^{s,\mathsf{r}_+;2}$ with $s\geq0$, with $\min \mathsf{r}_+ = -\frac{1}{2}-\epsilon$ for some small $\epsilon>0$ and
\begin{equation}
P u = f \in \la r \ra^{-2}L^2(\R^n),
\end{equation}
and $\WF_{\sct}(f) \cap \Rin = \emptyset$, then we have $a_0(y) \in L^2(\mathbb{S}^{n-1})$ such that
\begin{equation}\label{eq:leading-asymptotic-Helmholtz}\begin{gathered}
u = r^{-(n-1)/2} e^{i\lambda r}a_0(y) + u', \\
u', D_{x_j} u' = O(r^{-(n+1)/2 + \epsilon'}), \ \forall \epsilon' > \epsilon, 
\end{gathered}\end{equation}
where the $O(\cdot)$-notation is in terms of an $L^2(\mathbb{S}^{n-1})$-valued function of $r$.
\end{theorem}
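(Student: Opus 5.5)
The plan is to exploit the two orders of module regularity to turn the PDE into an ODE in $r$ for the outgoing component, and then integrate that ODE. Near spatial infinity the cusp module is generated by $D_r$ and the angular derivatives $D_{y_j}$ (together with $\Id$). Membership $u \in H_{\sct}^{s,\mr_+;2}$ therefore says that $D_{y}^\alpha u$ for $|\alpha| \le 2$, and also $D_r u$, $D_r D_y u$, lie in $H^{s,\mr_+}$. Since $\mr_+ \ge -1/2-\epsilon$, this gives $\ang{r}^{-1/2-\epsilon}$-weighted $L^2$ control. Writing $\Delta_g = D_r^2 - i\frac{n-1}{r}D_r + r^{-2}\Delta_{S^{n-1}} + E$, where $E \in \Psi_{\sct}^{2,-1}$ by \eqref{eq:Laplacian-approximate} (with more precise structure $x^2\mathrm{Diff}$ from \eqref{eq:metric-assumption}), we can regard the angular part and $E$ as controlled lower order terms. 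We conjugate: set $v = r^{(n-1)/2} u$. Then
\[
(D_r - \lambda)(D_r + \lambda) v = r^{(n-1)/2}\big(f - r^{-2}\Delta_{S^{n-1}} u - Eu\big) + O(r^{-2}) v =: g.
\]
The module regularity should show that $g \in \ang{r}^{-3/2+\epsilon}L^2(dr; L^2(S^{n-1}))$, with the weights counted in the measure $dr\,d\omega$ after conjugation.

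Next I would factor the operator and use the microlocal information to isolate the outgoing piece. Introduce $w = (D_r + \lambda) v$. Near $\Rout$ we have $\tau=-\lambda$, i.e. radial frequency $+\lambda$, so $D_r + \lambda$ is microlocally elliptic there. Near $\Rin$, $u$ is Schwartz: by the proof of Proposition~\ref{prop:inf module reg}, the above threshold estimate Theorem~\ref{thm:above dist} and the assumption $\WF_\sct(f)\cap\Rin=\emptyset$ give this. Hence the component of $v$ oscillating like $e^{-i\lambda r}$ is negligible. The next claim is that $w$ is small, specifically $w \in \ang{r}^{-1/2+\epsilon'} L^2$-type decay. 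This should follow from the identity $(D_r-\lambda)w = g$, which is an ODE in $r$ with values in $L^2(S^{n-1})$. Its solutions are $e^{i\lambda r}\big(c + i\int^r e^{-i\lambda s} g\,ds\big)$. Because the integral against $g \in \ang{r}^{-3/2+\epsilon}L^2$ converges absolutely by Cauchy--Schwarz, $w$ equals $e^{i\lambda r}c$ plus a term of size $O(r^{-1+\epsilon})$. Since $w = (D_r+\lambda)v$ has wavefront only away from $\Rout$, hence is near $\Rin$ only and thus trivial, the constant must be $c=0$. The cleanest way to see this is that $w \in \ang{r}^{1/2-\epsilon}\cdot$(better space) forbids a non-decaying $e^{i\lambda r}c$.

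Finally, integrate $(D_r+\lambda)v = w$. This gives $v = e^{-i\lambda r}(\ldots)$, which I suspect should be arranged the other way: instead set $w=(D_r-\lambda)v$ and use $(D_r+\lambda)w=g$. Then $w$ decays because its $e^{-i\lambda r}$ constant vanishes (the incoming part is trivial at $\Rin$). Integrating $D_r(e^{-i\lambda r}v) = e^{-i\lambda r}w$ from $r$ to $\infty$ gives a limit $a_0(y) \in L^2(S^{n-1})$ with $v - e^{i\lambda r}a_0 = O(r^{-1+\epsilon'})$. Multiplying back by $r^{-(n-1)/2}$ yields \eqref{eq:leading-asymptotic-Helmholtz}. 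The bound on $D_{x_j}u'$ follows the same way, using $D_r v = \lambda v + w$ and the module regularity for the angular derivatives. The main obstacle I anticipate is converting the weighted $L^2$ (Sobolev) module information into pointwise-in-$r$ $L^2(S^{n-1})$ bounds with the exact exponent $-(n+1)/2+\epsilon'$; for this I would use a one-dimensional Sobolev/Hardy inequality in $r$ applied to $v$ and $D_r v$. A second point needing care is justifying the vanishing of the incoming constant through the wavefront information at $\Rin$.
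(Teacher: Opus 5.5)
You reduce the problem to $(D_r^2-\lambda^2)v=g$ with $g\in\ang{r}^{-3/2+\epsilon}L^2(dr;L^2(\mathbb{S}^{n-1}))$, which matches the paper. The last step, however, fails as written.

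Integrating the first-order equation $(D_r+\lambda)w=g$ with absolute values gives at best $w=e^{-i\lambda r}c+O(r^{-1+\epsilon})$. That is one full order of decay worse than $g$, as your own expectation $w\in\ang{r}^{-1/2+\epsilon'}L^2$ already shows. This is all that absolute integration can extract from the hypothesis on $g$: for $g=e^{-i\lambda r}r^{-2+\epsilon-\delta}$ and $c=0$ one gets $|w|\sim r^{-1+\epsilon-\delta}$. Since $r^{-1+\epsilon}$ is not integrable at infinity, integrating $D_r(e^{-i\lambda r}v)=e^{-i\lambda r}w$ from $r$ to $\infty$ produces neither the limit $a_0$ nor the remainder $O(r^{-1+\epsilon'})$. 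The absolute bound only gives $e^{-i\lambda r}v=O(r^{\epsilon})$. What you need is $(D_r-\lambda)v$ with the \emph{same} decay as $g$, i.e.\ in $r^{-3/2+\epsilon}L^2(dr)\subset r^{-1+\epsilon'}L^1(dr)$. Getting that requires exploiting the oscillation, not just absolute integrability.

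The missing ingredient is a fact you noted but never used: $D_r+\lambda$ is elliptic on $\Char(P)\setminus\Rin$. The paper writes $v=(\Id-G)v+G_1v+G_2v$, where:
\begin{itemize}
\item $\Id-G$ is microsupported away from $\Char(P)$, and the elliptic estimate gives an $O(r^{-3/2+\epsilon})$ contribution;
\item $G_1$ is microsupported near $\Rin$, so $G_1v$ is Schwartz by the above-threshold radial estimate and $\WF_{\sct}(f)\cap\Rin=\emptyset$;
\item $D_r+\lambda$ is elliptic on $\WF'(G_2)$.
\end{itemize}
Microlocal elliptic regularity for $D_r+\lambda$ then gives $(D_r-\lambda)G_2v\in r^{-3/2+\epsilon}L^2(dr;L^2(\mathbb{S}^{n-1}))$ with no loss. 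A single integration yields $G_2v=e^{i\lambda r}a_0+O(r^{-1+\epsilon})$.

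This also disposes of your second weak point. In the paper's route the incoming coefficient never appears. Showing $c=0$ after the fact, for $c$ merely in $L^2(\mathbb{S}^{n-1})$, would need a separate testing argument at $\Rin$.

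If you want to stay with ODEs, you can repair your route by integrating by parts against the factor $e^{-2i\lambda r}$ in the second integration. Equivalently, use variation of parameters for $(D_r^2-\lambda^2)v=g$ with the bounded kernel $\lambda^{-1}\sin(\lambda(s-r))$, which gives $v=Ae^{i\lambda r}+Be^{-i\lambda r}+O(r^{-1+\epsilon})$ directly. You would still owe a rigorous proof that $B=0$.
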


\begin{proof}
Notice that in the polar coordinates, we have
\begin{align} \label{eq:Laplacian-r-coordinate}
\Delta_g = D_r^2 -i(n-1)r^{-1}D_r + r^{-2}\Delta_{\mathbb{S}^{n-1}} + R,
\end{align}
with $R \in r^{-1}\mathrm{Diff}_{\sct}^2(\R^n) \cap r^{-2} \mathrm{Diff}_{\mathrm{cu}}^2(\R^n)$.\footnote{See \cite[Section~3]{Melrose1994}.} 
Let $v = r^{(n-1)/2}u \in H_{\sca}^{s, \mr_+ - (n-1)/2; 2}$, then we have
\begin{align*}
(\Delta_g-\lambda^2) u = r^{-(n-1)/2}\big((r^{(n-1)/2}\Delta_gr^{-(n-1)/2}) - \lambda^2\big)v.
\end{align*}
The conjugation has the effect:
\begin{align*}
r^{(n-1)/2}D_rr^{-(n-1)/2} = D_r + i \frac{n-1}{2}r^{-1}.
\end{align*}

So we have
\begin{align} \label{eq:v-factored}
(D_r+\lambda)(D_r-\lambda)v = r^{(n-1)/2}f+
r^{-2}\Delta_{\mathbb{S}^{n-1}}v + \tilde{R}v,
\end{align}
with $\tilde{R} \in r^{-1}\mathrm{Diff}^2_{\sct}(\R^n) \cap r^{-2}\mathrm{Diff}^2_{\mathrm{cu}}(\R^n)$.
It is the latter containment, $\tilde{R} \in r^{-2}\mathrm{Diff}^2_{\mathrm{cu}}(\R^n)$ that is important here, as second-order cusp differential operators are generated by $\mathcal{G}^2$; since by assumption we have two orders of module regularity, applying $\tilde R$ to $v$ on the RHS of \eqref{eq:v-factored} gains us two orders of spatial decay. Moreover, the same is true of the operator $r^{-2} \Delta_{\mathbb{S}^{n-1}}$, since the spherical Laplacian is generated by vector fields $D_{y_j}$ which are also module generators. 

Then we use this condition to obtain the regularity and decay of $v$ by a bootstrapping argument. In summary, we first use the given information about $u$ (or equivalently $v$) and substitute it into the right hand side of \eqref{eq:v-factored}. We then use this equation to improve the conclusion, the key idea being to invert $(D_r+\lambda)$ microlocally near $\Char(P)$ and away from $\Rin$.
We start with the hypotheses of the theorem, giving 
 $u \in H_{\sct}^{0,-\frac{1}{2}-\epsilon;2}$, and hence $v \in H_{\sct}^{0,-\frac{n}{2} -\epsilon;2}$. It follows that the RHS of \eqref{eq:v-factored} is in $r^{-2+\frac{n}{2}+\epsilon}L^2(r^{n-1}dr, L^2(\mathbb{S}^{n-1})) = r^{-3/2+\epsilon}L^2(dr, L^2(\mathbb{S}^{n-1}))$.

Now we try to improve the decay property of $v$ using \eqref{eq:v-factored}. There are two regions two consider: the elliptic region (i.e. away from $\Char(P)$) and the non-elliptic region (i.e. near $\Char(P)$).
Take $G \in \Psisc^{0,0}$ such that $G$ is microlocally equal to the identity near $\Char(P)$ and in the region $r \lesssim 1$, which means
$\WF'(\Id - G)$ (defined in Section~\ref{subsec:op-WF}) is away from $\Char(P) = \Char(r^{(n-1)/2} P r^{-(n-1)/2})$ and the region $r \lesssim 1$. Then by the (microlocal) elliptic estimate, we have, for $M, N$ sufficiently positive, 
\begin{multline}\label{eq:(Id - G)v est}
\| (\Id-G)v \|_{2,-n/2 + 5/2} \lesssim \| r^{(n-1)/2} P r^{-(n-1)/2} v \|_{0, -n/2+5/2} + \| v \|_{-M, -N}  \\
= \| r^{(n-1)/2} P u \|_{0, -n/2+5/2} + \| u \|_{-M, -N'} = \| f \|_{0, 2} + \| u \|_{-M, -N'}  < \infty. 
\end{multline}
In particular, \eqref{eq:(Id - G)v est} tells us that
\begin{equation}\label{eq: (Id - G) v decay}
(\Id-G)v,  D_r(\Id-G)v \in r^{n/2 - 5/2} L^2(r^{n-1}dr, L^2(\mathbb{S}^{n-1})) = r^{-2} L^2(dr, L^2(\mathbb{S}^{n-1})),
\end{equation}
and by Cauchy-Schwarz this space is contained in $$r^{-3/2 + \epsilon} L^1(dr, L^2(\mathbb{S}^{n-1})).$$ Integrating the $r$-derivative, we see that $(\Id-G)v$ is continuous in $r$ with values in $L^2(\mathbb{S}^{n-1})$, with a limit at infinity. This limit has to vanish to be consistent with \eqref{eq: (Id - G) v decay}; moreover, we then see that 
\begin{equation}
   \|  (\Id-G)v(r) \|_{L^2(\mathbb{S}^{n-1})} \leq C r^{-3/2 + \epsilon}. 
\end{equation}
Moreover, in the passage from \eqref{eq:(Id - G)v est} to \eqref{eq: (Id - G) v decay} we gave up one derivative, so the same estimate can be made for the partial derivatives of $(\Id-G)v(r)$. 

Next we consider $Gv$. 
Notice that $u$ satisfies the above-threshold condition at $\Rin$ and $f$ is microlocally Schwartz there, then we can apply the above threshold radial point estimate discussed at the end of Section~\ref{subsec:proof-radial-est} to conclude that $u$ (hence $v$) is microlocally Schwartz near $\Rin$, similar to the proof of Proposition~\ref{prop:inf module reg}. 
More precisely, we further decompose $Gv$ into 
\begin{equation}
    Gv = G_1v+G_2v,
\end{equation}
where $\WF'(G_1)$ is contained in a neighborhood of $\Rin$ while $\WF'(G_2)$ is away from $\Rin$ and $(D_r+\lambda)$ is (uniformly) elliptic on a neighborhood of it. This is possible, since the only part of $\Char(P)$ where $D_r + \lambda$ is non-elliptic is $\Rin$. 
Then argument above says that $G_1v$ is Schwartz, so it only remains to  consider $G_2v$.
By the triangle inequality, 
\begin{equation}
\| (D_r - \lambda)G_2v \|_{1,2-\frac{n}{2}-\epsilon}
\leq  \| G_2 (D_r - \lambda)v \|_{1,2-\frac{n}{2}-\epsilon} 
  +  \| [(D_r - \lambda),G_2]v \|_{1,2-\frac{n}{2}-\epsilon} .    
\end{equation}
The commutator term can be treated as above, since $\WFsc'([(D_r - \lambda),G_2])$ is contained in the elliptic set of $P$ or in a small neighbourhood of $\Rin$, so the previous arguments and estimates apply to this term. 
The first term on the RHS can be estimated using microlocal elliptic regularity, as $D_r + \lambda$ is elliptic on the operator wavefront set of $G_2$:
\begin{equation}\label{eq:G2 1}
  \| G_2 (D_r - \lambda)v \|_{1,2-\frac{n}{2}-\epsilon} 
  \lesssim \| (D_r+\lambda)(D_r - \lambda)v \|_{0,2-\frac{n}{2}-\epsilon} + \|v\|_{-M,-N}   < \infty
\end{equation}
where we used \eqref{eq:v-factored} and the discussion below it to deduce the finitude of the first norm on the RHS. The second norm is finite if  $M,N \in \R$ are taken sufficiently positive. Thus we have 
\begin{multline}\label{eq: G2 2}
  (D_r - \lambda)G_2v \in r^{n/2 + \epsilon - 2} L^2(r^{n-1} dr, L^2(\mathbb{S}^{n-1}))  = 
  r^{-3/2 + \epsilon} L^2(dr, L^2(\mathbb{S}^{n-1})) \\
  \subset r^{-1 + \epsilon'} L^1(dr, L^2(\mathbb{S}^{n-1})), \quad \epsilon' > \epsilon. 
\end{multline}
Solving this ODE gives 
\begin{align*}
G_2 v = e^{i\lambda r}a_0(y) + O(r^{-1 + \epsilon'}).
\end{align*}
Similarly to last time, we gave up one derivative in the passage from \eqref{eq:G2 1} to \eqref{eq: G2 2}, so we can obtain the same error estimate on the partial derivatives of $G_2 v$. 
Since we have proved that $(\Id-G)v$ and $G_1v$ only contribute an $O(r^{-3/2 + \epsilon})$ error, we conclude that  
\begin{align*}
 v - e^{i\lambda r}a_0(y), D_{x_j} \big( v - e^{i\lambda r}a_0(y) \big)  = O(r^{-1 + \epsilon'}).
\end{align*}
This completes the proof since $u=r^{-(n-1)/2}v$.
\end{proof}

\begin{proof}[Completion of the proof of Theorem~\ref{thm:Helmtholz-invertible}]
Assume that $u \in H_{\sca}^{s, \mr_+}$ and $Pu = 0$.  Then by Proposition~\ref{prop:inf module reg}, $u$ has infinite module regularity. We may also assume that $s \geq 0$ without loss of generality, due to the ellipticity of $P$ at fibre-infinity. Thus we can apply Theorem~\ref{thm:leading-asymptotic-Helmholtz}, to deduce that $u$ has an expansion as in \eqref{eq:leading-asymptotic-Helmholtz}. Thanks to this expansion, we can apply the boundary pairing formula \eqref{eq:boudary-pairing} from Problem~\ref{prob:bpf} with $u_1 = u_2 = u$. We obtain 
$ 0 = 2i\lambda \| a_0 \|_{L^2}^2$, which implies that $a_0$ vanishes identically. However, this means that $u$ is also above threshold at $\Rout$. Repeating the argument in Proposition~\ref{prop:inf module reg} we see that in fact $u$ has no scattering wavefront set at $\Rout$, which means that the scattering wavefront set is empty, i.e. $u$ is Schwartz. 

Once we know that $Pu = 0$ and $u$ is Schwartz, then the triviality of $u$ follows from standard results in PDE, e.g. the 
uniqueness results  in \cite[Theorem~17.2.8]{hormander2007analysis} or the absence of $L^2$-eigenfunctions in \cite{froese1982absence}.
\end{proof}

The Theorem above also gives a characterization of being `above threshold' in terms of leading order expansion. Suppose $Pu \in \mathcal{S}(\R^n)$, and $u$ is above threshold at $\Rin$. Then, as we have just seen, by Proposition~\ref{prop:inf module reg}, $u$ has infinite module regularity and has wavefront set only at $\Rout$. Theorem~\ref{thm:leading-asymptotic-Helmholtz} applies to show that the limit 
\begin{equation}\label{eq: L limit}
L(\lambda)u = \lim_{r \to \infty} r^{(n-1)/2}e^{-i\lambda r} u
\end{equation}
exists and is some function $a_0 \in L^2(\mathbb{S}^{n-1})$ (actually it will be $C^\infty$ due to the infinite module regularity --- see Problem ****) playing the role of the asymptotic profile.
The operator $L(\lambda)$ is in fact (on a suitable domain, and modulo some constants and $\lambda$-factor) the adjoint of $P(-\lambda)$. On the other hand, being above threshold at $\Rout$ should mean that the limit above is $0$, after microlocalizing near $\mathcal{R}_+$, which discussion we postpone to after the microlocal decomposition of solutions.



One of the standard technique in scattering theory is to construct an approximate solution given a leading order asymptotic $a_0$ as in Theorem~\ref{thm:leading-asymptotic-Helmholtz}.  

\begin{proposition} \label{prop:approximate-sol}
Given $f_- \in C^\infty(\partial \overline{\R^n})$, then one can construct $\tilde{u}$ such that it takes $r^{-(n-1)/2}e^{i\lambda r}f_-(y)$ as the leading term and $(\Delta_g-\lambda)\tilde{u} \in \mathcal{S}(\R^n)$. Moreover, $\tilde u \in \mathcal{X}^{s, \mr_-}$ for any admissible $\mr_-$. 
\end{proposition}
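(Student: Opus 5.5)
The plan is to build $\tilde u$ as a WKB-type formal series at spatial infinity and then Borel-sum it, following the factorization \eqref{eq:v-factored} used in the proof of Theorem~\ref{thm:leading-asymptotic-Helmholtz}. Throughout, $\Delta_g-\lambda$ in the statement is read as the Helmholtz operator $P=\Delta_g-\lambda^2$. Fix $\chi\in C^\infty(\R)$ vanishing for $r\le 1$ and equal to $1$ for $r\ge 2$. We look for
$$
\tilde u=\chi(r)\,r^{-(n-1)/2}e^{i\lambda r}\sum_{j\ge 0} r^{-j}a_j(y),\qquad a_0=f_-,
$$
with each $a_j\in C^\infty(\mathbb{S}^{n-1})$. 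Conjugating as in the proof of Theorem~\ref{thm:leading-asymptotic-Helmholtz}, with $v=r^{(n-1)/2}u$, the equation $Pu=0$ becomes
$$
(D_r+\lambda)(D_r-\lambda)v=r^{-2}\Delta_{\mathbb{S}^{n-1}}v+\tilde R v .
$$
Here $\tilde R\in r^{-2}\mathrm{Diff}^2_{\cu}$; by the assumption \eqref{eq:metric-assumption} its coefficients are smooth in $(r^{-1},y)$ and decay at least like $r^{-2}$ relative to cusp derivatives.

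The key computation is the recursion. Write $v=e^{i\lambda r}w$, so that $D_r-\lambda$ acts on $w$ as $D_r$ and $D_r+\lambda$ acts as $D_r+2\lambda$. For $w=r^{-j}a_j(y)$ we get $D_r(r^{-j}a_j)=ij\,r^{-j-1}a_j$, and hence
$$
(D_r+2\lambda)D_r\big(r^{-j}a_j\big)=2i\lambda j\,r^{-j-1}a_j+O(r^{-j-2}).
$$
On the other side, $e^{-i\lambda r}\big(r^{-2}\Delta_{\mathbb{S}^{n-1}}+\tilde R\big)e^{i\lambda r}$ maps $r^{-k}C^\infty(\mathbb{S}^{n-1})$ into $r^{-k-2}C^\infty+r^{-k-3}C^\infty+\cdots$. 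This is where I use that cusp vector fields, namely $D_r$ and the angular derivatives, commute with $e^{i\lambda r}$ up to multiplication by $\lambda$, so no derivative loss in $r$ occurs. Matching the coefficient of $r^{-j-1}$ gives
$$
2i\lambda j\,a_j=\big(\text{terms built from }a_0,\dots,a_{j-1}\big),
$$
which determines $a_j$ uniquely and smoothly for every $j\ge1$, because $\lambda\ne0$. The $j=0$ equation is automatically satisfied, which is exactly why $a_0=f_-$ is free: the exponent $(n-1)/2$ was chosen so that the $r^{-1}$ term cancels. This is the same threshold fact as in the formal-series exercise at the end of Lecture 11.

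Next, take $\tilde u$ to be a Borel sum of the series, with smooth coefficients in $(r^{-1},y)$ multiplied by $\chi(r)r^{-(n-1)/2}e^{i\lambda r}$. The truncation error after $J$ terms is $O(r^{-J-(n+1)/2})$ together with all its derivatives, again because derivatives falling on the oscillatory factor only produce powers of $\lambda$. Hence $P\tilde u$ decays faster than every power of $r$ with all derivatives, and the cutoff $\chi$ contributes only a compactly supported smooth term. This gives $P\tilde u\in\Schw(\R^n)$, and by construction the leading term is $r^{-(n-1)/2}e^{i\lambda r}f_-(y)$.

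Finally, the Sobolev membership. Since $P\tilde u\in\Schw$, it suffices to show $\tilde u\in H_{\sca}^{s,\mr}$. The function $\tilde u$ is smooth, so only decay matters. Its wavefront set lies in the single radial set where $D_r$ acts as $\lambda$ (compare the computation of $\WFsc(e^{\pm i\lambda|x|})$ in Lecture 9). Moreover, $\tilde u$ lies in $H^{\infty,r}_{\sca}$ microlocally there for every constant $r<-1/2$, by the same computation as Problem~\ref{prob:smoothefns}(b), and it is Schwartz microlocally elsewhere. Consequently $\tilde u$ belongs to the variable-order space for any admissible order that is below threshold at that radial set, and it lies in no space whose order is above threshold there.

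The step I expect to be the main obstacle is this last one: matching the radial set on which $\WFsc(\tilde u)$ lives with the one where the order $\mr_-$ of Definition~\ref{definition:variable-order-admissible} is below threshold. That matching depends on the paper's sign conventions for $\tau$, which identify $\Rin$ as $\{\tau=+\lambda\}$ with $\tau$ dual to $-D_r$. I would check carefully, using these conventions, that $e^{i\lambda r}$ is microlocalized where $\mr_-<-1/2$, and I would flag the statement's sign convention if the check comes out the other way. The recursion itself is routine.
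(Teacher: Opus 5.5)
Your construction is the same as the paper's: the indicial computation showing that the exponent $(n-1)/2$ kills the $r^{-1}$ term, a recursion with nonzero coefficient, Borel summation, and then locating $\WFsc(\tilde u)$ at a single radial set. However, the step you deferred fails as you set it up, because the sign check comes out the other way. Since $D_r e^{i\lambda r}=\lambda e^{i\lambda r}$ and $\tau$ is dual to $-D_r$, your $\tilde u$ has scattering wavefront set at $\tau=-\lambda$, i.e.\ at $\Rout$. Equivalently, its local frequency $\lambda\hat x$ is parallel to $x$, so it is outgoing; in \eqref{eq:typical-expansion-Helmholtz} the coefficient of $e^{i\lambda r}$ is the outgoing datum $f_+$. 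By Definition~\ref{definition:variable-order-admissible}, $\mr_->-1/2$ near $\Rout$. For $f_-\not\equiv 0$, your $\tilde u$ is not microlocally in $H_{\sca}^{*,-1/2}$ there, so $\tilde u\notin H_{\sca}^{s,\mr_-}$. What you have actually proved is $\tilde u\in\mathcal{X}^{s,\mr_+}$, which defeats the intended use: for such $\tilde u$ one gets $\tilde u-P_+^{-1}(P\tilde u)=0$. The statement has a sign typo. The paper's proof builds the incoming expansion $r^{-(n-1)/2}e^{-i\lambda r}f_-(y)$, i.e.\ ${\msf{x}}^{(n-1)/2}e^{-i\lambda/{\msf{x}}}\sum_j{\msf{x}}^j a_j(y)$, consistent with $f_-$ being incoming data.

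With $e^{-i\lambda r}$ your argument goes through essentially unchanged. The recursion coefficient becomes $-2i\lambda j\neq 0$, and the wavefront set sits at $\tau=+\lambda$, i.e.\ at $\Rin$, where $\mr_-<-1/2$. One further point concerns showing that $\tilde u$ is microlocally Schwartz away from that radial set. The Lecture~9 computation for $e^{\pm i\lambda|x|}$ does not quite suffice, since it used that $D_{y_j}$ annihilates the function, and this fails once the $a_j$ depend on $y$. The paper's argument is the one to use. Each application of $D_r+\lambda$ gains a factor $r^{-1}$, so $(D_r+\lambda)^j\tilde u\in H_{\sca}^{s,-1/2-\epsilon+j}$ for every $j$. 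Since $D_r+\lambda$ is elliptic on $\Char(P)\setminus\Rin$, microlocal elliptic regularity gives arbitrary decay there. Together with $\tilde u\in H_{\sca}^{s,-1/2-\epsilon}$ globally and $P\tilde u\in\Schw$, this yields $\tilde u\in\mathcal{X}^{s,\mr_-}$.
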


\begin{proof}

Using \eqref{eq:Laplacian-r-coordinate}, a direct computation shows that $r^{-(n-1)/2}e^{-i\lambda r} f_-(y) $ solves the PDE to the leading and subleading order:
\begin{align*}
(\Delta_g-\lambda^2)( r^{-(n-1)/2}e^{-i\lambda r} f_-(y) ) = r^{-(n+3)/2}e^{-i\lambda r}g(y)+O(r^{-(n+5)/2}),
\end{align*}
for some $g(y) \in C^\infty(\partial \overline{\R^n})$. All $O(\cdot)$ notation in this part means the uniform control for fixed $C^k$-norm or some Sobolev norm in $y$ as $r\to\infty$ (or ${\msf{x}} \to 0$ below).
In fact, more generally, in terms of $({\msf{x}},y)$-coordinates near $\partial \overline{\R^n}$ with ${\msf{x}}=r^{-1}$, we have
\begin{align}\label{eq:formal iteration step}
(\Delta_g-\lambda^2)({\msf{x}}^p e^{-i\lambda/{\msf{x}}}a(y)) = i\lambda (2p-n+1){\msf{x}}^{(p+1)}e^{-i\lambda/{\msf{x}}}a(y)+O({\msf{x}}^{p+2}).
\end{align}
Notice that the first term on the RHS of \eqref{eq:formal iteration step} vanishes if and only if $p =  (n-1)/2$. 
Using this iteratively, starting with the power $p = (n-1)/2$ (that is, one prescribes the next term so that its leading output cancels the error from the previous step), one can prescribe a formal series 
\begin{align}\label{eq:incoming exp}
\tilde{u} = {\msf{x}}^{(n-1)/2}e^{-i\lambda/{\msf{x}}} \sum_{j=0}^\infty {\msf{x}}^j a_j(y)
\end{align}
with $a_0=f_-$ formally solving the equation $(\Delta_g-\lambda^2)\tilde{u} = 0$. Borel summing the series gives us an honest function $\tilde u$ satisfying  
\begin{align*}
(\Delta_g-\lambda^2)\tilde{u} \in \mathcal{S}(\R^n)
\end{align*}
and with the prescribed leading asymptotic $f_-$. 

To show that $\tilde u \in \mathcal{X}^{s, \mr_-}$, we first note that since $Pu \in \Schw$, the wavefront set of $\tilde u$ is contained in $\Char(P)$. Second, we note that $\tilde u = O(\ang{r}^{-(n-1)/2})$, so $u \in H_{\sca}^{s, -1/2 - \epsilon}$ for any $\epsilon > 0$. Moreover,  $(D_r + \lambda)^j u \in H_{\sca}^{s, -1/2 - \epsilon + j}$ for any $\epsilon > 0$, so $u$ is itself in this space microlocally on the elliptic set of $(D_r + \lambda)^j$. On $\Char(P)$, this includes everything except for $\Rin$. Since $j$ is arbitrary, it follows that $\tilde u$ has wavefront set only at $\Rin$. Combining these facts shows that $\tilde u \in \mathcal{X}^{s, \mr_-}$ for any admissible $\mr_-$ (since by definition of admissibility, $\mr_- < -1/2$ at $\Rin$, and its values elsewhere are irrelevant). 

\end{proof}

Now we turn to construct the actual solution with prescribed asymptotic data.
Recall that  from Theorem~\ref{thm:Helmtholz-invertible} we have bounded inverse $P_\pm^{-1}$ of
\begin{equation}
P = (\Delta_g-\lambda^2):\; \mathcal{X}^{s,\mathsf{r}_\pm}
\to H_{\sct}^{s-2,\mathsf{r}_\pm+1},
\end{equation}
which will be called the outgoing ($+$) and incoming ($-$) resolvent. And indeed, they are just classical outgoing/incoming resolvents that are denoted by $R(\lambda \pm i0)$.

According to Proposition~\ref{prop:approximate-sol} above, we have $\tilde{u}$ with prescribed data near $\mathcal{R}_-$.
Now we set
\begin{align*}
u = \tilde{u} - P_+^{-1}\big( (\Delta_g-\lambda^2)\tilde{u} \big)
\end{align*}
and this is an \emph{exact} solution to $Pu=0$. Notice that the first term is in $\mathcal{X}^{s, \mr_-}$ and the second term is in $\mathcal{X}^{s, \mr_+}$. Moreover it is nontrivial provided that $f_- \neq 0$, since the first term is below threshold at $\Rin$ while the second term is above threshold at $\Rin$, so the sum is below threshold at $\Rin$. Indeed, the second term, being above threshold at $\Rin$, and therefore having no wavefront set there as in the proof of Proposition~\ref{prop:inf module reg}, does not affect the asymptotic expansion \eqref{eq:incoming exp} in any way; it merely contributes a corresponding expansion with the `other' oscillation $e^{+i\lambda r}$ instead of $e^{-i\lambda r}$. 

So the $u$ constructed above indeed is a solution in $\mathcal{X}^{s,\mathsf{r}_+}+\mathcal{X}^{s,\mathsf{r}_-}$ with prescribed incoming data $f_-=a_0$ and we define
\begin{equation}
\Poi(\lambda)f_- = u.
\end{equation}
This is well-defined since such solution is unique. Because if there are two such solutions $u_1$ and $u_2$, then they we know that $u=u_1-u_2$ solves the equation with zero incoming data. Using the boundary pairing in \eqref{eq:boudary-pairing} again, we know that the outgoing data of $u$ vanishes as well (of course it should, since energy should be conserved). Then we are again in a situation where it is above threshold at both radial set, applying the proof in Theorem~\ref{thm:Helmtholz-invertible} concludes that $u \equiv 0$.

There is another way of decomposing solutions of $Pu = 0$ which is more general and conceptually more satisfying. To do this, let $\mathrm{Id}=A_++A_-$ be a microlocal partition of unity (that is, a quantization of a partition of unity on ${}^{\sct}T^*\R^n$) such that $A_\pm$ is micro-supported away from $\mathcal{R}_\mp$, which is equivalent to requiring $A_\pm$ to be microlocally equal to the identity near $\mathcal{R}_\pm$.

\begin{proposition} \label{prop:u-microlocal-decomposition}
Let $u \in \mathcal{X}^{s, \mr_-} + \mathcal{X}^{s, \mr_+}$ satisfy $Pu = 0$  and let $A_\pm$ be as above. Then we have a decomposition:
\begin{equation}
u = u_+ + u_-,
\end{equation}
where 
\begin{equation}
u_\pm = A_\pm u = P_\pm^{-1}[P,\;A_\pm]u.
\end{equation}
\end{proposition}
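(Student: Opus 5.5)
The decomposition $u = A_+u + A_-u$ is immediate from $A_+ + A_- = \mathrm{Id}$, so the whole content is the identity $A_\pm u = P_\pm^{-1}[P,A_\pm]u$. The plan is to show that $A_\pm u$ lies in the domain space $\mathcal{X}^{s,\mathsf{r}_\pm}$ and that $P(A_\pm u) = [P,A_\pm]u \in H_{\sct}^{s-2,\mathsf{r}_\pm+1}$. The identity then follows from the invertibility in Theorem~\ref{thm:Helmtholz-invertible}: $P_\pm^{-1}$ is the unique inverse on that space. Since $Pu=0$, we have $PA_\pm u = A_\pm Pu + [P,A_\pm]u = [P,A_\pm]u$ exactly, so the algebra is trivial and all the work is in the membership statements. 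I treat the $+$ sign; the $-$ case is symmetric with $\Rin,\Rout$ interchanged.

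\emph{Step 1 (regularity of $u$ away from the radial sets).} Write $u = v_+ + v_-$ with $v_\pm\in\mathcal{X}^{s,\mathsf{r}_\pm}$. Then $Pv_+ = -Pv_-$ lies in $H_{\sct}^{s-2,\mathsf{r}_++1}\cap H_{\sct}^{s-2,\mathsf{r}_-+1}$, which contains $H_{\sct}^{s-2,\max(\mathsf{r}_+,\mathsf{r}_-)+1}$ locally. I would use propagation (Theorem~\ref{thm:mpe variable}, with elliptic estimates off $\Char(P)$) to show that $u$ is microlocally in $H^{s,\mathsf{r}}$ on $\Char(P)\setminus(\Rin\cup\Rout)$, for any order $\mathsf{r}$ of interest. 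It suffices that $u$ is in the appropriate space microlocally near the relevant radial set: $v_+$ is above threshold near $\Rin$, and $v_-$ is above threshold near $\Rout$. Combining these, and using that $u$ solves $Pu=0$, the bicharacteristic flow from the radial sets transports this control. If this proves delicate, the fallback is to exploit the freedom in the admissible orders, so that $\mathsf{r}_+$ and $\mathsf{r}_-$ coincide on the transitional region where the commutator lives.

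\emph{Step 2 (membership).} The principal symbol of $[P,A_+]$ is $H_p a_+$, which vanishes near both radial sets because $a_+$ is locally constant there. Hence $\WF'([P,A_+])$ is a compact subset of $\partial\comphase$ disjoint from $\Rin\cup\Rout$, and $[P,A_+]\in\Psisc^{1,-1}$ (orders $(2+0-1,0+0-1)$). Step 1 then gives $[P,A_+]u\in H_{\sct}^{s-2,\mathsf{r}_++1}$, indeed Schwartz off $\Char(P)$ and in any order on the characteristic part of its microsupport. Next I need $A_+u\in H_{\sct}^{s,\mathsf{r}_+}$. The operator $A_+$ is microsupported away from $\Rout$, so near $\Rin$ the term $A_+u$ agrees with $u$. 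Near $\Rin$, $v_+$ is already in $H_{\sct}^{s,\mathsf{r}_+}$, and $v_-$ should be handled via its relation to $v_+$, or by applying the above-threshold estimate (Theorem~\ref{thm:above dist}) to $u$ itself. Away from the radial sets, Step 1 applies.

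\emph{Main obstacle.} The hardest point is the membership $A_+u\in H^{s,\mathsf{r}_+}$ near $\Rin$: the incoming piece $v_-$ is below threshold there, so $u$ is genuinely not in $H^{s,\mathsf{r}_+}$ near $\Rin$ unless it has no incoming part. My first attempt would be to avoid this by a weaker claim. I would show that $A_+u$ and $w:=P_+^{-1}[P,A_+]u$ both solve $P(\cdot)=[P,A_+]u$, so their difference solves $Pz=0$. Then I would show that $z$ is above threshold at one radial set, or is the sum of an element of $\mathcal{X}^{s,\mathsf{r}_+}$ and a term microlocally trivial near $\Rin$. Finally I would conclude $z=0$ by the boundary pairing argument of Theorem~\ref{thm:Helmtholz-invertible}, supplemented by Proposition~\ref{prop:inf module reg} and Theorem~\ref{thm:leading-asymptotic-Helmholtz}.
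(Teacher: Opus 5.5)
There is a genuine gap, and it starts with the labelling. $P_+^{-1}$ takes values in spaces $\mathcal{X}^{s,\mr}$ with $\mr$ Feynman admissible, and all of these are above threshold at $\Rin$. So the identity $A_+u=P_+^{-1}[P,A_+]u$ forces $A_+$ to be microlocally trivial near $\Rin$ and microlocally the identity near $\Rout$. This matches the paper's remark that $u_+$ carries the outgoing data $f_+$.

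You took the opposite convention (``$A_+$ is microsupported away from $\Rout$, so near $\Rin$ the term $A_+u$ agrees with $u$''). Under that labelling the statement is false whenever $u$ has nonzero incoming data. Indeed, applying the correctly labelled statement to your $A_-$ gives $P_+^{-1}[P,A_+]u=-A_-u$. The obstruction in your last paragraph is exactly this, not a technical difficulty. Your fallback cannot close either: $z=A_+u-P_+^{-1}[P,A_+]u$ equals $u$, which is below threshold at both radial sets, so no uniqueness argument applies. With the correct labelling your ``main obstacle'' disappears, since $A_+u$ is microlocally Schwartz near $\Rin$.

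Even with the labels fixed, proving $A_+u\in\mathcal{X}^{s,\mr_+}$ for the \emph{given} $\mr_+$ fails in general, and your Step 1 is false as stated. Nothing ties $\WF'(A_+)$ to the region where $\mr_+$ is small, and on $\WF'(A_+)$ you only know $u\in H^{s,\min(\mr_+,\mr_-)}$. The incoming component $v_-$ cannot be upgraded to order $\mr_+$ there. Forward propagation would need $v_-$ in order $\mr_+>-1/2$ near $\Rin$, which fails. Backward propagation from $\Rout$ has the wrong monotonicity. A plane wave shows the loss is real: its wavefront set fills whole bicharacteristics with spatial order no better than $-n/2$, yet it lies in such a sum space for suitable admissible $\mr_\pm$. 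Consequently $[P,A_+]u$ need not even lie in $H^{s-2,\mr_++1}$.

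The missing idea is to change the order instead of trying to improve $u$. Choose a Feynman admissible $\tilde{\mr}_+$ that is above threshold only on a small neighbourhood of $\Rin$ where $A_+$ is microlocally trivial. Elsewhere, let it equal a constant $\le\min(\mr_+,\mr_-)$.
\begin{itemize}
\item $A_+u\in H^{s,\tilde{\mr}_+}$ follows directly from $u\in H^{s,\min(\mr_+,\mr_-)}$.
\item $PA_+u=[P,A_+]u\in H^{s-1,\tilde{\mr}_++1}$, because $[P,A_+]\in\Psisc^{1,-1}$ is microsupported where $\tilde{\mr}_+$ is low.
\end{itemize}
Hence $A_+u\in\mathcal{X}^{s,\tilde{\mr}_+}$, and invertibility on that space gives the identity. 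The inverses on different admissible spaces agree by uniqueness, so $P_+^{-1}$ is unambiguous.

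This is the paper's argument. Your remark about exploiting the freedom in the orders points this way, but that freedom must replace $\mr_+$ by an order adapted to $A_+$, not alter the given $\mr_\pm$. No propagation, module regularity or boundary pairing is needed.
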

\begin{remark}
See \cite[Proposition~3.3]{Helmholtz-2} for the same statement with sharp module regularity.
\end{remark}

\begin{proof}
First of all, we have $[P,A_+] = -[P,A_-]$ since $[P,A_++A_-] = [P,\mathrm{Id}]=0$.
Since $Pu=0$, we have
\begin{align}
\begin{split}
& P_+^{-1}[P,A_+]u + P_-^{-1}[P,A_-]u
\\ = & P_+^{-1}PA_+u + P_-^{-1}PA_-u,
\end{split}
\end{align}
If we know $A_\pm u \in \mathcal{X}^{s,\tilde{\mathsf{r}}_\pm}$ for some $s$, and some Feynman/anti-Feynman admissible orders $\tilde{\mathsf{r}}_\pm$ then we know that $P_\pm^{-1}PA_\pm u = A_\pm u$ and the proof is finished.
To ensure that the statement $A_\pm u \in \mathcal{X}^{s,\tilde{\mathsf{r}}_\pm}$ is true, we take $\tilde{\mathsf{r}}_\pm$ to be an arbitrary constant greater than $-1/2$ in a small neighbourhood of the above-threshold radial point set (the value is irrelevant since $A_\pm u$ is microlocally trivial there), and equal to $\min (\mr_+, \mr_-)$ outside a slightly larger  neighbourhood of the above-threshold radial set. 
\end{proof}

Now we can come back to the limiting process we discussed earlier in \eqref{eq: L limit}. Let $u = |poi(\lambda) f_-$, where $f_- \in C^\infty(\mathbb{S}^{n-1})$.  In terms of the decomposition of Proposition~\ref{prop:u-microlocal-decomposition},  $u_-$ contributes the $f_-$-term in the expansion in \eqref{eq:typical-expansion-Helmholtz} while $u_+$ contributes the $f_+$-term in the expansion.
In terms of $L(\lambda)$ (and its analogue $L(-\lambda)$ defined via shifting the sign of the complex phase), we have
\begin{equation} \label{eq:L(lambda)-convergence}
L(\lambda)u_+ = f_+,\quad L(\lambda)u_- = 0, \quad L(-\lambda)u_+ = 0, \quad L(-\lambda)u_- = f_-.
\end{equation}
One needs to interpret these limits in certain $L^2$-based Sobolev spaces.

\subsection{The scattering matrix}
Let $u$ be a solution to $Pu=0$ as in \eqref{eq:typical-expansion-Helmholtz}, then the scattering matrix $S(\lambda)$ is the (extension of the) map $C^\infty(\mathbb{S}^{n-1}) \to C^\infty(\mathbb{S}^{n-1})$ sending $f_-$ to $f_+$.
It can be interpreted as the `boundary value' of our Poisson operator\footnote{This perspective was used in \cite{melrose1996scattering}, where $\Poi(\lambda)$ was constructed as certain Legendre distribution and $S(\lambda)$ was shown to be a Fourier integral operator of order $0$  associated to the time $\pi$ geodesic flow on the boundary.}.

Apart from this perspective, $S(\lambda)$ has a simple formula in terms of objects we introduced before:
\begin{equation} \label{eq:S-matrix-formula}
S(\lambda) = (2i\lambda)^{-1}\Poi(-\lambda)^* [P,A_+] \Poi(\lambda),
\end{equation}
where $\Poi(\pm \lambda)^*$ is the adjoint of $\Poi(\pm \lambda)$, which is we will analyze below.
The goal of this part is understanding this, and other identities connecting various objects that we have introduced.

We first give a characterization of $\Poi(\lambda)^*$ via the boundary pairing. Let $v \in \mathcal{S}(\R^n)$ and set $u_\pm = \pm P_\pm^{-1} v$, then we apply \eqref{eq:boudary-pairing} to $u_-$ and $P(\lambda)a$ with $a \in C^\infty(\partial \overline{\R^n})$ to get
\begin{align*}
\int (\Poi(\lambda)a) \overline{v} dV_g = 2i\lambda \int_{\partial \overline{\R^n}} a \overline{f_-} dV_{\partial \overline{\R^n}},
\end{align*}
where we have used the fact that $u_-$ has zero outgoing data and $f_-$ is the incoming data of $u_-$.
This gives
\begin{align}
\Poi(\lambda)^* v = 2i\lambda f_-.
\end{align}
Similarly, let $f_+$ be the incoming data of $u_+$, then we have
\begin{align}
\Poi(-\lambda)^* v = 2i\lambda f_+.
\end{align}

Applying $\Poi(\pm \lambda)$ to $f_\mp$, we obtain a function that solves $Pu=0$ with prescribed incoming/outgoing data equal to $f_\mp$, which is therefore $u_-+u_+$. So we have
\begin{align*}
\Poi(\pm \lambda)\Poi(\pm \lambda)^* = 2i \lambda  (P_+^{-1} - P_-^{-1}),
\end{align*}
which connects the Poisson operator to the spectral measure as given by Stone's theorem (the right hand side can be interpreted as the difference between the outgoing and incoming resolvents at the point $\lambda^2$ of the spectrum of $\Delta_g$).

Now for $u = \Poi(\lambda)f_-$, we use $v = [P,A_+]u$, which is Schwartz, then $u=u_-+u_+$ in terms of notation, which also coincides with the microlocal decomposition we had in Proposition~\ref{prop:u-microlocal-decomposition}. Then we can write
\begin{align*}
u = (2i\lambda) \Poi(-\lambda)\Poi(-\lambda)^*[P,A_+]u = (2i\lambda) \Poi(-\lambda)\Poi(-\lambda)^*[P,A_+] \Poi(\lambda)f_-.
\end{align*}
Writing it as
\begin{align*}
u = \Poi(-\lambda)\big( (2i\lambda)  \Poi(-\lambda)^*[P,A_+] \Poi(\lambda)f_-  \big),
\end{align*}
we know (since $u = \Poi(-\lambda) f_+$ and  by the uniqueness of the outgoing data)
\begin{align*}
f_+ = (2i\lambda)  \Poi(-\lambda)^*[P,A_+] \Poi(\lambda)f_- ,
\end{align*}
which proves \eqref{eq:S-matrix-formula}.

Applying the boundary pairing \eqref{eq:boudary-pairing} again, we see
\begin{align*}
\int_{\mathbb{S}^{n-1}} S(\lambda)f \overline{S(\lambda)f}  \ \mathrm{dvol}_{\mathbb{S}^{n-1}} - \int_{\mathbb{S}^{n-1}} f \overline{f} \  \mathrm{dvol}_{\mathbb{S}^{n-1}} = 0,
\end{align*}
which shows that $S(\lambda)$ is a unitary operator.

\subsection{Problems}

\begin{problem} Verify \eqref{eq:Laplacian-approximate}.  \label{ex:Laplacian-approximate}
\end{problem}

\begin{problem} The Huygens principle: Write down the spectral measure of $\Delta_0$ (or more precisely, of $\sqrt{\Delta_0}$), which in turn gives the propagator $e^{it\sqrt{\Delta_0}}$. What can you say about the wave front set of solutions to (say, initial value problems of) wave equation?
(Hint: You can use Fourier transform to derive the spectral measure, which ``diagonalizes'' the differentiation.)
\\(Hint 2/Solution: Verify that $\big( \frac{\lambda^{n-1}}{(2\pi)^n} \int_{\mathbb{S}^{n-1}} e^{i(z-z') \cdot \lambda \omega} d\omega \big) d\lambda$ is the spectral measure.)
\end{problem}

\begin{problem} (The free scattering theory) 
Write down the Poisson operator.
Also, write down the scattering operator for $\Delta_0-\lambda^2$ and convince yourself that this is a Fourier integral operator associated to the time $\pi$ geodesic flow. 
(Hint/Solution: the Poisson operator is a constant multiple of $\lambda^{(n-1)/2}e^{i\lambda z \cdot y}$, where we use $y \in \mathbb{S}^{n-1} \hookrightarrow \R^n$ to write its coordinates.)
\end{problem}

\begin{problem} According to the expression of $P(\lambda)$ for the free case and assume similar expression holds for in more general settings, what happens if we want to study the low energy behavior as $\lambda \to 0$? What kind of resolution is needed?
\end{problem}

\begin{problem} Use the expression for Poisson operator above to give another derivation of the expression for the spectral measure.
\end{problem}

\begin{problem} Give a example of topology in which the convergence in \eqref{eq:L(lambda)-convergence} will not hold in general.
\end{problem}

\begin{problem} (Scattering theory in one dimension) Consider the time-independent Schr\"odinger equation $(D_x^2+V-\lambda^2)\psi = 0$ in one dimension with real valued smooth $V$.
Suppose we have $\psi \sim e^{i\lambda x}+\mathbf{r}e^{-i\lambda x}$ as $x \to -\infty$ while $\psi \sim \mathbf{t}e^{ikx}$ as $x \to \infty$. Prove:
\begin{equation}
|\mathbf{r}|^2+|\mathbf{t}|^2=1.
\end{equation}
Here $\mathbf{r}$ is called the reflection coefficient and $\mathbf{t}$ is called the transmission coefficient.
(Hint: consider the current, or Wronskian in terms of the ODE theory, $J = \overline{\psi}\psi'-\psi\overline{\psi'}$, then $D_xJ=0$.)
Why this is our unitary property of $S(\lambda)$? (Be careful that $r=-x$ as $x \to -\infty$!)
\end{problem}

\begin{problem} (Potential perturbations) Suppose we consider $P=\Delta_g+V-\lambda^2$ instead, with $V \in \la r \ra^{-2}C^\infty(\overline{\R^n})$, verify main results in this section. (Impose extra decay conditions if you face some troubles.)
\end{problem}

\section{Lecture 14: The Klein--Gordon equation}

So far, we have taken the Helmholtz operator on $\mathbb{R}^n$ as our main example. We have stressed that the arguments given only depend on the principal part of the operator, so the same results apply in asymptotically Euclidean settings, or when a potential is present. Nevertheless, all of the operators considered are asymptotically Helmholtz. 

In this lecture we will consider a very different application, the Klein--Gordon equation $(\square+1) u =0$ on $\mathbb{R}^{1+n}_{t,x}$, where $\square = D_t^2-\triangle = -\partial_t^2 + \sum_{j=1}^n  \partial_{x_j}^2$ is the d'Alembertian, a.k.a ``wave operator''. (In some sources $\square$ denotes the negative of this operator; there is no consensus in the literature on this point.)

\begin{remark}
    The Klein--Gordon equation was introduced by Klein, Fock, and Gordon (in that order) as a model for relativistic quantum mechanics. In nature, it describes certain atomic nuclei, such as Helium-4, as well as pions, certain mesons which act as a carrier of the strong nuclear force between nucleons. It also serves as a model for the Higgs boson in the limit where its self-interactions are ignored.
\end{remark}

Notice that the operator $P=\square-1$ lies in $\Psi_{\mathrm{sc}}^{2,0}$, like the Helmholtz operator. Like the Helmholtz operator, we will see that the Hamiltonian vector field $\mathsf{H}_p$ 
coming from $p=\sigma_{\mathrm{sc}}^{2,0}(P)$ induces on the characteristic set a source-to-sink flow.
\emph{Unlike} the Helmholtz operator, $P$ is not elliptic at fiber infinity, which causes some issues. But these issues are (for the most part) minor, and the Helmholtz story applies in the expected way, with the expected modifications.

To simplify our discussion, we will work in the $n=1$ case. This means working on two-dimensional spacetime $\mathbb{R}^{2}_{t,x}$.

\begin{remark}
    As in our discussion of the Helmholtz operator, we will only discuss the case of $P=\square-1$, the exact Klein--Gordon operator, but the arguments use only the principal form of $P$ (and dynamical things like global hyperbolicity), so they apply for more general metrics that are \emph{asymptotically Minkowski} or for operators $P$ which differ from the Klein--Gordon operator $\square_g-1$ of an asymptotically Minkowski metric $g$ by lower-order terms.
\end{remark}

\subsection{The dynamics}

Let $\tau$ denote the frequency coordinate dual to $t$ and $\xi$ the frequency coordinate dual to $x$.

The operator $P$ is classical, so its characteristic set is the subset of the boundary $\partial (\overline{\mathbb{R}^2}\times \overline{\mathbb{R}^2} )$ of compactified phase space on which 
\begin{equation}
    \mathsf{p} = \frac{\tau^2-\xi^2 - 1 }{ \tau^2+\xi^2+1}
\end{equation}
vanishes.  

Over spatial infinity, at finite frequency, this consists of the hyperbola $\{\tau^2  = \xi^2+1\}$. At infinite frequency, this consists of the ``endpoints'' of the hyperbola. 

The characteristic set $\Sigma_{\mathrm{KG}}=\operatorname{Char}_{\mathrm{sc}}^{2,0}(P)$ has two components, one in the $\{\tau>0\}$ half of the phase space and one in the other half. The situations in the two halves are comparable, so we will restrict attention to the $\{\tau>0\}$ half. 
We can picture 
\begin{equation}
    \operatorname{Char}_{\mathrm{sc}}^{2,0}(P) \subset \partial (\overline{\mathbb{R}^2}\times \overline{\mathbb{R}^2} )
\end{equation}
as the boundary of a cylinder. The side of the cylinder consists of all of the copies of the hyperbola $\{\tau^2 +1 = \xi^2\}$ over spacetime infinity $\infty \mathbb{S}^1$, one hyperbola for each point on $\infty \mathbb{S}^1$. The top and bottom of the cylinder are the portion of the characteristic set at fiber infinity. One of the two, say the top, contains left-moving wavefront set, and the other, the bottom, contains right-moving wavefront set.

Unfortunately, it is hard to draw a cylinder. So, instead, we puncture the cylinder at its base and then splay out its sides, turning the base inside-out in the process, and then laying the whole thing flat. The result is a large flat disk, whose center is the disk that formed the original top, surrounded by an annulus which consists of the side of the cylinder, surrounded by another annulus consisting of the punctured, stretched, and flipped base.

\begin{center}
    \begin{tikzpicture} 
    \filldraw[fill=lightgray!50] (0,-50pt) ellipse (50pt and 10pt);
    \fill[fill=lightgray!50] (-50pt, 0) -- (50pt,0) -- (50pt,-50pt) -- (-50pt,-50pt) -- cycle;
    \draw (50pt,0) -- (50pt,-50pt);
    \draw (-50pt,0) -- (-50pt,-50pt);
    \filldraw[fill=lightgray!50] (0,0) ellipse (50pt and 10pt);
    \draw[->]  (0,-90pt) node[below] {Puncture} -- (0,-65pt);
\end{tikzpicture}
\begin{tikzpicture} 
    \filldraw[fill=lightgray!50] (-50pt,-50pt) -- (-30pt,-75pt) to[out=-30, in=210] (30pt,-75pt) -- (50pt,-50pt) -- cycle;
    \filldraw[fill=lightgray!50] (0,-50pt) ellipse (50pt and 10pt);
    \fill[fill=lightgray!50] (-50pt, 0) -- (50pt,0) -- (50pt,-50pt) -- (-50pt,-50pt) -- cycle;
    \draw (50pt,0) -- (50pt,-50pt);
    \draw (-50pt,0) -- (-50pt,-50pt);
    \filldraw[fill=lightgray!50] (0,0) ellipse (50pt and 10pt);
    \draw[->]  (0,-110pt) node[below] {Stretch} -- (0,-90pt);
\end{tikzpicture}
\end{center}

\begin{center}
    
\begin{tikzpicture} 
    \filldraw[fill=lightgray!50] (-50pt,-50pt) -- (-70pt,-75pt) to[out=-10,in=190] (70pt,-75pt) -- (50pt,-50pt) -- cycle;
    \filldraw[fill=lightgray!50] (0,-50pt) ellipse (50pt and 10pt);
    \fill[fill=lightgray!50] (-50pt, 0) -- (50pt,0) -- (50pt,-50pt) -- (-50pt,-50pt) -- cycle;
    \draw (50pt,0) -- (50pt,-50pt);
    \draw (-50pt,0) -- (-50pt,-50pt);
    \filldraw[fill=lightgray!50] (0,0) ellipse (50pt and 10pt);
    \draw[->] (60pt,-30pt) -- (80pt,-30pt) node[right] {flatten and rotate};
\end{tikzpicture}
\end{center}
Final result:
\begin{center}
    \begin{tikzpicture}[scale=.5]
        \fill[fill=lightgray!50] (0,0) circle (120pt);
        \filldraw[fill=lightgray!50] (0,0) circle (90pt);
        \filldraw[fill=lightgray!50] (0,0) circle (50pt);
    \end{tikzpicture}
\end{center}

Our next goal is to prove (or sketch a proof of):
\begin{proposition}
The Hamiltonian vector field $\mathsf{H}_{p} = (1+x^2+t^2)^{1/2} (1+\tau^2+\xi^2))^{-1/2} H_{p}$ induces on $\Sigma_{\mathrm{KG}}$ a source-to-sink flow.
\end{proposition}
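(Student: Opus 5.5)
Since $p=\tau^2-\xi^2-1$ has constant coefficients, I will start from the explicit form of the Hamilton vector field in the interior, $H_p = 2\tau\partial_t - 2\xi\partial_x$. The momenta $(\tau,\xi)$ are conserved, and bicharacteristics in the interior of phase space are straight lines $t\mapsto(t,x)+s(2\tau,-2\xi)$. The rescaled field $\mathsf{H}_p$ is smooth on $\overline{\mathbb{R}^2}\times\overline{\mathbb{R}^2}$ and tangent to the boundary, so it suffices to understand its restriction to $\Sigma_{\mathrm{KG}}\cap\{\tau>0\}$. I will treat the three pieces of the flattened picture separately: the top disk (fibre infinity, left-moving), the side annulus (spacetime infinity at finite frequency), and the outer annulus (the other fibre-infinity disk). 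The $\{\tau<0\}$ component is handled symmetrically.

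\textbf{Step 1: finite frequency over spacetime infinity.} This is the same as the Helmholtz computation of Lecture 9. Fix $(\tau,\xi)$ on the hyperbola, so the frequency is constant. The spacetime velocity $v=(2\tau,-2\xi)$ is timelike, since $\tau^2-\xi^2=1$. Rescaling by $|(t,x)|$ and passing to the circle at infinity, every nonradial orbit is a half great circle, which in one dimension is an arc of $\infty\mathbb{S}^1$. Each such arc runs from $-\hat v$ to $+\hat v$. Using projective coordinates $\rho=1/t$ and $y=x/t$ near these points, as in \eqref{eq:misc_a}, I expect to find
\begin{equation*}
\mathsf{H}_p=\mp c\big(\rho\partial_\rho+v\,\partial_v\big)+\dots
\end{equation*}
with $c>0$ and $v=y-(-\xi/\tau)$. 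This shows that $\{\hat z=-\hat v\}$ (the past radial set $\mathcal{R}_-$) is a source and $\{\hat z=+\hat v\}$ (the future radial set $\mathcal{R}_+$) is a sink. Both sets lie inside the timelike cone $|y|<1$, so over the side annulus the flow goes from $\mathcal{R}_-$ to $\mathcal{R}_+$.

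\textbf{Step 2: fibre infinity.} Over the interior of spacetime, near $\rho_{\mathrm{df}}=0$, the Hamilton vector field transports position along null lines $x\pm t=\text{const}$, while the fibre direction $(\hat\tau,\hat\xi)$ stays constant, pointing along a null direction. Each null line escapes to infinity, so I must examine the corner, where fibre infinity meets spacetime infinity at the endpoints of the hyperbola. There the two boundary defining functions interact. Working in coordinates $\rho_{\mathrm{df}}=1/\tau$, $\eta=\xi/\tau$, $\rho=1/t$, $y=x/t$, I would compute the $\rho_{\mathrm{df}}\partial_{\rho_{\mathrm{df}}}$ and $\rho\partial_\rho$ components of $\mathsf{H}_p$ at the points $y=\mp\eta=\mp1$.

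The expected outcome is that each disk at fibre infinity has its radial set at the corner over the past or future lightlike boundary points. These are the limits of $\mathcal{R}_\mp$ as $|\xi|\to\infty$ along the hyperbola, so the radial sets are really closed curves meeting the corner. The orbits in the disk interior enter from the past corner radial set and exit toward the future one. The radial sets then form one connected source $\mathcal{R}_-$ and one sink $\mathcal{R}_+$, each a closed curve passing through the side annulus and reaching fibre infinity at the corner.

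\textbf{Step 3: global statement.} I will combine the local normal forms with a monotone quantity. The natural candidate is $t/\langle(t,x)\rangle$, which is strictly increasing along non-radial bicharacteristics because $\tau>0$. This follows from $H_p t=2\tau>0$ together with the rescaling, much as $x_1/x_2$ was used in Lecture 9. This gives a Lyapunov function ruling out recurrent orbits, so every orbit limits to $\mathcal{R}_-$ backward and $\mathcal{R}_+$ forward.

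\textbf{Main obstacle.} The corner analysis in Step 2 is the hard part. The rescaling factor $(1+\tau^2+\xi^2)^{-1/2}$ degenerates along the characteristic set at fibre infinity, and one must check that the linearization at the corner radial points still has a definite source or sink sign in all normal directions. I would first attempt a direct computation in the coordinates above and read off the eigenvalues; if a sign is degenerate, I would adjust the weights used in the rescaling.
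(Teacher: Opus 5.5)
Your Step 1 is the paper's own argument. The paper locates $\mathcal{R}_\pm$ by asking where $H_p=2\tau\partial_t-2\xi\partial_x$ is parallel to $(t,x)$, and then computes $\rho^{-1}H_p=-2\tau(\rho\partial_\rho+v\partial_v)$ with $\rho=1/t$, $v=x\tau/t+\xi$. It only verifies this local sink property; the global picture is read off the figure, and fibre infinity is left as an exercise.

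The genuine gap is your Step 3: $t/\langle(t,x)\rangle$ is \emph{not} monotone along non-radial bicharacteristics. Indeed
\[
H_p\Big(\frac{t}{\langle(t,x)\rangle}\Big)=\frac{2\tau(1+x^2)+2\xi xt}{\langle(t,x)\rangle^{3}}.
\]
Differentiating the weight produces $-t\,H_p\langle(t,x)\rangle/\langle(t,x)\rangle^{2}$, which competes with $H_pt=2\tau$. At spacetime infinity and finite frequency, in the direction $(\hat t,\hat x)$, the rescaled derivative is $2\langle(\tau,\xi)\rangle^{-1}\hat x(\tau\hat x+\xi\hat t)$. This changes sign across the time axis $\hat x=0$ whenever $\xi\neq0$.

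Put bluntly: over the future end of the time axis, $t/\langle(t,x)\rangle$ attains its global maximum $1$ at every point of $\Sigma_{\mathrm{KG}}$. The only radial point in that fibre is $\xi=0$; the corner radial points lie over null directions. So the function cannot strictly increase through any of the other points. Your sentence ``this gives a Lyapunov function ruling out recurrent orbits'' is therefore false, and the global conclusion as you set it up rests on it.

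The repair is cheap. Since $(\tau,\xi)$ is conserved, Steps 1 and 2 already describe every orbit explicitly. At finite frequency they are arcs of $\infty\mathbb{S}^1$ from $-\hat v$ to $+\hat v$. On the two fibre-infinity disks they are compactified translations along $x\pm t=\mathrm{const}$. So you can simply drop Step 3.

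If you want a Lyapunov function, it must depend on the frequency. Set $z=(t,x)$, $\hat v=(\tau,-\xi)/|(\tau,\xi)|$ and $f=\langle z\rangle^{-1}z\cdot\hat v$. Then $H_pf=2|(\tau,\xi)|(1+|z_\perp|^2)\langle z\rangle^{-3}$, where $z_\perp$ is the component of $z$ orthogonal to $\hat v$. Consequently $\mathsf{H}_pf>0$ over the interior of spacetime, and at spacetime infinity $\mathsf{H}_pf$ is a positive multiple of $1-f^2$, vanishing exactly on $\overline{\mathcal{R}_\pm}=\{f=\pm1\}$.

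Finally, your ``main obstacle'' is not one. Use the coordinates $\rho=1/t$, $\tilde v=x/t+\xi/\tau$, $\rho_f=1/\tau$ from the paper's exercise. There $H_p\rho=-2\tau\rho^2$, $H_p\tilde v=-2\tau\rho\tilde v$ and $H_p\rho_f=0$. Hence near the corner $\mathsf{H}_p$ is a smooth positive multiple of $-2(\rho\partial_\rho+\tilde v\partial_{\tilde v})$.

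The only vanishing eigenvalue is in the $\rho_f$-direction. That direction is tangent to the curve $\overline{\mathcal{R}_+}$, locally $\{\rho=\tilde v=0\}$, which meets fibre infinity transversally. No change of weights is needed, and $\langle(\tau,\xi)\rangle^{-1}$ does nothing special on $\Sigma_{\mathrm{KG}}$ beyond defining fibre infinity.
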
  

First, let us compute the Hamiltonian vector field $H_{p}$ of $p = \tau^2 - \xi^2 - 1$. The Hamiltonian flow is 
\[
H_{p} = (\partial_\tau p) \partial_t + (\partial_\xi p) \partial_x =  2 \tau \partial_t - 2 \xi \partial_x. 
\]

The quickest way to find those points in  $\infty \mathbb{S}^1 \times \mathbb{R}^2$ where $\mathsf{H}_{p}$ vanishes is to note that it consists of points of the form $(\theta,\zeta)\in \infty \mathbb{S}^1 \times  \mathbb{R}^2$ such that, at finite spacetime points $(t,x) \neq (0,0)$ in spacetime direction $\theta = \operatorname{arctan}(t/x)$, the Hamiltonian vector field $H_{p}$ is parallel or anti-parallel to $\theta$. This is just translating between the compactified perspective and the conic perspective.\footnote{The reason this works is that $H_{p}$ is homogeneous of degree $-1$ with respect to spacetime dilations. [Exercise]}

The Hamiltonian vector field is pointed in the spacetime direction $(\tau,-\xi)$. This is parallel/anti-parallel to $(t,x)$ when $-\tau x = t \xi$. Now:
\begin{itemize}
    \item If $\theta=\operatorname{arctan}(t/x)$ is between $45^\circ$ and $135^\circ$, then $(\tau,-\xi)$ should be between those same angles, which picks out exactly one point in the hyperbola $\{\tau^2 = \xi^2+1\}$ (which defines the characteristic set).
    \item  Likewise if $\theta$ is between $225^\circ$ and $315^\circ$. 
    \item For other $\theta$, there are no points $(\tau,-\xi)$ in the characteristic set with that same angle.
\end{itemize}
So, the radial set $\mathcal{R}$ consists, in the component of the characteristic set to which we have been restricting attention, of two components, one over the future timelike cap, $\mathcal{R}_+$, and one over the back timelike cap, $\mathcal{R}_-$, each of which consists of one point over each point at spacetime infinity.

Since $H_{p}$ is flowing forwards in time on the $\{\tau>0\}$ component of the characteristic set to which we have been restricting attention, this is highly suggestive of a source-to-sink flow, with the radial set over the past timelike cap as a source and the radial set over the future timelike cap as a sink:
\begin{center}
    \includegraphics[scale=1]{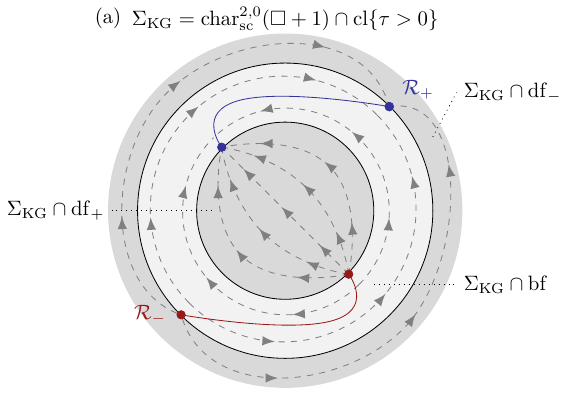}
\end{center}
This is in fact what the flow looks like. Let us verify that $\mathcal{R}$ is a sink, at least away from fiber infinity. 

\begin{remark}
    The figure above shows the Hamiltonian flow induced by $\mathsf{H}_{p}$ moving backwards in time along $\infty \mathbb{S}^{n}$. This might seem inconsistent with the fact that $H_{p}$ is going forwards in time. The reason is that even if $H_{p}$ is going forwards in time, if it is decreasing the spatial radius $r$ at a faster rate, then $H_{p}$ is bring us closer to the time-axis. This means that the induced flow $\mathsf{H}_{p}$ on the corresponding portion of the boundary is doing the same thing.  
\end{remark}

Let us use $\rho=1/t$ as a boundary-defining-function of base infinity (valid in a neighborhood of the subset of $\infty \mathbb{S}^1$ over which $\mathcal{R}_+$ lies). In addition, instead of using $x/t$ as a coordinate, let us use 
\[
v = \frac{x \tau}{t} + \xi .
\]
This vanishes precisely on $\mathcal{R}_+$. 
So, our coordinate system of choice is $(\rho=1/t, v, \tau,\xi)$. Above, we wrote $H_{p}$ in terms of the coordinate system $(t,x,\tau,\xi)$. 
In order to carry out the coordinate change from the latter coordinate system to the former, use 
\begin{align}
\begin{split} 
   \frac{\partial}{\partial t} &= \frac{\partial \rho}{\partial t}\frac{\partial}{\partial \rho} + \frac{\partial v}{\partial t}\frac{\partial}{\partial v} \\ 
    &= - \rho^2 \frac{\partial}{\partial \rho }  -  \rho (v-\xi) \frac{\partial}{\partial v} \\ 
    \frac{\partial}{\partial x} &= \frac{\partial v}{\partial x}\frac{\partial}{\partial v} = \rho \tau \frac{\partial}{\partial v}.
\end{split}
\end{align}
So, 
\begin{multline}
    H_{p} = 2\tau\partial_t - 2 \xi \partial_x  =  -2 \tau \Big(\rho^2 \frac{\partial}{\partial \rho }  +  \rho (v-\xi) \frac{\partial}{\partial v} \Big) - 2 \xi  \rho \tau \frac{\partial}{\partial v} \\ 
    = - 2 \tau \rho^2 \frac{\partial}{\partial \rho} - 2 \tau \rho v \frac{\partial}{\partial v}. 
\end{multline}
So: 
\begin{equation}
    \mathsf{H}_{p} = \rho^{-1} H_{p} = - 2 \tau \rho \frac{\partial}{\partial \rho}   - 2 \tau v \frac{\partial}{\partial v} .
\end{equation}
From this, we can see that $\{v=0,\rho=0\}$ is a sink on the component of the characteristic variety with $\tau>0$.

\subsection{Theorems}

Let $\mathsf{r}$ denote a variable order monotonic under the Hamiltonian flow (it is allowed for $\mathsf{r}$ to be decreasing on one component of the characteristic set and increasing on the other), constant near each radial set, and, within each component of the characteristic set, $\mathsf{r}<-1/2$ on one radial set and $\mathsf{r}>-1/2$ on the other. Now let 
\begin{align}
    \mathcal{X}^{s,\mathsf{r}} &= \{ u \in H_{\mathrm{sc}}^{s,\mathsf{r}} : Pu \in H_{\mathrm{sc}}^{m-1,\mathsf{s}+1}  \} , \\ 
    \mathcal{Y}^{s,\mathsf{r}} &= H_{\mathrm{sc}}^{s,\mathsf{r}}.
\end{align}
Then, the Klein--Gordon operator $P=\square-1$ satisfies:
\begin{theorem}
The map $P:\mathcal{X}^{s,\mathsf{r}} \to \mathcal{Y}^{m-1,\mathsf{s}+1}$ is Fredholm.  
\end{theorem}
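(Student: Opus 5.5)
The plan is to copy the Helmholtz argument of Theorem~\ref{thm:Fredholm-Helmholtz}, reading the statement with $\mathcal{X}^{s,\mathsf{r}} = \{u \in H_{\mathrm{sc}}^{s,\mathsf{r}} : Pu \in H_{\mathrm{sc}}^{s-1,\mathsf{r}+1}\}$ and target $H_{\mathrm{sc}}^{s-1,\mathsf{r}+1}$. This gains one order at fibre infinity and one at spatial infinity relative to an elliptic estimate, as in \eqref{eq:calX-defn-hyp}. We first prove a global semi-Fredholm estimate
\begin{equation*}
\|u\|_{s,\mathsf{r}} \lesssim \|Pu\|_{s-1,\mathsf{r}+1} + \|u\|_{M,N}, \qquad M<s,\ N<\min\mathsf{r}.
\end{equation*}
We establish it for Schwartz $u$, which suffices by the analogue of Proposition~\ref{prop:Schwdense}. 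The regulariser $T_r$ of Lemma~\ref{lemma:regularizer-Tr} works verbatim, with $[T_r,P]\to 0$ strongly into $H_{\mathrm{sc}}^{s-1,\mathsf{r}+1}$ since $P \in \Psi_{\mathrm{sc}}^{2,0}$. Compactness of $H_{\mathrm{sc}}^{s,\mathsf{r}} \hookrightarrow H_{\mathrm{sc}}^{M,N}$ (Problem~\ref{prob:var compact embedding}) then gives finite kernel and closed range via Proposition~\ref{prop:Fred}.

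The estimate is assembled from a microlocal partition of unity $\mathrm{Id}=Q_1+\dots+Q_4$ on each of the two components $\Sigma_\pm$ of $\Sigma_{\mathrm{KG}}$, handled separately.
\begin{itemize}
\item $Q_1$ is microsupported in $\Ell(P)$ and is handled by the microlocal elliptic estimate, weakened to orders $(s-1,\mathsf{r}+1)$ on $Pu$.
\item $Q_4$ is microsupported near the radial set where $\mathsf{r}>-1/2$. Here we apply the above-threshold radial estimate (Theorem~\ref{thm:above Schw}), which leaves an error $\|u\|_{s-1/2,\mathsf{r}-1/2}$.
\item $Q_2$ covers the rest of $\Sigma$ away from the radial sets. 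It is controlled by Theorem~\ref{thm:mpe variable}, propagating from $\Ell(Q_4)$, which needs $\mathsf{r}$ nonincreasing along the flow direction used.
\item $Q_3$ is microsupported near the below-threshold radial set and is controlled by Theorem~\ref{thm:below}, with $Q_2$ as the source.
\end{itemize}
On the component where $\mathsf{r}$ increases along $\mathsf{H}_p$, we run every estimate backwards in the flow parameter; this simply swaps the roles of source and sink. Summing with weights $1,2C,1,4C^2$ as before absorbs the $Q_2,Q_4$ terms. The interpolation inequality \eqref{eq:var interp} then removes $\|u\|_{s-1/2,\mathsf{r}-1/2}$.

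The new geometric input is the dynamics, and I expect it to be the main obstacle. We must justify that every bicharacteristic in $\Sigma_+$ tends to $\mathcal{R}_-$ in backward time and to $\mathcal{R}_+$ in forward time, including the portions at fibre infinity, where $P$ is not elliptic. Over the interior $\mathbb{R}^2$, fibre-infinity bicharacteristics are straight null lines. By the remark preceding Theorem~\ref{thm:Helmtholz-invertible} on non-trapping, each such line exits to the corner at spacetime infinity. There the flow must be checked to continue into the finite-frequency region and reach the timelike caps rather than stalling at the lightlike corner.

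I would verify this in local coordinates near the corner over the null directions, using $\rho_{\mathrm{df}}$ and $1/t$ (or $1/(t\pm x)$), and check that the rescaled $\mathsf{H}_p$ is nonvanishing there, so there are no additional radial points. Since $\tau^2=\xi^2+1$ forces $|\tau|>|\xi|$ at finite frequency, points over the null directions occur only at infinite frequency, and I would confirm that $\mathsf{H}_p$ is transverse there. The propagation estimate at fibre infinity over the interior, and at the corner, is Theorem~\ref{thm:mpe} in its general-order form, with loss $(1,1)$ consistent with the orders $(s-1,\mathsf{r}+1)$. Given the sink computation $\mathsf{H}_p=-2\tau(\rho\partial_\rho+v\partial_v)$, the radial estimates near $\mathcal{R}_\pm$ hold with threshold $-1/2$. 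To confirm this I would recompute $\beta_0$ and $\beta_1$, which have the same sign, and note that $P$ is symmetric, so no subprincipal shift occurs.

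Finally, the cokernel is identified as in the proof of Theorem~\ref{thm:Fredholm-Helmholtz}. It is the kernel of $P=P^*$ on $H_{\mathrm{sc}}^{1-s,-\mathsf{r}-1}$, and $-\mathsf{r}-1$ is again admissible, with thresholds reversed at each radial set and monotonicity reversed. The same semi-Fredholm estimate for this order shows the kernel is finite dimensional, completing the Fredholm property.
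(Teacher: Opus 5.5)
You are following the route the notes intend. They give no written proof; they verify the source-to-sink structure and say the Helmholtz argument carries over. Your functional-analytic bookkeeping is fine: the partition of unity on each component, the gluing, interpolation, density, and duality with $-\mathsf{r}-1$. The gap is in the dynamical step you flag yourself, where your expected picture is wrong and the check you propose would fail.

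Because $p=\tau^2-\xi^2-1$ does not depend on $(t,x)$, $H_p=2\tau\partial_t-2\xi\partial_x$ has no frequency components. Hence $\mathsf{H}_p\rho_f=0$ and fibre infinity is invariant, so the null-line bicharacteristics there never re-enter finite frequency. They converge instead to genuine zeros of $\mathsf{H}_p$ at the corner. Use the chart $\rho=1/t$, $\tilde v=x/t+\hat\xi$, $\rho_f=1/\tau$, $\hat\xi=\xi/\tau$. It covers $\Sigma$ near the closed future cap, since $|\tau|>|\xi|$ on $\Sigma$ and $t\ge|x|$ there. In it one finds
\[
\mathsf{H}_p=-2\big(\rho\partial_\rho+\tilde v\partial_{\tilde v}\big),
\]
which vanishes on $\{\rho=\tilde v=0\}$. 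On $\Sigma\cap\{\rho_f=0\}$ these are the points $\hat\xi=\mp1$ over $x/t=\pm1$. That is, they lie over the lightlike directions, at exactly the points where the velocity $(\tau,-\xi)$ is parallel to the position. They are the endpoints of $\overline{\mathcal{R}_\pm}$: solving $(\tau,-\xi)\parallel(t,x)$ with $\tau^2=\xi^2+1$ shows the radial sets over the timelike caps run off to fibre infinity as $(t,x)$ becomes null. So your claim that $\mathsf{H}_p$ is nonvanishing over the null directions at infinite frequency is false. It fails for one of the two characteristic points over each lightlike direction in each component, and Theorem~\ref{thm:mpe} cannot treat those points.

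What is missing, therefore, is radial point estimates up to and including this corner part of $\overline{\mathcal{R}_\pm}$. Your check via $\mathsf{H}_p=-2\tau(\rho\partial_\rho+v\partial_v)$ only works at finite frequency, because the coordinates $(\rho,v,\tau,\xi)$ degenerate as $\tau\to\infty$. Also, Theorems~\ref{thm:below} and~\ref{thm:above Schw} are proved only for the Helmholtz operator, whose radial sets avoid fibre infinity, where that operator is elliptic.

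The repair is the exercise at the end of Lecture 14: rerun the commutator argument in the chart above. Take quadratic defining function $\tilde v^2$ and commutant $\rho^{-2r-1}\rho_f^{-2s+1}\phi(\mathsf{p})^2\psi(\tilde v^2)^2$, of order $(2s-1,2r+1)$. Since $\mathsf{H}_p\rho_f=0$, the factor $\rho_f^{-2s+1}$ does not affect the sign of $\mathsf{H}_p a$. So there is no threshold in $s$, the threshold in the spatial order is still $-1/2$, and the forcing appears in $H^{s-1,\mathsf{r}+1}_{\mathrm{sc}}$.

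Then place the corner radial points inside $\Ell(Q_3)$ and $\Ell(Q_4)$, with $\mathsf{r}$ constant near them as well. All remaining corner points are nonradial, and every bicharacteristic of the $\tau>0$ component runs from $\overline{\mathcal{R}_-}$ to $\overline{\mathcal{R}_+}$. With that, the rest of your argument goes through.
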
 

In this way we get \emph{four} Fredholm problems. The reason why we get four instead of two (like for the Helmholtz problem) is that the characteristic set has two components, and we can choose the propagation direction of each. 

If $\mathsf{s}$ is high in the past, then this is called the advanced problem. If $\mathsf{s}$ is high in the future, then this is called the retarded problem. The other two problems are called the Feynman and anti-Feynman problems.

\subsection{Exercises}

\begin{problem}
    Show that $\mathcal{R}_+$ continues to be a sink at fiber infinity. Do this by: 
    \begin{enumerate}[label=(\alph*)] 
        \item Change coordinates from $(\rho,v,\tau,\xi)$ to $(\rho,\tilde{v}, 1/\tau,\hat{\xi})$ for $\hat{\xi} = \xi/\tau$ and $\tilde{v} = x/t+\hat{\xi}$. 
        \item Why does this coordinate chart suffice?
    \end{enumerate}
\end{problem}

For the next two problems, you can use the Fourier transform if you like.
\begin{problem}
    Suppose that $\mathsf{s}$ is increasing on both components of the characteristic set. (Or decreasing on both.) Show that  $P:\mathcal{X}^{s,\mathsf{r}} \to \mathcal{Y}^{m-1,\mathsf{s}+1}$ is invertible. 
\end{problem}

\begin{problem}
    Suppose that $\mathsf{s}$ is increasing on one component of the characteristic set and decreasing on the other. Show that  $P:\mathcal{X}^{s,\mathsf{r}} \to \mathcal{Y}^{m-1,\mathsf{s}+1}$ is invertible. 
\end{problem}

\section{Lecture 15:  Another way to realize Fredholmness, and the  Schr\"odinger operator}

\subsection{Another way to realize Fredholmness} There are a variety of ways to construct of Fredholm realization of (pseudo-)differential operators. One relatively nontechnical way was introduced recently by Baskin, Doll and Gell-Redman \cite{BDGR}. It uses constant-order Sobolev spaces, with an additional condition at the above-threshold radial set. We will discuss this just in the context of the Helmholtz operator, denoted $P$, here. 

To define these spaces, let us choose $\Rin$ to be the above-threshold radial set (of course we can equally well choose $\Rout$, and it should be clear how to modify the construction for this choice). We then choose two microlocal cutoffs $Q, Q' \in \Psisc^{0,0}$, which are both microlocally equal to the identity near $\Rin$ (in particular, they are elliptic near $\Rin$), and microlocally trivial near $\Rout$ in the sense that their operator wavefront set is disjoint from $\Rout$. In addition, we ask that $\WFsc'(Q) \subset \Ell(Q')$. We then choose a differential order $s$ and two constant spatial orders $l_-, l_+$ satisfying $l_- < -1/2 < l_+ < l_- + 1$.  We then define our $\SX$ and $\SY$ spaces by 
\begin{equation}\begin{gathered}
    \SY^{s, l_-, l_+} = \{ u \in H^{s, l_-} \mid Q'u \in H^{s, l_+} \}, \\
    \| u \|_{\SY^{s, l_-, l_+}} := \| u \|_{H^{s, l_-}} + \| Q'u \|_{H^{s, l_+}}
\end{gathered}\end{equation}
and 
\begin{equation}\begin{gathered}
    \SX^{s, l_-, l_+} = \{ u \in H^{s, l_-} \mid Qu \in H^{s, l_+}, Pu \in \SY^{s-2, l_-+1, l_+ + 1} \}, \\
    \| u \|_{\SX^{s, l_-, l_+}} := \| u \|_{H^{s, l_-}} + \| Qu \|_{H^{s, l_+}} + \| Pu \|_{H^{s-2, l_-+1}} + \| Q'Pu \|_{H^{s-2, l_++1}}. 
\end{gathered}\end{equation}
We see that if $u \in \SX^{s, l_-, l_+}$ then $u$ has above-threshold decay at $\Rin$ but can be below threshold at $\Rout$. Moreover, $Pu$ satisfies similar conditions, with the expected change of order to $(s-2, l_\pm + 1)$, but using the $Q'$ cutoff rather than the $Q$ cutoff for $u$. 

\begin{proposition}\label{prop:BDGR} The operator $P : \SX^{s, l_-, l_+} \to \SY^{s-2, l_-+1, l_++1}$ is Fredholm. 
\end{proposition}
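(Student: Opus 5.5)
The plan is to follow the template of Theorem~\ref{thm:Fredholm-Helmholtz}: prove a semi-Fredholm estimate for $P$ on $\SX^{s,l_-,l_+}$, then identify the cokernel with the kernel of $P$ on a dual space of the same type (with the roles of $\Rin$ and $\Rout$ exchanged), and prove a semi-Fredholm estimate there too. First I would check the routine facts. $\SX$ and $\SY$ are Hilbert spaces, by the same completeness argument as for $\mathcal{X}^{s,\mr_\pm}$. $P:\SX\to\SY$ is bounded by definition of the norms. Schwartz functions are dense in $\SX$: use the regularizers $T_r$ of Lemma~\ref{lemma:regularizer-Tr}. The only new point is the commutators $[T_r,Q]$ and $[T_r,Q']$, which are bounded in $\Psisc^{-1,-1}$ uniformly and tend to $0$ in $\Psisc^{-1+\delta,-1+\delta}$, so they are harmless. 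Density lets me use the radial estimates only for Schwartz $u$, as in Lecture 12.

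For the main estimate I would take a microlocal partition of unity $\Id=Q_1+Q_2+Q_3+Q_4$ exactly as in the proof of Theorem~\ref{thm:Fredholm-Helmholtz}: $Q_1$ on the elliptic set, $Q_4$ near $\Rin$, $Q_3$ near $\Rout$, and $Q_2$ on the characteristic set away from both. I would additionally arrange $\WF'(Q_4)\subset\Ell(Q)$ and $\WF'(Q_2)\cup\WF'(Q_3)\cap Q$-support as needed.

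\begin{itemize}
\item Near $\Rin$, Theorem~\ref{thm:above Schw} with $r=l_+>-1/2$ and $G$ microsupported where $Q'$ is elliptic gives $\|Q_4u\|_{s,l_+}\lesssim\|Q'Pu\|_{s-2,l_++1}+\|u\|_{s-1/2,l_+-1/2}$.
\item Away from the radial sets, Theorem~\ref{thm:mpe} propagates from $\Ell(Q_4)$ at order $(s,l_-)$. This is allowed since $l_-<l_+$, and it only needs $Pu\in H^{s-2,l_-+1}$.
\item Near $\Rout$, Theorem~\ref{thm:below} with $r=l_-<-1/2$ gives $Q_3u$ in $H^{s,l_-}$.
\item The elliptic estimate handles $Q_1u$.
\end{itemize}

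Summing gives $\|u\|_{s,l_-}+\|Qu\|_{s,l_+}\lesssim\|Pu\|_{\SY}+\|u\|_{s-1/2,l_+-1/2}+\|u\|_{-N,-N}$. The $Qu$ term is controlled by $Q_4u$ together with propagation on $\WF'(Q)\setminus\Ell(Q_4)$, which I arrange to lie in the region where the order $l_+$ propagation from $\Rin$ is valid. That region must stay within $\Ell(Q')$, so $Q$ should be chosen with support inside a flow-out neighbourhood of $\Rin$ contained in $\Ell(Q')$. The term $\|u\|_{s-1/2,l_+-1/2}$ is the key place where the hypothesis $l_+<l_-+1$ enters. I expect this step to be the main obstacle. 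Since $l_+-1/2<l_-+1/2$, I would use the microlocalized version of the above-threshold estimate, so that this error is $\|Gu\|_{s-1/2,l_+-1/2}$ with $G$ microsupported near $\Rin$. It is then weaker than $\|Qu\|_{s,l_+}$ by half an order in each index, and the constant-order interpolation \eqref{eq:var interp} absorbs it into the left side modulo $\|u\|_{-N,-N}$. I would use the constraint $l_+<l_-+1$ to ensure the duality below and the weak term behave, so that the residual norm $H^{M,N}$ with $N<l_-$ is compactly included by Proposition~\ref{prop:comp emb}.

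For the cokernel I would use the $L^2$ pairing. The dual of $\SY^{s-2,l_-+1,l_++1}$ is the space of $v\in H^{2-s,-l_+-1}$ with $(\Id-Q'')v\in H^{2-s,-l_--1}$ for a complementary cutoff, so $v$ is weak near $\Rin$ and strong (above threshold, since $-l_--1>-1/2$) near $\Rout$. An annihilator $v$ of the range satisfies $Pv=0$. I would prove the analogous semi-Fredholm estimate for $v$ with the two radial sets interchanged: the above-threshold estimate at $\Rout$ uses $-l_--1>-1/2$, and the below-threshold estimate at $\Rin$ uses $-l_+-1<-1/2$. Propagation runs from $\Rout$ backward at order $-l_+-1\le -l_--1$, which is consistent. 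This gives a finite-dimensional kernel, and by Proposition~\ref{prop:Fred} the operator is Fredholm.
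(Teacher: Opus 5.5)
Your architecture is the paper's: an above-threshold estimate near $\Rin$ at order $l_+$, propagation and the below-threshold estimate at order $l_-$, an elliptic estimate, absorption of the weak term by interpolation, compact embedding, and the reversed-threshold problem for the annihilator. The gap is in the step you treat as routine, namely density of $\Schw$ in $\SX^{s,l_-,l_+}$. Your whole route depends on it, because you only apply the radial estimates to Schwartz $u$.

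Convergence $T_ru\to u$ in $\SX$ requires $Q'PT_ru\to Q'Pu$ in $H^{s-2,l_++1}$. Write $Q'PT_ru=T_rQ'Pu+[Q',T_r]Pu+Q'[P,T_r]u$.
\begin{itemize}
\item The first term is fine.
\item The second term is fine because $l_-+2>l_++1$. This, together with $[Q,T_r]u\to0$ in $H^{s+1,l_-+1}\subset H^{s,l_+}$, is where $l_+<l_-+1$ actually enters your route. It does not enter in the weak-term absorption, which works for any $l_+$ once you microlocalize into $\Ell(Q)$ and interpolate.
\item The third term is not controlled by Lemma~\ref{lemma:regularizer-Tr}. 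Since $u$ is only globally in $H^{s,l_-}$, that lemma gives $[P,T_r]u\to0$ only in $H^{s-1,l_-+1}$, which is too weak.
\end{itemize}
The principal symbol of $[P,T_r]$ is $-iH_pt_r$. On $\Char(P)$ near $\Rin$ this has size $\ang{z}^{-1}$ on the shell $\ang{z}\sim r^{-1}$ and does not vanish there. So even uniform boundedness of $Q'[P,T_r]u$ in $H^{s-2,l_++1}$ needs $u$ to have spatial order $l_+$ microlocally on all of $\WF'(Q')$. Membership in $\SX$ gives this only on $\Ell(Q)$, and $\WF'(Q)\subset\Ell(Q')$ is the wrong inclusion. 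Near $\WF'(Q')\setminus\Ell(Q')$ one only knows $Pu\in H^{s-2,l_-+1}$, so there is no order-$l_+$ information on $u$ there.

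In Lecture 12 this problem does not arise, because in $\mathcal{X}^{s,\mr_\pm}$ the orders on $u$ and $Pu$ differ by exactly one everywhere. Without density, estimates proved for Schwartz $u$ give a semi-Fredholm estimate only on the closure of $\Schw$ in $\SX$, not on $\SX$. The same difficulty recurs for your dual space.

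The paper avoids density entirely by applying the strong-form above-threshold estimate, Theorem~\ref{thm:above dist}, directly to $u\in\SX$.
\begin{itemize}
\item The extra term is $\|\tilde Eu\|_{H^{s_0,r_0}}$, with $s_0<s$, $-1/2<r_0<l_+$, and, after a propagation step, $\WF'(\tilde E)\subset\Ell(Q)$. It is finite precisely because $Qu\in H^{s,l_+}$.
\item This term is absorbed into $\|Qu\|_{H^{s,l_+}}$ by interpolation, modulo $\|u\|_{H^{-N,-N}}$.
\item The resulting bound holds for $u\in\SX$, although not in the strong sense.
\item For the annihilator, $Pv=0$ together with the above-threshold part of $v$ near $\Rout$ supplies the a priori information in the same way.
\end{itemize}
If you replace your density step by this, the rest of your outline goes through.
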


\begin{proof}
We first prove a semi-Fredholm estimate for $P$, i.e. we show 
\begin{equation}
    \| u \|_{\SX^{s, l_-, l_+}} \leq C \Big( \| Pu \|_{\SY^{s-2, l_-+1, l_++1}} + \| u \|_{H^{-N, -N}} \Big). 
\end{equation}
It is enough to show that
\begin{equation}
    \| u \|_{H^{s, l_-}} + \| Qu \|_{H^{s, l_+}} \leq C \Big( \| Pu \|_{H^{s-2, l_-+1}} + \| Q'Pu \|_{H^{s-2, l_++1}} \Big). 
\end{equation}
In fact, since $\| Qu \|_{H^{s, l_-}}$ is clearly bounded by $\| Qu \|_{H^{s, l_+}}$, it is enough to show 
\begin{equation}\label{eq:wtsBDGR}
    \| (\Id - Q) u \|_{H^{s, l_-}} + \| Qu \|_{H^{s, l_+}} \leq C \Big( \| Pu \|_{H^{s-2, l_-+1}} + \| Q'Pu \|_{H^{s-2, l_++1}} \Big). 
\end{equation}
We start with the above threshold radial point estimate, which allows us to estimate $\| Qu \|_{H^{s, l_+}}$ by 
\begin{equation}
\| Qu \|_{H^{s, l_+}} \leq C \Big( \| Q'Pu \|_{H^{s-2, l_++1}} + \| Eu \|_{H^{s', l'}} + \| u \|_{H^{-N, -N}} \Big),
\end{equation}
where $-1/2 < l' < l_+$ and $s' < s$. The $E$ operator initially would have (from the proof of the above threshold radial point estimate) have operator wavefront set a bit bigger than that of $Q$. However, using the propagation estimate, we can estimate $Eu$ by $\tilde E u$ where $\tilde E$ is elliptic at $\Rin$ but can have operator wavefront set as close as desired to $\Rin$:
\begin{equation}
\| Eu \|_{H^{s', l'}} \leq C \Big( \| \tilde Eu \|_{H^{s', l'}} + \| Q'Pu \|_{H^{s'-2, l'+1}} + \| u \|_{H^{-N, -N}} \Big),
\end{equation}
and the norm of $Q' Pu$ here is weaker than the one in \eqref{eq:wtsBDGR}, so it can be absorbed. Then the $\| \tilde Eu \|_{H^{s', l'}}$ norm can be absorbed in the LHS, assuming as we may that $\WFsc'(\tilde E) \subset \Ell(Q)$, up to another $\| u \|_{H^{-N, -N}}$ term, as explained in previous lectures. We thus obtain, for $u \in \SX^{s, l_-, l_+}$, the estimate  
\begin{equation}\label{eq:term2}
    \| Qu \|_{H^{s, l_+}} \leq C \Big( \| Q'Pu \|_{H^{s-2, l_++1}} + \| u \|_{H^{-N, -N}} \Big). 
\end{equation}
Note that this estimate does \emph{not} hold in the strong sense: we need to know a priori that $u \in \SX^{s, l_-, l_+}$, which means in particular that both the LHS and RHS are finite. (Exercise: show that for a general distribution $u$, the RHS may be finite and the LHS infinite.) 

We have therefore estimated the second term in \eqref{eq:wtsBDGR}. To estimate the first term, we use the below threshold radial point estimate combined with the propagation estimate. It shows that 
\begin{equation}
    \| (\Id - Q) u \|_{H^{s, l_-}} \leq C \Big( \| Qu \|_{H^{s, l_-}} + \| Pu \|_{H^{s-2, l_-+1}} + \| u \|_{H^{-N, -N}}  \Big). 
\end{equation}
Of course we can trivially weaken this by taking a stronger norm on $Qu$ on the RHS:  
  \begin{equation}\label{eq:term1}
    \| (\Id - Q) u \|_{H^{s, l_-}} \leq C \Big( \| Qu \|_{H^{s, l_+}} + \| Pu \|_{H^{s-2, l_-+1}} + \| u \|_{H^{-N, -N}}  \Big). 
\end{equation}  
Now we add together \eqref{eq:term1} and \eqref{eq:term2} and use \eqref{eq:term2} again on the RHS to eliminate the $Qu$ term on the RHS, and arrive at \eqref{eq:wtsBDGR}. 

\vskip 8pt

The space $\SX^{s, l_-, l_+}$ is compactly embedded in $H^{-N, -N}$, provided that $N$ is sufficiently large. This shows that $P$, acting as in Proposition~\ref{prop:BDGR}, has a finite dimensional kernel and closed range. 
We need to show that the range has finite codimension. It suffices to show that the annihilator of the range of $P$ is finite dimensional. The annihilator lies in the dual space of $\SY^{s-2, l_-+1, l_+ + 1}$, so let's pause to consider what this dual space actually is. 

Note that $\SY^{s-2, l_-+1, l_+ + 1}$ is an intersection of spaces, that is, for $v \in \SY^{s-2, l_-+1, l_+ + 1}$ we require both $v \in H^{s-2, l_- + 1}$ and $Q' v \in H^{s-2, l_+ + 1}$. The dual space is therefore a sum: it consists of $f = f_1 + Q'f_2$ where $f_1 \in H^{2-s, -l_- - 1}$ and $f_2 \in H^{2-s, -l_+ -1}$ (exercise). It will be more transparent if we assume that $l_+ + l_- = -1$, that is, these two orders are symmetric around $-1/2$. Then $-l_- - 1 = l_+$ and $-l_+ - 1 = l_-$. So we require that $f = f_1 + Q'f_2$ where $f_1 \in H^{2-s, l_+}$ and $f_2 \in H^{2-s, l_-}$. This space of functions is similar to $\SX^{s, l_-, l_+}$ but with different microlocalizing functions, and more importantly, with the above and below threshold radial sets reversed: it is now below threshold at $\Rin$ and above threshold at $\Rout$. 

Moreover, for $f$ to be in the annihilator, then it must satisfy 
$$
\ang{P\phi, f} = 0 \text{ for all } \phi \in \Schw(\R^n). 
$$
For Schwartz $\phi$ there is no impediment to `integrating by parts', so this implies that $\ang{\phi, P^* f} = 0$ for all $\phi \in \Schw(\R^n)$, which in turn implies that $P^* f = 0$. So, the annihilator is identified with the null space of $P^* = P$ on a space similar to $\SX^{s, l_-, l_+}$ but with the threshold condition reversed. Exactly the same semi-Fredholm estimate as above can be made for this space and shows the finite dimensionality of the annihilator. Therefore, $P$ is Fredholm acting between the spaces in Proposition~\ref{prop:BDGR}. 
\end{proof}

\subsection{Parabolic calculus}

Finally we discuss how the ``Fredholm philosophy" can be applied to the Schr\"odinger operator \cite{gell2022propagation}. The free Schr\"odinger operator is
$$
P_0 = D_t + \Delta \text{ on } \R^{n+1}_{t,x}, \quad D_t = -i \partial_t, 
$$
and is the basic operator of nonrelativistic quantum mechanics. We consider perturbations $P$ given by 
\begin{equation}
P =     D_t + \Delta_{g(t)} + V(t, x), 
\end{equation}
where $g(t)$ is a family of metrics on $\R^n$, for simplicity a compactly supported perturbation of the flat metric, and such that $g(t)$ is flat for  $|t|$ large (this strong assumption can certainly be weakened to a finite decay rate at spacetime infinity). Similarly, we assume that the potential $V$ is smooth, real-valued, and compactly supported in spacetime (similarly, this could be weakened to a decay rate). Under our assumptions, $P = P_0$ near spacetime infinity.

We are going to consider a different compactification to our standard radial compactification. Why? -- because we need to respect the scaling of the operator, in which one time derivative is ``worth" two spatial derivatives. Note that the symbol of $P_0$ is $\tau + |\xi|^2$. We consider a parabolic scaling in the cotangent fibres:
$$
(\tau, \xi) \mapsto (a^2 \tau, a \xi), \quad a > 0.
$$
We call this map the `parabolic dilation with scale $a$'. 
We form a compactification of $\R^{n+1}_{\tau, \xi}$ in which the points at infinity correspond to all the orbits under parabolic dilation, in the same way that the points at infinity under \emph{radial} dilation correspond to all the orbits under the standard, linear dilation. To do this formally, we define the `parabolic sphere' in $\R^{n+1}$:
\begin{equation}\label{eq:parsphere}
    S^n_{\parb} = \{ (\tau, \xi) \mid \tau^2 + |\xi|^4 = 1 \}, 
\end{equation}
thus the parabolic sphere of scale $R$ is the set $\{ (\tau, \xi) \mid \tau^2 + |\xi|^4 = R^4 \}$. We then define the parabolic compactification of $\R^{n+1}_{\tau, \xi}$ to be 
\begin{equation}\begin{gathered}
 \overline{\R^{n+1}_{\parb}} :=    \R^{n+1}_{\tau, \xi} \sqcup S^n_{\parb} \times  [0, 1)_s / \sim ,  \\
 (\tau, \xi) \sim (\omega, s) = ((\omega_\tau, \omega_\xi), s) \iff (\tau, \xi) = (s^{-2}\omega_\tau, s^{-1} \omega_\xi).
\end{gathered}\end{equation}
The boundary of the parabolic compactification is at $s=0$, and $s = (\tau^2 + |\xi|^4)^{-1/4}$ is a boundary defining function. Let $\aang{R} : = (1 + \tau^2 + |\xi|^4)^{1/4}$ (the analogue of $\ang{\xi}$ in the linear case).  We then define classical symbols of order $s$ in $(\tau, \xi)$ to be smooth functions $a$  in the interior of $\overline{\R^{n+1}_{\parb}}$ such that $\aang{R}^{-s} a$ is $C^\infty$ on $\overline{\R^{n+1}_{\parb}}$. The corresponding definition of non-classical symbol of order $s$ is a function $a(\tau, \xi)$ such that, for all $j$ and all multi-indices $\beta$ of length $n$, we have 
\begin{equation}\label{eq:parsymbols}
  \Big|   D_\tau^j D_\xi^\beta a(\tau, \xi) \Big| \leq C \aang{R}^{s - 2j - |\beta|}. 
\end{equation}
Notice that $\xi_j$ is a symbol of order $1$, but $\tau$ is a symbol of order two. That means that $D_{x_j}$ will be an operator of differential order $1$ in the parabolic calculus, but $D_t$ will be an operator of order two. That is, $D_t$ and $\Delta$ have been brought to the same order, which fits with the intuitive idea that these two components of $P$ have `the same strength'. Notice also that $\tau$-derivatives in \eqref{eq:parsymbols} reduce the order of a symbol by two, which is again consistent (and in fact required) by the parabolic scaling. 

Our compactification of phase space $\comparphase$ is defined to be the Cartesian product of the radial compactification of spacetime, $\overline{\R^{n+1}_{t, x}}$, with $\overline{\R^{n+1}_{\parb}}$, and symbols of order $(s, l)$ will be functions $a(t, x, \tau, \xi)$ such that 
\begin{equation}
  \Big| D_{x, t}^\alpha  D_\tau^j D_\xi^\beta a(t, x, \tau, \xi) \Big| \leq C \ang{x, t}^{l-|\alpha|} \aang{R}^{s - 2j - |\beta|}. 
\end{equation}
This is no different to the previous compactification near spacetime infinity where $(\tau, \xi)$ are finite, but it differs considerably when the frequency variables tend to infinity, as we shall see shortly. 

We then define our parabolic calculus by quantizing, in the usual way (left quantization), this class of parabolic symbols. This gives us the parabolic calculus, $\Psipar^{s, l}(\R^{n+1})$. We can also define spaces of operators and symbols with variable spacetime order, although, for reasons of limited time, we do not elaborate here. This works just as well as in the usual calculus, and in fact is scarcely different as we have the usual dilation structure in the spacetime variables. 

From the parabolic calculus, we get weighted parabolic Sobolev spaces $\Hpar^{s, l}$, defined as the distributions that are mapped to $L^2$ by all operators in $\Psipar^{s,l}$. 

\subsection{Schr\"odinger operator}
Next, as microlocal analysts, we ask the following universal questions about the operator $P$:
\begin{itemize}
    \item What is the characteristic set $\Char(P)$, and how does it intersect the boundary of (parabolically) compactified phase space $\comparphase$?
    \item What is the Hamilton vector field at the boundary of $\comparphase$?
    \item Where are the radial sets (where the Hamilton vector field at infinity vanishes), and what is the linearization of the Hamilton vector field at the radial sets?
\end{itemize}
The answers are as follows, at least for the free operator $P_0$ for simplicity (but the geometry for general $P$ is only inessentially different):

 $\bullet$ The characteristic set $\Char(P)$ is given by $\{ \tau + |\xi|^2 = 0 \}$, that is, a paraboloid. This is a smooth manifold with boundary on the parabolic compactification, and it meets the boundary transversally. This is quite different to the geometry under the radial compactification, on which the continuous extension to the boundary is not a manifold. This already indicates that the parabolic compactification is the `correct' compactification. 
 
$\bullet$ The Hamilton vector field, for the free operator $P_0$, is $H_p = \partial_t + 2 \xi \cdot \partial_x$. This is straight line motion in $\R^{n+1}_{t,x}$ with fixed velocity $(1, 2\xi)$. We examine the behaviour of the Hamilton vector field at the boundary of the compactification. To do that, we need to rescale by multiplying by $\ang{t, x} \aang{R}^{-1}$. But since we only are interested in the Hamilton vector field on $\Char(P_0)$, it is more convenient to replace $\aang{R}^{-1}$ with $\ang{\xi}^{-1}$; and at spacetime infinity, we can multiply by either $\ang{t}$ or $\ang{x}$, depending on whether $t$ or $|x|$ is dominant. 

We first consider the situation at frequency infinity, in the interior of spacetime. In this case, we can forget about the factor $\ang{t, x}$ and only apply the factor $|\xi|^{-1}$. Then the limit of the Hamilton vector field $H_p$ is 
$$
\scH^{2, 0}_p = 2 \hat \xi \cdot \partial_x. 
$$
(This is a slight abuse of notation: we shall rescale the Hamilton vector field in different ways in different regions, so what we have is not precisely $\scH^{2, 0}_p$ but a smooth, bounded multiple of it. This is inconsequential as we are only interested in the bicharacteristics which are invariant under such operations.) 
Notice that the $\partial_t$ coefficient is equal to zero at frequency infinity. This reflects the fact that at frequency infinity, we have `infinite propagation speed', i.e. each bicharacteristic is contained in a single moment in time. Indeed, the form of $H_p$ shows that the speed of propagation at frequency $\xi$ is $2|\xi|$ which tends to infinity as $|\xi|$ tends to infinity. 

Second, at spacetime infinity, and at finite frequency, suppose that $t$ is dominant. Then we can use coordinates $\rho_b = 1/|t|$ and $y_j = x_j/t$,
as well as $(\tau, \xi)$ where these variables are finite. In this coordinates, the rescaled Hamilton vector field is 
$$
\scH^{2, 0}_p = |t| H_p = \signum(t) \Big( - \rho_b \partial_{\rho_b} - \sum_j y_j \partial_{y_j} + 2\sum_j \xi_j \partial_{y_j} \Big).  
$$
We see that this vanishes where $2\xi = y$. Moreover, to lie in $\Char(P)$ we must have $\tau = |\xi|^2$.  That is, there is exactly one point in each cotangent fibre on the spacetime boundary where this rescaled Hamilton vector field vanishes, at least when $t$ is dominant. When $x$ is dominant, we can assume without loss of generality that $x_1$ is a dominant variable, and define $s = t/x_1$, $\rho_b = 1/|x_1|$, $v_j = x_j/x_1$, $j \geq 2$. We note that when $|x|/|t| \to \infty$, on $\Char(P_0)$ we have $|\xi| \to \infty$, and $\xi$ is parallel to $x$. So we simultaneously need to work near frequency infinity, and we can assume that $\xi_1$ is a dominant variable. We define $\rho_f = 1/|\xi_1|$ and let $\sigma = \tau/\xi_1^2$, $\omega_j = \xi_j/\xi_1$ for $j \geq 2$. In these coordinates, we have (good exercise to check)
\begin{multline}
    \scH^{2, 0}_p = \frac{|x_1|}{|\xi_1|} H_p \\ = (\signum x_1)(\signum \xi_1) \Big( ((\signum \xi_1) \rho_f - 2s) \partial_s - 2 \rho_b \partial_{\rho_b} + 2\sum_j (\omega_j - v_j) \partial_{v_j} \Big). 
\end{multline}
We see that the rescaled Hamilton vector field vanishes where $(\signum \xi_1)\rho_f = 2s$ and $\omega_j = v_j$. This defines a smooth manifold with boundary, contained in the spacetime boundary $\rho_b = 0$ and meeting frequency infinity, where $\rho_f = 0$, transversally. 

$\bullet$ If we change variable to $\tilde s = 2s - \signum(\xi_1) \rho_f$ and $\tilde v_j = v_j - \omega_j$, this can be written in the form
\begin{equation}
    \scH^{2, 0}_p  = (\signum x_1)(\signum \xi_1) \Big( -2 \tilde s \partial_{\tilde s} - 2 \rho_b \partial_{\rho_b} -  2\sum_j (\tilde v_j) \partial_{\tilde v_j} \Big). 
\end{equation}
This vector field vanishes at $\tilde s = \tilde v = \rho_b = 0$, and it is a sink for $\signum x_1 = \signum \xi_1$, and a source for $\signum x_1 = - \signum \xi_1$. Notice that $2s = \signum(\xi_1) \rho_f = 1/\xi_1$ has a fixed sign. Thus there are two components to the radial set, which we label $\Rin$ and $\Rout$ where $\pm$ is the sign of $(\signum x_1)(\signum \xi_1)$ on each component. The component $\Rin$, lies over the `southern hemisphere' where $t/|x|$ is nonpositive, and $\Rout$ lies over the northern hemisphere where $t/|x|$ is nonnegative. Each is a graph over the corresponding open hemisphere, but the radial set reaches frequency infinity over the `equator' where $|t|/|x| = 0$, that is, the graph `turns vertical' at the equator which is the boundary of both hemispheres.  

It is straightforward to show that, for the free operator $P_0$, the radial sets are a global source and sink for the flow. That is, every bicharacteristic tends in the backward direction to $\Rin$ and in the forward direction to $\Rout$. Whenever the frequency is finite, time tends to $\pm \infty$ along the bicharacteristic but for infinite frequency, the bicharacteristic is in a level set of time. For the perturbed operator $P$, the same is true provided that each metric $g(t)$ is \emph{nontrapping}. 

This means that the geometry is very similar to that of the Klein-Gordon operator! Consequently, the same analysis applies. As for Helmholtz or Klein-Gordon, the threshold value of the spacetime order is $-1/2$, due to the fact that the operator (Helmholtz/Klein-Gordon/Schr\"odinger) has spatial/spacetime order $0$ in each case.  We choose a variable spatial order $\ml_+$ such that $\ml_+$ is monotone nonincreasing, above threshold at $\Rin$ and below threshold at $\Rout$, with the reverse true for $\ml_- := -1 - \ml_+$. Then, we define Hilbert spaces 
\begin{align}
    \mathcal{X}^{s,\ml_\pm} &= \{ u \in \Hpar^{s, \ml_\pm} : Pu \in \Hpar^{s-1,\ml_\pm+1}  \} \\ 
    \mathcal{Y}^{s, \ml_\pm} &= \Hpar^{s, \ml_\pm}.
\end{align}
Completely analogous reasoning leads to a semi-Fredholm estimate of the form 
\begin{equation}
    \| u \|_{\SX^{s, \ml_\pm}} \leq C \Big( \| u \|_{\Hpar^{s-1, \ml_\pm + 1}} + \| u \|_{\Hpar^{-N, -N}} \Big). 
\end{equation}
Indeed, the key geometric property that makes this work is the uniform source/sink structure of the rescaled Hamilton vector field near the radial sets --- allowing us to prove above- and below-threshold radial point estimates analogous to Theorems~\ref{thm:below} and \ref{thm:above Schw} for the Schr\"odinger operator --- plus the condition that these are the global source and sink of the flow. 

As before, the orthogonal complement (with respect to the $L^2$-pairing) of the range of $P : \SX^{s, \ml_\pm} \to \Hpar^{s-1, \ml_\pm + 1}$ can be identified with the kernel of  $P$ acting on $\SX^{1-s, \ml_\mp}$. Indeed, if $v \in (\Hpar^{s-1, \ml_\pm + 1})^*$ is in the orthogonal complement of the range of $P$ acting on $\SX^{s, \ml_\pm}$, then in particular it is orthogonal to $P\phi$ for all $\phi \in \Schw(\R^{n+1})$. Since $P$ is formally self-adjoint this means that $\ang{Pv, \phi} = 0$ for all $\phi \in \Schw(\R^{n+1})$, showing that $Pv = 0$. It follows that $v \in \SX^{1-s, \ml_\mp}$ (using the identity $\ml_\mp := -1 - \ml_\pm$). The semi-Fredholm estimate for $P$ on $\SX^{1-s, \ml_\mp}$ proves that $P$ on $\SX^{s, \ml_\pm}$ has finite-codimensional range, thus showing that $P$ is Fredholm. 

Moreover, for the Schr\"odinger operator $P$, invertibility is rather easy to prove, since any solution $v$ to $Pv = 0$ satisfies mass conservation: that is, the spatial $L^2$ norm is constant in time. This is incompatible with the above threshold condition on $v$ unless $v=0$. Thus the null space is trivial, and this shows both the null space and (using the adjoint as above) the cokernel are trivial, i.e. $P$ is invertible. 

From this we can get obtain results on global solutions to $Pu = 0$ that are completely analogous to the case for Helmholtz scattering. The solutions are found in the sum of spaces, $\SX^{s, \ml_+} + \SX^{s, \ml_-}$, since they must be below threshold at both radial sets. In particular, given any function $f_- \in \Schw(\R^n_\xi)$, there is a solution $u$ to $Pu = 0$ with the asymptotic
$$
u = (4\pi it)^{-n/2} e^{i|x|^2/4t} f_-\big( \frac{x}{t} \big) + O(t^{-n/2-1}), \quad t \to -\infty,
$$
and this solution has a similar asymptotic as $t \to +\infty$:
$$
u = (4\pi it)^{-n/2} e^{i|x|^2/4t} f_+\big( \frac{x}{t} \big) + O(t^{-n/2-1}), \quad t \to +\infty,
$$
where $f_+$ is also Schwartz. This defines a map, the `Poisson operator' in this context, from $f_-$ to $u$. This map extends to tempered distributions. The scattering map $S$ may then be defined as the map $S : f_- \to f_+$. A microlocal analysis of $S$ shows that $S$ is a Fourier integral operator and preserves a scale of weighted Sobolev-type spaces. Since the incoming and outgoing data $f_\pm$ are functions of $x/t \in \R^n$, one might guess that the standard weighted Sobolev spaces, such as we have been using in our study of the scattering calculus, would be an appropriate scale of spaces. It turns out, however, that the scale of weighted `one-cusp' Sobolev spaces is the best scale to use.\footnote{The `one-cusp' calculus is a calculus associated to a Lie Algebra of vector fields, somewhat similar to the scattering calculus, but instead of being generated, in the notation of Lecture 7, by the vector fields $\mathsf{x}^2 \partial_{\mathsf{x}}, \mathsf{x} \partial_{y_1}, \dots, \mathsf{x} \partial_{y_{n-1}}$, one uses generators $\mathsf{x}^3 \partial_{\mathsf{x}}, \mathsf{x} \partial_{y_1}, \dots, \mathsf{x} \partial_{y_{n-1}}$. See \cite{zachos2022inverting}.}  Indeed, the first two authors have recently shown that $S$ is a one-cusp Fourier integral operator, associated to a symplectic map determined by the bicharacteristic flow from the incoming to the outgoing radial set \cite{HJscatteringmap}, and preserves the scale of weighted one-cusp Sobolev spaces. This is the type of result one expects to be able to prove, if the underlying geometry has been properly elucidated: see \cite{melrose1996scattering}, \cite{alexandrova2005} for antecedents. Further discussion of this is unfortunately well beyond the scope of these lecture notes!

\subsection{Exercises} 
\begin{problem}
    Show that the dual space of $\SY^{s, l_-, l_+}$ is 
    $$
    \{ f = f_1 + Q' f_2 \mid f_1 \in H^{-s, -l_-}, f_2 \in H^{-s, -l_+} \}
    $$
    with norm
    $$
    \| f \| := \inf \| f_1 \|_{H^{-s, -l_-}} + \| f_2 \|_{H^{-s, -l_+}}
    $$
    where the infimum is over all decompositions of $f = f_1 + Q'f_2$ with $f_i$ in the above spaces. Hint: for the more difficult direction, which is showing the dual space is contained in the space above, use the fact that Hilbert spaces such as $\SY^{s, l_-, l_+}$ are their own dual spaces with respect to the inner product on that Hilbert space, then write the duality in terms of $L^2$-pairing. 
\end{problem}

\begin{problem}
    Show that \eqref{eq:parsymbols} is equivalent to the following condition: $a$ is in $\aang{R}^s L^\infty$, and remains in this space under the repeated application of smooth vector fields on $\overline{\R^{n+1}_{\parb}}$ tangent to the boundary. That is, the `new' symbol estimates are just our familiar  conormal estimates, as in \eqref{eq:conormal reg}, on the parabolic compactification of phase space. 
\end{problem}

\begin{problem}
Show that the radial set $\Rin$ for the Schr\"odinger operator $P_0$ is a global source for the rescaled Hamilton flow, and the radial set $\Rout$ is a global sink. Hint: use the same method as was used in Lecture 9 for the Helmholtz operator. 
\end{problem}

\begin{problem} The symbol of $P_0$ is homogeneous with respect to parabolic scaling. But the Hamilton vector field is not, as shown by the fact that the $\partial_t$ component of the Hamilton vector field, when rescaled, vanishes at frequency infinity, although the rest of the Hamilton vector field has a nonzero limit. How could that be? 
\end{problem}

\begin{problem} For the free Schr\"odinger operator $P_0$, use the Fourier transform to express `nice' solutions of $P_0 u = 0$. Perform the inverse Fourier transform and use the stationary phase lemma to get an asymptotic expansion of these solutions as $t \to \pm \infty$. Do the same for the Klein-Gordon operator. 
\end{problem}

\bibliographystyle{plain}
\bibliography{lecture_notes}

\end{document}